\documentclass[12pt,reqno]{amsart}
\usepackage{a4wide}
\usepackage{enumerate,comment,xparse}

\usepackage{amsmath, amsfonts, amssymb, xcolor, stmaryrd}

\usepackage{hyperref}
\usepackage{url, hypcap}
\hypersetup{colorlinks=true, citecolor=blue, linkcolor=blue}

\usepackage{color,tikz,ifthen}
\usetikzlibrary{calc,through,backgrounds,shapes}
\usetikzlibrary{arrows.meta}

\usepackage{todonotes}

\usepackage{comment}
% \excludecomment{figure}
% \let\endfigure\relax

\usepackage{cleveref}

\crefname{conjecture}{conjecture}{conjectures}
\Crefname{conjecture}{Conjecture}{Conjectures}
\crefname{subsection}{subsection}{subsections}
\Crefname{subsection}{Subsection}{Subsections}

\newcounter{saveeqn}
\newcommand{\alphaeqn}{\setcounter{saveeqn}{\value{equation}}%
\setcounter{equation}{0}%
\global\def\theequation{\mbox{\thesection.\arabic{saveeqn}\alph{equation}}}}
\newcommand{\reseteqn}{\setcounter{equation}{\value{saveeqn}}%
\global\def\theequation{\thesection.\arabic{equation}}}

\usepackage{color}

\newtheorem{theorem}{Theorem}[section]
\newtheorem{proposition}[theorem]{Proposition}
\newtheorem{corollary}[theorem]{Corollary}
\newtheorem{lemma}[theorem]{Lemma}
\newtheorem{conjecture}[theorem]{Conjecture}

\theoremstyle{definition}
\newtheorem{definition}[theorem]{Definition}
\newtheorem{example}[theorem]{Example}
\newtheorem{question}[theorem]{Question}
\newtheorem{openproblem}[theorem]{Open Problem}
\newtheorem{remark}[theorem]{Remark}  % wenn man dem Leser etwas mitteilen moechte
\newtheorem{remarks}[theorem]{Remarks}  % wenn man dem Leser etwas mitteilen moechte
      % wenn man auf etwas hinweisen moechte

\numberwithin{equation}{section}

% others

\catcode`\@=11
\@namedef{subjclassname@2010}{%
  \textup{2010} Mathematics Subject Classification}
\catcode`\@=12

\def\({\left(}
\def\){\right)}
\def\coef#1{\left\langle#1\right\rangle}

\def\rk{\operatorname{rk}}
\def\width{\operatorname{wd}}

\def\nequiv{\hbox{${}\not\equiv{}$}}
\def\R{\mathbb R}

\def\N{\mathbb N}

\def\om{\omega}
\def\ph{\varphi}
\def\si{\sigma}
\def\ep{\varepsilon}
\def\al{\alpha}
\def\be{\beta}
\def\ga{\gamma}
\def\la{\lambda}

\def\th{\vartheta}
\def\ka{\kappa}
\def\ta{\tau}

\def\rot{R}
\def\rotA{\kappa_{\scriptscriptstyle A}}
\def\rotB{\kappa_{\scriptscriptstyle B}}
\def\rotD{\kappa_{\scriptscriptstyle D}}
\def\Na{\bigtriangledown}
\def\Nam#1#2{\Na_{#1}}%\def\Nam#1#2{\Na^{(#2)}_{#1}}
\def\fl#1{\left\lfloor#1\right\rfloor}

\newcommand{\sq}[1]{{\rm #1}} % sequence of letters (squared symbols)
\renewcommand{\r}{\sq{r}} % the letter r
\newcommand{\s}{\sq{s}} % the letter s
\renewcommand{\t}{\sq{t}} % the letter t

\def\ifix{i_{\operatorname{fix}}}
\def\rfix{\r_{\operatorname{fix}}}

\def\Cat{\operatorname{Cat}} % Catalan numbers
\def\mCat{\operatorname{Cat}^{(m)}} % Fuss-Catalan numbers
\NewDocumentCommand\mCatplus{O{m}}{{\operatorname{Cat}^{(#1)}_+}}

\def\Krewtilde{{\sf K}} % Kreweras mapped
\def\Krew{\widetilde\Krewtilde} % Kreweras map

\def\Krewplus{\Krew_+} % positive Kreweras map
\def\Krewplustilde{\Krewtilde_+} % positive Kreweras map
\def\Krewplusbar{\Krewtilde_\circ} % positive Kreweras map

\def\Pan{{\sf P}} % Panyushev map
\def\Panplus{{\sf P}_+} % positive Panyushev map
\def\antichain{\mathcal{A}} % an antichain in the root poset
\def\wo{w_\circ} % the longest element
\def\rrr{\rho}
\def\rRr{\nu}
\def\lenS{\ell_\reflS} % the length in simples
\def\lenR{\ell_\reflR} % the length in all reflections
\def\reflR{\mathcal{R}} % the set of reflections
\def\reflS{\mathcal{S}} % the set of simple reflections
\def\one{{\varepsilon}} % the identity in a Coxeter group
\def\order{{\operatorname{ord}}} % the order of the Kreweras map
 % the reflection ordering on R induced by c
\def\leqR{\le_\reflR} % the reflection ordering on R induced by c
\def\geqR{\ge_\reflR} % the reflection ordering on R induced by c
\def\lec{<_c} % the reflection ordering on R induced by c
 % a word
\def\w{{\rm w}} % a word for an element w
 % a letter
\def\c{{\rm c}} % a word for a Coxeter element
\def\wwo{{\rm \wo}} % a word for the longest element
 % the reverse of a word
 % the Auslander-Reiten translate
 % the shift functor
\def\NN{N\kern-1ptN(W)} % the non-nesting partitions
\def\NNplus{N\kern-1ptN_+(W)} % the positive non-nesting partitions
\def\mNN{N\kern-1ptN^{(m)}(W)} % the non-nesting partitions
\def\mNNplus{N\kern-1ptN_+^{(m)}(W)} % the positive non-nesting partitions

\def\NCsymb{N\kern-1ptC}

\NewDocumentCommand\NC{O{W}O{c}}{\NCsymb(#1,#2)} % the non-crossing partitions
\NewDocumentCommand\NCPlus{O{W}O{c}}{\NCsymb_+(#1,#2)} % the positive non-crossing partitions

\NewDocumentCommand\mNC{O{W}O{m}O{c}}{\NCsymb^{(#2)}(#1,#3)}
\NewDocumentCommand\mNCPlus{O{W}O{m}O{c}}{\NCsymb^{(#2)}_+(#1,#3)}
\NewDocumentCommand\mNCdelta{O{W}O{m}O{c}}{\NCsymb^{(#2)}_\Delta(#1,#3)}
\NewDocumentCommand\mNCnabla{O{W}O{m}O{c}}{\NCsymb^{(#2)}_\nabla(#1,#3)}

\NewDocumentCommand\NCA{O{N}}{\widetilde{\NCsymb}(#1)}
\NewDocumentCommand\NCAPlus{O{N}}{\widetilde{\NCsymb}_+(#1)}
\NewDocumentCommand\mNCA{O{n-1}O{m}}{\widetilde{\NCsymb}^{(#2)}(A_{#1})}
\NewDocumentCommand\mNCAPlus{O{n-1}O{m}}{\widetilde{\NCsymb}_+^{(#2)}(A_{#1})}

\NewDocumentCommand\NCB{O{N}}{\widetilde{\NCsymb}(\pm #1)}
\NewDocumentCommand\NCBPlus{O{N}}{\widetilde{\NCsymb}(\pm #1)}
\NewDocumentCommand\mNCB{O{n}O{m}}{\widetilde{\NCsymb}^{(#2)}(B_{#1})}
\NewDocumentCommand\mNCBPlus{O{n}O{m}}{\widetilde{\NCsymb}_+^{(#2)}(B_{#1})}

\NewDocumentCommand\mNCD{O{n}O{m}}{\widetilde{\NCsymb}^{(#2)}(D_{#1})}
\NewDocumentCommand\mNCDPlus{O{n}O{m}}{\widetilde{\NCsymb}^{(#2)}_+(D_{#1})}

\def\rootPoset{\Phi^+} % the root poset for a finite Weyl group
 % subword complex
 % the bijection between facets and #1-non-crossing partitions
\NewDocumentCommand\Rword{O{c}}{\mathcal{\reflR}_{#1}} % the reduced R-word
\def\invc{\mathbf{inv}_c} % the reduced R-word

\def\asc{\operatorname{asc}}
\def\des{\operatorname{des}}

\renewcommand{\th}{\textsuperscript{th}}

 % the root of #3 at position #4 in the subword complex #2 with basepoint #1

\newcommand{\defn}[1]{\emph{\color{red} #1}} % emphasis of a definition
 %highlighting

\newcounter{intege}

\newcommand{\polygon}[5]{ % #1 = center
                          % #2 = object name
                          % #3 = number of vertices
                          % #4 = diameter
                          % #5 = labels
  \foreach \t in {1,...,#3} {
    \coordinate (#2\t) at ($#1+(90-\t*360/#3:#4)$);
  }
  \draw[thin,black,fill=white,opacity=0.3,densely dashed] #1 circle (#4);
  \setcounter{intege}{1}
  \pgfmathsetcounter{intege}{1}
  \foreach \object in {#5}{
    \ifthenelse{\not\equal{\object}{}}{
      \filldraw[black] ($#1+($(90-\theintege*360/#3:#4)$)$) circle(2pt);
    }{
%       \filldraw[black] ($#1+($(90-\theintege*360/#3:#4)$)$) circle(2pt);
    }
    \node[inner sep=0pt] at ($#1+($1.12*(90-\theintege*360/#3:#4)$)$)
         {$\scriptstyle\object$};
    \pgfmathsetcounter{intege}{\theintege+1}
    \setcounter{intege}{\theintege}
  }
}

\newcommand{\polygonlabel}[5]{ % #1 = center
                          % #2 = object name
                          % #3 = number of vertices
                          % #4 = diameter
                          % #5 = labels
  \foreach \t in {1,...,#3} {
    \coordinate (#2\t) at ($#1+(90-\t*360/#3:#4)$);
  }
  \draw[thin,black,fill=white,opacity=0.3,densely dashed] #1 circle (#4);
  \setcounter{intege}{1}
  \pgfmathsetcounter{intege}{1}
  \foreach \object in {#5}{
    \ifthenelse{\not\equal{\object}{}}{
      \pgfmathsetmacro\angle{int(90-\theintege*360/#3)}
      \pgfmathsetmacro\opangle{int(270-\theintege*360/#3)}
      \node[circle,draw,inner sep=0pt,fill=black,outer sep=0pt,minimum
size=4pt] at ($#1+($(\angle:#4)$)$) (A) {};
      \node[ellipse,inner sep=2pt,anchor=\opangle,outer sep=2pt] at
(A.\angle) {$\scriptstyle\object$};
    }{
%       \filldraw[black] ($#1+($(90-\theintege*360/#3:#4)$)$) circle(2pt);
    }
    \pgfmathsetcounter{intege}{\theintege+1}
    \setcounter{intege}{\theintege}
  }
}

\newcommand{\polygonnew}[6]{ % #1 = center
                          % #2 = object name
                          % #3 = number of vertices
                          % #4 = diameter
                          % #5 = labels
                          % #6 = label factor
  \foreach \t in {1,...,#3} {
    \coordinate (#2\t) at ($#1+(90-\t*360/#3:#4)$);
  }
  \draw[thin,black,fill=white,opacity=0.3,densely dashed] #1 circle (#4);
  \setcounter{intege}{1}
  \pgfmathsetcounter{intege}{1}
  \foreach \object in {#5}{
    \ifthenelse{\not\equal{\object}{}}{
      \filldraw[black] ($#1+($(90-\theintege*360/#3:#4)$)$) circle(2pt);
    }{
%       \filldraw[black] ($#1+($(90-\theintege*360/#3:#4)$)$) circle(2pt);
    }
    \node[inner sep=0pt] at ($#1+($#6*(90-\theintege*360/#3:#4)$)$) {$\scriptstyle\object$};
    \pgfmathsetcounter{intege}{\theintege+1}
    \setcounter{intege}{\theintege}
  }
}

\newcommand{\polygoninner}[5]{ % #1 = center
                          % #2 = object name
                          % #3 = number of vertices
                          % #4 = diameter
                          % #5 = labels
  \foreach \t in {1,...,#3} {
    \coordinate (#2\t) at ($#1+(90-\t*360/#3:#4)$);
  }
  \draw[thin,black,fill=white,opacity=0.3,densely dashed] #1 circle
(#4);
  \setcounter{intege}{1}
  \pgfmathsetcounter{intege}{1}
  \foreach \object in {#5}{
    \ifthenelse{\not\equal{\object}{}}{
      \filldraw[black] ($#1+($(90-\theintege*360/#3:#4)$)$)
circle(2pt);
    }{
%       \filldraw[black] ($#1+($(90-\theintege*360/#3:#4)$)$)
circle(2pt);
    }
    \node[inner sep=0pt] at ($#1+($0.7*(90-\theintege*360/#3:#4)$)$)
{$\scriptscriptstyle\object$};
    \pgfmathsetcounter{intege}{\theintege+1}
    \setcounter{intege}{\theintege}
  }
}

\begin{document}

\title[Positive $m$-divisible non-crossing partitions]{Positive
  $m$-divisible non-crossing partitions\\ and their Kreweras maps}

\author[C.~Krattenthaler]{Christian Krattenthaler$^*$}
\address[C.~Krattenthaler]{Fakult\"at f\"ur Mathematik, Universit\"at Wien,
Oskar-Morgenstern-\break Platz~1, A-1090 Vienna, Austria}
\thanks{$^*$Research partially supported by the Austrian Science Foundation FWF, grants Z130-N13 and S50-N15,
the latter in the framework of the Special Research Program
``Algorithmic and Enumerative Combinatorics"}

\author[C.~Stump]{Christian Stump$^\dagger$}
\address[C.~Stump]{Fakult\"at f\"ur Mathematik,
Ruhr-Universit\"at Bochum, Germany}
\email{christian.stump@rub.de}
\thanks{$^\dagger$Supported by DFG grants STU 563/2 ``Coxeter-Catalan combinatorics'' and STU 563/\{4--6\}-1 ``Noncrossing phenomena in Algebra and Geometry''.}

\subjclass [2020]{Primary 20F55; Secondary 05A05 05A10 05A15 05A18 06A07}
\keywords {Non-crossing partitions, reflection groups, Coxeter groups, 
  annular non-crossing partitions, Kreweras map, cyclic sieving,
cluster category, shift functor, Auslander--Reiten translate}

\begin{abstract}
  We study positive $m$-divisible non-crossing partitions and their positive Kreweras maps.
  In classical types, we describe their combinatorial realisations as certain non-crossing set partitions.
  We also realise these positive Kreweras maps as pseudo-rotations on a circle, respectively on an annulus.
  We enumerate positive $m$-divisible non-crossing partitions in classical types that are invariant under powers of the
  positive Kreweras maps with respect to several parameters.
  In order to cope with the exceptional types, we develop a different combinatorial model in general type describing positive $m$-divisible non-crossing partitions that are invariant under powers of the positive Kreweras maps.
  We finally show that altogether these results establish several cyclic sieving phenomena.
\end{abstract}

\maketitle

\setcounter{tocdepth}{1}
\tableofcontents

%%%%%%%%%%%%%%%%%%%%%%%%%%%%%%%%%%%%%%%%%%%%%%%%%%%%%%%%%%%%%%%%%%%%%%%%%%%%%%%%%
\section{Introduction}
\label{sec:0} 
%%%%%%%%%%%%%%%%%%%%%%%%%%%%%%%%%%%%%%%%%%%%%%%%%%%%%%%%%%%%%%%%%%%%%%%%%%%%%%%%%

Non-crossing set partitions have been introduced into combinatorial
mathematics by Kreweras~\cite{KrewAA} in 1972.
Biane~\cite{BianAA} presented an algebraic realisation in terms
of certain factorisations of a long cycle in the symmetric
group. These classical non-crossing partitions were lifted
to all finite Coxeter groups and a given Coxeter element by Bessis~\cite{BesDAA},
and independently by Brady and Watt~\cite{brady2001partial,BRWaAA}.
The earlier combinatorial construction
of non-crossing set partitions for type~$B$ due to Reiner
\cite{ReivAG} turned out to be equivalent to the
type~$B$ situation of the construction of Bessis and of Brady and Watt.
The (algebraic) type~$D$ non-crossing partitions of Bessis and of Brady and Watt
were translated into (combinatorial)
non-crossing set partitions for type~$D$ by Reiner and Athanasiadis in~\cite{AR2004}.

Inspired by a refinement of Kreweras' non-crossing set partitions
due to Edelman\break \cite{EdelAA}, Armstrong~\cite{Arm2006} introduced
a positive integer parameter~$m$ and defined \emph{$m$-divisible
non-crossing partitions} for any finite irreducible Coxeter group~$W$ and any Coxeter element $c \in W$,
together with a natural partial order.
Armstrong showed that, in type~$A_{n-1}$
these objects can be realised as Edelman's refinement, namely 
as non-crossing set partitions of
$\{1,2,\dots,mn\}$ where all block sizes are divisible by~$m$.
In type~$B_n$ they can be realised
as non-crossing set partitions of
$\{1,2,\dots,mn,-1,-2,\dots,-mn\}$ which remain invariant
under the simultaneous replacement $i\mapsto-i$,
% $i=1,2,\dots,mn$,
and where again all block sizes are divisible by~$m$~\cite[Thm.~4.5.6]{Arm2006}.
The first author and M\"uller~\cite{KrMuAB} (with a missing piece
reported in~\cite{JKimAA})
presented a realisation in type~$D_n$
as non-crossing set partitions of 
$\{1,2,\dots,mn,-1,-2,\dots,-mn\}$ which are arranged on
an annulus subject to similar, but more
technical conditions. In all these realisations as set partitions
(see \Cref{fig:0} for examples),
the above mentioned partial order corresponds to 
the classical refinement order on set partitions.

\begin{figure}
  \centering
  \begin{tikzpicture}[scale=.8]
    \polygon{(0,0)}{obj}{36}{2.5}
      {1,2,3,4,5,6,7,8,9,10,11,12,13,14,15,16,17,18,19,20,21,22,23,24,25,26,27,28,29,30,31,32,33,34,35,36}
     \draw[fill=black,fill opacity=0.1] (obj1) to[bend right=30] (obj5) to[bend right=30] (obj12) to[bend right=30] (obj13) to[bend right=30] (obj14) to[bend left=10] (obj36) to[bend right=30] (obj1);
     \draw[fill=black,fill opacity=0.1] (obj18) to[bend right=30] (obj22) to[bend right=30] (obj29) to[bend right=30] (obj30) to[bend right=30] (obj31) to[bend right=30] (obj35) to[bend left=10] (obj18);
     \draw[fill=black,fill opacity=0.1] (obj2) to[bend right=30] (obj3) to[bend right=30] (obj4) to[bend left=30] (obj2);
     \draw[fill=black,fill opacity=0.1] (obj6) to[bend right=30] (obj7) to[bend right=30] (obj11) to[bend left=30] (obj6);
     \draw[fill=black,fill opacity=0.1] (obj8) to[bend right=30] (obj9) to[bend right=30] (obj10) to[bend left=30] (obj8);
     \draw[fill=black,fill opacity=0.1] (obj15) to[bend right=30] (obj16) to[bend right=30] (obj17) to[bend left=30] (obj15);
     \draw[fill=black,fill opacity=0.1] (obj19) to[bend right=30] (obj20) to[bend right=30] (obj21) to[bend left=30] (obj19);
     \draw[fill=black,fill opacity=0.1] (obj23) to[bend right=30] (obj24) to[bend right=30] (obj28) to[bend left=30] (obj23);
     \draw[fill=black,fill opacity=0.1] (obj25) to[bend right=30] (obj26) to[bend right=30] (obj27) to[bend left=30] (obj25);
     \draw[fill=black,fill opacity=0.1] (obj32) to[bend right=30] (obj33) to[bend right=30] (obj34) to[bend left=30] (obj32);
    \end{tikzpicture}
  \quad
  \begin{tikzpicture}[scale=.8]
    \polygon{(-4,0)}{obj}{24}{2.5}
      {1,2,3,4,5,6,7,8,9,10,11,12,\overline{1},\overline{2},\overline{3},\overline{4},\overline{5},\overline{6},\overline{7},\overline{8},\overline{9},\overline{10},\overline{11},\overline{12}}

%      \draw[line width=2.5pt,black] (obj1) to[bend left=50] (obj24);
%      \draw[line width=2.5pt,black] (obj13) to[bend left=50] (obj12);

     \draw[fill=black,fill opacity=0.1] (obj24) to[bend right=30]
(obj1) to[bend right=30] (obj2) to[bend left=30] (obj24);
     \draw[fill=black,fill opacity=0.1] (obj3) to[bend right=30]
(obj4) to[bend right=30] (obj5) to[bend left=30] (obj3);
     \draw[fill=black,fill opacity=0.1] (obj6) to[bend right=30]
(obj10) to[bend right=30] (obj11) to[bend right=30] (obj18) to[bend
right=30] (obj22) to[bend right=30] (obj23) to[bend right=30] (obj6);
     \draw[fill=black,fill opacity=0.1] (obj7) to[bend right=30]
(obj8) to[bend right=30] (obj9) to[bend left=30] (obj7);
     \draw[fill=black,fill opacity=0.1] (obj12) to[bend right=30]
(obj13) to[bend right=30] (obj14) to[bend left=30] (obj12);
     \draw[fill=black,fill opacity=0.1] (obj15) to[bend right=30]
(obj16) to[bend right=30] (obj17) to[bend left=30] (obj15);
     \draw[fill=black,fill opacity=0.1] (obj19) to[bend right=30]
(obj20) to[bend right=30] (obj21) to[bend left=30] (obj19);
  \end{tikzpicture}
  \quad
  \begin{tikzpicture}[scale=.7]
    \polygonnew{(-4,0)}{objleftout}{30}{3}
      {\overline{15},1,2,3,4,5,6,7,8,9,10,11,12,13,14,15,\overline{1},\overline{2},\overline{3},\overline{4},\overline{5},\overline{6},\overline{7},\overline{8},\overline{9},\overline{10},\overline{11},\overline{12},\overline{13},\overline{14}}{1.1}
    \polygonnew{(-4,0)}{objleftin}{6}{0.75}
      {\overline{17},\overline{16},18,17,16,\overline{18}}{0.65}

     \draw[fill=black,fill opacity=0.1] (objleftin6) to[bend right=5] (objleftout29) to[bend right=30] (objleftout30) to[bend right=5] (objleftin6);
     \draw[fill=black,fill opacity=0.1] (objleftin3) to[bend right=5] (objleftout14) to[bend right=30] (objleftout15) to[bend right=5] (objleftin3);

     \draw[fill=black,fill opacity=0.1] (objleftout2) to[bend right=5] (objleftin1) to[bend left=5] (objleftout1) to[bend right=10] (objleftout2);
     \draw[fill=black,fill opacity=0.1] (objleftout17) to[bend right=5] (objleftin4) to[bend left=5] (objleftout16) to[bend right=10] (objleftout17);

     \draw[fill=black,fill opacity=0.1] (objleftout3) to[bend right=30] (objleftout4) to[bend left=10] (objleftin2) to[bend right=5] (objleftout3);
     \draw[fill=black,fill opacity=0.1] (objleftout18) to[bend right=30] (objleftout19) to[bend left=10] (objleftin5) to[bend right=5] (objleftout18);

     \draw[fill=black,fill opacity=0.1] (objleftout5) to[bend right=30] (objleftout6) to[bend right=30] (objleftout7) to[bend right=30] (objleftout8) to[bend right=30] (objleftout9) to[bend right=30] (objleftout13) to[bend left=5] (objleftout5);
     \draw[fill=black,fill opacity=0.1] (objleftout20) to[bend right=30] (objleftout21) to[bend right=30] (objleftout22) to[bend right=30] (objleftout23) to[bend right=30] (objleftout24) to[bend right=30] (objleftout28) to[bend left=5] (objleftout20);

%     \draw[fill=black,fill opacity=0.1] (objleftout6) to[bend right=30] (objleftout7) to[bend right=30] (objleftout8) to[bend left=30] (objleftout6);
%     \draw[fill=black,fill opacity=0.1] (objleftout21) to[bend right=30] (objleftout22) to[bend right=30] (objleftout23) to[bend left=30] (objleftout21);

     \draw[fill=black,fill opacity=0.1] (objleftout10) to[bend right=30] (objleftout11) to[bend right=30] (objleftout12) to[bend left=30] (objleftout10);
     \draw[fill=black,fill opacity=0.1] (objleftout25) to[bend right=30] (objleftout26) to[bend right=30] (objleftout27) to[bend left=30] (objleftout25);
    \end{tikzpicture}
    \caption{Examples of positive $3$-divisible non-crossing set partitions of the respective types $A_{11}$, $B_4$ and $D_6$.}% Each has a $2$-fold symmetry under the positive Kreweras map.}
\label{fig:0}
\end{figure}
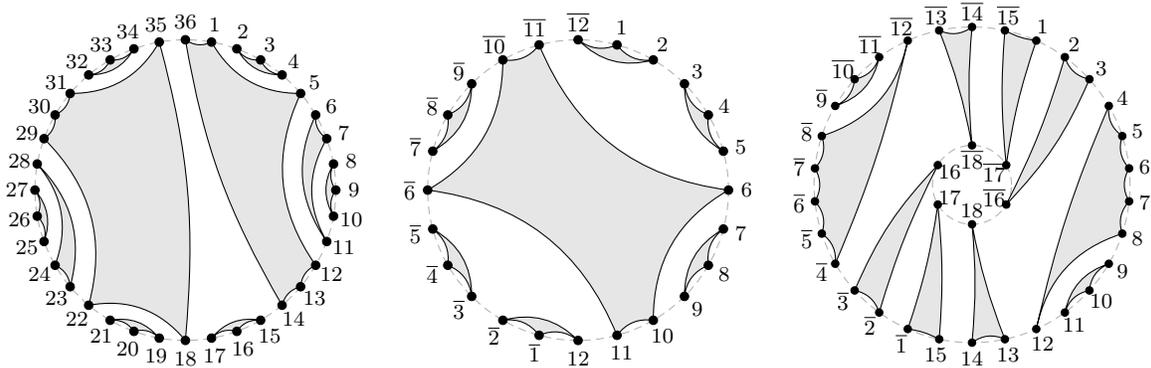

As observed for example in \cite[Prop.~9]{Cha2004},
one of the remarkable facts about these generalised non-crossing
partitions is that their total number is given by the
uniform formula 
\begin{equation}
  \mCat(W) :=
  \prod_{i=1}^n{\frac{mh+d_i}{d_i}}, 
\label{eq:mdivnc}
\end{equation}
where~$n$ denotes the \emph{rank} of~$W$, $d_1 \leq \ldots \leq d_n$ denote the \emph{degrees} of~$W$, and where~$h = d_n$ is the \emph{Coxeter number}; see \Cref{sec:mdivdef} for precise definitions.
This number is called
\defn{Fu\ss--Catalan number of type~$W$}.
These numbers also count various other combinatorial objects associated with~$W$.
Among these are the generalised non-nesting partitions of Athanasiadis~\cite{Ath2004}, the generalised clusters of Fomin and Reading~\cite{FR2005}, and the generalised Coxeter-sortable elements of the second author, Thomas and Williams~\cite{STW2015}.
We refer to the latter article for a detailed discussion of these various
Fu\ss-analogues counted by the Fu\ss--Catalan numbers.

\medskip

In the present article, we study a subset of the $m$-divisible
non-crossing partitions associated with~$W$, namely
\defn{positive $m$-divisible non-crossing partitions of type~$W$}; see \Cref{def:NCpos}.
For $m=1$, these have been introduced
by Reading \cite[Cor.~9.2 plus the following paragraph]{Rea2007}
(implicitly) and by Sim\~oes \cite[Thm.~1.1]{Sim2012} 
(explicitly; only for crystallographic types). 
For generic~$m$, positive
$m$-divisible non-crossing partitions appear in~\cite[Cor.~7.4 plus the following paragraph]{BRT2012}
(again, only for crystallographic types), and in~\cite[Ch.~7]{STW2015}.

The term ``positive" here stems from the observation that the bijection between generalised non-crossing partitions and generalised clusters in~\cite{STW2015} restricts to a bijection between positive non-crossing partitions and positive clusters for the generalised cluster complex of~\cite{FR2005}, where a cluster is {\em positive} if it consists entirely of (coloured) positive roots.

\medskip

The total number of positive $m$-divisible non-crossing partitions is given by
the \defn{positive Fu\ss--Catalan number of type~$W$} (see~\cite[Cor.~7.4]{BRT2012}
for crystallographic types and \cite[Thm.~7.2.1]{STW2015} for all types) defined by
\begin{align}
\mCatplus(W) :=
\prod_{i=1}^n{\frac{mh+d_i-2}{d_i}}.
\label{eq:mdivncplus}
\end{align}
This expression can be found in work by Athanasiadis~\cite{Ath2004}
where he shows that it counts certain bounded regions in the extended
Shi arrangement.
It was then shown in \cite[Cor.~12.2]{FR2005} that positive 
generalised clusters are also counted by this formula, and both 
\cite[Cor.~7.4]{BRT2012} and \cite[Thm.~7.2.1]{STW2015} use the 
earlier mentioned bijective correspondence to derive the same counting 
formula for positive $m$-divisible non-crossing partitions.

\bigskip

Our first focus is on describing positive $m$-divisible non-crossing
partitions for the classical types $A_{n-1}$, $B_n$, and $D_n$ as
subsets of the above mentioned realisations as set
partitions\footnote{\label{foot:NC}As usual, we call a partition~$\pi$ of the (ordered) set
  $\{a_1,a_2,\dots,a_k\}$ \defn{non-crossing}, if the graphical
  representation of~$\pi$, where the elements $a_1,a_2,\dots,a_k$ are
  placed, in this order, around a circle, and successive elements in
  the blocks of~$\pi$ are connected by an arc, can be realised so that
the arcs do not cross each other.}.
We show in \Cref{prop:1} that in type~$A_{n-1}$ being positive means that the block containing~$1$ also contains~$mn$, in \Cref{prop:1B} that in type~$B_n$ being positive means that the block containing~$1$ also contains an element from $\{-1,-2,\dots,-mn\}$, and in \Cref{prop:1D} that in type~$D_n$ being positive means that the block containing~$1$ also contains an element from $\{-1,-2,\dots,-m(n-1)\}$.

\medskip
Our next set of results consists of counting formulae for
types~$A_{n-1}$, $B_n$, and~$D_n$ that refine the total count~\eqref{eq:mdivncplus}. More precisely, we provide formulae for the number of 
(multichains of) positive $m$-divisible non-crossing partitions 
with restrictions on the block structure; see \Cref{sec:enumA,sec:enumB,sec:enumD}.

\medskip
Our final set of results concerns a uniformly defined action on
positive $m$-divisible non-crossing partitions. This action is
given by two variants
of a map introduced by Armstrong~\cite[Def.~3.4.15]{Arm2006}.
Armstrong's map is an extension of a map originally introduced
by Kreweras~\cite[Sec.~3]{KrewAA} commonly known as {\it``Kreweras complement."}
For this reason, we shall call Armstrong's map
({\it generalised\/}) {\it``Kreweras map"}.

Armstrong's map does not
restrict to an action on {\it positive}
$m$-divisible non-crossing partitions.
Inspired by a representation-theoretic functor in the work of
Sim\~oes~\cite{Sim2012} and of Buan, Reiten and Thomas
\cite{BRT2012}, we introduce 
``positive analogues'' of Armstrong's (generalised) Kreweras map on 
positive $m$-divisible non-crossing 
partitions in \Cref{def:positivekreweras}.
One of these \defn{``positive Kreweras maps''} was already described in~\cite[Sec.~7]{AST2010}, where also its order was computed; see \Cref{thm:order}.

Armstrong's original Kreweras map translates into ordinary rotation on the realisations of $m$-divisible non-crossing set partitions in classical types.
It turns out that our variants are slightly more complicated to describe.
They translate into certain ``pseudo-rotations," meaning that they are realised as a ``deformation" of rotation; see \Cref{prop:2,prop:2B,prop:2D}.

In all types, we provide counting formulae for the number of positive $m$-divisible non-crossing partitions that are invariant under a fixed power of either of the two positive Kreweras maps, including refinements; see \Cref{sec:rotenumA,,sec:rotenumB,,sec:rotenumD}.

We finally use the above described results to address cyclic sieving phenomena for the positive $m$-divisible non-crossing partitions.
The phenomenon of cyclic sieving has been introduced by Reiner, Stanton and White in~\cite{RSW2004}.
It refers to a certain enumerative property of
combinatorial objects under the action of a cyclic group.
Armstrong \cite[Conj.~5.4.7]{Arm2006} went
on to conjecture a cyclic sieving phenomenon for his Kreweras map.
A related conjecture on cyclic sieving of $m$-divisible
non-crossing partitions was made by
Bessis and Reiner~\cite[Conj.~6.5]{BR2007}. Both conjectures
were proved by the first author and M\"uller~\cite{KratCG,KrMuAD}.

In this paper,
we prove that the positive $m$-divisible non-crossing partitions together with either of the two variants of the positive Kreweras map
also satisfy a cyclic sieving phenomenon;
see \Cref{thm:CS}. This is in fact a cumulative theorem,
resulting from \Cref{thm:3,thm:3-B,thm:3-D} for the classical types,
and \Cref{thm:cycexc} for the exceptional types.

\medskip
This article is organised as follows.
In \Cref{sec:prel}, we recall the definitions of
$m$-divisible non-crossing partitions associated with a
Coxeter group~$W$, and their Kreweras map(s). 
There, we also define positive $m$-divisible non-crossing
partitions and the positive analogue of the Kreweras map(s).
Moreover, we collect several properties of these objects
(see \Cref{thm:positivedescription} and \Cref{cor:unique}),
and we state one of the main results, namely the cyclic sieving
phenomena in \Cref{thm:CS}.

This is followed by \Cref{sec:typeA,sec:typeB,sec:typeD}, in 
which the positive $m$-divisible non-crossing partitions and
their positive Kreweras maps are discussed in depth for the classical
types. Each of these sections starts with a translation of
the algebraic definition of these objects into the language
of set partitions; see \Cref{prop:1,prop:1B,prop:1D}. 
These translations motivate purely combinatorial definitions of
positive $m$-divisible non-crossing partitions and their positive Kreweras maps
(in terms of a pseudo-rotation) for the classical types;
see \Cref{sec:mapA,sec:mapB,sec:mapD}.
%We point out that these
%combinatorial definitions of the positive Kreweras maps make also
%sense for $m=1$.
Subsequently, 
a subsection is devoted
to the enumeration of positive $m$-divisible non-crossing
partitions in each type subject to various restrictions on
their block structure; see \Cref{sec:enumA,sec:enumB,sec:enumD}. 
In a further subsection, we characterise, for each type, 
the positive $m$-divisible non-crossing partitions that are invariant
under a given power of either of the two positive Kreweras maps;
see \Cref{sec:rotA,sec:rotB,sec:rotD}. Then, in
\Cref{sec:rotenumA,sec:rotenumB,sec:rotenumD},
we enumerate those that are
invariant. Finally, we use the obtained formulae to deduce
corresponding cyclic sieving phenomena; see
\Cref{thm:3,thm:3-B,thm:3-D}.

In \Cref{sec:typeExc}, we treat the exceptional types
with respect to invariance and cyclic sieving under the action
of the positive Kreweras maps.
We present several problems that were left
open by our analyses in \Cref{sec:open}.

In order to streamline the presentation, we have ``outsourced" most of the (frequently technical) proofs of our results to appendices.
More precisely, our combinatorial proofs of the order
of the (combinatorial version of) the Kreweras maps acting on
positive $m$-divisible non-crossing partitions in the classical
types are contained in \Cref{app:order}.

The various proofs of our enumeration
formulae for positive $m$-divisible non-crossing partitions
of classical type, without
or with invariance under powers of the positive Kreweras maps,
from
\Cref{sec:enumA,,sec:rotenumA,,sec:enumB,,sec:rotenumB,,sec:enumD,,sec:rotenumD}
can be found in \Cref{app:GF}.

This is followed by
\Cref{app:inv} in which we have collected the proofs of our characterisations
of positive $m$-divisible non-crossing partitions that are invariant
under the action of a given power of the positive Kreweras maps,
again in the classical types.

In \Cref{app:unnec} we provide
the proofs for some auxiliary results for our cyclic sieving
results in classical types from \Cref{sec:sievA,,sec:sievB,,sec:sievD}.

Finally, the required computational details
for the verification of our cyclic sieving result for
positive $m$-divisible non-crossing partitions in the exceptional
types in \Cref{thm:cycexc} are carried out in \Cref{app:exc}.

\bigskip\noindent
\textbf{Acknowledgement}
This project was initiated in 2012 when, at the occasion of
the 24th International Conference on ``Formal Power Series and
Algebraic Combinatorics" in Nagoya, Japan, Christian
innocently asked Christian: ``Willst Du ein weiteres
zyklisches Sieben beweisen?"\footnote{``Do you want to prove another
cyclic sieving phenomenon?"} Over time, this turned out to be
slightly more involved than originally anticipated by Christian.
It took multiple meetings at each of the Universit\"at Wien, the
Freie Universit\"at Berlin, the Technische Universit\"at Berlin, and the Ruhr-Universit\"at Bochum, until this article finally materialised in 2025 just before the 37th International Conference on ``Formal Power Series and Algebraic Combinatorics'' in  Sapporo, Japan.
%For the dedicated reader, we have hidden a small Easter egg in the text, which we leave as a bounty.

\section{A general setup for positive~\texorpdfstring{$m$}{m}-divisible non-crossing partitions}
\label{sec:prel}

This section provides the setup for $m$-divisible non-crossing partitions in their various incarnations for general finite irreducible Coxeter systems.
In particular, we provide two equivalent descriptions of \emph{positive} $m$-divisible non-crossing partitions.
Most of the background material is standard, though we refer to~\cite{Arm2006,STW2015} for further
detail.
After a brief
reminder of the notions for finite Coxeter systems, we discuss (positive) $m$-divisible non-crossing partitions in \Cref{sec:mdivdef,,sec:alt}, and then introduce our two positive Kreweras maps
in \Cref{sec:poskrewmap}. 
Since they cannot be fully used for $m=1$, we provide an alternative description in
\Cref{sec:poskrewmap2} that makes sense for general $m \geq 1$.
After having established the needed structures, we conclude with
\Cref{sec:sievW}, in which we first recall the definition of the cyclic
sieving phenomenon of Reiner, Stanton and White~\cite{RSW2004}, and
then present two uniformly stated cyclic sieving
phenomena for our positive $m$-divisible non-crossing partitions
in \Cref{thm:CS}.
(More refined cyclic sieving phenomena which are specific
for type~$A_{n-1}$ respectively for type~$B_n$
are contained in \Cref{thm:3,,thm:3-B}.)

\medskip

The following can all be found in standard references such as~\cite{Hum1990}.
Let $(W,\reflS)$ denote a finite irreducible Coxeter system of rank~$n = |\reflS|$ acting on a Euclidean vector space~$V \cong \R^n$ with inner product $\langle \cdot, \cdot \rangle$.
Let
$\Phi\subseteq V$ be a root system for $(W,\reflS)$ with simple roots $\Delta = \{ \alpha_s \mid s \in \reflS \}$, positive roots~$\Phi^+$, and negative roots $\Phi^- = -\Phi^+$.
The elements in $\reflS = \{ r_\alpha \mid \alpha \in \Delta\}$ are called \defn{simple reflections}, and the \defn{reflections} in~$W$ are their conjugates,
\[
  \reflR = \{r_\beta \mid \beta \in \Phi^+ \} = \{ wsw^{-1} \mid w \in W, s \in \reflS\}\,,
\]
where $r_\beta$ denotes the reflection sending~$\beta$ to its negative while fixing pointwise its orthogonal complement~$\beta^\perp = \{v \in V \mid \langle \beta, v \rangle = 0\}$.
The length function~$\lenS : W \rightarrow \N$ sends $w \in W$ to the length of
any reduced $\reflS$-word for~$w$.
There exists a unique element $\wo \in W$ of longest length $N = |\Phi^+| = |\reflR|$.
Since all reduced $\reflS$-words for an element are related by braid relations and, moreover, the two simple reflections on both sides of a braid relation coincide, we may define the \defn{support} of an element $w \in W$ by the set of simple reflections appearing in any of its reduced
$\reflS$-words.
An element $w \in W$ is
\defn{of full support} if its support is all of~$\reflS$, or, equivalently, if it does not live in any proper standard parabolic subgroup.

A \defn{(standard) Coxeter element} $c \in W$ is given by the product of all generators~$\reflS$ in any order, $c = s_1\dots s_n \in W$ for $\reflS = \{s_1,\dots,s_n\}$.
Observe that all standard Coxeter elements are conjugate to each other and that their order is given by the \defn{Coxeter number}~$h$, which moreover
features in the formula $N = nh/2$.
The eigenvalues of a Coxeter element are given by $\{ e^{2\pi i (d_1-1) / h},\ldots,e^{2\pi i (d_n-1) / h}\}$, where $d_1,\ldots,d_n$ are the \defn{degrees} of~$W$.

\begin{example}[\sc Running example in type~$A_2$]
\label{ex:runningA}
  The finite irreducible Coxeter system of type $A_2$ can be represented by the symmetric group acting on three letters.
  In this case, we have
  \[
    \reflS = \big\{(12), (23) \big\} \subset \big\{ (12),(13),(23) \big\} = \reflR \,,
  \]
  where $(ij)$ is the transposition interchanging the letters~$i$ and~$j$.
  The unique longest element $\wo = (12)(23)(12) = (23)(12)(23)$ has length $N = 3$.
  One choice of a Coxeter element is the long cycle $c = (12)(23) = (123)$ and we see that the Coxeter number is~$h = 3$.
  The degrees are moreover $d_1 = 2$ and $d_2 = 3$.
  This yields the (positive)
  Fu\ss--Catalan numbers
  \begin{equation}
  \label{eq:mcatA2}
    \mCat(W) = \tfrac{1}{2}(m+1)(3m+2),\quad \mCatplus(W) = \tfrac{1}{2}m(3m+1)\,.
  \end{equation}
  For $m = 0,1,2$, these evaluate to $(1,5,12)$ and to $(0,2,7)$, respectively.
\end{example}

\subsection{\texorpdfstring{$m$}{m}-divisible non-crossing partitions}
\label{sec:mdivdef}

It is well established that there is also a \emph{dual} viewpoint on Coxeter groups by generating
such a group by all its reflections. Non-crossing partitions play a crucial role in this approach.
The \defn{reflection length} $\lenR : W \rightarrow \N$ is defined analogously as the length with respect to reduced $\reflR$-words.
Analogously to the (left or right) weak order on~$W$, one defines the \defn{absolute order}~$\leqR$ by letting $u\leqR v$ if $\lenR(u)+\lenR(u^{-1}v)=\lenR(v)$.
Thus, $u\leqR v$ if and only if there is a reduced $\reflR$-word for~$v$ which begins with a reduced $\reflR$-word for~$u$.

\medskip

For a Coxeter element~$c$, one defines the set of \defn{non-crossing partitions} $\NC$ as the set of all elements $w\in W$ with $w\leqR c$.
$\NC$ with the partial order $\leqR$ forms a lattice, cf.~\cite{BRWaAB}.
Since $\leqR$ is invariant under conjugation, this lattice is, abstractly, independent of the choice of~$c$.
We refer to~\cite{RRS2017} for an in-depth discussion of this independence of the chosen Coxeter element.

\medskip

For a finite irreducible Coxeter group $W$ of rank~$n$,
Armstrong's \defn{$m$-divisible non-crossing partitions}~\cite{Arm2006} are then defined by
\begin{multline*}
\mNC=\big\{(w_0,w_1,\dots,w_m) \mid w_0w_1\cdots w_m=c\text{ and }\\
\lenR(w_0)+\lenR(w_1)+\dots+\lenR(w_m)=n \big\}.
\end{multline*}
For $m=1$, this definition reduces to the definition of 
non-crossing partitions $\NC$, by identifying
$(w_0,w_1) \in \mNC[W][1][c]$
with the element $w_0 \in \NC$.
The set $\mNC$ becomes a partially ordered set by letting
\[
  (u_0,u_1,\dots,u_m)\leqR
  (v_0,v_1,\dots,v_m)\quad \text{if }
  u_i\geqR v_i,\text{ for }i=1,2,\dots,m.
\]
The elements of $\mNC$
are well-known to be counted by the Fu\ss--Catalan numbers
\[
  |\mNC| = \mCat(W)\,.
\]

\begin{example}
  Following the running example from \Cref{ex:runningA}, we illustrate the twelve $2$-divisible non-crossing partitions $\mNC[A_2][2][(12)(23)]$ in \Cref{fig:ncA3} where we separate the parts by dots.
  \begin{figure}[t]
    \begin{center}
      \resizebox{\textwidth}{!}{
      \begin{tikzpicture}[scale=1.5]
        \tikzstyle{rect}=[rectangle,draw,opacity=.5,fill opacity=1]
        \node[rect, ultra thick, fill=black!10] (01)   at ( 0,0) {$(12)(23)\cdot \one\cdot \one$};
  
        \node[rect, ultra thick, fill=black!10] (11)   at (-5,-2) {$(12)\cdot (23)\cdot \one$};
        \node[rect] (12)   at ( 5,-2) {$(12)\cdot \one\cdot (23)$};
        \node[rect, ultra thick, fill=black!10] (13)   at (-1,-2) {$(13)\cdot (12)\cdot \one$};
        \node[rect, ultra thick, fill=black!10] (14)   at ( 3,-2) {$(13)\cdot \one\cdot (12)$};
        \node[rect, ultra thick, fill=black!10] (15)   at (-3,-2) {$(23)\cdot (13)\cdot \one$};
        \node[rect] (16)   at ( 1,-2) {$(23)\cdot \one\cdot (13)$};
  
        \node[rect, ultra thick, fill=black!10] (21)   at (-4,-4) {$\one \cdot (12)(23) \cdot \one$};
        \node[rect] (22)   at (-2,-4) {$\one \cdot (23)\cdot (13)$};
        \node[rect] (23)   at ( 4,-4) {$\one \cdot \one \cdot (12)(23)$};
        \node[rect] (24)   at ( 2,-4) {$\one \cdot (12)\cdot (23)$};
        \node[rect, ultra thick, fill=black!10] (25)   at ( 0,-4) {$\one \cdot (13)\cdot (12)$};
  
        \draw (01) -- (11);
        \draw (01) -- (12);
        \draw (01) -- (13);
        \draw (01) -- (14);
        \draw (01) -- (15);
        \draw (01) -- (16);
        \draw (11) -- (21);
        \draw (11) -- (22);
        \draw (12) -- (23);
        \draw (12) -- (24);
        \draw (13) -- (21);
        \draw (13) -- (24);
        \draw (14) -- (23);
        \draw (14) -- (25);
        \draw (15) -- (21);
        \draw (15) -- (25);
        \draw (16) -- (22);
        \draw (16) -- (23);
      \end{tikzpicture}}
    \end{center}
    \caption{\label{fig:ncA3}The $2$-divisible non-crossing partitions $\mNC[A_2][2][(12)(23)]$.}
\end{figure}
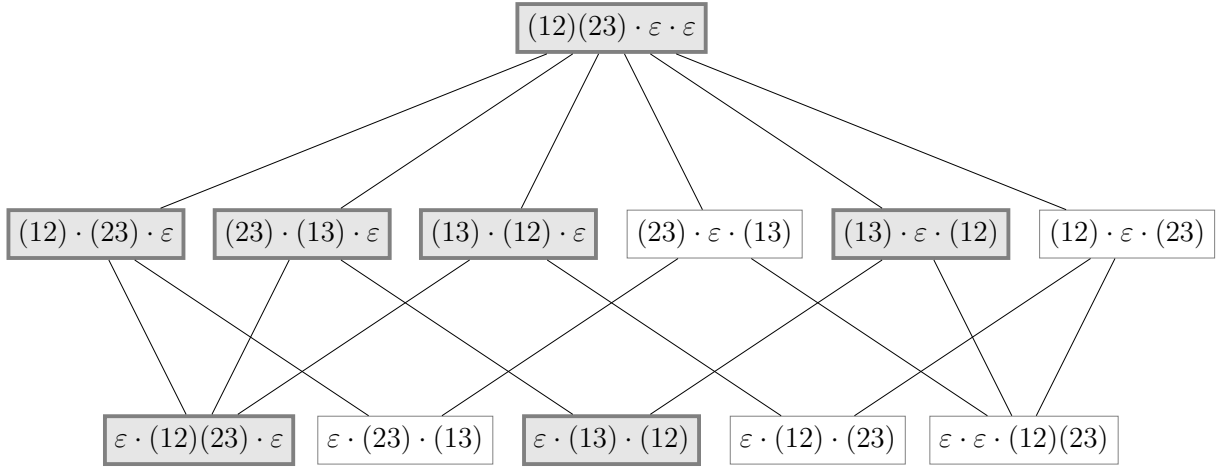
\end{example}

As an abstract partially ordered set, $\mNC$ does again not depend on the
choice of the Coxeter element~$c$. In general, it is not a
lattice but a graded join-semilattice.

\medskip

The main object of study in this paper is the following subset of $m$-divisible non-crossing partitions.

\begin{definition} \label{def:NCpos}
  An $m$-divisible non-crossing partition $(w_0,w_1,\dots,w_m) \in \mNC$ is called \defn{positive} if the product $w_0 \cdots w_{m-1} \in \NC$ has full support.
  The set of \defn{positive $m$-divisible non-crossing partitions} is denoted by $\mNCPlus$.
\end{definition}

The seven positive elements in $\mNC[A_2][2][(12)(23)]$ are highlighted in \Cref{fig:ncA3}.
The above definition is motivated by a bijective correspondence between $m$-divisible non-cros\-sing partitions and the generalised cluster complex defined by Fomin and Reading in~\cite{FR2005}.
In~\cite{BRT2012}, Buan, Reiten, and Thomas provided a
representation-theoretic bijection between facets in the $m$-generalised cluster complex and $m$-divisible non-crossing partitions for crystallographic types.
This bijection was made combinatorial and uniform for all types by the second author, Thomas and Williams in~\cite{STW2015}.
Under this bijection, positive $m$-divisible non-crossing partitions 
correspond exactly to \lq\lq positive clusters\rq\rq\ which are counted by the \defn{positive Fu\ss--Catalan numbers}~\eqref{eq:mdivncplus}; see~\cite{FR2005}.
%In~\cite{KratCG}, the first author provides more refined versions of the counting formula~\eqref{eq:mdivnc} for $m$-divisible non-crossing partitions according to the rank in the poset of $m$-divisible non-crossing partitions, and as well according to block sizes in a combinatorial model in the classical types.
%In \Cref{sec:typeA,sec:typeB,sec:typeD}, we provide similar refinements for the positive $m$-divisible non-crossing partitions.

\subsection{An alternative description of $m$-divisible non-crossing partitions}
\label{sec:alt}

Following~\cite[Ch.~7]{STW2015}, we next provide an alternative description of $m$-divisible non-crossing partitions and their positive analogues.
We first need to recall some background on \emph{Coxeter-sortable elements }in Coxeter groups as introduced by Reading in~\cite{Rea2005} and further studied in various places; see for example~\cite{RS2008,CLS2011, STW2015}.

Fix a reduced word $\c = \s_1 \cdots \s_n$ for the Coxeter element~$c$, and define the \defn{$\c$-sorting word} $\w(\c)$ for $w \in W$ to be the lexicographically first reduced word for~$w$ when written as a subword of $\c^\infty:=c\cdot c\cdot c\cdots$.
(Here and below, we mildly distinguish between the letter $\s$ in the word~$\c$ from the simple reflection $s \in \reflS$, and similarly from a letter $\r$ and the corresponding reflection $r \in W$.)
Observe that all reduced words for~$c$ are obtained from each other by transpositions of consecutive commuting letters, or \defn{up to commutations} for short.
Thus, given the Coxeter element~$c$, the various $\c$-sorting words for $w \in W$ all coincide up to commutations.
Let furthermore $\wwo(\c) = s_{1} \cdots s_{N}$ be the $\c$-sorting word of the longest element $\wo \in W$, starting with an initial copy of $\c = \s_1\cdots \s_n$.
It was shown for example in~\cite[Prop.~7.1]{CLS2011} that, up to commutations, this word ends in a twisted version of the reduced word of the Coxeter element,
namely
\[
  \s_{{N-n+1}} \cdots \s_{N} = \psi(\c) = \psi(\s_1) \cdots \psi(\s_n),
\]
where~$\psi(s) = \wo s \wo$ for $s \in \reflS$.
Depending on the given Coxeter element~$c$, the word $\wwo(\c)$ induces a \defn{reflection ordering}
\begin{equation} \label{eq:refls}
  r_1 \lec r_2 \lec \cdots \lec r_N,
\end{equation}
on $\reflR$
with $r_i = s_{1} \cdots s_{{i-1}} s_{i} s_{{i-1}} \cdots s_{1} \in \reflR$.
For a definition (which is not further used in this paper) and further properties of reflection orderings; see~\cite{Dye1993}.
For later reference, we refer to this reflection ordering by the word~$\invc = \r_1\cdots \r_N$.
We denote the final~$n$ letters of~$\invc$ (i.e., the word $\psi(\c)$) by
$\invc^R$ so that we obtain $\invc = \invc^L \invc^R$.

\begin{example}
  Continuing the above example with $\c = (12)(23)$,
the $\c$-sorting word for the longest element is $\wwo(\c) = (12)(23)(12)$,
while the corresponding reflection ordering is
  \[
    (12) <_c (13) <_c (23)\,.
  \]
  We also see that $\wwo(\c) = (12)(23)(12)$ starts with an initial copy of~$\c = (12)(23)$ and ends with $\psi(\c) = (23)(12)$.
  This gives
  \[
    \invc = (12)(13)(23) = (12) \cdot (13)(23) = \invc^L \cdot \invc^R\,.
  \]
  Note that the middle reflection $(23)$ in $\wwo(\c)$ appears both in the initial copy of~$\c$ and also in~$\psi(\c)$.
  In general, we cannot guarantee that there is a single word equal to $\wwo(\c)$ up to commutations starting with $\c$ and ending with $\psi(\c)$.
  For example, in type $A_3$ with
$\c = (12)(23)(34)$ we have
  \[
    \wwo(\c) = (12)(23)(34)(12)(23)(12)
  \]
  and the third letter $(34)$ appears both in the initial copy of~$\c$ and in 
$\psi(\c)=(34)(23)(12)$.
This has no effect on the
constructions below.
\end{example}

The following property of non-crossing partitions is crucial, we refer to~\cite[Rem.~6.8]{Rea2007} and to \cite[Prop.~4.1.3]{STW2015} for details.

\begin{proposition}
\label{prop:nc_unique_factorization}
  Each $w \in \NC$ has a unique reduced $\reflR$-word $r_1 r_2 \cdots r_p$ with $r_1<_{\c} r_2<_{\c} < \cdots <_{\c} r_p$.
  Moreover, every reduced $\reflR$-word $r_1 r_2 \cdots r_p$ with $r_1<_{\c} r_2<_{\c} < \cdots <_{\c} r_p$ yields an element in~$\NC$.
\end{proposition}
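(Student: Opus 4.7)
The plan is to prove both assertions by strong induction on the reflection length $p=\lenR(w)$. The base case $p=0$ is immediate: the empty word is the unique (vacuously) increasing reduced $\reflR$-word, and its product $\one$ lies in $\NC$.

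For the inductive step in the existence part, I let $w\in\NC$ with $\lenR(w)=p\ge 1$, and I take $r$ to be the $\lec$-minimum reflection satisfying $r\leqR w$. The key subclaim is that $r^{-1}w\in\NC$ with $\lenR(r^{-1}w)=p-1$. Granting this, the induction hypothesis supplies a unique increasing reduced $\reflR$-factorization $r^{-1}w = r_2\cdots r_p$ with $r_2\lec\cdots\lec r_p$, and the minimality of $r$ forces $r\lec r_2$, producing the desired factorization $w=r\cdot r_2\cdots r_p$. For uniqueness, any increasing reduced $\reflR$-factorization of $w$ must begin with a reflection $\leqR w$ that is $\lec$-minimum among those below $w$; this pins down the first letter to $r$, after which the induction hypothesis applied to $r^{-1}w$ determines the rest.

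For the converse direction, given a reduced $\reflR$-word $r_1\lec r_2\lec\cdots\lec r_p$ with product $w$, I again induct on $p$. The tail $r_2\cdots r_p$ lies in $\NC$ by the induction hypothesis, and prepending the $\lec$-smaller $r_1$ preserves the property of being $\leqR c$ by virtue of the convexity properties of the reflection ordering $\lec$ derived from $\wwo(\c)$: any initial segment of reflections assembled compatibly with the $\c$-sorting structure of $\wo$ produces an element bounded by $c$ in absolute order.

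The main obstacle is the key subclaim, namely that the $\lec$-minimum reflection $r$ below $w\in\NC$ yields $r^{-1}w\in\NC$. This compatibility between the reflection ordering $\lec$ and the absolute order on the interval $[\one,c]_{\leqR}=\NC$ does not follow directly from the definition of $\lec$, since a priori $\lec$ is a total order on all of $\reflR$, while the absolute order on $\NC$ is only partial. Making this work requires invoking the structural properties of $\lec$ coming from its construction via the $\c$-sorting word of $\wo$ and the theory of $c$-sortable elements; full details appear in~\cite[Rem.~6.8]{Rea2007} and~\cite[Prop.~4.1.3]{STW2015}.
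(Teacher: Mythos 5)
The paper does not actually prove this proposition --- it is quoted from \cite[Rem.~6.8]{Rea2007} and \cite[Prop.~4.1.3]{STW2015}, with the remark that the factorisation comes from the EL-labelling of the non-crossing partition lattice in \cite{ABW2007} --- so the only question is whether your argument stands on its own. It does not: the inductive skeleton is fine, but the two load-bearing steps are exactly the content of the cited results and you assert them rather than prove them. First, in the uniqueness step you claim that \emph{any} increasing reduced $\reflR$-word for $w$ must begin with the $\lec$-minimum reflection below $w$. This does not follow from the minimality of your chosen $r$: a priori an increasing factorisation $t_1\lec\cdots\lec t_p$ of $w$ could start with some $t_1\neq r$ even though $r\leqR w$ and $r\lec t_1$. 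Ruling this out is precisely the lexicographic-least property of the unique rising chain in the EL-labelling of $[\one,c]$, i.e.\ the theorem of \cite{ABW2007}; it is not a formality. Second, your converse direction rests entirely on the sentence about ``convexity properties of the reflection ordering'' guaranteeing that prepending the $\lec$-smaller $r_1$ keeps the product $\leqR c$ --- this is not an argument, and it is false for arbitrary reflections prepended to elements of $\NC$ (the product can leave the interval $[\one,c]$ even when lengths add), so the increasing hypothesis must genuinely be used, which again is the substance of the cited results.

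A smaller but telling point: the step you single out as ``the main obstacle'' --- that $r^{-1}w\in\NC$ with $\lenR(r^{-1}w)=p-1$ --- is in fact immediate and needs no compatibility between $\lec$ and the absolute order. Since $r\leqR w$, by definition $\lenR(r^{-1}w)=p-1$, and then $\lenR(r^{-1}w)+\lenR\big((r^{-1}w)^{-1}w\big)=(p-1)+\lenR(w^{-1}rw)=p$ because $w^{-1}rw$ is a reflection, so $r^{-1}w\leqR w\leqR c$. So the genuine difficulties sit elsewhere (in the uniqueness of the rising word and in the converse direction), exactly where you defer to \cite{Rea2007} and \cite{STW2015}; as written, your proof is a citation of the proposition dressed as an induction, with the hard points both unproved and mislocated.
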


Indeed, this factorisation is the unique factorisation of~$w$ into reflections induced by the EL-labelling of the non-crossing partition lattice in~\cite{ABW2007}.
Together with the definition of $m$-divisible non-crossing partitions, we obtain the following two alternative descriptions.
Define $\mNCdelta$ to be the set of subwords of $\invc^{m+1}$ that
are reduced $\reflR$-words for the Coxeter element~$c$, in which we distinguish the
letters in different copies of~$\invc$,
\begin{multline}
\label{ml:mncdelta}
  \mNCdelta = \{ \w_0\cdot \w_1\cdot\,\dots\,\cdot\w_m \mid \w_i = \t^{(i)}_1 \cdots \t^{(i)}_{k_i} \subset \invc \text{ and }\\ c = t^{(0)}_1 \cdots t^{(0)}_{k_0} \cdots t^{(m)}_1 \cdots t^{(m)}_{k_m} \text{ is reduced} \}\,,
\end{multline}
where we write $\rm u\subset\rm v$ if $\rm u$ is a subword of~$\rm v$.
See \Cref{fig:ncA22} for this set of subwords in the running example of type $A_2$ with $m=2$.
As before, we use $\t_i$ for the letter of~$\invc$ corresponding to the reflection $t_i\in\reflR$. Analogously, the word $\w_i=\t^{(i)}_1 \cdots \t^{(i)}_{k_i}$ is the subword
of~$\invc$ corresponding to the element $w_i=t^{(i)}_1 \cdots t^{(i)}_{k_i}\in\NC$.

We then have the following theorem.

\begin{theorem}[\sc {\cite[Thm.~4.2.5]{STW2015}}]
  Sending a reduced subword in $\mNCdelta$ to the tuple $(w_0,\dots,w_m)\in \mNC$, where $w_i$ is the product of the reflections in the subword inside the $(i+1)$-st copy of~$\invc$, is a bijection.
\end{theorem}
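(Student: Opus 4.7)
The plan is to write down the natural inverse suggested by \Cref{prop:nc_unique_factorization} and to check that both maps are well defined; mutual invertibility is then immediate from the uniqueness clause of that proposition.

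For the forward direction, take a subword $\w_0\cdot\w_1\cdots\w_m$ in $\mNCdelta$ with $\w_i = \t^{(i)}_1\cdots\t^{(i)}_{k_i}$ a subword of the $(i+1)$-st copy of $\invc$, and set $w_i = t^{(i)}_1\cdots t^{(i)}_{k_i}$. The hypothesis that the concatenation is a reduced $\reflR$-word for $c$ of length $\sum_i k_i = n$, combined with the triangle inequality $\lenR(c)\le\sum_i\lenR(w_i)\le\sum_i k_i$, forces $\lenR(w_i) = k_i$; so each $\w_i$ is already reduced. Since $\w_i$ sits inside $\invc$, its letters are $\lec$-increasing, so \Cref{prop:nc_unique_factorization} places $w_i$ in $\NC$; together with $\sum_i\lenR(w_i) = n$ and $w_0\cdots w_m = c$ this shows $(w_0,\ldots,w_m)\in\mNC$.

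For the inverse direction I first establish the auxiliary fact that each component of $(w_0,\ldots,w_m)\in\mNC$ lies in $\NC$. Setting $u_i := w_0\cdots w_i$ and applying the triangle inequality twice,
\[
  \lenR(c)\ \leq\ \lenR(u_i)+\lenR(u_i^{-1}c)\ \leq\ \sum_j\lenR(w_j)\ =\ n\ =\ \lenR(c),
\]
so all inequalities are equalities and in particular $u_i\in\NC$ for every $i$. Writing $w_i = u_{i-1}^{-1}u_i$, the additivity $\lenR(u_{i-1}) + \lenR(w_i) = \lenR(u_i)$ together with conjugation invariance of $\lenR$ (applied to $w_i^{-1}u_i = w_i^{-1}u_{i-1}w_i$) yields $w_i\leqR u_i\leqR c$, and hence $w_i\in\NC$. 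Now \Cref{prop:nc_unique_factorization} assigns to each $w_i$ a unique $\lec$-increasing reduced $\reflR$-word, which I read off as a subword $\w_i$ of the $(i+1)$-st copy of $\invc$. The concatenation $\w_0\cdots\w_m$ has total length $\sum_i\lenR(w_i) = n$ and product $c$, so by minimality of $\lenR(c)$ it is automatically a reduced $\reflR$-word for $c$ and thus lies in $\mNCdelta$.

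The two constructions are mutually inverse: passing forward then backward reproduces the input subword because of the uniqueness clause of \Cref{prop:nc_unique_factorization}, and passing backward then forward recovers $(w_0,\ldots,w_m)$ by construction. The only non-routine step is the auxiliary lemma that the components of an $m$-divisible non-crossing partition individually lie in $\NC$; everything else is an application of \Cref{prop:nc_unique_factorization} combined with the additivity of reflection length.
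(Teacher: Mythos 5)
Your argument is correct and follows exactly the route the paper indicates: the result is obtained from \Cref{prop:nc_unique_factorization} (the unique $\lec$-increasing reduced factorisation of elements of $\NC$) together with additivity of reflection length, which is precisely how the cited source derives it. The auxiliary observation that each component $w_i$ of a tuple in $\mNC$ lies in $\NC$, via the conjugation-invariance of $\lenR$, is the standard fact and is handled correctly.
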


\begin{example}
  The bijection between $\mNC[A_2][2][(12)(23)]$ and $\mNCdelta[A_2][2][(12)(23)]$ is given in \Cref{fig:ncA22}.
  \begin{figure}[t]
    \begin{center}
      \begin{tabular}{ccccc|ccccc}
        \multicolumn{5}{c}{$\mNC[A_2][2][(12)(23)]$} & \multicolumn{5}{c}{$\mNCdelta[A_2][2][(12)(23)]$} \\[3pt]
        \hline
        $(12)(23) $&$\cdot$&$ \one $&$\cdot$&$ \one$ &
        $\textbf{(12)}(13)\textbf{(23)}$&$\cdot$&$(12)(13)(23)$&$\cdot$&$(12)(13)(23)$
        \\[3pt]
        $\one $&$\cdot$&$ \one $&$\cdot$&$ (12)(23)$ &
        $(12)(13)(23)$&$\cdot$&$(12)(13)(23)$&$\cdot$&$\mathbf{(12)}(13)\mathbf{(23)}$
        \\[3pt]
        $\one $&$\cdot$&$ (13) $&$\cdot$&$ (12)$ &
        $(12)(13)(23)$&$\cdot$&$(12)\textbf{(13)}(23)$&$\cdot$&$\textbf{(12)}(13)(23)$
        \\[3pt]
        $(23) $&$\cdot$&$ (13) $&$\cdot$&$ \one$ &
        $(12)(13)\textbf{(23)}$&$\cdot$&$(12)\textbf{(13)}(23)$&$\cdot$&$(12)(13)(23)$
        \\[3pt]
        $(13) $&$\cdot$&$ (12) $&$\cdot$&$ \one$ &
        $(12)\textbf{(13)}(23)$&$\cdot$&$\textbf{(12)}(13)(23)$&$\cdot$&$(12)(13)(23)$
        \\[3pt]
        $(12) $&$\cdot$&$ \one $&$\cdot$&$ (23)$ &
        $\textbf{(12)}(13)(23)$&$\cdot$&$(12)(13)(23)$&$\cdot$&$(12)(13)\textbf{(23)}$
        \\[3pt]
        $\one $&$\cdot$&$ (23) $&$\cdot$&$ (13)$ &
        $(12)(13)(23)$&$\cdot$&$(12)(13)\textbf{(23)}$&$\cdot$&$(12)\textbf{(13)}(23)$
        \\[3pt]
        $\one $&$\cdot$&$ (12)(23) $&$\cdot$&$ \one$ &
        $(12)(13)(23)$&$\cdot$&$\textbf{(12)}(13)\textbf{(23)}$&$\cdot$&$(12)(13)(23)$
        \\[3pt]
        $(12) $&$\cdot$&$ (23) $&$\cdot$&$ \one$ &
        $\textbf{(12)}(13)(23)$&$\cdot$&$(12)(13)\textbf{(23)}$&$\cdot$&$(12)(13)(23)$
        \\[3pt]
        $(23) $&$\cdot$&$ \one $&$\cdot$&$ (13)$ &
        $(12)(13)\textbf{(23)}$&$\cdot$&$(12)(13)(23)$&$\cdot$&$(12)\textbf{(13)}(23)$
        \\[3pt]
        $(13) $&$\cdot$&$ \one $&$\cdot$&$ (12)$ &
        $(12)\textbf{(13)}(23)$&$\cdot$&$(12)(13)(23)$&$\cdot$&$\textbf{(12)}(13)(23)$
        \\[3pt]
        $\one $&$\cdot$&$ (12) $&$\cdot$&$ (23)$ &
        $(12)(13)(23)$&$\cdot$&$\textbf{(12)}(13)(23)$&$\cdot$&$(12)(13)\textbf{(23)}$
      \end{tabular}
    \end{center}
    \caption{The bijection between $\mNC[A_2][2][(12)(23)]$ and $\mNCdelta[A_2][2][(12)(23)]$.}
    \label{fig:ncA22}
  \end{figure}
\end{example}

For an element $(w_0,w_1,\dots,w_m) \in \mNC$ with corresponding $\w_0\cdot\w_1\cdot\,\dots\,\cdot\w_m \in \mNCdelta$, we have $w_0 \in \NC$.
Furthermore, the word $\w_0 \subset \invc$
is redundant since $w_0 = c(w_1\cdots w_m)^{-1}$ with unique reduced increasing word $\w_0$.
Therefore, sending $\w_0\cdot\w_1\cdot\,\dots\,\cdot\w_m$ to $\w_1\cdot\,\dots\,\cdot\w_m$ is a bijection between $\mNCdelta$ and
\begin{multline}
\label{ml:mncnabla}
  \mNCnabla: = \{ \w_1\cdot\,\dots\,\cdot\w_m \mid \w_i = \t^{(i)}_1 \cdots \t^{(i)}_{k_i} \subset \invc \text{ and }\\ t^{(1)}_1 \cdots t^{(1)}_{k_1} \cdots t^{(m)}_1 \cdots t^{(m)}_{k_m} \in \NC \text{ is reduced} \}.
\end{multline}
Moreover, $\mNCdelta[W][m-1] \subset \mNCnabla$ is
a proper subset.

\medskip

These two descriptions can be used to determine positive $m$-divisible non-crossing partitions as follows.
Denote by
\begin{equation} \label{eq:LR} 
  w = w^L\cdot w^R
\end{equation}
the decomposition of~$w \in \NC$ such that $w^L$ and $w^R$ are the parts of the unique subword for~$w$ of~$\invc$ that lie inside $\invc^L$ and inside $\invc^R$, respectively.
The following theorem gives an alternative description of positive $m$-divisible non-crossing partitions.
It was first observed in crystallographic types in~\cite[Thm.~7]{BRT2012}. The general case was established in~\cite[Ch.~7]{STW2015}.

\begin{theorem}
\label{thm:positivedescription}
The tuple $(w_0,w_1,\dots,w_m) \in \mNC$ is positive if and only if $w_m = w_m^L$.
\end{theorem}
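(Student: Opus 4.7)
My plan is to reduce the theorem to an equivalent lemma involving only $u := w_m$, then establish that lemma using the structure of the reflection ordering $\invc = \invc^L\invc^R$.

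First I would translate the positivity condition via the bijection $\mNC \cong \mNCdelta$ given just above. Since $w_0 \cdots w_{m-1} = c w_m^{-1}$ again lies in $\NC$ (because $w_m \leqR c$), Proposition~\ref{prop:nc_unique_factorization} furnishes it with a unique increasing reduced $\reflR$-subword of $\invc$, and positivity requires this element to have full support. In light of the definition~\eqref{eq:LR} of $u = u^L\cdot u^R$, the theorem reduces to the following lemma: \emph{for $u \in \NC$, the element $c u^{-1}$ has full support if and only if $u^R = \one$.}

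Next I would prove both directions of the lemma. For the ``only if'' direction, suppose $u^R \neq \one$ and let $\r_{N-n+j} \in \invc^R$ be a letter appearing in the increasing reduced subword of $u$; by construction, this reflection is the conjugate of $\psi(s_j)$ by the prefix $s_1\cdots s_{N-n+j-1}$ of $\wwo(\c)$. A careful analysis of the factorisation $c = u\cdot (cu^{-1})$ inside $\invc$ would then show that $c u^{-1}$ lies in the standard parabolic subgroup generated by $\reflS\setminus\{s_j\}$, so that $s_j$ is absent from its support. For the ``if'' direction, if $u = u^L$ has its reduced subword entirely inside $\invc^L$, then the complementary reduced factorisation of $c u^{-1}$ necessarily draws reflections from $\invc^R$ (since the unique increasing subword for $c \in \NC$ in $\invc$ itself must meet $\invc^R$, as one checks by comparing lengths), and the combined supports of the reflections indexing $\invc^R$ exhaust $\reflS$ because $\invc^R$ encodes the Coxeter-like word $\psi(\c)$; hence $cu^{-1}$ has full support.

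The main obstacle is the delicate combinatorial bookkeeping in both directions, particularly pinning down which parabolic subgroup contains $cu^{-1}$ from the data of $u^R$. A conceptually cleaner alternative, which is the one pursued in~\cite[Ch.~7]{STW2015} in general and in~\cite[Thm.~7]{BRT2012} in crystallographic types, is via the bijection between $\mNC$ and the facets of Fomin--Reading's generalised cluster complex: under this bijection, positive $m$-clusters (those containing no negative simple roots) correspond exactly to the positive $m$-divisible non-crossing partitions of Definition~\ref{def:NCpos}, and the condition ``$-\alpha_{s_j}$ belongs to the cluster'' translates directly into the presence of a letter of $\invc^R$ in the subword associated with $w_m$. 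I would ultimately defer to this representation-theoretic framework for the cleanest verification, while keeping the combinatorial translation above as the motivating picture.
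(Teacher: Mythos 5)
The paper never proves this theorem itself: it records it as a quoted result, citing \cite[Thm.~7]{BRT2012} for crystallographic types and \cite[Ch.~7]{STW2015} in general. Your decision to ultimately defer to exactly these sources therefore coincides with the paper's treatment, and your preliminary reduction is correct: by \Cref{def:NCpos}, positivity of $(w_0,\dots,w_m)$ means that $w_0\cdots w_{m-1}=cw_m^{-1}$ has full support, so the theorem is equivalent to the statement that, for $u\in\NC$, the element $cu^{-1}$ has full support if and only if $u^R=\one$.

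However, the combinatorial sketch you interpose would not stand on its own, so be clear about what is actually carrying the argument. In your ``if'' direction, the inference from ``the increasing factorisation of $cu^{-1}$ meets $\invc^R$'' together with ``the letters of $\invc^R$ jointly involve every simple reflection'' to ``$cu^{-1}$ has full support'' is a non sequitur: $cu^{-1}$ contains only some of those reflections, and an individual letter of $\invc^R$ typically has small support, so nothing in this step forces the support of $cu^{-1}$ to be all of $\reflS$. In your ``only if'' direction, the crucial claim --- that the appearance of a letter of $\invc^R$ in the increasing word of $u$ forces $cu^{-1}$ into a proper standard parabolic subgroup --- is precisely the heart of the matter, and you only assert that ``a careful analysis would show'' it. If, as in the paper, the proof is to rest on the bijection with the generalised cluster complex from \cite{BRT2012} and \cite{STW2015}, then say so and present the sketch only as motivation; if the sketch is meant to be the proof, these two steps are genuine gaps that still require arguments.
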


We thus call the corresponding elements $\w_0\cdot\,\dots\,\cdot\w_m \in \mNCdelta$ and $\w_1\cdot\,\dots\,\cdot\w_m \in \mNCnabla$ positive if $\w_m \subset \invc^L$.
In \Cref{fig:ncA3}, we highlight the seven positive
elements in $\mNC[A_2][2][(12)(23)]$ and observe in \Cref{fig:ncA22} that these are exactly those where the final component of $\mNCdelta$ is a subword of $\invc^L = (12)$.
We note that this theorem has the following immediate consequence, which we record for later use.

\begin{lemma} \label{cor:unique}
  Let $w \in \NC$, and decompose~$w$ in the form $w=uv$ such that $\lenR(u)+\lenR(v)=\lenR(w)$.
  Suppose $v=r_1\cdots r_k$ for $\r_1\cdots\r_k \subset \invc^R$ and $cu^{-1}$ has full support.
  Then $w^L=u$ and $w^R=v$.
\end{lemma}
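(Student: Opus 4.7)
The plan is to reduce the statement to the case $m=1$ of Theorem~\ref{thm:positivedescription}, applied to the pair $(cu^{-1},u)$. First I will check that all the relevant elements sit in $\NC$. Since $w=uv$ is a reflection-length-additive factorisation, $u\leqR w\leqR c$, so $u\in\NC$. Because the letters $\r_1,\dots,\r_k$ occur in $<_c$-increasing order inside $\invc^R\subset\invc$, Proposition~\ref{prop:nc_unique_factorization} gives $v\in\NC$, and $r_1\cdots r_k$ is already the unique $<_c$-increasing reduced $\reflR$-word for $v$; in particular $v^L=\one$ and $v^R=v$.

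Next I will activate the positivity hypothesis. The pair $(cu^{-1},u)$ satisfies $(cu^{-1})\cdot u=c$, and from $u\leqR c$ together with the conjugation-invariance of $\lenR$ we obtain
\[
\lenR(cu^{-1})+\lenR(u)=\lenR(u^{-1}c)+\lenR(u)=n,
\]
so $(cu^{-1},u)\in\NC^{(1)}(W,c)$. The assumption that $cu^{-1}$ has full support means, by Definition~\ref{def:NCpos}, that this $1$-divisible non-crossing partition is positive. Applying Theorem~\ref{thm:positivedescription} then forces $u=u^L$; equivalently, the unique $<_c$-increasing reduced $\reflR$-word for $u$ lies entirely inside $\invc^L$.

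Finally I conclude by uniqueness. Concatenating the increasing reduced word for $u$ (contained in $\invc^L$) with the word $\r_1\cdots\r_k$ (contained in $\invc^R$) produces a $<_c$-increasing word, since every letter of $\invc^L$ precedes every letter of $\invc^R$. Its length is $\lenR(u)+\lenR(v)=\lenR(w)$ and its product is $uv=w$, so by the uniqueness part of Proposition~\ref{prop:nc_unique_factorization} it coincides with the canonical $<_c$-increasing reduced word for $w$. Splitting this canonical word at the boundary $\invc^L\mid\invc^R$ then yields $w^L=u$ and $w^R=v$. The only conceptual point — and hence the \emph{only} step where anything nontrivial happens — is the recognition that the full-support hypothesis on $cu^{-1}$ is precisely what is needed to invoke Theorem~\ref{thm:positivedescription} at $m=1$; everything else is bookkeeping around the uniqueness of the $<_c$-increasing reduced reflection factorisation of elements of $\NC$.
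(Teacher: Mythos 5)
Your proof is correct and follows essentially the same route as the paper's: both observe that the full-support hypothesis makes $(cu^{-1},u)$ a positive element of $\mNCPlus[W][1]$, invoke Theorem~\ref{thm:positivedescription} to get $u^L=u$, and then conclude $w^L=u$, $w^R=v$ from the uniqueness of the $<_c$-increasing reduced $\reflR$-word. You merely spell out the bookkeeping (that $u,v\in\NC$, the length condition, and the concatenation argument) that the paper leaves implicit.
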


\begin{proof}
  Since $cu^{-1}$ has full support, we have $(cu^{-1},u)\in \mNCPlus[W][1]$ by \Cref{def:NCpos}.
  \Cref{thm:positivedescription} then implies that $u^L=u$.
  By definition, $u^L$ is formed by determining the subword of~$\reflR$ that is a reduced $\reflR$-word for $u$, and then disregarding those reflections which lie inside $\invc^R$.
  Since $u^L=u$, nothing had to be disregarded.
  We conclude by observing that $w^L=(uv)^L = u^L = u$ and $w^R = (uv)^R = u^Rv = v$.
\end{proof}

The description of (positive) $m$-divisible non-crossing partitions in terms of $\mNCnabla$ also has the following enumerative implications, namely formulae for the (positive) Fu\ss--Catalan numbers in terms of a sum of binomial coefficients; see \cite[Thm.~4.5.1]{STW2015}.

\begin{corollary}
\label{cor:fusscatcounting}
Let $W$ be a finite irreducible Coxeter group of rank $n$. Then
  \begin{align*}
    \mCat(W) &= \sum \binom{m+1+\asc(\r_1,\ldots,\r_n)}{n}, \\
    \mCatplus(W) &= \sum \binom{m + \des(\r_1,\ldots,\r_n)}{n} = \sum \binom{m + \asc(\r_1,\ldots,\r_n,\rfix)}{n} ,
  \end{align*}
  where all sums range over the factorisations $\r_1\cdots\r_n \subset \invc$ of the Coxeter element~$c$ into reflections $r_i \in \reflR$.
  Here, $\asc(\r_1,\ldots,\r_n)$ and $\des(\r_1,\ldots,\r_n)$ are the number of indices~$i$ such that $r_i <_c r_{i+1}$ and such that $r_i >_c r_{i+1}$, respectively.
  Moreover, $\rfix$ in the last sum is the first letter in the word $\invc^R$.
\end{corollary}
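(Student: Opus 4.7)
The plan is to parameterise each element of $\mNCdelta$ (and, for the second identity, its positive subset via \Cref{thm:positivedescription}) as a pair consisting of an ordered reduced factorisation $c = r_1 r_2 \cdots r_n$ into reflections together with a weakly increasing block-assignment $\sigma \colon \{1, \ldots, n\} \to \{0, 1, \ldots, m\}$. For each fixed factorisation, the admissible assignments will then be counted by a standard ``strict-to-weak'' substitution absorbing the strict-jump condition at descent positions into the upper bound of the codomain.

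For the first identity: reading the reflections of $\w_0 \w_1 \cdots \w_m \in \mNCdelta$ left to right yields the factorisation, while $\sigma(i)$ records the block containing $r_i$. Since each $\w_j$ is a subword of a single copy of $\invc$, it appears in $<_c$-ascending order, which forces $\sigma$ to be \emph{strictly} increasing at every descent position of $(r_1, \ldots, r_n)$. The substitution $\sigma(i) \mapsto \sigma(i) - |\{j<i : r_j >_c r_{j+1}\}|$ then bijects admissible $\sigma$'s with unconstrained weakly increasing functions into $\{0, \ldots, m - \des(r_1, \ldots, r_n)\}$, of which there are $\binom{m + n - \des(r_1, \ldots, r_n)}{n} = \binom{m + 1 + \asc(r_1, \ldots, r_n)}{n}$. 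Summing over factorisations and invoking $|\mNCdelta| = \mCat(W)$ from the bijection in \Cref{sec:alt} gives the first formula.

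For the positive identity in the form $\binom{m + \asc(r_1, \ldots, r_n, \rfix)}{n}$: \Cref{thm:positivedescription} characterises positive elements of $\mNCdelta$ as those satisfying $\w_m \subseteq \invc^L = \{r \in \reflR : r <_c \rfix\}$, equivalently the constraint ``$\sigma(i) = m$ implies $r_i <_c \rfix$''. This can be cleanly encoded by appending $\rfix$ as an $(n+1)$-st letter and fixing $\tilde\sigma(n+1) = m$: the strict-jump-at-descent requirement together with the prohibition of repeated letters within a single copy of $\invc$ (applied at position~$n$) forbids $\tilde\sigma(n) = m$ precisely when $r_n \not<_c \rfix$, while the downward closedness of $\invc^L$ in $<_c$ propagates the constraint to every earlier index $i$ with $\sigma(i) = m$. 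Re-running the same substitution on the length-$(n+1)$ augmented sequence with the terminal value fixed yields a count of $\binom{m + \asc(r_1, \ldots, r_n, \rfix)}{n}$ per factorisation, hence the claimed formula.

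The equivalent expression $\sum_f \binom{m + \des(f)}{n}$ does \emph{not} match the previous one term by term. I would therefore establish equality of the two sums either by evaluating both against the closed product formula~\eqref{eq:mdivncplus} for $\mCatplus(W)$ (the route taken in~\cite{STW2015}), or by constructing a bijection on ordered reduced factorisations of $c$ that realigns descent counts by the indicator $[r_n \in \invc^L]$. Producing such a bijection uniformly across all finite irreducible Coxeter groups is the main obstacle I foresee in a self-contained combinatorial argument; invoking~\eqref{eq:mdivncplus} sidesteps this at the cost of relying on the independently established product formula.
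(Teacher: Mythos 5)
Your proposal is correct where it gives proofs, and it is essentially the intended argument: the paper itself supplies no proof of this corollary beyond the citation of \cite[Thm.~4.5.1]{STW2015}, and your parameterisation of $\mNCdelta$ by a reflection factorisation together with a weakly increasing copy-assignment, strict at descents, is exactly the counting that the $\mNCdelta$/$\mNCnabla$ model of \Cref{sec:alt} is set up for; your treatment of the positive case via \Cref{thm:positivedescription}, the observation that the copy-$m$ letters form a $<_c$-increasing suffix with maximum $r_n$, and the appended letter $\rfix$ with fixed terminal value, correctly yields $\binom{m+\asc(\r_1,\ldots,\r_n,\rfix)}{n}$ per factorisation (including the borderline case $r_n=\rfix$, which contributes neither an ascent nor a descent and is handled correctly by your count). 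Your caution about the middle expression also matches the paper, which explicitly remarks that the descent formula ``does not directly follow from the analysis above'' and records it for its beauty. For completeness, no new bijection is needed to close that step: both sides of your first identity are polynomials in $m$ of degree~$n$, hence equal as polynomials, so one may substitute $m\mapsto -(m+1)$; using the duality of degrees $d_i+d_{n+1-i}=h+2$ the left-hand side becomes $(-1)^n\prod_{i=1}^n\frac{mh+d_i-2}{d_i}=(-1)^n\mCatplus(W)$, while the identity $\binom{-m+\asc(f)}{n}=(-1)^n\binom{m+n-1-\asc(f)}{n}=(-1)^n\binom{m+\des(f)}{n}$ turns the right-hand side into $(-1)^n\sum_f\binom{m+\des(f)}{n}$, giving the descent formula with the same external input (the product formula \eqref{eq:mdivnc} and the standard degree duality) that your first identity already uses, rather than the positive product formula \eqref{eq:mdivncplus}.
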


Indeed, the description of the positive Fu\ss--Catalan numbers in terms of descents does not directly follow from the analysis above.
We record it here for its beauty.
On the other hand, the other description using the letter~$\rfix$ will be crucial in the final section where we establish a similar count for positive elements that are invariant under either of the two \emph{positive Kreweras maps}.

\begin{example}
  In our ongoing example, we have $c = (12)(23) = (23)(13) = (13)(12)$ for the ordering $(12) <_c (13) <_c (23)$.
  Moreover, $\rfix = (13)$.
  This yields
  \begin{align*}
    \asc((12),(23)) = 1,&\quad 0 = \asc((13),(12)) = \asc((23),(13)), \\
    \asc((12),(23),(13)) = \asc((13),(12),(13)) = 1,&\quad 0 = \asc((23),(13),(13)),
  \end{align*}
  implying
  \[
    \mCat(W) = 2\binom{m+1}{2} + \binom{m+2}{2},\quad
    \mCatplus(W) = 2\binom{m+1}{2} + \binom{m}{2}\,,
  \]
  in accordance with~\eqref{eq:mcatA2}.
\end{example}

\subsection{The positive Kreweras maps, I}
\label{sec:poskrewmap}

Beside refined counting formulae, we study in this paper a ``positive analogue'' of the \defn{(generalised) Kreweras map} on $m$-divisible non-crossing partitions.
The two variants~$\Krewtilde^{(m)}$ and~$\Krew^{(m)}$ below were introduced by Armstrong in~\cite[Def.~3.4.15]{Arm2006}.
For $m\geq 1$, these are given by
\begin{align*}
   \Krewtilde^{(m)} &:
  \begin{array}{ccc}
     \mNC[W] &\tilde\longrightarrow& \mNC[W] \\
     (w_0,w_1,\ldots,w_m) &\mapsto& (v,cw_mc^{-1},w_1\ldots,w_{m-1}),
  \end{array}\\[15pt]
   \Krew^{(m)} &:
  \begin{array}{ccc}
     \mNC &\tilde\longrightarrow& \mNC \\
     (w_0,w_1,\ldots,w_m) &\mapsto& (cw_mc^{-1},w_0\ldots,w_{m-1}),
  \end{array}
\end{align*}
where $v = c(cw_mc^{-1}w_1 \dots w_{m-1})^{-1} = (cw_mc^{-1})w_0(cw_mc^{-1})^{-1}$.
Both maps are close relatives.
More precisely, the embedding
$$
  \ph :
  \begin{array}{ccc}
    \mNC &\hookrightarrow& \mNC[W][m+1] \\
    (w_0,w_1,\ldots,w_m) &\mapsto& (\one,w_0,w_1,\ldots,w_m)
  \end{array}
$$
has the property that it maps $\Krew^{(m)}$ to $\Krewtilde^{(m+1)}$. In symbols, $\ph \circ \Krew^{(m)} = \Krewtilde^{(m+1)} \circ \ph$.
Moreover, these two variants can easily be interpreted in terms of $\mNCdelta$ and $\mNCnabla$, and the relation comes from the mentioned embedding
of $\mNCdelta$ into $\mNCnabla[W][m+1]$.
We drop the superscripts in $\Krewtilde^{(m)}$ and in $\Krew^{(m)}$ if $m$ is clear from the context and only write $\Krewtilde$ and $\Krew$, respectively.

\medskip

The Kreweras maps do not stabilise the set of positive $m$-divisible non-crossing partitions.
One of the main motivations for this paper is to define and study positive versions of both maps.
Indeed, it turns out that these positive variants can naturally be seen on $\mNCnabla$ and on $\mNCdelta$, and that they are combinatorial descriptions of a representation-theoretic functor on the positive cluster category.

\begin{definition}
\label{def:positivekreweras}
  For $m \geq 2$, the \defn{positive Kreweras map} $\Krewplustilde^{(m)}$ is given by
  \begin{align*}
    \Krewplustilde^{(m)} &:
      \begin{array}{ccc}
        \mNCPlus[W][m] &\tilde\longrightarrow& \mNCPlus[W][m] \\
        (w_0,w_1,\ldots,w_{m}) &\mapsto& (v_+,cw_{m-1}^Rw_{m}c^{-1},w_1,\dots,w_{m-1}^L),
      \end{array} \\
\intertext{where $v_+ = c(cw_{m-1}^Rw_mc^{-1} w_1 \cdots w_{m-1}^L)^{-1} = (cw_{m-1}^Rw_mc^{-1})w_0(cw_{m-1}^Rw_mc^{-1})^{-1}$.
For\break $m \geq 1$, its variant $\Krewplus^{(m)}$ is given by}
    \Krewplus^{(m)} &:
      \begin{array}{ccc}
        \mNCPlus &\tilde\longrightarrow& \mNCPlus \\
        (w_0,w_1,\ldots,w_m) &\mapsto& (cw_{m-1}^Rw_mc^{-1},w_0,w_1,\dots,w_{m-1}^L).
      \end{array}
  \end{align*}
\end{definition}
As for the (original) Kreweras maps, the embedding~$\ph$ of $\mNC$ into $\mNC[W][m+1]$ given above embeds $\mNCPlus$ into $\mNCPlus[W][m+1]$, and we have $\ph \circ \Krewplus^{(m)} = \Krewplustilde^{(m+1)} \circ \ph$.
We again usually drop the superscripts
in $\Krewplustilde^{(m)}$ and $\Krewplus^{(m)}$ and write $\Krewplustilde$ and $\Krewplus$, respectively.
We also emphasise that this version of $\Krewplustilde^{(m)}$ is only defined for $m \geq 2$.
In \Cref{sec:poskrewmap2},
we will present an alternative description that also covers the
case where $m=1$.

\begin{example}
\label{ex:PositiveKrew1}
  We consider $\Krewplustilde^{(2)}$ and both $\Krewplus^{(1)}$ and $\Krewplus^{(2)}$ in our running example.
  We start with $\Krewplustilde^{(2)}$ acting on $\mNCPlus[A_2][2]$:
  \[
    \begin{tikzpicture}
    \node (seq) at (0,0) {
      $(13)\cdot\one\cdot(12) \longmapsto (12)\cdot(23)\cdot\one \longmapsto (23)\cdot(13)\cdot\one \longmapsto (13)\cdot(12)\cdot\one$
    };
    \draw[|->,looseness=3] 
        (seq.mid east) 
        to[out=0,in=0] (seq.south east) 
        -- (seq.south west) 
        to[out=180,in=180] (seq.mid west);

    \node (seq2) at (0,-1.2) {
      $\one\cdot(13)\cdot(12) \longmapsto \one\cdot(12)(23)\cdot\one$
    };
    \draw[|->,looseness=3] 
        (seq2.mid east) 
        to[out=0,in=0] (seq2.south east) 
        -- (seq2.south west) 
        to[out=180,in=180] (seq2.mid west);

    \node (seq2) at (0,-2.4) {
      $(12)(23)\cdot\one\cdot\one$
    };
    \draw[|->,looseness=3] 
        (seq2.mid east) 
        to[out=0,in=0] (seq2.south east) 
        -- (seq2.south west) 
        to[out=180,in=180] (seq2.mid west);
    \end{tikzpicture}
  \]
  Considering only the second orbit and the above inclusion
  $$\varphi : \mNC[A_2][1] \longrightarrow \mNC[A_2][2],
  $$
  we get $\Krewplus^{(1)}$.
  The action of $\Krewplus^{(2)}$ on $\mNCPlus[A_2][2]$ is finally
  \[
    \begin{tikzpicture}
    \node (seq1) at (0,0) {
      $(13)\cdot\one\cdot(12) \mapsto (23)\cdot(13)\cdot\one \mapsto (12)\cdot(23)\cdot\one \mapsto (13)\cdot(12)\cdot\one$
    };
    \node (seq2) at (0,-1) {
      $\one\cdot(12)(23)\cdot\one \longmapsfrom (12)(23)\cdot\one\cdot\one \longmapsfrom \one\cdot(13)\cdot(12)$
    };
    \draw[|->,looseness=3] 
        (seq1.mid east) 
        to[out=0,in=0] (seq2.mid east);
    \draw[|->,looseness=3] 
        (seq2.mid west)
        to[out=180,in=180] (seq1.mid west);
    \end{tikzpicture}
  \]
\end{example}

\subsection{The positive Kreweras maps, II}
\label{sec:poskrewmap2}

The goal of this subsection is to present alternative descriptions of the positive
Kreweras maps $\Krewplustilde^{(m)}$ and~$\Krewplus^{(m)}$ introduced in
\Cref{def:positivekreweras} which enable us to make sense of $\Krewplustilde^{(m)}$
also for $m=1$ (see \Cref{def:K-alt}).
Starting point is the observation that
these positive Kreweras maps can be seen as Coxeter-theoretic versions of the \emph{shift functor} in the \emph{positive generalised cluster category}, compare~\cite{BRT2012,Sim2012}.
Without going into the details of the involved homological algebra, we give here its combinatorial incarnation as given in~\cite{CLS2011,STW2015}.
Up to commutations, it is known that $\c^h = \wwo(\c)\psi(\wwo(\c))$.
This implies the following equality of bi-infinite words, ${}^\infty\c^{\infty} = {}^\infty\big(\wwo(\c)\psi(\wwo(\c))\big)^{\infty}$.
In our ongoing example (with $c=(12)(23)$ and $h=3$), this is
\[
  (12)(23)\cdot(12)(23)\cdot(12)(23) = (12)(23)(12)\cdot(23)(12)(23),
\]
and the bi-infinite word is either
$$\cdots|\ (12)(23)\ |\ (12)(23)\ |\ (12)(23)\ |\cdots$$
or
$$\cdots|\ (12)(23)(12)\ |\ (23)(12)(23)\ |\cdots,$$
respectively.
These bi-infinite words serve as a model for the bounded derived category of finite-dimensional modules over the path algebra of the Dynkin quiver associated with the Coxeter element~$c \in W$.
There are two natural operations on letters in this word:
\begin{itemize}
  \item the \defn{shift functor}~$\Sigma$ sending a letter in a copy of $\wwo(\c)$ (or of $\psi(\wwo(\c))$) to the corresponding letter in the next copy of $\psi(\wwo(\c))$ (or of $\wwo(\c)$, respectively);
  \item the \defn{Auslander--Reiten translate}~$\tau$ sending a letter in a copy of~$\c$ to the corresponding letter in the previous copy of~$\c$.
\end{itemize}
The objects in the positive generalised
cluster category are now orbits in this bi-infinite word under the action of $\Sigma^{m}\circ\tau$.
By construction, a fundamental domain is given by the letters inside the word $\invc^{m-1}\invc^L$.
Denote by~$\Krewplusbar = \Krewplusbar^{(m)}$ this action on letters in $\invc^{m-1}\invc^L$ and extend it letter-wise to subwords.
The following proposition can be found in \cite[Ch.~7]{STW2015}.

\begin{proposition}
\label{prop:PositiveKrewAlt}
For $m \geq 1$, $\Krewplusbar^{(m)}$ acts both on the positive elements in $\mNCnabla$ and on the positive elements in $\mNCdelta[W][m-1]$.
Restricting the above identification between $\mNC$ and $\mNCnabla$ to positive elements, we obtain $\Krewplustilde^{(m)} = \Krewplusbar^{(m)}$.
Restricting further to $\mNCdelta[W][m-1] \subset \mNCnabla$, we obtain also $\Krewplus^{(m-1)} = \Krewplusbar^{(m)}$.
\end{proposition}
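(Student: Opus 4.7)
The plan is to make $\Krewplusbar^{(m)}$ explicit as a positional permutation of the fundamental domain $\invc^{m-1}\invc^L$, and then to match it with \Cref{def:positivekreweras}. We label positions $1,\dots,mN-n$ so that positions $(j-1)N+1,\dots,jN$ form slot $\w_j$ for $j\le m-1$ and positions $(m-1)N+1,\dots,mN-n$ form slot $\w_m$ (confined to $\invc^L$). Since $\Sigma$ shifts positions by~$N$ and the orbit equivalence for $\Sigma^m\circ\tau$ identifies positions modulo $mN-n$, a case split on the starting position~$i$ will show that letters of slot $\w_j$ with $j\le m-2$ go to slot $\w_{j+1}$; letters of the $\invc^L$-part of slot $\w_{m-1}$ go to slot $\w_m$; letters of the $\invc^R$-part of slot $\w_{m-1}$ wrap around to positions $1,\dots,n$ of slot $\w_1$; and letters of slot $\w_m$ go to positions $n+1,\dots,N$ of slot $\w_1$.

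The algebraic bridge will be the identity $c\,r_i\,c^{-1}=r_{i+n}$ in the reflection ordering, with indices read modulo~$N$. It follows from $r_i=s_1\cdots s_{i-1}s_is_{i-1}\cdots s_1$ together with $c=s_1\cdots s_n$ and the periodicity $r_{i+N}=r_i$ inherited from $\c^h=\wwo(\c)\psi(\wwo(\c))$. Read the other way, conjugation by $c$ shifts a subword of $\invc$ by~$+n$ positions modulo~$N$. Applied to the positional description above, the first $n$ letters of the new slot $\w'_1$ are precisely $c\w_{m-1}^Rc^{-1}$ and the remaining $N-n$ letters are $c\w_mc^{-1}$; concatenated in reading order this is exactly $c\w_{m-1}^R\w_mc^{-1}$, matching \Cref{def:positivekreweras}. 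The other slot shifts match trivially, and the new final slot $\w_{m-1}^L\subset\invc^L$ preserves positivity via \Cref{thm:positivedescription}. This will establish $\Krewplustilde^{(m)}=\Krewplusbar^{(m)}$ on positive elements for $m\ge 2$.

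The second identification $\Krewplus^{(m-1)}=\Krewplusbar^{(m)}$ will follow by restriction: $\mNCdelta[W][m-1]\subset\mNCnabla$ is the subset with $w_0=\one$ in the $\mNC$-picture, and this subset is $\Krewplustilde^{(m)}$-stable since the formula for~$v_+$ gives $\one$ whenever $w_0=\one$. Component-wise comparison with \Cref{def:positivekreweras} then yields the equality. The boundary case $m=1$, where $\Krewplustilde^{(1)}$ is not defined, will be handled by taking the positional description of $\Krewplusbar^{(1)}$ itself as the common extension. The main obstacle we anticipate is verifying that the positional permutation sends reduced $\reflR$-subwords to reduced $\reflR$-subwords, so that the image is indeed a valid element of~$\mNCnabla$; this will be done via \Cref{prop:nc_unique_factorization} applied to both the source and the target of the permutation, together with the length preservation under conjugation by~$c$.
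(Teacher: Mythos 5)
First, note that the paper does not prove this proposition itself: it is quoted from \cite[Ch.~7]{STW2015}, so your argument has to stand on its own. It does not, because its two load-bearing claims are false in general. Your positional description of $\Krewplusbar^{(m)}$ (``shift by $N$ modulo $mN-n$ and read off the letter printed at the target position of $\invc^{m-1}\invc^L$'') and your bridge identity ``$c\,r_i\,c^{-1}=r_{i+n}$ with indices modulo $N$'' both break down precisely at the wrap-around, where a twist by $\psi$ appears, and for non-bipartite Coxeter elements they break down even away from the seam. Concretely, for the linear Coxeter element $c=s_1s_2s_3$ in $A_3$ one has $\invc=(12),(13),(14),(23),(24),(34)$ and $c\,r_3\,c^{-1}=c(14)c^{-1}=(12)=r_1\neq r_6$, so conjugation by $c$ is not a shift of the reflection ordering at all; and for the bipartite $c=s_1s_3s_2$ in $A_3$, where $\invc=(12),(34),(14),(24),(13),(23)$, one has $c\,r_4\,c^{-1}=c(24)c^{-1}=(34)=\psi(r_1)\neq r_1$, so the wrap-around carries the twist $\psi$. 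This is not a boundary nuisance: take $m=2$ and the positive element with $\w_1=(24)=\r_4$ and $\w_2$ empty; \Cref{def:positivekreweras} sends it to the element whose first slot is $c\,w_1^R\,c^{-1}=(34)=\r_2$, whereas your rule lands on the printed letter $(12)=\r_1$, a genuinely different element of $\mNCnabla[A_3][2]$. A structural way to see the same failure: your positional map is rotation by $N$ on $\Z/(mN-n)$, whose order is $(mh-2)/\gcd(h,2)$, while \Cref{thm:order} gives order $mh-2$ whenever $\psi\not\equiv\one$; so in every type with $h$ even and $\psi\not\equiv\one$ (already $A_3$, also $D_{2k+1}$, $E_6$) your map cannot equal $\Krewplusbar^{(m)}$, and a fortiori cannot equal $\Krewplustilde^{(m)}$.

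The root cause is that $\c^h=\wwo(\c)\psi(\wwo(\c))$ only up to commutations and that the $\c$-sorting word continues periodically by $s_{i+N}=\psi(s_i)$, not by $s_i$; consequently the identification of positions modulo $mN-n$ matches letters only up to a $\psi$-twist (this is exactly why $\psi$ pervades \Cref{lem:invariantsitting} and why \Cref{sec:typeExc} insists on the bipartite $c$ ``so as not to work up to commutations''). A correct proof along your lines would need, for bipartite $c$, the twisted identities $c\,r_i\,c^{-1}=r_{i+n}$ for $i\le N-n$ and $c\,r_i\,c^{-1}=\psi(r_{i+n-N})$ for $i>N-n$, together with an argument that the wrapped letters of $\w_{m-1}^R$ land in the first $n$ positions in an order compatible with the product $c\,w_{m-1}^R w_m c^{-1}$ (conjugation can permute these positions, so ``concatenated in reading order'' is not automatic), and then a separate reduction of arbitrary $c$ to the bipartite case or a careful up-to-commutation bookkeeping; this is exactly the technical content of \cite[Ch.~7]{STW2015} that your plan skips. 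The parts of your plan that do work are the easy ones: the slot bookkeeping away from the seam, the stability of $\{w_0=\one\}$ giving $\Krewplus^{(m-1)}$ by restriction (this is just $\ph\circ\Krewplus^{(m-1)}=\Krewplustilde^{(m)}\circ\ph$), and the positivity of the image via \Cref{thm:positivedescription}.
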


\begin{example} \label{ex:AA}
In \Cref{ex:PositiveKrew1},
the letters for $\mNCnabla[A_2][2]$ are
  \[
    \invc\invc^L = (12)^{(1)}(13)^{(1)}(23)^{(1)}\cdot(12)^{(2)}\color{lightgray}{(13)(23)},
  \]
where the final two letters $\color{lightgray}{(13)(23)}$ are identified with the initial two letters $(12)(13)$ in this order, and where we indicated
by superscripts
for each reflection in which copy of~$\invc$ it occurs.
  The action of~$\Krewplusbar$ on these letters is
  \[
    \begin{tikzpicture}
      \node (seq) at (0,0) {
        $(12)^{(1)} \longmapsto (12)^{(2)} \longmapsto (23)^{(1)} \longmapsto (13)^{(1)}$
      };
      \draw[|->,looseness=3] 
          (seq.mid east) 
          to[out=0,in=0] (seq.south east) 
          -- (seq.south west) 
          to[out=180,in=180] (seq.mid west);
    \end{tikzpicture}
  \]
  This yields that the order of~$\Krewplusbar$ is four in this case, and extending this action to subwords yields the above orbit structures.
\end{example}

Following \Cref{prop:PositiveKrewAlt}, we may consider both versions of the positive Kreweras maps $\Krewplustilde^{(m)}$ and $\Krewplus^{(m)}$ as the action of~$\Krewplusbar$ on the positive elements in $\mNCnabla$,
respectively in $\mNCdelta$.
This has the additional advantage that this definition also allows to consider the action of~$\Krewplustilde^{(1)}$ on positive elements in $\mNCnabla[W][1]$.
This situation was not covered using the above version from \Cref{def:positivekreweras}.
In the above example, this case corresponds to the single letter word $\invc^L = (12)\color{lightgray}{(13)(23)}$ with $\color{lightgray}{(13)}$ identified with $(12)$ and $\color{lightgray}{(23)}$ identified with $\color{lightgray}{(13)}$.
In particular, the order of~$\Krewplustilde$ in this case is one and we have the two single element orbits $\one$ and $(12)$.
This yields the following definition for $m=1$.

\begin{definition} \label{def:K-alt}
  The \defn{positive Kreweras map} $\Krewplustilde^{(1)} : \mNCPlus[W][1] \tilde\longrightarrow \mNCPlus[W][1]$ is given by sending $(cw^{-1},w)$ with $w = w^L = t_1 \cdots t_k$ to the element obtained by sending every reflection to its first conjugate inside
$\invc^L$ --- that is, $t_i$ is sent to $c^jt_ic^{-j} \in \invc^L$ with
$j \in \{1,2,\dots\}$ minimal with this property --- and then rearranging the reflections according to the reflection ordering.
\end{definition}

In our example, we see that there is only one reflection in $\invc^L = (12)$.
Conjugation by the Coxeter
element yields $(12) \mapsto (23) \mapsto (13) \mapsto (12) \in \invc^L$.
In this example, $j = 3$ in the definition, and
the positive Kreweras map is given
by extending this operation to words.

\begin{theorem}[\sc {\cite[Thm.~7.3.4]{STW2015}}]
\label{thm:order}
  Let $m\ge 1$.
  The order of\/ $\Krewplusbar^{(m)}$ on positive elements in $\mNCnabla$ and in $\mNCdelta[W][m-1]$ are both given by
  \[
    \order(\Krewplusbar^{(m)}) =
    \begin{cases}
      mh/2 - 1, & \text{if } \psi(s) = s \text{ for all } s \in \reflS, \\
      mh - 2, & \text{otherwise,}
    \end{cases}
  \]
  with the only exception that the order in type $I_2(h)$ for $\Krewplusbar^{(2)}$ on positive elements in\break $\mNCdelta[W][1]$ equals $mh-2$ even for odd~$h$.
  In particular, in all cases we have $\left(\Krewplusbar^{(m)}\right)^{mh-2}=\mathbf{1}$.
\end{theorem}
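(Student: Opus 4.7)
The plan is to reduce the statement to a single orbit computation on the fundamental domain. By \Cref{prop:PositiveKrewAlt}, both $\Krewplustilde^{(m)}$ (acting on positive elements of $\mNCnabla$) and $\Krewplus^{(m-1)}$ (acting on positive elements of $\mNCdelta[W][m-1]$) coincide with $\Krewplusbar^{(m)}$ acting on the letters in $\invc^{m-1}\invc^L$. Since the extension to subwords is letter-wise, the order of the induced permutation on subwords equals the least common multiple of the letter-orbit sizes, so it suffices to analyse the cycle structure of a single permutation of the $mN-n$ letters of the fundamental domain.

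The second step is to realise $\Krewplusbar^{(m)}$ concretely as a permutation of positions inside $\invc^{m-1}\invc^L$, viewed as a length-$(mN-n)$ window of the bi-infinite word ${}^\infty\c^\infty={}^\infty\bigl(\wwo(\c)\psi(\wwo(\c))\bigr)^\infty$. Using the identity $\wwo(\c)\psi(\wwo(\c))=\c^h$ (valid up to commutations), together with the facts that $\Sigma$ shifts positions by $N$ while twisting the underlying simple reflection by $\psi$, and that $\tau$ shifts positions by $-n$ while conjugating the underlying reflection by $\c$, the orbit identification $\Sigma^m\circ\tau\equiv\mathrm{id}$ becomes an explicit affine action on positions modulo $mN-n$. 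The main task is to trace, for each letter in $\invc^{m-1}\invc^L$, both the position and the reflection of its image, thereby producing cycles whose lengths can be read off.

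Third, I would perform the cycle-length calculation by separating the two regimes. When $\psi=\mathrm{id}$, the two halves $\wwo(\c)$ and $\psi(\wwo(\c))$ of a period of the bi-infinite word coincide, the relevant period collapses from $2N$ to $N$, and the cycles have length $mh/2-1$. When $\psi\ne\mathrm{id}$, the twist is genuine, the bi-infinite structure must be traversed at its full $2N$-period before a cycle can close, and the cycle length doubles to $mh-2$. The universal claim $(\Krewplusbar^{(m)})^{mh-2}=\mathbf{1}$ is then automatic, since in the first case $mh-2=2(mh/2-1)$ is twice the order, and in the second case it equals the order.

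The hardest step is the bookkeeping of the $\psi$-twist, which produces this factor-of-two dichotomy and is subtle enough that the naive $h$-parity heuristic gives the wrong answer in types such as $D_{2k+1}$, where $h$ is even but $\psi$ acts non-trivially. The exceptional dihedral case of $\Krewplusbar^{(2)}$ on positive elements of $\mNCdelta[W][1]$ in type~$I_2(h)$ with $h$ odd I would settle by direct inspection: the set of positive elements is small enough to list explicitly and the cycle structure can be verified by hand, confirming the order $2h-2$ rather than the value $h-1$ that the generic recipe would predict for this tiny rank.
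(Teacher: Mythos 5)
The paper does not actually prove this statement: it is quoted from [STW2015, Thm.~7.3.4], with only the remark that the proof given there also covers the case $m=1$ for positive elements in $\mNCnabla[W][1]$. So your proposal must stand on its own, and while its framework --- realising $\Krewplusbar^{(m)}$ as a permutation of the letters of $\invc^{m-1}\invc^L$ via $\Sigma^m\circ\tau$ on the bi-infinite word and computing cycle lengths --- is the right one (it is exactly the model recalled in \Cref{sec:poskrewmap2} and exploited later in \Cref{sec:typeExc}), your first reduction has a genuine gap. The identity ``order on positive elements $=$ lcm of the letter-orbit lengths'' is justified for $\mNCnabla$, because every reflection lies in $\NC$, so each single letter of $\invc^{m-1}\invc^L$ is by itself a positive element and realises its own letter orbit. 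It is \emph{not} justified for $\mNCdelta[W][m-1]$: there the elements are reduced factorisations of the full Coxeter element $c$, singletons are unavailable, and the orbit of such a factorisation (a set of positions, moved letter-wise) can close strictly before the letter lcm because $\Krewplusbar$ may permute its letters among themselves. Hence for $\mNCdelta$ the order is a priori only a divisor of the letter-permutation order, and one must exhibit, type by type, a positive factorisation of $c$ whose orbit attains the full length; this is precisely the delicacy signalled by the exceptional clause, which your plan reduces to a hand check in $I_2(h)$ without explaining why such a shortfall cannot occur elsewhere. Your closing remark also shows the clause is misread: for $I_2(h)$ with $h$ odd one has $\psi\neq\one$ (conjugation by the longest element swaps the two simple reflections), so your own dichotomy predicts $mh-2=2h-2$ there, not $h-1$; the exception concerns the possible halving of orbits of two-letter factorisations in $\mNCdelta[W][1]$, not the smallness of the rank.

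Second, the cycle-length dichotomy is false as a blanket statement. When $\psi\neq\one$, a letter $\r$ with $\psi(r)=r$ (such reflections exist, e.g.\ in type $A_3$ the transpositions fixed by $\psi$) has an orbit of length $(mh-2)/2$ whenever this is an integer; it is only the least common multiple of the orbit lengths that equals $mh-2$, and even that requires verifying that the orbit through some letter with $\psi(r)\neq r$ does not close after $(mh-2)/2$ steps --- which is exactly the position-and-reflection bookkeeping you announce but do not carry out (such an orbit must contain letters spelling both $r$ and $\psi(r)$, and one needs the affine position computation to see it does not return earlier). Symmetrically, in the case $\psi\equiv\one$ one must rule out that letter orbits close after a proper divisor of $mh/2-1$ steps. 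These points are fixable within your framework, but as written the ``cycle-length calculation'' asserts a false intermediate statement and omits the attainment argument, and the $\mNCdelta$ half of the theorem is not covered at all.
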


Indeed, the case $m=1$ for the action of~$\Krewplusbar$ on positive elements in $\mNCnabla[W][1]$ was not treated in \cite[Thm.~7.3.4]{STW2015} but the given proof covers that case as well.
We also make this order concrete in the classical types.

\begin{corollary} \label{cor:order}
  The order of $\Krewplusbar^{(m)}$ on positive elements in $\mNCnabla$ is given by
  \[
    \order(\Krewplusbar^{(m)}) =
    \begin{cases}
      mn - 2, & \text{in type } A_{n-1}, \\
      mn - 1, & \text{in type } B_{n}, \\
      m(n-1) - 1, & \text{in type } D_{n} \text{ with } n \text{ even}, \\
      2m(n-1) - 2, & \text{in type } D_{n} \text{ with } n \text{ odd}.
    \end{cases}
  \]
\end{corollary}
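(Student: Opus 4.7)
\medskip\noindent
\textbf{Plan of proof.}
The statement is a direct unpacking of \Cref{thm:order} once one records, for each classical type, (i) the Coxeter number~$h$ and (ii) whether the map $\psi : s \mapsto \wo s\wo$ is the identity on~$\reflS$. So the plan is to verify these two data type-by-type and substitute into the dichotomy
\[
  \order(\Krewplusbar^{(m)}) = \begin{cases} mh/2 - 1, & \psi = \mathrm{id}_\reflS,\\ mh - 2, & \text{otherwise,}\end{cases}
\]
provided by \Cref{thm:order}.

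First I would recall the standard values $h(A_{n-1}) = n$, $h(B_n) = 2n$, $h(D_n) = 2(n-1)$; these are classical (e.g.\ from the degrees $d_1 \le \cdots \le d_n$, since $h = d_n$). Next, I would use the fact that $\psi(s) = s$ for every $s \in \reflS$ if and only if $\wo$ is central in~$W$, if and only if $\wo$ acts as $-\mathrm{id}$ on the reflection representation. This is a standard computation: in type $B_n$ the longest element is $\wo = -I$ and so central for all $n$; in type $D_n$ one has $\wo = -I$ precisely when $n$ is even (for $n$ odd, $\wo$ induces the non-trivial diagram involution swapping the two fork generators); in type $A_{n-1}$ with $n \ge 3$, $\wo$ induces the non-trivial diagram involution $s_i \mapsto s_{n-i}$ and is therefore not central.

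Substituting these into the dichotomy now gives the four claimed formulae:
\[
\begin{aligned}
  A_{n-1}:\ &\psi \ne \mathrm{id},\ h=n \ \Longrightarrow\ mh-2 = mn-2,\\
  B_n:\ &\psi = \mathrm{id},\ h=2n \ \Longrightarrow\ mh/2-1 = mn-1,\\
  D_n,\ n\text{ even}:\ &\psi=\mathrm{id},\ h=2(n-1) \ \Longrightarrow\ mh/2-1 = m(n-1)-1,\\
  D_n,\ n\text{ odd}:\ &\psi\ne\mathrm{id},\ h=2(n-1) \ \Longrightarrow\ mh-2 = 2m(n-1)-2.
\end{aligned}
\]
Finally, one should note that the $I_2(h)$ exception in \Cref{thm:order} does not affect any classical type, so no special care is needed there.

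The argument is essentially bookkeeping; the only step requiring any thought is the characterisation of when $\wo$ is central, but this is textbook material (cf.\ Bourbaki or \cite{Hum1990}) and identifying $\psi=\mathrm{id}_\reflS$ with the centrality of $\wo$ is immediate from the definition of $\psi$.
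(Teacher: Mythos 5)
Your proposal is correct and is exactly the derivation the paper intends: \Cref{cor:order} is stated as an immediate consequence of \Cref{thm:order}, obtained by inserting the Coxeter numbers $h(A_{n-1})=n$, $h(B_n)=2n$, $h(D_n)=2(n-1)$ and the standard facts about when $\wo$ is central (equivalently $\psi=\mathrm{id}$), precisely as you do, and the $I_2(h)$ exception indeed only concerns the $\Delta$-version and is irrelevant here. The only caveat, which your restriction to $n\ge 3$ in type $A_{n-1}$ already implicitly handles, is that the degenerate case $A_1$ falls outside the stated formula, consistent with the paper's tacit convention.
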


\subsection{Cyclic sieving}
\label{sec:sievW}

We conclude this section with stating the cyclic sieving phenomena
concerning the positive Kreweras maps.
The notion of cyclic sieving was introduced by Reiner, Stanton and
White~\cite{RSW2004}. Given a finite set $S$,
an action on $S$ of a cyclic group~$C$, and a polynomial $P(q)$ in $q$
with non-negative integer coefficients, we say
that the triple $\big(S,P(q),C\big)$ \defn{exhibits the cyclic sieving
  phenomenon}, if for all~$\rrr$ dividing~$\#C$ and
all~$g\in C$ of order~$\rrr$, we have\footnote{This is not the
original definition \cite[Def.-Prop.(i)]{RSW2004}, but is easily seen
to be equivalent to it.}
\begin{equation} \label{eq:siev} 
\#\{s\in S\mid g(s)=s\}=
P(e^{2\pi i/\rrr}).
\end{equation}

Recall that the usual $q$-extension of the positive Fu\ss--Catalan numbers is given by
\[
  \mCatplus(W;q) = \prod_{i=1}^n{\frac{[mh+d_i-2]_q}{[d_i]_q}},
\]
where we use the standard $q$-notation $[\alpha]_q:=(1-q^\alpha)/(1-q) = 1 + q + \dots + q^{\alpha-1}$.

\begin{theorem}
\label{thm:CS}
For a given positive integer $m$, let $W$ be a finite irreducible Coxeter group~$W$, let~$C$ be the cyclic group generated by the Kreweras map~$\Krewplustilde^{(m+1)}$, viewed as a group of order~$(m+1)h-2$, and let~$\widetilde C$ be the cyclic group generated by the Kreweras map~$\Krewplus^{(m)}$, viewed again as a group of order~$(m+1)h-2$.
Then the triples
  \[
\Big(\mNCPlus[W][m+1],\ \mCatplus[m+1](W;q),\ C\Big) \text{ and } 
    \Big(\mNCPlus,\ \mCatplus(W;q),\ \widetilde C\Big)
  \]
  both exhibit the cyclic sieving phenomenon.
\end{theorem}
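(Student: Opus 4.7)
The plan is to prove both cyclic sieving phenomena by the standard route: verify the identity
\[
\#\{x\in\mNCPlus : \bigl(\Krewplus^{(m)}\bigr)^k(x)=x\} = \mCatplus(W;e^{2\pi i k/((m+1)h-2)})
\]
for every integer $k$, together with the analogous identity for $\Krewplustilde^{(m+1)}$. Thanks to Proposition \ref{prop:PositiveKrewAlt}, both maps are incarnations of the single combinatorial action $\Krewplusbar^{(m+1)}$ on positive subwords, and the embedding $\ph$ intertwines them; so the two statements may be handled simultaneously, fixed-point by fixed-point, on the level of $\Krewplusbar^{(m+1)}$. Theorem \ref{thm:order} ensures that it makes sense to regard the cyclic groups $C$ and $\widetilde C$ as having order $(m+1)h-2$ in all cases.

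The first phase is the classical-type analysis, covering Theorems \ref{thm:3}, \ref{thm:3-B}, \ref{thm:3-D}. I would carry out the following steps uniformly in each of types $A_{n-1}$, $B_n$, $D_n$. First, transport the problem to set-partition models via Propositions \ref{prop:1}, \ref{prop:1B}, \ref{prop:1D}, and rewrite $\Krewplustilde$ and $\Krewplus$ as the explicit pseudo-rotations of Propositions \ref{prop:2}, \ref{prop:2B}, \ref{prop:2D}. Second, for each power $k$, characterise the positive $m$-divisible set partitions fixed by that power of the pseudo-rotation (Sections \ref{sec:rotA}, \ref{sec:rotB}, \ref{sec:rotD}); as in the non-positive case of \cite{KratCG,KrMuAD}, these should be partitions exhibiting a cyclic symmetry of the circle (respectively annulus), but the "positive" constraint and the non-rotational nature of the map inject genuinely new conditions that must be isolated. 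Third, enumerate these fixed partitions by generating functions or by an encoding in smaller non-crossing objects (Sections \ref{sec:rotenumA}, \ref{sec:rotenumB}, \ref{sec:rotenumD}); the refinement inherent in the enumeration formulae of Sections \ref{sec:enumA}, \ref{sec:enumB}, \ref{sec:enumD} should feed directly into this step.

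The fourth step in each classical type is to match the resulting fixed-point counts with the evaluations of $\mCatplus(W;q)$ at the relevant roots of unity, using the product formula
\[
\mCatplus(W;q) = \prod_{i=1}^n\frac{[mh+d_i-2]_q}{[d_i]_q}
\]
together with the standard fact that $[N]_\zeta$ vanishes exactly when $\order(\zeta)\mid N$ and the explicit degrees $d_i$ for each classical type. Here I would exploit Corollary \ref{cor:fusscatcounting} — in particular the description involving $\rfix$ — since the summation over ascents/descents of reflection factorisations is already compatible with the letter-wise action of $\Krewplusbar$. The exceptional types (Theorem \ref{thm:cycexc}) are the second phase: using the general definition of the positive Kreweras map from Section \ref{sec:poskrewmap2} and the orbit structure predicted by Theorem \ref{thm:order}, I would carry out a finite symbolic verification, checking the cyclotomic identity for each divisor of $(m+1)h-2$ — since the order is bounded and the groups $E_6,E_7,E_8,F_4,H_3,H_4,I_2(h)$ are finitely many, this becomes a routine, computer-assisted computation.

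The main obstacle I expect is the $D_n$ calculation. The annular realisation of Reiner--Athanasiadis, together with the asymmetric pseudo-rotation behaviour between the inner and outer circles (already visible in the parity split of Corollary \ref{cor:order}), makes the fixed-partition characterisation and its enumeration substantially more delicate than in types $A$ and $B$. Matching the ensuing generating function with the root-of-unity evaluation of $\mCatplus(D_n;q)$, whose cyclotomic factorisation depends subtly on the interaction of $h=2(n-1)$ with the even degree $n$, will require a careful parity analysis and, presumably, an intermediate summation identity of Vandermonde/hypergeometric type to reduce the fixed-point count to the predicted cyclotomic product.
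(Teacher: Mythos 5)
Your proposal follows essentially the same route as the paper: \Cref{thm:CS} is proved there as a cumulative result, with the classical types handled exactly as you outline (combinatorial set-partition models, pseudo-rotation descriptions of the positive Kreweras maps, characterisation and enumeration of invariant partitions, and matching with root-of-unity evaluations of $\mCatplus(W;q)$, culminating in \Cref{thm:3,thm:3-B,thm:3-D} and the embedding argument of \Cref{cor:CS-A,cor:CS-B,cor:CS-D}), and the dihedral and exceptional types by the finite, subword-based computation built on \Cref{cor:fusscatcounting} and the letter $\rfix$, as in \Cref{thm:cycexc}. Your anticipation that type~$D$ is the delicate case is also borne out by the paper's treatment.
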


We will prove this theorem for the classical types by a case-by-case analysis in  \Cref{sec:typeA,sec:typeB,sec:typeD}. Furthermore, in \Cref{sec:typeExc} we develop a tool to count (positive) $m$-divisible non-crossing partitions that are invariant under powers of the positive Kreweras maps for a given type in a finite computation.
We then apply this approach to each exceptional type individually to derive the proposed cyclic sieving phenomenon; see \Cref{thm:cycexc} and \Cref{app:exc}.

%%%%%%%%%%%%%%%%%%%%%%%%%%%%%%%%%%%%%%%%%%%%%%%%%%%%%%%%%%%%%%%%%%%%%%%%%%%%%%%%%
\section{Positive non-crossing set partitions in 
type~\texorpdfstring{$A$}{A}}
\label{sec:typeA}
%%%%%%%%%%%%%%%%%%%%%%%%%%%%%%%%%%%%%%%%%%%%%%%%%%%%%%%%%%%%%%%%%%%%%%%%%%%%%%%%%

In this section, we study positive $m$-divisible non-crossing partitions and the positive Kreweras maps in the situation of the symmetric group $A_{n-1} = \mathfrak{S}_n$.
This situation corresponds to the Coxeter system
\begin{equation} \label{eq:WSA} 
(W,\reflS) = \big( A_{n-1},\{ s_i = (i,i+1) \mid 1 \leq i < n\} \big)
\end{equation}
together with the Coxeter element $c = s_1 \cdots s_{n-1} \in A_{n-1}$ being the
long cycle $(1,2,\ldots,n) \in \mathfrak{S}_n$. 

\medskip

We start with \Cref{sec:realA},
in which we first make the positivity condition for
$m$-divisible non-crossing partitions under the choice~\eqref{eq:WSA} 
of Coxeter system explicit; see \Cref{prop:posA}. We then use
this to describe \emph{positive} $m$-divisible non-crossing partitions
within the combinatorial
model of $m$-divisible non-crossing partitions due to Edelman
\cite{EdelAA}, via the translation due to Armstrong
\cite[Thm.~4.3.8]{Arm2006}; see \Cref{prop:1}.

In \Cref{sec:mapA}, we show that, under Armstrong's translation,
for $m\ge2$ the positive Kreweras map~$\Krewplustilde^{(m)}$
acts as a pseudo-rotation; see \Cref{prop:2}.
This pseudo-rotation is denoted by~$\rotA$.

In \Cref{sec:combA}, we introduce a natural extension of~$\rotA$
which also makes
sense for\break $m=1$; see \Cref{def:phiallg}. We argue in \Cref{prop:2-m=1}
that, under Armstrong's translation, for $m=1$
this extension corresponds exactly to the map~$\Krewplustilde^{(1)}$
from \Cref{def:K-alt}.
The order of the (extended) map~$\rotA$ is of course 
given by \Cref{thm:order} for type~$A$; see \Cref{lem:N-2}.
For the sake of completeness,
we also provide an independent, {\it combinatorial}, proof for that
order; see \Cref{app:order-A}.

\Cref{sec:enumA} is devoted to the enumeration of positive
$m$-divisible non-crossing partitions. The most refined and most general
result is \Cref{thm:countingA} which provides a
formula for the number of multichains in the poset of these
non-crossing partitions in which the ranks of the elements in the
multichain as well as the block structure of the bottom element are prescribed. All other results in that subsection,
\Cref{cor:countingA1,cor:countingA2,cor:2,cor:3}, 
are obtained as consequences.

\Cref{sec:rotA} presents the characterisation of
type~$A$ positive $m$-divisible non-crossing partitions that are
invariant under powers of the pseudo-rotation~$\rotA$; see \Cref{lem:allA}.

In \Cref{sec:rotenumA}, we then exploit this characterisation to provide a formula for the number of 
type~$A$ positive $m$-divisible non-crossing partitions that are
invariant under powers of the pseudo-rotation~$\rotA$ and
have a given block structure; see \Cref{thm:2}. Once again,
several less refined enumeration results may be obtained by summation;
see \Cref{cor:4,cor:5}.

In the final subsection, \Cref{sec:sievA}, we state, and prove, several
cyclic sieving phenomena refining \Cref{thm:CS} in type~$A$; see~\Cref{thm:3}.

\subsection{Combinatorial realisation of the positive non-crossing partitions}
\label{sec:realA}
With the choice \eqref{eq:WSA} of Coxeter system, 
we have the following simple combinatorial description of 
positive $m$-divisible non-crossing partitions.

\begin{proposition}
\label{prop:posA}
  Let $m$ and $n$ be positive integers with  $m\ge2$.
  The tuple
  \[
    (w_0,w_1,\dots,w_m) \in \mNC[A_{n-1}]
  \]
  is positive if and only if $w_m(n)=n$, or, equivalently, if $(w_0w_1\cdots
  w_{m-1})(n)=1$. 
\end{proposition}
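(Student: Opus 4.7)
The plan is to unpack the definition of positivity and translate it into the combinatorial language of non-crossing set partitions of $\{1, 2, \ldots, n\}$ via the Biane--Brady--Watt correspondence. First, I use \Cref{def:NCpos} to rewrite positivity of $(w_0, w_1, \ldots, w_m)$ as the assertion that $u := w_0 w_1 \cdots w_{m-1}$ has full support in $A_{n-1}$. Since $u w_m = c = (1, 2, \ldots, n)$ and $c(n) = 1$, we have $w_m(n) = u^{-1}(c(n)) = u^{-1}(1)$, so $w_m(n) = n$ is equivalent to $u(n) = 1$. This settles the ``or, equivalently'' clause at once and reduces the proposition to showing that $u \in \NC[A_{n-1}]$ has full support if and only if $u(n) = 1$.

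Next, I invoke the combinatorial realization of $\NC[A_{n-1}]$: each $u \in \NC[A_{n-1}]$ corresponds to a non-crossing set partition $\pi_u$ of $\{1, 2, \ldots, n\}$ whose blocks are the cycles of $u$, oriented so that within each block $B = \{a_1 < a_2 < \cdots < a_k\}$ one has $u(a_i) = a_{i+1}$ (indices taken modulo~$k$). In particular, $u$ sends the maximum of each block to its minimum. Since $n$ is always the maximum of the block to which it belongs, the condition $u(n) = 1$ is equivalent to $1$ and $n$ lying in the same block of $\pi_u$.

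The proposition therefore reduces to the claim that $u \in \NC[A_{n-1}]$ has full support if and only if $1$ and $n$ lie in the same block of $\pi_u$. For the direction ``$1,n$ in the same block $\Rightarrow$ full support'' I argue by contrapositive: if $u$ lies in the proper standard parabolic $\langle s_i : i \neq k \rangle$ for some $k \in \{1,\dots,n-1\}$, then $u$ preserves the initial segment $\{1, \ldots, k\}$, so every cycle of $u$ is contained entirely in $\{1, \ldots, k\}$ or entirely in $\{k+1, \ldots, n\}$; thus $1$ and $n$ cannot share a cycle. For the converse, I assume $1$ and $n$ lie in different blocks and set $k$ to be the maximum of the block containing $1$; then $k < n$, and non-crossingness forces every block of $\pi_u$ to lie either in $\{1, \ldots, k\}$ or in $\{k+1, \ldots, n\}$ (otherwise its elements would interleave with $1$ and $k$, producing a crossing), so $u \in \langle s_i : i \neq k\rangle$ and $u$ does not have full support.

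The only potential obstacle is pinning down the standard dictionary between the support of an element of $\NC[A_{n-1}]$, the block structure of its associated non-crossing set partition, and the ``maximum-to-minimum within each block'' description of the cycle orientation. These are classical features of the Biane correspondence, and once they are in hand the argument reduces to routine bookkeeping with non-crossingness.
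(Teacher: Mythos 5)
Your proof is correct and follows essentially the same route as the paper: unpack \Cref{def:NCpos} into full support of $w_0\cdots w_{m-1}=cw_m^{-1}$, use the non-crossing (increasing-cycle) structure of elements below $c=(1,2,\dots,n)$ to identify full support with the image of $n$ being $1$, and then translate this into $w_m(n)=n$. The only difference is that you spell out the crossing/parabolic bookkeeping that the paper's proof asserts in one line, which is a harmless (and welcome) elaboration.
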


\begin{proof}
According to \Cref{def:NCpos}, the tuple $(w_0,w_1,\dots,w_m)$ is
positive if and only if $cw_m^{-1}=w_0w_1\cdots w_{m-1}$ has full support in our generators $s_1,\dots,s_{n-1}$ given in~\eqref{eq:WSA}. The
cycles in the disjoint cycle decomposition of~$cw_m^{-1}$ define
a non-crossing partition. Thus, $cw_m^{-1}$ will not have full support if and only if 
$(cw_m^{-1})(n)\ne1$. By acting on the left on both sides of
this relation by the inverse of the
Coxeter element $c=(1,2,\dots,n)$, we see that this is equivalent
to $w_m^{-1}(n)\ne n$. We are interested in the contrapositive:
$cw_m^{-1}$ has full support if and only if $w_m(n)=n$.
\end{proof}

Next we recall Armstrong's bijection between $\mNC[A_{n-1}]$ and 
Edelman's $m$-divisible non-crossing partitions of
$\{1,2,\dots,mn\}$ (with the classical definition of ``non-crossing" as
recalled in \Cref{foot:NC}), which we denote here by
$\mNCA$.
We write $\NCA$ for the set of \emph{all} non-crossing partitions of $\{1,2,\dots,N\}$ and observe that
\[
  \mNCA = \{ \pi \in \NCA[mn] \mid \text{all block sizes of $\pi$ are divisible by~$m$}\}\,.
\]
Given an element $(w_0,w_1,\dots,w_m)\in \mNC[A_{n-1}]$,
the bijection, $\Nam{A_{n-1}}m$ say, 
from\break \cite[Thm.~4.3.8]{Arm2006} 
works as follows: for $i=1,2,\dots,m$, let $\ta_{m,i}$ be
the transformation which maps a permutation $w\in A_{n-1}$ to a
permutation $\ta_{m,i}(w)\in \mathfrak{S}_{mn}$ by letting
$$(\ta_{m,i}(w))(mk+i-m)=mw(k)+i-m,\quad k=1,2,\dots,n,$$ 
and 
$(\ta_{m,i}(w))(l)=l$ for all $l\nequiv i$~(mod~$m$).
With the choice of Coxeter element $c=(1,2,\dots,n)$, 
the announced bijection maps 
$(w_0,w_1,\dots,w_m)\in \mNC[A_{n-1}]$ to
\begin{equation} \label{eq:Na}
\Nam{A_{n-1}}m(w_0,w_1,\dots,w_m)=
(1,2,\dots,mn)\,(\ta_{m,1}(w_1))^{-1}\,(\ta_{m,2}(w_2))^{-1}\,\cdots\,
(\ta_{m,m}(w_m))^{-1},
\end{equation}
where the cycles in the disjoint cycle decomposition correspond to
the blocks in the non-crossing partition in
$\mNCA$.
We refer the reader to \cite[Sec.~4.3.2]{Arm2006} for the details.
For example, let $n=7$, $m=3$, $w_0=(4,5,6)$,
$w_1=(3,6)$,
$w_2=(1,7)$, and
$w_3=(1,2,6)$. Then $(w_0,w_1,w_2,w_3)$ is mapped to
\begin{align*}
\Nam{A_{6}}3(w_0,w_1,w_2,w_3)&=(1,2,\dots,21)\,(7,16)\,
(2,20)\,(18,6,3)\\
&=(1,2,21)\,(3,19,20)\,(4,5,6)\,(7,17,18)\,(8,9,\dots,16).
\end{align*}

Armstrong \cite[Thm.~4.3.13]{Arm2006} has shown that the cycle
structure of the first
component of an $m$-divisible non-crossing partition
determines the block structure of its image under his bijection.

\begin{proposition} \label{prop:block}
Let $(w_0,w_1,\dots,w_m)\in\mNC[A_{n-1}]$. The non-crossing partition
$\Nam{A_{n-1}}m(w_0,w_1,\dots,w_m)\in\mNCA$ has $b_i$ blocks of
size~$mi$ if and only if $w_0$ has $b_i$ cycles of length~$i$
in its disjoint cycle decomposition. 
\end{proposition}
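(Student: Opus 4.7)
The plan is to unpack formula~\eqref{eq:Na} and read off the cycles of the permutation $\pi := \Nam{A_{n-1}}m(w_0,w_1,\dots,w_m)$ directly. First I would identify $\{1,2,\ldots,mn\}$ with $\{1,\ldots,n\}\times\{1,\ldots,m\}$ via $m(k-1)+i \leftrightarrow (k,i)$. Under this identification, the definition of $\ta_{m,i}$ says that $\ta_{m,i}(w_i)$ acts as $w_i$ on the fiber $\{(k,i):1\le k\le n\}$ and as the identity elsewhere. In particular, the permutations $\ta_{m,i}(w_i)$ have pairwise disjoint supports for distinct $i$, and hence commute; writing $C=(1,2,\ldots,mn)$ and $T=\prod_{i=1}^{m}\ta_{m,i}(w_i)$, we have $\pi = C\cdot T^{-1}$ with $T^{-1}(k,i)=(w_i^{-1}(k),i)$, from which one directly computes
\[
\pi(k,i) = \bigl(w_i^{-1}(k),\,i+1\bigr) \text{ for } 1\le i<m, \qquad \pi(k,m) = \bigl(w_m^{-1}(k)+1,\,1\bigr),
\]
with the convention $n+1\equiv 1$ in the first coordinate.

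Iterating $\pi$ exactly $m$ times starting from $(k,1)$ and telescoping yields $\pi^m(k,1)=(w_m^{-1}\cdots w_1^{-1}(k)+1,\,1)$. Using the defining identity $w_0w_1\cdots w_m=c=(1,2,\ldots,n)$ to rewrite $w_m^{-1}\cdots w_1^{-1}=c^{-1}w_0$, and observing that $c^{-1}(j)+1\equiv j\pmod n$, this collapses to the key identity $\pi^m(k,1)=(w_0(k),1)$. In other words, under the obvious bijection $k\leftrightarrow (k,1)$, the restriction of $\pi^m$ to the first fiber is exactly $w_0$.

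From here the conclusion is essentially immediate. Every element $(k,i)$ is mapped to the first fiber after $m-i+1$ applications of $\pi$, so every $\pi$-cycle meets the first fiber. A $w_0$-cycle of length $\ell$ therefore contributes one $\pi$-cycle, consisting of the $\ell$ level-$1$ elements $(w_0^j(k),1)$ for $0\le j<\ell$ interleaved with the $\ell(m-1)$ intermediate elements $\bigl(w_{i-1}^{-1}\cdots w_1^{-1}(w_0^j(k)),\,i\bigr)$ for $2\le i\le m$ and $0\le j<\ell$; this cycle has length $m\ell$. Under the standard correspondence between non-crossing permutations in $\mathfrak{S}_{mn}$ and set partitions in $\mNCA$, this yields exactly the claimed bijection between cycles of $w_0$ of length $i$ and blocks of $\pi$ of size $mi$.

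The main obstacle is purely bookkeeping: one must verify that the $m\ell$ elements listed above are pairwise distinct (so the $\pi$-cycle actually closes after $m\ell$ steps, not earlier) and that cycles issuing from different $w_0$-orbits do not merge. Both checks reduce to the injectivity of the maps $w_{i-1}^{-1}\cdots w_1^{-1}$ on $\{1,\ldots,n\}$. A useful sanity check is that the factorisation $\pi\cdot T=C$ is automatically rank-additive in $A_{mn-1}$, forcing $|\mathrm{cycles}(\pi)|=|\mathrm{cycles}(w_0)|$ independently, so only the refined matching of cycle lengths really requires argument.
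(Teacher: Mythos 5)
Your proof is correct. Note that the paper itself offers no argument for this proposition -- it is quoted as Armstrong's Theorem~4.3.13 -- so your direct verification from the defining formula~\eqref{eq:Na} is a self-contained alternative rather than a variant of an in-paper proof. Your computation checks out against the paper's conventions: with $\pi=C\,T^{-1}$ acting by $\pi(x)=C(T^{-1}(x))$ (consistent with the worked example following~\eqref{eq:Na}), one indeed gets $\pi(k,i)=(w_i^{-1}(k),i+1)$ for $i<m$, $\pi(k,m)=(w_m^{-1}(k)+1,1)$, and the telescoping together with $w_m^{-1}\cdots w_1^{-1}=c^{-1}w_0$ gives the key identity $\pi^m(k,1)=(w_0(k),1)$. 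One small improvement: the ``bookkeeping'' you flag at the end (no premature closing of a cycle, no merging of cycles from different $w_0$-orbits) does not need any injectivity argument, since every application of $\pi$ advances the second coordinate by one cyclically; hence the orbit of $(k,1)$ can return to the first fiber only at steps that are multiples of $m$, where it sits at $\bigl(w_0^{j}(k),1\bigr)$, so the minimal return time is exactly $m\ell$ and the level-one elements of each $\pi$-cycle form precisely one $w_0$-orbit. With that observation the proof is complete, and the bijection between $w_0$-cycles of length $i$ and blocks of $\Nam{A_{n-1}}m(w_0,\dots,w_m)$ of size $mi$ follows, giving both directions of the equivalence at once.
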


In the earlier example, we have $w_0=(4,5,6)=(1)(2)(3)(7)(4,5,6)$ which has one cycle
of length~3 and four cycles of length~1. Indeed, the image of
$(w_0,w_1,w_2,w_3)$ in the example has one block of size $3\cdot 3=9$
and four blocks of size $3\cdot 1=3$.

\begin{remark} \label{rem:id}
A simple consequence of \Cref{prop:block} is that the image of
the set of all non-crossing partitions in $\mNC[A_{n-1}]$ of the form
$(\ep,w_1,\dots,w_m)$ under the map $\Nam{A_{n-1}}m$ is
the set of all non-crossing partitions in $\mNCA$ in which {\em all}
blocks have size~$m$.
\end{remark}

\begin{theorem} \label{prop:1}
Let $m$ and $n$ be positive integers with $m\ge2$.
The image under $\Nam{A_{n-1}}m$ of the positive $m$-divisible
non-crossing partitions in $\mNC[A_{n-1}]$ are those $m$-divisible
non-crossing partitions in $\mNCA$ where $1$ and
$mn$ are in the same block.
\end{theorem}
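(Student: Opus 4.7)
The plan is to reduce the claim to a short computation together with the standard cyclic-order property of Biane's bijection. By \Cref{prop:posA}, positivity of $(w_0,w_1,\dots,w_m)$ is equivalent to the single condition $w_m(n)=n$. Setting $\pi=\Nam{A_{n-1}}m(w_0,w_1,\dots,w_m)$ and viewing $\pi\in\mathfrak{S}_{mn}$ as the permutation whose cycles are the blocks of the image non-crossing partition, it therefore suffices to prove that $w_m(n)=n$ if and only if $1$ and $mn$ belong to the same cycle of $\pi$.

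I would first compute $\pi(mn)$ directly from the formula~\eqref{eq:Na}, applying the factors of the product from right to left. The rightmost factor $(\ta_{m,m}(w_m))^{-1}$ sends $mn=m\cdot n$ to $m\cdot w_m^{-1}(n)$, by the defining relation $\ta_{m,m}(w)(mk)=m\,w(k)$, and this image is again a multiple of~$m$. Every remaining factor $(\ta_{m,i}(w_i))^{-1}$ with $1\le i\le m-1$ fixes all multiples of~$m$, since $\ta_{m,i}(w)$ acts non-trivially only on positions in the residue class $i\pmod m$ and the hypothesis $m\ge2$ ensures $0\not\equiv i\pmod m$. Finally, the long cycle $(1,2,\dots,mn)$ sends $m\,w_m^{-1}(n)$ to $m\,w_m^{-1}(n)+1$ when $w_m^{-1}(n)<n$, and to $1$ precisely when $w_m^{-1}(n)=n$. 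Hence
\[
  \pi(mn)=1 \iff w_m(n)=n.
\]

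It remains to observe that $1$ and $mn$ lie in a common block of $\pi$ if and only if $\pi(mn)=1$. One direction is immediate. For the other direction I would invoke the standard property of the non-crossing-partition lattice $\NCA[mn]$ realised as permutations below $(1,2,\dots,mn)$ in the absolute order on $\mathfrak{S}_{mn}$: the cycle of such a permutation containing a block $B=\{b_1<b_2<\dots<b_k\}$ is precisely $(b_1,b_2,\dots,b_k)$, so a block containing both the minimum $1$ and the maximum $mn$ forces $\pi(mn)=1$. Combining both steps yields the theorem.

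The only delicate point in the argument is bookkeeping: one must carefully track how each $\ta$-factor acts on the specific element $mn$, using the assumption $m\ge 2$ exactly to guarantee that the middle factors leave multiples of~$m$ untouched. The cyclic-order step is a classical feature of Biane's realisation of the non-crossing-partition lattice and can be cited without elaboration; in particular, it is implicit in the bijection $\Nam{A_{n-1}}m$ already being well defined as a map into $\mNCA$.
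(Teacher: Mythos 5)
Your proposal is correct and follows essentially the same route as the paper: evaluate the image of $mn$ under the formula~\eqref{eq:Na}, using that $(\ta_{m,m}(w_m))^{-1}$ sends $mn$ to $m\,w_m^{-1}(n)$, that the factors with $1\le i\le m-1$ fix multiples of $m$ (this is where $m\ge2$ enters), and that blocks correspond to cycles traversed in increasing order, so $1$ and $mn$ share a block exactly when $\pi(mn)=1$. The only cosmetic difference is that you package both implications into the single equivalence $\pi(mn)=1\iff w_m(n)=n$, while the paper runs the two directions separately.
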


\begin{proof}
First, let $(w_0,w_1,\dots,w_m)\in \mNC[A_{n-1}]$.
By definition, we know that\break $w_m(n)=n$, and hence
$\ta_{m,m}(w_m))^{-1}(mn)=mn$. Consequently,
\begin{align*}
\Big(\Nam{A_{n-1}}m(&w_0,w_1,\dots,w_m)\Big)(mn)\\
&=
\big((1,2,\dots,mn)\,(\ta_{m,1}(w_1))^{-1}\,(\ta_{m,2}(w_2))^{-1}\,\cdots\,
(\ta_{m,m}(w_m))^{-1}\big)(mn)\\
&=
\big((1,2,\dots,mn)\big)(mn)=1.
\end{align*}
Translated to (combinatorial) non-crossing partitions, 
this means that $mn$ and $1$
belong to the same block, which is what we claimed.

Conversely, let $\pi\in \mNCA$ be an
$m$-divisible non-crossing partition in which $1$ and $mn$ are in the
same block. In other words, if $\pi$
is interpreted as a permutation, $\pi(mn)=1$. Let
$(w_0,w_1,\dots,w_m)$ be the element of $\mNC[A_{n-1}]$ such that
$$
\Nam{A_{n-1}}m(w_0,w_1,\dots,w_m)=\pi.
$$
We must have
$$
\Big(\Nam{A_{n-1}}m(w_0,w_1,\dots,w_m)\Big)(mn)=1.
$$
The definition \eqref{eq:Na} of $\Nam{A_{n-1}}m$
and the fact that $(\ta_{m,i}(w_i))^{-1}$ leaves multiples of~$m$
fixed as long as $1\le i\le m-1$ together imply that 
$(\ta_{m,m}(w_m))^{-1}(mn)=mn$, or, equivalently, that
$w_m(n)=n$. By \Cref{prop:posA}, this means that
$(w_0,w_1,\dots,w_m)$ is positive.
\end{proof} 

We let $\mNCAPlus$ denote the ``positive" elements of $\mNCA$, that is, those for which~$1$ and~$mn$ are contained in the same block.
In accordance with our previous conventions, we write $\NCAPlus$ for the set of \emph{all} non-crossing partitions of $\{1,2,\dots,N\}$ for which~$1$ and~$N$ are in the same block.
We observe that
\[
  \mNCAPlus = \{ \pi \in \NCAPlus[mn] \mid \text{all block sizes of $\pi$ are divisible by~$m$}\}\,.
\]
We furthermore call the block containing both $1$ and $mn$ the \defn{special block} of the partition under consideration.

\subsection{Combinatorial realisation of the positive Kreweras maps}
\label{sec:mapA}

Next we translate the positive Kreweras map $\Krewplustilde$
into a ``rotation action" on the positive elements of
$\mNCA$. In order to do so, we need to explicitly describe 
the decomposition~\eqref{eq:LR} in type~$A$.

\begin{lemma} \label{lem:LRA}
Let~$w$ be an element of $\NC[A_{n-1}]$, and let
$w^{-1}(n)=a$. Then
$$w=w^Lw^R,\quad \quad \text {where $w^R=(a,n)$}.$$
In particular, we have $w^L(n)=n$.
\end{lemma}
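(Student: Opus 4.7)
The plan is to apply \Cref{cor:unique} with the explicit candidate $u := w(a,n)$ and $v := (a,n)$. Two ingredients are required: an explicit identification of $\invc^R$ in type~$A_{n-1}$, and a verification that this candidate satisfies the three hypotheses of \Cref{cor:unique}.

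For the first ingredient, I would fix the commutation-equivalent representative of $\wwo(\c)$ in $A_{n-1}$ with $c = (1,2,\dots,n)$ given by the staircase word
\[
\wwo(\c) = s_1 \cdot (s_2 s_1) \cdot (s_3 s_2 s_1) \cdots (s_{n-1} s_{n-2} \cdots s_1),
\]
whose final $n-1$ letters form $\psi(\c) = s_{n-1} s_{n-2} \cdots s_1$. A direct computation -- conjugating each simple reflection in the final block by the corresponding prefix, which works out to be the product of the reversal on $\{1,\dots,n-1\}$ with the cycle $(n-k+1,n,n-1,\dots,n-k+2)$ -- will show that the reflection at the position immediately following the prefix of length $\binom{n-1}{2} + k - 1$ is exactly $(k,n)$, for $k = 1,\dots,n-1$. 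Hence $\invc^R = \bigl\{(1,n),(2,n),\dots,(n-1,n)\bigr\}$, listed in this order.

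For the main argument, if $a = n$, then $w$ fixes~$n$, hence acts as a non-crossing permutation of $\{1,\dots,n-1\}$, and every reflection in its unique reduced increasing $\reflR$-word is a transposition within $\{1,\dots,n-1\} \subset \invc^R$'s complement in~$\invc$, i.e.\ lies in $\invc^L$; thus $w^L = w$ and $w^R = \one$, under the convention that $(n,n)$ denotes the identity. For $a \ne n$, the non-crossing structure of $w \le_R c$ forces the cycle of~$w$ containing both $a$ and $n$ to take the form $(i_1,\dots,i_{k-1},n)$ with $i_{k-1} = a$, since the elements of each cycle appear in cyclic $c$-order. Setting $u := w(a,n)$ then gives $u(n) = n$, $u(a) = i_1$, and $u(x) = w(x)$ for $x \notin \{a,n\}$; so $u$ is obtained from $w$ simply by removing~$n$ from its cycle, hence $u \in \NC[A_{n-1}]$ and $\lenR(u) = \lenR(w) - 1$.

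It then remains to verify the three hypotheses of \Cref{cor:unique} for the factorisation $w = u \cdot (a,n)$: (i) $\lenR(u) + \lenR((a,n)) = \lenR(w)$ by construction; (ii) $(a,n) \in \invc^R$ is a single-letter subword; and (iii) $cu^{-1}$ has full support because $(cu^{-1})(n) = c(u^{-1}(n)) = c(n) = 1$, which is the criterion used in the proof of \Cref{prop:posA}. The corollary will then yield $w^L = u$ and $w^R = (a,n)$, from which $w^L(n) = u(n) = n$ is immediate. The main obstacle I expect is pinning down $\invc^R$: the reflection ordering at commuting positions is not invariant under arbitrary commutations of $\wwo(\c)$, so the splitting $\invc = \invc^L \cdot \invc^R$ depends on the choice of representative, and only the staircase representative above yields the clean description $\invc^R = \{(k,n) : 1 \le k \le n-1\}$.
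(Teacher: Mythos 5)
Your proposal is correct and follows essentially the same route as the paper: identify $\invc^R$ as the transpositions $(1,n),\dots,(n-1,n)$ and conclude from the reduced factorisation $w=\bigl(w(a,n)\bigr)\cdot(a,n)$ via the uniqueness of the decomposition. You merely make explicit what the paper leaves implicit, namely the appeal to \Cref{cor:unique} together with the full-support check $(cu^{-1})(n)=1$, which the paper only spells out in its type~$B$ and type~$D$ analogues.
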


\begin{proof}
The last $n-1$ reflections in $\reflR$ given in \eqref{eq:refls}
are $(i,n)$, $i=1,2,\dots,n-1$, in this order. Thus, the factorisation
$w=w'\circ(a,n)$ for $a=w^{-1}(n)$ is reduced. This implies that
$w^L=w'$ and $w^R=(a,n)$, as desired.

The final claim of the lemma is obvious.
\end{proof}

%The map
%$\Krewplus:N\kern-1ptC^{(m)}_+(W)\to N\kern-1ptC^{(m)}_+(W)$ by
%\begin{multline} \label{eq:phi}
%(w_0,w_1,\dots,w_m)\\
%\mapsto
%\big((cw_{m-1}^Rw_mc^{-1})w_0(cw_{m-1}^Rw_mc^{-1})^{-1},
%cw_{m-1}^Rw_mc^{-1},w_1,w_2,\dots,w_{m-1}^L\big).
%\end{multline}
%For reasons that will become clear later on,
%we shall sometimes call this map \defn{pseudo-rotation}.
%It is indeed not difficult to see that, if the $(m+1)$-tuple
%on the left-hand side is an element of $N\kern-1ptC^{(m)}_+(W)$, then so is
%the $(m+1)$-tuple on the right-hand side.
%We shall see in \Cref{lem:N-2}
%that $\phi^{mn-2}$ acts as the identity.
%
%\begin{remark}
%(1) The above definition makes still sense for $m=2$, but it does not for
%$m=1$. Whether there is still a sensible way to define $\phi$ even
%for $m=1$ remains to be seen.
%We shall see that, at least for the types $A_n$, $B_n$, and $D_n$,
%there are meaningful extensions which also cover the case where $m=1$,
%see \Cref{def:phiallg}.
%
%\medskip
%(2) If $w_0=\one$, then the action $\phi$ reduces to
%$$
%(\one,w_1,\dots,w_m)
%\mapsto
%\big(\one,
%cw_{m-1}^Rw_mc^{-1},w_1,w_2,\dots,w_{m-1}^L\big).
%$$
%\end{remark}

Next, we translate the positive Kreweras map $\Krewplustilde$ from
\Cref{def:positivekreweras} for type $A_{n-1}$ into combinatorial
language. 

\begin{theorem} \label{prop:2}
Let $m$ and $n$ be positive integers with $m\ge2$.
Under the bijection $\Nam{A_{n-1}}m$, the map $\Krewplustilde^{(m)}$ translates into
the following map~$\rotA$ on $\mNCAPlus$:
if $mn-1$ is in the same block as $1$ and $mn$ of an element~$\pi$
of~$\mNCAPlus$, then $\rotA$ 
rotates all blocks of~$\pi$
by one unit in clockwise direction. On the other hand, if $mn-1$ is
in a different block, say $\{b_1,b_2,\dots,mn-1\}$ and
$\{1,\dots,a,mn\}$ are two different blocks, where
$$1<\dots<a<b_1<b_2\dots<mn-1<mn,$$
then the image of~$\pi$ under~$\rotA$ is the partition which
contains the blocks $\{1,b_2+1,b_3+,\dots,\break mn\}$ and $\{2,\dots,a+1,b_1+1\}$
and the blocks which arise from the remaining blocks of~$\pi$ by
rotating them by one unit in clockwise direction.
\end{theorem}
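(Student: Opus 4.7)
The plan is to factor the positive Kreweras map as a composition $\Krewplustilde^{(m)} = \Krewtilde^{(m)} \circ \Psi$, where $\Psi$ is the ``shift'' step $(w_0, \ldots, w_{m-1}, w_m) \mapsto (w_0, \ldots, w_{m-1}^L,\, w_{m-1}^R w_m)$. This is a well-defined operation on $m$-divisible factorisations of~$c$ because $w_{m-1} = w_{m-1}^L \cdot w_{m-1}^R$ preserves the product, and plugging the shifted tuple into $\Krewtilde^{(m)}$ reproduces exactly the formula in Definition \ref{def:positivekreweras}. Since Armstrong's ordinary Kreweras map is known to translate under $\Nam{A_{n-1}}m$ into clockwise rotation by one unit of the associated non-crossing partition, the entire task reduces to understanding the effect of $\Psi$ on the partition side.

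By Lemma \ref{lem:LRA}, $w_{m-1}^R = (a', n)$ where $a' := w_{m-1}^{-1}(n)$, and both $w_{m-1}^L$ and $w_m$ fix $n$. Using that each $\ta_{m, i}$ is a group homomorphism, one computes
\[
  \ta_{m, m-1}(w_{m-1}) \,=\, \ta_{m, m-1}(w_{m-1}^L) \cdot (m a' - 1,\; mn - 1),
  \quad
  \ta_{m, m}(w_{m-1}^R w_m) \,=\, (m a',\; mn) \cdot \ta_{m, m}(w_m).
\]
The crucial observation is that $(m a' - 1, mn - 1)$ acts only on positions congruent to $m - 1$ modulo~$m$, and therefore commutes with every factor $(\ta_{m, i}(w_i))^{-1}$ for $i \ne m - 1$; analogously, $(m a', mn)$ commutes with $(\ta_{m, i}(w_i))^{-1}$ for all $i \ne m$. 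Setting $\sigma := (1, 2, \ldots, mn)$ and using $\sigma\,(m a' - 1, mn - 1)\,\sigma^{-1} = (m a', mn)$, one can rearrange the product formulas for $\Nam{A_{n-1}}m$ before and after $\Psi$ to conclude
\[
  \Nam{A_{n-1}}m(\Psi(w_0, \ldots, w_m)) \,=\, (m a',\, mn) \cdot \Nam{A_{n-1}}m(w_0, \ldots, w_m) \cdot (m a',\, mn).
\]
Combinatorially, $\Psi$ swaps the elements $m a'$ and $mn$ between their respective blocks of $\pi := \Nam{A_{n-1}}m(w_0, \ldots, w_m)$.

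It remains to identify $m a'$ in terms of $\pi$. A direct trace of the product formula shows that only $(\ta_{m, m-1}(w_{m-1}))^{-1}$ moves $mn - 1$ (sending it to $m a' - 1$), after which the leading $\sigma$ advances it to $m a'$, so $\pi(mn - 1) = m a'$. When $a' = n$, equivalently $mn - 1$ lies in the same block as~$mn$, both transpositions above are trivial, $\Psi$ is the identity, and $\Krewplustilde$ collapses to $\Krewtilde$ acting as a plain one-step rotation---this is the first case of the theorem. Otherwise, $m a'$ is the smallest element of the block of $mn - 1$, i.e.\ $m a' = b_1$; the composite ``swap $b_1 \leftrightarrow mn$, then rotate all blocks by one unit clockwise'' sends the special block $\{1, \ldots, a, mn\}$ to $\{2, \ldots, a + 1, b_1 + 1\}$ and the block $\{b_1, b_2, \ldots, mn - 1\}$ to $\{1, b_2 + 1, \ldots, mn\}$, rotating every other block by one unit, which is exactly $\rotA$ as in the second case.

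The principal technical obstacle will be the commutativity bookkeeping in the product formula for $\Nam{A_{n-1}}m$: verifying that $(m a' - 1, mn - 1)$ may be moved leftward past every $(\ta_{m, i}(w_i))^{-1}$ with $i \ne m - 1$, and symmetrically for $(m a', mn)$ on the right, and that the conjugation by $\sigma$ converts one transposition into the other. Once these rearrangements are justified and combined with the identification $\pi(mn - 1) = m a'$, the description of $\Psi$ as a single swap of $m a'$ with $mn$ in the partition, together with the known rotation interpretation of $\Krewtilde$, yields the two cases of the theorem by direct inspection.
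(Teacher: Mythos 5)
Your proposal is correct, and it organises the argument differently from the paper. You factor $\Krewplustilde^{(m)}=\Krewtilde^{(m)}\circ\Psi$ with $\Psi(w_0,\dots,w_m)=(w_0,\dots,w_{m-2},w_{m-1}^L,w_{m-1}^Rw_m)$, use the homomorphism property of the $\ta_{m,i}$ together with the fact that factors supported on different residue classes modulo~$m$ commute to show that, under $\Nam{A_{n-1}}m$, the map $\Psi$ is conjugation by the transposition $(ma',mn)$ with $ma'=\pi(mn-1)=b_1$ (a swap of $b_1$ and $mn$ between their blocks), and then quote the rotation interpretation of Armstrong's map $\Krewtilde^{(m)}$. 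The paper instead works with $\Krewplustilde^{(m)}$ directly: it encodes the block data by congruences ($a=mA-1$, $b_1=mB_1$, $b_2=mB_2+1$), reads off $w_{m-1}(n)$, $w_{m-1}(B_1)$, $w_m(B_2)$, $w_m(n)$ from the definition of $\Nam{A_{n-1}}m$, applies \Cref{lem:LRA} to get $w_{m-1}^R=(B_1,n)$, and evaluates $cw_{m-1}^Rw_mc^{-1}$ and $w_{m-1}^L$ at the relevant points to track where $1$, $a+1$, $b_1+1$ are sent — relying, as you do, on the rotation behaviour of $\Krewtilde$ for all remaining block connections. Your factorisation buys a cleaner conceptual statement (``swap $b_1\leftrightarrow mn$, then rotate'') and makes the first case ($w_{m-1}^R=\one$, so $\Psi=\mathrm{id}$) transparent, at the price of introducing the auxiliary map $\Psi$; the paper's computation is more pedestrian but self-contained at the level of specific values. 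Two small points you should make explicit to close the argument: (i) $\Psi$ does land in $\mNC[A_{n-1}]$ — the additivity $\lenR(w_{m-1}^Rw_m)=\lenR(w_{m-1}^R)+\lenR(w_m)$ holds because $w_m(n)=n$ and $w_{m-1}^R=(a',n)$ merges the fixed point $n$ into the cycle of $a'$ (or simply by subadditivity, since the total length can neither exceed nor drop below $n-1$) — and this is needed before the rotation property of $\Krewtilde^{(m)}$ may be applied to the shifted tuple; and (ii) the identifications $ma'=b_1$ and ``$a'=n$ if and only if $mn-1$ lies in the block of $mn$'' use that every block of an element of $\mNCA[mn]$, viewed as a cycle, traverses its elements in increasing order, so $\pi(mn-1)$ is the minimum of the block of $mn-1$ whenever $mn-1$ is its maximum.
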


\begin{figure}
  \centering
  \begin{tikzpicture}[scale=1]
    \polygon{(-3.5,0)}{obj}{45}{2.5}
      {1,,,,,,\hspace{5pt},,,,,,,,,\hspace{5pt},,,,a,,,,,b_1,,,,b_2,,,,,,,\hspace{15pt},,,,,,mn-1\hspace{4pt},,mn\hspace{2pt},}

    \draw[line width=2.5pt,black] (obj1) to[bend left=50] (obj44);

    \draw[dash pattern=on 2pt off 2pt on 2pt off 2pt on 2pt off 2pt on 2pt off 2pt on 2pt off 2pt on 2pt off 46pt on 2pt off 2pt on 2pt off 2pt on 2pt off 2pt on 2pt off 2pt] (obj7) to[bend right=50] (obj16);
    
    \draw (obj16) to[bend right=50]
          (obj20) to[bend left=10]
          (obj44) to[bend right=50]
          (obj1)  to[bend right=50]
          (obj7);

    \draw[dash pattern=on 2pt off 2pt on 2pt off 2pt on 2pt off 2pt on 2pt off 2pt on 2pt off 2pt on 2pt off 38pt on 2pt off 2pt on 2pt off 2pt on 2pt off 2pt on 2pt off 2pt] (obj29) to[bend right=50] (obj36);
    
    \draw (obj36) to[bend right=50]
          (obj42) to[bend left=10]
          (obj25) to[bend right=50] (obj29);

    \draw[dotted] (obj2)  to[bend right=50] (obj6);
    \draw[dotted] (obj37) to[bend right=50] (obj41);
    \node at ($($0.92*($(obj4)+(3.5,0)$)$)-(3.5,0)$) {\tiny$X$};
    \node at ($($0.92*($(obj39)+(3.5,0)$)$)-(3.5,0)$) {\tiny$Y$};

    \node[inner sep=0pt] at (0,0) {$\mapsto$};

    \polygon{(3.5,0)}{obj}{45}{2.5}
      {1,,2,,,,,,\hspace{5pt},,,,,,,,,\hspace{5pt},,,,a+1,,,,,b_1+1\hspace{8pt},,,,b_2+1\hspace{10pt},,,,,,,\hspace{15pt},,,,,,mn\hspace{2pt},}

    \draw[line width=2.5pt,black] (obj1) to[bend left=50] (obj44);

    \draw[dash pattern=on 2pt off 2pt on 2pt off 2pt on 2pt off 2pt on 2pt off 2pt on 2pt off 2pt on 2pt off 46pt on 2pt off 2pt on 2pt off 2pt on 2pt off 2pt on 2pt off 2pt] (obj9) to[bend right=50] (obj18);
    
    \draw (obj18) to[bend right=50]
          (obj22) to[bend right=50]
          (obj27) to[bend left=10]
          (obj3)  to[bend right=50]
          (obj9);

    \draw[dash pattern=on 2pt off 2pt on 2pt off 2pt on 2pt off 2pt on 2pt off 2pt on 2pt off 2pt on 2pt off 28pt on 2pt off 2pt on 2pt off 2pt on 2pt off 2pt on 2pt off 2pt] (obj31) to[bend right=20] (obj38);
    
    \draw (obj38) to[bend right=50]
          (obj44) to[bend right=50]
          (obj1)  to[bend left=10]
          (obj31);

    \draw[dotted] (obj4)  to[bend right=50] (obj8);
    \draw[dotted] (obj39) to[bend right=50] (obj43);
    \node at ($($0.92*($(obj6)-(3.5,0)$)$)+(3.5,0)$) {\tiny$X'$};
    \node at ($($0.92*($(obj41)-(3.5,0)$)$)+(3.5,0)$) {\tiny$Y'$};

    \end{tikzpicture}
  \caption{The action of the pseudo-rotation $\rotA$}
\label{fig:7}
\end{figure}

\begin{figure}
  \centering
  \begin{tikzpicture}[scale=1]
    \polygon{(-3.5,0)}{objleft}{24}{2.5}
      {1,2,3,4,5,6,7,8,9,10,11,12,13,14,15,16,17,18,19,20,21,22,23,24}
     \draw[line width=2.5pt,black] (objleft1) to[bend left] (objleft24);
     \draw[fill=black,fill opacity=0.1] (objleft1) to[bend right=30] (objleft5) to[bend right=30] (objleft6) to[bend right=30] (objleft13) to[bend right=30] (objleft14) to[bend right=30] (objleft24) to[bend right=30] (objleft1);
     \draw[fill=black,fill opacity=0.1] (objleft2) to[bend right=30] (objleft3) to[bend right=30] (objleft4) to[bend left=30] (objleft2);
     \draw[fill=black,fill opacity=0.1] (objleft7) to[bend right=30] (objleft8) to[bend right=30] (objleft12) to[bend left=30] (objleft7);
     \draw[fill=black,fill opacity=0.1] (objleft9) to[bend right=30] (objleft10) to[bend right=30] (objleft11) to[bend left=30] (objleft9);
     \draw[fill=black,fill opacity=0.1] (objleft15) to[bend right=30] (objleft19) to[bend right=30] (objleft23) to[bend left=30] (objleft15);
     \draw[fill=black,fill opacity=0.1] (objleft16) to[bend right=30] (objleft17) to[bend right=30] (objleft18) to[bend left=30] (objleft16);
     \draw[fill=black,fill opacity=0.1] (objleft20) to[bend right=30] (objleft21) to[bend right=30] (objleft22) to[bend left=30] (objleft20);

     \node[inner sep=0pt] at (0,0) {$\mapsto$};

     \polygon{(3.5,0)}{objright}{24}{2.5}
      {1,2,3,4,5,6,7,8,9,10,11,12,13,14,15,16,17,18,19,20,21,22,23,24}
     \draw[line width=2.5pt,black] (objright1) to[bend left] (objright24);
     \draw[fill=black,fill opacity=0.1] (objright2) to[bend right=30] (objright6) to[bend right=30] (objright7) to[bend right=30] (objright14) to[bend right=30] (objright15) to[bend right=30] (objright16) to[bend right=30] (objright2);
     \draw[fill=black,fill opacity=0.1] (objright3) to[bend right=30] (objright4) to[bend right=30] (objright5) to[bend left=30] (objright3);
     \draw[fill=black,fill opacity=0.1] (objright8) to[bend right=30] (objright9) to[bend right=30] (objright13) to[bend left=30] (objright8);
     \draw[fill=black,fill opacity=0.1] (objright10) to[bend right=30] (objright11) to[bend right=30] (objright12) to[bend left=30] (objright10);
     \draw[fill=black,fill opacity=0.1] (objright17) to[bend right=30] (objright18) to[bend right=30] (objright19) to[bend left=30] (objright17);
     \draw[fill=black,fill opacity=0.1] (objright20) to[bend right=30] (objright24) to[bend right=30] (objright1) to[bend left=30] (objright20);
     \draw[fill=black,fill opacity=0.1] (objright21) to[bend right=30] (objright22) to[bend right=30] (objright23) to[bend left=30] (objright21);
    \end{tikzpicture}
  \caption{The action of the pseudo-rotation $\rotA$ in a concrete example}
\label{fig:1}
\end{figure}
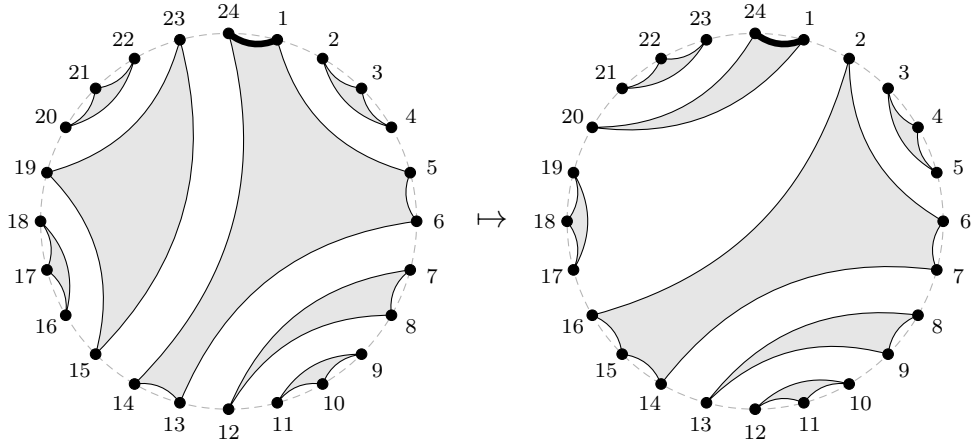

\begin{remark}
\Cref{fig:7} provides a schematic illustration of 
the construction in the statement of 
the above theorem.
\Cref{fig:1} then provides a concrete example for the parameters $m=3$, $n=8$, $a=14$,
$b_1=15$, and $b_2=19$.
Since this map~$\rotA$ essentially acts as rotation, possibly except for the ``neighbourhood'' of~$1$, we call it frequently a \defn{pseudo-rotation}.
\end{remark}

\begin{proof}[Proof of \Cref{prop:2}]
The only claim which needs proof is the one on the image of the
blocks $\{b_1,b_2,\dots, mn-1\}$ and $\{1,\dots,a,mn\}$ in the case where
$mn-1$ is not in the same block as $1$ and $mn$. By the structure of
$m$-divisible non-crossing partitions, we have 
$$
b_1\equiv b_2-1\equiv a-(m-1)\equiv 0\pmod m.
$$
Let us write 
$b_1=mB_1$,
$b_2=mB_2+1$, and
$a=mA-1$, for suitable integers 
$B_1,B_2,A$. If $(w_0,w_1,\dots,w_m)$ denotes the
element of $\mNCPlus[A_{n-1}]$ which corresponds to~$\pi$ under the
bijection $\Nam{A_{n-1}}m$, then we have
\begin{align*}
w_{m-1}(n)&=A,\\
w_{m-1}(B_1)&=n,\\
w_m(B_2)&=B_1,\\
w_m(n)&=n.
\end{align*}
Consequently, we have $w_{m-1}^R=(B_1,n)$ and
\begin{align*}
cw_{m-1}^Rw_mc^{-1}(1)&=B_1+1,\\
cw_{m-1}^Rw_mc^{-1}(B_2+1)&=1,\\
w_{m-1}^L(B_1)&=A,\\
w_{m-1}^L(n)&=n.
\end{align*}
In view of the definition, it follows that
$\Nam{A_{n-1}}m\big(\Krewplustilde(w_0,w_1,\dots,w_m)\big)$ maps 
$a+1=mA$ to $mB_1+1=b_1+1$,
$b_1+1=mB_1+1$ to $m\cdot 1-(m-1)+1=2$,
$1=m\cdot 1-(m-1)$ to $m(B_2+1)-(m-1)+1=b_2+1$,
as we claimed.
\end{proof}

\subsection{The combinatorial positive Kreweras maps}
\label{sec:combA}

The definition of $\rotA$ from \Cref{prop:2} to
make sense on an \emph{arbitrary} non-crossing
partition needed that blocks should be of size at least~$2$.
It is a simple matter to extend this so that it
makes sense for non-crossing partitions without any restrictions.
See \Cref{fig:8} for an illustration of the following definition.

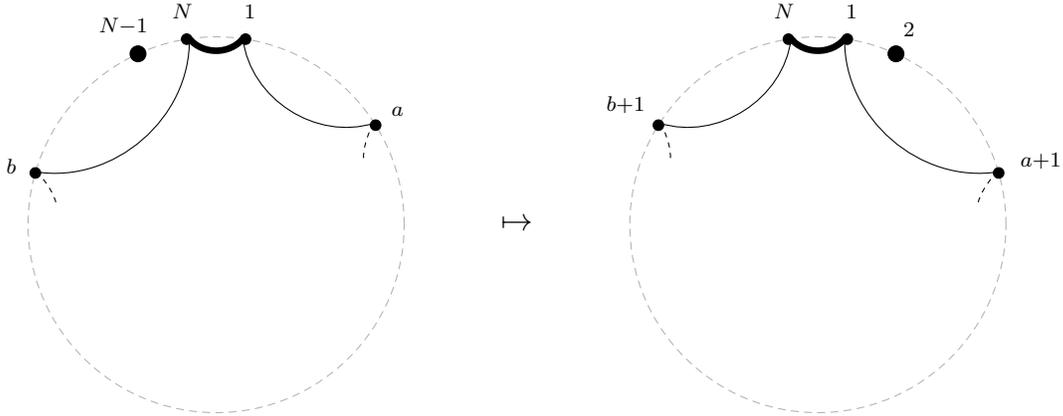
\begin{figure}
\begin{center}
    \begin{tikzpicture}[scale=1]
    \polygonlabel{(-4,0)}{obj}{44}{2.5}
      {1,,,,,,a,,,,,,,,,,,,,,,,,,,,,,,,,,,,b,,,,,,N-1,,N,}

    \draw[line width=2.5pt,black] (obj1) to[bend left=50] (obj43);
    \draw (obj35) to[bend right=50]
          (obj43) to[bend right=50]
          (obj1) to[bend right=50]
          (obj7);

    \draw[black, dash pattern=on 2pt off 2pt on 2pt off 2pt on 2pt off
2pt on 2pt off 35pt] (obj7) to[bend right=50] (obj11);
    \draw[black, dash pattern=on 2pt off 2pt on 2pt off 2pt on 2pt off
2pt on 2pt off 35pt] (obj35) to[bend left=50] (obj31);

    \filldraw[black] (obj41) circle(3pt);

    \node[inner sep=0pt] at (0,0) {$\mapsto$};

    \polygonlabel{( 4,0)}{obj}{44}{2.5}
      {1,,2,,,,,,a+1,,,,,,,,,,,,,,,,,,,,,,,,,,,,b+1,,,,,,N,}

    \draw[line width=2.5pt,black] (obj1) to[bend left=50] (obj43);
    \draw (obj37) to[bend right=50]
          (obj43) to[bend right=50]
          (obj1) to[bend right=50]
          (obj9);

    \draw[black, dash pattern=on 2pt off 2pt on 2pt off 2pt on 2pt off
2pt on 2pt off 35pt] (obj9) to[bend right=50] (obj13);
    \draw[black, dash pattern=on 2pt off 2pt on 2pt off 2pt on 2pt off
2pt on 2pt off 35pt] (obj37) to[bend left=50] (obj33);

    \filldraw[black] (obj3) circle(3pt);

\end{tikzpicture}
\end{center}
\caption{Definition of the pseudo-rotation under presence of 
singleton blocks}
\label{fig:8}
\end{figure}

\begin{definition} \label{def:phiallg}
We define the action of $\rotA$ on a positive non-crossing partition~$\pi$
of $\{1,2,\dots,N\}$ to be given by \Cref{prop:2} with $mn$ replaced by
$N$ (cf.\ \Cref{fig:7,fig:1}), except when the block containing $N-1$ is a
singleton block (in which case \Cref{prop:2} would not make sense).
If $N-1$ forms its own singleton block, then the image of~$\rotA$
is the non-crossing partition which arises from~$\pi$ by replacing the
singleton block $\{N-1\}$ by the singleton block $\{2\}$, and by adding $1$ to
each element of the remaining blocks of~$\pi$, except to $N$ and 
$1$ which remain untouched; see \Cref{fig:8} for an illustration.
\end{definition}

As it turns out, this definition is also in agreement with the
image under $\Nam{A_{n-1}}1$ of the map $\Krewplustilde^{(1)}$ in
\Cref{def:K-alt} for the case where $m=1$.

\begin{theorem} \label{prop:2-m=1}
Let $n$ be a positive integer.
Under the bijection $\Nam{A_{n-1}}1$, the map $\Krewplustilde^{(1)}$
from \Cref{def:K-alt} translates to the combinatorial map~$\rotA$ described in
\Cref{def:phiallg}  with $N=n$.
\end{theorem}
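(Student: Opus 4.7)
The plan is to verify the theorem by a case analysis on $w \in \NCPlus[A_{n-1}]$. Throughout, write $\pi_w := \Nam{A_{n-1}}{1}(cw^{-1},w)$ for the positive non-crossing partition associated with $w$, whose blocks are the cycles of $w_0 = cw^{-1}$ by \Cref{prop:block}; the goal is to show $\pi_{\Krewplustilde^{(1)}(w)} = \rotA(\pi_w)$.

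First, I unpack \Cref{def:K-alt} at the level of a single reflection. With $c = (1,2,\dots,n)$, conjugation by $c$ sends $(a,b)$ to $(a+1,b+1) \pmod n$, and by \Cref{lem:LRA} the reflections in $\invc^L$ are exactly those $(a,b)$ with $b < n$. Hence for $t = (a,b) \in \invc^L$, the minimal $\ell \ge 1$ with $c^\ell t c^{-\ell} \in \invc^L$ is $\ell = 1$, image $(a+1,b+1)$, when $b < n-1$; $\ell = 2$, image $(1,a+2)$, when $b = n-1$ and $a < n-2$; and $\ell = 3$, image $(1,2)$, when $(a,b) = (n-2,n-1)$. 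In the unique $<_c$-increasing factorization $w = t_1 \cdots t_k$ given by \Cref{prop:nc_unique_factorization}, at most one factor $t_j = (a_j,n-1)$ has $b_j = n-1$.

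\emph{Case I: $w(n-1) = n-1$.} Since $n-1$ is fixed by $w$, no $t_i$ involves $n-1$; every shift is by one and, since shifting by one preserves $<_c$, no re-sorting is needed. Therefore $w' := \Krewplustilde^{(1)}(w) = \prod_i c t_i c^{-1} = c w c^{-1}$, whence $c(w')^{-1} = c \cdot cw^{-1} \cdot c^{-1}$. This identifies $\pi_{w'}$ as the clockwise rotation of $\pi_w$ by one unit. On the other hand $cw^{-1}(n-1) = w^{-1}(n-1)+1 = n$ and $cw^{-1}(n) = 1$, so $\{1,n-1,n\}$ lie in a common block of $\pi_w$, placing us in the first case of \Cref{def:phiallg} which prescribes the same rotation. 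Hence $\rotA(\pi_w) = \pi_{w'}$.

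\emph{Case II: $w(n-1) \ne n-1$.} The unique factor $t_j = (a_j, n-1)$ then has $a_j$ equal to the second-largest element of the cycle of $w$ through $n-1$, and a short computation shows that the block of $\pi_w$ through $n-1$ is the singleton $\{n-1\}$ precisely when $a_j = n-2$. Let $P := \prod_{i<j}(c t_i c^{-1})$ and $Q := \prod_{i>j}(c t_i c^{-1})$, so that $PQ = c\tilde wc^{-1}$ with $\tilde w := \prod_{i\ne j} t_i$. Because the new front reflection $(1,a_j+2)$ (or $(1,2)$ when $a_j = n-2$) is $<_c$-smaller than every other shifted reflection, the sorted product is $w' = (1,a_j+2)\cdot P\cdot Q$ (respectively $w' = (1,2)\cdot P\cdot Q$). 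A direct cycle-tracing in $PQ$ shows that $1$ sits in the rotation of the special block of $\pi_w$ while $a_j+2$ sits in the rotation of the block through $n-1$ (with $n-1$ deleted), so left-multiplication by the new front reflection swaps the tails past these two positions, producing exactly the block re-assignment in the second case of \Cref{prop:2}. The degenerate situation $a_j = n-2$ (in which $a_j+2 = n$) collapses this effect into the singleton rule of \Cref{def:phiallg}.

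The principal obstacle is the cycle-tracing computation in Case~II, because inserting the new front reflection modifies two cycles of $c\tilde w c^{-1}$ simultaneously. The key is to pin down the positions of $1$ and $a_j+2$ in those cycles: using that $w$ is non-crossing and that removing $t_j$ from the factorization deletes $n-1$ from its block of $\pi_w$ while leaving all remaining blocks intact up to a rotation by one, one checks that the tail-swap matches the two-block rule of \Cref{prop:2} (or the singleton rule of \Cref{def:phiallg}) element-for-element, completing the verification.
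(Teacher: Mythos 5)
Your opening coincides with the paper's own proof: both unpack \Cref{def:K-alt} into the three conjugation rules $(a,b)\mapsto(a+1,b+1)$, $(a,n-1)\mapsto(1,a+2)$, $(n-2,n-1)\mapsto(1,2)$, and both split according to whether some factor of the increasing factorisation of $w$ has the form $(a,n-1)$. Your Case~I is correct and, if anything, cleaner than the paper's: no factor involves $n-1$, every conjugate stays in $\invc^L$ with shift one, the relative order is preserved, so the image is $cwc^{-1}$ and the Kreweras complement is conjugated by $c$, i.e.\ rotated; and your observation that $w(n-1)=n-1$ is equivalent to $n-1$ lying in the special block of $\pi_w$ makes the case dichotomy match the dichotomy in \Cref{prop:2}/\Cref{def:phiallg}. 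Where you diverge from the paper is the hard case: the paper argues inside the chord realisation of the Kreweras complement (the factors $t_i$ drawn as arcs together with the complement blocks), where the change from overlapping to non-overlapping arcs caused by $(b_1-1,n-1)\mapsto(1,b_1+1)$ visibly produces the two new blocks $\{1,b_2+1,\dots,n\}$ and $\{2,\dots,a+1,b_1+1\}$; you propose instead a direct group-theoretic computation with the cycles of $c(w')^{-1}$.

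The gap is that this computation is never performed. Identifying the cycles of $c(w')^{-1}$ for $w'=(1,a_j+2)\cdot P\cdot Q$ and matching them element-for-element with the second case of \Cref{prop:2} and with the singleton rule of \Cref{def:phiallg} is the entire content of the theorem in the nontrivial case, and in your write-up it appears only as ``a direct cycle-tracing shows'' and ``one checks''; as it stands the proposal is a plan, not a proof. Two of the auxiliary claims you would need are moreover not right as stated. First, under the reflection ordering forced by \Cref{lem:LRA} (the last letters of $\invc$ are $(1,n),\dots,(n-1,n)$, so $<_c$ compares transpositions by their larger entry first), the reflection $(1,a_j+2)$ is in general \emph{not} $<_c$-smaller than every shifted reflection: for instance a shifted factor $(2,3)$ precedes it whenever $a_j\ge 2$. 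The identity $w'=(1,a_j+2)\cdot P\cdot Q$ can be rescued, because every shifted reflection that precedes $(1,a_j+2)$ in this order is disjoint from $\{1,a_j+2\}$ and therefore commutes past it, but that argument is missing and must replace your minimality claim. Second, deleting $t_j$ from the factorisation does not ``delete $n-1$ from its block of $\pi_w$ while leaving all remaining blocks intact up to a rotation'': passing from $w$ to $\tilde w$ makes the Kreweras complement \emph{coarser} (two blocks of $\pi_w$ merge), so locating $1$ and $a_j+2$ in the cycles of $P\,Q$ requires genuinely more bookkeeping than your sketch acknowledges. The strategy is sound and very likely completable, but the decisive verification in Case~II still has to be written out.
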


\begin{figure}
  \centering
  \begin{tikzpicture}[scale=1]
    \polygon{(-3.5,0)}{obj}{45}{2.5}
      {1,,,,,,\hspace{5pt},,,,,,,,,\hspace{5pt},,,,a,,,,,b_1,,,,b_2,,,,,,,\hspace{15pt},,,,,,n-1\hspace{10pt},,\hspace{2pt}n,}
    \polygon{(-3.5,0)}{objXXX}{45}{2.5}
      {,,,,,,,,,,,,,,,,,,,,\hspace{1pt},,,\hspace{12pt}\color{red}b_1-1,,\hspace{1pt},,\hspace{1pt},,,,,,,,,,,,,,,\color{red}n-1}

    \draw[line width=2.5pt,black] (obj1) to[bend left=50] (obj44);
    \draw[line width=2.5pt,red] (objXXX21) to[bend right=30] (objXXX24) to[bend right=10] (objXXX43);
    \draw[line width=2.5pt,red] (objXXX26) to[bend right=30] (objXXX28);

    \draw[dash pattern=on 2pt off 2pt on 2pt off 2pt on 2pt off 2pt on 2pt off 2pt on 2pt off 2pt on 2pt off 46pt on 2pt off 2pt on 2pt off 2pt on 2pt off 2pt on 2pt off 2pt] (obj7) to[bend right=50] (obj16);

    \draw (obj16) to[bend right=50]
          (obj20) to[bend left=10]
          (obj44) to[bend right=50]
          (obj1)  to[bend right=50]
          (obj7);

    \draw[dash pattern=on 2pt off 2pt on 2pt off 2pt on 2pt off 2pt on 2pt off 2pt on 2pt off 2pt on 2pt off 38pt on 2pt off 2pt on 2pt off 2pt on 2pt off 2pt on 2pt off 2pt] (obj29) to[bend right=50] (obj36);
    
    \draw (obj36) to[bend right=50]
          (obj42) to[bend left=10]
          (obj25) to[bend right=50] (obj29);

    \draw[dotted] (obj2)  to[bend right=50] (obj6);
    \draw[dotted] (obj37) to[bend right=50] (obj41);
    \node at ($($0.92*($(obj4)+(3.5,0)$)$)-(3.5,0)$) {\tiny$X$};
    \node at ($($0.92*($(obj39)+(3.5,0)$)$)-(3.5,0)$) {\tiny$Y$};

    \node[inner sep=0pt] at (0,0) {$\mapsto$};

    \polygon{(3.5,0)}{obj}{45}{2.5}
      {1,,2,,,,,,\hspace{5pt},,,,,,,,,\hspace{5pt},,,,a+1,\hspace{1pt},,,\hspace{3pt}\color{red}b_1,b_1+1\hspace{5pt},,,,b_2+1\hspace{10pt},,,,,,,\hspace{15pt},,,,,,n\hspace{2pt},}

    \polygon{(3.5,0)}{objXXX}{45}{2.5}
      {,\color{red}1,,,,,,,,,,,,,,,,,,,,,,,,,,\color{red}b_1+1\hspace{20pt},,\hspace{1pt}}

    \draw[line width=2.5pt,black] (obj1) to[bend left=50] (obj44);
    \draw[line width=2.5pt,red] (objXXX30) to[bend left=30] (objXXX28) to[bend left=10] (objXXX2);
    \draw[line width=2.5pt,red] (objXXX23) to[bend right=30] (objXXX26);

    \draw[dash pattern=on 2pt off 2pt on 2pt off 2pt on 2pt off 2pt on 2pt off 2pt on 2pt off 2pt on 2pt off 46pt on 2pt off 2pt on 2pt off 2pt on 2pt off 2pt on 2pt off 2pt] (obj9) to[bend right=50] (obj18);
    
    \draw (obj18) to[bend right=50]
          (obj22) to[bend right=50]
          (obj27) to[bend left=10]
          (obj3)  to[bend right=50]
          (obj9);

    \draw[dash pattern=on 2pt off 2pt on 2pt off 2pt on 2pt off 2pt on 2pt off 2pt on 2pt off 2pt on 2pt off 28pt on 2pt off 2pt on 2pt off 2pt on 2pt off 2pt on 2pt off 2pt] (obj31) to[bend right=20] (obj38);
    
    \draw (obj38) to[bend right=50]
          (obj44) to[bend right=50]
          (obj1)  to[bend left=10]
          (obj31);

    \draw[dotted] (obj4)  to[bend right=50] (obj8);
    \draw[dotted] (obj39) to[bend right=50] (obj43);
    \node at ($($0.92*($(obj6)-(3.5,0)$)$)+(3.5,0)$) {\tiny$X'$};
    \node at ($($0.92*($(obj41)-(3.5,0)$)$)+(3.5,0)$) {\tiny$Y'$};
    \end{tikzpicture}
  \caption{The action of the pseudo-rotation $\rotA$ for $m=1$}
\label{fig:1m=1}
\end{figure}
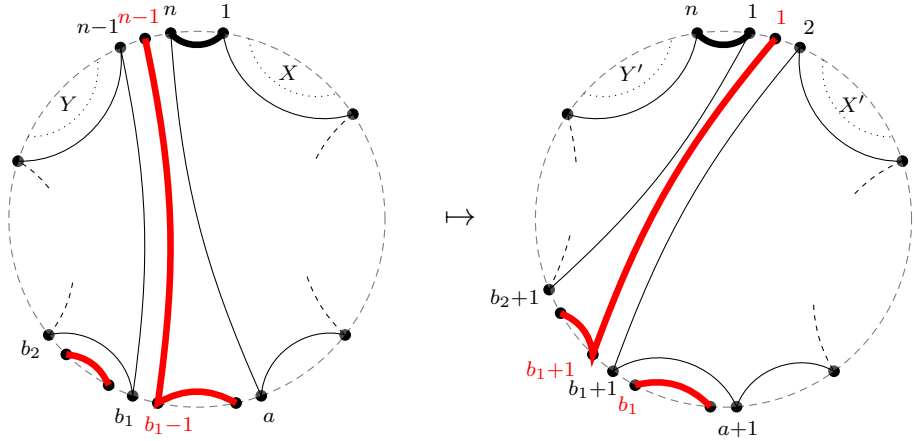

\begin{proof}[Sketch of proof]
Recall that the bijection $\Nam{A_{n-1}}1$ in the case $m=1$ reduces
to the simple map $w\mapsto cw^{-1}$, where due to our
choice~\eqref{eq:WSA} of Coxeter system, $c=(1,2,\dots,n)$.
This map is usually called the
``Kreweras complement". It has an elegant combinatorial realisation
(cf.\ \cite[Fig.~4.3 in Sec.~4.2]{Arm2006}) on which our arguments
will rely.

Let $w=w^L=t_1\cdots t_k$ be an element of $\mNCPlus[A_{n-1}][1]$,
and let $\pi$ be the non-crossing partition in $\NCAPlus[n]$
corresponding to the Kreweras complement $(1,2,\dots,n)w^{-1}$.
According to \Cref{def:K-alt}, the Kreweras
map~$\Krewplustilde^{(1)}$ acts by sending the transposition~$t_i$ to
$c^jt_ic^{-j}$  
with $j$ minimal such that $c^jt_ic^{-j}\in \invc^L$.
Now, since $c=(1,2,\dots,n)$,
this action will always be $(a,b)\mapsto(a+1,b+1)$, except if $b=n-1$.
In the latter case, the action will be $(a,n-1)\mapsto(1,a+2)$
(with $j=2$) if $a<n-2$, and it will be $(n-2,n-1)\mapsto(1,2)$
(with $j=3$). Hence, given the combinatorial realisation of the
Kreweras complement, the
map~$\Krewplustilde^{(1)}$ becomes rotation by one unit in
clockwise direction under the bijection $\Nam{A_{n-1}}1$ as long as
no $t_i$ is of the form $(a,n-1)$. 
In particular, a singleton block $\{k\}$ in~$\pi$ --- which under the Kreweras
complement corresponds to an adjacent transposition $(k-1,k)$ among
the~$t_i$'s --- is mapped to $\{k+1\}$ if $k\le n-1$.

On the other hand, let us assume that there is a $t_i$ with
$t_i=(b_1-1,n-1)$ for some $b_1<n$.
This situation is schematically illustrated in \Cref{fig:1m=1}.
The red arcs there correspond to transpositions~$t_i$.
In particular, among the $t_i$'s there may occur $(a_0,b_1-1)$, for
some $a_0$ with $a_0\le b_1-1$, and $(b_1,b_2-1)$, for some $b_2$ with
$b_1\le b_2-1$ (in both cases, equality corresponds to the
degenerate case where the ``doubled" number does not appear in the $t_i$'s). 
If $b_1<n-1$, then, as discussed just above,
$\Krewplustilde^{(1)}$ maps $(b_1-1,n-1)$ to $(1,b_1+1)$; see
\Cref{fig:1m=1}. In the Kreweras complement~$\pi$, this means that,
before the action of~$\Krewplustilde^{(1)}$, there are two
blocks $\{b_1,b_2,\dots,n-1\}$ and $\{1,\dots,a,n\}$ with
$$1<\dots<a<b_1<b_2\dots<n-1<n;$$
after the action of~$\Krewplustilde^{(1)}$, there are the two
blocks $(1,b_2+1,\dots,n)$ and $(2,\dots,a+1,b_1+1)$.
It should be noted that this transformation of the blocks is
caused by the fact that the overlapping (red) arcs $(a_0,b_1-1)$ and
$(b_1-1,n-1)$ become the non-overlapping arcs $(a_0+1,b_1)$ and
$(b_1+1,1)$ in the image after application of~$\Krewplustilde^{(1)}$,
while the non-overlapping arcs $(b_1-1,n-1)$ and 
$(b_1,b_2-1)$ become the overlapping arcs $(b_1+1,1)$ and
$(b_1+1,b_2)$ in the image.
All other blocks in the combinatorial realisation~$\pi$ of $w=w^L=t_1\cdots
t_k$ are rotated by one unit in clockwise direction.

Finally, if one of the $t_i$'s should equal $(n-2,n-1)$, then this
corresponds to the singleton block $\{n-1\}$ in~$\pi$.
As discussed above, the action
of $\Krewplustilde^{(1)}$ maps $t_i=(n-2,n-1)$ to $(1,2)$,
corresponding to the singleton block $\{2\}$ in the Kreweras complement.

All this is exactly in agreement with \Cref{def:phiallg}.
\end{proof}

We conclude with the following theorem describing the order of the combinatorial\break map~$\rotA$. In view of our previous discussion, it follows from
\Cref{cor:order} for the type~$A_{N-1}$ with $m=1$.
However, since we believe it is interesting for its own sake, we
provide a direct, combinatorial, proof in \Cref{app:order-A}.

\begin{theorem} \label{lem:N-2}
Let $N$ be a positive integer with $N\ge2$.
The order of the map~$\rotA$ acting on the set of positive non-crossing 
partitions~$\pi$ of\/ $\{1,2,\dots,N\}$ is $N-2$.
\end{theorem}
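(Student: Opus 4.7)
The plan is to prove the order of $\rotA$ by establishing matching lower and upper bounds.

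For the lower bound I would exhibit the positive non-crossing partition $\pi_0=\bigl\{\{1,N\},\{2,3,\dots,N-1\}\bigr\}$ and trace its $\rotA$-orbit explicitly. Applying the exceptional case of \Cref{prop:2} (with $a=1$ and $b_i=i+1$ for $1\le i\le N-3$) gives
\[
  \rotA(\pi_0)=\bigl\{\{1,4,5,\dots,N\},\{2,3\}\bigr\}.
\]
From this step on, $N-1$ lies in the special block, so the generic case of \Cref{prop:2} applies at every subsequent step until $N-1$ leaves the special block again. A short induction on $k$ then yields
\[
  \rotA^{k}(\pi_0)=\bigl\{\{1,2,\dots,k,\,k+3,k+4,\dots,N\},\,\{k+1,k+2\}\bigr\}, \qquad 1\le k\le N-3.
\]
Applying $\rotA$ one more time to $\rotA^{N-3}(\pi_0)=\{\{1,\dots,N-3,N\},\{N-2,N-1\}\}$, again in the exceptional case, returns us to $\pi_0$. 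Since these $N-2$ partitions are pairwise distinct, the order of $\rotA$ is at least $N-2$.

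For the upper bound, the plan is to prove $\rotA^{N-2}=\mathrm{id}$ by encoding each positive non-crossing partition $\pi$ of $\{1,\dots,N\}$ as a pair $(\pi',B)$, where $\pi'=\pi|_{\{2,\dots,N-1\}}$ is the restriction (a non-crossing partition of the $(N-2)$-point cycle $\{2,3,\dots,N-1\}$) and $B$ is a marker that designates which block of $\pi'$ extends to the special block of $\pi$ (with the convention that ``no marker'' encodes the special block $\{1,N\}$). Under this encoding, the generic case of \Cref{prop:2} is manifestly conjugate to the cyclic rotation $i\mapsto i+1$ on $\{2,3,\dots,N-1\}$, the marker being transported along with its block. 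The crux of the argument is to verify the analogous statement in the exceptional case of \Cref{prop:2} and in the singleton case of \Cref{def:phiallg}: in those cases, the marker is transferred from its current block to a neighbouring one and one block of $\pi'$ is cleaved between the old and the new marked block, but the composite effect on $(\pi',B)$ over any maximal run of consecutive non-generic steps agrees with ordinary cyclic rotation of the same length on the encoding. Iterating $N-2$ steps then brings the marker once around the cycle $\{2,\dots,N-1\}$ and restores the entire configuration.

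The main obstacle is precisely this verification in the exceptional and singleton cases, since there the blocks of $\pi$ are redistributed nontrivially rather than merely rotated. The plan for handling it is a careful case analysis, splitting on whether the special block of $\pi$ equals $\{1,N\}$, on whether $\{N-1\}$ is a singleton of $\pi$, and on the internal structure of the block of $\pi$ containing $N-1$, and tracking in each case how the pair $(\pi',B)$ transforms from one step to the next. Once this compatibility is established, the upper bound $\rotA^{N-2}=\mathrm{id}$ follows, and combined with the lower bound this yields the stated order. As a sanity check, the same conclusion can be read off from \Cref{cor:order} in type $A_{N-1}$ with $m=1$ through the translation of \Cref{prop:2-m=1}, but our goal here is to give a self-contained combinatorial argument.
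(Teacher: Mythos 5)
Your lower bound is fine for $N\ge 5$, but it already fails at $N=4$: there $\pi_0=\{\{1,4\},\{2,3\}\}$ falls into the exceptional case of \Cref{prop:2} with $a=1$, $b_1=2$, $b_2=3$, and the image consists of the blocks $\{1,b_2+1\}=\{1,4\}$ and $\{2,b_1+1\}=\{2,3\}$, i.e.\ $\rotA(\pi_0)=\pi_0$, so the claimed $N-2$ partitions are not pairwise distinct (a different witness, e.g.\ $\{1,2,N\}$ together with singletons, or a separate treatment of small $N$, would repair this).

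The serious problem is the upper bound. The crux claim --- that over any maximal run of consecutive non-generic steps the effect on the pair $(\pi',B)$ agrees with cyclic rotation of the $(N-2)$-cycle $\{2,\dots,N-1\}$ by the length of the run --- is false. Take $N=6$ and $\pi=\{\{1,2,6\},\{3,4,5\}\}$, with encoding $\pi'=\{\{2\},\{3,4,5\}\}$ and marker on $\{2\}$. The step out of $\pi$ is a single exceptional step (so a maximal non-generic run of length $1$), and by \Cref{prop:2} with $a=2$, $b_1=3$, $b_2=4$ it yields $\rotA(\pi)=\{\{1,5,6\},\{2,3,4\}\}$, whose encoding is $\bigl(\{\{5\},\{2,3,4\}\},\{5\}\bigr)$. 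This is the rotation of the original encoding by $-1$, not by $+1$; the $+1$ rotation would give the marked pair $\bigl(\{\{3\},\{2,4,5\}\},\{3\}\bigr)$, which does not even correspond to a non-crossing partition. Equivalently: the next step is generic, so your claim would force $\rotA^2(\pi)$ to be the $+2$ rotation of the encoding, whereas in fact $\rotA^2(\pi)=\pi$ (this orbit has size $2$, while the rotation orbit of the encoding has size $4$). So $\rotA$ is not conjugate to the marked rotation in the way you propose, and the planned case analysis cannot close this gap, because the statement it is meant to verify is wrong. This is exactly why the paper's proof in \Cref{app:order-A} does something different: after discarding singletons it follows an individual block through its whole journey, keeping track of how maximal runs of consecutive elements gain and lose elements near $\{N,1\}$, and shows that these compensating ``jumps'' make every block return after exactly $N-2$ applications; no global conjugacy to a rotation of an $(N-2)$-cycle is used, and your example shows none of the naive ones exists.
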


\subsection{Enumeration of positive non-crossing partitions}
\label{sec:enumA}

Here we present our enumeration results for positive non-crossing
partitions in $\mNCAPlus$. Our proofs are based on a generating
function approach and are largely deferred to \Cref{app:GF}.

The most refined and most general
result is \Cref{thm:countingA} which provides a
formula for the number of multichains in the poset of these
non-crossing partitions in which the ranks of the elements of the
multichain as well as the block structure of the bottom element are prescribed. 
By specialisation and summation, we obtain, among others, results on the number of
such partitions with prescribed block structure and with prescribed
rank, and the total number of multichains of given length in
$\mNCAPlus$.

\begin{theorem} \label{thm:countingA}
Let $m,n,l$ be positive integers,
and let $s_1,s_2,\dots,s_l,b_1,b_2,\dots,b_n$ be
non-negative integers with $s_1+s_2+\dots+s_l=n-1$. 
The number of multichains 
$\pi_1\le \pi_2\le \dots\le \pi_{l-1}$ in the poset 
of positive $m$-divisible non-crossing partitions of\/ $\{1,2,\dots,mn\}$
with the property that $\rk(\pi_i)=s_1+s_2+\cdots +s_i$, 
$i=1,2,\dots,l-1$, and that the
number of blocks of size~$mi$ of~$\pi_1$ is~$b_i$, $i=1,2,\dots,n$,
is given by
\begin{equation} \label{eq:multichains-bi-si} 
\frac {mn-b_1-b_2-\dots-b_n} {(mn-1)(b_1+b_2+\dots+b_n)}
\binom {b_1+b_2+\dots+b_n}{b_1,b_2,\dots,b_n}
\binom {mn-1} {s_2}\cdots
\binom {mn-1} {s_l}
\end{equation}
if $b_1+2b_2+\dots+nb_n=n$ and $s_1+b_1+b_2+\dots+b_n=n$, and\/ $0$ otherwise.
\end{theorem}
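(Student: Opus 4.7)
I would prove \Cref{thm:countingA} via a generating-function and Lagrange inversion argument, which is the strategy executed in \Cref{app:GF}. The starting point is \Cref{prop:1}: a positive $m$-divisible non-crossing partition $\pi_1$ is characterised by the condition that its special block (the block containing $1$) also contains $mn$. I would introduce a weighted generating function
\[
F(z;\mathbf{x};\mathbf{y}) := \sum_{n\ge 1} z^{n}\sum_{(\pi_1,\ldots,\pi_{l-1})} \prod_{i\ge 1}x_i^{b_i(\pi_1)}\prod_{j=2}^{l}y_j^{s_j},
\]
in which the inner sum runs over all multichains in $\mNCAPlus$ with the prescribed invariants. The aim is then to derive a functional equation for $F$ and extract the coefficient of $z^{n}$ by Lagrange inversion.

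The key combinatorial decomposition cuts along the special block $B^\ast$ of $\pi_1$: removing $B^\ast$ splits the remaining elements of $\{1,\dots,mn\}$ into the contiguous segments between consecutive elements of $B^\ast$ read cyclically, and each segment carries an independent $m$-divisible non-crossing sub-partition. Moreover, since positivity is preserved under coarsening, the interval $[\pi_1,\hat 1]$ in $\mNCAPlus$ coincides with the corresponding interval in $\mNCA$; consequently the multichain extensions $\pi_2 \le \cdots \le \pi_{l-1}$ decompose compatibly with the blocks of $\pi_1$. Combining this decomposition with Edelman's classical refined chain enumeration in $\mNCA$ yields a functional equation for $F$ that differs from its well-understood non-positive analogue only by the distinguishing role of $B^\ast$.

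Lagrange inversion applied to this functional equation, and extraction of the coefficient of $z^{n}$, then produce the explicit product formula \eqref{eq:multichains-bi-si}. Under the extraction one can identify where each factor comes from: the multinomial $\binom{k}{b_1,\dots,b_n}$ accounts for the distribution of block sizes among the $k = b_1+\cdots+b_n$ blocks of $\pi_1$; the product $\binom{mn-1}{s_2}\cdots\binom{mn-1}{s_l}$ comes from the Kreweras--Edelman chain enumeration in the intervals $[\pi_1,\hat 1]$; and the prefactor $\tfrac{mn-k}{(mn-1)k}$ arises as the ``Jacobian'' contribution of the Lagrange inversion, reflecting the cyclic-lemma flavour of the enumeration.

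\textbf{Main obstacle.} The principal difficulty is the subtle coupling between positivity and the multichain structure: the number of multichain extensions above a fixed $\pi_1$ depends not only on its block structure $(b_i)$ but also on the specific $\pi_1$ (one can verify this already for $m=2$, $n=3$ with the two positive partitions of block vector $(3,0,0)$, which admit $3$ and $2$ extensions respectively for $s_2 = s_3 = 1$). Thus the joint count does \emph{not} factorise as (number of $\pi_1$'s with prescribed block structure) $\times$ (number of multichain extensions); the clean product form in \eqref{eq:multichains-bi-si} only emerges after summing over $\pi_1$ with fixed block structure. Tracking this cancellation is what forces the generating-function setup to keep $B^\ast$ distinguished throughout, and makes a clean application of Lagrange inversion the crux of the argument.
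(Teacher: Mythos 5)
Your overall strategy -- a weighted generating function, a decomposition that keeps the special block distinguished, and Lagrange inversion -- is indeed the strategy of the paper's proof (see \Cref{lem:TBa}, \Cref{cor:TCa} and \Cref{prop:TG} in \Cref{app:GF-A}). But the step you place at the centre of your plan is precisely the one that fails, and your own ``main obstacle'' paragraph shows why. You propose to cut along the special block $B^{\ast}$ of the bottom element $\pi_1$ and to handle the part of the multichain above $\pi_1$ by ``Edelman's classical refined chain enumeration'', claiming that the extensions ``decompose compatibly with the blocks of $\pi_1$''. They do not: an element $\pi_2\ge\pi_1$ merges blocks of $\pi_1$ across the segments cut out by $B^{\ast}$, and with $B^{\ast}$ itself, so the interval $[\pi_1,\hat 1]$ does not factor over those segments; as your (correct) $m=2$, $n=3$ example shows, the number of extensions is not even a function of the block type of $\pi_1$. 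Asserting that the product formula ``only emerges after summing over $\pi_1$ with fixed block structure'' names the phenomenon but supplies no mechanism: no functional equation for your $F$ is actually derived, so there is nothing to which Lagrange inversion can be applied. This is a genuine gap, not a technicality.

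The missing idea is to run the recursion in the opposite direction, by induction on the chain length $l$, working from the top of the chain downward. For a \emph{fixed coarser} element $\pi_2$, the set of positive $\pi_1\le\pi_2$ does factor over the blocks of $\pi_2$: each non-special block of size $mb$ is refined by an arbitrary $m$-divisible non-crossing partition of $mb$ elements, while the special block of size $ma$ is refined by a \emph{positive} one (so that $1$ and $mn$ stay together). The paper encodes this by taking the inductively known generating function for the multichains $\pi_2\le\cdots\le\pi_{l-1}$, weighted by the block structure of $\pi_2$ with the special block distinguished, and substituting $x_b\mapsto \coef{u^{mb}}\,t\,C^{(m)}(u)$ and $y_a\mapsto \coef{u^{ma}}\,t\,C^{(m)}_+(u)$, with $t$ recording the number of blocks (hence the rank) of $\pi_2$; extracting the relevant power of $t$, evaluating a Chu--Vandermonde sum, and applying Lagrange inversion then yields \eqref{eq:multichains-bi-si}. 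With this reversal your plan becomes essentially the paper's proof; without it, there is no route past the coupling you correctly identified.
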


For the proof of the theorem, see the end of \Cref{app:GF-A}.

The $l=2$ special case of the previous theorem yields an
enumeration formula for positive $m$-divisible non-crossing partitions
with a given block structure.

\begin{corollary} \label{cor:countingA1}
Let $m$ and $n$ be positive integers,
For non-negative integers $b_1,b_2,\dots,b_n$,
the number 
of positive $m$-divisible non-crossing partitions of\/ $\{1,2,\dots,mn\}$
which have exactly $b_i$
blocks of size~$mi$, $i=1,2,\dots,n$,
is given by
\begin{equation} \label{eq:multichains-bi} 
\frac {1} {mn-1}
\binom {b_1+b_2+\dots+b_n}{b_1,b_2,\dots,b_n}
\binom {mn-1} {b_1+b_2+\dots+b_n}
\end{equation}
if $b_1+2b_2+\dots+nb_n=n$, and\/ $0$ otherwise.
\end{corollary}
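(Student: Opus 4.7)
My plan is to derive the corollary as the $l=2$ specialisation of \Cref{thm:countingA}. When $l=2$, a multichain $\pi_1 \le \pi_2 \le \dots \le \pi_{l-1}$ reduces to a single element $\pi_1$, so the count on the left-hand side of \eqref{eq:multichains-bi-si} simply becomes the number of positive $m$-divisible non-crossing partitions~$\pi_1$ with $b_i$ blocks of size~$mi$ and rank $\rk(\pi_1)=s_1$. This is precisely what the corollary enumerates, provided one verifies that the rank is forced by the block structure.

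First I would record that, for a non-crossing partition in $\mNCAPlus$ with block data $(b_1,\dots,b_n)$, the total number of blocks is $B := b_1+b_2+\dots+b_n$, and under the bijection $\Nam{A_{n-1}}m$ this corresponds (via \Cref{prop:block}) to an element $(w_0,w_1,\dots,w_m)$ with $w_0$ having~$B$ cycles. Since $\lenR(w_0)=n-B$, the rank of~$\pi_1$ in the poset $\mNCAPlus$ equals $n-B$. In the notation of \Cref{thm:countingA} with $l=2$, this forces $s_1=n-B$, so the constraint $s_1+s_2=n-1$ yields $s_2=B-1$. The auxiliary hypothesis $s_1+B=n$ of \Cref{thm:countingA} is automatically satisfied, while $b_1+2b_2+\dots+nb_n=n$ is retained as the ``$0$ otherwise'' condition of the corollary.

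Substituting $s_2=B-1$ into \eqref{eq:multichains-bi-si} (with the product over $\binom{mn-1}{s_i}$ consisting of the single factor $\binom{mn-1}{s_2}$ when $l=2$) gives
\begin{equation*}
\frac{mn-B}{(mn-1)B}\,\binom{B}{b_1,b_2,\dots,b_n}\binom{mn-1}{B-1}.
\end{equation*}
I would then invoke the elementary identity
\begin{equation*}
\binom{mn-1}{B-1}=\frac{B}{mn-B}\binom{mn-1}{B},
\end{equation*}
obtained by comparing factorials, which immediately cancels the prefactor $(mn-B)/B$ and produces the asserted expression $\tfrac{1}{mn-1}\binom{B}{b_1,\ldots,b_n}\binom{mn-1}{B}$.

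There is no real obstacle: the entire content is absorbed into \Cref{thm:countingA}, and the deduction is merely a matter of identifying~$s_1$ with $n-B$ and performing a single binomial simplification. The only thing to watch is the degenerate case $B=0$ (which would force $n=0$ via $b_1+2b_2+\dots+nb_n=n$), and the case $B=mn$ (which cannot occur for $n\ge 1$ given the divisibility constraint), both of which are handled by the ``$0$ otherwise'' clause or are vacuous.
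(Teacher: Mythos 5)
Your proposal is correct and follows exactly the paper's route: the paper derives this corollary simply as the $l=2$ specialisation of \Cref{thm:countingA}, and your identification $s_1=n-B$, $s_2=B-1$ together with the binomial identity $\binom{mn-1}{B-1}=\frac{B}{mn-B}\binom{mn-1}{B}$ is precisely the (unstated) simplification. The only tiny slip is the claim that $B=mn$ cannot occur --- for $m=1$ and $b_1=n$ it does --- but in that case both \eqref{eq:multichains-bi-si} and \eqref{eq:multichains-bi} visibly vanish, so the conclusion is unaffected.
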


On the other hand, if we sum Expression~\eqref{eq:multichains-bi-si} over all
$s_2,s_3,\dots,s_l$ with $s_2+s_3+\dots+s_l=n-s_1-1$
using iterated Chu--Vandermonde summation
(see e.g.\ \cite[Sec.~5.1, Eq.~(5.27)]{GrKPAA}), then we obtain
the following generalisation.

\begin{corollary} \label{cor:countingA2}
Let $m,n,l$ be positive integers,
For non-negative integers $b_1,b_2,\dots,b_n$,
the number of multichains 
$\pi_1\le \pi_2\le \dots\le \pi_{l-1}$ in the poset  
of positive $m$-divisible non-crossing partitions of\/ $\{1,2,\dots,mn\}$
for which the number of blocks of size~$mi$ of~$\pi_1$ is~$b_i$,
$i=1,2,\dots,n$,
is given by
\begin{equation} \label{eq:multichains-bia} 
\frac {mn-b_1-b_2-\dots-b_n} {(mn-1)(b_1+b_2+\dots+b_n)}
\binom {b_1+b_2+\dots+b_n}{b_1,b_2,\dots,b_n}
\binom {(l-1)(mn-1)} {b_1+b_2+\dots+b_n-1}
\end{equation}
if $b_1+2b_2+\dots+nb_n=n$, and\/ $0$ otherwise.
\end{corollary}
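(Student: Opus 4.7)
The plan is to obtain Corollary \ref{cor:countingA2} as a direct summation consequence of Theorem \ref{thm:countingA}, exactly along the recipe announced in the paragraph just above the statement. The idea is to fix the block type $(b_1,\dots,b_n)$ of the minimal element~$\pi_1$ and then sum Expression~\eqref{eq:multichains-bi-si} over all admissible rank sequences $(s_2,\dots,s_l)$.

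First I would observe that once the block sizes are fixed with $b_1+2b_2+\dots+nb_n=n$, the rank $s_1=\rk(\pi_1)$ is forced by the secondary constraint $s_1+b_1+b_2+\dots+b_n=n$ in Theorem~\ref{thm:countingA}, so $s_1 = n - B$ with $B:=b_1+\dots+b_n$. Since a multichain is counted by that theorem only when $s_1+s_2+\dots+s_l = n-1$, the remaining rank increments satisfy
$$s_2+s_3+\dots+s_l = (n-1)-s_1 = B-1,$$
with each $s_i\ge 0$.

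Next I would pull out the prefactor of~\eqref{eq:multichains-bi-si} involving only the $b_i$'s and $mn$, which does not depend on $(s_2,\dots,s_l)$. What remains is the convolution
$$\sum_{\substack{s_2,\dots,s_l \ge 0 \\ s_2+\dots+s_l = B-1}} \binom{mn-1}{s_2}\binom{mn-1}{s_3}\cdots\binom{mn-1}{s_l} = \binom{(l-1)(mn-1)}{B-1},$$
by iterated Chu--Vandermonde summation (as cited in the text). Multiplying this evaluation by the extracted prefactor yields exactly~\eqref{eq:multichains-bia}, while the case $b_1+2b_2+\dots+nb_n\neq n$ is vacuous because Theorem~\ref{thm:countingA} already returns zero there.

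There is no genuine obstacle in this plan: the argument is pure bookkeeping plus a standard binomial convolution identity, which is presumably why the result is stated as a corollary rather than as a theorem. The only place where one has to be mildly careful is in verifying that the constraint $s_2+\dots+s_l=B-1$ coming from the rank equation is consistent with $B\ge 1$ (so that the binomial coefficient $\binom{(l-1)(mn-1)}{B-1}$ is non-negative), but this follows automatically from $b_1+2b_2+\dots+nb_n=n\ge 1$ forcing at least one $b_i\ge 1$, hence $B\ge 1$.
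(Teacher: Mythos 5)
Your proposal is correct and coincides with the paper's own derivation: the paper obtains Corollary~\ref{cor:countingA2} precisely by summing Expression~\eqref{eq:multichains-bi-si} over all $s_2,\dots,s_l$ with $s_2+\dots+s_l=n-s_1-1$ (where $s_1$ is forced by $s_1+b_1+\dots+b_n=n$) via iterated Chu--Vandermonde summation. Your bookkeeping of the rank constraints and the extraction of the $s$-independent prefactor match the intended argument exactly.
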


Summing the result in \Cref{thm:countingA} over all possible block
structures, we arrive at the following result.

\begin{corollary} \label{cor:2}
Let $m,n,l$ be positive integers,
and let $s_1,s_2,\dots,s_l$ be
non-negative integers with $s_1+s_2+\dots+s_l=n-1$. 
The number of multi-chains 
$\pi_1\le \pi_2\le \dots\le \pi_{l-1}$ in the poset 
of positive $m$-divisible non-crossing partitions of\/ $\{1,2,\dots,mn\}$
with the property that $\rk(\pi_i)=s_1+s_2+\cdots +s_i$, 
$i=1,2,\dots,l-1$, 
is given by
\begin{equation} \label{eq:multichains-si} 
\frac {mn-s_2-s_3-\dots-s_l-1} {(mn-1)n}
\binom {n}{s_1}
\binom {mn-1} {s_2}\cdots
\binom {mn-1} {s_l}.
\end{equation}
\end{corollary}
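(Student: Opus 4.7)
The plan is to derive \Cref{cor:2} from \Cref{thm:countingA} simply by summing the formula \eqref{eq:multichains-bi-si} over all admissible block-structure vectors $(b_1,b_2,\dots,b_n)$. For a fixed rank sequence $s_1,s_2,\dots,s_l$ with $s_1+\dots+s_l=n-1$, the two vanishing conditions of \Cref{thm:countingA} force both $b_1+2b_2+\dots+nb_n=n$ and $b_1+b_2+\dots+b_n=n-s_1$. Thus the sum runs precisely over the \emph{type vectors} of compositions of~$n$ into exactly $n-s_1$ positive parts.

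The first step is to pull out everything that does not depend on the $b_i$. Since $b_1+\dots+b_n$ is constrained to equal $n-s_1$, the prefactor
\[
\frac{mn-b_1-\dots-b_n}{(mn-1)(b_1+\dots+b_n)}
\]
is actually a constant equal to $\big((m-1)n+s_1\big)/\big((mn-1)(n-s_1)\big)$, and the product $\binom{mn-1}{s_2}\cdots\binom{mn-1}{s_l}$ also factors out. What remains is
\[
\sum \binom{n-s_1}{b_1,b_2,\dots,b_n},
\]
the sum being over non-negative integers $(b_1,\dots,b_n)$ with $\sum_i b_i=n-s_1$ and $\sum_i i b_i=n$.

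The key combinatorial step is to evaluate this multinomial sum. I would interpret each tuple $(b_1,\dots,b_n)$ as the type of a composition: the multinomial coefficient $\binom{n-s_1}{b_1,\dots,b_n}$ counts the compositions of~$n$ into exactly $n-s_1$ positive parts having $b_i$ parts equal to~$i$ for every~$i$. Summing over all admissible $(b_1,\dots,b_n)$ therefore yields the total number of compositions of~$n$ into $n-s_1$ positive parts, which is the classical $\binom{n-1}{n-s_1-1}=\binom{n-1}{s_1}$.

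Combining these pieces, the required right-hand side follows by the elementary identities $(m-1)n+s_1 = mn-s_2-\dots-s_l-1$ (which uses $s_1+\dots+s_l=n-1$) and $\tfrac{1}{n-s_1}\binom{n-1}{s_1}=\tfrac{1}{n}\binom{n}{s_1}$. I do not expect any genuine obstacle here: \Cref{thm:countingA} has already done all the real work, and the only non-trivial observation is the identification of the multinomial sum with the number of compositions of~$n$ into a prescribed number of positive parts.
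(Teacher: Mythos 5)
Your argument is correct: the two vanishing conditions in \Cref{thm:countingA} do pin down $b_1+\dots+b_n=n-s_1$, the prefactor and the binomials $\binom{mn-1}{s_2}\cdots\binom{mn-1}{s_l}$ then factor out of the sum, the multinomial $\binom{n-s_1}{b_1,\dots,b_n}$ does count compositions of~$n$ into $n-s_1$ positive parts of type $(b_1,\dots,b_n)$, so the sum evaluates to $\binom{n-1}{s_1}$, and the final identities $(m-1)n+s_1=mn-s_2-\dots-s_l-1$ and $\tfrac{1}{n-s_1}\binom{n-1}{s_1}=\tfrac{1}{n}\binom{n}{s_1}$ close the computation. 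This is, however, not the computation the paper actually performs: the paper notes that \eqref{eq:multichains-si} \emph{could} be obtained by summing \eqref{eq:multichains-bi-si} over all admissible block structures, but then declares it simpler to return to the intermediate generating-function expression \eqref{eq:x^b} from the appendix proof of \Cref{thm:countingA}, set all $x_i=1$, and extract the coefficient of $z^{mn}$ in $\bigl(z^m/(1-z^m)\bigr)^{s_2+\dots+s_l+1}$, which directly yields $\binom{n-1}{n-s_2-\dots-s_l-1}$. So you have carried out in full the summation route the paper only sketches. Your version is more elementary and self-contained, relying only on the stated Theorem plus the composition interpretation of multinomial coefficients; the paper's version is shorter for the authors because \eqref{eq:x^b} is already on the table inside the proof of \Cref{thm:countingA}, but it requires access to that intermediate formula rather than just the theorem's statement. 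Both are valid, and the numerical bookkeeping in yours checks out.
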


\begin{proof}
As indicated above the statement of the corollary, the expression~\eqref{eq:multichains-si} can be obtained from~\eqref{eq:multichains-bi-si} by summing over all possible~$b_i$'s,
$i=1,2,\dots,n$, with $b_1+2b_2+\dots+nb_n=n$ and 
$s_1+b_1+b_2+\dots+b_n=n$. A simpler way to arrive at the same result
is to go to~\eqref{eq:x^b}, set all~$x_i$'s equal to~$1$, to
see that the number which we want to compute is given by
\begin{multline*}
\frac {mn-s_2-s_3-\dots-s_l-1} {(mn-1)(s_2+s_3+\dots+s_l+1)}
\binom {mn-1}{s_2}\binom {mn-1}{s_3}\cdots\binom {mn-1}{s_{l}}\\
\times
\coef{z^{mn}}
\(\frac {z^m} {1-z^{mi}}\)^{s_2+s_3+\dots+s_l+1}\\
=\frac {mn-s_2-s_3-\dots-s_l-1} {(mn-1)(s_2+s_3+\dots+s_l+1)}
\binom {mn-1}{s_2}\binom {mn-1}{s_3}\cdots\binom {mn-1}{s_{l}}\\
\times
\binom {n-1}{n-s_2-s_3-\dots-s_l-1},
\end{multline*}
which, because of $s_1+s_2+\dots+s_l=n-1$, is easily seen to be
equivalent to~\eqref{eq:multichains-si}.
\end{proof}

Summing the expression in \eqref{eq:multichains-si} over all
possible~$s_i$'s with $s_1+s_2+\dots+s_l=n-1$, we obtain the number
of all multichains of a given length, and, thus, the \emph{zeta
  polynomial} of the poset of positive $m$-divisible non-crossing
partitions of $\{1,2,\dots,mn\}$. As a special case, the \emph{total
  number} of positive $m$-divisible non-crossing
partitions of $\{1,2,\dots,mn\}$ falls out.

\begin{corollary} \label{cor:3}
Let $m,n,l$ be positive integers,
The number of multichains 
$\pi_1\le \pi_2\le \dots\le \pi_{l-1}$ in the poset 
of positive $m$-divisible non-crossing partitions of\/ $\{1,2,\dots,mn\}$
is given by
\begin{equation} \label{eq:multichains-l} 
\frac {1+(l-1)(m-1)} {n-1}
\binom {n-1+(l-1)(mn-1)} {n-2}.
\end{equation}
In particular, the total number of elements in this poset equals
\begin{equation} \label{eq:total} 
\frac {1} {n}
\binom {(m+1)n-2} {n-1}.
\end{equation}
\end{corollary}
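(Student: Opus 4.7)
The plan is to derive \Cref{cor:3} directly from \Cref{cor:2} by summing the rank-refined count~\eqref{eq:multichains-si} over all admissible compositions $s_1+s_2+\cdots+s_l=n-1$. Using $s_2+\cdots+s_l=n-1-s_1$, the numerator factor becomes
\[
mn-s_2-\cdots-s_l-1=(m-1)n+s_1,
\]
so the quantity I want to evaluate is
\[
\frac{1}{(mn-1)n}\sum_{s_1+\cdots+s_l=n-1}\bigl((m-1)n+s_1\bigr)\binom{n}{s_1}\binom{mn-1}{s_2}\cdots\binom{mn-1}{s_l}.
\]

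I would then split this into two sums. For the first, iterated Chu--Vandermonde summation (as already used between \Cref{thm:countingA} and \Cref{cor:countingA2}) gives
\[
\sum_{s_1+\cdots+s_l=n-1}\binom{n}{s_1}\binom{mn-1}{s_2}\cdots\binom{mn-1}{s_l}=\binom{n+(l-1)(mn-1)}{n-1}.
\]
For the second, use $s_1\binom{n}{s_1}=n\binom{n-1}{s_1-1}$, re-index $s_1\mapsto s_1+1$, and apply Chu--Vandermonde again to obtain
\[
\sum_{s_1+\cdots+s_l=n-1}s_1\binom{n}{s_1}\binom{mn-1}{s_2}\cdots\binom{mn-1}{s_l}=n\binom{n-1+(l-1)(mn-1)}{n-2}.
\]

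The main (and only nontrivial) step is then to combine the two resulting binomial expressions into the closed form claimed in~\eqref{eq:multichains-l}. Writing $A:=(l-1)(mn-1)$, I use the identity $\binom{n+A}{n-1}=\frac{n+A}{n-1}\binom{n-1+A}{n-2}$ to pull out the common factor $\binom{n-1+A}{n-2}$, after which the bracketed expression simplifies to
\[
\frac{(m-1)(n+A)}{n-1}+1=\frac{(mn-1)\bigl(1+(l-1)(m-1)\bigr)}{n-1},
\]
where the key algebraic cancellation is $(m-1)n+(m-1)A+(n-1)=(mn-1)\bigl(1+(l-1)(m-1)\bigr)$. The factor $mn-1$ cancels with the $\frac{1}{mn-1}$ in front, yielding~\eqref{eq:multichains-l}.

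Finally, the total count~\eqref{eq:total} corresponds to the specialisation $l=2$, i.e.\ counting single elements rather than nontrivial multichains. Plugging $l=2$ gives $\frac{m}{n-1}\binom{(m+1)n-2}{n-2}$, and the ratio identity $\binom{(m+1)n-2}{n-1}/\binom{(m+1)n-2}{n-2}=\frac{mn}{n-1}$ converts this into $\frac{1}{n}\binom{(m+1)n-2}{n-1}$, matching~\eqref{eq:total}. I expect the bookkeeping in combining the two binomial sums to be the one place where care is required; everything else is a routine application of Chu--Vandermonde.
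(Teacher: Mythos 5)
Your proposal is correct and follows essentially the paper's own route: sum the rank-refined formula~\eqref{eq:multichains-si} over all compositions, split the linear numerator into two pieces, absorb the linear factor into a binomial coefficient, evaluate each piece by Chu--Vandermonde, and recombine; the only (cosmetic) difference is that you absorb $s_1$ into $\binom{n}{s_1}$ while the paper absorbs $n-s_1-1$ into $\binom{(l-1)(mn-1)}{n-s_1-1}$, and your algebraic verifications, including the $l=2$ specialisation, all check out.
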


\begin{proof}
We compute 
\begin{align*}
\sum_{s_1+\dots+s_l=n-1}&
\frac {mn-s_2-s_3-\dots-s_l-1} {(mn-1)n}
\binom {n}{s_1}
\binom {mn-1} {s_2}\cdots
\binom {mn-1} {s_l}\\
&=
\sum_{s_1=0}^{n-1}
\frac {mn-(n-s_1-1)-1} {(mn-1)n}
\binom {n}{s_1}
\binom {(l-1)(mn-1)} {n-s_1-1}\\
&=
\sum_{s_1=0}^{n-1}
\Bigg(\frac {1} {n}
\binom {n}{s_1}
\binom {(l-1)(mn-1)} {n-s_1-1}\\
&\kern2cm
-
\frac {(l-1)(mn-1)} {(mn-1)n}
\binom {n}{s_1}
\binom {(l-1)(mn-1)-1} {n-s_1-2}\Bigg)\\
&=
\frac {1} {n}
\binom {n+(l-1)(mn-1)}{n-1}
-
\frac {l-1} {n}
\binom {(n-1)+(l-1)(mn-1)}{n-2},
\end{align*}
where, to obtain the second line and the last line, 
we used again the Chu--Vandermonde summation formula.
Upon little manipulation, the last expression can be converted\break into~\eqref{eq:multichains-l}. Formula~\eqref{eq:total} results from~\eqref{eq:multichains-l} by setting $l=2$.
\end{proof}

The formula in \Cref{cor:3} counts multichains of~$l-1$ elements.
For $m=1$, this formula thus equals the zeta polynomial of the poset of positive non-crossing partitions of $\{1,2,\dots,n\}$
According to~\cite[Lem.~2.2(iii)]{MR0595933}, evaluating the zeta polynomial at $l = -1$ yields the M\"obius function.
We record this in the following corollary.

\begin{corollary}
\label{cor:moebiusA}
  The M\"obius function between bottom and top element of the poset of positive non-crossing partitions of $\{1,2,\dots,n\}$ is given by
  \begin{equation}
    \frac {(-1)^{n-2}} {n-1}
    \binom {2n-4} {n-2}.
  \end{equation}
\end{corollary}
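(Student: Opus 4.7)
The plan is to apply directly the fact, recalled just above the statement, that the M\"obius function $\mu(\hat 0,\hat 1)$ of a bounded poset equals the value at $l=-1$ of its zeta polynomial. Almost all of the work has already been done in \Cref{cor:3}: what remains is an identification of conventions and a short binomial manipulation.

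First I would note that the poset $P$ of positive non-crossing partitions of $\{1,2,\dots,n\}$ (with the refinement order) is indeed bounded: its top $\hat 1$ is the single-block partition $\{1,2,\dots,n\}$, and its bottom $\hat 0$ is the partition $\{\{1,n\},\{2\},\{3\},\dots,\{n-1\}\}$, the unique finest positive partition. Consequently, each multichain $\pi_1\le\pi_2\le\dots\le\pi_{l-1}$ counted by \Cref{cor:3} (with $m=1$) is in bijection with the multichain $\hat 0\le\pi_1\le\dots\le\pi_{l-1}\le\hat 1$ in the convention of \cite{MR0595933}, so the expression in~\eqref{eq:multichains-l} is the zeta polynomial $Z(P,l)$. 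Specialising \eqref{eq:multichains-l} to $m=1$ yields the compact form
\[
Z(P,l)=\frac{1}{n-1}\binom{l(n-1)}{n-2}.
\]

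Next I would invoke the cited \cite[Lem.~2.2(iii)]{MR0595933} to obtain $\mu_P(\hat 0,\hat 1)=Z(P,-1)$, and then evaluate using the standard upper-negation identity $\binom{-a}{b}=(-1)^{b}\binom{a+b-1}{b}$. With $a=n-1$ and $b=n-2$ this gives
\[
\binom{-(n-1)}{n-2}=(-1)^{n-2}\binom{(n-1)+(n-2)-1}{n-2}=(-1)^{n-2}\binom{2n-4}{n-2},
\]
so that
\[
\mu_P(\hat 0,\hat 1)=Z(P,-1)=\frac{(-1)^{n-2}}{n-1}\binom{2n-4}{n-2},
\]
which is exactly the claimed formula.

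There is no real obstacle: the only point that deserves a line of care is the convention check confirming that the multichain count of \Cref{cor:3} agrees with the polynomial whose value at $-1$ is the M\"obius function (namely, that prepending $\hat 0$ and appending $\hat 1$ is a bijection between the two kinds of multichains, which holds precisely because $P$ has a minimum and a maximum). Once that is recorded, the remainder of the proof is a one-line algebraic simplification.
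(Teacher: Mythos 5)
Your proposal is correct and follows exactly the paper's route: specialise the multichain count of \Cref{cor:3} to $m=1$ to get the zeta polynomial $\frac{1}{n-1}\binom{l(n-1)}{n-2}$, invoke \cite[Lem.~2.2(iii)]{MR0595933} to evaluate at $l=-1$, and simplify with upper negation. The extra sentences verifying that the poset is bounded and that the multichain convention matches are harmless refinements of what the paper leaves implicit.
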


\subsection{Characterisation of pseudo-rotationally invariant elements}
\label{sec:rotA}

In this subsection, we describe the elements of
$\NCAPlus$ that are invariant under a
power of~$\rotA$. To be specific,
we call such an element
\defn{$\rrr$-pseudo-rotationally invariant} if it is invariant
under~$\rotA^{(N-2)/\rrr}$ for some positive integer~$\rrr$ that divides~$N-2$.
We achieve the description
by providing two constructions of such elements
together with \Cref{lem:allA} which then tells us how to obtain all
such invariant elements from these two constructions. 

In order to be able to describe the two constructions, we need to
introduce a few notions. Given a sequence $a_1,a_2,\dots,a_k$, we
shall frequently consider the cyclic rotation of these elements and
invariably denote it by~$\rot$, so that $\rot(a_i)=a_{i+1}$ and the
indices are taken modulo~$k$. (Although $\rot$ depends on the
sequence $a_1,a_2,\dots,a_k$, we do not indicate this in the notation
since the sequence will always be clear from the context.)
We call a block~$B$ of a non-crossing partition of
$\{a_1,a_2,\dots,a_k\}$ \defn{central\/} if $B=\rot^l(B)$ for
some~$l$ with $1\le l<k$. See \Cref{fig:central} for an example
with $k=20$ and $l=5$.

\begin{figure}
  \centering
 \begin{tikzpicture}[scale=1]
    \polygonnew{(0,0)}{obj}{20}{3}
      {a_1,a_2,a_3,a_4,a_5,a_6,a_7,a_8,a_9,a_{10},a_{11},a_{12},a_{13},a_{14
     },a_{15},a_{16},a_{17},a_{18},a_{19},a_{20}}{1.1}

     \draw[fill=black,fill opacity=0.1] (obj2) to[bend right=15]
(obj4) to[bend right=15] (obj7) to[bend right=15] (obj9) to[bend
right=15] (obj12) to[bend right=15] (obj14) to[bend right=15] (obj17)
to[bend right=15] (obj19) to[bend right=15] (obj2);
    \end{tikzpicture}
  \caption{A central block}
\label{fig:central}
\end{figure}
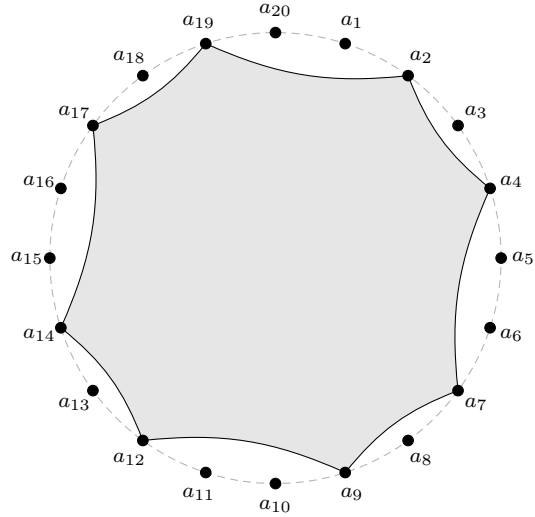

\medskip
We now present two constructions of (pseudo-rotationally invariant,
as it will turn out in \Cref{lem:allA}) positive non-crossing partitions.
In the sequel, $\rrr $ will always be a positive integer dividing $N-2$
and different from $1$.

\begin{figure}
  \centering
  \begin{tikzpicture}[scale=1]
    \polygon{(0,0)}{obj}{30}{2.5}
      {1,2,3,4,5,6,7,8,9,10,11,12,13,14,15,16,17,18,19,20,21,22,23,24,25,26,27,28,29,30}
     \draw[line width=2.5pt,black] (obj1) to[bend left] (obj30);
     \draw[fill=black,fill opacity=0.1] (obj1) to[bend right=30] (obj2) to[bend right=30] (obj9) to[bend right=30] (obj16) to[bend right=30] (obj23) to[bend right=30] (obj30) to[bend right=30] (obj1);
     \draw[fill=black,fill opacity=0.1] (obj3) to[bend right=30] (obj7) to[bend right=30] (obj8) to[bend left=30] (obj3);
     \draw[fill=black,fill opacity=0.1] (obj4) to[bend right=30] (obj5) to[bend right=30] (obj6) to[bend left=30] (obj4);
     \draw[fill=black,fill opacity=0.1] (obj10) to[bend right=30] (obj14) to[bend right=30] (obj15) to[bend left=30] (obj10);
     \draw[fill=black,fill opacity=0.1] (obj11) to[bend right=30] (obj12) to[bend right=30] (obj13) to[bend left=30] (obj11);
     \draw[fill=black,fill opacity=0.1] (obj17) to[bend right=30] (obj21) to[bend right=30] (obj22) to[bend left=30] (obj17);
     \draw[fill=black,fill opacity=0.1] (obj18) to[bend right=30] (obj19) to[bend right=30] (obj20) to[bend left=30] (obj18);
     \draw[fill=black,fill opacity=0.1] (obj24) to[bend right=30] (obj28) to[bend right=30] (obj29) to[bend left=30] (obj24);
     \draw[fill=black,fill opacity=0.1] (obj25) to[bend right=30] (obj26) to[bend right=30] (obj27) to[bend left=30] (obj25);
    \end{tikzpicture}
  \caption{A $4$-pseudo-rotationally invariant non-crossing partition in
$\NCAPlus[30]$ (and also in $\mNCAPlus[9][3]$)}
\label{fig:3}
\end{figure}
\begin{figure}
  \centering
  \begin{tikzpicture}[scale=1]
    \polygon{(0,0)}{obj}{30}{2.5}
      {1,2,3,4,5,6,7,8,9,10,11,12,13,14,15,16,17,18,19,20,21,22,23,24,25,26,27,28,29,30}
     \draw[line width=2.5pt,black] (obj1) to[bend left] (obj30);
     \draw[fill=black,fill opacity=0.1] (obj3) to[bend right=30] (obj4) to[bend right=30] (obj11) to[bend right=30] (obj18) to[bend right=30] (obj25) to[bend right=30] (obj26) to[bend right=30] (obj3);
     \draw[fill=black,fill opacity=0.1] (obj5) to[bend right=30] (obj9) to[bend right=30] (obj10) to[bend left=30] (obj5);
     \draw[fill=black,fill opacity=0.1] (obj6) to[bend right=30] (obj7) to[bend right=30] (obj8) to[bend left=30] (obj6);
     \draw[fill=black,fill opacity=0.1] (obj12) to[bend right=30] (obj16) to[bend right=30] (obj17) to[bend left=30] (obj12);
     \draw[fill=black,fill opacity=0.1] (obj13) to[bend right=30] (obj14) to[bend right=30] (obj15) to[bend left=30] (obj13);
     \draw[fill=black,fill opacity=0.1] (obj19) to[bend right=30] (obj23) to[bend right=30] (obj24) to[bend left=30] (obj19);
     \draw[fill=black,fill opacity=0.1] (obj20) to[bend right=30] (obj21) to[bend right=30] (obj22) to[bend left=30] (obj20);
     \draw[fill=black,fill opacity=0.1] (obj27) to[bend right=30] (obj28) to[bend right=30] (obj29) to[bend left=30] (obj27);
     \draw[fill=black,fill opacity=0.1] (obj30) to[bend right=30] (obj1) to[bend right=30] (obj2) to[bend left=30] (obj30);
    \end{tikzpicture}
  \caption{Another $4$-pseudo-rotationally invariant non-crossing partition in
$\NCAPlus[30]$ (and also in $\mNCAPlus[9][3]$)}
\label{fig:4}
\end{figure}

\medskip
\noindent

{\sc Construction 1.}
We start with a
non-crossing partition of $\{2,3,\dots,N-1\}$ that is invariant under
$\rot^{(N-2)/\rrr}$ (with $\rot$ acting cyclically on $2<3<\dots<N-1$) and that
contains $2$ in the central block. We also allow the
degenerate case that the central block is empty.
Then we add $1$ and $N$ to the central block, thereby creating a
positive non-crossing partition. If we are in
the degenerate case where we started with an empty central block, the
construction has to be understood in the sense that a block $\{1,N\}$
is added to the $\rrr $-pseudo-rotationally invariant non-crossing partition of
$\{2,3,\dots,N-1\}$ in which $\{2,3,\dots,\frac {N-2} {\rrr }+1\}$ is
a union of blocks.
\Cref{fig:3,fig:4,fig:22} illustrate this construction.
In the first two figures, we chose $\rrr=4$ and $N=30$.
In order to obtain the non-crossing partition in \Cref{fig:3},
one starts with the non-crossing partition
\begin{multline*}
\{\{2,9,16,23\},\{3,7,8\},\{4,5,6\},
\{10,14,15\},\{11,12,13\},\\
\{17,21,22\},\{18,19,20\},
\{24,28,29\},\{25,26,27\}\}
\end{multline*}
of $\{2,3,\dots,29\}$, which is indeed invariant under~$\rot^{(30-2)/4}=\rot^7$.
Then one adds $1$ and $30$ to the
central block $\{2,9,16,23\}$ to obtain the partition shown in
\Cref{fig:3}. If one applies $\rotA$ twice to this
partition, then the result is the partition in \Cref{fig:4}.

\begin{figure}
\begin{center}
 \begin{tikzpicture}[scale=1]
    \polygon{(-3.5,0)}{objleft}{32}{2.5}
      {1,2,3,4,5,6,7,8,9,10,11,12,13,14,15,16,17,18,19,20,21,22,23,24,25,26,
     27,28,29,30,31,32}
     \draw[line width=2.5pt,black] (objleft1) to[bend left] (objleft32);
     \draw (objleft3) to[bend right] (objleft4);
     \draw (objleft6) to[bend right] (objleft7);
     \draw (objleft13) to[bend right] (objleft14);
     \draw (objleft16) to[bend right] (objleft17);
     \draw (objleft23) to[bend right] (objleft24);
     \draw (objleft26) to[bend right] (objleft27);
     \draw (objleft32) to[bend right] (objleft1);

     \draw[fill=black,fill opacity=0.1] (objleft2) to[bend right=30]
(objleft5) to[bend right=30] (objleft8) to[bend left=30] (objleft2);
     \draw[fill=black,fill opacity=0.1] (objleft9) to[bend right=30]
(objleft10) to[bend right=30] (objleft11) to[bend left=30] (objleft9);
     \draw[fill=black,fill opacity=0.1] (objleft12) to[bend right=30]
(objleft15) to[bend right=30] (objleft18) to[bend left=30]
     (objleft12);
      \draw[fill=black,fill opacity=0.1] (objleft19) to[bend right=30]
(objleft20) to[bend right=30] (objleft21) to[bend left=30]
(objleft19);
     \draw[fill=black,fill opacity=0.1] (objleft22) to[bend right=30]
(objleft25) to[bend right=30] (objleft28) to[bend left=30]
(objleft22);
     \draw[fill=black,fill opacity=0.1] (objleft29) to[bend right=30]
(objleft30) to[bend right=30] (objleft31) to[bend left=30]
(objleft29);

    \polygon{(3.5,0)}{objright}{32}{2.5}
      {1,2,3,4,5,6,7,8,9,10,11,12,13,14,15,16,17,18,19,20,21,22,23,24,25,26,
     27,28,29,30,31,32}
     \draw[line width=2.5pt,black] (objright1) to[bend left] (objright32);
     \draw (objright4) to[bend right] (objright5);
     \draw (objright7) to[bend right] (objright8);
     \draw (objright14) to[bend right] (objright15);
     \draw (objright17) to[bend right] (objright18);
     \draw (objright24) to[bend right] (objright25);
     \draw (objright27) to[bend right] (objright28);
 
     \draw (objright30) to[bend right] (objright2);

     \draw[fill=black,fill opacity=0.1] (objright3) to[bend right=30]
(objright6) to[bend right=30] (objright9) to[bend left=30]
(objright3);
     \draw[fill=black,fill opacity=0.1] (objright10) to[bend right=30]
(objright11) to[bend right=30] (objright12) to[bend left=30]
(objright10);
     \draw[fill=black,fill opacity=0.1] (objright13) to[bend right=30]
(objright16) to[bend right=30] (objright19) to[bend left=30]
(objright13);
     \draw[fill=black,fill opacity=0.1] (objright20) to[bend right=30]
(objright21) to[bend right=30] (objright22) to[bend left=30]
(objright20);
     \draw[fill=black,fill opacity=0.1] (objright23) to[bend right=30]
(objright26) to[bend right=30] (objright29) to[bend left=30]
     (objright23);
      \draw[fill=black,fill opacity=0.1] (objright31) to[bend right=30]
(objright32) to[bend right=30] (objright1) to[bend left=30]
(objright31);
 \end{tikzpicture}
\end{center}
\caption{Two $3$-pseudo-rotationally invariant non-crossing partitions in
$\NCAPlus[32]$ (and also in $\mNCAPlus[31][1]$)}
\label{fig:22}
\end{figure}

An example of the degenerate case where one starts with a rotationally
symmetric non-crossing partition of $\{2,3,\dots,N-1\}$ with empty
central block is given in \Cref{fig:22}. The parameter values in
the figure are $N=32$ and $\rho = 3$.
In this example we start with the $3$-rotationally invariant
non-crossing partition 
\begin{multline*}
\{\{2,5,8\},\{3,4\},\{6,7\},\{9,10,11\},
\{12,15,18\},\{13,14\},\{16,17\},\{19,20,21\},\\\{22,25,28\},
\{23,24\},\{26,27\},\{29,30,31\}\}
\end{multline*}
of $\{2,3,\dots,31\}$
with empty central block. Adding $\{32,1\}$ as central block,
we obtain the $3$-pseudo-rotationally invariant non-crossing
partition on the left of \Cref{fig:22}. The right part of the figure
shows the result of one application of the pseudo-rotation~$\rotA$.

\begin{figure}
  \centering
  \begin{tikzpicture}[scale=1]
    \polygon{(0,0)}{obj}{36}{2.5}
      {1,2,3,4,5,6,7,8,9,10,11,12,13,14,15,16,17,18,19,20,21,22,23,24,25,26,27,28,29,30,31,32,33,34,35,36}
     \draw[line width=2.5pt,black] (obj1) to[bend left] (obj36);
     \draw[fill=black,fill opacity=0.1] (obj1) to[bend right=30] (obj5) to[bend right=30] (obj12) to[bend right=30] (obj13) to[bend right=30] (obj14) to[bend left=10] (obj36) to[bend right=30] (obj1);
     \draw[fill=black,fill opacity=0.1] (obj18) to[bend right=30] (obj22) to[bend right=30] (obj29) to[bend right=30] (obj30) to[bend right=30] (obj31) to[bend right=30] (obj35) to[bend left=10] (obj18);
     \draw[fill=black,fill opacity=0.1] (obj2) to[bend right=30] (obj3) to[bend right=30] (obj4) to[bend left=30] (obj2);
     \draw[fill=black,fill opacity=0.1] (obj6) to[bend right=30] (obj7) to[bend right=30] (obj11) to[bend left=30] (obj6);
     \draw[fill=black,fill opacity=0.1] (obj8) to[bend right=30] (obj9) to[bend right=30] (obj10) to[bend left=30] (obj8);
     \draw[fill=black,fill opacity=0.1] (obj15) to[bend right=30] (obj16) to[bend right=30] (obj17) to[bend left=30] (obj15);
     \draw[fill=black,fill opacity=0.1] (obj19) to[bend right=30] (obj20) to[bend right=30] (obj21) to[bend left=30] (obj19);
     \draw[fill=black,fill opacity=0.1] (obj23) to[bend right=30] (obj24) to[bend right=30] (obj28) to[bend left=30] (obj23);
     \draw[fill=black,fill opacity=0.1] (obj25) to[bend right=30] (obj26) to[bend right=30] (obj27) to[bend left=30] (obj25);
     \draw[fill=black,fill opacity=0.1] (obj32) to[bend right=30] (obj33) to[bend right=30] (obj34) to[bend left=30] (obj32);
    \end{tikzpicture}
  \caption{A $2$-pseudo-rotationally invariant non-crossing partition in
$\NCAPlus[36]$ (and also in $\mNCAPlus[11][3]$)}
\label{fig:5}
\end{figure}
\begin{figure}
  \centering
  \begin{tikzpicture}[scale=1]
    \polygon{(0,0)}{obj}{36}{2.5}
      {1,2,3,4,5,6,7,8,9,10,11,12,13,14,15,16,17,18,19,20,21,22,23,24,25,26,27,28,29,30,31,32,33,34,35,36}
     \draw[line width=2.5pt,black] (obj1) to[bend left] (obj36);
     \draw[fill=black,fill opacity=0.1] (obj3) to[bend right=30] (obj7) to[bend right=30] (obj14) to[bend right=30] (obj15) to[bend right=30] (obj16) to[bend right=30] (obj20) to[bend right=10] (obj3);
     \draw[fill=black,fill opacity=0.1] (obj24) to[bend right=30] (obj31) to[bend right=30] (obj32) to[bend right=30] (obj33) to[bend right=30] (obj34) to[bend right=30] (obj2) to[bend left=10] (obj24);
     \draw[fill=black,fill opacity=0.1] (obj4) to[bend right=30] (obj5) to[bend right=30] (obj6) to[bend left=30] (obj4);
     \draw[fill=black,fill opacity=0.1] (obj8) to[bend right=30] (obj9) to[bend right=30] (obj13) to[bend left=30] (obj8);
     \draw[fill=black,fill opacity=0.1] (obj10) to[bend right=30] (obj11) to[bend right=30] (obj12) to[bend left=30] (obj10);
     \draw[fill=black,fill opacity=0.1] (obj17) to[bend right=30] (obj18) to[bend right=30] (obj19) to[bend left=30] (obj17);
     \draw[fill=black,fill opacity=0.1] (obj21) to[bend right=30] (obj22) to[bend right=30] (obj23) to[bend left=30] (obj21);
     \draw[fill=black,fill opacity=0.1] (obj25) to[bend right=30] (obj26) to[bend right=30] (obj30) to[bend left=30] (obj25);
     \draw[fill=black,fill opacity=0.1] (obj27) to[bend right=30] (obj28) to[bend right=30] (obj29) to[bend left=30] (obj27);
     \draw[fill=black,fill opacity=0.1] (obj35) to[bend right=30] (obj36) to[bend right=30] (obj1) to[bend left=30] (obj35);
    \end{tikzpicture}
  \caption{Another $2$-pseudo-rotationally invariant non-crossing partition in
$\NCAPlus[36]$ (and also in $\mNCAPlus[11][3]$)}
\label{fig:6}
\end{figure}

\medskip
\noindent
{\sc Construction 2.}
This construction requires $\rrr =2$.
We start with a non-crossing partition~$\bar\pi$ of
$\{N,1,2,\dots,(N-2)/2\}$ in which $N$ and $1$ are in the
same block. Out of these data, we form a positive non-crossing
partition~$\pi$ of $\{1,2,\dots,N\}$ in the following way:
each block of~$\bar\pi$ is also a block of~$\pi$. Furthermore,
for each
block $B$ of~$\bar\pi$ not containing $N$ and $1$, its ``twin block''
$\rot^{(N-2)/2}(B)$ is also a block of~$\pi$. Finally,
let $B_0$ denote
the block containing~$N$ and~$1$ of~$\bar\pi$, say
$B_0=\{N,1,v_1,v_2,\dots,v_{a-2}\}$. Then
$$\rot^{(N-2)/2}\big(\{v_1,v_2,\dots,v_{a-2}\}\big)\cup
\{N/2,N-1\}$$
is also a block of~$\pi$.
\Cref{fig:5,fig:6} illustrate this construction.
In these figures, we chose $N=36$.
In order to obtain the non-crossing partition in \Cref{fig:5},
one starts with the non-crossing partition
$$
\{\{1,5,12,13,14,36\},\{2,3,4\},\{6,7,11\},
\{8,9,10\},\{15,16,17\}\}
$$
of $\{36,1,2,\dots,17\}$. Then the blocks not containing $1$ and
$36$ are rotated, that is,\break $\rot^{(36-2)/2}=
\rot^{17}$ is applied to them, yielding the blocks
$$
\{19,20,21\},\ \{23,24,28\},\
\{25,26,27\},\ \{32,33,34\}.
$$
All of these become blocks of the partition to be constructed;
see \Cref{fig:5}. Finally, we take the block $B_0$ containing
$1$ and $36$, $B_0=\{1,5,12,13,14,36\}$, remove $1$ and $36$,
apply $\rot^{17}$ to the remaining set,
$$                                                                             \rot^{17}(B_0\setminus\{1,36\})=\{22,29,30,31\},
$$
and finally add $36/2=18$ and $36-1=35$ to this
to obtain the last block of the partition to be constructed;
see \Cref{fig:5}.
If one applies $\rotA$ twice to this
partition, then the result is the partition in \Cref{fig:6}.

\begin{theorem} \label{lem:allA}
Let $N$ and $\rrr$ be positive integers with $N,\rrr\ge2$ and $\rrr\mid(N-2)$.

\smallskip
{\em(1)} If $\rrr \ge3$,
all $\rrr $-pseudo-rotationally invariant elements of
$\NCAPlus$ 
are obtained by starting with a non-crossing partition
from Construction~{\em1}
and applying the pseudo-rotation~$\rotA$ repeatedly to it.

\smallskip
{\em(2)}
All $2$-pseudo-rotationally invariant elements of $\NCAPlus$ 
are obtained by starting with a non-crossing partition
from Construction~{\em1} or Construction~{\em2}
and applying the pseudo-rotation~$\rotA$ repeatedly to it.
\end{theorem}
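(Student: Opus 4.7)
The plan is to prove both directions of the characterisation.

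For the forward direction, write $\alpha = (N-2)/\rrr$. Given a Construction~1 output $\pi$, built from a $\rot^\alpha$-invariant non-crossing partition $\bar\pi$ of $\{2,\dots,N-1\}$ whose central block $C$ contains $2$ by adjoining $\{1,N\}$ to $C$, I would verify $\rotA^\alpha(\pi) = \pi$ by tracking the $\alpha$ successive applications of $\rotA$. Each individual application is either a trivial shift by one (when $N-1$ lies in the current special block) or the swap described in \Cref{prop:2}. Careful bookkeeping, using the $\rot^\alpha$-symmetry of $\bar\pi$, shows that after $\alpha$ applications the partition is restored to $\pi$. Construction~2 is handled analogously using the twin-block symmetry of its input.

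For the converse direction, given a $\rrr$-pseudo-rotationally invariant $\pi$, the task is to produce some $k$ such that $\rotA^k(\pi)$ is a Construction~1 output (or, when $\rrr = 2$, either a Construction~1 or Construction~2 output). My approach is to select within the $\rotA$-orbit of $\pi$ a canonical iterate $\pi^{*}$ whose special block is correctly aligned with the $\rot^\alpha$-orbit of~$2$ in $\{2,\dots,N-1\}$. For such $\pi^{*}$, the restriction $\overline{\pi^{*}}$ to $\{2,\dots,N-1\}$ is then $\rot^\alpha$-invariant with central block containing~$2$ (or empty, in the degenerate subcase), whence $\pi^{*}$ qualifies as a Construction~1 output by definition. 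In the case $\rrr = 2$, a second type of canonical iterate arises from an additional central symmetry and is captured by Construction~2.

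The main obstacle is that $\rotA$ does not act as cyclic rotation on the restriction $\bar\pi$: both the special block and the block containing $N-1$ are transformed non-cyclically (see \Cref{prop:2}). In particular, the restriction of an arbitrary $\rrr$-pseudo-rotationally invariant partition need not itself be $\rot^\alpha$-invariant, as witnessed by \Cref{fig:4}. Consequently, proving that each orbit contains an iterate of the canonical form requires a delicate combinatorial tracking of how the size and contents of the special block vary along the orbit, together with a separate case analysis when $\rrr = 2$ verifying that Constructions~1 and~2 jointly account for all $2$-pseudo-rotationally invariant orbits.
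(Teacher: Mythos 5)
There is a genuine gap in the converse direction: you assert that within the $\rotA$-orbit of an invariant partition one can select a ``canonical iterate'' $\pi^{*}$ whose restriction to $\{2,\dots,N-1\}$ is $\rot^{(N-2)/\rrr}$-invariant with central block containing~$2$, but you give no argument that such an iterate exists, and this is precisely where all the work lies. For $\rrr\ge3$ the crucial missing ingredient is the statement that \emph{every} $\rrr$-pseudo-rotationally invariant element of $\NCAPlus$ possesses a central block at all (the paper's \Cref{lem:10}); without it, there is no candidate for the block to which Construction~1 should attach $\{1,N\}$. Proving this requires its own machinery: one first shows that the width of a block cannot grow under $\rotA$ when its support avoids $\{1,N\}$ or when it is covered by another block (\Cref{lem:rotungl,lem:Bogen}), deduces that any ``fat'' block (width exceeding $\frac{N-2}{\rrr}+1$) is automatically central (\Cref{lem:fat}), and then runs a contradiction argument tracking the block containing $N,1$ through the orbit. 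Your ``delicate combinatorial tracking of how the size and contents of the special block vary'' names this difficulty but does not resolve it. Moreover, even granted a central block, you still need to show that $\rotA^{(N-2)/\rrr}$ acts as ordinary rotation (with the extra jump across $\{N,1\}$) on all non-central blocks, which in the paper follows from the width bound $\width(B)<(N-2)/\rrr$ forced by the central block's structure; this is what makes the restriction $\rot^{(N-2)/\rrr}$-invariant, and it is not automatic.

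The case $\rrr=2$ is likewise unargued: you say a ``second type of canonical iterate arises from an additional central symmetry and is captured by Construction~2,'' but the paper obtains this by re-running the contradiction argument of \Cref{lem:10} and observing that the only step using $\rrr\ge3$ is a single chain of inequalities; when it fails one is forced to have $a=\frac{N-2}{2}+1=\frac{N}{2}$, and the configuration reached at that point is exactly the output of Construction~2. Without an argument of this kind you have no proof that Constructions~1 and~2 exhaust the $2$-pseudo-rotationally invariant orbits. (The forward direction, which you also leave to ``careful bookkeeping,'' is comparatively harmless: it follows from the block-tracking analysis already carried out in the proof of \Cref{lem:N-2}, as the paper notes, but it too should be tied to that analysis rather than asserted.)
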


The proof of this theorem is given in \Cref{app:inv-A}.

\subsection{Enumeration of pseudo-rotationally invariant elements}
\label{sec:rotenumA}

With the characterisation of pseudo-rotationally invariant elements
of $\NCAPlus$ at hand, we can now embark on
the enumeration of such elements. Since the map~$\rotA$ does in general not
preserve the partial order on non-crossing partitions, and the
repeated application of~$\rotA$ is essential in the aforementioned
characterisation (see \Cref{lem:allA}), we are not able to provide
results on chain enumeration (which however still may exist).
Rather, the most refined and most general
result in this subsection is \Cref{thm:2} which provides a
formula for the number of these non-crossing partitions in which
the block structure is prescribed. In the statement of the theorem,
and also later,
we use an extended version of the earlier introduced notion of
``central block":
for a $\rrr $-pseudo-rotationally invariant
element~$\pi$ of $\NCAPlus$
(that is, $\pi$ is invariant with respect to~$\rotA^{(N-2)/\rrr }$), 
a block $B$ of~$\pi$ is called
\defn{$\rotA$-central block\/} (or simply \defn{central block} if $\rotA$
is clear from the context)
if $B=\rotA^{(N-2)/\rrr }(B)$.
(How $\rotA$ acts on a block~$B$ should be intuitively clear from
the definition of~$\rotA$, cf.\ \Cref{fig:7,fig:1,fig:8}.)
For example, both non-crossing partitions in \Cref{fig:3,fig:4} 
have central blocks: in the partition of \Cref{fig:3}
the central block is $\{1,2,9,16,23,30\}$, while in 
the partition of \Cref{fig:4}
the central block is $\{3,4,11,18,25,26\}$. On the other hand,
neither of the non-crossing partitions in \Cref{fig:5,fig:6} 
has a central block.
While a singleton block can never be a central block, it may
happen that --- slightly counter-intuitively --- $\{N,1\}$ is a central block. 
An example with $N=32$ and $\rrr=3$ is shown in \Cref{fig:22},
where indeed
$\rotA^{(32-2)/3}(\(1,32\))=\rotA^{10}(\(1,32\))=\(1,32\)$.

\begin{theorem} \label{thm:2}
Let $m,n,\rrr $ be positive integers with $\rrr \ge2$ and $\rrr \mid (mn-2)$.
Furthermore, let $b_1,b_2,\dots,b_n$ be non-negative integers.
The number of positive $m$-divisible non-crossing partitions of 
$\{1,2,\dots,mn\}$ which are invariant under
the $\rrr $-pseudo-rotation~$\rotA^{(mn-2)/\rrr }$, the
number of non-central 
blocks of size~$mi$ being $\rrr b_i$, $i=1,2,\dots,n$,
the central block having size $ma=mn-m\rrr \sum_{j=1}^njb_j$,
is given by
\begin{equation} \label{eq:multichains-bi-si-pos} 
\binom {b_1+b_2+\dots+b_n}{b_1,b_2,\dots,b_n}
\binom {(mn-2)/\rrr } {b_1+b_2+\dots+b_n}
\end{equation}
if $b_1+2b_2+\dots+nb_n<{n/\rrr }$, 
or if $\rrr =2$ and $b_1+2b_2+\dots+nb_n=n/2$, 
and\/ $0$ otherwise.
\end{theorem}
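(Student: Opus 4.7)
The plan is to leverage the classification in \Cref{lem:allA} and then enumerate via generating functions, paralleling the techniques of \Cref{app:GF} that give \Cref{cor:countingA1} in the non-invariant setting.

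First, I would split the set of $\rho$-pseudo-rotationally invariant elements of $\mNCAPlus$ into two families: (i) those obtained from Construction~1, with positive central block size $ma > 0$, corresponding to the case $\sum_{j} j b_j < n/\rho$; and (ii) those obtained from Construction~2, existing only for $\rho = 2$, with $ma = 0$, corresponding to $\sum_j j b_j = n/\rho$. By \Cref{lem:allA}, together with their $\rotA$-orbits these families exhaust the set of invariant partitions. The two clauses of the hypothesis in the theorem statement thus correspond exactly to the two constructions, and since the two types of partitions are structurally distinct (Construction~2 produces a twin block matching the block containing $1$ and $mn$, which never happens in Construction~1), their counts simply add.

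For Construction~1, I would describe each invariant partition by its restriction to a fundamental domain of length $(mn-2)/\rho$ in the cycle $\{2,\ldots,mn-1\}$. Within this domain, $(ma-2)/\rho$ positions belong to the central block and the remaining $m\sum_j jb_j$ positions are grouped into $\sum_i b_i$ block-orbit representatives (one per $\rot^{(mn-2)/\rho}$-orbit, with $b_i$ of size $mi$). Because the central block separates its complement into arcs, the non-central partition restricted to the fundamental domain is linear (not cyclic) and can be counted by a standard Fu{\ss}--Catalan-type generating function. I would set up a weighted generating function
\[
\sum z^{\text{domain length}} \prod_i x_i^{b_i} \;=\; F(z,x_1,x_2,\dots),
\]
decomposing the fundamental domain into alternating central-block and non-central-arc segments, and then extract the coefficient of $z^{(mn-2)/\rho}\prod_i x_i^{b_i}$ via Lagrange inversion, following the scheme used in the proof of \Cref{cor:countingA1}. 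The multinomial $\binom{\sum b_i}{b_1,\dots,b_n}$ will arise from the labeled choice of block sizes across the $\sum b_i$ orbit representatives, while the binomial $\binom{(mn-2)/\rho}{\sum b_i}$ will arise from the placement of the block-starts among the $(mn-2)/\rho$ fundamental-domain positions.

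For Construction~2 the argument is entirely parallel: the invariant partition is encoded by a non-crossing partition of $\{mn, 1, 2, \ldots, (mn-2)/2\}$ with $mn$ and $1$ in the same block, placed on a path of length $(mn-2)/2$, and the same generating-function identity (now with $ma = 0$ so that no central-block contribution appears in the fundamental domain) yields the same closed form. The main obstacle will be verifying that the fundamental-domain bijection is well-defined despite the pseudo-rotation $\rotA$ differing from the ordinary rotation $\rot$ in a neighbourhood of the special block: one must check, using \Cref{prop:2,def:phiallg}, that the $\rotA^{(mn-2)/\rho}$-invariance of the full partition corresponds exactly to ordinary $\rot^{(mn-2)/\rho}$-invariance of $\bar\pi$ on $\{2,\dots,mn-1\}$ (Construction~1) or to the reflection symmetry across $\{1,mn\}$ versus $\{mn/2, mn-1\}$ (Construction~2), so that no invariant partitions are missed or double-counted.
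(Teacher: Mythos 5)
Your proposal follows essentially the same route as the paper's own proof in \Cref{app:GF-A-inv}: it reduces via \Cref{lem:allA} to Constructions~1 and~2, encodes the fundamental domain through a weighted generating function built from $C^{(m)}(z)=f(z)/z$, and extracts the prescribed block structure by Lagrange inversion, exactly as in \Cref{lem:TF,lem:TFa} and the paper's proof of \Cref{thm:2}. The only steps you gloss over are routine within this framework: the orbit factor $(mn-2)/\rrr$ together with the division by $(ma-2)/\rrr$ needed to avoid overcounting in the Construction~1 case, and the final summation over the possible sizes of the block containing $1$ and $mn$ in the Construction~2 case.
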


The proof of this theorem is found in \Cref{app:GF-A-inv}.

\begin{corollary} \label{cor:4}
Let $m,n,\rrr $ be positive integers with $\rrr \ge2$ and $\rrr \mid (mn-2)$.
Furthermore, let $b$ be a non-negative integer.
The number
of positive $m$-divisible non-crossing partitions of\/ $\{1,2,\dots,mn\}$
which are invariant under the $\rrr $-pseudo-rotation~$\rotA^{(mn-2)/\rrr }$,
have $\rrr b$ non-central blocks and a central block of size~$ma$,
is given by
\begin{equation} \label{eq:multichains-si-pos-a} 
\binom {{(n-a-\rrr )/\rrr }}{b-1}
\binom {(mn-2)/\rrr }{b}
\end{equation}
if $a\equiv n$~{\em(mod~$\rrr $)} and $0$ otherwise,
while the number of these partitions without restricting the size 
{\em(}or existence{\em)} of the central block is
\begin{equation} \label{eq:multichains-si-pos} 
\binom {\fl{n/\rrr }}{b}
\binom {(mn-2)/\rrr }{b}.
\end{equation}
\end{corollary}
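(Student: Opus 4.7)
The plan is to deduce both formulas from \Cref{thm:2} by summing the refined count over block-size distributions $(b_1,\ldots,b_n)$, in the same spirit as \Cref{cor:2} was obtained from \Cref{thm:countingA}.

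For the first assertion, I fix $a$ and set $k := (n-a)/\rrr$. The tuples relevant to \Cref{thm:2} with $b := b_1+\cdots+b_n$ fixed and central block of size $ma$ are precisely those satisfying $b_1 + 2 b_2 + \cdots + n b_n = k$, existence of which forces $a \equiv n \pmod{\rrr}$. Summing the multinomial coefficient in~\eqref{eq:multichains-bi-si-pos} over such tuples reduces to computing
\[
\sum_{\substack{b_1+\dots+b_n=b\\ b_1+2b_2+\dots+nb_n=k}} \binom{b}{b_1,\dots,b_n},
\]
which enumerates sequences $(c_1,\dots,c_b) \in \{1,\dots,n\}^b$ with $c_1 + \cdots + c_b = k$ (by reading off the entries of such a sequence in order, with $b_i$ entries equal to $i$). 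The hypotheses of \Cref{thm:2} force $k \le n/\rrr \le n/2 < n$, so the upper bound $c_j \le n$ is automatic, and the sum equals the number of compositions of $k$ into $b$ positive parts, namely $\binom{k-1}{b-1} = \binom{(n-a-\rrr)/\rrr}{b-1}$. Multiplying by the second factor of~\eqref{eq:multichains-bi-si-pos} gives~\eqref{eq:multichains-si-pos-a}.

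For the second formula, I sum the first over admissible values of~$k$ via the hockey-stick identity $\sum_{k=b}^{K}\binom{k-1}{b-1}=\binom{K}{b}$, so the task reduces to identifying the maximal admissible~$k$. If $\rrr \ge 3$, the bound $k < n/\rrr$ from \Cref{thm:2} is strict; crucially, the divisibility hypothesis $\rrr \mid (mn-2)$ rules out $\rrr \mid n$ (otherwise $\rrr \mid 2$, contradicting $\rrr \ge 3$), so $n/\rrr$ is not an integer and $K = \lfloor n/\rrr \rfloor$. If $\rrr = 2$, then $k = n/2$ is additionally allowed by \Cref{thm:2}, giving $K = \lfloor n/2 \rfloor$ in either parity of~$n$. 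In all cases $K = \lfloor n/\rrr \rfloor$, and the hockey-stick identity yields~\eqref{eq:multichains-si-pos}. The only delicate point is the interplay between the strict-versus-non-strict boundary in \Cref{thm:2} and the divisibility constraint $\rrr \mid (mn-2)$, which together ensure that $\lfloor n/\rrr \rfloor$ is the correct uniform upper bound.
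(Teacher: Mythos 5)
Your proof is correct, and it takes a different route from the one the paper actually carries out. The paper's proof of \Cref{cor:4} (in \Cref{app:GF-A-inv}) remarks that \eqref{eq:multichains-si-pos-a} ``could be obtained'' by summing the refined formula \eqref{eq:multichains-bi-si-pos} of \Cref{thm:2} over block-size distributions, but then declares it simpler to bypass this and instead extract the coefficient of $t^b$ directly from the generating functions in \eqref{eq:DA} and \eqref{eq:DAa} (the right-hand sides of \Cref{lem:TF,lem:TFa}), treating the cases $a>0$ and $a=0$ separately; the second formula is then obtained by summing over~$a$, exactly as you do. You execute the summation route the paper only alludes to: the multinomial sum is reinterpreted as counting compositions of $k=(n-a)/\rrr$ into~$b$ positive parts, the upper bound $n$ on the parts is checked to be vacuous since $k\le n/\rrr\le n/2<n$, and the hockey-stick identity then yields \eqref{eq:multichains-si-pos}. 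This makes the statement a genuine corollary of \Cref{thm:2} by an elementary combinatorial argument, whereas the paper re-enters the generating-function machinery; your version also isolates the one delicate point correctly, namely that for $\rrr\ge3$ the divisibility $\rrr\mid(mn-2)$ excludes $\rrr\mid n$, so the strict bound $k<n/\rrr$ of \Cref{thm:2} and the non-strict bound $k\le n/2$ for $\rrr=2$ both give the uniform cutoff $\fl{n/\rrr}$. Note also that your argument silently covers the case $a=0$ (no central block, forced to have $\rrr=2$ and $n$ even) through the second alternative in \Cref{thm:2}, matching the paper's separate $a=0$ computation via \eqref{eq:DAa}; only the wholly degenerate case $b=0$, $a=n$ relies on binomial-coefficient conventions, a point the paper glosses over as well.
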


The proof of this corollary is also found in \Cref{app:GF-A-inv}.

\begin{corollary} \label{cor:5}
Let $m,n,a,\rrr $ be positive integers with $\rrr \ge2$ and $\rrr \mid (mn-2)$.
The number
of positive $m$-divisible non-crossing partitions of\/ $\{1,2,\dots,mn\}$
which are invariant under the $\rrr $-pseudo-rotation~$\rotA^{(mn-2)/\rrr }$ and have a central block of size~$ma$
is given by
\begin{equation} \label{eq:multichains-pos-a} 
\binom {{((m+1)n-a-\rrr -2)/\rrr }}{{(n-a)/\rrr }}
\end{equation}
if $a\equiv n$~{\em(mod~$\rrr $)} and $0$ otherwise,
while the number of these partitions without restricting the size
{\em(}or existence{\em)} of
the central block is
\begin{equation} \label{eq:multichains-pos} 
\binom {\fl{((m+1)n-2)/\rrr }}{\fl{n/\rrr }}.
\end{equation}
\end{corollary}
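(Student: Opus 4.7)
The plan is to obtain both formulas from \Cref{cor:4} by a short application of the Chu--Vandermonde summation. Since an invariant partition is classified by the size $ma$ of its central block (possibly $a=0$) together with the number $\rho b$ of non-central blocks, formula \eqref{eq:multichains-pos-a} arises by summing \eqref{eq:multichains-si-pos-a} over $b\ge 0$, and \eqref{eq:multichains-pos} arises by summing \eqref{eq:multichains-si-pos} over $b\ge 0$.

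First, I fix $a$ with $a\equiv n\pmod{\rho}$ and set $M=(n-a-\rho)/\rho$ and $K=(mn-2)/\rho$. Summing \eqref{eq:multichains-si-pos-a} over $b\ge 0$, shifting $b\mapsto b+1$, and using $\binom{K}{b+1}=\binom{K}{K-1-b}$, I obtain
\begin{equation*}
\sum_{b\ge 0}\binom{M}{b-1}\binom{K}{b}=\sum_{b\ge 0}\binom{M}{b}\binom{K}{K-1-b}=\binom{M+K}{K-1}
\end{equation*}
by Chu--Vandermonde. A direct computation gives $M+K=((m+1)n-a-\rho-2)/\rho$, and rewriting $\binom{M+K}{K-1}=\binom{M+K}{M+1}$ with $M+1=(n-a)/\rho$ yields exactly \eqref{eq:multichains-pos-a}.

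Second, I sum \eqref{eq:multichains-si-pos} over $b\ge 0$ and apply Chu--Vandermonde in the symmetric form $\sum_{b}\binom{M'}{b}\binom{K}{b}=\binom{M'+K}{M'}$ with $M'=\lfloor n/\rho\rfloor$, obtaining
\begin{equation*}
\binom{\lfloor n/\rho\rfloor+(mn-2)/\rho}{\lfloor n/\rho\rfloor}.
\end{equation*}
Since the divisibility hypothesis $\rho\mid(mn-2)$ makes $(mn-2)/\rho$ an integer, I may absorb it inside the floor to conclude $\lfloor n/\rho\rfloor+(mn-2)/\rho=\lfloor((m+1)n-2)/\rho\rfloor$, which is precisely \eqref{eq:multichains-pos}.

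There is no serious obstacle: both identities reduce to a single application of Chu--Vandermonde, and the only bookkeeping point is the floor manipulation, which is handled by the divisibility condition. As a consistency check, one may alternatively derive \eqref{eq:multichains-pos} by summing \eqref{eq:multichains-pos-a} over all admissible $a$ (write $a=n-\rho j$ with $0\le j\le\lfloor n/\rho\rfloor$) via the hockey-stick identity $\sum_{j=0}^{J}\binom{K-1+j}{j}=\binom{K+J}{J}$, giving the same right-hand side.
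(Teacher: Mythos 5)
Your proposal is correct and follows exactly the paper's own route: the paper likewise obtains \eqref{eq:multichains-pos-a} and \eqref{eq:multichains-pos} by summing \eqref{eq:multichains-si-pos-a} and \eqref{eq:multichains-si-pos} over $b$ and evaluating via Chu--Vandermonde; your index shift, the identification $M+K=((m+1)n-a-\rrr-2)/\rrr$, and the floor absorption are the same bookkeeping, just spelled out. The hockey-stick cross-check is a nice extra but not needed.
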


\begin{proof}
One sums Expressions~\eqref{eq:multichains-si-pos-a} and~\eqref{eq:multichains-si-pos} over all~$b$.
The resulting sums can be evaluated by means of the Chu--Vandermonde
summation formula.
\end{proof}

\subsection{Cyclic sieving}
\label{sec:sievA}

With the enumeration results in \Cref{sec:enumA,sec:rotenumA}, we are
now ready to derive our cyclic sieving results in type~$A$; see
\Cref{thm:3} below.
The definition of cyclic sieving (given in~\eqref{eq:siev})
requires certain properties of the ``cyclic sieving polynomial"
$P(q)$. Namely, one has to show that
it is indeed a polynomial and has non-negative
coefficients, and one has to compute the evaluation of this
polynomial at certain roots of unity.
We verify these properties separately
in \Cref{lem:1,lem:2}.

\medskip
We use the standard $q$-notations $[\alpha]_q:=(1-q^\alpha)/(1-q)$,
$[n]_q!:=[n]_q\, [n-1]_q\cdots[1]_q$, where $[0]_q!:=1$,
the $q$-binomial coefficient
$$
\begin{bmatrix} M\\m\end{bmatrix}_q:=\begin{cases}
\frac {[M]_q\, [M-1]_q\cdots [M-m+1]_q} {[m]_q!},&\text{if $k\ge1$},\\
1,&\text{if $k=0$,}
\end{cases}
$$
and the $q$-multinomial coefficient
$$
\begin{bmatrix} {M}\\{m_1,m_2,\dots,m_k}\end{bmatrix}_q
:=\begin{cases}
\frac {[M]_q\,[M-1]_q\cdots [M-m_1-m_2-\dots-m_k+1]_q} {[m_1]_q!\,[m_2]_q!\cdots
[m_k]_q!},&\text {if $m_1+\dots+m_k\ge1$,}\\
1,&\text {if $m_1+\dots+m_k=0$.}
\end{cases}
$$

\begin{lemma} \label{lem:1}
Let $m,n$ be positive integers and $s$ and  $b_1,b_2,\dots,b_n$ 
non-neg\-ative integers such that $0\le s\le n-1$ and
\begin{equation}
%\label{eq:SA}
%s_1+s_2+\dots+s_l&=n-1,
%\\
\label{eq:SB}
b_1+2b_2+\dots+nb_n=n.
%\\
%\label{eq:SC}
%s_1
%s+b_1+b_2+\dots+b_n&=n.
\end{equation}
Then the expressions
\begin{gather}
\frac {1} {[n]_q}\begin{bmatrix} mn-2\\n-1\end{bmatrix}_q,
\label{eq:SD}
\\
\frac {1} {[n]_q}
\begin{bmatrix} {n}\\{s}\end{bmatrix}_q
\begin{bmatrix} {mn-2}\\ {n-s-1}\end{bmatrix}_q,
\label{eq:SE}
\\
\frac {1} {[b_1+b_2+\dots+b_n]_q}
\begin{bmatrix} {b_1+b_2+\dots+b_n}\\{b_1,b_2,\dots,b_n}\end{bmatrix}_q
\begin{bmatrix} {mn-2}\\ {b_1+b_2+\dots+b_n-1}\end{bmatrix}_q
\label{eq:SF}
\end{gather}
are polynomials in $q$ with non-negative integer coefficients.
\end{lemma}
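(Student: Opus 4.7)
My plan is to treat the three expressions via a common strategy: reduce each to a canonical form by elementary $q$-factorial manipulations, verify polynomiality through a cyclotomic-polynomial divisibility argument, and then establish non-negativity by recognising the resulting expression as a $q$-analogue of a combinatorially meaningful quantity.

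For \eqref{eq:SD}, multiplying numerator and denominator by $[mn-1]_q$ and rearranging yields the identity
\[
\frac{1}{[n]_q}\begin{bmatrix}mn-2\\n-1\end{bmatrix}_q
\;=\;
\frac{1}{[mn-1]_q}\begin{bmatrix}mn-1\\n\end{bmatrix}_q.
\]
Since $\gcd(n,mn-1)=1$, this is a rational $q$-Catalan number. Polynomiality follows from the fact that for every $d>1$ with $d\mid mn-1$ (so $d\nmid n$), the cyclotomic polynomial $\Phi_d(q)$ appears with multiplicity $1$ in $\begin{bmatrix}mn-1\\n\end{bmatrix}_q$ (via the usual carry criterion for $\Phi_d$-multiplicity in a $q$-binomial). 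Non-negativity of rational $q$-Catalan numbers is classical, obtainable for instance from the $q$-analogue of the cycle lemma.

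For \eqref{eq:SE}, I would apply the two elementary identities
\[
\frac{1}{[n]_q}\begin{bmatrix}n\\s\end{bmatrix}_q=\frac{1}{[n-s]_q}\begin{bmatrix}n-1\\s\end{bmatrix}_q,
\qquad
\frac{1}{[n-s]_q}\begin{bmatrix}mn-2\\n-s-1\end{bmatrix}_q=\frac{1}{[mn-1]_q}\begin{bmatrix}mn-1\\n-s\end{bmatrix}_q,
\]
to rewrite \eqref{eq:SE} as $\frac{1}{[mn-1]_q}\begin{bmatrix}n-1\\s\end{bmatrix}_q\begin{bmatrix}mn-1\\n-s\end{bmatrix}_q$. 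Polynomiality is verified by cyclotomic analysis: for each $d>1$ with $d\mid mn-1$, one has $d\nmid n$, and a short case analysis on the residues of $s$ and $n$ modulo $d$ shows that in every case at least one of the two $q$-binomials supplies the factor $\Phi_d(q)$. The analogous rewrite for \eqref{eq:SF} gives $\frac{1}{[mn-1]_q}\begin{bmatrix}b\\b_1,\ldots,b_n\end{bmatrix}_q\begin{bmatrix}mn-1\\b\end{bmatrix}_q$ with $b=b_1+\cdots+b_n$. Here the cyclotomic argument splits neatly: if $d\nmid b$ then $\Phi_d(q)\mid\begin{bmatrix}mn-1\\b\end{bmatrix}_q$, while if $d\mid b$ then the constraint $b_1+2b_2+\cdots+nb_n=n$ combined with $d\nmid n$ forces some $b_i$ to be not divisible by $d$, so $\Phi_d(q)$ divides the $q$-multinomial.

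The main obstacle I anticipate is the non-negativity of \eqref{eq:SE} and \eqref{eq:SF}. Polynomiality alone is not enough, since dividing a polynomial with non-negative coefficients by a cyclotomic-style factor need not preserve non-negativity. My preferred route is to exhibit an explicit positive interpretation by identifying these $q$-polynomials with generating functions (under, for example, major index or a comajor-type statistic) over the combinatorial objects counted by the classical formulas \eqref{eq:multichains-bi} and \eqref{eq:multichains-si} from \Cref{sec:enumA} — the refinement structure of these counts closely parallels the $q$-refinement in \eqref{eq:SE}--\eqref{eq:SF}. Alternatively, one can proceed by rewriting via the $q$-Chu--Vandermonde identity and reducing to established non-negative expressions for $q$-Narayana and $q$-Fu\ss--Catalan numbers; either route should be deferred to the appendix.
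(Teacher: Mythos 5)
Your algebraic rewrites are correct: \eqref{eq:SD} equals $\frac{1}{[mn-1]_q}\begin{bmatrix}mn-1\\n\end{bmatrix}_q$, \eqref{eq:SE} equals $\frac{1}{[mn-1]_q}\begin{bmatrix}n-1\\s\end{bmatrix}_q\begin{bmatrix}mn-1\\n-s\end{bmatrix}_q$, and \eqref{eq:SF} equals $\frac{1}{[mn-1]_q}\begin{bmatrix}b\\b_1,\dots,b_n\end{bmatrix}_q\begin{bmatrix}mn-1\\b\end{bmatrix}_q$ with $b=b_1+\cdots+b_n$, and your cyclotomic case analysis does establish polynomiality: for $d\mid(mn-1)$, $d>1$, one has $d\nmid n$, and either $d\nmid(n-s)$, in which case $\Phi_d$ divides the large $q$-binomial, or $d\mid(n-s)$, in which case the residues of $s$ and $n-1-s$ modulo $d$ sum to at least $d$; the argument for \eqref{eq:SF} via \eqref{eq:SB} is likewise sound. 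This is a somewhat more direct route to polynomiality than the paper's, which instead splits \eqref{eq:SE} and \eqref{eq:SF} into gcd-normalised factors as in \eqref{eq:SEa.1}--\eqref{eq:SEa.3} and invokes the strengthened rational-$q$-Catalan statement of \Cref{lem:Andrews}.

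The genuine gap is the non-negativity of \eqref{eq:SE} and \eqref{eq:SF}, which you rightly single out as the main obstacle but then do not prove: neither of your two suggested routes is a proof. There is no known major-index-type statistic realising these polynomials --- such a statistic is not even available for rational $q$-Catalan numbers, whose positivity is proved by entirely different means, and the ``$q$-analogue of the cycle lemma'' you invoke for \eqref{eq:SD} gives the count at $q=1$ but not coefficientwise positivity (for \eqref{eq:SD} you may simply cite \cite{AndrCB,RSW2004}, or note it is the $s=0$ case of \eqref{eq:SE}). Likewise, a $q$-Chu--Vandermonde expansion does not obviously interact with the global factor $1/[mn-1]_q$ so as to produce non-negative summands, and you cannot cite positivity of $q$-Fu\ss--Narayana-type expressions, since the shifted-parameter expressions with $mn-2$ are exactly what is to be proved. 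The missing idea, used in the paper following Andrews and Reiner--Stanton--White, is to exploit that $q$-binomial and $q$-multinomial coefficients are reciprocal and unimodal: the first half of the coefficients of $(1-q)\begin{bmatrix}n\\s\end{bmatrix}_q$ and of $(1-q^{mn-n+s})\begin{bmatrix}mn-1\\n-s-1\end{bmatrix}_q$ are non-negative, hence so are the corresponding initial coefficients of \eqref{eq:SE} written as $\frac{(1-q)(1-q^{mn-n+s})}{(1-q^{mn-1})(1-q^n)}\begin{bmatrix}n\\s\end{bmatrix}_q\begin{bmatrix}mn-1\\n-s-1\end{bmatrix}_q$; since this is already known to be a polynomial and is reciprocal, all its coefficients are non-negative. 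Your rewritten forms are equally amenable to that argument (apply it to $\frac{1-q}{1-q^{mn-1}}\begin{bmatrix}n-1\\s\end{bmatrix}_q\begin{bmatrix}mn-1\\n-s\end{bmatrix}_q$ and its multinomial analogue), so the proof can be completed along your lines, but as written the positivity claims for \eqref{eq:SE} and \eqref{eq:SF} are unsupported.
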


The proof of this lemma is found in \Cref{app:unnec-pol}.

\begin{lemma} \label{lem:2}
Let $m,n,s,b_1,b_2,\dots,b_n$ be integers satisfying
the hypotheses in \Cref{lem:1}. 
Furthermore, let $\si$ be a positive integer such that $\si\mid (mn-2)$ and 
$\si<mn-2$. Writing $\rrr =(mn-2)/\si$ and 
$\om_\rrr =e^{2\pi i\si/(mn-2)}=e^{2\pi i/\rrr }$, we have
\begin{align}
\frac {1} {[n]_q}\bmatrix (m+1)n-2\\n-1\endbmatrix_q
\Bigg\vert_{q=\om_\rrr }
&=\binom {\fl{((m+1)n-2)/\rrr }} {\fl{n/\rrr }},
\label{eq:SI}
\\
\frac {1} {[n]_q}\bmatrix mn-2\\n-1\endbmatrix_q
\Bigg\vert_{q=\om_\rrr }
&=\begin{cases}
\binom {(mn-2)/2} {{n/2}},&\text{if\/ $\rrr =2$ and $n\equiv 0$ \rm (mod $2$)},\\
\binom {(mn-2)/\rrr } {\fl{n/\rrr }},&\text{if\/ $n\equiv 1$ \rm (mod $\rrr $)},\\
0,&\text{otherwise,}
\end{cases}
\label{eq:SJ}
\\
\notag
\frac {1} {[n]_q}
&\bmatrix {n}\\{s}\endbmatrix_q
\bmatrix {mn-2}\\ {n-s-1}\endbmatrix_q
\Bigg\vert_{q=\om_\rrr }
&\\
&\hskip-1cm
=\begin{cases}
\binom {\fl{(n-1)/\rrr }} {\fl{s/\rrr }}
\binom {(mn-2)/\rrr } {{(n-s-1)/\rrr }},
&\text{if\/ $n-s-1\equiv 0$ \rm (mod $\rrr $)},\\
\binom {(n-2)/2} {{s/2}}
\binom {(mn-2)/2} {{(n-s)/2}},
&\text{if\/ $\rrr =2$ and $n\equiv s\equiv0$ \rm (mod $2$),}\\
0,&\text{otherwise,}
\end{cases}
\notag
\\
\label{eq:SK}
\\
\notag
&\hskip-4.5cm
\frac {1} {[b_1+b_2+\dots+b_n]_q}
\bmatrix {b_1+b_2+\dots+b_n}\\{b_1,b_2,\dots,b_n}\endbmatrix_q
\bmatrix {mn-2}\\ {b_1+b_2+\dots+b_n-1}\endbmatrix_q\Bigg\vert_{q=\om_\rrr }\\
&\hskip-4cm=\begin{cases}
\binom {{(b_1+b_2+\dots+b_n-1)/\rrr }}
{\fl{b_1/\rrr },\fl{b_2/\rrr },\dots,\fl{b_n/\rrr }}
\binom {(mn-2)/\rrr } {{(b_1+b_2+\dots+b_n-1)/\rrr }},
&\text{if\/ $b_k\equiv0$ \rm (mod $\rrr $) for $k\ne j$}\\
&\text{and $b_j\equiv1$ \rm (mod $\rrr $), for some $j$},\\
\binom {{(b_1+b_2+\dots+b_n)/2}}
{{b_1/2},{b_2/2},\dots,{b_n/2}}
\binom {(mn-2)/2} {\fl{(b_1+b_2+\dots+b_n)/2}},\hskip-4cm\\
&\hskip-1cm
\text{if\/ $\rrr =2$ and $n\equiv b_1\equiv\dots\equiv b_n\equiv0$ \rm (mod $2$),}\\
0,&\text{otherwise.}
\end{cases}
\notag
\\
\label{eq:SL}
\end{align}
\end{lemma}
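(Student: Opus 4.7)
The plan is to derive all four formulae by applying the \emph{$q$-Lucas theorem}, which asserts that, for a primitive $\rho$-th root of unity $\omega_\rho$,
$$\binom{M}{k}_q\bigg|_{q=\omega_\rho} = \binom{\lfloor M/\rho\rfloor}{\lfloor k/\rho\rfloor}\binom{M\bmod\rho}{k\bmod\rho},$$
together with the analogous identity for $q$-multinomials
$$\bmatrix M\\k_1,\dots,k_l\endbmatrix_q\bigg|_{q=\omega_\rho} = \binom{\lfloor M/\rho\rfloor}{\lfloor k_1/\rho\rfloor,\dots,\lfloor k_l/\rho\rfloor}\binom{M\bmod\rho}{k_1\bmod\rho,\dots,k_l\bmod\rho},$$
where the right-hand multinomial is understood to vanish unless $\sum(k_i\bmod\rho)=M\bmod\rho$. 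A key arithmetic observation used throughout is that, because $\rho\mid mn-2$ and $\rho>1$, the congruence $\rho\mid n$ forces $\rho\mid 2$, hence $\rho=2$; this explains why the ``$\rho=2$, $n$ even'' case is singled out in each formula.

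For formula~\eqref{eq:SI}, I would first rewrite
$$\frac{1}{[n]_q}\binom{(m+1)n-2}{n-1}_q=\frac{1}{[mn-1]_q}\binom{(m+1)n-2}{n}_q,$$
which is a straightforward factorial manipulation. Since $\rho\mid mn-2$, we have $mn-1\equiv 1\pmod\rho$, and a direct computation using that the geometric series over one complete period of $\omega_\rho$ sums to $0$ yields $[mn-1]_{\omega_\rho}=1$. Applying $q$-Lucas to the remaining $q$-binomial and noting that $((m+1)n-2)\bmod\rho=n\bmod\rho$ (because $mn-2\equiv 0\pmod\rho$) yields the second binomial factor equal to $\binom{n\bmod\rho}{n\bmod\rho}=1$, leaving~\eqref{eq:SI}.

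For formula~\eqref{eq:SJ}, I would split into three cases according to $n\bmod\rho$. If $n\not\equiv 0,1\pmod\rho$, then $(n-1)\bmod\rho\ne 0$ so $q$-Lucas gives $\binom{mn-2}{n-1}_{\omega_\rho}=0$ (while $[n]_{\omega_\rho}\ne 0$), producing~$0$. If $n\equiv 1\pmod\rho$, then $[n]_{\omega_\rho}=1$ by the same complete-period argument, and $q$-Lucas on the $q$-binomial produces $\binom{(mn-2)/\rho}{(n-1)/\rho}=\binom{(mn-2)/\rho}{\lfloor n/\rho\rfloor}$. The remaining case $\rho\mid n$ can only occur when $\rho=2$; there both $[n]_q$ and $\binom{mn-2}{n-1}_q$ vanish at $q=\omega_2=-1$, so I would instead use the identical rewriting
$$\frac{1}{[n]_q}\binom{mn-2}{n-1}_q=\frac{1}{[mn-n-1]_q}\binom{mn-2}{n}_q$$
and evaluate directly, computing $[mn-n-1]_{\omega_2}=1$ and reading off $\binom{(mn-2)/2}{n/2}$ from $q$-Lucas.

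Formulae~\eqref{eq:SK} and~\eqref{eq:SL} follow the same pattern, using the multinomial $q$-Lucas theorem. For~\eqref{eq:SK} I would first observe the factorial identity
$$\frac{1}{[n]_q}\bmatrix n\\s\endbmatrix_q = \frac{1}{[n-s]_q}\binom{n-1}{s}_q$$
and then argue case by case on $(n-s-1)\bmod\rho$; once again, the case $\rho\mid n-s$ combined with $\rho\mid mn-2$ forces $\rho=2$, explaining the special branch of the formula. For~\eqref{eq:SL} I would similarly absorb the factor $1/[b_1+\dots+b_n]_q$ into the $q$-multinomial: since the side-condition $\sum(b_i\bmod\rho)=(b_1+\dots+b_n)\bmod\rho$ must hold for the second factor in the multinomial $q$-Lucas to be nonzero, the generic evaluation vanishes unless exactly one $b_j$ is $\equiv 1\pmod\rho$ and the rest are $\equiv 0\pmod\rho$, or else $\rho=2$ and all $b_i$ are even. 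The main technical obstacle is precisely this last multinomial case: when $\rho=2$ and every $b_i$ is even, several factors simultaneously vanish at $q=-1$, and I would need either a limit argument (cyclotomic multiplicity count) or an explicit rewriting analogous to the one in~\eqref{eq:SJ} to extract the finite value $\binom{(mn-2)/2}{\lfloor(b_1+\dots+b_n)/2\rfloor}\binom{(b_1+\dots+b_n)/2}{b_1/2,\dots,b_n/2}$ asserted by the lemma.
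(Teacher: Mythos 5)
Your route differs genuinely from the paper's: the paper expands each expression as a quotient of products of factors $1-q^{j}$ and determines the limit by counting vanishing factors via fractional parts, using only the elementary fact about $\lim_{q\to\om_\rrr}(1-q^{\al})/(1-q^{\be})$, whereas you absorb the troublesome prefactor into a $q$-binomial by a factorial identity and then invoke $q$-Lucas. For \eqref{eq:SI} and \eqref{eq:SJ} your argument is complete and correct: the rewritings $\frac{1}{[n]_q}\left[\begin{smallmatrix}(m+1)n-2\\ n-1\end{smallmatrix}\right]_q=\frac{1}{[mn-1]_q}\left[\begin{smallmatrix}(m+1)n-2\\ n\end{smallmatrix}\right]_q$ and $\frac{1}{[n]_q}\left[\begin{smallmatrix}mn-2\\ n-1\end{smallmatrix}\right]_q=\frac{1}{[mn-n-1]_q}\left[\begin{smallmatrix}mn-2\\ n\end{smallmatrix}\right]_q$ do exactly what you want, because $mn-1$, and in the relevant case $mn-n-1$, are not divisible by $\rrr$. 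One caveat on the tool itself: $q$-Lucas gives the residue factor as a \emph{$q$-binomial evaluated at $\om_\rrr$}, not an ordinary binomial (for instance $\left[\begin{smallmatrix}5\\ 1\end{smallmatrix}\right]_q$ at a primitive cube root of unity equals $1+\om$, not $\binom{2}{1}=2$); this is harmless here only because every residue factor you actually meet is of the form $\binom{x}{x}$, $\binom{x}{0}$ or $\binom{0}{y}$, and you should say so explicitly.

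The genuine gaps are in \eqref{eq:SK} and \eqref{eq:SL}. In \eqref{eq:SK} your case analysis rests on a false claim: $\rrr\mid(n-s)$ together with $\rrr\mid(mn-2)$ does \emph{not} force $\rrr=2$ (take $\rrr=3$, $m=2$, $n=4$, $s=1$); only $\rrr\mid n$ does. After your rewriting $\frac{1}{[n]_q}\left[\begin{smallmatrix}n\\ s\end{smallmatrix}\right]_q=\frac{1}{[n-s]_q}\left[\begin{smallmatrix}n-1\\ s\end{smallmatrix}\right]_q$, every case with $\rrr\mid(n-s)$ has vanishing denominator: for $\rrr>2$ this is a $0/0$ your plan does not address (the fix is easy --- either note that for $\rrr>2$ one has $\rrr\nmid n$, so in the \emph{original} form $[n]_{\om_\rrr}\neq 0$ while $\left[\begin{smallmatrix}mn-2\\ n-s-1\end{smallmatrix}\right]_{\om_\rrr}=0$, or count multiplicities of $\Phi_\rrr$); and for $\rrr=2$ with $n,s$ even the single rewriting still yields $0/0$, so a second rewriting such as $\frac{1}{[n-s]_q}\left[\begin{smallmatrix}mn-2\\ n-s-1\end{smallmatrix}\right]_q=\frac{1}{[mn-n+s-1]_q}\left[\begin{smallmatrix}mn-2\\ n-s\end{smallmatrix}\right]_q$ (with $mn-n+s-1$ odd) is needed before $q$-Lucas can be applied. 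Similarly, the degenerate branch of \eqref{eq:SL} is only flagged as an obstacle, not proved; it is resolved by the analogous rewriting $\frac{1}{[B]_q}\left[\begin{smallmatrix}mn-2\\ B-1\end{smallmatrix}\right]_q=\frac{1}{[mn-1-B]_q}\left[\begin{smallmatrix}mn-2\\ B\end{smallmatrix}\right]_q$ with $B=b_1+\cdots+b_n$, where $mn-1-B$ is odd when $\rrr=2$ and all $b_i$ are even, and the remaining ``otherwise $0$'' cases with $\rrr\mid B$ and $\rrr>2$ (which do occur) again require either a multiplicity count or absorbing $1/[B]_q$ into a part $b_j$ with $\rrr\nmid b_j$. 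None of this is fatal to your approach, but as written the proof is complete only for \eqref{eq:SI} and \eqref{eq:SJ}.
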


The proof of this lemma is found in \Cref{app:unnec-eval}.

Now we are ready to state, and prove, our cyclic sieving results
in type~$A$.

\begin{theorem}
\label{thm:3}
Let $m,n,s,b_1,b_2,\dots,b_n$ be integers satisfying
the hypotheses in\break \Cref{lem:1}. 
Furthermore, 
let $C$ be the cyclic group of pseudo-rotations of an $mn$-gon
generated by~$\rotA$. Then
the triple $(M,P,C)$ exhibits the cyclic sieving phenomenon
for the following choices of sets $M$ and polynomials $P$:

\begin{enumerate}
\item[\em(1)]
$M=\mNCAPlus$, and
$$P(q)=\frac {1} {[n]_q}\left[\begin{matrix}
(m+1)n-2\\n-1\end{matrix}\right]_q;$$
\item[\em(2)]
$M$ consists of the elements of
$\mNCAPlus$ all of whose blocks have size~$m$, and
$$P(q)=\frac {1} {[n]_q}\left[\begin{matrix}
mn-2\\n-1\end{matrix}\right]_q;$$
\item[\em(3)]
$M$ consists of all elements of
$\mNCAPlus$ which have rank $s$
{\em(}or, equivalently, their number of blocks is $n-s${\em)},
and 
$$P(q)=\frac {1} {[n]_q}
\bmatrix {n}\\{s}\endbmatrix_q
\bmatrix {mn-2}\\ {n-s-1}\endbmatrix_q;
$$
\item[\em(4)]
$M$ consists of all elements of
$\mNCAPlus$ whose
number of blocks of size~$mi$ is~$b_i$, $i=1,2,\dots,n$,
and 
$$P(q)=\frac {1} {[b_1+b_2+\dots+b_n]_q}
\bmatrix {b_1+b_2+\dots+b_n}\\{b_1,b_2,\dots,b_n}\endbmatrix_q
\bmatrix {mn-2}\\ {b_1+b_2+\dots+b_n-1}\endbmatrix_q.
$$
\end{enumerate}
\end{theorem}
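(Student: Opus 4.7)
The plan is to verify the cyclic sieving condition \eqref{eq:siev} for each of parts (1)--(4) separately, assembling the ingredients already in place. Lemma~\ref{lem:1} provides that each $P(q)$ is a polynomial with non-negative integer coefficients, and Theorem~\ref{lem:N-2} tells us that $C$ has order $mn-2$. Thus for every positive divisor $\rho$ of $mn-2$ we need to match $P(e^{2\pi i/\rho})$ against the number of elements of $M$ fixed by $\rotA^{(mn-2)/\rho}$.

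For $\rho=1$ the identity $P(1)=|M|$ follows from the type-$A$ enumeration: part~(1) from \eqref{eq:total} in Corollary~\ref{cor:3}; part~(2) by specialising Corollary~\ref{cor:countingA1} to $b_1=n$, after the rewrite $\tfrac{1}{mn-1}\binom{mn-1}{n}=\tfrac{1}{n}\binom{mn-2}{n-1}$; part~(3) from the $l=2$ case of Corollary~\ref{cor:2} together with one application of Pascal's identity; part~(4) from Corollary~\ref{cor:countingA1} after the rewrite $\tfrac{1}{mn-1}\binom{mn-1}{B}=\tfrac{1}{B}\binom{mn-2}{B-1}$, where $B=b_1+\cdots+b_n$. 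For $\rho>1$ the fixed-point enumerations of Section~\ref{sec:rotenumA} are matched against the evaluations of Lemma~\ref{lem:2}. Part~(1) compares \eqref{eq:SI} directly with \eqref{eq:multichains-pos} of Corollary~\ref{cor:5}. Part~(4) compares \eqref{eq:SL} directly with Theorem~\ref{thm:2}: the congruence pattern $b_j\equiv1$, $b_k\equiv 0\pmod\rho$ (for $k\ne j$) says precisely that a central block of size $mj$ is present and the remaining blocks form full $\rho$-orbits, while the auxiliary case $\rho=2$ with all $b_i$ even corresponds to the partitions with no central block, which Theorem~\ref{thm:2} explicitly allows in this range. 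Part~(2) is the specialisation of part~(4) to $b_1=n$, $b_i=0$ for $i\ge2$, and in the two allowed sub-cases this reduces to \eqref{eq:SJ}.

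The most technical piece is part~(3), which I would handle by summing Corollary~\ref{cor:4} over the permissible central-block sizes $a$ with $a\equiv n\pmod\rho$ and $1\le a\le n$: writing $a=n-\rho k$ and $b=(n-s-1)/\rho$, the hockey-stick identity $\sum_{k\ge b}\binom{k-1}{b-1}=\binom{\lfloor(n-1)/\rho\rfloor}{b}$ collapses the sum, and binomial symmetry (valid because the hypothesis $n-s-1\equiv0\pmod\rho$ forces $s\equiv n-1\pmod\rho$, so $\lfloor(n-1)/\rho\rfloor-b=\lfloor s/\rho\rfloor$) produces the first case of \eqref{eq:SK}. For the second case of \eqref{eq:SK}, applying only when $\rho=2$ with $n,s$ both even, I would instead apply Theorem~\ref{thm:2} in the no-central-block branch and evaluate the resulting sum via
\[
\sum_{\substack{\sum b_i=(n-s)/2\\ \sum ib_i=n/2}}\binom{(n-s)/2}{b_1,\ldots,b_n}=[z^{s/2}](1+z+\cdots+z^{n-1})^{(n-s)/2}=\binom{(n-2)/2}{s/2},
\]
the last equality being the standard expansion $[z^j](1-z)^{-k}=\binom{k+j-1}{j}$, applicable because $s/2<n$. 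The main obstacle is the careful case analysis for part~(3) and the verification that the congruence constraints imposed by $\rho$ partition the fixed-point count correctly into the two branches of \eqref{eq:SK} and \eqref{eq:SL}; once this is done, the remaining parts reduce to direct quotations of Lemmas~\ref{lem:1} and~\ref{lem:2} together with the enumeration results of Sections~\ref{sec:enumA} and~\ref{sec:rotenumA}.
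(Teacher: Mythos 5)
Your proposal is correct and follows essentially the same route as the paper: it matches the fixed-point enumerations of \Cref{sec:rotenumA} (\Cref{thm:2}, \Cref{cor:4}, \Cref{cor:5}) against the root-of-unity evaluations of \Cref{lem:2}, with \Cref{lem:1} supplying polynomiality; your explicit treatment of the identity element via \Cref{sec:enumA} and your hockey-stick summation over central-block sizes in part~(3) merely spell out steps the paper leaves implicit (and correctly restrict to $a\ge1$, which is where the paper's ``the previous arguments apply again'' is terse). The only point to make explicit is that the dichotomy underlying your ``two branches'' --- that for $\rrr\ge3$ every invariant partition has a central block with the remaining blocks in free $\rrr$-orbits, and that for $\rrr=2$ the no-central-block case forces all $b_i$ even --- is exactly \Cref{lem:10} and \Cref{lem:allA}, which should be cited when you assert that \Cref{thm:2} and \Cref{cor:4} exhaust all invariant elements.
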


\begin{proof}
The polynomials $P(q)$ in the assertion of the theorem are indeed
polynomials with non-negative coefficients due to \Cref{lem:1}.

Now, given a positive integer $\rrr$ with $\rrr\mid (mn-2)$, we must show  
$$
P(q)\big\vert_{q=\om_\rrr }=
(\text{number of elements of $\mNCAPlus$ invariant under
  $\rotA^{(mn-2)/\rrr}$}),
$$
where $\om_\rrr=e^{2\pi i/\rrr}$.

For Item~(1), this follows by combining \eqref{eq:multichains-pos} and~\eqref{eq:SI}.
Item~(2) is the special case of Item~(4) where $b_1=n$
(and, hence, $b_2=b_3=\dots=b_n=0$), and therefore does not need
to be treated separately.

\medskip
We now turn to Item~(3). 
We distinguish two cases, depending on $\rrr $. 

\medskip
{\sc Case 1: $\rrr \ge3$.} Then, according to \Cref{lem:10}, any 
positive $m$-divisible non-crossing partition which is
$\rrr $-pseudo-rotationally invariant must have a central block. 
Moreover, \Cref{lem:allA}(1) says that all of these
partitions arise by
Construction~1 and subsequent applications of the
pseudo-rotation~$\rotA$. As a consequence,
the number of non-central blocks must be divisible by~$\rrr $.
Consequently, for any such non-crossing partition~$\pi$, we have
$$
\#(\text{blocks of $\pi$})=1+\rrr B=n-s,
$$
where $B$ is some non-negative integer. In particular, we must
have $n-s-1\equiv 0$~(mod~$\rrr $).
The number of these non-crossing partitions is given by
Formula~\eqref{eq:multichains-si-pos} with $b$ replaced by
$B=(n-s-1)/\rrr $. It is not difficult to see that this agrees with
the expression in the first alternative on the right-hand side of~\eqref{eq:SK}.

\medskip
{\sc Case 2: $\rrr =2$.} In that case, \Cref{lem:allA}(2) says that we
obtain all relevant positive $m$-divisible non-crossing partitions
by either Construction~1 or~2 and subsequent applications of~$\rotA$. 
In the first case, there is a central block and
the previous arguments apply again. In the second case, there is
no central block. Moreover, if $b_i$ denotes the number of blocks
of size~$mi$, then all~$b_i$'s are even. In particular,
the number of blocks, $n-s$, must be even.
Since $n=b_1+2b_2+\dots+nb_n$, also $n$, and thus also
$s$, must be even. The number of relevant non-crossing partitions
is then given by~\eqref{eq:multichains-si-pos-a} with $a=0$ and
$b$ replaced by $(n-s)/2$. The resulting expression is identical
with the expression in the second alternative on the right-hand side of~\eqref{eq:SK}.

In all other cases, there are no positive $m$-divisible non-crossing
partitions which are $\rrr $-pseudo-rotationally invariant.

\medskip
Finally, we address Item~(4). 
Again, we distinguish two cases depending on $\rrr $.

\medskip
{\sc Case 1: $\rrr \ge3$.} Let again $b_i$ denote the number of blocks
of size~$mi$.
Repeating the arguments for Item~(3),
we see that there is one $b_j$ (namely the one for which $mj$ is
the size of the central block) which is congruent to $1$ modulo~$\rrr $,
while all other~$b_k$'s, $k\ne j$, are divisible by~$\rrr $.
The number of relevant non-crossing partitions is then given
by~\eqref{eq:multichains-bi-si-pos} with $b_j$ replaced by
$(b_j-1)/\rrr $ and $b_k$ replaced by $b_k/\rrr $, $k\ne j$. 
This agrees with the expression in the first alternative
on the right-hand side of~\eqref{eq:SL}.

\medskip
{\sc Case 2: $\rrr =2$.} In that case, \Cref{lem:allA}(2) says that we
obtain all relevant positive $m$-divisible non-crossing partitions
by either Construction~1 or~2 and subsequent application
of~$\rotA$. In the first case, there is a central block and
the previous arguments apply again. In the second case, by
repeating the arguments for Item~(3), we see that
all~$b_i$'s, and thus also $n$, must be even. 
The number of relevant non-crossing partitions
is then given by~\eqref{eq:multichains-bi-si-pos} with 
$b_k$ replaced by $b_k/2$, $k=1,2,\dots,n$. 
The resulting expression is identical
with the expression in second alternative on the right-hand side of~\eqref{eq:SL}.

In all other cases, there are no positive $m$-divisible non-crossing
partitions which are $\rrr $-pseudo-rotationally invariant.

\medskip
This completes the proof of the theorem.
\end{proof}

\begin{corollary} \label{cor:CS-A}
  The conclusion of \Cref{thm:CS} holds for type $A_{n-1}$.
\end{corollary}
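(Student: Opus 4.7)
The plan is to reduce both cyclic sieving assertions to Theorem \ref{thm:3}, using Armstrong's bijection $\Nam{A_{n-1}}{m+1}$ together with the embedding $\ph$ introduced after Definition \ref{def:positivekreweras}.

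For the first triple, I would apply Theorem \ref{prop:1} with parameter $m+1$ (valid since $m+1 \geq 2$) to identify $\mNCPlus[A_{n-1}][m+1]$ with $\mNCAPlus[m+1][n]$, and Proposition \ref{prop:2} to translate $\Krewplustilde^{(m+1)}$ into the pseudo-rotation $\rotA$. The orders of both cyclic groups equal $(m+1)n-2$ by Corollary \ref{cor:order} and Theorem \ref{lem:N-2}, so the identification is compatible with the group structure. A direct calculation from the formula \eqref{eq:mdivncplus}, using the degrees $2, 3, \ldots, n$ and Coxeter number $h = n$ of $A_{n-1}$, shows
\[
\mCatplus[m+1](A_{n-1};q) = \frac{1}{[n]_q}\begin{bmatrix}(m+2)n-2\\n-1\end{bmatrix}_q,
\]
which is precisely the polynomial in Theorem \ref{thm:3}(1) after the substitution $m \mapsto m+1$. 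The first cyclic sieving phenomenon then follows.

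For the second triple, I would use the embedding $\ph \colon \mNCPlus[A_{n-1}][m] \hookrightarrow \mNCPlus[A_{n-1}][m+1]$ given by $(w_0, \ldots, w_m) \mapsto (\one, w_0, \ldots, w_m)$, which (as recalled after Definition \ref{def:positivekreweras}) satisfies $\ph \circ \Krewplus^{(m)} = \Krewplustilde^{(m+1)} \circ \ph$. Composing $\ph$ with $\Nam{A_{n-1}}{m+1}$ and invoking Remark \ref{rem:id} (with parameter $m+1$) identifies $\mNCPlus[A_{n-1}][m]$ with the subset of $\mNCAPlus[m+1][n]$ consisting of partitions all of whose blocks have size $m+1$; positivity is preserved under $\ph$ because in both formulations the product governing positivity is $w_0 w_1 \cdots w_{m-1}$. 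Since the pseudo-rotation $\rotA$ preserves block sizes, it restricts to this subset, and the restriction is intertwined with $\Krewplus^{(m)}$. Theorem \ref{thm:3}(2) with $m \mapsto m+1$ then gives cyclic sieving for this subset with polynomial $\frac{1}{[n]_q}\begin{bmatrix}(m+1)n-2\\n-1\end{bmatrix}_q = \mCatplus(A_{n-1};q)$, which transports back via the bijection to the required statement.

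I do not anticipate any substantive obstacle: the core combinatorial content is already contained in Theorem \ref{thm:3} and in the translations of $\Krewplustilde^{(m+1)}$ and $\Krewplus^{(m)}$ into combinatorial pseudo-rotations from Sections \ref{sec:realA}--\ref{sec:combA}. What remains is routine bookkeeping, namely matching the orders of the cyclic groups, the underlying sets, and the $q$-Fu\ss--Catalan polynomials across the bijections.
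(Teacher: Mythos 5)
Your proposal is correct and follows essentially the same route as the paper: part one is Theorem \ref{thm:3}(1) with $m$ replaced by $m+1$ after matching $\mCatplus[m+1](A_{n-1};q)$ with the sieving polynomial there, and part two transports $\Krewplus^{(m)}$ through the embedding $\ph$ (as in Proposition \ref{prop:PositiveKrewAlt}) and Remark \ref{rem:id} to the partitions with all blocks of size $m+1$, then applies Theorem \ref{thm:3}(2) with $m$ replaced by $m+1$. The extra bookkeeping you spell out (equivariance, preservation of positivity and block sizes, the order of the cyclic group) is exactly what the paper leaves implicit.
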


\begin{proof}
The cyclic sieving phenomenon for
  $\Big(\mNCPlus[A_{n-1}][m+1],\ \mCatplus[m+1](A_{n-1};q),\ C\Big)$
follows directly from \Cref{thm:3}(1) with~$m$ replaced by~$m+1$.

The image of the
embedding of $\mNCPlus[A_{n-1}][m]$
in $\mNCPlus[A_{n-1}][m+1]$ as described in
\Cref{prop:PositiveKrewAlt} is given by all tuples
$(w_0,w_1,\dots,w_{m+1})$ with $w_0=\ep$. By
\Cref{rem:id}, these tuples correspond to non-crossing set partitions
in $\mNCA[n-1][m+1]$ all of whose blocks have size~$m+1$. 
The cyclic sieving phenomenon for
$\Big(\mNCPlus[A_{n-1}][m],\ \mCatplus(A_{n-1};q),\ \widetilde C\Big)$
thus follows from \Cref{thm:3}(2) with $m$ replaced by~$m+1$.
\end{proof}

%%%%%%%%%%%%%%%%%%%%%%%%%%%%%%%%%%%%%%%%%%%%%%%%%%%%%%%%%%%%%%%%%%%%%%%%%%%%%%%%%
\section{Positive non-crossing set partitions in
type~\texorpdfstring{$B$}{B}}
\label{sec:typeB}
%%%%%%%%%%%%%%%%%%%%%%%%%%%%%%%%%%%%%%%%%%%%%%%%%%%%%%%%%%%%%%%%%%%%%%%%%%%%%%%%%

In this section, we study positive $m$-divisible non-crossing partitions and the positive Kreweras maps in the situation of the hyperoctahedral group $B_n$.
Recall that $B_n$ can be combinatorially realised as the group of permutations~$\pi$ of $\{1,2,\dots,n,-1,-2,\dots,-n\}$ satisfying $\pi(-i)=-\pi(i)$ for $i=1,2,\dots,n$.
In the sequel, we shall most often write $\overline i$ instead of~$-i$.
We adopt the type~$B$ cycle notation from~\cite{BRWaAA}
consisting of the conventions
\begin{align*}
((i_1,i_2,\dots,i_k))&:=(i_1,i_2,\dots,i_k)
(\overline{i_1},\overline{i_2},\dots,\overline{i_k}),\\
[i_1,i_2,\dots,i_k]&:=(i_1,i_2,\dots,i_k,
\overline{i_1},\overline{i_2},\dots,\overline{i_k}).
\end{align*}

The Coxeter system corresponding to the hyperoctahedral group~$B_n$ is
\begin{equation} \label{eq:WSB} 
(W,\reflS) = \big( B_{n},\{ s_i = ((i,i+1)) \mid 1 \leq i < n\}\cup
\{s_n=(n,\overline{n})=[n]\} \big)
\end{equation}
together with the Coxeter element $c = s_1 \cdots
s_{n}=[1,2,\ldots,n]$.

\medskip

The structure of this section is the same as the one of \Cref{sec:typeA}.
Namely, in \Cref{sec:realB}
we first make the positivity condition for type~$B$
$m$-divisible non-crossing partitions under the choice~\eqref{eq:WSB} 
of Coxeter system explicit; see \Cref{lem:wmn>0}. We then use
this to describe \emph{positive} $m$-divisible non-crossing partitions
within Armstrong's combinatorial
model of type~$B$ $m$-divisible non-crossing partitions
\cite[Def.~4.5.5 and Thm.~4.5.6]{Arm2006}; see \Cref{prop:1B}.

In \Cref{sec:mapB}, we show that also here, under Armstrong's
translation,
for $m\ge2$ the positive Kreweras map acts as a pseudo-rotation; see
\Cref{prop:2B}.
This pseudo-rotation is denoted by~$\rotB$.

In \Cref{sec:combB}, we introduce a natural extension of~$\rotB$
which also makes
sense for $m=1$; see \Cref{def:phiallgB}.
Also here it turns out
that, under Armstrong's translation, for $m=1$
this extension corresponds exactly to the map~$\Krewplustilde^{(1)}$
from \Cref{def:K-alt}; see \Cref{prop:2-m=1B}.
The order of the (extended) map~$\rotB$ is of course 
given by \Cref{thm:order} for type~$B$; see \Cref{lem:N-2B}.
For the sake of completeness,
we also provide an independent, {\it combinatorial}, proof for that
order; see \Cref{app:order-B}.

\Cref{sec:enumB} is devoted to the enumeration of type~$B$ positive
$m$-divisible non-crossing partitions. Here, we avoid attempting to
find results on chain enumeration since this seems to require more
refined techniques as the ones we have developed here. Thus, our
most refined and most general
result is \Cref{thm:1-B} which provides a
formula for the number of these
non-crossing partitions in which the block structure is prescribed.
All other results in that subsection,
\Cref{cor:2-B,cor:3-B}, 
are obtained as consequences.

\Cref{sec:rotB} is then devoted to the characterisation of
type~$B$ positive $m$-divisible non-crossing partitions that are
invariant under powers of the pseudo-rotation~$\rotB$; see \Cref{lem:allB}.

In \Cref{sec:rotenumB}, we then exploit this characterisation to provide a formula for the number of 
type~$B$ positive $m$-divisible non-crossing partitions that are
invariant under powers of the pseudo-rotation~$\rotB$ and
have a given block structure; see \Cref{thm:2-B}. Once again,
several less refined enumeration results may be obtained by summation;
see \Cref{cor:4-B,cor:5-B}.

In the final subsection, \Cref{sec:sievB}, we establish several
cyclic sieving phenomena refining \Cref{thm:CS} in type~$B$;
see \Cref{thm:3-B}.

\subsection{Combinatorial realisation of the positive non-crossing partitions}
\label{sec:realB}

With the choice~\eqref{eq:WSB} of Coxeter system, 
we have the following simple combinatorial description of 
positive $m$-divisible non-crossing partitions.

\begin{proposition} \label{lem:wmn>0}
Let $m$ and $n$ be positive integers with  $m\ge2$.
  $(w_0,w_1,\dots,w_m) \in \mNC[B_{n}]$ is positive if and only if
  $w_m(n)>0$.
\end{proposition}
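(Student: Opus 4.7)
The plan is to reduce the proposition, via Definition~\ref{def:NCpos}, to a claim about the single non-crossing partition $u := w_0 \cdots w_{m-1} = c\, w_m^{-1} \in \NC[B_n]$. By that definition, $(w_0, \ldots, w_m)$ is positive iff $u$ has full support in the simple system of~\eqref{eq:WSB}; and since $u\, w_m = c$ with $c(n) = \overline{1}$, one has $w_m(n) = u^{-1}(\overline{1})$, so the proposition becomes the claim that $u \in \NC[B_n]$ has full support iff $u^{-1}(\overline{1}) > 0$. I would first observe that, for the generators in~\eqref{eq:WSB}, the maximal standard parabolic $W_{\reflS \setminus \{s_i\}}$ is exactly the setwise stabiliser in $B_n$ of $\{1, 2, \ldots, i\}$, so that $u$ has full support iff, for every $i \in \{1, \ldots, n\}$, some block of $u$ (viewed as a $B_n$-symmetric non-crossing partition of $\{1, \ldots, n, \overline{1}, \ldots, \overline{n}\}$) meets both $\{1, \ldots, i\}$ and its complement.

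For the forward direction, I would set $k := u^{-1}(\overline{1}) > 0$. Then the block of $u$ containing $k$ also contains $\overline{1} = u(k)$; by the $B_n$-symmetry of the partition, there is a block $D$ of $u$ — either this block itself, in the zero-block case, or its negation otherwise — containing both $1$ and some negative element. For every $i \in \{1, \ldots, n\}$, the block $D$ contains $1 \in \{1, \ldots, i\}$ together with a negative element lying in $\{1, \ldots, i\}^c$, so $u$ fails to stabilise $\{1, \ldots, i\}$; hence $u$ has full support.

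For the converse, I would write $u^{-1}(\overline{1}) = \overline{k}$ with $k \in \{1, \ldots, n\}$, which by the defining $B_n$-symmetry gives $u(k) = 1$. Let $B$ be the block of $u$ containing $k$ and $1$. If $k = 1$ then $u$ fixes $1$, so $s_1 \notin \operatorname{supp}(u)$. If $k \ge 2$, then $1$ is the cyclic successor of $k$ inside $B$ in the cyclic order $1, 2, \ldots, n, \overline{1}, \ldots, \overline{n}$, so $B$ has no elements in the intervening arc $\{k+1, \ldots, n, \overline{1}, \ldots, \overline{n}\}$, which forces $B \subseteq \{1, \ldots, k\}$. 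Non-crossingness then confines every remaining block of $u$ either to $\{1, \ldots, k\}$ (within some arc between consecutive $B$-elements) or to the complementary arc $\{k+1, \ldots, n, \overline{1}, \ldots, \overline{n}\} = \{1, \ldots, k\}^c$; so $u$ stabilises $\{1, \ldots, k\}$ and $s_k \notin \operatorname{supp}(u)$.

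The main obstacle I foresee lies in the converse direction: one must verify carefully that the inclusion $B \subseteq \{1, \ldots, k\}$ really does force every other block into $\{1, \ldots, k\}$ or $\{1, \ldots, k\}^c$, using both the planar non-crossing property and the central symmetry $B' \mapsto -B'$ on blocks; this also rules out, along the way, the possibility that $B$ is a zero block. The identification of $W_{\reflS \setminus \{s_i\}}$ with the stabiliser of $\{1, \ldots, i\}$ is then a direct check from~\eqref{eq:WSB}, noting that $s_n = [n] = (n, \overline{n})$ is the unique simple generator that can carry a positive element to a negative one.
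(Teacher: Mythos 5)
Your proposal is correct and follows essentially the same route as the paper: reduce via Definition~\ref{def:NCpos} to the statement that $u=cw_m^{-1}=w_0\cdots w_{m-1}$ has full support, translate $w_m(n)>0$ through the Coxeter element into a sign condition on the preimage of $1$ (equivalently $\overline{1}$) under~$u$, and read off full support from the non-crossing partition determined by the cycles of~$u$. The only difference is one of detail: the paper simply asserts that $u$ fails full support exactly when $u^{-1}(1)$ is positive, whereas you justify this step explicitly by identifying the maximal standard parabolics $W_{\reflS\setminus\{s_i\}}$ with the setwise stabilisers of $\{1,\dots,i\}$ and using the circular-order cycle structure and non-crossingness of the blocks — a worthwhile filling-in, but not a different argument.
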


\begin{proof}
According to \Cref{def:NCpos}, the tuple $(w_0,w_1,\dots,w_m)$ is
positive if and only if $cw_m^{-1}=w_0w_1\cdots w_{m-1}$ has full support in our generators $\{s_1,\dots,s_n\}$ given in~\eqref{eq:WSB}. The
cycles in the disjoint cycle decomposition of~$cw_m^{-1}$ define
a non-crossing partition. Thus, $cw_m^{-1}$ will not have full support if and only if the preimage of~$1$ under
$cw_m^{-1}$ is a positive number. In other words, there is an $l>0$
such that $(cw_m^{-1})(l)=1$.
By acting on the left on both sides
of this relation by the inverse of the
Coxeter element $c=[1,2,\dots,n]$, we see that this is equivalent
to $w_m^{-1}(l)=\overline n$, or, again equivalently, to $w_m(n)=\overline l$,
for some $l>0$. We are interested in the contrapositive:
$cw_m^{-1}$ has full support if and only if $w_m(n)>0$.
\end{proof}

Next we recall Armstrong's bijection between $\mNC[B_{n}]$ and 
$m$-divisible non-crossing partitions of
$\{1,2,\dots,mn,\overline1,\overline2,\dots,\overline{mn}\}$ that are 
invariant under substitution of~$i$ by~$-i$, for all~$i$, which we denote here by
$\mNCB$.
We write $\NCB$ for the set of \emph{all}
non-crossing partitions of
$\{1,2,\dots,N,\overline1,\overline2,\dots,\overline N\}$ that are
invariant under substitution of~$i$ by~$-i$, for all~$i$, and observe that
\[
  \mNCB = \{ \pi \in
  \NCB[mn] \mid \text{all block sizes of $\pi$ are divisible by~$m$}\}\,. 
  \]
Following~\cite{ReivAG}, we agree here to call a block
that is itself invariant under substitution of~$i$ by~$-i$, for
all~$i$, a \defn{zero block}, a notion that will be used frequently in
the sequel. It should be noted that a non-crossing partition can have
at most one zero block and that, for parity reasons, the size of
a zero block is always divisible by~$2m$.  

\medskip
Given an element $(w_0,w_1,\dots,w_m)\in \mNC[B_{n}]$,
the bijection, $\Nam{B_n}m$ say, 
from \cite[Thm.~4.5.6]{Arm2006} 
works essentially in the same way as in type~$A$: 
namely
$(w_0,w_1,\dots,w_m)\in \mNC[B_{n}]$ is mapped to
\begin{equation} \label{eq:Nb}
\Nam{B_n}m(w_0,w_1,\dots,w_m)=
[1,2,\dots,mn]\,(\bar\ta_{m,1}(w_1))^{-1}\,(\bar\ta_{m,2}(w_2))^{-1}\,\cdots\,
(\bar\ta_{m,m}(w_m))^{-1},
\end{equation}
where $\bar\ta_{m,i}$ is the obvious extension of the 
transformation $\ta_{m,i}$ from \Cref{sec:typeA}.
More precisely, we let
$$(\bar\ta_{m,i}(w))(mk+i-m)=mw(k)+i-m,\quad 
k=1,2,\dots,n,\overline1,\overline2,\dots,\overline n,$$ 
and 
$(\bar\ta_{m,i}(w))(l)=l$ and
$(\bar\ta_{m,i}(w))(\overline l)=\overline l$ for all $l\not\equiv i$~(mod~$m$),
where $m\overline k+i-m$ is identified with $\overline{mk+i-m}$ for all~$k$
and~$i$. 
Again, the cycles in the disjoint cycle decomposition correspond to
the blocks in the non-crossing partition in
$\mNCB$.
We refer the reader to \cite[Sec.~4.5]{Arm2006} for the details.
For example, let $n=5$, $m=3$, $w_0=((2,4))$,
$w_1=[1]=(1,\overline 1)$,
$w_2=((1,4))$, and
$w_3=((2,3))\,((4,5))$. Then $(w_0;w_1,w_2,w_3)$ is mapped to
\begin{align} \notag
\Nam{B_{5}}3(w_0;w_1,w_2,w_3)&=[1,2,\dots,15]\,[1]\,((2,11))\,
((6,9))\,((12,15))\\
&=((1,\overline2,\overline{12}))\,((3,4,5,6,10,11))\,((7,8,9))\,((13,14,15)).
\label{eq:NCB}
\end{align} 

Similarly as in type $A_{n-1}$, the cycle
structure of the first
component of an $m$-divisible non-crossing partition
determines the block structure of its image under Armstrong's
bijection~$\Nam{B_n}m$; see \cite[Proof of Thm.~4.3.13]{Arm2006}.

\begin{proposition} \label{prop:block-B}
Let $(w_0,w_1,\dots,w_m)\in\mNC[B_n]$. The non-crossing partition
$\pi=\Nam{B_n}m(w_0,w_1,\dots,w_m)\in\mNCB$ has a zero block consisting
of $2km$~elements if and only if $w_0$ contains a cycle of the form
$[i_1,i_2,\dots,i_k]$ in its disjoint cycle decomposition.
Furthermore, $\pi$ has $2b_k$ non-zero blocks of
size~$mk$ if and only if $w_0$ has $b_k$ cycles of the form
$((i_1,i_2,\dots,i_k))$
in its disjoint cycle decomposition.\footnote{The reader must recall
that a ``cycle'' of the form $((i_1,i_2,\dots,i_k))$ consists actually
of {\em two} disjoint cycles of length~$k$, while
a cycle of the form $[i_1,i_2,\dots,i_k]$ is indeed a single cycle,
of length~$2k$.}
\end{proposition}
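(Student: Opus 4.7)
The plan is to trace the cycle structure of the signed permutation $\Nam{B_n}m(w_0,\ldots,w_m)$ directly, mimicking Armstrong's type-$A$ argument for \Cref{prop:block} (see \cite[Proof of Thm.~4.3.13]{Arm2006}). Write the image on the right-hand side of~\eqref{eq:Nb} as $[1,2,\ldots,mn]\cdot\tau$, where $\tau := \prod_{i=1}^{m}(\bar\ta_{m,i}(w_i))^{-1}$. Since each $\bar\ta_{m,i}$ is a group homomorphism and different $\bar\ta_{m,i}$'s act on disjoint residue classes modulo~$m$, the map $\tau$ sends the element $mk+i-m$ to $mw_i^{-1}(k)+i-m$ for all $k\in\{\pm 1,\ldots,\pm n\}$ and $i\in\{1,\ldots,m\}$.

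First I would verify the one-step dynamics: starting at residue~$i$ with coordinate~$k$ (that is, at position $mk+i-m$), one application of $\Nam{B_n}m(w_0,\ldots,w_m)$ lands at residue~$i+1$ with coordinate $w_i^{-1}(k)$ when $i<m$, and at residue~$1$ with coordinate $c(w_m^{-1}(k))$ when $i=m$, where $c=[1,2,\ldots,n]$ is the type-$B$ Coxeter element. The latter uses the convention $m\overline k+i-m=\overline{mk+i-m}$ to see that the cyclic shift $[1,2,\ldots,mn]$ on positions encodes, on coordinates, the action of~$c$. Iterating $m$ times from residue~$1$, coordinate~$k$, and invoking the defining relation $w_0w_1\cdots w_m=c$, the coordinate becomes
\[
c\cdot(w_1w_2\cdots w_m)^{-1}(k)\;=\;c\cdot c^{-1}w_0(k)\;=\;w_0(k).
\]
In particular, each cycle of $\Nam{B_n}m(w_0,\ldots,w_m)$ has length $m\ell$, where $\ell$ is the length of the corresponding orbit of~$w_0$ on $\{\pm 1,\ldots,\pm n\}$, and it visits the $m$ residue classes in turn.

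Translating to the type-$B$ cycle notation then finishes the proof: a paired cycle $((a_1,\ldots,a_k))$ of $w_0$ corresponds to two orbits $\{a_1,\ldots,a_k\}$ and $\{\overline{a_1},\ldots,\overline{a_k}\}$ of~$w_0$, yielding two non-zero blocks of size~$mk$ related by negation; a zero cycle $[a_1,\ldots,a_k]$ of $w_0$ is a single negation-invariant orbit of length~$2k$, yielding a single zero block of size~$2mk$. The main obstacle is the careful case analysis at the step $i=m$: one must verify across all sign cases for $w_m^{-1}(k)$ (positive or negative, and whether $|w_m^{-1}(k)|=n$, which triggers the wrap-around $mn+1=\overline 1$ in $[1,2,\ldots,mn]$) that the combined action of $\tau$ and $[1,2,\ldots,mn]$ on the position $mk$ produces the position encoded by residue~$1$ with coordinate $c(w_m^{-1}(k))$. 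This bookkeeping is routine but essential in order to see that zero cycles of $w_0$ yield single zero blocks, rather than splitting into pairs of non-zero blocks.
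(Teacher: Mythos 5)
Your argument is correct, and it is essentially the argument the paper relies on: the paper gives no proof of \Cref{prop:block-B}, deferring to Armstrong's proof of the type-$A$ analogue, which is exactly this orbit-tracing computation (the return map of $\Nam{B_n}m(w_0,\dots,w_m)$ to the positions congruent to $1$ modulo~$m$ acts on coordinates as $c\,(w_1\cdots w_m)^{-1}=w_0$), adapted to the signed setting. Your one-step dynamics, together with the distinction between negation-paired orbits (giving the factor~$2$ for non-zero blocks of size $mk$) and negation-invariant orbits (giving a single zero block of size $2mk$), is exactly the needed adaptation, and the sign/wrap-around bookkeeping you flag at the step $i=m$ is indeed routine.
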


In the earlier example, we have
$w_0=((2,4))=((1))((3))((4))((5))((2,4))$. Indeed, the image of
$(w_0,w_1,w_2,w_3)$ in the example has no zero block,
two non-zero blocks of size $3\cdot 2=6$
and six non-zero blocks of size $3\cdot 1=3$.

\begin{remark} \label{rem:id-B}
A simple consequence of \Cref{prop:block-B} is that the image of
the set of all non-crossing partitions in $\mNC[B_n]$ of the form
$(\ep,w_1,\dots,w_m)$ under the map $\Nam{B_n}m$ is
the set of all non-crossing partitions in $\mNCB$ 
in which {\em all} blocks have size~$m$.
\end{remark}

\begin{theorem} \label{prop:1B}
Let $m$ and $n$ be positive integers with  $m\ge2$.
The image under $\Nam{B_n}m$ of the positive $m$-divisible
non-crossing partitions in $\mNC[B_{n}]$ are those $m$-divisible
non-crossing partitions in $\mNCB$ where 
the block containing $1$ also contains a negative number.
\end{theorem}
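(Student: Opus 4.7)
The plan is to mirror the proof of the type~$A$ analogue \Cref{prop:1}, using \Cref{lem:wmn>0} in place of \Cref{prop:posA}. Concretely, I would compute, for $\pi = \Nam{B_n}m(w_0,w_1,\dots,w_m)$ viewed as a signed permutation of $\{\pm 1,\dots,\pm mn\}$, the single value $\pi^{-1}(1)$, and then argue that the sign of $\pi^{-1}(1)$ detects whether or not the block of~$\pi$ containing~$1$ has a negative element.

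The main computation runs as follows. By \eqref{eq:Nb},
\[
  \pi^{-1} = \bar\ta_{m,m}(w_m)\,\bar\ta_{m,m-1}(w_{m-1})\cdots\bar\ta_{m,1}(w_1)\cdot[1,2,\dots,mn]^{-1}.
\]
Now $[1,2,\dots,mn]^{-1}$ sends $1$ to $\overline{mn}$. Since $\bar\ta_{m,i}$ fixes every~$\pm l$ for which $l$ is not of the form $mk+i-m$ with $k\in\{1,\dots,n\}$, and $\overline{mn}$ has this form only for $i=m$, the maps $\bar\ta_{m,1}(w_1),\dots,\bar\ta_{m,m-1}(w_{m-1})$ all fix $\overline{mn}$. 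Finally, writing $\overline{mn}=m\cdot\overline n+m-m$, the definition of $\bar\ta_{m,m}$ gives $\bar\ta_{m,m}(w_m)(\overline{mn}) = m\,w_m(\overline n) = -m\,w_m(n)$. Hence
\[
  \pi^{-1}(1) = -m\,w_m(n).
\]
By \Cref{lem:wmn>0}, $(w_0,w_1,\dots,w_m)$ is positive if and only if $w_m(n)>0$, equivalently, if and only if $\pi^{-1}(1)<0$.

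It remains to see that $\pi^{-1}(1) < 0$ is equivalent to the block of~$\pi$ containing~$1$ having a negative element. In the standard reading of an element of $\mNCB$ as a signed permutation, the elements $1,2,\dots,mn,\overline 1,\dots,\overline{mn}$ are placed clockwise around a circle, and $\pi^{-1}(1)$ is the first element of the block of~$1$ that is encountered when scanning counterclockwise from~$1$. Going counterclockwise from~$1$, one first visits the negative elements $\overline{mn},\overline{mn-1},\dots,\overline 1$ and only afterwards the positive elements $mn,mn-1,\dots,2$, so $\pi^{-1}(1)$ is negative precisely when some $\overline j$ lies in the block of~$1$. I do not anticipate any serious obstacle; the only point requiring care is pinning down the clockwise ordering convention for reading off blocks as cycles, which is already implicit in the definition of $\Nam{B_n}m$ in~\eqref{eq:Nb}.
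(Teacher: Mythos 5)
Your proposal is correct and is essentially the paper's own argument: the key computation $\pi^{-1}(1)=-m\,w_m(n)$ (via the fact that $\bar\ta_{m,i}(w_i)$ for $i<m$ fixes $\overline{mn}$) is exactly the computation the paper performs, combined with the positivity criterion of \Cref{lem:wmn>0} and the observation that the predecessor of~$1$ in its block is negative precisely when that block contains a negative element. The only difference is cosmetic: you run everything as a single chain of equivalences through $\pi^{-1}(1)$, whereas the paper splits it into a forward computation and a converse that identifies the predecessor $\overline L=\overline{ml}$ via the $m$-divisibility of the intervening blocks.
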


\begin{proof}
First, let $(w_0,w_1,\dots,w_m)\in \mNC[B_{n}]$.
By definition, we know that $w_m(n)>0$. Let us write $l$
for $w_m(n)$. We then have
\begin{align*}
\Big(\Nam{B_n}m(&w_0,w_1,\dots,w_m)\Big)(\overline{ml})\\
&=
\big([1,2,\dots,mn]\,(\ta_{m,1}(w_1))^{-1}\,(\ta_{m,2}(w_2))^{-1}\,\cdots\,
(\ta_{m,m}(w_m))^{-1}\big)(\overline{ml})\\
&=
\big([1,2,\dots,mn]\big)(\overline{mn})=1.
\end{align*}
Translated to non-crossing partitions, this means that $\overline{ml}$ and $1$
belong to the same block, which implies our claim.

Conversely, let $\pi\in \mNCB$ be an
$m$-divisible non-crossing partition in which the block containing
$1$ also contains a negative number. Let $\overline L$ be the negative number
that is connected with~$1$ in the geometric representation of~$\pi$.
Since in between $\overline L$ and $1$ there ``sit" blocks all of whose sizes
are divisible by~$m$, we must necessarily have $\overline L=\overline{ml}$,
for some $l$ between~$1$ and~$n$.
In other words, if $\pi$
is interpreted as a permutation, $\pi(\overline{ml})=1$. Let
$(w_0,w_1,\dots,w_m)$ be the element of $\mNC[B_{n}]$ such that
$$
\Nam{B_n}m(w_0,w_1,\dots,w_m)=\pi.
$$
We must have
$$
\Big(\Nam{B_n}m(w_0,w_1,\dots,w_m)\Big)(\overline{ml})=1.
$$
The definition \eqref{eq:Nb} of $\Nam{B_n}m$
and the fact that $(\ta_{m,i}(w_i))^{-1}$ leaves multiples of~$m$
fixed as long as $1\le i\le m-1$ together imply that 
$(\ta_{m,m}(w_m))^{-1}(\overline{ml})=\overline{mn}$.
Phrased differently, we have
$(\ta_{m,m}(w_m))^{-1}(ml)=mn$, or, equivalently, 
$w_m(n)=l>0$. By definition, this means that
$(w_0,w_1,\dots,w_m)$ is positive.
\end{proof} 

\subsection{Combinatorial realisation of the positive Kreweras maps}
\label{sec:mapB}

The next step consists in translating the positive Kreweras map $\Krewplustilde$
into a ``rotation action" on the positive elements of
$\mNC[B_{n}]$. In order to do so, we need to explicitly describe 
the decomposition~\eqref{eq:LR} in type~$B$.

\begin{lemma} \label{lem:LRB}
Let~$w$ be an element of $\NC[B_n]$, and consider the cycle $z$
in the (type~$B$) disjoint cycle decomposition of~$w$ that contains~$n$. 
\begin{enumerate} 
\item[\em(1)] If $z=((i_1,i_2,\dots,i_k,n))$ with $i_1>0$
{\em(}and consequently $0<i_1<i_2<\dots<i_k<n${\em)}, we put
$a=n$.
\item[\em(2)] If $z=((i_1,\dots,i_s,i_{s+1},\dots,i_k,n))$ 
with $i_s<0$ and $i_{s+1}>0$
{\em(}and consequently $i_1,i_2,\dots,i_s<0$,
$i_{s+1},\dots,i_{k-1},i_k>0$,
and $0<\vert i_1\vert<\vert i_2\vert<\dots<\vert i_k\vert<n$;
the sequence $i_{s+1},\dots,i_k$ has to be interpreted as
the empty sequence if $s=k${\em)}, 
we put $a=i_s$.
%\item If $z=((i_1,i_2,\dots,i_k,n))$ with $i_k<0$
%{\em(}and consequently $0>i_1>i_2>\dots>i_k${\em)}, we put
%$a=i_k$.
\item[\em(3)] If $z=[i_1,i_2,\dots,i_k,n]$
{\em(}and consequently $0<i_1<i_2<\dots<i_k<n${\em)}, we put
$a=\overline n$.
\end{enumerate}
Then we have
$$w=w^Lw^R,\quad \quad \text {where $w^R=((a,n))$}.$$
{\em(}By slight abuse of notation, here we identify $((\overline n,n))$
with $[n]$.{\em)}
This covers all possible cases.
\end{lemma}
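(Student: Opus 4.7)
The plan is to prove the claim by applying \Cref{cor:unique} in each of the three cases, after first identifying $\invc^R$ explicitly in type~$B_n$. Since $\wo\in B_n$ is central, $\psi(s)=s$ for every $s\in\reflS$, and $\wwo(\c)=\c^n=(\s_1\cdots\s_n)^n$ is a reduced word of length~$n^2$ for $\wo$. The first $n$ reflections in the induced ordering are $r_1=((1,2)),\,r_2=((1,3)),\dots,r_{n-1}=((1,n)),\,r_n=[1]$, and the last $n$ reflections (those of $\invc^R$) are obtained from these by conjugation with $c^{n-1}$. Using $c^{n-1}(1)=n$ and $c^{n-1}(i+1)=-i$ for $1\le i\le n-1$ yields
\[
  \invc^R=\bigl\{((-i,n)):1\le i\le n-1\bigr\}\cup\bigl\{[n]\bigr\}.
\]
Thus $\invc^L$ contains every reflection not involving~$n$, together with all reflections $((i,n))$ with $0<i<n$.

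In each of the three cases, I set $v:=((a,n))$ as prescribed (with the convention that $((n,n))$ denotes $\one$, covering Case~(1)), and put $u:=wv$. A direct multiplication shows how $z\cdot v$ decomposes: in Case~(1), $z$ is unchanged; in Case~(2), $z\cdot((i_s,n))=((i_1,\dots,i_s))\cdot((i_{s+1},\dots,i_k,n))$ splits into two disjoint pair cycles (the second becoming trivial if $s=k$); and in Case~(3), $z\cdot[n]=((i_1,\dots,i_k,n))$ becomes a single pair cycle. A length count shows that $\lenR(u)+\lenR(v)=\lenR(w)$, so $w=uv$ is a reduced decomposition, and by the preliminary step the single letter $v$ lies in $\invc^R$.

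The main technical step is to verify the remaining hypothesis of \Cref{cor:unique}, that $cu^{-1}$ has full support. By \Cref{thm:positivedescription} this is equivalent to showing that the canonical $<_c$-factorisation of $u$ uses only reflections from $\invc^L$. The cycles of $u$ that do not involve $\pm n$ are inherited from $w$ and factor into reflections not involving~$n$, all contained in $\invc^L$. The new cycle of $u$ containing~$n$, if nontrivial, is a pair cycle $((i'_1,\dots,i'_j,n))$ with $0<i'_1<\cdots<i'_j<n$; its standard factorisation $((i'_1,i'_2))\cdots((i'_{j-1},i'_j))\cdot((i'_j,n))$ uses only $\invc^L$-reflections. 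In Case~(2), the additional pair cycle $((i_1,\dots,i_s))$ has entirely negative entries but coincides as a group element with the pair cycle of their absolute values $((|i_1|,\dots,|i_s|))$, and therefore factors analogously into $\invc^L$-reflections. With all hypotheses of \Cref{cor:unique} verified, the conclusion $w^L=u$ and $w^R=v=((a,n))$ follows. The main obstacle is this case-analytic verification of the cycle structure of $u$, made tractable by the clean description of $\invc^R$ afforded by the centrality of $\wo$ in~$B_n$.
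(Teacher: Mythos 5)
Your preliminary identification of $\invc^R=\{((\overline{1},n)),\dots,((\overline{n-1},n)),[n]\}$, your cycle computations for $u=w\cdot((a,n))$ in the three cases, and the length count making $w=uv$ reduced all agree with the paper's argument. The gap is in your ``main technical step''. You correctly reduce the remaining hypothesis of \Cref{cor:unique} (full support of $cu^{-1}$) to the statement that the \emph{canonical increasing} $<_c$-factorisation of $u$ lies entirely in $\invc^L$, but what you actually verify is only that \emph{some} reduced $\reflR$-factorisation of $u$ avoids $\invc^R$: your per-cycle ``standard'' factorisations are never shown to be $<_c$-increasing, and concatenated over the cycles of $u$ they will in general not be. These two statements are genuinely different. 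For instance, in $B_3$ the element $u=((\overline{1},2,3))$ lies in $\NC[B_3]$ and admits a reduced factorisation into the two reflections $((\overline{1},2))$ and $((2,3))$, both of which belong to $\invc^L$; nevertheless $u(3)=\overline{1}<0$, so $cu^{-1}$ (here a single long reflection) does \emph{not} have full support, and by \Cref{thm:positivedescription} the canonical factorisation of $u$ must use a letter of $\invc^R$. So exhibiting an $\invc^L$-factorisation of $u$ cannot establish the hypothesis of \Cref{cor:unique}, and your verification of that hypothesis does not go through as written.

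The fix is short and is exactly what the paper does: by \Cref{lem:wmn>0} (whose proof applies verbatim for $m=1$), $cu^{-1}$ has full support if and only if $u(n)>0$. This can be read off directly from the cycle structures you already computed: $u(n)=i_1>0$ in Case (1), $u(n)=i_{s+1}>0$ (respectively $u(n)=n$ when $s=k$) in Case (2), and $u(n)=i_1>0$ in Case (3). Replacing your factorisation argument by this one-line check, and then invoking \Cref{cor:unique} as you do, yields the lemma along essentially the same route as the paper.
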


\begin{proof}
The last $n$ reflections in $\reflR$ given in \eqref{eq:refls}
are 
\begin{equation} \label{eq:reflsB} 
((i,n)),\ i=\overline1,\overline2,\dots,\overline {n-1},\text{ and }[n]=(\overline n,n),
\end{equation}
in this order. 
Thus, by definition, we have $w^L=w\circ (w^R)^{-1}$, where the factor
$w^R$ consists of a product of these~$n$ reflections (we shall see,
however, that in our case it will be always at most one reflection), 
and $\lenR(w^L)+\lenR(w^R)=\lenR(w)$.
Furthermore, using \Cref{lem:wmn>0} with
$m=1$, we see that $w^L$ must have the property that $w^L(n)>0.$
The arguments below are heavily based on the uniqueness of the
decomposition $w=w^l\circ w^R$ in the sense of 
\Cref{cor:unique}.

In Case~(1), we have $w(n)=i_1>0$. Hence, we may choose
$w^L=w$ and $w^R=\one=((n,n))$, and uniqueness of decomposition
guarantees that this is the correct choice.

In Case~(2), the choice $w^R=((i_s,n))$ implies that
$w^L$ would contain the cycles\break
$((i_1,\dots,i_s))((i_{s+1},\dots,i_k,n))$. 
Hence, we would
have $w^L(n)=i_{s+1}>0$ if~$s<k$, and $w^L(n)=n$ if $s=k$. 
By uniqueness of decomposition, 
this must be the correct choice.

%In Case~(3), the choice $w^R=((i_k,n))$ implies that
%$w^L$ would contain the cycle
%$((i_1,\dots,i_s,i_{s+1},\dots,i_k))$ and map~$n$ to itself. 
%Consequently, this is the correct choice.

Finally we address Case~(3). If we choose $w^R=[n]$,
then $w^L$ would contain the cycle
$((i_1,i_2,\dots,i_k,n))$. 
Hence, we would
have $w^L(n)=i_1>0$. Again,
by uniqueness of decomposition, 
this must be the correct choice.
\end{proof}

Next, we translate the positive Kreweras map $\Krewplustilde$ from
\Cref{def:positivekreweras} for type $B_{n}$ into combinatorial
language. 

\begin{theorem} \label{prop:2B}
Let $m$ and $n$ be positive integers with $m\ge2$.
Under the bijection $\Nam{B_n}m$, the map $\Krewplustilde^{(m)}$ translates to
the following map~$\rotB$ on $\mNCBPlus$:
if the block of~$\overline{mn}$ contains another negative element,
then $\rotB$ rotates all blocks of~$\pi$ by one unit in clockwise direction.

On the other hand, 
if the block of~$\overline{mn}$ of some element~$\pi\in\mNCBPlus$ contains no other negative 
element, then, first of all, $1$ is also contained in this block.
In other words, $\overline {mn}$ and $1$ are successive elements
in a block {\em(}and consequently also $mn$ and $\overline 1${\em)}.
For the description of the operation, we distinguish 
between two cases:

\begin{enumerate} 
\item[\em(1)] If there is a block not containing $1,\overline1,mn,\overline{mn}$ in
  which there are positive {\em and} negative elements, then
let $b$ be minimal such that $\overline d$ and $b$ are successive
elements in a block, with $\overline d$ negative and $b$
positive\footnote{\label{foot:d=b}We have $b\ne d$. For, if $b=d$,
then $\{b,\overline d\}=\{b,\overline b\}$ is a block by itself.
It is a zero block of size~2. Thus, necessarily $m=2$.
However, for parity reasons, such non-crossing partitions of
type~$B$ do not exist.}.
Furthermore, let $b$ be followed by~$e$ in this block, and let
$a$ and $\overline{mn}$ be successive elements
in a block. {\em(}The element~$a$ must be positive by the assumption
defining the subcase in which we are, while $e$ can be positive
or negative. See the left half of \Cref{fig:10} for an
illustration of the various definitions.{\em)} 
Then the image of~$\pi$ under~$\rotB$ is the partition 
in which $\overline{d+1}$, $1$, and $e+1$ are three 
successive elements in a
block, and $a+1$, $b+1$, and $2$ are successive elements in 
another block. All other succession relations in~$\pi$
are rotated by one unit in clockwise direction.
See \Cref{fig:10} for a schematic illustration of this operation.
\item[\em(2)] If there is {\em no} block other than the ones containing
$1,\overline1,mn,\overline{mn}$ with
positive {\em and} negative elements, then let $a$,
$\overline{mn}$, and~$1$ be successive elements in a block.
{\em(}Again, the element~$a$ must be positive.{\em)}
The image of~$\pi$ under~$\rotB$ is the partition 
in which $\overline{a+1}$, $1$, and $\overline2$ are successive
elements in a block, and all other succession relations in~$\pi$
are rotated by one unit in clockwise direction. 
See \Cref{fig:11} for a schematic illustration of this operation.
\end{enumerate}
\end{theorem}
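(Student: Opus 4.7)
The proof would follow the strategy of \Cref{prop:2}. Let $(w_0, w_1, \ldots, w_m) \in \mNCPlus[B_n]$ and set $\pi := \Nam{B_n}m(w_0, \ldots, w_m)$. The plan is to compute $\Nam{B_n}m(\Krewplustilde^{(m)}(w_0, \ldots, w_m))$ explicitly and compare its action on a few distinguished elements (namely $1$, $2$, $\overline 1$, $\overline 2$, $mn$, $\overline{mn}$ and their neighbours in $\pi$) with the description of $\rotB(\pi)$ given in the statement.

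The first step is to split according to the three cases of \Cref{lem:LRB} applied to $w_{m-1}$. In Case~(1) one has $w_{m-1}^R = ((n,n)) = \one$, so that $\Krewplustilde^{(m)}(w_0, \ldots, w_m)$ coincides with the image of $(w_0, \ldots, w_m)$ under the ordinary Kreweras map $\Krewtilde^{(m)}$. The fact that $\Krewtilde^{(m)}$ translates into rotation by one unit in clockwise direction on $\mNCB$ is well known and can also be verified directly from~\eqref{eq:Nb}. I would then verify that Case~(1) of \Cref{lem:LRB} corresponds precisely to the situation in which the block of $\overline{mn}$ in $\pi$ contains a further negative element; in Cases~(2) and~(3) the cycle of $w_{m-1}$ through $n$ has either mixed signs or is of the form $[i_1,\ldots,i_k,n]$, and this forces $\overline{mn}$ and $1$ to be successive in $\pi$.

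For Cases~(2) and~(3) of \Cref{lem:LRB} I would carry out a computation parallel to the one at the end of the proof of \Cref{prop:2}, enriched with signed cycle bookkeeping. In Case~(2), one has $w_{m-1}^R = ((\overline D, n))$ for the last negative entry $\overline D$ preceding $n$ in the relevant cycle; writing the neighbours of $\overline{mn}$ and of $b$ in $\pi$ as $a = mA$, $b = mB_1$, and so on, and tracking how $w_{m-1}$, $w_m$, $w_{m-1}^L$ and $cw_{m-1}^R w_m c^{-1}$ act on the elements $n$, $A$, $B_1$, $B_2$ in complete analogy with the proof of \Cref{prop:2}, the reconnection described in Case~(1) of the theorem falls out after applying $\bar\ta_{m,m}$ and $\bar\ta_{m,m-1}$. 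Case~(3), where $w_{m-1}^R = [n]$, corresponds to the situation where no additional mixed-sign block exists, and a similar but slightly shorter computation yields exactly Case~(2) of the theorem.

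The main obstacle will be the sign bookkeeping in the hyperoctahedral setting. One subtle point is the degenerate configuration from footnote~\ref{foot:d=b}: the case $b = d$ would force a zero block of size two in $\pi$, which is ruled out by the parity constraint that every zero block in $\mNCB$ has size divisible by $2m$. One must also check that in each of Cases~(2) and~(3) of \Cref{lem:LRB} only a single reflection from the list~\eqref{eq:reflsB} is absorbed into $w_{m-1}^R$, so that the lemma delivers the exact factorisation needed for the conjugation identity $cw_{m-1}^R w_m c^{-1}$ to collapse to the explicit permutation used in the computation. Once the elementwise image formulas are in place in each case, the theorem follows directly from the definition of $\Nam{B_n}m$.
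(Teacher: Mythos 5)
Your proposal matches the paper's proof in all essentials: the argument there likewise splits according to the three cases of \Cref{lem:LRB} applied to $w_{m-1}$ (ordinary rotation when $w_{m-1}^R=\one$, which is exactly the situation where the block of $\overline{mn}$ contains a further negative element, and the two reconnection cases coming from $w_{m-1}^R=((\overline B,n))$ and $w_{m-1}^R=[n]$), and then tracks the action of $w_{m-1}$, $w_m$, $w_{m-1}^L$ and $cw_{m-1}^Rw_mc^{-1}$ on the distinguished elements precisely as you outline, with the degenerate $b=d$ configuration excluded by the parity argument of the footnote. The only bookkeeping detail to correct when carrying this out is that the forced residues are $a\equiv d\equiv -1$, $b\equiv 0$ and $e\equiv 1$ modulo~$m$ (so $a=mA-1$, $b=mB$, etc.), not $a=mA$, but this does not affect the structure of your argument.
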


\begin{figure}
\begin{center}
  \begin{tikzpicture}[scale=1]
    \polygon{(-4,0)}{obj}{44}{2.5}
       {1,,,,a,,,,,,b,,,,e,,,d,,,,mn,\overline{1},,,,\overline{a},,,,,,\overline{b},,,,\overline{e},,,\overline{d},,,,\overline{mn}}

%     thick line connecting 1 and \overline{mn}
    \draw[line width=2.5pt,black] (obj1) to[bend left=50] (obj44);
    \draw[line width=2.5pt,black] (obj23) to[bend left=50] (obj22);

    % dotted line connecting 1 and a
    \draw[dash pattern=on 2pt off 2pt on 2pt off 2pt on 2pt off 2pt on
2pt off 16pt on 2pt off 2pt on 2pt off 2pt] (obj1) to[bend right=50]
(obj5);
    \draw[dash pattern=on 2pt off 2pt on 2pt off 2pt on 2pt off 2pt on
2pt off 16pt on 2pt off 2pt on 2pt off 2pt] (obj23) to[bend right=50]
(obj27);

    % line connecting a and \overline{mn}
    \draw (obj44) to[bend right=50] (obj5);
    \draw (obj22) to[bend right=50] (obj27);

    % area for a+1 ... b-1
    \draw[dotted] (obj6) to[bend right=50] (obj10);
    \node at ($($0.92*($(obj8)+(4.0,0)$)$)-(4.0,0)$) {\tiny$X$};

    \draw[dotted] (obj28) to[bend right=50] (obj32);
    \node at ($($0.92*($(obj30)+(4.0,0)$)$)-(4.0,0)$) {\tiny$\overline{X}$};

    % line connecting \overlin{d} - b - e - ...
    \draw (obj40) to[bend right=20] (obj11) to[bend right=50] (obj15);
    \draw[dash pattern=on 2pt off 2pt on 2pt off 2pt on 2pt off 2pt on
2pt off 2pt on 2pt off 2pt on 2pt off 95pt on 2pt off 2pt on 2pt off
2pt on 2pt] (obj15) to[bend left=10] (obj40);
    \draw (obj18) to[bend right=20] (obj33) to[bend right=50] (obj37);
    \draw[dash pattern=on 2pt off 2pt on 2pt off 2pt on 2pt off 2pt on
2pt off 2pt on 2pt off 2pt on 2pt off 95pt on 2pt off 2pt on 2pt off
2pt on 2pt] (obj37) to[bend left=10] (obj18);

    \node[inner sep=0pt] at (0,0) {$\mapsto$};

    \polygon{(4,0)}{obj}{44}{2.5}
       {1,2,,,,a+1,,,,,,b+1,,,,e+1,,,d+1,,,mn,\overline{1},\overline{2},,,,\overline{a+1},,,,,,\overline{b+1},,,,\overline{e+1},,,\overline{d+1},,,\overline{mn}}

%     thick line connecting 1 and \overline{d+1}
    \draw[line width=2.5pt,black] (obj1) to[bend left=50] (obj41);
    \draw[line width=2.5pt,black] (obj23) to[bend left=50] (obj19);

    % dotted line connecting 1 and a
    \draw[dash pattern=on 2pt off 2pt on 2pt off 2pt on 2pt off 2pt on
2pt off 16pt on 2pt off 2pt on 2pt off 2pt] (obj2) to[bend right=50]
(obj6);
    \draw[dash pattern=on 2pt off 2pt on 2pt off 2pt on 2pt off 2pt on
2pt off 16pt on 2pt off 2pt on 2pt off 2pt] (obj24) to[bend right=50]
(obj28);

    % line connecting a+1 and b
    \draw (obj6) to[bend right=50] (obj12);
    \draw (obj28) to[bend right=50] (obj34);

    % line connecting b+1 and 2
    \draw (obj2) to[bend right=50] (obj12);
    \draw (obj24) to[bend right=50] (obj34);

    % area for a+2 ... b
    \draw[dotted] (obj7) to[bend right=50] (obj11);
    \node at ($($0.92*($(obj9)-(4.0,0)$)$)+(4.0,0)$) {\tiny$X$};
    \draw[dotted] (obj29) to[bend right=50] (obj33);
    \node at ($($0.92*($(obj31)-(4.0,0)$)$)+(4.0,0)$) {\tiny$X$};

    % line connecting \overlin{d} - b - e - ...
    \draw (obj1) to[bend right=20] (obj16);
    \draw[dash pattern=on 2pt off 2pt on 2pt off 2pt on 2pt off 2pt on
2pt off 2pt on 2pt off 2pt on 2pt off 95pt on 2pt off 2pt on 2pt off
2pt on 2pt] (obj16) to[bend left=10] (obj41);
    \draw (obj23) to[bend right=20] (obj38);
    \draw[dash pattern=on 2pt off 2pt on 2pt off 2pt on 2pt off 2pt on
2pt off 2pt on 2pt off 2pt on 2pt off 95pt on 2pt off 2pt on 2pt off
2pt on 2pt] (obj38) to[bend left=10] (obj19);

    \end{tikzpicture}
\end{center}
\caption{The action of the pseudo-rotation $\rotB$,
Case (1)}
\label{fig:10}
\end{figure}

\begin{figure}
  \begin{tikzpicture}[scale=1]
    \polygon{(-4,0)}{obj}{24}{2.5}
      {1,2,3,4,5,6,7,8,9,10,11,12,\overline{1},\overline{2},\overline{3},\overline{4},\overline{5},\overline{6},\overline{7},\overline{8},\overline{9},\overline{10},\overline{11},\overline{12}}

      \draw[line width=2.5pt,black] (obj1) to[bend left=50] (obj24);
      \draw[line width=2.5pt,black] (obj13) to[bend left=50] (obj12);

     \draw[fill=black,fill opacity=0.1] (obj24) to[bend right=30]
(obj1) to[bend right=30] (obj2) to[bend left=30] (obj24);
     \draw[fill=black,fill opacity=0.1] (obj3) to[bend right=30]
(obj4) to[bend right=30] (obj5) to[bend left=30] (obj3);
     \draw[fill=black,fill opacity=0.1] (obj6) to[bend right=30]
(obj10) to[bend right=30] (obj11) to[bend right=30] (obj18) to[bend
right=30] (obj22) to[bend right=30] (obj23) to[bend right=30] (obj6);
     \draw[fill=black,fill opacity=0.1] (obj7) to[bend right=30]
(obj8) to[bend right=30] (obj9) to[bend left=30] (obj7);
     \draw[fill=black,fill opacity=0.1] (obj12) to[bend right=30]
(obj13) to[bend right=30] (obj14) to[bend left=30] (obj12);
     \draw[fill=black,fill opacity=0.1] (obj15) to[bend right=30]
(obj16) to[bend right=30] (obj17) to[bend left=30] (obj15);
     \draw[fill=black,fill opacity=0.1] (obj19) to[bend right=30]
(obj20) to[bend right=30] (obj21) to[bend left=30] (obj19);

    \node[inner sep=0pt] at (0,0) {$\mapsto$};

    \polygon{(4,0)}{obj}{24}{2.5}
      {1,2,3,4,5,6,7,8,9,10,11,12,\overline{1},\overline{2},\overline{3},\overline{4},\overline{5},\overline{6},\overline{7},\overline{8},\overline{9},\overline{10},\overline{11},\overline{12}}

      \draw[line width=2.5pt,black] (obj1) to[bend left=50] (obj24);
      \draw[line width=2.5pt,black] (obj13) to[bend left=50] (obj12);

     \draw[fill=black,fill opacity=0.1] (obj2) to[bend right=30]
(obj3) to[bend right=30] (obj7) to[bend left=30] (obj2);
     \draw[fill=black,fill opacity=0.1] (obj4) to[bend right=30]
(obj5) to[bend right=30] (obj6) to[bend left=30] (obj4);
     \draw[fill=black,fill opacity=0.1] (obj1) to[bend right=10]
(obj11) to[bend right=30] (obj12) to[bend right=30] (obj13) to[bend
right=10] (obj23) to[bend right=30] (obj24) to[bend right=30] (obj1);
     \draw[fill=black,fill opacity=0.1] (obj8) to[bend right=30]
(obj9) to[bend right=30] (obj10) to[bend left=30] (obj8);
     \draw[fill=black,fill opacity=0.1] (obj14) to[bend right=30]
(obj15) to[bend right=30] (obj19) to[bend left=30] (obj14);
     \draw[fill=black,fill opacity=0.1] (obj16) to[bend right=30]
(obj17) to[bend right=30] (obj18) to[bend left=30] (obj16);
     \draw[fill=black,fill opacity=0.1] (obj20) to[bend right=30]
(obj21) to[bend right=30] (obj22) to[bend left=30] (obj20);
    \end{tikzpicture}
  \caption{The action of the pseudo-rotation $\rotB$, Case (1)}
\label{fig:10b}
\end{figure}

\begin{figure}
\begin{center}
  \begin{tikzpicture}[scale=1]
    \polygon{(-4,0)}{obj}{44}{2.5}
       {1,,,,a,,,,,,,,,,,,,,,,\hspace{20pt}mn-1,mn,\overline{1},,,,\overline{a},,,,,,,,,,,,,,,,\overline{mn-1}\hspace{20pt},\overline{mn}}

%     thick line connecting 1 and \overline{mn}
    \draw[line width=2.5pt,black] (obj1) to[bend left=50] (obj44);
    \draw[line width=2.5pt,black] (obj23) to[bend left=50] (obj22);

    % dotted line connecting 1 and a
    \draw[dash pattern=on 2pt off 2pt on 2pt off 2pt on 2pt off 2pt on
2pt off 16pt on 2pt off 2pt on 2pt off 2pt] (obj1) to[bend right=50]
(obj5);
    \draw[dash pattern=on 2pt off 2pt on 2pt off 2pt on 2pt off 2pt on
2pt off 16pt on 2pt off 2pt on 2pt off 2pt] (obj23) to[bend right=50]
(obj27);

    % line connecting a and \overline{mn}
    \draw (obj44) to[bend right=50] (obj5);
    \draw (obj22) to[bend right=50] (obj27);

    % area for a+1 ... mn-1
    \draw[dotted] (obj6) to[bend right=20] (obj21);
    \node at ($($0.62*($(obj13)+(4.0,0)$)$)-(4.0,0)$) {\tiny$X$};
    \draw[dotted] (obj28) to[bend right=20] (obj43);
    \node at ($($0.62*($(obj35)+(4.0,0)$)$)-(4.0,0)$) {\tiny$\overline{X}$};

    \node[inner sep=0pt] at (0,0) {$\mapsto$};

    \polygon{(4,0)}{obj}{44}{2.5}
       {1,2,,,,a+1,,,,,,,,,,,,,,,,mn,\overline{1},\overline{2},,,,\overline{a+1},,,,,,,,,,,,,,,,\overline{mn}}

%     thick line connecting 1 and \overline{a+1}
    \draw[line width=2.5pt,black] (obj1) to[bend left=20] (obj28);
    \draw[line width=2.5pt,black] (obj23) to[bend left=20] (obj6);

    % dotted line connecting 1 and \overline{2}
    \draw (obj1) to[bend right=0] (obj24);
    \draw (obj23) to[bend right=0] (obj2);

    % dotted line connecting 2 and a+1
    \draw[dash pattern=on 2pt off 2pt on 2pt off 2pt on 2pt off 2pt on
2pt off 16pt on 2pt off 2pt on 2pt off 2pt] (obj2) to[bend right=50]
(obj6);
    \draw[dash pattern=on 2pt off 2pt on 2pt off 2pt on 2pt off 2pt on
2pt off 16pt on 2pt off 2pt on 2pt off 2pt] (obj24) to[bend right=50]
(obj28);

    % area for a+2 ... mn
    \draw[dotted] (obj7) to[bend right=20] (obj22);
    \node at ($($0.62*($(obj14)-(4.0,0)$)$)+(4.0,0)$) {\tiny$X$};
    \draw[dotted] (obj29) to[bend right=20] (obj44);
    \node at ($($0.62*($(obj36)-(4.0,0)$)$)+(4.0,0)$) {\tiny$\overline{X}$};

    \end{tikzpicture}
\end{center}
\caption{The action of the pseudo-rotation $\rotB$,
Case (2)}
\label{fig:11}
\end{figure}

\begin{figure}
  \begin{tikzpicture}[scale=1]
    \polygon{(-4,0)}{obj}{24}{2.5}
      {1,2,3,4,5,6,7,8,9,10,11,12,\overline{1},\overline{2},\overline{3},\overline{4},\overline{5},\overline{6},\overline{7},\overline{8},\overline{9},\overline{10},\overline{11},\overline{12}}

      \draw[line width=2.5pt,black] (obj1) to[bend left=50] (obj24);
      \draw[line width=2.5pt,black] (obj13) to[bend left=50] (obj12);

     \draw[fill=black,fill opacity=0.1] (obj24) to[bend right=30]
(obj1) to[bend right=30] (obj8) to[bend left=30] (obj24);
     \draw[fill=black,fill opacity=0.1] (obj2) to[bend right=30]
(obj6) to[bend right=30] (obj7) to[bend left=30] (obj2);
     \draw[fill=black,fill opacity=0.1] (obj3) to[bend right=30]
(obj4) to[bend right=30] (obj5) to[bend left=30] (obj3);
     \draw[fill=black,fill opacity=0.1] (obj9) to[bend right=30]
(obj10) to[bend right=30] (obj11) to[bend left=30] (obj9);
     \draw[fill=black,fill opacity=0.1] (obj12) to[bend right=30]
(obj13) to[bend right=30] (obj20) to[bend left=30] (obj12);
     \draw[fill=black,fill opacity=0.1] (obj14) to[bend right=30]
(obj18) to[bend right=30] (obj19) to[bend left=30] (obj14);
     \draw[fill=black,fill opacity=0.1] (obj15) to[bend right=30]
(obj16) to[bend right=30] (obj17) to[bend left=30] (obj15);
     \draw[fill=black,fill opacity=0.1] (obj21) to[bend right=30]
(obj22) to[bend right=30] (obj23) to[bend left=30] (obj21);

    \node[inner sep=0pt] at (0,0) {$\mapsto$};

    \polygon{(4,0)}{obj}{24}{2.5}
      {1,2,3,4,5,6,7,8,9,10,11,12,\overline{1},\overline{2},\overline{3},\overline{4},\overline{5},\overline{6},\overline{7},\overline{8},\overline{9},\overline{10},\overline{11},\overline{12}}

      \draw[line width=2.5pt,black] (obj1) to[bend left=30] (obj21);
      \draw[line width=2.5pt,black] (obj13) to[bend left=30] (obj9);

     \draw[fill=black,fill opacity=0.1] (obj2) to[bend right=30]
(obj9) to[bend right=30] (obj13) to[bend left=10] (obj2);
     \draw[fill=black,fill opacity=0.1] (obj3) to[bend right=30]
(obj7) to[bend right=30] (obj8) to[bend left=30] (obj3);
     \draw[fill=black,fill opacity=0.1] (obj4) to[bend right=30]
(obj5) to[bend right=30] (obj6) to[bend left=30] (obj4);
     \draw[fill=black,fill opacity=0.1] (obj10) to[bend right=30]
(obj11) to[bend right=30] (obj12) to[bend left=30] (obj10);

     \draw[fill=black,fill opacity=0.1] (obj14) to[bend right=30]
(obj21) to[bend right=30] (obj1) to[bend left=10] (obj14);
     \draw[fill=black,fill opacity=0.1] (obj15) to[bend right=30]
(obj19) to[bend right=30] (obj20) to[bend left=30] (obj15);
     \draw[fill=black,fill opacity=0.1] (obj16) to[bend right=30]
(obj17) to[bend right=30] (obj18) to[bend left=30] (obj16);
     \draw[fill=black,fill opacity=0.1] (obj22) to[bend right=30]
(obj23) to[bend right=30] (obj24) to[bend left=30] (obj22);

    \end{tikzpicture}
\caption{The action of the pseudo-rotation $\rotB$,
Case (2)}
\label{fig:11b}
\end{figure}

\begin{remarks}
(i) \Cref{fig:10} provides a schematic illustration of 
the construction in Case~(1) of the statement of 
the above theorem, while \Cref{fig:10b} provides a concrete
example. In this example, $m=3$, $n=4$, $a=2$, $b=6$, $d=11$, and $e=10$.
Case~(2) of the statement of \Cref{prop:2B} is illustrated in
\Cref{fig:11}, while \Cref{fig:11b} provides a concrete example, in
which $m=3$, $n=4$, and $a=8$.

\medskip
(ii) Case~(2) can be seen as a degenerate special case of Case~(1).
To see this, one chooses $b=mn$ and $d=a$ in Case~(1). We shall
occasionally refer to this point of view in order to simplify
arguments in proofs.
\end{remarks}

\begin{proof}[Proof of \Cref{prop:2B}]
Let us first assume that, 
in the positive $m$-divisible type~$B$ non-crossing partition~$\pi$,
the block of~$\overline{mn}$ contains another negative element.
Among those, let $\overline{mi_1-1}$ be the one which is the
predecessor of~$\overline{mn}$ in the block.
Obviously, we have $i_1>0$.
\Cref{fig:16} provides sketches of such situations. 
Let $(w_0,w_1,\dots,w_m)$ denote the
element of $\mNCPlus[B_n]$ which corresponds to~$\pi$ under the
bijection $\Nam{B_n}m$. By the definition of $\Nam{B_n}m$, we
see that $w_{m-1}(n)=i_1$. From
\Cref{lem:LRB}(1), we infer $w_{m-1}^L=w_{m-1}$ and
$w_{m-1}^R=\one$. This means that the pseudo-rotation~$\rotB$
reduces to ordinary rotation in this special case, as we claimed. 

\begin{figure}
\begin{center}
\begin{tikzpicture}
    \polygonlabel{(-4,-21)}{obj}{44}{2.5}
      {1,,,,,,,,,,,,,,,,,,,,,,,,,,,,\hspace{1pt},,,,,,,,\overline{mi_1-1},,,,,,\overline{mn},}

    \draw[line width=2.5pt,black] (obj1) to[bend left=30] (obj29);
    \draw (obj1)  to[bend left=30]
          (obj29);
    \draw (obj43) to[bend left=50]
          (obj37);

    \draw[dash pattern=on 22pt off 100pt] (obj43) to[bend left=20] (obj29);
    \draw[dash pattern=on 22pt off 100pt] (obj37) to[bend left=20] (obj29);
    \draw[dash pattern=on 22pt off 140pt] (obj1)  to[bend right=20] (obj8);

    \node[inner sep=0pt] at (0,-21) {};

    \polygonlabel{( 4,-21)}{obj}{44}{2.5}
      {1,,,,,,,,,,,,,,,,,,,,,,,,,,,,,,,,,,,,\overline{mi_1-1},,,,,,\overline{mn},}

    \draw[line width=2.5pt,black] (obj1) to[bend left=50] (obj43);
%     \draw (obj1)  to[bend left=30]
%           (obj29);
    \draw (obj43) to[bend left=50]
          (obj37);

    \draw[dash pattern=on 22pt off 100pt] (obj37) to[bend left=20] (obj29);
    \draw[dash pattern=on 22pt off 140pt] (obj1)  to[bend right=20] (obj8);

\end{tikzpicture}
\end{center}
  \caption{The block of $\overline{mn}$ contains another negative element}
\label{fig:16}
\end{figure}
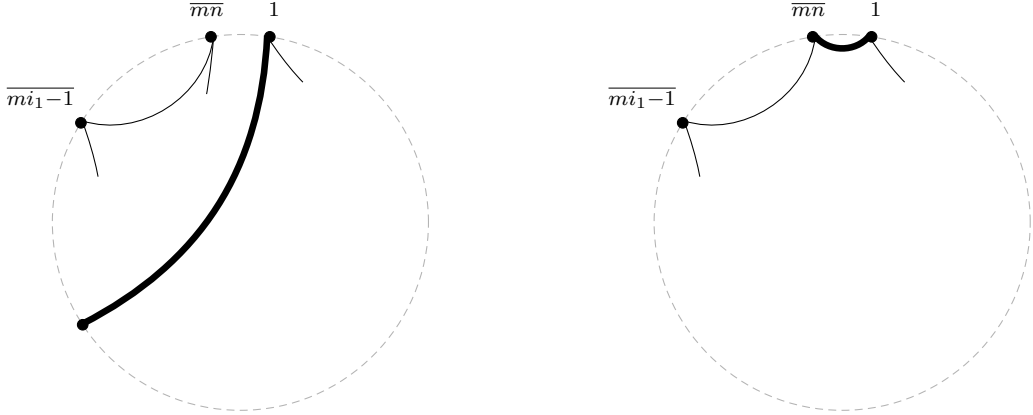

\medskip
For the remainder of the proof, we assume that the block
of~$\overline{mn}$ contains no other negative element. In particular,
it must contain a positive number (since $m\ge2$).
By \Cref{prop:1B}, we know that the block of~$1$ contains a negative
number. Hence, if the partition wants to be non-crossing, necessarily
$1$ must be in the same block as $\overline{mn}$.

\medskip
Let again $(w_0,w_1,\dots,w_m)$ denote the
element of $\mNCPlus[B_n]$ which corresponds to~$\pi$ under the
bijection $\Nam{B_n}m$. 

\begin{figure}
\begin{center}
\begin{tikzpicture}
    \polygonlabel{(-4,0)}{obj}{44}{2.5}
      {1,,,a,a+1,,,a_2,a_2+1,,,a_3,,,a_{s-1}+1,,,b-1,b,,,,,,,,,,,,,,,,,,,\overline{d},,,,,,\overline{N}}

    \draw[line width=2.5pt,black] (obj1) to[bend left=50] (obj44);
    \draw (obj44) to[bend right=50]
          (obj4);

    \draw (obj5) to[bend right=90, looseness=2]
          (obj8);

    \draw (obj9) to[bend right=90, looseness=2]
          (obj12);

    \draw (obj15) to[bend right=90, looseness=2]
          (obj18);

    \draw (obj19) to [bend left=10] (obj38);

    \draw[dash pattern=on 2pt off 10pt on 2pt off 2pt on 2pt off 2pt]
(obj12) to[bend right=50] (obj15);

\end{tikzpicture}
\end{center}
  \caption{The block structure in Case (1)}
\label{fig:16b}
\end{figure}
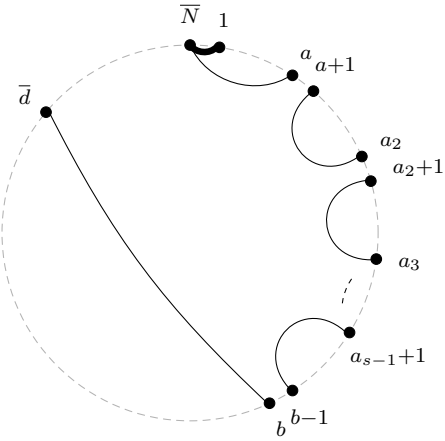

\smallskip
{\sc Case (1).} 
Let $a_2,a_3,\dots,a_{s-1}$ be positive integers with
$a<a_2<a_3<\dots<a_{s-1}$ such that 
$a_2$ and $a+1$ (sic!) are successive elements in a block of~$\pi$,
$a_3$ and $a_2+1$ are successive elements in a block,
\dots, and
$b-1$ and $a_{s-1}+1$ are successive elements in a block.
\Cref{fig:16b} provides a sketch of this situation.

The congruence classes of $a,b,d,e,a_2,\dots,a_{s-1}$ 
modulo~$m$ are forced: indeed, 
we have $a\equiv-1$~(mod~$m$) since $a$ is the predecessor
of~$\overline{mn}$ in their block,
we have $a_2\equiv-1$~(mod~$m$) since $a_2$ is the predecessor of
$a+1$ in their block,
we have $a_3\equiv-1$~(mod~$m$) since $a_3$ is the predecessor of
$a_2+1$ in their block, \dots,
we have $b\equiv0$~(mod~$m$) since $b-1$ is the predecessor of
$a_{s-1}+1$ in their block, and
we have $d\equiv-1$~(mod~$m$) and
$e\equiv1$~(mod~$m$) since $\overline d$, $b$, $e$ are successive
elements in their block. We write
$a=mA-1$, 
$b=mB$,
$d=mD-1$, 
$e=mE+1$, and $a_i=mA_i-1$, $i=2,3,\dots,s-1$, 
for suitable integers 
$A,B,D,E,A_2,A_3,\dots,A_{s-1}$. 
By the definition of $\Nam{B_n}m$,
we have
\begin{align}
\notag
%\label{eq:wmex1}
w_{m-1}(\overline n)&=A,\\
\notag
%\label{eq:wmex1}
w_{m-1}(A)&=A_2,\\
\notag
%\label{eq:wmex1}
w_{m-1}(A_2)&=A_3,\\
\notag
%\label{eq:wmex1}
w_{m-1}(A_{s-1})&=B,\\
\notag
%\label{eq:wmex2}
w_{m-1}(B)&=\overline D,\\
\label{eq:wmex3}
w_m(E)&=B,\\
\label{eq:wmex4}
w_m(n)&=n.
\end{align}

%\begin{figure}
%??
%  \caption{Illustration of the meaning of $a,b,d,e$}
%\label{fig:15}
%\end{figure}

\noindent
Consequently, the cycle of $w_{m-1}$ containing $n$ is of the
form 
$$((\overline A,\overline A_2,\dots,\overline A_{s-1},\overline B,D,\dots,n)).$$
By \Cref{lem:LRB}(2), we have 
$$w^R_{m-1}=((\overline B,n)),\quad \text{and }
w^L_{m-1}\text{ contains the cycles }((A,A_2,\dots,A_{s-1},B))
((D,\dots,n)).$$
Together with \eqref{eq:wmex3} and \eqref{eq:wmex4}, 
this implies the relations
\begin{align*}
%\label{eq:wmex1}
w^L_{m-1}(n)&=D,\\
%\label{eq:wmex1}
w^L_{m-1}(A)&=A_2,\\
%\label{eq:wmex1}
w^L_{m-1}(A_2)&=A_3,\\
%\label{eq:wmex1}
w^L_{m-1}(A_{s-1})&=B,\\
%\label{eq:wmex2}
w^L_{m-1}(B)&=A,\\
%\label{eq:wmex3}
(cw^R_{m-1}w_mc^{-1})(1)&=B+1,\\
%\label{eq:wmex4}
(cw^R_{m-1}w_mc^{-1})(E+1)&=1.
\end{align*}
By the definition of $\Krewplustilde^{(m)}$ and of $\Nam{B_n}m$,
this means that
$a+1$, $b+1$, and~$2$ are successive elements in their block
in the image of~$\pi$ under~$\Krewplustilde^{(m)}$ (conjugated by~$\Nam{B_n}m$),
that $\overline{d+1}$, $1$, and $e+1$ are successive elements
in a block,
while all other ``block connections" are rotated by one unit
in clockwise direction.
This is in accordance with the corresponding assertion in the
proposition.

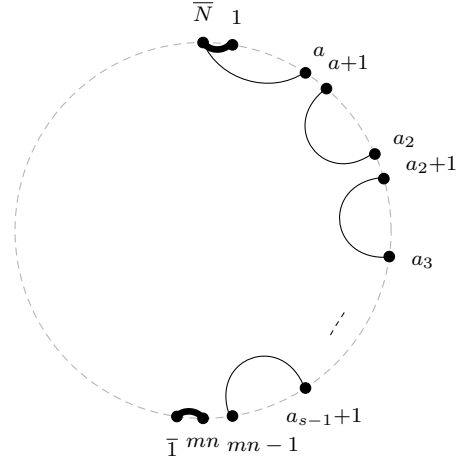
\begin{figure}
\begin{center}
\begin{tikzpicture}
    \polygonlabel{( 4,0)}{obj}{44}{2.5}
      {1,,,a,a+1,,,a_2,a_2+1,,,a_3,,,,,,a_{s-1}+1,,,\hbox{\kern20pt\tiny$mn-1$},mn,\overline{1},,,,,,,,,,,,,,,,,,,,,\overline{N}}

    \draw[line width=2.5pt,black] (obj1) to[bend left=50] (obj44);
    \draw[line width=2.5pt,black] (obj22) to[bend right=50] (obj23);
    \draw (obj44) to[bend right=50]
          (obj4);

    \draw (obj5) to[bend right=90, looseness=2]
          (obj8);

    \draw (obj9) to[bend right=90, looseness=2]
          (obj12);

    \draw (obj18) to[bend right=90, looseness=2]
          (obj21);

    \draw[dash pattern=on 2pt off 25pt on 2pt off 2pt on 2pt off 2pt]
(obj12) to[bend right=10] (obj18);

\end{tikzpicture}
\end{center}
  \caption{The block structure in Case (2)}
\label{fig:16c}
\end{figure}

\smallskip
{\sc Case (2).} 
Let $a_2,a_3,\dots,a_k$ be positive integers with
$a<a_2<a_3<\dots<a_k$ such that 
$a_2$ and $a+1$ are successive elements in a block,
$a_{i+1}$ and $a_i+1$ are successive elements in a block,
$i=2,3,\dots,s-2$, and
$mn-1$ and $a_{s-1}+1$ are successive elements in a block.
\Cref{fig:16c} provides a sketch of this situation.

As in Case~(1), we must have $a\equiv-1$~(mod~$m$),
and $a_i\equiv-1$~(mod~$m$), for $i=2,3,\dots,s-1$.
We write again $a=mA-1$ and $a_i=mA_i-1$ for suitable 
integers $A$ and $A_i$, for $i=2,3,\dots,s-1$.
By the definition of  $\Nam{B_n}m$,
we have
\begin{align}
\notag
%\label{eq:wmex1}
w_{m-1}(\overline n)&=A,\\
\notag
%\label{eq:wmex1}
w_{m-1}(A)&=A_2,\\
\notag
%\label{eq:wmex1}
w_{m-1}(A_2)&=A_3,\\
\notag
%\label{eq:wmex1}
w_{m-1}(A_{s-2})&=A_{s-1},\\
\notag
%\label{eq:wmex1}
w_{m-1}(A_{s-1})&=n,\\
\label{eq:wmex4b}
w_m(n)&=n.
\end{align}
Consequently, the cycle of $w_{m-1}$ containing $n$ is of the
form 
$$[A,A_2,\dots,\dots,A_{s-1},n].$$
By \Cref{lem:LRB}(3), we have 
$$w^R_{m-1}=[n],\quad \text{and }
w^L_{m-1}\text{ contains the cycle }((A,A_2,\dots,A_{s-1},n)).$$
Together with \eqref{eq:wmex4b}, 
this implies the relations
\begin{align*}
%\label{eq:wmex1}
w^L_{m-1}(n)&=A,\\
%\label{eq:wmex1}
w^L_{m-1}(A)&=A_2,\\
%\label{eq:wmex1}
w^L_{m-1}(A_2)&=A_3,\\
%\label{eq:wmex1}
w^L_{m-1}(A_{s-2})&=A_{s-1},\\
%\label{eq:wmex2}
w^L_{m-1}(A_{s-1})&=n,\\
%\label{eq:wmex3}
(cw^R_{m-1}w_mc^{-1})(1)&=\overline 1.
\end{align*}
By the definition of  $\Nam{B_n}m$, this means that
$\overline{a+1}$, $1$, and~$\overline 2$ are successive elements in 
their block in the image of~$\pi$ under~$\Krewplustilde^{(m)}$ (conjugated by~$\Nam{B_n}m$),
while all other ``block connections" are rotated by one unit in clockwise direction.
This is in accordance with the corresponding assertion in the
proposition.
Before finishing the argument, we would like to offer, to the first
three readers who made it thus far, a one-dollar bill hand-signed by Christian.
The bill is claimed by sending us a {\it non-trivial\/}
positive non-crossing partition
in $\mNCBPlus[4][2]$ and its image under the positive Kreweras map~$\rotB$.
This completes the proof of the proposition.
\end{proof}

\subsection{The combinatorial positive Kreweras maps}
\label{sec:combB}

The definition of $\rotB$ from \Cref{prop:2B} to
make sense on an \emph{arbitrary} non-crossing
partition needed that blocks should be of size at least~$2$.
It is a simple matter to extend this so that it
makes sense for non-crossing partitions without any restrictions.

\begin{figure}
\begin{center}
    \begin{tikzpicture}[scale=1]
    \polygonlabel{(-4,0)}{obj}{44}{2.5}
      {1,,,,,,a,,,,,,b,,,,,,,,N,,\overline{1},,,,,,\overline{a},,,,,,\overline{b},,,,,,,,\overline{N},}

    \draw[line width=2.5pt,black] (obj1) to[bend left=50] (obj35);
    \draw (obj35) to[bend right=50]
          (obj1) to[bend right=50]
          (obj7);

    \draw[black, dash pattern=on 2pt off 2pt on 2pt off 2pt on 2pt off
2pt on 2pt off 35pt] (obj7) to[bend right=50] (obj11);
    \draw[black, dash pattern=on 2pt off 2pt on 2pt off 2pt on 2pt off
2pt on 2pt off 35pt] (obj35) to[bend left=50] (obj31);

    \filldraw[black] (obj43) circle(3pt);

    \draw[line width=2.5pt,black] (obj23) to[bend left=50] (obj13);
    \draw (obj13) to[bend right=50]
          (obj23) to[bend right=50]
          (obj29);

    \draw[black, dash pattern=on 2pt off 2pt on 2pt off 2pt on 2pt off
2pt on 2pt off 35pt] (obj29) to[bend right=50] (obj33);
    \draw[black, dash pattern=on 2pt off 2pt on 2pt off 2pt on 2pt off
2pt on 2pt off 35pt] (obj13) to[bend left=50] (obj9);

    \filldraw[black] (obj21) circle(3pt);

    \node[inner sep=0pt] at (0,0) {$\mapsto$};

    \polygonlabel{( 4,0)}{obj}{44}{2.5}
      {1,,2,,,,a+1,,,,,,b+1,,,,,,,,N,,\overline{1},,\overline{2},,,,\overline{a+1},,,,,,\overline{b+1},,,,,,,,\overline{N},}

    \draw[line width=2.5pt,black] (obj1) to[bend left=50] (obj35);
    \draw (obj35) to[bend right=50]
          (obj1) to[bend right=50]
          (obj7);

    \draw[black, dash pattern=on 2pt off 2pt on 2pt off 2pt on 2pt off
2pt on 2pt off 35pt] (obj7) to[bend right=50] (obj11);
    \draw[black, dash pattern=on 2pt off 2pt on 2pt off 2pt on 2pt off
2pt on 2pt off 35pt] (obj35) to[bend left=50] (obj31);

    \filldraw[black] (obj3) circle(3pt);

    \draw[line width=2.5pt,black] (obj23) to[bend left=50] (obj13);
    \draw (obj13) to[bend right=50]
          (obj23) to[bend right=50]
          (obj29);

    \draw[black, dash pattern=on 2pt off 2pt on 2pt off 2pt on 2pt off
2pt on 2pt off 35pt] (obj29) to[bend right=50] (obj33);
    \draw[black, dash pattern=on 2pt off 2pt on 2pt off 2pt on 2pt off
2pt on 2pt off 35pt] (obj13) to[bend left=50] (obj9);

    \filldraw[black] (obj25) circle(3pt);

\end{tikzpicture}
\end{center}
 \caption{Definition of the pseudo-rotation under presence of
singleton blocks}
\label{fig:8B}
\end{figure}

\begin{definition} \label{def:phiallgB}
We define the action of $\rotB$ on a positive type~$B$ 
non-crossing partition~$\pi$
of $\{1,2,\dots, N,\overline1,\overline2,\dots,\overline N\}$ 
to be given by \Cref{prop:2B} with $mn$ replaced by
$N$ (cf.\ \Cref{fig:10,fig:11}), except when the block containing $N$ is a
singleton block (in which case also $\{\overline{N}\}$ is a singleton 
block; here, \Cref{prop:2B} would not make sense).
If $\{N\}$ is a singleton block, then the image of~$\rotB$
is the non-crossing partition which arises from~$\pi$ 
by replacing the singleton block $\{N\}$ 
by the singleton block $\{\overline 2\}$, by replacing the
singleton block $\{\overline N\}$ by the singleton block $\{2\}$, 
and $i$ by~$i+1$ and $\overline i$ by~$\overline{i+1}$ for
$i=2,3,\dots,N-1$
in each of the remaining blocks of~$\pi$, leaving~$1$ and~$\overline1$
untouched; see \Cref{fig:8B}
for an illustration.
In the degenerate case of Case~(1) in \Cref{prop:2B} where $b=d$,
the partition $\pi$ contains $\{b,\overline b\}=\{d,\overline d\}$ as zero
block\footnote{See \Cref{sec:realB} for the
definition of zero block.} (see the left half of
\Cref{fig:d=b} for a schematic illustration of that situation,
which can only occur for $m=1$; cf.\ \Cref{foot:d=b}).
The image of~$\pi$ under~$\rotB$ is then defined as the partition 
in which $1$ and $\overline1$ form a block,
and $a+1$, $b+1$, and $2$ are successive elements in 
another block. All other succession relations in~$\pi$
are rotated by one unit in clockwise direction.
See \Cref{fig:d=b} for a schematic illustration of this operation
and \Cref{fig:d=b2} for a concrete example in which $N=7$, $a=2$,
and $b=d=3$.
\end{definition}

\begin{figure}
\begin{center}
    \begin{tikzpicture}[scale=1]
    \polygonlabel{(-4,0)}{obj}{44}{2.5}
      {1,,,,,,a,,,,,,b,,,,,,,,N,,\overline{1},,,,,,\overline{a},,,,,,\overline{b},,,,,,,,\overline{N},}

    \draw[line width=2.5pt,black] (obj1) to[bend left=50] (obj43);
    \draw (obj43) to[bend right=50]
          (obj7);

    \draw[black, dash pattern=on 2pt off 2pt on 2pt off 2pt on 2pt off
2pt on 2pt off 35pt] (obj7) to[bend left=20] (obj1);

    \draw[line width=2.5pt,black] (obj23) to[bend left=50] (obj21);
    \draw (obj21) to[bend right=50]
          (obj29);

    \draw[black, dash pattern=on 2pt off 2pt on 2pt off 2pt on 2pt off
2pt on 2pt off 35pt] (obj29) to[bend left=20] (obj23);

    \draw[black,fill=black,fill opacity=0.1] (obj13) to[bend left=20] (obj35)
                         to[bend left=20] (obj13);

    \node[inner sep=0pt] at (0,0) {$\mapsto$};

    \polygonlabel{( 4,0)}{obj}{44}{2.5}
      {1,,2,,,,a+1,,,,,,b+1,,,,,,,,N,,\overline{1},,\overline{2},,,,\overline{a+1},,,,,,\overline{b+1},,,,,,,,\overline{N},}

    \draw[black] (obj3) to[bend right=50] (obj13) to[bend left=30] (obj7);

    \draw[black, dash pattern=on 2pt off 2pt on 2pt off 2pt on 2pt off
20pt] (obj7) to[bend left=20] (obj3);

    \draw[black] (obj25) to[bend right=50] (obj35) to[bend left=30] (obj29);

    \draw[black, dash pattern=on 2pt off 2pt on 2pt off 2pt on 2pt off
20pt] (obj29) to[bend left=20] (obj25);

    \draw[line width=2.5pt,black,fill=black,fill opacity=0.1] (obj1)
to[bend left=20] (obj23)
                         to[bend left=20] (obj1);
\end{tikzpicture}
\end{center}
\caption{The action of the pseudo-rotation $\rotB$,
Case (1) with $b=d$}
\label{fig:d=b}
\end{figure}

\begin{figure}
\begin{center}
  \begin{tikzpicture}[scale=1]
    \polygon{(-4,0)}{obj}{14}{2.5}
      {1,2,3,4,5,6,7,\overline{1},\overline{2},\overline{3},\overline{4},\overline{5},\overline{6},\overline{7}}

      \draw[line width=2.5pt,black] (obj1) to[bend left=30] (obj14);
      \draw[line width=2.5pt,black] (obj8) to[bend left=30] (obj7);

     \draw[fill=black,fill opacity=0.1] (obj14) to[bend right=30]
(obj1) to[bend right=30] (obj2) to[bend left=30] (obj14);
     \draw[fill=black,fill opacity=0.1] (obj7) to[bend right=30]
(obj8) to[bend right=30] (obj9) to[bend left=30] (obj7);

     \draw[fill=black,fill opacity=0.1] (obj4) to[bend right=50]
(obj6) to[bend left=20] (obj4);
     \draw[fill=black,fill opacity=0.1] (obj11) to[bend right=50]
(obj13) to[bend left=20] (obj11);

     \draw[fill=black,fill opacity=0.1] (obj3) to[bend right=20]
(obj10) to[bend right=20] (obj3);

    \node[inner sep=0pt] at (0,0) {$\mapsto$};

    \polygon{(4,0)}{obj}{14}{2.5}
      {1,2,3,4,5,6,7,\overline{1},\overline{2},\overline{3},\overline{4},\overline{5},\overline{6},\overline{7}}

     \draw[fill=black,fill opacity=0.1] (obj2) to[bend right=30]
(obj3) to[bend right=30] (obj4) to[bend left=30] (obj2);
     \draw[fill=black,fill opacity=0.1] (obj9) to[bend right=30]
(obj10) to[bend right=30] (obj11) to[bend left=30] (obj9);

     \draw[fill=black,fill opacity=0.1] (obj5) to[bend right=50]
(obj7) to[bend left=20] (obj5);
     \draw[fill=black,fill opacity=0.1] (obj12) to[bend right=50]
(obj14) to[bend left=20] (obj12);

     \draw[line width=2.5pt,fill=black,fill opacity=0.1] (obj1)
to[bend right=20]
(obj8) to[bend right=20] (obj1);
    \end{tikzpicture}
\end{center}
\caption{The action of the pseudo-rotation $\rotB$,
Case (1) with $b=d$}
\label{fig:d=b2}
\end{figure}

As it turns out, this definition is also in agreement with the
image under $\Nam{B_{n}}1$ of the map $\Krewplustilde^{(1)}$ in
\Cref{def:K-alt} for the case where $m=1$.

\begin{theorem} \label{prop:2-m=1B}
Let $n$ be a positive integer.
Under the bijection $\Nam{B_{n}}1$, the map $\Krewplustilde^{(1)}$
from \Cref{def:K-alt} translates to the combinatorial map~$\rotB$ described in
\Cref{def:phiallgB}  with $N=n$.
\end{theorem}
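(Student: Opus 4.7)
The approach mirrors the proof of \Cref{prop:2-m=1} for type~$A$, though with considerably more casework. Recall first that for $m=1$ the bijection $\Nam{B_n}1$ reduces to the type~$B$ Kreweras complement $w \mapsto cw^{-1}$, whose standard geometric realisation identifies the cycles of~$cw^{-1}$ with the blocks of a non-crossing partition on $\{1,\dots,n,\overline 1,\dots,\overline n\}$. Given $w = w^L = t_1\cdots t_k \in \mNCPlus[B_n][1]$ with each $t_i \in \invc^L$, the map $\Krewplustilde^{(1)}$ from \Cref{def:K-alt} sends each $t_i$ to $c^{j_i}t_ic^{-j_i}$, where $j_i \ge 1$ is minimal such that this conjugate lies in $\invc^L$, and subsequently sorts the resulting reflections according to $<_c$.

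The key computation is the explicit behaviour of conjugation by $c=[1,2,\dots,n]$ on reflections in $\invc^L$. Since $c$ maps indices by $i\mapsto i+1$ for $1 \le i \le n-1$, $n\mapsto\overline 1$, and $\overline n\mapsto 1$, conjugation sends $((a,b))$ to $((c(a),c(b)))$. Using the explicit description of $\invc^R$ from~\eqref{eq:reflsB} (namely $((\overline 1,n)),\dots,((\overline{n-1},n)),[n]$), a direct case analysis shows that a reflection $t\in\invc^L$ satisfies $ctc^{-1}\in\invc^L$ (so $j_i=1$) except in the three ``boundary'' situations
\[
  t = ((n-1,n)), \qquad t = ((a,\overline{n-1})) \text{ for } 1 \le a \le n-2, \qquad t = [n-1].
\]
In each of these exceptional cases, $ctc^{-1}$ lies in $\invc^R$, but a second conjugation yields $((1,2))$, $((1,a+2))$, and $[1]$, respectively, all in $\invc^L$; thus $j_i = 2$. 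In all other cases, $j_i = 1$.

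It remains to translate this reflection-level action into the combinatorial pseudo-rotation~$\rotB$ acting on the non-crossing partition $\pi = cw^{-1}$. Each reflection $t_i$ contributes chords (symmetric about the horizontal axis, in the type~$B$ realisation) to $\pi$, and the shift $t_i\mapsto ct_ic^{-1}$ with $j_i=1$ is combinatorially the ordinary one-unit clockwise rotation of those chords. The three exceptional families correspond precisely to the non-trivial cases of \Cref{def:phiallgB}: the presence of some $t_i = ((a,\overline{n-1}))$ produces a pair of overlapping chords that, under double conjugation, becomes the non-overlapping pair described in Case~(1) of \Cref{prop:2B}; a reflection $t_i = [n-1]$ triggers the ``split through the zero block'' rearrangement of Case~(2); and the singleton-block degeneracy for $\{n\}$ and the zero-block-of-size-two degeneracy correspond to the exceptional reflections $((n-2,n-1))$, $[n-1]$, or a pair $((a,\overline{n-1}))$ and $((\overline a,n-1))$ appearing in isolation, respectively.

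The main obstacle is the bookkeeping: type~$B$ features short-root reflections $[i]$ in addition to two kinds of long-root reflections $((i,j))$ and $((i,\overline j))$, producing richer combinatorics than in type~$A$. In particular one must verify that the non-crossing condition on $w$ (i.e., that $\t_1\cdots\t_k \subset \invc$ yields a reduced $\reflR$-word for an element of $\NC[B_n]$) forces at most one of the three families of exceptional reflections to occur at any given time, thereby justifying the clean case distinction of \Cref{prop:2B}. One must also match the degenerate sub-case $b=d$ of Case~(1), which corresponds to a zero block of size~$2$ in~$\pi$, with the coincidence of the reflections $((a,\overline{n-1}))$ and $((\overline a,n-1))$ in the reduced $\reflR$-word of $w$. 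Once all cases are checked against \Cref{def:phiallgB}, the identification $\Nam{B_n}1\circ\Krewplustilde^{(1)} = \rotB\circ\Nam{B_n}1$ follows.
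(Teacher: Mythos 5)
Your proposal is correct and takes essentially the same route as the paper, which omits this proof on the grounds that it is completely analogous to the type~$A$ argument for \Cref{prop:2-m=1}: your reduction to the Kreweras complement $w\mapsto cw^{-1}$, the computation that conjugation by $c=[1,2,\dots,n]$ keeps a letter of $\invc^L$ inside $\invc^L$ except for the three families $((n-1,n))$, $((a,\overline{n-1}))$ ($1\le a\le n-2$) and $[n-1]$, where a second conjugation lands on $((1,2))$, $((1,a+2))$ and $[1]$, and the matching of these families with the cases of \Cref{def:phiallgB} is exactly that analogue. One small slip worth fixing: in the degenerate zero-block-of-size-two case you invoke ``the coincidence of $((a,\overline{n-1}))$ and $((\overline a,n-1))$'', but these are literally the same reflection, so that sub-case should instead be identified directly from the structure of the Kreweras complement (as in the $b=d$ degeneration of Case~(1) in \Cref{def:phiallgB}), which does not affect the overall argument.
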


Since the proof of the theorem is completely analogous to the proof
of \Cref{prop:2-m=1}, we omit it here.

\medskip
We conclude with the following theorem describing the order of the combinatorial\break map~$\rotB$. In view of our previous discussion, it follows from
\Cref{cor:order} for the type~$B_N$ with $m=1$.
However, since we believe it is interesting for its own sake, we
provide a direct, combinatorial, proof in \Cref{app:order-B}.

\begin{theorem} \label{lem:N-2B}
Let $N$ be a positive integer.
The order of the map~$\rotB$ acting on the set of positive type~$B$ 
non-crossing partitions~$\pi$
of\/ $\{1,2,\dots, N,\overline1,\overline2,\dots,\overline N\}$ 
is $N-1$.
\end{theorem}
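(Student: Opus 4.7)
The plan is to prove \Cref{lem:N-2B} combinatorially in two parts: establishing a lower bound on the order by exhibiting an orbit of size exactly $N-1$, and an upper bound $(\rotB)^{N-1}=\operatorname{id}$ by a careful case-by-case analysis.

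For the lower bound, I consider the ``minimal'' positive partition
\[
\pi_0 = \big\{\{1,\overline N\},\ \{\overline 1,N\}\big\}\ \cup\ \big\{\{i\},\{\overline i\}:2\le i\le N-1\big\}.
\]
In $\pi_0$, the block of $\overline N$ is $\{1,\overline N\}$, which contains no other negative element, so the critical regime of \Cref{prop:2B} applies; specifically, one is in Case~(2) with $a=1$ (reading the block $\{1,\overline N\}$ cyclically), and $\rotB(\pi_0)$ has special block $\{1,\overline 2\}$ (together with its type-$B$ conjugate $\{\overline 1,2\}$), while the remaining elements become the rotated singletons $\{3\},\dots,\{N\},\{\overline 3\},\dots,\{\overline N\}$. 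From the first iterate onward, $\{\overline N\}$ is a singleton, so the singleton rule of \Cref{def:phiallgB} applies; a direct inductive computation then shows that $(\rotB)^k(\pi_0)$ has special block $\{1,\overline{k+1}\}$ for $1\le k\le N-2$, and that $(\rotB)^{N-1}(\pi_0)=\pi_0$. Hence the orbit of $\pi_0$ under $\rotB$ has size exactly $N-1$, proving the order is at least $N-1$.

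For the upper bound, I argue that $(\rotB)^{N-1}$ is the identity on every positive type~$B$ non-crossing partition. The key structural fact is that such partitions are invariant under the half-turn $i\leftrightarrow\overline i$, which corresponds to rotation by $N$ steps on the $2N$-gon; hence ordinary rotation $\rho$ satisfies $\rho^{N}=\operatorname{id}$ on this set. The map $\rotB$ coincides with $\rho$ in the generic regime of \Cref{prop:2B} (block of $\overline N$ contains another negative element), and implements a specific local re-wiring in the critical regime (Cases~(1), (2), the degenerate $b=d$ subcase, and the singleton case of \Cref{def:phiallgB}). My strategy is to associate with each positive $\pi$ a ``marker'' element --- the predecessor of $\overline N$ inside its block, or a suitable fallback when $\pi$ is critical --- and to verify, case by case, that this marker advances forward in a controlled way at every iteration of $\rotB$. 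Summing these advances over the orbit of length $N-1$, together with careful tracking of which iterates are critical, one shows that the cumulative effect of $\rotB$ applied $N-1$ times equals one additional rotation step beyond $\rho^{N-1}$, i.e., equals $\rho^{N}$, which is the identity on positive type~$B$ partitions.

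The main obstacle is the case analysis itself. The five regimes (generic rotation, Case~(1), Case~(2), the degenerate zero-block $b=d$ subcase, and the singleton case) can occur in intricate succession when iterating $\rotB$, and transitioning between them --- for example, when applying $\rotB$ to a partition where the block of $\overline N$ gains or loses a negative element --- requires separate verification of the marker dynamics. A secondary technical point is confirming that the degenerate $b=d$ subcase (which produces a zero block of size $2$) and the singleton case of \Cref{def:phiallgB} contribute the correct marker advancement despite their special block-structure effects. Once this bookkeeping is complete, the combination with the lower-bound computation for $\pi_0$ yields that the order of $\rotB$ equals exactly $N-1$.
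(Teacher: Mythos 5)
Your lower bound is correct: the orbit of your $\pi_0$ under $\rotB$ has size exactly $N-1$ (the first step is the degenerate instance $a=1$ of Case~(2) of \Cref{prop:2B}, and every later step uses only the singleton rule of \Cref{def:phiallgB}), so $N-1$ divides the order. The gap is in the upper bound, which is the actual substance of the theorem. There you only describe a bookkeeping scheme and defer the case analysis; more seriously, the scheme would not suffice even if the bookkeeping were completed. A critical application of $\rotB$ (Case~(1) of \Cref{prop:2B}, its degenerate $b=d$ variant, Case~(2), or the singleton rule) is not ``rotation plus a local correction recorded by one marker'': it changes the succession relations at several places at once ($\overline{d+1},1,e+1$ become consecutive in one block while $a+1,b+1,2$ become consecutive in another), i.e., it genuinely re-wires which elements belong to which block. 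Hence knowing that your marker (the predecessor of $\overline N$ in its block) advances by a total of $N$ units over $N-1$ applications says nothing about the remaining blocks and does not imply $\rotB^{N-1}(\pi)=\pi$.

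What is needed --- and what the paper's proof supplies --- is control of \emph{every} block: after reducing to partitions without singleton blocks, one follows an arbitrary block through the iteration (first a block containing the cyclic run $\overline N,1,\dots,k$, distinguishing whether it consists of one or of several maximal runs of consecutive elements) and shows that $N-1$ applications of $\rotB$ carry it precisely to its negative; since a type~$B$ partition is invariant under $i\leftrightarrow\overline i$, this gives $\rotB^{N-1}=\operatorname{id}$, and any other block is treated by first rotating it into the tracked position. Your ``summing of advances'' silently assumes exactly the cancellation of the critical-step re-wirings that this block-by-block tracking establishes, so as written the upper bound is asserted rather than proved. To repair the argument you would have to upgrade the marker to the full block containing it (and handle the zero-block and singleton situations at that level), at which point you are essentially reproducing the paper's Steps~2--4.
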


\subsection{Enumeration of positive non-crossing partitions}
\label{sec:enumB}

Here we present our enumeration results for positive non-crossing
partitions in $\mNCBPlus$.
The most refined and most general
result is \Cref{thm:1-B} which provides a
formula for the number of these
non-crossing partitions in which the block structure is prescribed. 
By specialisation and summation, we obtain results on the number of
such partitions with prescribed rank or size of zero block, including
the total number of elements in $\mNCBPlus$.

\begin{theorem} \label{thm:1-B}
Let $m$ and $n$ be positive integers, and let $a$ and
$b_1,b_2,\dots,b_n$ be non-negative integers.
The number of positive $m$-divisible non-crossing partitions of 
$\{1,2,\dots,\break mn,\overline1,\overline2,\dots,\overline{mn}\}$ of type~$B$, where
the number of non-zero blocks of size~$mi$ is~$2b_i$, $i=1,2,\dots,n$,
the zero block having size $2ma=2mn-2m \sum_{j=1}^njb_j$
{\em(}if $a=0$ this is interpreted as the absence of a zero block{\em)},
is given by
\begin{equation} \label{eq:block-B} 
\binom {b_1+b_2+\dots+b_n}{b_1,b_2,\dots,b_n}
\binom {mn-1} {b_1+b_2+\dots+b_n}.
\end{equation}
\end{theorem}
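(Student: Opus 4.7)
The plan is to prove \Cref{thm:1-B} by a generating-function argument modelled on the one carried out for type~$A$ in \Cref{app:GF-A}, appropriately adapted to the $\Z/2$-symmetry inherent in type~$B$. First, I would translate the combinatorial block data into algebraic data on tuples $(w_0,\ldots,w_m)\in\mNCPlus[B_n][m]$. By \Cref{prop:block-B} and \Cref{lem:wmn>0}, the block structure $(a;b_1,\ldots,b_n)$ of $\pi=\Nam{B_n}m(w_0,\ldots,w_m)$ corresponds to $w_0$ having (at most) one signed cycle of the form $[i_1,\ldots,i_a]$ together with $b_k$ pairs of cycles of the form $((i_1,\ldots,i_k))$ for each $k$, with positivity encoded by the single constraint $w_m(n) > 0$.

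Next, using the $\mNCnabla$ description together with \Cref{thm:positivedescription}, I would set up the generating function
\[
F(x_1,\ldots,x_n;z) \;:=\; \sum_{n\ge 1}\ \sum_{\pi \in \mNCBPlus[n][m]} z^{mn}\prod_{i=1}^n x_i^{b_i(\pi)}
\]
and decompose it along the special block (the block containing~$1$, which by \Cref{prop:1B} always straddles the $\overline{mn}$-to-$1$ boundary). Two sub-cases arise: the special block is the zero block, or it is a non-zero block paired with its $i\mapsto\bar i$ counterpart. Each contributes a factor to $F$ expressible in terms of the classical (type-$A$) non-crossing-partition generating function of \Cref{cor:countingA1}, multiplied by a ``special-block factor'' encoding the possible sizes $ma$ of the zero block (or the paired non-zero block). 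A coefficient extraction $\coef{z^{mn}\prod_i x_i^{b_i}}F$ via iterated Chu--Vandermonde summation --- the same technique used to derive \Cref{thm:countingA} from an analogous generating function --- should then collapse the two sub-cases into the single formula \eqref{eq:block-B}.

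The main obstacle I foresee is accounting for the uniformity of the resulting formula: the count $\binom{b_1+\cdots+b_n}{b_1,\ldots,b_n}\binom{mn-1}{b_1+\cdots+b_n}$ is independent of $a$ beyond the global constraint $a+\sum_i i b_i = n$. Demonstrating this independence requires showing that the two sub-cases (zero special block versus paired non-zero special block) contribute in a perfectly complementary manner; this is a cancellation analogous to, but subtler than, the one producing the factor $1/(mn-1)$ in the type-$A$ analogue \eqref{eq:multichains-bi}. I expect that this cancellation falls out cleanly once the generating function is properly set up, the key input being that type~$B$ positivity already fixes a canonical reference point on the circle (the $\overline{mn}$-to-$1$ boundary), which removes the ``rotational ambiguity'' that in type~$A$ manifests as the factor $1/(mn-1)$, thus explaining why the type-$B$ formula lacks this prefactor.
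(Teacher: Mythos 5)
Your overall plan --- decompose along the block of~$1$ and extract coefficients from a generating function --- is workable, and it is genuinely different from how the paper obtains its two key generating functions: there, \Cref{lem:TF-B-1} and \Cref{lem:TFa-B} are derived not by a direct special-block decomposition but by an orbit argument for the pseudo-rotation~$\rotB$ (every positive partition with a zero block arises from one whose zero block contains $\overline{mn}$ and~$1$, every one without a zero block from one whose block of~$1$ contains $\overline{2}$ and otherwise only negative elements, followed by repeated application of~$\rotB$; this is where the paper's factor $mn-1$ comes from, via \Cref{lem:N-2B}). A direct decomposition in the spirit of \Cref{lem:TBa} can indeed replace that step, so the skeleton of your argument is sound; note, however, that the relevant extraction tool is Lagrange inversion for the gap series $C^{(m)}(z)=f(z)/z$ (as in \Cref{lem:TA,lem:TB}), not Chu--Vandermonde, and that the detour through $\mNCnabla$ and \Cref{thm:positivedescription} plays no role once you work with the set-partition model.

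The genuine gap is that you have aimed the hard part at the wrong target. For a fixed block type $(a;b_1,\dots,b_n)$ exactly one of your two sub-cases occurs --- the zero block is present if and only if $a>0$ --- so there is no ``complementary cancellation'' between the sub-cases to establish; each must separately produce \eqref{eq:block-B}. Two concrete points are missing. First, your case split tacitly uses that in a positive partition a zero block, if present, must be the block of~$1$ (two blocks straddling the centre would cross); \Cref{prop:1B} alone only says the block of~$1$ meets the negatives, so this needs an argument. Second, and more seriously, in the case $a=0$ the size of the block of~$1$ is not determined by $(b_1,\dots,b_n)$: you must sum over which mirror pair of non-zero blocks contains~$1$, i.e.\ over its size $ma'$, and for a given arrangement of gaps each such size occurs with a multiplicity counting how the $\overline{mn}$-to-$1$ boundary can sit inside that block (in a direct decomposition this is the weight $ma'-1$; in the paper the analogous bookkeeping is the $\rotB$-orbit factor $mn-1$ of \Cref{lem:TFa-B}). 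The resulting weighted sum collapses to the $a$-independent expression \eqref{eq:block-B} only through the identity
\begin{equation*}
\sum_{a'\ge 1}(ma'-1)\,b_{a'}=mn-(b_1+b_2+\dots+b_n),
\end{equation*}
which is exactly the final step in the paper's proof of \Cref{thm:1-B}. Your expectation that ``the cancellation falls out cleanly once the generating function is set up'', and the heuristic about the absent $1/(mn-1)$ prefactor, do not supply this step.
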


The proof of this theorem is found in \Cref{app:GF-B-inv}.

\begin{corollary} \label{cor:2-B}
Let $m$ and $n$ be positive integers, and let
$a$ and $b$ be non-negative integers.
The number
of positive $m$-divisible non-crossing partitions of 
$\{1,2,\dots,mn,\overline1,\overline2,\dots,\overline{mn}\}$ of type~$B$
which have $2b$ non-zero blocks and a zero block of size~$2ma$
{\em(}if $a=0$ this is interpreted as the absence of a zero block{\em)},
is given by
\begin{equation} \label{eq:multichains-si-pos-a-B-1} 
\binom {{n-a-1}}{b-1}
\binom {mn-1}{b},
\end{equation}
while the number of these partitions without restricting the size 
{\em(}or existence{\em)} of the zero block is
\begin{equation} \label{eq:multichains-si-pos-B-1} 
\binom {{n}}{b}
\binom {mn-1}{b}.
\end{equation}
Via the relation $s=n-b$, the last expression also equals the number
of elements of $\mNCBPlus$ of fixed rank~$s$.
\end{corollary}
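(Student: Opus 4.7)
The plan is to deduce \Cref{cor:2-B} as a consequence of \Cref{thm:1-B} by summing the refined block-structure count over appropriate choices of $(b_1,b_2,\dots,b_n)$, together with a routine rank computation for the final assertion.

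First I would fix $a$ and $b$, and sum the formula~\eqref{eq:block-B} over all tuples $(b_1,\dots,b_n)$ of non-negative integers satisfying simultaneously $b_1+b_2+\dots+b_n=b$ and $b_1+2b_2+\dots+nb_n=n-a$ (the second constraint encoding that the non-zero blocks collectively cover $2(n-a)$ elements, leaving $2ma$ for the zero block). Since $\binom{mn-1}{b}$ only depends on $b$, it factors out of the sum, leaving
\[
\binom{mn-1}{b}\sum \binom{b}{b_1,b_2,\dots,b_n},
\]
where the multinomial coefficient counts the words of length~$b$ over the alphabet $\{1,2,\dots,n\}$ having exactly $b_i$ occurrences of the letter~$i$. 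Under the two constraints, such words are precisely the compositions of $n-a$ into $b$ positive integer parts (the upper bound $n$ on the parts being automatic since each part is at most $n-a\le n$). The number of such compositions is $\binom{n-a-1}{b-1}$, yielding~\eqref{eq:multichains-si-pos-a-B-1}.

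For the unrestricted count~\eqref{eq:multichains-si-pos-B-1}, I would then sum~\eqref{eq:multichains-si-pos-a-B-1} over all admissible~$a$, treating $a=0$ as the ``no zero block'' case in accordance with the convention of \Cref{thm:1-B}. Substituting $k=n-a-1$ and applying the hockey-stick identity,
\[
\sum_{a=0}^{n-1}\binom{n-a-1}{b-1}=\sum_{k=b-1}^{n-1}\binom{k}{b-1}=\binom{n}{b},
\]
immediately produces $\binom{n}{b}\binom{mn-1}{b}$.

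For the final claim identifying this count with the rank-$s$ count, I would invoke \Cref{prop:block-B}: if $\pi \in \mNCBPlus$ has $2b_k$ non-zero blocks of size~$mk$ and a zero block of size~$2ma$, then the corresponding $w_0$ is a product of $b_k$ type-$B$ cycles $((\cdot))$ of length~$k$ (each contributing reflection length $k-1$) together with one cycle $[\cdot]$ of length~$a$ (contributing reflection length $a$). Hence
\[
\rk(w_0)=\sum_{k}b_k(k-1)+a=(n-a)-b+a=n-b,
\]
using $\sum_k kb_k=n-a$. Thus $s=n-b$, and the second formula counts precisely the elements of $\mNCBPlus$ of rank~$s$. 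No serious obstacle arises; the only point requiring care is the convention $a=0$ meaning ``no zero block'' in the first formula, so that the summation range in the hockey-stick step is consistent with the enumerative setup of \Cref{thm:1-B}.
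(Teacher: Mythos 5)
Your proof is correct, but it takes a different route from the paper's for the first formula. The paper explicitly notes that \eqref{eq:multichains-si-pos-a-B-1} ``could be obtained from~\eqref{eq:block-B} by summing over all possible~$b_i$'s'' --- precisely your plan --- but then declares it ``simpler'' to redo the computation with the generating functions \eqref{eq:DA-B-1} and \eqref{eq:DAa-B}: it sets all $x_j=t$, extracts the coefficient of $t^b$, and treats the cases $a>0$ (reduced to the later computation for \Cref{cor:4-B} with $\rRr=1$) and $a=0$ separately. Your direct summation avoids this case split and the coefficient extraction entirely: factoring out $\binom{mn-1}{b}$ and recognising the multinomial sum as the number of compositions of $n-a$ into $b$ positive parts (the upper bound $n$ on parts being vacuous) is a clean, purely combinatorial derivation of the same binomial product, arguably more transparent than the paper's route, though it buys nothing new structurally since it rests on the same \Cref{thm:1-B}. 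For the second formula both you and the paper sum \eqref{eq:multichains-si-pos-a-B-1} over $a$; your hockey-stick evaluation is the natural way to do what the paper leaves as a one-line remark. Finally, the identification with the rank-$s$ count is stated in the corollary without proof in the paper; your verification via \Cref{prop:block-B} (cycles $((\,\cdot\,))$ of length $k$ contributing $k-1$ to $\lenR(w_0)$, the cycle $[\,\cdot\,]$ of length $a$ contributing $a$, giving $\lenR(w_0)=n-b$) is exactly the implicit argument and is worth making explicit. The only mild caveat is the degenerate case $b=0$, $a=n$, where the composition count and the hockey-stick step need the usual binomial conventions, but this edge case is glossed over in the paper as well.
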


The proof of this corollary is also found in \Cref{app:GF-B-inv}.

\begin{corollary} \label{cor:3-B}
Let $m$ and $n$ be positive integers, and let $a$ be a non-negative integer.
The number
of positive $m$-divisible non-crossing partitions of 
$\{1,2,\dots,mn,\overline1,\overline2,\dots,\overline{mn}\}$ of type~$B$
which have a zero block of size~$2ma$
{\em(}if $a=0$ this is interpreted as the absence of a zero block{\em)}
is given by
\begin{equation} \label{eq:multichains-pos-a-B-1} 
\binom {{(m+1)n-a-2 }}{{n-a }},
\end{equation}
while the number of these partitions without restricting the size
{\em(}or existence{\em)} of the zero block is
\begin{equation} \label{eq:multichains-pos-B-1} 
\binom {{(m+1)n-1}}{{n}}.
\end{equation}
\end{corollary}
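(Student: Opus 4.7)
The plan is to obtain both formulas by summing the refined counts from \Cref{cor:2-B} over the number of non-zero blocks and applying Chu--Vandermonde.

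For~\eqref{eq:multichains-pos-a-B-1}, I would fix the zero-block size parameter~$a$ and sum the expression in~\eqref{eq:multichains-si-pos-a-B-1} over all non-negative~$b$:
\[
\sum_b \binom{n-a-1}{b-1}\binom{mn-1}{b}.
\]
After reindexing by $b'=b-1$ and rewriting $\binom{mn-1}{b'+1}=\binom{mn-1}{mn-2-b'}$, this is of Chu--Vandermonde shape and collapses to $\binom{(m+1)n-a-2}{mn-2}=\binom{(m+1)n-a-2}{n-a}$, as required. The degenerate cases $a=0$ (no zero block) and $a=n$ (only a zero block) are covered by this single computation.

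For~\eqref{eq:multichains-pos-B-1}, one can either sum~\eqref{eq:multichains-si-pos-B-1} directly to obtain
\[
\sum_b \binom{n}{b}\binom{mn-1}{b}=\sum_b \binom{n}{b}\binom{mn-1}{mn-1-b}=\binom{(m+1)n-1}{n}
\]
by a single application of Chu--Vandermonde, or alternatively sum the first formula over~$a$: setting $k=n-a$ in $\sum_{a=0}^{n}\binom{(m+1)n-a-2}{n-a}$ gives $\sum_{k=0}^{n}\binom{mn-2+k}{k}$, which by the hockey-stick identity equals $\binom{(m+1)n-1}{n}$. The two derivations serve as mutual consistency checks.

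No step presents a real obstacle: the corollary follows from \Cref{cor:2-B} by routine binomial identities, entirely analogous to the derivation of \Cref{cor:3} from \Cref{cor:2} in the type~$A$ case. The only minor point is to verify that the standard binomial conventions make the identities hold uniformly at the boundary values of~$a$ and~$b$.
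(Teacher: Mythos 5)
Your proposal is correct and follows the paper's own proof essentially verbatim: the paper likewise obtains both formulas by summing the expressions \eqref{eq:multichains-si-pos-a-B-1} and \eqref{eq:multichains-si-pos-B-1} of \Cref{cor:2-B} over~$b$ and evaluating via the Chu--Vandermonde summation formula. The alternative hockey-stick consistency check and the remark on boundary conventions are harmless additions but not needed.
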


\begin{proof}
One sums Expressions~\eqref{eq:multichains-si-pos-a-B-1} 
and~\eqref{eq:multichains-si-pos-B-1} 
over all~$b$.
The resulting sums can be evaluated by means of the Chu--Vandermonde
summation formula.
\end{proof}

\subsection{Characterisation of pseudo-rotationally invariant elements}
\label{sec:rotB}

In this subsection, we describe the elements of
$\NCBPlus$ that are invariant under a
power of~$\rotB$. 
We call an element of $\NCBPlus$
\defn{$2\rRr $-pseudo-rotationally 
invariant\/} if it is invariant under the action of
$\rotB^{2(N-1)/(2\rRr) }=\rotB^{(N-1)/\rRr }$.
This --- perhaps seemingly unintuitive --- definition has its background 
in the cyclic sieving result(s) that we aim to eventually state and prove
in \Cref{sec:sievB}. While it is shown in \Cref{lem:N-2B} 
that the order of~$\rotB$ is~$N-1$, in the statement of the cyclic sieving
phenomena in \Cref{thm:3-B} we need to consider the cyclic group generated 
by~$\rotB$ as a group of order $2(N-1)$ in order to fit with the uniform 
statement of cyclic sieving in \Cref{thm:CS}.

We achieve the description of pseudo-rotationally invariant elements
in $\NCBPlus$
by providing a construction of such elements (see below)
together with \Cref{lem:allB} which then tells us how to obtain all
such invariant elements from the construction. 
To this end, we 
adopt the notions of rotation~$\rot$ of a sequence and of a central
block from \Cref{sec:rotA}.

\begin{figure}
\begin{center}
  \begin{tikzpicture}[scale=1]
    \polygon{(0,0)}{obj}{30}{2.5}
      {1,2,3,4,5,6,7,8,9,10,11,12,13,14,15,\overline{1},\overline{2},\overline{3},\overline{4},\overline{5},\overline{6},\overline{7},\overline{8},\overline{9},\overline{10},\overline{11},\overline{12},\overline{13},\overline{14},\overline{15}}
%      \draw[line width=2.5pt,black] (obj1) to[bend left] (obj30);
     \draw[fill=black,fill opacity=0.1] (obj1) to[bend right=30]
(obj2) to[bend right=30] (obj9) to[bend right=30] (obj16) to[bend
right=30] (obj17) to[bend right=30] (obj24) to[bend right=30] (obj1);
     \draw[fill=black,fill opacity=0.1] (obj3) to[bend right=30]
(obj7) to[bend right=30] (obj8) to[bend left=30] (obj3);
     \draw[fill=black,fill opacity=0.1] (obj4) to[bend right=30]
(obj5) to[bend right=30] (obj6) to[bend left=30] (obj4);
     \draw[fill=black,fill opacity=0.1] (obj10) to[bend right=30]
(obj14) to[bend right=30] (obj15) to[bend left=30] (obj10);
     \draw[fill=black,fill opacity=0.1] (obj11) to[bend right=30]
(obj12) to[bend right=30] (obj13) to[bend left=30] (obj11);
     \draw[fill=black,fill opacity=0.1] (obj18) to[bend right=30]
(obj22) to[bend right=30] (obj23) to[bend left=30] (obj18);
     \draw[fill=black,fill opacity=0.1] (obj19) to[bend right=30]
(obj20) to[bend right=30] (obj21) to[bend left=30] (obj19);
     \draw[fill=black,fill opacity=0.1] (obj25) to[bend right=30]
(obj29) to[bend right=30] (obj30) to[bend left=30] (obj25);
     \draw[fill=black,fill opacity=0.1] (obj26) to[bend right=30]
(obj27) to[bend right=30] (obj28) to[bend left=30] (obj26);
    \end{tikzpicture}
\end{center}
  \caption{A $4$-pseudo-rotationally invariant non-crossing partition in
$\NCBPlus[15]$ (and also in $\mNCBPlus[5][3]$)}
\label{fig:34}
\end{figure}

\begin{figure}
\begin{center}
\begin{tikzpicture}[scale=1]
    \polygon{(0,0)}{obj}{30}{2.5}
      {1,2,3,4,5,6,7,8,9,10,11,12,13,14,15,\overline{1},\overline{2},\overline{3},\overline{4},\overline{5},\overline{6},\overline{7},\overline{8},\overline{9},\overline{10},\overline{11},\overline{12},\overline{13},\overline{14},\overline{15}}
%      \draw[line width=2.5pt,black] (obj1) to[bend left] (obj30);
     \draw[fill=black,fill opacity=0.1] (obj6) to[bend right=30]
(obj7) to[bend right=30] (obj14) to[bend right=30] (obj21) to[bend
right=30] (obj22) to[bend right=30] (obj29) to[bend right=30] (obj6);
     \draw[fill=black,fill opacity=0.1] (obj2) to[bend right=30]
(obj3) to[bend right=30] (obj4) to[bend left=30] (obj2);
     \draw[fill=black,fill opacity=0.1] (obj30) to[bend right=30]
(obj1) to[bend right=30] (obj5) to[bend left=30] (obj30);
     \draw[fill=black,fill opacity=0.1] (obj9) to[bend right=30]
(obj10) to[bend right=30] (obj11) to[bend left=30] (obj9);
     \draw[fill=black,fill opacity=0.1] (obj8) to[bend right=30]
(obj12) to[bend right=30] (obj13) to[bend left=30] (obj8);
     \draw[fill=black,fill opacity=0.1] (obj17) to[bend right=30]
(obj18) to[bend right=30] (obj19) to[bend left=30] (obj17);
     \draw[fill=black,fill opacity=0.1] (obj15) to[bend right=30]
(obj16) to[bend right=30] (obj20) to[bend left=30] (obj15);
     \draw[fill=black,fill opacity=0.1] (obj24) to[bend right=30]
(obj25) to[bend right=30] (obj26) to[bend left=30] (obj24);
     \draw[fill=black,fill opacity=0.1] (obj23) to[bend right=30]
(obj27) to[bend right=30] (obj28) to[bend left=30] (obj23);
    \end{tikzpicture}
\end{center}
  \caption{Another $4$-pseudo-rotationally invariant non-crossing partition in
$\NCBPlus[15]$ (and also in $\mNCBPlus[5][3]$)}
\label{fig:35}
\end{figure}

\medskip
\noindent
{\sc Construction.} 
We start with an (ordinary)
$2\rRr $-rotationally invariant 
non-crossing partition of $\{2,3,\dots,N,\overline
2,\overline3,\dots,\overline{N}\}$, $2$ being
contained in the zero block. We also allow the
degenerate case that the zero block is empty. 
Then we add $1$ and $\overline1$ to the zero block, thereby creating a
positive non-crossing partition.
We illustrate this construction in the case where $\rRr=2$ and $N=15$.
We start with the non-crossing partition 
\begin{multline*}
\{\{2,9,\overline2,\overline9\},\{3,7,8\},\{4,5,6\},
\{10,14,15\},\{11,12,13\},\\
\{\overline3,\overline7,\overline8\},\{\overline4,\overline5,\overline6\},
\{\overline{10},\overline{14},\overline{15}\},
\{\overline{11},\overline{12},\overline{13}\}\}
\end{multline*}
of $\{2,3,\dots,\overline{15}\}$, 
which is indeed $4$-rotationally invariant
in the above sense. Then one adds $1$ and $\overline1$ to the
zero block $\{2,9,\overline2,\overline9\}$ to obtain the partition shown in
\Cref{fig:34}. If one applies $\rotB$ five times to this
partition, then the result is the partition in \Cref{fig:35}.

\begin{theorem} \label{lem:allB}
Let $N$ and $\rRr$ be positive integers with $\rRr \ge2$. Then
all $2\rRr $-pseudo-rotationally invariant positive non-crossing
  partitions of\/ $\{1,2,\dots,N,\overline1,\overline2,\dots,\overline{N}\}$
of type~$B$ 
are obtained by starting with a non-crossing partition
obtained by the above Construction
and applying the pseudo-rotation~$\rotB$ repeatedly to it.
\end{theorem}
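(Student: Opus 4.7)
The proof addresses two directions: first, that the Construction produces $2\rRr$-pseudo-rotationally invariant elements of $\NCBPlus$ (which is implicit in the statement); second, that every such invariant element lies in the $\rotB$-orbit of some Construction output.

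For the first direction, I would analyse the effect of iterating $\rotB$ on a Construction output $\pi$, which has $\{1, \overline 1\}$ in its zero block. Each application of $\rotB$ is ordinarily a clockwise shift by one on the outer $2N$-cycle, except when the current block of $\overline N$ contains no other negative element, in which case the pseudo-rotation of Case~(1) or~(2) of Proposition~\ref{prop:2B} is triggered. I would show that, after $(N-1)/\rRr$ applications, the cumulative effect on $\pi$ corresponds, on the inner subset $\{2, \ldots, N, \overline 2, \ldots, \overline N\}$, to a rotation by a determined amount, which preserves the inner partition by the rotational symmetry assumed in the Construction. Consequently, $\pi$ is fixed by $\rotB^{(N-1)/\rRr}$.

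For the converse, given $\pi \in \NCBPlus$ invariant under $\rotB^{(N-1)/\rRr}$, I would proceed in three steps. Step~1: establish that $\pi$ contains a zero block; this is a structural result that exploits the non-trivial pseudo-rotational invariance, since positivity alone does not force the existence of a zero block (for instance, $\{1,\overline{k}\},\{\overline 1, k\}$ and their twins form a perfectly valid positive non-crossing partition without zero block). Step~2: by passing to an appropriate iterate $\rotB^k(\pi)$, arrange that the zero block contains both $1$ and $\overline 1$; this is possible because the zero block is setwise preserved under $\rotB^{(N-1)/\rRr}$ and, within the $\rotB$-orbit, the element $1$ occupies every position of the zero block at some iterate. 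Step~3: remove $\{1,\overline 1\}$ from the zero block of $\rotB^k(\pi)$ to obtain a non-crossing partition $\bar\pi$ of $\{2,\ldots, N, \overline 2, \ldots, \overline N\}$ with $2$ in its (possibly empty) zero block, and verify that $\bar\pi$ is $2\rRr$-rotationally invariant by transporting the symmetry of $\rotB^k(\pi)$ via the correspondence developed in the first direction.

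The main obstacle is making precise the translation between the action of $\rotB^{(N-1)/\rRr}$ on the outer structure and ordinary rotation on the inner $2(N-1)$-cycle. The two cycles differ in length by the two extra elements $\{1, \overline 1\}$, and the pseudo-rotation mechanism of Proposition~\ref{prop:2B} is precisely what absorbs this discrepancy. A careful case analysis is needed to track when pseudo-rotation is triggered during iteration, to handle the degenerate subcases (notably the case $b = d$ in Case~(1) and the empty-inner-zero-block case of the Construction), and to verify that the composed effect of $(N-1)/\rRr$ applications of $\rotB$ reduces to the expected clean action on the inner partition. Once this translation is established, both directions follow largely by bookkeeping.
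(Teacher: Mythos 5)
Your plan follows the same route as the paper's proof: show that Construction outputs are invariant (the paper gets this from the block-tracking analysis in Step~3 of the proof of \Cref{lem:N-2B}), and for the converse rotate until the zero block contains $1$ and $\overline1$, strip $\{1,\overline1\}$, and check that the reduced partition of $\{2,\dots,N,\overline2,\dots,\overline N\}$ is rotationally invariant. The problem is your Step~1: the claim that a $2\rRr$-pseudo-rotationally invariant element of $\NCBPlus$ must contain a zero block is only announced, never argued. You rightly note that positivity alone does not force a zero block (such partitions exist in abundance, cf.\ \Cref{lem:TFa-B}), so something substantive must be proved here --- but you offer no mechanism at all, and this is where essentially the whole content of the theorem sits; everything after it is, as you say yourself, bookkeeping.

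In the paper this step is precisely \Cref{lem:ord=N-1}: every element of $\NCBPlus$ \emph{without} zero block has order exactly $N-1$ under $\rotB$, so it cannot be fixed by $\rotB^{(N-1)/\rRr}$ with $\rRr\ge2$, and hence any invariant partition must have a zero block. Establishing that order statement is itself a nontrivial argument --- one tracks the widest block whose support contains $1$, distinguishes whether it has one or several positive elements, and follows how its ``first'' element advances under iteration, including the Case~(2) transitions of \Cref{prop:2B}, to see that the orbit cannot close up before $N-1$ steps. Nothing in your proposal substitutes for this lemma or sketches an alternative. Your Steps~2 and~3 do match the paper's subsequent argument (setwise invariance of the unique zero block, rotating $1$ into it, using invariance to force $\overline2$, hence $2$, into the zero block when $|Z|\ge3$, and observing that all other blocks are rotated by $(N-1)/\rRr$ units except for one extra unit when crossing $1$), and the degenerate case $|Z|=2$ is easily handled directly; but without a proof of zero-block existence the proposal has a genuine gap at its central point.
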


The proof of this theorem is given in \Cref{app:inv-B}.

%Given a type~$B$ non-crossing partition $\pi$ of 
%$\{2,3,\dots,N,\overline2,\overline3,\dots,\overline{N}\}$ which is
%$2\rRr $-rotationally invariant, a block $B$ of $\pi$ is called
%\emph{central block\/} if $B=\rot^{(N-1)/\rRr }(B)$.
%??Example
%It is obvious that a $\rot$-central block is automatically a
%zero block.
%
%Similarly, for 
%an element $\pi$ of $\NCB{N}$ which is 
%$2\rRr $-pseudo-rotationally invariant
%(that is, it is invariant with respect to ${\rotB}^{(N-1)/\rRr }$), 
%a block $B$ of $\pi$ is called
%\emph{central block\/} if $B={\rotB}^{(N-1)/\rRr }(B)$. 
%??Example
%Again, it is not difficult to convince oneself that
%a $\rotB$-central block is automatically a zero block.
%Conversely, a zero block in a $2\rRr $-pseudo-rotationally invariant
%element of $\NCB{N}$ is automatically $\rotB$-central.

\subsection{Enumeration of pseudo-rotationally invariant elements}
\label{sec:rotenumB}

With the characterisation of pseudo-rotationally invariant elements
of $\NCBPlus$ at hand, we can now embark on
the enumeration of such elements. 
The most refined and most general
result in this subsection is \Cref{thm:2-B} which provides a
formula for the number of these non-crossing partitions in which
the block structure is prescribed.

\begin{theorem} \label{thm:2-B}
Let $m,n,a,\rRr $ be positive integers with $\rRr \mid (mn-1)$.
Furthermore, let $b_1,b_2,\dots,b_n$ be non-negative integers.
The number of positive $m$-divisible non-crossing partitions of 
$\{1,2,\dots,mn,\overline1,\overline2,\dots,\overline{mn}\}$ of type~$B$
which are invariant under the
$2\rRr $-pseudo-rotation~$\rotB^{(mn-1)/\rRr }$, where the
number of non-zero 
blocks of size~$mi$ is~$2\rRr b_i$, $i=1,2,\dots,n$,
the zero block having size $2ma=2mn-2m\rRr \sum_{j=1}^njb_j>0$,
is given by
\begin{equation} \label{eq:multichains-bi-si-pos-B} 
\binom {b_1+b_2+\dots+b_n}{b_1,b_2,\dots,b_n}
\binom {(mn-1)/\rRr } {b_1+b_2+\dots+b_n}.
\end{equation}
\end{theorem}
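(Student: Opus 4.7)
My approach would proceed by first applying the structural characterisation in \Cref{lem:allB} and then carrying out an enumerative argument paralleling the one for \Cref{thm:1-B} (the unrestricted count). By \Cref{lem:allB}, every $2\rRr$-pseudo-rotationally invariant positive $m$-divisible non-crossing partition of type $B$ lies in the $\rotB$-orbit of a partition obtained from the Construction: a $2\rRr$-rotationally invariant non-crossing partition of $\{2,3,\ldots,mn,\overline{2},\ldots,\overline{mn}\}$ with $2$ in the zero block, with $\{1,\overline{1}\}$ subsequently adjoined to the zero block. I would first verify that each Construction-partition is genuinely invariant under $\rotB^{(mn-1)/\rRr}$, and that the $\rotB$-orbit of an invariant partition contains exactly one Construction-partition; hence the number of invariant partitions with the required block data equals the number of Construction-partitions with that same block data.

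Next, I would exploit the fundamental domain structure. Fix a cyclic arc of $(mn-1)/\rRr$ consecutive positions starting at $2$ in the reduced annulus $\{2,\ldots,mn,\overline{2},\ldots,\overline{mn}\}$. The combined $2\rRr$-rotational symmetry and type~$B$ symmetry imply that the whole non-crossing partition is determined by its restriction to this fundamental arc. Because $\rRr$ is coprime to both $m$ and $n$ (as $\rRr\mid mn-1$), each non-zero block is confined entirely within one rotational copy of the fundamental arc, so that the $b_i$ non-zero blocks of size $mi$ inside the fundamental arc produce precisely $2\rRr b_i$ non-zero blocks in the whole partition, and the zero block's restriction to the fundamental arc is a union of orbits whose total size is determined by the other block sizes.

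At this stage, the problem reduces to the following strip enumeration: count the number of ways to arrange $b_i$ non-crossing blocks of size $mi$ (for each $i$) in a linear strip of $(mn-1)/\rRr$ positions, the remaining positions being assigned to the zero block. This is exactly the enumeration handled in \Cref{app:GF-B-inv} for the unrestricted case (where the strip length is $mn-1$), and the generating-function argument adapts verbatim with $mn-1$ replaced by $(mn-1)/\rRr$. The answer factors as $\binom{(mn-1)/\rRr}{B}\binom{B}{b_1,\ldots,b_n}$, where $B=b_1+\cdots+b_n$: the binomial $\binom{(mn-1)/\rRr}{B}$ chooses the starting positions of the non-zero blocks, and the multinomial $\binom{B}{b_1,\ldots,b_n}$ assigns sizes to those positions.

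The main obstacle, in my view, will be establishing the clean bijectivity in the first step: one must show that for any $2\rRr$-pseudo-rotationally invariant positive partition, there is a unique power of $\rotB$ placing it into Construction form (i.e.\ with the element $2$ adjacent to the zero block). This requires a careful case analysis of the action of $\rotB$ near the block containing $\overline{mn}$, where $\rotB$ departs from an ordinary rotation (cf.\ the case distinction in \Cref{prop:2B}), together with the observation that within a $2\rRr$-pseudo-rotational orbit of length $(mn-1)/\rRr$ in the $\rotB$-action, exactly one element has $2$ lying in the zero block. Once this bijectivity is confirmed, the formula \eqref{eq:multichains-bi-si-pos-B} emerges directly from the strip enumeration described above.
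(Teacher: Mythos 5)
There is a genuine gap in your reduction step, namely the sentence ``hence the number of invariant partitions with the required block data equals the number of Construction-partitions with that same block data.'' This is false: even if every $\rotB$-orbit contained exactly one Construction-partition, the number of invariant partitions would be the number of Construction-partitions \emph{times the orbit length}, not equal to it. In fact neither statement holds: within one orbit the partition is in Construction form once for each positive element of the zero block in a fundamental domain, i.e.\ $(ma-1)/\rRr$ times, and the correct relation (this is exactly the prefactor $\frac{mn-1}{ma-1}$ in \Cref{lem:TF-B}, modelled on the type~$A$ argument in \Cref{lem:TF}) is
\[
\#\{\text{invariant partitions}\}=\frac{(mn-1)/\rRr}{(ma-1)/\rRr}\cdot\#\{\text{Construction-partitions}\}.
\]
A concrete counterexample to your claim: take $m=1$, $n=4$, $\rRr=1$, $b_1=2$, so $a=2$; the invariant partitions are the three partitions with zero block $\{1,j,\overline 1,\overline j\}$, $j\in\{2,3,4\}$, and singletons elsewhere, but only the one with $j=2$ comes from the Construction (the other two are obtained from it by applying $\rotB$), so $3\neq 1$.

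Your strip enumeration contains a compensating error, which is why the final formula still looks right: for a Construction-partition the first position of the fundamental arc (the element~$2$) is forced to lie in the zero block, and with that constraint the strip count is $\frac{ma-1}{mn-1}\binom{(mn-1)/\rRr}{B}\binom{B}{b_1,\dots,b_n}$, not $\binom{(mn-1)/\rRr}{B}\binom{B}{b_1,\dots,b_n}$ (in the counterexample above: $3$ versus $6$). So your two steps are each off by reciprocal factors of $\frac{mn-1}{ma-1}$, and the argument as written is not sound. A correct proof must carry out this orbit-versus-anchoring bookkeeping explicitly; the paper does it by keeping the factor $\frac{mn-1}{ma-1}$ in the generating function of \Cref{lem:TF-B} and then removing it via Lagrange inversion (with the degenerate case $ma=1$, zero block $\{1,\overline 1\}$, treated separately), and your unproved strip identity would likewise need either this generating-function computation or a cycle-lemma/rooting argument, which is precisely the missing factor in disguise. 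Two further small points: the claim that non-zero blocks cannot straddle copies of the fundamental arc is true for Construction-partitions, but the reason is that the zero block contains $2$ and its rotational images (which bound the gaps), not the coprimality of $\rRr$ with $m$ and $n$; and verifying invariance of Construction-partitions plus exhaustiveness of the Construction is indeed \Cref{lem:allB}, so that part of your plan is fine.
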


\begin{corollary} \label{cor:4-B}
Let $m,n,a,\rRr $ be positive integers with $\rRr \ge2$ and $\rRr \mid (mn-1)$.
Furthermore, let $b$ be a non-negative integer.
The number
of positive $m$-divisible non-crossing partitions of 
$\{1,2,\dots,mn,\overline1,\overline2,\dots,\overline{mn}\}$ of type~$B$
which are invariant under the $2\rRr $-pseudo-rotation~$\rotB^{(mn-1)/\rRr }$,
have $2\rRr b$ non-zero blocks and a zero block of size~$2ma$,
is given by
\begin{equation} \label{eq:multichains-si-pos-a-B} 
\binom {{(n-a-\rRr )/\rRr }}{b-1}
\binom {(mn-1)/\rRr }{b}
\end{equation}
if $a\equiv n$~{\em(mod~$\rRr $)} and $0$ otherwise,
while the number of these partitions without restricting the size 
of the zero block is
\begin{equation} \label{eq:multichains-si-pos-B} 
\binom {\fl{n/\rRr }}{b}
\binom {(mn-1)/\rRr }{b}.
\end{equation}
\end{corollary}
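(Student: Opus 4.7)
The plan is to deduce both formulae from Theorem~\ref{thm:2-B} by summing the refined count in \eqref{eq:multichains-bi-si-pos-B} over the detailed block-size vector $(b_1,\ldots,b_n)$ while keeping $b=b_1+\cdots+b_n$ fixed.

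For Formula~\eqref{eq:multichains-si-pos-a-B}, I would fix both $b$ and the zero-block size $2ma$, and sum \eqref{eq:multichains-bi-si-pos-B} over all non-negative tuples $(b_1,\ldots,b_n)$ with $\sum_i b_i=b$ and $\rRr\sum_j j\,b_j=n-a$. The factor $\binom{(mn-1)/\rRr}{b}$ pulls out, leaving the multinomial sum $\sum\binom{b}{b_1,\ldots,b_n}$. Combinatorially, this counts sequences $(j_1,\ldots,j_b)\in\{1,2,\ldots,n\}^b$ with $j_1+\cdots+j_b=(n-a)/\rRr$; since $(n-a)/\rRr\le n$, the upper bound on each $j_k$ is vacuous, so the count is simply the number of compositions of $(n-a)/\rRr$ into exactly $b$ positive parts. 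Such a count is $0$ unless $(n-a)/\rRr\in\mathbb{Z}_{\ge 0}$, which is the stated congruence $a\equiv n\pmod \rRr$; in that case it equals $\binom{(n-a)/\rRr-1}{b-1}=\binom{(n-a-\rRr)/\rRr}{b-1}$, yielding \eqref{eq:multichains-si-pos-a-B}.

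For Formula~\eqref{eq:multichains-si-pos-B}, I would then sum \eqref{eq:multichains-si-pos-a-B} over all admissible zero-block sizes. The key point is that, by Theorem~\ref{lem:allB}, every $2\rRr$-pseudo-rotationally invariant positive type-$B$ non-crossing partition arises from the Construction in \Cref{sec:rotB} (with $1$ and $\overline 1$ subsequently added to the zero block), and is therefore forced to have a zero block containing $1$ and $\overline 1$. Hence $a\ge 1$, and the admissible values of $a$ are exactly those in $\{1,2,\ldots,n\}$ with $a\equiv n\pmod \rRr$. Setting $M=(n-a)/\rRr$, these correspond to $M\in\{0,1,\ldots,\lfloor(n-1)/\rRr\rfloor\}$. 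A small arithmetic remark is needed: $\rRr\mid(mn-1)$ together with $\rRr\ge 2$ forces $\gcd(n,\rRr)=1$, so in particular $\rRr\nmid n$ and $\lfloor(n-1)/\rRr\rfloor=\lfloor n/\rRr\rfloor$. The hockey-stick identity
$$\sum_{M=1}^{\lfloor n/\rRr\rfloor}\binom{M-1}{b-1}=\binom{\lfloor n/\rRr\rfloor}{b}$$
then delivers \eqref{eq:multichains-si-pos-B}, with the $b=0$ boundary case handled directly by noting that the unique invariant partition with no non-zero blocks corresponds to $a=n$.

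The argument is essentially a two-step summation on top of Theorem~\ref{thm:2-B}; no serious obstacle arises once that theorem is in hand. The only points requiring care are the combinatorial recognition that the multinomial sum counts compositions (verifying that the bound $j_k\le n$ is automatic) and the arithmetic observation $\gcd(n,\rRr)=1$, which is what aligns the upper index of the resulting binomial coefficient with $\lfloor n/\rRr\rfloor$ rather than with a smaller quantity.
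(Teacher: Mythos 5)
Your argument is correct, and for the first formula it takes the route that the paper mentions but deliberately bypasses: the paper's proof of \eqref{eq:multichains-si-pos-a-B} does not sum the refined count of \Cref{thm:2-B}, but instead returns to the generating function of \Cref{lem:TF-B}, sets $x_j=t$ for all $j\ge 1$ in the right-hand side of \eqref{eq:DA-B}, and extracts the coefficient of $t^bz^0$ directly, whereas you sum \eqref{eq:multichains-bi-si-pos-B} over the block-size vectors, recognising the multinomial sum as counting compositions of $(n-a)/\rRr$ into $b$ positive parts (with the bound $j_k\le n$ indeed vacuous). Both are valid; the paper's coefficient extraction reuses the machinery already set up for \Cref{thm:2-B} and avoids handling the multinomial sum, while your route is more elementary and makes the congruence condition $a\equiv n\pmod{\rRr}$ appear for a transparent combinatorial reason. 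For the second formula, you and the paper both sum \eqref{eq:multichains-si-pos-a-B} over $a\equiv n\pmod{\rRr}$; the paper states this without detail, whereas you supply the needed ingredients (the observation that $\rRr\mid(mn-1)$ forces $\gcd(n,\rRr)=1$, hence $\fl{(n-1)/\rRr}=\fl{n/\rRr}$, together with the hockey-stick identity and the $b=0$ boundary case), which is a genuine completion of that step. One small imprecision: after applying $\rotB$ repeatedly to a partition produced by the Construction, the zero block persists but need not contain $1$ and $\overline 1$ any more; what your argument actually needs --- and what \Cref{lem:ord=N-1} (or \Cref{lem:allB}) provides --- is only that every invariant partition with $\rRr\ge 2$ possesses a zero block, so that $a\ge 1$, and with that adjustment your proof is complete.
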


\begin{corollary} \label{cor:5-B}
Let $m,n,a,\rRr $ be positive integers with $\rRr \ge2$ and $\rRr \mid (mn-1)$.
The number
of positive $m$-divisible non-crossing partitions of 
$\{1,2,\dots,mn,\overline1,\overline2,\dots,\overline{mn}\}$ of type~$B$
which are invariant
under~$\rotB^{(mn-1)/\rRr }$ and have a zero block of size~$2ma$
is given by
\begin{equation} \label{eq:multichains-pos-a-B} 
\binom {{((m+1)n-a-\rRr -1)/\rRr }}{{(n-a)/\rRr }}
\end{equation}
if $a\equiv n$~{\em(mod~$\rRr $)} and $0$ otherwise,
while the number of these partitions without restricting the size
of the zero block is
\begin{equation} \label{eq:multichains-pos-B} 
\binom {\fl{((m+1)n-1)/\rRr }}{\fl{n/\rRr }}.
\end{equation}
\end{corollary}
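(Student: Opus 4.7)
The plan is to follow the same strategy as in the analogous type~$A$ result, \Cref{cor:5}. Namely, I would sum the two expressions from \Cref{cor:4-B} over all admissible values of the number-of-non-zero-blocks parameter~$b$, and invoke the Chu--Vandermonde summation formula to collapse each sum into a single binomial coefficient.

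For the first assertion, suppose $a\equiv n\pmod\rRr$ (otherwise the summand in~\eqref{eq:multichains-si-pos-a-B} vanishes identically, which explains the ``$0$ otherwise"). Writing $M:=(n-a-\rRr)/\rRr$ and $N:=(mn-1)/\rRr$, the substitution $j=b-1$ and the symmetry $\binom{N}{j+1}=\binom{N}{N-j-1}$ give
\[
\sum_b\binom{M}{b-1}\binom{N}{b}
=\sum_j\binom{M}{j}\binom{N}{N-j-1}
=\binom{M+N}{N-1}
\]
by Chu--Vandermonde. A direct computation yields $M+N=((m+1)n-a-\rRr-1)/\rRr$ and $M+1=(n-a)/\rRr$, so $\binom{M+N}{N-1}=\binom{M+N}{M+1}$ is exactly~\eqref{eq:multichains-pos-a-B}.

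For the second assertion, applied to~\eqref{eq:multichains-si-pos-B} with $M':=\fl{n/\rRr}$ and $N$ as above, the same manipulation gives
\[
\sum_b\binom{M'}{b}\binom{N}{b}
=\sum_b\binom{M'}{M'-b}\binom{N}{b}
=\binom{M'+N}{M'}.
\]
Since~$\rRr$ divides~$mn-1$ by hypothesis, $(mn-1)/\rRr$ is an integer and hence $M'+N=\fl{n/\rRr}+(mn-1)/\rRr=\fl{((m+1)n-1)/\rRr}$, which reproduces~\eqref{eq:multichains-pos-B}. There is no substantive obstacle here: the only care needed is bookkeeping the floor/divisibility manipulation in the unrestricted case, and checking that the congruence condition $a\equiv n\pmod\rRr$ carried over from \Cref{cor:4-B} is indeed the only restriction surviving the summation.
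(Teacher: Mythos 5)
Your proposal is correct and follows exactly the paper's argument: the paper likewise obtains \Cref{cor:5-B} by summing the expressions \eqref{eq:multichains-si-pos-a-B} and \eqref{eq:multichains-si-pos-B} of \Cref{cor:4-B} over all~$b$ and evaluating the sums with the Chu--Vandermonde formula. Your explicit bookkeeping of $M+N$, the symmetry of the binomial coefficients, and the floor/divisibility step in the unrestricted case is accurate, so nothing is missing.
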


\begin{proof}
One sums Expressions~\eqref{eq:multichains-si-pos-a-B} 
and~\eqref{eq:multichains-si-pos-B} 
over all~$b$.
The resulting sums can be evaluated by means of the Chu--Vandermonde
summation formula.
\end{proof}

\subsection{Cyclic sieving}
\label{sec:sievB}

With the enumeration results in \Cref{sec:enumB,sec:rotenumB}, we are
now ready to derive our cyclic sieving results in type~$B$; see
\Cref{thm:3-B} below.
\Cref{lem:2-B} prepares for the proof of this theorem.

\begin{lemma} \label{lem:2-B}
Let $m,n,s,b_1,b_2,\dots,b_n$ be integers 
such that $0\le s\le n$ and
\begin{equation}
\label{eq:SB-B}
b_1+2b_2+\dots+nb_n\le n.
\end{equation}
Furthermore, let $\si$ be a positive integer such that $\si\mid (2mn-2)$.
Writing $\rrr =2(mn-1)/\si$ and 
$\om_\rrr =e^{2\pi i\si/(2mn-2)}=e^{2\pi i/\rrr }$, we have
\begin{align}
\bmatrix (m+1)n-1\\n\endbmatrix_{q^2}
\Bigg\vert_{q=\om_\rrr }
&=\begin{cases} 
\binom {\fl{2((m+1)n-1)/\rrr }} {\fl{2n/\rrr }},&\text{if $\rrr$ is
  even},\\
\binom {\fl{((m+1)n-1)/\rrr }} {\fl{n/\rrr }},&\text{if $\rrr$ is
  odd},
\end{cases}
\label{eq:SI-B}
\\
\bmatrix mn-1\\n\endbmatrix_{q^2}
\Bigg\vert_{q=\om_\rrr }
&=\begin{cases}
\binom {mn-1} {{n}},&\text{if\/ $\rrr =1$ or $\rrr =2$},\\
0,&\text{otherwise,}
\end{cases}
\label{eq:SJ-B}
\\
\notag
\bmatrix {n}\\{s}\endbmatrix_{q^2}
\bmatrix {mn-1}\\ {n-s}\endbmatrix_{q^2}
\Bigg\vert_{q=\om_\rrr }
&=\begin{cases}
\binom {\fl{2n/\rrr }} {\fl{2s/\rrr }}
\binom {2(mn-1)/\rrr } {{2(n-s)/\rrr }},
&\text{if\/ $\rrr$ is even and $n\equiv s$ \rm (mod $\rrr /2$)},\\
\binom {\fl{n/\rrr }} {\fl{s/\rrr }}
\binom {(mn-1)/\rrr } {{(n-s)/\rrr }},
&\text{if\/ $\rrr$ is odd and $n\equiv s$ \rm (mod $\rrr $)},\\
0,&\text{otherwise,}
\end{cases}
\notag
\\
\label{eq:SK-B}
\\
\notag
&\hskip-4.5cm
\bmatrix {b_1+b_2+\dots+b_n}\\{b_1,b_2,\dots,b_n}\endbmatrix_{q^2}
\bmatrix {mn-1}\\ {b_1+b_2+\dots+b_n}\endbmatrix_{q^2}\Bigg\vert_{q=\om_\rrr }\\
&\hskip-4cm=\begin{cases}
\binom {{2(b_1+b_2+\dots+b_n)/\rrr }}
{\fl{2b_1/\rrr },\fl{2b_2/\rrr },\dots,\fl{2b_n/\rrr }}
\binom {2(mn-1)/\rrr } {{2(b_1+b_2+\dots+b_n)/\rrr }},
&\kern-1.6pt
\text{if\/ $\rrr$ is even and $b_k\equiv0$ \rm (mod $\rrr/2 $) for all $k$,}\\
\binom {{(b_1+b_2+\dots+b_n)/\rrr }}
{\fl{b_1/\rrr },\fl{b_2/\rrr },\dots,\fl{b_n/\rrr }}
\binom {(mn-1)/\rrr } {{(b_1+b_2+\dots+b_n)/\rrr }},
&\kern-1.6pt
\text{if\/ $\rrr$ is odd and $b_k\equiv0$ \rm (mod $\rrr $) for all $k$,}\\
0,&\kern-1.6pt\text{otherwise.}
\end{cases}
\notag
\\
\label{eq:SL-B}
\end{align}
\end{lemma}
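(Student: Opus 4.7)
The plan is to reduce all four evaluations to the classical $q$-Lucas theorem of D\'esarm\'enien: for $\zeta$ a primitive $d$-th root of unity and nonnegative integers $a = Ad + a'$, $b = Bd + b'$ with $0 \le a', b' < d$, one has $\begin{bmatrix}a\\b\end{bmatrix}_\zeta = \binom{A}{B}\begin{bmatrix}a'\\b'\end{bmatrix}_\zeta$, where the residual factor equals $\binom{a'}{b'}$ if $b' \le a'$ and vanishes otherwise. Since the expressions in the lemma are $q^2$-binomials evaluated at $q = \om_\rrr$, the first preparatory observation is that $q^2$ is a primitive $d$-th root of unity with $d := \rrr/\gcd(\rrr,2)$; this is exactly the source of the even/odd dichotomy on the right-hand sides, giving $d = \rrr/2$ when $\rrr$ is even and $d = \rrr$ when $\rrr$ is odd. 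The second preparatory observation exploits the hypothesis $\sigma \mid 2(mn-1)$: in both parities integrality of $d$ forces $d \mid (mn-1)$, so every appearance of $mn-1$ at the top of a $q$-binomial contributes residual remainder $0$ modulo $d$. This single divisibility fact drives all of the vanishing conditions in the lemma.

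With these preparations in hand, I would verify the four identities in turn. For~\eqref{eq:SI-B}, the congruence $(m+1)n - 1 \equiv n \pmod d$ makes the residual equal to $\begin{bmatrix}n \bmod d\\n \bmod d\end{bmatrix}_\zeta = 1$, leaving only the floor quotient, which is precisely the stated binomial. For~\eqref{eq:SJ-B}, the residual equals $\begin{bmatrix}0\\n \bmod d\end{bmatrix}_\zeta$, which vanishes unless $d \mid n$; combined with $d \mid (mn-1)$, this forces $d = 1$, i.e., $\rrr \in \{1,2\}$, and in both of those cases $q^2 = 1$, yielding $\binom{mn-1}{n}$. For~\eqref{eq:SK-B} I would split the product: the second $q$-binomial forces $n \equiv s \pmod d$ exactly as in~\eqref{eq:SJ-B}, and once this congruence holds the first factor's residual equals $\begin{bmatrix}n \bmod d\\ s \bmod d\end{bmatrix}_\zeta = 1$ automatically, after which the remaining floor quotients reassemble into the claimed product.

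The main technical obstacle is~\eqref{eq:SL-B}. I would decompose the $q$-multinomial into the iterated product $\prod_{k=1}^{n} \begin{bmatrix}b_k + b_{k+1} + \cdots + b_n\\ b_k\end{bmatrix}_{q^2}$ and apply $q$-Lucas to each factor as well as to the outer factor $\begin{bmatrix}mn-1\\ b_1 + \cdots + b_n\end{bmatrix}_{q^2}$. The outer factor forces $B := b_1 + \cdots + b_n \equiv 0 \pmod d$ as in~\eqref{eq:SJ-B}; then the first inner factor requires $b_1 \bmod d \le B \bmod d = 0$, hence $b_1 \equiv 0 \pmod d$; iterating propagates this to $b_k \equiv 0 \pmod d$ for every $k$, giving the advertised congruences. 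Under these conditions all residuals collapse to $1$, and the surviving floor quotients reassemble into the claimed ordinary multinomial times binomial. The remaining verification is purely arithmetic bookkeeping; this is why the full computation is presumably relegated to the appendix.
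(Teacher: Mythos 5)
Your argument is correct, and it reaches all four evaluations by a route that differs in packaging from the paper's proof in \Cref{app:unnec-eval}: there, each $q^2$-binomial is expanded as a quotient of factors $1-q^{2j}$ and one counts, via the elementary limit~\eqref{eq:SM-B}, how many factors vanish in numerator and denominator as $q\to\om_\rrr$, with a case analysis on parities and fractional parts; you instead invoke $q$-Lucas for $\zeta=q^2$, a primitive $d$-th root of unity with $d=\rrr/2$ or $d=\rrr$ according to the parity of $\rrr$, together with the key divisibility $d\mid(mn-1)$ (which indeed follows from $\rrr\mid 2(mn-1)$ in both parities). Since $q$-Lucas is itself proved by precisely this kind of cyclotomic bookkeeping, the two arguments are equivalent in substance, but yours buys brevity and uniformity: the congruence conditions and the floor-quotient binomials in \eqref{eq:SI-B}--\eqref{eq:SL-B} drop out mechanically, and your telescoping of the $q$-multinomial into the factors $\left[\begin{smallmatrix}b_k+\dots+b_n\\ b_k\end{smallmatrix}\right]_{q^2}$ handles~\eqref{eq:SL-B} cleanly, where the paper instead repeats the vanishing-factor count directly for the multinomial. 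What the paper's hands-on computation buys is self-containedness (no external theorem is cited) and a presentation parallel to the neighbouring proofs of \Cref{lem:2} and \Cref{lem:2-D}, which keeps the appendix uniform.

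One caveat: your statement of $q$-Lucas is not quite right. The residual factor $\left[\begin{smallmatrix}a'\\ b'\end{smallmatrix}\right]_\zeta$ is nonzero exactly when $b'\le a'$, but its value is the $q$-binomial evaluated at $\zeta$, not the ordinary binomial $\binom{a'}{b'}$; for instance $\left[\begin{smallmatrix}2\\ 1\end{smallmatrix}\right]_\zeta=1+\zeta\ne 2$ for $d=3$. This slip is harmless for your proof, because every residual you actually use has equal top and bottom (value $1$) or top $0$ (value $0$ or $1$), where the two readings agree --- but the theorem should be quoted with the residual left as a root-of-unity evaluation.
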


The proof of this lemma is found in \Cref{app:unnec-eval}.

Now we are ready to state, and prove, our cyclic sieving results
in type~$B$.

\begin{theorem} \label{thm:3-B}
Let $m,n,s,b_1,b_2,\dots,b_n$ be integers satisfying
the hypotheses in\break \Cref{lem:2-B}. 
Furthermore, 
let $C$ be the cyclic group of pseudo-rotations of the $2mn$-gon
consisting of the elements
$\{1,2,\dots,mn,\overline1,\overline2,\dots,\overline{mn}\}$
generated by~$\rotB$, viewed as a group of order~$2mn-2$. Then
the triple $(M,P,C)$ exhibits the cyclic sieving phenomenon
for the following choices of sets $M$ and polynomials $P$:

\begin{enumerate}
\item[\em(1)]
$M=\mNCBPlus$, and
$$P(q)=\left[\begin{matrix}
(m+1)n-1\\n\end{matrix}\right]_{q^2};$$
\item[\em(2)]
$M$ consists of the elements of
$\mNCBPlus$ all of whose blocks have size~$m$, and
$$P(q)=\left[\begin{matrix}
mn-1\\n\end{matrix}\right]_{q^2};$$
\item[\em(3)]
$M$ consists of all elements of
$\mNCBPlus$ which have rank $s$
{\em(}or, equivalently, their number of non-zero blocks is $2(n-s)${\em)},
and 
$$P(q)=
\bmatrix {n}\\{s}\endbmatrix_{q^2}
\bmatrix {mn-1}\\ {n-s}\endbmatrix_{q^2};
$$
\item[\em(4)]
$M$ consists of all elements of
$\mNCBPlus$ whose
number of non-zero blocks of size~$mi$ is~$2b_i$, $i=1,2,\dots,n$,
and 
$$P(q)=
\bmatrix {b_1+b_2+\dots+b_n}\\{b_1,b_2,\dots,b_n}\endbmatrix_{q^2}
\bmatrix {mn-1}\\ {b_1+b_2+\dots+b_n}\endbmatrix_{q^2}.
$$
\end{enumerate}
\end{theorem}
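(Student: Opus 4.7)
My plan is to follow the template of the proof of \Cref{thm:3} (the type~$A$ analogue) and reduce the cyclic sieving statement to two inputs already at hand: the enumeration of $2\rRr$-pseudo-rotationally invariant positive $m$-divisible type~$B$ non-crossing partitions, provided by \Cref{thm:2-B} together with \Cref{cor:4-B,cor:5-B}, and the evaluation of the relevant $q$-binomial polynomials at roots of unity, provided by \Cref{lem:2-B}. I would start by noting that each polynomial~$P(q)$ in parts~(1)--(4) is a product of $q^2$-binomial coefficients, and hence a polynomial in~$q$ with non-negative integer coefficients, so the setup of the cyclic sieving phenomenon is well-defined.

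The crux of the argument is the identification of the fixed-point set of a given power of $\rotB$. Fix a divisor $\sigma$ of $2(mn-1)$ and set $\rho = 2(mn-1)/\sigma$. Since $\rotB$ has true order $mn-1$ by \Cref{lem:N-2B}, the fixed points of $\rotB^{\sigma}$ on any set coincide with those of $\rotB^{\gcd(mn-1,\,\sigma)}$. A short computation gives $\gcd(mn-1,\sigma) = (mn-1)/\rRr$, where $\rRr = \rho/2$ if $\rho$ is even and $\rRr = \rho$ if $\rho$ is odd. (The even case is direct from $\sigma = (mn-1)/\rRr$; the odd case uses that $\rho \mid mn-1$ and $\gcd(\rho,2)=1$, so $\rotB^{2(mn-1)/\rho}$ and $\rotB^{(mn-1)/\rho}$ generate the same order-$\rho$ subgroup of $\langle\rotB\rangle$.) Consequently the fixed-point set of $\rotB^{\sigma}$ on~$M$ is exactly the set of $2\rRr$-pseudo-rotationally invariant elements of~$M$ in the sense of \Cref{sec:rotB}.

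With this reduction in place, the verification proceeds case by case. For Part~(1), \Cref{cor:5-B} (without restricting the zero block) gives the count $\binom{\lfloor((m+1)n-1)/\rRr\rfloor}{\lfloor n/\rRr\rfloor}$, which matches both alternatives of~\eqref{eq:SI-B} under the substitution $\rRr = \rho/2$ or $\rRr = \rho$. For Part~(3), \Cref{cor:4-B} contributes $\binom{\lfloor n/\rRr\rfloor}{(n-s)/\rRr}\binom{(mn-1)/\rRr}{(n-s)/\rRr}$ when $\rRr\mid(n-s)$, and the elementary identity $\lfloor n/\rRr\rfloor = \lfloor s/\rRr\rfloor + (n-s)/\rRr$ (valid exactly when $n\equiv s\pmod\rRr$) rewrites the first factor so as to match the two non-zero alternatives of~\eqref{eq:SK-B}. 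For Part~(4), \Cref{thm:2-B} directly yields the multinomial expression agreeing with~\eqref{eq:SL-B}. Finally, Part~(2) is the specialisation $b_1=n,\ b_2=\cdots=b_n=0$: since \Cref{lem:allB} forces every $2\rRr$-pseudo-rotationally invariant element to carry a non-trivial zero block, non-trivial fixed points exist only when $\rotB^\sigma$ acts as the identity, which by $\rRr\mid n$ and $\rRr\mid(mn-1)$ forces $\rRr=1$, i.e., $\rho\in\{1,2\}$, matching~\eqref{eq:SJ-B}.

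The main obstacle I anticipate lies in the careful bookkeeping of the parity of~$\rho$ and the arithmetic reconciling the abstract cyclic group order $2(mn-1)$ appearing in the statement with the true order $mn-1$ of~$\rotB$ from \Cref{lem:N-2B}. Once this unification is achieved, the remainder of the proof is a straightforward cross-check between the combinatorial counts of \Cref{sec:rotenumB} and the root-of-unity evaluations of \Cref{lem:2-B}.
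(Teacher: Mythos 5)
Your proposal is correct and takes essentially the same route as the paper's proof: both reduce the fixed-point counts of powers of~$\rotB$ to the enumerations of $2\rRr$-pseudo-rotationally invariant elements from \Cref{thm:1-B,thm:2-B,cor:2-B,cor:4-B,cor:5-B} (via \Cref{lem:allB,lem:N-2B}) and match them against the root-of-unity evaluations of \Cref{lem:2-B}, with your gcd computation being a compact form of the paper's argument around~\eqref{eq:rotB/2} that reconciles the abstract group order $2mn-2$ with the true order $mn-1$ of~$\rotB$. The remaining differences are purely organizational (you handle Part~(2) directly rather than as the $b_1=n$ case of Part~(4), and in Part~(4) the case $\rRr=1$ without a zero block formally needs \Cref{thm:1-B} rather than \Cref{thm:2-B}), and do not affect correctness.
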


\begin{proof}
The polynomials $P(q)$ in the assertion of the theorem are indeed
polynomials with non-negative coefficients since $q$-binomial
and $q$-multinomial coefficients have this property.

Now, given a positive integer $\rrr$ with $\rrr\mid (2mn-2)$, we must show  
$$
P(q)\big\vert_{q=\om_\rrr }=
\text{number of elements of $\mNCBPlus$ which are invariant under
  $\rotB^{(2mn-2)/\rrr}$},
$$
where $\om_\rrr=e^{2\pi i/\rrr}$.

\medskip
We begin with Item~(1).
If $\rrr$ is even, this follows by combining~\eqref{eq:multichains-pos-B}
with $\rRr=\rrr/2$
and the first alternative on the right-hand side of~\eqref{eq:SI-B}.
If $\rrr$ is odd, then we claim that an element of $\mNCBPlus$
that is invariant under the action of
$\rotB^{(2mn-2)/\rrr}$ is automatically also
invariant under $\rotB^{(2mn-2)/2\rrr}=\rotB^{(mn-1)/\rrr}$. Indeed,
let $\la$ be the multiplicative inverse of~$2$ modulo~$\rrr$. Then
there exists a positive integer $\ka$ such that $2\la=1+\rrr\ka$.
Thus, we have
\begin{equation} \label{eq:rotB/2} 
\la\cdot\frac {2mn-2} {\rrr}
=\frac {2\la} {\rrr}(mn-1)
=\frac {1} {\rrr}(mn-1)+\ka(mn-1).
\end{equation}
According to \Cref{lem:N-2B}. the order of~$\rotB$ is~$mn-1$, proving
our claim. In this case, the assertion~(1) follows 
by combining~\eqref{eq:multichains-pos-B} with $\rRr=\rrr$
and the second alternative on the right-hand side of~\eqref{eq:SI-B}.

\medskip
Item~(2) is the special case of Item~(4) where $b_1=n$
(and, hence, $b_2=b_3=\dots=b_n=0$), and therefore does not need
to be treated separately.

\medskip
We now turn to Item~(3). 
Again, let first $\rrr$ be even.
If $\rrr=2$, then the assertion follows immediately from
\Cref{lem:N-2B} and~\eqref{eq:multichains-si-pos-B-1}.
If $\rrr>2$, then,
according to \Cref{lem:allB}, any
positive $m$-divisible non-crossing partition which is
$\rrr $-pseudo-rotationally invariant arises from the
Construction in \Cref{sec:rotB} and subsequent applications of the
pseudo-rotation~$\rotB$. As a consequence,
the number of non-zero blocks must be divisible by~$\rrr $.
Hence, for any such non-crossing partition~$\pi$, we have
$$
\#(\text{non-zero blocks of $\pi$})=\rrr B=2(n-s),
$$
where $B$ is some non-negative integer. In particular, we must
have $n\equiv s$~(mod~$\rrr /2$).
The number of these non-crossing partitions is given by
Formula~\eqref{eq:multichains-si-pos-B} with $\rRr=\rrr/2$ 
and $b$ replaced by $B=2(n-s)/\rrr $.
It is not difficult to see that this agrees with
the expression in the first alternative on the right-hand side of~\eqref{eq:SK-B}.

On the other hand, if $\rrr$ is odd, then we reuse the earlier argument
that any element of $\mNCBPlus$
that is invariant under the action of~$\rotB^{(2mn-2)/\rrr}$ is also
invariant under $\rotB^{(2mn-2)/2\rrr}=\rotB^{(mn-1)/\rrr}$. 
Hence, \Cref{lem:N-2B}, Equation~\eqref{eq:multichains-si-pos-B-1},
\Cref{lem:allB}, and
Equation~\eqref{eq:multichains-si-pos-B} with $\rRr=\rrr$ and $b$ replaced by
$(n-s)/\rrr $ together confirm that the $\rrr$-pseudo-rotationally
invariant elements of $\mNCBPlus$ that are relevant for the
current case are given by the second alternative on the right-hand
side of~\eqref{eq:SK-B}.

In all other cases, there are no positive $m$-divisible non-crossing
partitions which are $\rrr $-pseudo-rotationally invariant.

\medskip
Finally, we address Item~(4). 
Again, we distinguish two cases depending on whether $\rrr $ is even
or odd.

Let first $\rrr$ be even. 
Repeating the arguments from Item~(3), 
we see that all numbers~$b_k$ are divisible by~$\rrr/2$.
If $\rrr=2$,
the number of relevant non-crossing partitions is then given
by \Cref{thm:1-B}. If $\rrr>2$, the number of relevant non-crossing
partitions is given by \Cref{thm:2-B} with the appropriate replacements.
The resulting expression agrees with the expression in the first
alternative on the right-hand side of~\eqref{eq:SL-B}.

If $\rrr$ is odd, then, by repeating the earlier argument again
that any element of $\mNCBPlus$
that is invariant under the action of
$\rotB^{(2mn-2)/\rrr}$ is also
invariant under $\rotB^{(2mn-2)/2\rrr}=\rotB^{(mn-1)/\rrr}$, we infer
that all numbers~$b_k$ are divisible by~$\rrr$. 
Also here, the number of relevant non-crossing
partitions is given by \Cref{thm:2-B} with the appropriate replacements.
The resulting expression agrees with the expression in the second
alternative on the right-hand side of~\eqref{eq:SL-B}.

In all other cases, there are no positive $m$-divisible non-crossing
partitions which are $\rrr $-pseudo-rotationally invariant.

\medskip
This completes the proof of the theorem.
\end{proof}

\begin{corollary} \label{cor:CS-B}
  The conclusion of \Cref{thm:CS} holds for type $B_n$.
\end{corollary}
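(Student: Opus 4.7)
The plan is to mirror exactly the strategy used for type $A$ in \Cref{cor:CS-A}, deducing \Cref{thm:CS} from the type-$B$ refined cyclic sieving result \Cref{thm:3-B}. The first cyclic sieving phenomenon, concerning
$\big(\mNCPlus[B_n][m+1],\ \mCatplus[m+1](B_n;q),\ C\big)$,
should follow immediately from \Cref{thm:3-B}(1) applied with $m$ replaced by $m+1$. The key routine check is the polynomial identity
\[
  \bmatrix (m+2)n-1\\ n\endbmatrix_{q^2}
  = \prod_{i=1}^{n}\frac{[2(m+1)n+2i-2]_q}{[2i]_q}
  = \mCatplus[m+1](B_n;q),
\]
which uses $h=2n$ and degrees $d_i=2i$ for $B_n$, together with the standard conversion $[a]_{q^2}=[2a]_q/[2]_q$. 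The cyclic group $C$ of order $(m+1)h-2=2(m+1)n-2$ from \Cref{thm:CS} matches the group of pseudo-rotations in \Cref{thm:3-B}.

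For the second cyclic sieving phenomenon, concerning
$\big(\mNCPlus[B_n][m],\ \mCatplus(B_n;q),\ \widetilde C\big)$, the plan is to transport it inside $\mNCPlus[B_n][m+1]$ via the embedding
\[
  \ph:\mNCPlus[B_n][m]\hookrightarrow\mNCPlus[B_n][m+1],\quad
  (w_0,\ldots,w_m)\mapsto(\one,w_0,\ldots,w_m),
\]
from \Cref{sec:poskrewmap}, which satisfies $\ph\circ\Krewplus^{(m)}=\Krewplustilde^{(m+1)}\circ\ph$. Thus $\widetilde C$, generated by $\Krewplus^{(m)}$, acts on the image of $\ph$ as a subgroup of $C$, and invariance under either generator corresponds. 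The image of $\ph$ consists precisely of those $(m+1)$-divisible non-crossing partitions whose first component is the identity; by \Cref{rem:id-B}, under $\Nam{B_n}{m+1}$ these correspond exactly to the positive $(m+1)$-divisible non-crossing set partitions \emph{all of whose blocks have size $m+1$}. Hence the desired cyclic sieving will follow from \Cref{thm:3-B}(2) applied with $m$ replaced by $m+1$, provided we verify the polynomial identity
\[
  \bmatrix (m+1)n-1\\ n\endbmatrix_{q^2}
  = \prod_{i=1}^{n}\frac{[2mn+2i-2]_q}{[2i]_q}
  = \mCatplus(B_n;q).
\]

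The argument is essentially bookkeeping: one just has to carefully match up the cyclic groups and the shifted polynomial expressions, which should be a routine verification. No significant obstacle is anticipated, since \Cref{thm:3-B} already does all the combinatorial and enumerative work; the only mild subtlety is to confirm that \Cref{rem:id-B} still applies after shifting $m\to m+1$, and that the generator $\Krewplus^{(m)}$ on $\mNCPlus[B_n][m]$ has the same order $2(m+1)n-2$ as $\Krewplustilde^{(m+1)}$ on $\mNCPlus[B_n][m+1]$ when both are viewed as the groups of order prescribed in \Cref{thm:CS}, which follows from the compatibility with $\ph$.
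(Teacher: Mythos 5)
Your proposal is correct and follows essentially the same route as the paper: item (1) of \Cref{thm:3-B} with $m$ replaced by $m+1$ for the first triple, and for the second triple the embedding $\ph$ (compatible with $\Krewplus^{(m)}$ and $\Krewplustilde^{(m+1)}$) together with \Cref{rem:id-B} and item (2) of \Cref{thm:3-B} with $m$ replaced by $m+1$. The explicit verification of the $q$-polynomial identities via $h=2n$ and $d_i=2i$ is a harmless extra check that the paper leaves implicit.
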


\begin{proof}
The cyclic sieving phenomenon for
  $\Big(\mNCPlus[B_n][m+1],\ \mCatplus[m+1](B_n;q),\ C\Big)$
follows directly from \Cref{thm:3-B}(1) with~$m$ replaced by~$m+1$.

The image of the
embedding of $\mNCPlus[B_n][m]$
in $\mNCPlus[B_n][m+1]$ as described in
\Cref{prop:PositiveKrewAlt} is given by all tuples
$(w_0,w_1,\dots,w_{m+1})$ with $w_0=\ep$. By
\Cref{rem:id-B}, these tuples correspond to non-crossing set partitions
in $\mNCB[n][m+1]$ all of whose non-zero blocks have size~$m+1$. 
The cyclic sieving phenomenon for
$\Big(\mNCPlus[B_n][m],\ \mCatplus(B_n;q),\ \widetilde C\Big)$
thus follows from \Cref{thm:3-B}(2) with $m$ replaced by~$m+1$.
\end{proof}

%%%%%%%%%%%%%%%%%%%%%%%%%%%%%%%%%%%%%%%%%%%%%%%%%%%%%%%%%%%%%%%%%%%%%%%%%%%%%%%%%     
\section{Positive non-crossing set partitions in 
type~\texorpdfstring{$D$}{D}}
\label{sec:typeD}
%%%%%%%%%%%%%%%%%%%%%%%%%%%%%%%%%%%%%%%%%%%%%%%%%%%%%%%%%%%%%%%%%%%%%%%%%%%%%%%%%     

In this section, we study positive $m$-divisible non-crossing
partitions and the positive Kreweras maps in the situation of the
even hyperoctahedral group $D_n$. Recall that $D_n$ can be combinatorially
realised as the group of permutations~$\pi$ of
$\{1,2,\dots,n,-1,-2,\dots,-n\}$ satisfying
$\pi(-i)=-\pi(i)$ for $i=1,2,\dots,n$ which maps an even number
of the numbers $\{1,2,\dots,n\}$ to a negative number.
Again, we shall most often write $\overline i$ instead of~$-i$.
Moreover, the type~$B$ cycle notation explained
in \Cref{sec:typeB} is also used here.

The Coxeter system corresponding to the group $D_n$ is
\begin{equation} \label{eq:WSD}
(W,\reflS) = \big( D_{n},\{ s_i = ((i,i+1)) \mid 1 \leq i \le n-1\}\cup
\{s_{n}=((n-1,\overline n))\} \big)
\end{equation}
together with the Coxeter element $c = s_1 \cdots s_{n}=[1,2,\ldots,n-1][n]$.

\medskip

The structure of this section is again the same as the one of \Cref{sec:typeA}.
Namely, in \Cref{sec:realD}
we first make the positivity condition for type~$D$
$m$-divisible non-crossing partitions under the choice~\eqref{eq:WSD} 
of Coxeter system explicit; see \Cref{lem:wmn-1>0}. We then use
this to describe \emph{positive} $m$-divisible non-crossing partitions
within the combinatorial
model of type~$D$ $m$-divisible non-crossing partitions on an annulus
from~\cite{KrMuAB} (with the correction reported in~\cite{JKimAA}); see \Cref{prop:1D}.

In \Cref{sec:mapD}, we show that, under the translation
described in \cite[Sec.~7]{KrMuAB}, for $m\ge2$ 
the positive Kreweras map acts as a pseudo-rotation on an annulus; see
\Cref{prop:2D}.
This pseudo-rotation is denoted by~$\rotD$.

In \Cref{sec:combD}, we then show how this realisation naturally gives rise to an extension which also makes
sense for $m=1$; see \Cref{def:phiallgD}.
Again it turns out
that, under the translation from~\cite{KrMuAB}, for $m=1$
this extension corresponds exactly to the map~$\Krewplustilde^{(1)}$
from \Cref{def:K-alt}; see \Cref{prop:2-m=1D}.
The order of the (extended) map~$\rotD$ is of course 
given by \Cref{thm:order} for type~$D$.
However, for enumerative purposes,
we need more refined order properties of~$\rotD$ that also take into
account the combinatorial structure of the type~$D$ positive $m$-divisible
non-crossing partitions.
These are the subject of \Cref{lem:N-2D}.
They require {\it combinatorial\/} proofs independent of
\Cref{thm:order}. These proofs are found in \Cref{app:order-D}.

\Cref{sec:enumD} is devoted to the enumeration of type~$D$ positive
$m$-divisible non-crossing partitions. For the sake of simplicity,
we content ourselves with the enumeration of all such non-crossing
partitions, and those all of whose block sizes are equal to~$m$;
see \Cref{cor:2-D}.

\Cref{sec:rotD} is then devoted to the characterisation of
type~$D$ positive $m$-divisible non-crossing partitions that are
invariant under powers of the pseudo-rotation~$\rotD$; see \Cref{lem:allD}.
Here, the invariant non-crossing partitions arise from three different
constructions. 

In \Cref{sec:rotenumD}, we then use this characterisation and, for
each of the aforementioned three constructions, 
provide a formula for the number of corresponding 
type~$D$ positive $m$-divisible non-crossing partitions that are
invariant under powers of the pseudo-rotation~$\rotD$;
see \Cref{thm:enumD-1,thm:enumD-2,thm:enumD-3}.

In the final subsection, \Cref{sec:sievD}, we establish two
cyclic sieving phenomena which prove \Cref{thm:CS} in type~$D$;
see \Cref{thm:3-D}.

\subsection{Combinatorial realisation of the positive non-crossing partitions}
\label{sec:realD}

With the choice \eqref{eq:WSD} of Coxeter system, 
we have the following simple combinatorial description of 
positive $m$-divisible non-crossing partitions.

\begin{proposition} \label{lem:wmn-1>0}
Let $m$ and $n$ be positive integers with  $m\ge2$. The tuple
  $(w_0,w_1,\dots,\break w_m) \in \mNC[D_{n}]$ is positive if and only if
  $0<w_m(n-1)\le n-1$.
\end{proposition}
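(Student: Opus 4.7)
Plan. Following the proofs of \Cref{prop:posA} (type~$A$) and \Cref{lem:wmn>0} (type~$B$), I would apply \Cref{def:NCpos} directly: $(w_0,\dots,w_m)$ is positive if and only if $u := cw_m^{-1}=w_0w_1\cdots w_{m-1}$ has full support in the simple generators \eqref{eq:WSD}. The cycles of $u$ define a type-$D$ non-crossing partition in $\NC[D_n][c]$, and failure of full support means that $u$ lies in a maximal proper standard parabolic subgroup of $D_n$.

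Writing $c = [1,2,\dots,n-1][n]$ out as a signed cycle, one reads off $c(\overline{n-1})=1$ and hence $c^{-1}(1)=\overline{n-1}$. Substitution then yields
\[
u^{-1}(1) \;=\; w_m\bigl(c^{-1}(1)\bigr) \;=\; w_m(\overline{n-1}) \;=\; -w_m(n-1).
\]
The claim I would establish is that $u \in \NC[D_n][c]$ has full support if and only if $u^{-1}(1) \in \{\overline 1,\overline 2,\dots,\overline{n-1}\}$. Granting this, the desired condition $0<w_m(n-1)\le n-1$ is the translation of $u^{-1}(1)\in\{\overline 1,\dots,\overline{n-1}\}$ under the identity above, and the proposition follows.

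To prove the claim I would analyse the maximal standard parabolic subgroups of $D_n$. Unlike in type $B$, there are two ``fork'' parabolics: $\langle s_1,\dots,s_{n-1}\rangle\cong\mathfrak{S}_n$ acts as unsigned permutations of $\{1,\dots,n\}$, and $\langle s_1,\dots,s_{n-2},s_n\rangle\cong\mathfrak{S}_n$ permutes $\{1,\dots,n-1,\overline n\}$. For $u\in\NC[D_n][c]$ I would show that membership in the first parabolic is equivalent to $u^{-1}(1)\in\{1,\dots,n\}$, and in the second to $u^{-1}(1)\in\{1,\dots,n-1,\overline n\}$; the remaining maximal parabolics $\langle s_1,\dots,\widehat{s_i},\dots,s_n\rangle$ for $1\le i\le n-2$ preserve $\{1,\dots,i\}\subset\{1,\dots,n\}$ and so contribute no new conditions on $u^{-1}(1)$. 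The complement of $\{1,\dots,n\}\cup\{1,\dots,n-1,\overline n\}=\{1,\dots,n,\overline n\}$ is precisely $\{\overline 1,\dots,\overline{n-1}\}$, giving the asserted equivalence.

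The main obstacle is the converse of each parabolic equivalence: showing that if $u^{-1}(1)$ lies in the preserved set of a fork parabolic, then $u$ itself lies in that parabolic. The forward direction is immediate, but the converse requires that the non-crossing cycle structure of $u$ propagate a constraint on the single element $u^{-1}(1)$ through all of $u$'s cycles. I would justify this either via the annular realisation of $\NC[D_n][c]$ from \cite{KrMuAB}---where non-crossingness forces such propagation explicitly---or via a direct cycle-wise analogue of the reasoning implicit in the type $B$ proof of \Cref{lem:wmn>0}.
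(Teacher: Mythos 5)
Your reduction is the same as the paper's: by \Cref{def:NCpos}, positivity amounts to full support of $u=cw_m^{-1}=w_0\cdots w_{m-1}$, and since $c^{-1}(1)=\overline{n-1}$ one has $u^{-1}(1)=w_m(\overline{n-1})=\overline{w_m(n-1)}$, so everything hinges on the criterion that $u\in\NC[D_n]$ has full support if and only if $u^{-1}(1)\in\{\overline 1,\dots,\overline{n-1}\}$ --- exactly the step the paper performs (phrased there as ``the preimage of $1$ under $cw_m^{-1}$ is a positive number or $\overline n$''). The gap lies in how you propose to prove this criterion: the two ``fork'' equivalences you set up are false. Take $u=s_n=((n-1,\overline n))$ for $n\ge3$: it is a reflection, hence lies in $\NC[D_n]$ (every reflection is $\leqR c$), it fixes $1$, so $u^{-1}(1)=1\in\{1,\dots,n\}$, yet $u\notin\langle s_1,\dots,s_{n-1}\rangle$. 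Symmetrically, $u=s_{n-1}=((n-1,n))$ has $u^{-1}(1)=1\in\{1,\dots,n-1,\overline n\}$ but $u\notin\langle s_1,\dots,s_{n-2},s_n\rangle$. Worse, for $n\ge4$ the reflection $u=((2,\overline 3))$ has support $\{s_2,\dots,s_n\}$, so it lies in \emph{neither} fork parabolic although $u^{-1}(1)=1$; hence even the weaker statement ``$u^{-1}(1)\in\{1,\dots,n,\overline n\}$ implies $u\in P_1\cup P_2$'' fails, and no propagation argument through the annular model can establish the lemma as you stated it, because it is not true.

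What the argument actually needs is the weaker claim: if $u\in\NC[D_n]$ and $u^{-1}(1)\in\{1,\dots,n,\overline n\}$, then $u$ lies in \emph{some} proper standard parabolic (equivalently, some simple generator is missing from its support) --- which one depends on all of $u$, not on $u^{-1}(1)$ alone, as the examples show (they live in $\langle s_2,\dots,s_n\rangle$). Your forward direction --- membership in any maximal parabolic forces $u^{-1}(1)\in\{1,\dots,n,\overline n\}$, with the non-fork parabolics contributing nothing new --- is correct and gives one implication of the criterion; but the dismissal of the non-fork parabolics is valid only for that direction, and for the converse they are precisely where the troublesome elements sit. For that converse your instinct to use the non-crossing structure (the annular realisation of \cite{KrMuAB}, in the spirit of \Cref{prop:1D}, or a cycle-wise argument as in type $A$ behind \Cref{prop:posA}, where the maximal element of the block of $1$ exhibits a missing generator) is the right one, and it is essentially all the paper's own terse proof appeals to; but it must be aimed at the corrected statement above. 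As written, your proof therefore contains a genuine gap.
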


\begin{proof}
According to \Cref{def:NCpos}, the tuple $(w_0,w_1,\dots,w_m)$ is
positive if and only if $cw_m^{-1}=w_0w_1\cdots w_{m-1}$ has full support in our generators $\{s_1,\dots,s_n\}$ given in~\eqref{eq:WSD}. The
cycles in the disjoint cycle decomposition of~$cw_m^{-1}$ define
a non-crossing partition. Thus, $cw_m^{-1}$ will not have full support if and only if the preimage of~$1$ under
$cw_m^{-1}$ is a positive number or $\overline n$.
In other words, there is either an $l>0$
such that $(cw_m^{-1})(l)=1$, or $(cw_m^{-1})(\overline n)=1$.
By acting on the left on both sides
of this relation by the inverse of the
Coxeter element $c=[1,2,\dots,n-1][n]$, we see that this is equivalent
to either $w_m^{-1}(l)=\overline {n-1}$, or $w_m^{-1}(\overline n)=\overline {n-1}$.
Again an equivalent statement is that we have either $w_m(n-1)=\overline 
l$, for some $l>0$,
or $w_m(n-1)=n$. We are interested in the contrapositive:
$cw_m^{-1}$ has full support if and only if $0<w_m(n-1)\le n-1$.
\end{proof}

Next we recall the bijection from \cite[Sec.~7]{KrMuAB} between $\mNC[D_{n}]$ and
$m$-divisible non-crossing partitions on an annulus, with
the numbers
$\{1,2,\dots,mn-m,\overline1,\overline2,\dots,\break\overline{mn-m}\}$
placed clockwise
around the outer circle, the numbers
$\{mn-m+1,\break mn-m+2,\dots,mn,
\overline{mn-m+1},\overline{mn-m+2},\dots,\overline{mn}\}$
placed {\it counter-clockwise} around the inner circle,
that are invariant under substitution of~$i$ by~$-i$ for all~$i$, 
and which satisfy the following additional
restrictions:

\begin{enumerate} 
\item [(D1)]
Successive elements in a block must be
successive numbers when taken modulo~$m$. 
\item [(D2)]
A zero block (see right below for the definition)
must contain \emph{all\/} elements of the inner
circle and at least two elements of the outer circle.
\item [(D3)]
In the case that there is no block that contains elements
from the outer \emph{and\/} the inner circle, the blocks
on the inner circle are completely determined by the blocks
on the outer circle in the following way: consider a block, $B$ say,
on the outer circle that is ``visible'' from the inner circle.
(Here, ``visible'' means that it is possible to travel from
a point of the inner circle to an element of that block
without crossing any block connection.) Let $b$ be the ``right-most''
element in $B$, which, by definition, is determined by considering
the elements of $B\cup(-B)$ in clockwise order and, among these,
choosing the element of~$B$ that is followed by an element
of~$-B$. There is a unique $a\equiv b$~(mod~$m$) between
$mn-m+1$ and $mn$. Then one block on the inner circle
consists of the $m$ consecutive elements in counter-clockwise
order on the inner circle ending in $a$, and the other
block consists of the negatives of these elements.
\end{enumerate}

(Restriction (D3) was missing
in~\cite{KrMuAB}, but is reported in \cite[Sec.~7]{JKimAA}.)
We denote these latter $m$-divisible non-crossing partitions by
$\mNCD$.

Again, following~\cite{ReivAG}, we agree here to call a block
that is itself invariant under substitution of~$i$ by~$-i$, for
all~$i$, a \defn{zero block}.
Furthermore, a block is called
\defn{bridging} if it contains elements of the outer \emph{and\/} the inner circle.

\medskip
Given an element $(w_0,w_1,\dots,w_m)\in \mNC[D_{n}]$,
the bijection, $\Nam{D_n}m$ say,
from \cite[Sec.~7]{KrMuAB}
works essentially in the same way as in type~$B$:
namely
$(w_0,w_1,\dots,w_m)\in \mNC[D_{n}]$ is mapped to
\begin{multline} \label{eq:Nd}
\Nam{D_n}m(w_0,w_1,\dots,w_m)=
[1,2,\dots,m(n-1)]\,[mn-m+1,\dots,mn-1,mn]\\
\circ(\bar\ta_{m,1}(w_1))^{-1}\,(\bar\ta_{m,2}(w_2))^{-1}\,\cdots\,
(\bar\ta_{m,m}(w_m))^{-1},
\end{multline}
where $\bar\ta_{m,i}$ is defined as in \Cref{sec:typeB}
for type~$B$.
Again, the cycles in the disjoint cycle decomposition correspond to
the blocks in the non-crossing partition in
$\mNCD$.
We refer the reader to \cite[Sec.~7]{KrMuAB} for the details.
For example, let $n=6$, $m=3$, $w_0=((2,\overline 4))$,
$w_1=((2,\overline 6))\,((4,5))$,
$w_2=((1,\overline 5))\,((2,3))$, and
$w_3=((3,6))$. Then $(w_0;w_1,w_2,w_3)$ is mapped to
\begin{align} \notag
\Nam{D_{6}}3(w_0;&w_1,w_2,w_3)\\
\notag
&=[1,2,\dots,15]\,[16,17,18]
((4,\overline{16}))\,((10,13))\,((2,\overline{14}))\,
((5,8))\,((9,18))\\
&=((1,2,\overline{15}))\,((3,4,\overline{17},\overline{18},\overline{10},
\overline{14}))\,((5,9,\overline{16}))\,((6,7,8))\,((11,12,13)).
\label{eq:NCD}
\end{align}

Similarly as in types $A_{n-1}$ and $B_n$, the cycle
structure of the first
component of an $m$-divisible non-crossing partition
determines the block structure of its image under the above
bijection~$\Nam{D_n}m$.

\begin{proposition} \label{prop:block-D}
Let $(w_0,w_1,\dots,w_m)\in\mNC[D_n]$. The non-crossing partition
$\pi=\Nam{D_n}m(w_0,w_1,\dots,w_m)\in\mNCD$ has a zero block consisting
of $2km+2m$ elements if and only if $w_0$ contains 
$[i_1,i_2,\dots,i_k][n]$ for some $i_1,i_2,\dots,i_k$
in its disjoint cycle decomposition.
Furthermore, $\pi$ has $2b_k$ non-zero blocks of
size~$mk$ if and only if $w_0$ has $b_k$ cycles of the form
$((i_1,i_2,\dots,i_k))$
in its disjoint cycle decomposition.\footnote{The reader must recall
that a ``cycle'' of the form $((i_1,i_2,\dots,i_k))$ consists actually
of {\em two} disjoint cycles of length~$k$, while
a cycle of the form $[i_1,i_2,\dots,i_k]$ is indeed a single cycle,
of length~$2k$.}
\end{proposition}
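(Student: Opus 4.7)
The plan is to adapt the strategy used by Armstrong \cite[Thm.~4.3.13]{Arm2006} for type~$A$ (cited here as \Cref{prop:block}) and its type~$B$ analogue \Cref{prop:block-B}, working with the annular realisation of \cite{KrMuAB}. The core identity to exploit is $w_0=c\,(w_1\cdots w_m)^{-1}=c\,w_m^{-1}\cdots w_1^{-1}$ in $D_n$. Via the ``inflation'' maps $\bar\ta_{m,i}$, this identity lifts to a statement in $D_{mn}$: the permutation $\Nam{D_n}m(w_0,\dots,w_m)=C\cdot W_1\cdots W_m$ (where $C=[1,\dots,m(n-1)][mn{-}m{+}1,\dots,mn]$ and $W_i=\bar\ta_{m,i}(w_i)^{-1}$) has the same cycle structure as~$w_0$, except that each element $i\in\{\pm1,\dots,\pm n\}$ has been ``thickened'' into a block of size~$m$ indexed by the residue class of~$i$ modulo~$n$. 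The goal is to make this correspondence precise and show that it respects the two types of cycles in the $D_n$-cycle notation.

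I would first verify the two boundary cases. The case $w_0=\one$ (so $w_0$ consists of $n$ trivial cycles $((i))$) gives $\Nam{D_n}m(\one,w_1,\dots,w_m)\in\mNCD$ with all blocks of size exactly $m$, which is the analogue of \Cref{rem:id} and \Cref{rem:id-B} and follows directly from the construction. At the other extreme, $w_0=c$ (so $w_m=\cdots=w_1=\one$ and $w_0$ is a single cycle of the form $[1,\dots,n-1][n]$, i.e.\ $k=n-1$) produces $\Nam{D_n}m(c,\one,\dots,\one)=C$, which as a partition on the annulus consists of a single zero block of size~$2mn=2(n-1)m+2m$, matching the claim.

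For the general case, I would argue inductively on the reflection length of~$w_0$ (equivalently, on~$n$ minus the number of $D_n$-cycles of $w_0$), peeling off one cycle at a time by multiplying $w_0$ on the right by a reflection $t\le_\reflR w_0$. On the combinatorial side, multiplying~$w_0$ by such a reflection refines one cycle into two, and the effect on $\Nam{D_n}m(w_0,\dots,w_m)$ is to split the corresponding block (pair of blocks, for paired cycles $((i_1,\dots,i_k))$) into two in the non-crossing partition; here one uses that the product $W_1\cdots W_m$ is a reduced $\reflR$-factorisation in $D_{mn}$ precisely because $(w_0,\dots,w_m)\in\mNC[D_n]$. A cycle $((i_1,\dots,i_k))$ of $w_0$, which does not involve the factor $[n]$ of the Coxeter element, lifts to a pair of non-zero cycles of length~$mk$ in $\Nam{D_n}m(w_0,\dots,w_m)$, that is, to $2$ non-zero blocks of size~$mk$; a cycle $[i_1,\dots,i_k][n]$, which does involve the factor $[n]$, absorbs the inner-circle block $[mn{-}m{+}1,\dots,mn]$ of $C$ and hence yields a single zero block of size $2mk+2m$.

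The main obstacle will be verifying that the resulting non-crossing partition on the annulus genuinely satisfies the conditions (D1)--(D3) defining $\mNCD$, in particular the non-obvious condition (D3) that was missing from the original formulation in \cite{KrMuAB} and only later supplied in \cite{JKimAA}. Concretely, when $w_0$ contains no cycle involving $n$ (so no bridging block is produced), one must check that the forced structure of the $m$ elements on the inner circle belonging to each inner-circle block is precisely the one prescribed by (D3). This requires tracking, for each block~$B$ visible from the inner circle, which residue class modulo~$m$ its ``right-most'' element lies in, and verifying that the product $C\cdot W_1\cdots W_m$ places the inner-circle cycles in exactly the correct counter-clockwise position. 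This bookkeeping is technical but essentially local, so I would isolate it as a lemma and combine it with the inductive step to complete the proof.
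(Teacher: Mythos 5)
The paper itself offers no proof of \Cref{prop:block-D}; as with \Cref{prop:block} and \Cref{prop:block-B}, it is taken as a consequence of the construction of $\Nam{D_n}m$ in \cite{KrMuAB} (with the correction in \cite{JKimAA}), following Armstrong's type-$A$ argument, so your attempt must stand on its own. Its weak point is that the inductive step carries the entire content of the statement but is only asserted. If you shorten $w_0$ by a reflection you must simultaneously lengthen one of $w_1,\dots,w_m$ to stay inside $\mNC[D_n]$, and the corresponding change to $C\cdot W_1\cdots W_m$ is multiplication by an inflated reflection sitting in the \emph{middle} of the product, not at the end; that this merges or splits exactly the expected (pair of) blocks, with sizes $mk$, respectively $2mk+2m$ when the affected cycle of $w_0$ is the balanced one $[i_1,\dots,i_k][n]$, is precisely the proposition in incremental form. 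Without a lemma identifying which blocks are affected and why the inner circle is absorbed exactly in the balanced case, the induction assumes what it is meant to prove; the direct route (tracking, as Armstrong does, how the elements in a fixed residue class modulo~$m$ are permuted by the product, so that $\pi$ restricted to them reproduces $w_0$) avoids this.

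Your designated ``main obstacle'' is also misplaced and rests on a false implication. Checking conditions (D1)--(D3) belongs to the statement that $\Nam{D_n}m$ is a well-defined bijection onto $\mNCD$, which the paper imports from \cite{KrMuAB,JKimAA}; it is not needed for the block-structure claim. Moreover, the parenthetical assertion that $w_0$ having no cycle involving~$n$ forces the image to have no bridging block is wrong: in the paper's own example \eqref{eq:NCD} one has $w_0=((2,\overline{4}))$ in $D_6$ with $m=3$, whose cycle through $n=6$ is trivial, yet the image contains the bridging blocks $((5,9,\overline{16}))$ and $((3,4,\overline{17},\overline{18},\overline{10},\overline{14}))$. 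Whether a bridging block occurs depends on the whole tuple (essentially on~$w_m$), not on~$w_0$ alone; only the presence and size of a \emph{zero} block is controlled by~$w_0$, which is exactly what the proposition asserts.
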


In the earlier example, we have $w_0=((2,\overline4))=((1))((3))((5))((6))((2,\overline4))$. Indeed, the image of
$(w_0,w_1,w_2,w_3)$ in the example has no zero block,
two non-zero blocks of size $3\cdot 2=6$
and eight non-zero blocks of size $3\cdot 1=3$.

\begin{remark} \label{rem:id-D}
A simple consequence of \Cref{prop:block-D} is that the image of
the set of all non-crossing partitions in $\mNC[D_n]$ of the form
$(\ep,w_1,\dots,w_m)$ under the map $\Nam{D_n}m$ is
the set of all non-crossing partitions in $\mNCD$ 
in which {\em all} blocks have size~$m$.
\end{remark}

\begin{theorem} \label{prop:1D}
Let $m$ and $n$ be positive integers with $m\ge2$.
The image under $\Nam{D_n}m$ of the positive $m$-divisible
non-crossing partitions in $\mNC[D_{n}]$ are those $m$-divisible
non-crossing partitions in $\mNCD$ where
the predecessor of\/ $1$ in its block is a negative number
on the outer circle of the annulus.
\end{theorem}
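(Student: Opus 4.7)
The plan is to mimic the pattern of the proofs of \Cref{prop:1} (type $A$) and \Cref{prop:1B} (type $B$): translate the algebraic characterisation of positivity from \Cref{lem:wmn-1>0} via a direct computation using the explicit formula~\eqref{eq:Nd} for $\Nam{D_n}m$. Set $\pi = \Nam{D_n}m(w_0,w_1,\dots,w_m)$. Since the block containing~$1$ corresponds to the cycle of~$\pi$ containing~$1$, the predecessor of~$1$ in its block is precisely $\pi^{-1}(1)$. Hence what needs to be shown is
\[
  \pi^{-1}(1) \in \{\overline 1,\overline 2,\dots,\overline{mn-m}\}
  \iff
  0 < w_m(n-1) \le n-1\,.
\]

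Next I would compute $\pi^{-1}(1)$ explicitly. Writing $c = [1,2,\dots,m(n-1)]\,[mn-m+1,\dots,mn]$, we have from~\eqref{eq:Nd}
\[
  \pi^{-1}(1) = \bar\ta_{m,m}(w_m)\,\bar\ta_{m,m-1}(w_{m-1})\,\cdots\,\bar\ta_{m,1}(w_1)\,c^{-1}(1)\,.
\]
The cycle $[1,2,\dots,m(n-1)]$ sends $\overline{m(n-1)}$ to~$1$, so $c^{-1}(1) = \overline{m(n-1)}$. Since $\overline{m(n-1)}$ is a multiple of~$m$, the maps $\bar\ta_{m,i}(w_i)$ for $1 \le i \le m-1$ fix it (they move only elements $\equiv i \pmod m$). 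Hence
\[
  \pi^{-1}(1) = \bar\ta_{m,m}(w_m)\bigl(\overline{m(n-1)}\bigr)
  = \overline{m\cdot w_m(n-1)},
\]
by the defining formula of $\bar\ta_{m,m}$ together with the type-$D$ rule $w_m(\overline k) = \overline{w_m(k)}$.

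With this identity in hand, both implications follow easily. For the forward direction, if $(w_0,\ldots,w_m)$ is positive then \Cref{lem:wmn-1>0} gives $w_m(n-1) = a$ with $1 \le a \le n-1$, whence $\pi^{-1}(1) = \overline{ma}$ with $m \le ma \le m(n-1)$, placing it among the negatives on the outer circle. Conversely, if $\pi^{-1}(1)$ is a negative number on the outer circle, then since $\pi^{-1}(1) = \overline{m\cdot w_m(n-1)}$ is already a (signed) multiple of~$m$, the condition forces $m\cdot w_m(n-1)$ to lie in $\{m,2m,\dots,m(n-1)\}$, i.e.\ $0 < w_m(n-1) \le n-1$, and \Cref{lem:wmn-1>0} yields positivity.

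The argument is essentially a bookkeeping exercise, so I do not expect any serious obstacle; the only care needed is to track signs correctly (remembering that the image $\overline{m\cdot w_m(n-1)}$ is interpreted as a signed number, so the sign flips if $w_m(n-1) < 0$) and to verify that indeed the predecessor inside the cycle of~$\pi$ containing~$1$ corresponds to $\pi^{-1}(1)$ in the combinatorial model of~$\mNCD$ on the annulus. The latter is implicit in the way the cycles of~$\pi$ are identified with the blocks of the non-crossing partition under $\Nam{D_n}m$, as spelled out in the construction of this bijection in~\cite{KrMuAB}.
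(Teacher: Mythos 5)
Your proposal is correct and follows essentially the same route as the paper: both translate the positivity criterion of \Cref{lem:wmn-1>0} through the explicit formula~\eqref{eq:Nd}, using that $c^{-1}(1)=\overline{m(n-1)}$ and that the maps $\bar\ta_{m,i}(w_i)$ for $1\le i\le m-1$ fix (signed) multiples of~$m$. The only difference is organisational: you package the computation as the single identity $\pi^{-1}(1)=\overline{m\,w_m(n-1)}$, from which both implications drop out, whereas the paper verifies $\pi(\overline{ml})=1$ for $l=w_m(n-1)$ and obtains the converse by reading the same computation backwards.
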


\begin{proof}
First, let $(w_0,w_1,\dots,w_m)\in \mNCPlus[D_{n}]$.
By definition, we know that\break $0<w_m(n-1)\le n-1$. Let us write $l$
for $w_m(n-1)$. We then have
\begin{align*}
\Big(\Nam{D_n}m(&w_0,w_1,\dots,w_m)\Big)(\overline{ml})\\
&=
\big([1,2,\dots,mn-m][mn-m+1,\dots,mn]\\
&\kern2cm
\,(\ta_{m,1}(w_1))^{-1}\,(\ta_{m,2}(w_2))^{-1}\,\cdots\,
(\ta_{m,m}(w_m))^{-1}\big)(\overline{ml})\\
&=
\big([1,2,\dots,mn-m][mn-m+1,\dots,mn]\big)(\overline{m(n-1)})=1.
\end{align*}
Translated to non-crossing partitions, this means that $\overline{ml}$ and $1$
belong to the same block, which implies our claim.

Conversely, let $\pi\in \mNCD$ be an
$m$-divisible non-crossing partition in which
$1$ is preceded in its block by a negative number,
$\overline L$ say, on the outer circle.
Since in between $\overline L$ and $1$ there ``sit'' blocks all of whose
sizes
are divisible by~$m$, we must necessarily have $\overline L=\overline{ml}$,
for some $l$ between~$1$ and~$n-1$.
In other words, if $\pi$
is interpreted as a permutation, $\pi(\overline{ml})=1$. Let
$(w_0,w_1,\dots,w_m)$ be the element of $\mNC[D_{n}]$ such that
$$
\Nam{D_n}m(w_0,w_1,\dots,w_m)=\pi.
$$
We must have
$$
\Big(\Nam{D_n}m(w_0,w_1,\dots,w_m)\Big)(\overline{ml})=1.
$$
The definition \eqref{eq:Nd} of $\Nam{D_n}m$
and the fact that $(\ta_{m,i}(w_i))^{-1}$ leaves multiples of~$m$
fixed as long as $1\le i\le m-1$ together imply that
$(\ta_{m,m}(w_m))^{-1}(\overline{ml})=\overline{m(n-1)}$.
Phrased differently, we have
$(\ta_{m,m}(w_m))^{-1}(ml)=m(n-1)$, or, equivalently,
$w_m(n-1)=l>0$. By definition, this means that
$(w_0,w_1,\dots,w_m)$ is positive.
\end{proof}

\subsection{Combinatorial realisation of the positive Kreweras maps}
\label{sec:mapD}

The next step consists in translating the positive Kreweras map $\Krewplustilde$
into a ``rotation action'' on the elements of
$\mNCPlus[D_{n}]$. In order to do so, we need to explicitly describe
the decomposition~\eqref{eq:LR} in type~$D$.

\begin{lemma} \label{lem:LRD}
Let~$w$ be an element of $\NC[D_n]$, and consider the cycle $z$
in the {\em(}type~$D${\em)} disjoint cycle decomposition of~$w$ that contains~$n-1$.
\begin{enumerate}
\item[\em(1)] If $z=((i_1,i_2,\dots,i_k,n-1))$ with
$0<i_1<i_2<\dots<i_k<n-1$, then
$w^R=\one$.
\item[\em(2)] If $z=((i_1,\dots,i_s,i_{s+1},\dots,i_k,n-1))$ 
with $i_1,i_2,\dots,i_s<0$,
$i_{s+1},\dots,i_{k-1},i_k>0$,
and $\vert i_1\vert<\vert i_2\vert<\dots<\vert i_k\vert<n-1$
{\em(}as before,
the sequence $i_{s+1},\dots,i_k$ has to be interpreted as
the empty sequence if $s=k${\em)},
then $w^R=((i_s,n-1))$.
\item[\em(3)] If $z=((i_1,\dots,i_s,n-1,j_1,\dots,j_t,\pm n))$
with $0<i_1<i_2<\dots<i_s<n-1$ and
$0>j_1>j_2>\dots>j_t>-i_1$,
then $w^R=((n-1,\pm n))$.
\item[\em(4)] If $z=[i_1,i_2,\dots,i_k,n-1][n]$
{\em(}and consequently $0<i_1<i_2<\dots<i_k<n-1${\em)}, then
$w^R=((n-1,n))((n-1,\overline n))=[n-1][n]$.
\end{enumerate}
Here, in Item~{\em(3)}, $\pm n$ means either $n$ or $\overline n$,
and the two alternatives must be read consistently.
This covers all possible cases.
\end{lemma}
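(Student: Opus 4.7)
\medskip
\noindent
\textbf{Proof proposal.}
The plan is to mirror the strategy used in the proof of \Cref{lem:LRB}, applying \Cref{cor:unique} as the main tool. Recall that \Cref{cor:unique} identifies $w^L$ and $w^R$ from any length-additive decomposition $w = uv$ with the properties that $v$ is a product of reflections whose letters lie in~$\invc^R$ and that $cu^{-1}$ has full support. The last condition, by \Cref{lem:wmn-1>0} applied with $m=1$, is equivalent to $0 < u(n-1) \le n-1$.

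First I would identify the reflections appearing in~$\invc^R$ for the Coxeter element $c = [1,2,\ldots,n-1][n]$: by the same argument underlying the type~$B$ list~\eqref{eq:reflsB}, these are the $n$ reflections $((\overline{1}, n-1)), ((\overline{2}, n-1)), \ldots, ((\overline{n-2}, n-1))$ together with $((n-1, n))$ and $((n-1, \overline n))$, appearing as the final~$n$ letters of $\wwo(\c)$. The identity $[n-1][n] = ((n-1, n)) \cdot ((n-1, \overline n))$ (a direct computation on the four relevant elements) shows that the $w^R$ proposed in Case~(4) is also a product of two reflections from $\invc^R$.

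Next I would treat each of the four cases by exhibiting the claimed $w^R$ and verifying properties (i)--(iii) of \Cref{cor:unique}. Length-additivity in each case follows because the prescribed $w^R$ has reflection length~$0$, $1$, $1$, and~$2$, respectively, and cancelling each factor removes exactly one reflection from a reduced expression for~$w$ (as can be read off from the cycle structure of~$w$, compare the analogous verification in \Cref{lem:LRB}). The key positivity check, condition~(iii), becomes a one-line cycle computation in each case:
\begin{itemize}
\item In Case~(1) with $w^R = \one$, $w^L(n-1) = w(n-1) = i_1$ (or $n-1$ if $k=0$), which lies in $\{1,\ldots,n-1\}$.
\item In Case~(2) with $w^R = ((i_s, n-1))$, $w^L(n-1) = w(i_s) = i_{s+1}$ (or $n-1$ if $s=k$), positive and at most $n-1$.
\item In Case~(3) with $w^R = ((n-1, \pm n))$, $w^L(n-1) = w(\pm n) = i_1 \in \{1,\ldots,n-2\}$.
\item In Case~(4) with $w^R = [n-1][n]$, $w^L(n-1) = w(\overline{n-1}) = i_1$ (or $n-1$ if $k=0$), again in $\{1,\ldots,n-1\}$.
\end{itemize}
Invoking \Cref{cor:unique} then identifies the proposed decomposition as the canonical one.

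It remains to show exhaustiveness, which I would settle by a case analysis on the shape of the cycle~$z$ containing $n-1$ in the type~$D$ disjoint cycle decomposition of~$w$. If $z = ((\cdots))$, then either~$z$ avoids $\{n,\overline n\}$ (giving Cases~(1) or~(2) depending on whether negative elements precede $n-1$) or it contains $\pm n$ (giving Case~(3)); if $z = [\cdots, n-1]$, then the non-crossing structure on the annulus forces a companion cycle~$[n]$ on the inner circle, giving Case~(4). The main obstacle I anticipate is precisely the last step: justifying the exact sign and monotonicity pattern $0 < i_1 < \cdots < i_s < n-1$ and $0 > j_1 > \cdots > j_t > -i_1$ in Case~(3). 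This requires reading off from the annular non-crossing picture why the portion of the block on the outer circle on the ``clockwise'' side of $n-1$ must be a consecutive increasing run of positive integers, why the portion on the ``counter-clockwise'' side must consist of negatives of smaller magnitude, and why the block closes back through $\pm n$ on the inner circle without forcing any $k, \overline k$ pair into~$z$ (which would convert it to a $[\cdots]$-cycle instead). Once this combinatorial constraint is unpacked, the four cases are seen to be mutually exclusive and jointly exhaustive, completing the proof.
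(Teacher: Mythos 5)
Your proposal is correct and follows essentially the same route as the paper's proof: identify the final $n$ letters of $\invc$ as in \eqref{eq:reflsD}, exhibit the candidate factor $w^R$ in each of the four cases, check by the same cycle computations that the resulting $w^L$ satisfies $0<w^L(n-1)\le n-1$ (equivalently, that $c(w^L)^{-1}$ has full support, via \Cref{lem:wmn-1>0} with $m=1$), and conclude by the uniqueness statement of \Cref{cor:unique}. The exhaustiveness point you flag at the end is also left implicit in the paper, whose proof only treats the four listed cases without separately arguing that they cover all elements of $\NC[D_n]$, so nothing beyond what you describe is needed there.
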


\begin{proof}
The last $n$ reflections in $\reflR$ given in \eqref{eq:refls}
are
\begin{equation} \label{eq:reflsD}
((i,n-1)),\ i=\overline1,\overline2,\dots,\overline {n-2},\text{ and }
((n-1,\overline n)),\ ((n-1,n)),
\end{equation}
in this order.
Thus, by definition, we have $w^L=w\circ (w^R)^{-1}$, where the factor
$w^R$ consists of a product of these~$n$ reflections,
and $\lenR(w^L)+\lenR(w^R)=\lenR(w)$.
Furthermore, using \Cref{lem:wmn-1>0} with
$m=1$, we see that $w^L$ must have the property that
$0< w^L(n-1)\le n-1.$
The arguments below are heavily based on the uniqueness of the
decomposition $w=w^l\circ w^R$ in the sense of
\Cref{cor:unique}.

In Case~(1), we have $w(n-1)=i_1>0$. Hence, we may choose
$w^L=w$ and $w^R=\one$, and uniqueness of decomposition
guarantees that this is the correct choice.

In Case~(2), the choice $w^R=((i_s,n-1))$ implies that
$w^L$ would contain the cycles
$((i_1,\dots,i_s))((i_{s+1},\dots,i_k,n-1))$.
Hence, we would
have $w^L(n-1)=i_{s+1}>0$ if~$s<k$, and $w^L(n-1)=n-1$ if $s=k$.
By uniqueness of decomposition,
this must be the correct choice.

Case~(3) is similar. The choice $w^R=((n-1,\pm n))$
implies that
$w^L$ would contain the cycles
$((i_1,i_2,\dots,i_s,n-1))((j_1,j_2,\dots,j_t,\pm n))$.
Hence, we would
have $w^L(n-1)=i_1>0$, which,
by uniqueness of decomposition,
must be the correct choice.

Finally we address Case~(4). If we choose $w^R=[n-1][n]$,
then $w^L$ would contain the cycle
$((i_1,i_2,\dots,i_k,n-1))$.
Hence, we would
have $w^L(n-1)=i_1>0$. Again,
by uniqueness of decomposition,
this must be the correct choice.
\end{proof}

Next, we translate the positive Kreweras map $\Krewplustilde$ from
\Cref{def:positivekreweras} for type $D_{n}$ into combinatorial
language. 

\begin{theorem} \label{prop:2D}
Let $m$ and $n$ be positive integers with $m\ge2$.
Under the bijection $\Nam{D_n}m$, the map $\Krewplustilde^{(m)}$ translates to
the following map~$\rotD$ on $\mNCDPlus$:
if the block of $\overline{m(n-1)}$ contains another negative element
which is different from $\overline{mn-1}$,
then $\rotD$ rotates all blocks of~$\pi$ by one unit
{\em(}in clockwise direction on the outer circle of the annulus,
and in counter-clockwise direction on the inner circle{\em)}.

On the other hand,
if the block of $\overline{m(n-1)}$ of some element~$\pi\in\mNCDPlus$
contains no other negative
element which is different from $\overline{mn-1}$,
then, first of all, $1$ is also contained in this block.
In other words, $\overline{m(n-1)}$ and\/ $1$ are successive elements
in a block {\em(}and consequently also $m(n-1)$ and\/ $\overline 1${\em)}.
For the description of the operation, we distinguish
between three cases:

\begin{enumerate}
\item[\em(1)] If there is a block not containing
$1,\overline1,m(n-1),\overline{m(n-1)}$ in
which there are positive {\em and} negative elements
from the outer circle, then
let $b$ be minimal such that $\overline d$ and $b$ are successive
elements in such a block,
with $\overline d$ negative and $b$ positive.
Furthermore, let $b$ be followed by~$e$ in this block, and let
$a$ and $\overline{m(n-1)}$ be successive elements
in a block. {\em(}The element~$a$ must be positive and on the
outer circle by the assumption
defining the subcase in which we are\footnote{Indeed, it is that block
not containing $1,\overline1,m(n-1),\overline{m(n-1)}$ in which there are
positive {\it and\/} negative elements on the outer circle which
constitutes a ``barrier" making it possible for~$a$ to lie on the
inner circle.}, while $e$ can be positive
or negative, and on the outer or inner circle. See
the left half of \Cref{fig:12} for an
illustration of the various definitions.{\em)}
Then the image of~$\pi$ under~$\rotD$ is the partition
in which $\overline{d+1}$, $1$, and $e+1$ are three
successive elements in a
block, and $a+1$, $b+1$, and $2$ are successive elements in
another block. All other succession relations in~$\pi$
are rotated by one unit in clockwise direction on the outer circle
and in counter-clockwise direction on the inner circle.
See \Cref{fig:12} for a schematic illustration of this operation.
\item[\em(2)]If there is no block not containing
$1,\overline1,m(n-1),\overline{m(n-1)}$ in
which there are positive {\em and} negative elements
from the outer circle, and
if the predecessor of~$\mp mn$, denoted by~$\overline d$
(with $d$ positive),
is on the outer circle, then let
$\overline b$ be the successor of~$\mp mn$ and~$a$ be the predecessor of
$\overline{m(n-1)}$.
{\em(}The element~$a$ must be positive and on the
outer circle or equal to $\mp(mn-1)$ by the assumption
defining the subcase in which we are. Furthermore,
the element $\overline b$ is either a negative element on the 
outer circle or equal to $\pm(mn-m+1)$.  
See the left half of \Cref{fig:13} for an
illustration of the described situation.{\em)}
Then the image of~$\pi$ under~$\rotD$ is the partition
in which $\overline{d+1}$, $1$, and $\overline{b+1}$ are three
successive elements in a
block, and $a+1$, $\pm(mn-m+1)$, and $2$ are successive elements in
another block. All other succession relations in~$\pi$
are rotated by one unit in clockwise direction on the outer circle
and in counter-clockwise direction on the inner circle.
See \Cref{fig:13} for a schematic illustration of this operation.
\item[\em(3)] If there is no block containing elements from the outer
\emph{and} the inner circle, then let $a$,
$\overline{m(n-1)}$, and~$1$ be successive elements in a block.
{\em(}Again, the element~$a$ must be positive.{\em)}
By Condition~{\em(D3)} in the definition of $\mNCD$, the blocks on the inner circle
are $\{\overline{mn},mn-m+1,\dots,mn-1\}$ and its negative.
The image of~$\pi$ under~$\rotD$ is the partition
in which $\overline{a+1}$, $1$, and $\overline2$ are successive
elements, other succession relations on the outer circle 
are rotated by one unit in clockwise direction,
while on the inner circle the blocks in the image partition are
$\{mn-m+2,\dots,mn-1,mn,\overline{mn-m+1}\}$
and its negative. {\em(}These are the original blocks rotated by
{\em two} units in counter-clockwise direction{\em)}
See \Cref{fig:14} for a schematic illustration of this
operation.
\end{enumerate}
\end{theorem}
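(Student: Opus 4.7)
The plan is to proceed along the same lines as the proofs of \Cref{prop:2} and \Cref{prop:2B}. Let $(w_0,w_1,\dots,w_m)$ be the element of $\mNCPlus[D_n]$ corresponding to $\pi$ under the bijection $\Nam{D_n}m$. Using the definition~\eqref{eq:Nd}, one can read off the cycle of $w_{m-1}$ containing $n-1$ directly from the shape of the block of $\overline{m(n-1)}$ in $\pi$. Then \Cref{lem:LRD} supplies the factorisation $w_{m-1}=w_{m-1}^L w_{m-1}^R$, and \Cref{def:positivekreweras} tells us that applying $\Krewplustilde^{(m)}$ replaces $w_{m-1}$ by $w_{m-1}^L$ and $w_m$ by $cw_{m-1}^R w_m c^{-1}$ (up to conjugation of $w_0$). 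Finally, one applies $\Nam{D_n}m$ to the result and reads off the effect on the combinatorial partition.

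The case distinction in the statement of \Cref{prop:2D} matches the four cases of \Cref{lem:LRD} as follows. First, when the block of $\overline{m(n-1)}$ contains another negative element different from $\overline{mn-1}$, the cycle of $w_{m-1}$ containing $n-1$ falls under Case~(1) of \Cref{lem:LRD}, so $w_{m-1}^R=\one$. Consequently $\Krewplustilde^{(m)}$ reduces to the ordinary Kreweras map $(w_0,\dots,w_m)\mapsto(v,cw_mc^{-1},w_1,\dots,w_{m-1})$, which under $\Nam{D_n}m$ is classical rotation by one unit (as already recorded in~\cite{Arm2006,KrMuAB}); hence $\rotD$ acts as the advertised rotation in the annulus. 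Cases~(1), (2), and~(3) of the theorem correspond respectively to Cases~(2), (3), and~(4) of \Cref{lem:LRD}: in each instance the prescription $w_{m-1}^R=((i_s,n-1))$, $((n-1,\pm n))$, or $[n-1][n]$ precisely accounts for the ``anomaly'' near $\overline{m(n-1)}$, $1$, and (in Cases~(2),(3)) near the inner circle.

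The main technical step is the explicit verification of the local action. For each of the three cases, one writes out the congruence conditions forced by the block-structure constraint~(D1) (so that $a\equiv -1$, $b\equiv 0$, $d\equiv -1$, $e\equiv 1$ modulo $m$ in Case~(1), with analogous conditions in Cases~(2),(3)), sets $a=mA-1$, $b=mB$, etc., and then computes the images under $w_{m-1}^L$ and $cw_{m-1}^R w_mc^{-1}$ of the relevant indices $A, B, D, E, n-1, n$, as is done in the type~$B$ proof. Applying $\bar\ta_{m,i}$ and the outer Coxeter element $[1,\dots,m(n-1)][mn-m+1,\dots,mn]$ from~\eqref{eq:Nd} then translates these permutation identities back into successor relations among blocks, yielding exactly the pseudo-rotation described in the statement.

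The hard part will be Case~(3) together with the inner-circle bookkeeping in Case~(2). Unlike the type~$B$ situation, the annulus model carries the extra rigidity imposed by condition~(D3): when no block bridges the outer and inner circles, the inner blocks are forced. One therefore has to verify that the induced action on the inner circle is not merely a rotation by one unit but rather by two units (as claimed in Case~(3)), matching what (D3) dictates once the unique predecessor of $\overline{m(n-1)}$ on the outer circle has been advanced. This consistency check --- and the analogous check that in Case~(2) the bridging block is still correctly reconstructed from its new outer-circle data --- is the only place where type~$D$ genuinely departs from the type~$A$/$B$ arguments; everything else is routine cycle bookkeeping following the template of \Cref{prop:2B}.
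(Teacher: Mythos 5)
Your proposal is correct and follows essentially the same route as the paper's own proof: translate via $\Nam{D_n}m$, match the four cases of \Cref{lem:LRD} to the case distinction of the theorem (with $w_{m-1}^R=\one$ giving ordinary rotation, and Cases (1)--(3) handled by the congruence/cycle computations modelled on the type~$B$ argument), then read off the successor relations, with the extra care for condition~(D3) and the two-unit inner rotation in Case~(3). The only small step you leave implicit --- that $1$ lies in the block of $\overline{m(n-1)}$ in the second situation --- is dealt with in the paper exactly as in type~$B$, so no genuinely new idea is missing.
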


\begin{figure}
\begin{center}
  \begin{tikzpicture}[scale=1]

    \polygon{(-4,0)}{obj}{44}{2.5}
       {1,,,,a,,,,,,b,,,,e,,,d,,,,\hspace*{25pt}m(n-1),\overline{1},,,,\overline{a},,,,,,\overline{b},,,,\overline{e},,,\overline{d},,,,\overline{m(n-1)}\hspace*{25pt}}

    \polygoninner{(-4,0)}{objleftin}{8}{0.6}
      {,,,,,,,}

    % thick line connecting 1 and \overline{mn}
    \draw[line width=2.5pt,black] (obj1) to[bend left=50] (obj44);
    \draw[line width=2.5pt,black] (obj23) to[bend left=50] (obj22);

    % dotted line connecting 1 and a
    \draw[dash pattern=on 2pt off 2pt on 2pt off 2pt on 2pt off 2pt on
2pt off 16pt on 2pt off 2pt on 2pt off 2pt] (obj1) to[bend right=50]
(obj5);
    \draw[dash pattern=on 2pt off 2pt on 2pt off 2pt on 2pt off 2pt on
2pt off 16pt on 2pt off 2pt on 2pt off 2pt] (obj23) to[bend right=50]
(obj27);

    % line connecting a and \overline{mn}
    \draw (obj44) to[bend right=50] (obj5);
    \draw (obj22) to[bend right=50] (obj27);

    % area for a+1 ... b-1
    \draw[dotted] (obj6) to[bend right=50] (obj10);
    \node at ($($0.92*($(obj8)+(4.0,0)$)$)-(4.0,0)$) {\tiny$X$};

    \draw[dotted] (obj28) to[bend right=50] (obj32);
    \node at ($($0.92*($(obj30)+(4.0,0)$)$)-(4.0,0)$) {\tiny$\overline{X}$};

    % line connecting \overlin{d} - b - e - ...
    \draw (obj40) to[bend right=20] (obj11) to[bend right=50] (obj15);
    \draw[dash pattern=on 2pt off 2pt on 2pt off 2pt on 2pt off 2pt on
2pt off 2pt on 2pt off 2pt on 2pt off 95pt on 2pt off 2pt on 2pt off
2pt on 2pt] (obj15) to[bend left=10] (obj40);
    \draw (obj18) to[bend right=20] (obj33) to[bend right=50] (obj37);
    \draw[dash pattern=on 2pt off 2pt on 2pt off 2pt on 2pt off 2pt on
2pt off 2pt on 2pt off 2pt on 2pt off 95pt on 2pt off 2pt on 2pt off
2pt on 2pt] (obj37) to[bend left=10] (obj18);

    \node[inner sep=0pt] at (0,0) {$\mapsto$};

    \polygon{(4,0)}{obj}{44}{2.5}
       {1,2,,,,a+1,,,,,,\,b+1,,,,e+1,,,d+1,,,\hspace*{20pt}m(n-1),\overline{1},\overline{2},,,,\overline{a+1},,,,,,\overline{b+1}\,,,,,\overline{e+1},,,\overline{d+1},,,\overline{m(n-1)}\hspace*{20pt}}

    \polygoninner{(4,0)}{objleftin}{8}{0.6}
      {,,,,,,,}

    % thick line connecting 1 and \overline{d+1}
    \draw[line width=2.5pt,black] (obj1) to[bend left=50] (obj41);
    \draw[line width=2.5pt,black] (obj23) to[bend left=50] (obj19);

    % dotted line connecting 1 and a
    \draw[dash pattern=on 2pt off 2pt on 2pt off 2pt on 2pt off 2pt on
2pt off 16pt on 2pt off 2pt on 2pt off 2pt] (obj2) to[bend right=50]
(obj6);
    \draw[dash pattern=on 2pt off 2pt on 2pt off 2pt on 2pt off 2pt on
2pt off 16pt on 2pt off 2pt on 2pt off 2pt] (obj24) to[bend right=50]
(obj28);

    % line connecting a+1 and b
    \draw (obj6) to[bend right=50] (obj12);
    \draw (obj28) to[bend right=50] (obj34);

    % line connecting b+1 and 2
    \draw (obj2) to[bend right=50] (obj12);
    \draw (obj24) to[bend right=50] (obj34);

    % area for a+2 ... b
    \draw[dotted] (obj7) to[bend right=50] (obj11);
    \node at ($($0.92*($(obj9)-(4.0,0)$)$)+(4.0,0)$) {\tiny$X$};
    \draw[dotted] (obj29) to[bend right=50] (obj33);
    \node at ($($0.92*($(obj31)-(4.0,0)$)$)+(4.0,0)$) {\tiny$X$};

    % line connecting \overlin{d} - b - e - ...
    \draw (obj1) to[bend right=20] (obj16);
    \draw[dash pattern=on 2pt off 2pt on 2pt off 2pt on 2pt off 2pt on
2pt off 2pt on 2pt off 2pt on 2pt off 95pt on 2pt off 2pt on 2pt off
2pt on 2pt] (obj16) to[bend left=10] (obj41);
    \draw (obj23) to[bend right=20] (obj38);
    \draw[dash pattern=on 2pt off 2pt on 2pt off 2pt on 2pt off 2pt on
2pt off 2pt on 2pt off 2pt on 2pt off 95pt on 2pt off 2pt on 2pt off
2pt on 2pt] (obj38) to[bend left=10] (obj19);

    \end{tikzpicture}
\end{center}
\caption{The action of the pseudo-rotation $\rotD$,
Case (1)}
\label{fig:12}
\end{figure}

\begin{figure}
  \centering
  \begin{tikzpicture}[scale=1]
    \polygon{(-4,0)}{obj}{24}{2.5}
      {1,2,3,4,5,6,7,8,9,10,11,12,\overline{1},\overline{2},\overline{3},\overline{4},\overline{5},\overline{6},\overline{7},\overline{8},\overline{9},\overline{10},\overline{11},\overline{12}}

    \polygoninner{(-4,0)}{objleftin}{6}{0.6}
      {\overline{14},\overline{13},15,14,13,\overline{15}}

      \draw[line width=2.5pt,black] (obj1) to[bend left=50] (obj24);
      \draw[line width=2.5pt,black] (obj13) to[bend left=50] (obj12);

     \draw[fill=black,fill opacity=0.1] (obj24) to[bend right=30]
(obj1) to[bend right=30] (obj2) to[bend left=30] (obj24);
     \draw[fill=black,fill opacity=0.1] (obj3) to[bend right=30]
(obj4) to[bend right=30] (obj5) to[bend left=30] (obj3);
     \draw[fill=black,fill opacity=0.1] (obj6) to[bend right=30]
(obj10) to[bend right=30] (obj11) to[bend right=30] (obj18) to[bend
right=30] (obj22) to[bend right=30] (obj23) to[bend right=30] (obj6);
     \draw[fill=black,fill opacity=0.1] (obj7) to[bend right=30]
(obj8) to[bend right=30] (obj9) to[bend left=30] (obj7);
     \draw[fill=black,fill opacity=0.1] (obj12) to[bend right=30]
(obj13) to[bend right=30] (obj14) to[bend left=30] (obj12);
     \draw[fill=black,fill opacity=0.1] (obj15) to[bend right=30]
(obj16) to[bend right=30] (obj17) to[bend left=30] (obj15);
     \draw[fill=black,fill opacity=0.1] (obj19) to[bend right=30]
(obj20) to[bend right=30] (obj21) to[bend left=30] (obj19);

    \polygon{(4,0)}{obj}{24}{2.5}
      {1,2,3,4,5,6,7,8,9,10,11,12,\overline{1},\overline{2},\overline{3},\overline{4},\overline{5},\overline{6},\overline{7},\overline{8},\overline{9},\overline{10},\overline{11},\overline{12}}

    \node[inner sep=0pt] at (0,0) {$\mapsto$};

    \polygoninner{(4,0)}{objleftin}{6}{0.6}
      {\overline{14},\overline{13},15,14,13,\overline{15}}

      \draw[line width=2.5pt,black] (obj1) to[bend left=50] (obj24);
      \draw[line width=2.5pt,black] (obj13) to[bend left=50] (obj12);

     \draw[fill=black,fill opacity=0.1] (obj2) to[bend right=30]
(obj3) to[bend right=30] (obj7) to[bend left=30] (obj2);
     \draw[fill=black,fill opacity=0.1] (obj4) to[bend right=30]
(obj5) to[bend right=30] (obj6) to[bend left=30] (obj4);
     \draw[fill=black,fill opacity=0.1] (obj1) to[bend left=10]
(obj11) to[bend right=30] (obj12) to[bend right=30] (obj13) to[bend
left=10] (obj23) to[bend right=30] (obj24) to[bend right=30] (obj1);
     \draw[fill=black,fill opacity=0.1] (obj8) to[bend right=30]
(obj9) to[bend right=30] (obj10) to[bend left=30] (obj8);
     \draw[fill=black,fill opacity=0.1] (obj14) to[bend right=30]
(obj15) to[bend right=30] (obj19) to[bend left=30] (obj14);
     \draw[fill=black,fill opacity=0.1] (obj16) to[bend right=30]
(obj17) to[bend right=30] (obj18) to[bend left=30] (obj16);
     \draw[fill=black,fill opacity=0.1] (obj20) to[bend right=30]
(obj21) to[bend right=30] (obj22) to[bend left=30] (obj20);
    \end{tikzpicture}
  \caption{The action of the pseudo-rotation $\rotD$, Case (1)}
\label{fig:23}
\end{figure}

\begin{figure}
  \centering
\begin{tikzpicture}[scale=1]
    \polygon{(-4,0)}{obj}{24}{2.5}
      {1,2,3,4,5,6,7,8,9,10,11,12,\overline{1},\overline{2},\overline{3},\overline{4},\overline{5},\overline{6},\overline{7},\overline{8},\overline{9},\overline{10},\overline{11},\overline{12}}

    \polygoninner{(-4,0)}{objin}{12}{0.6}
      {\overline{14},,\overline{13},,15,,14,,13,,\overline{15},}

      \draw[line width=2.5pt,black] (obj1) to[bend left=50] (obj24);
      \draw[line width=2.5pt,black] (obj13) to[bend left=50] (obj12);

     \draw[fill=black,fill opacity=0.1] (obj24) to[bend right=30]
(obj1) to[bend right=30] (obj2) to[bend left=30] (obj24);
     \draw[fill=black,fill opacity=0.1] (obj3) to[bend right=30]
(obj4) to[bend right=30] (obj5) to[bend left=30] (obj3);
     \draw[fill=black,fill opacity=0.1] (obj6) to[bend right=30]
(obj10) to[bend right=25] (obj23) to[bend left=10] (obj6);
     \draw[fill=black,fill opacity=0.1] (obj11) to[bend left=10] (obj18) to[bend
right=30] (obj22) to[bend right=25] (obj11);
     \draw[fill=black,fill opacity=0.1] (obj7) to[bend right=30]
(obj8) to[bend right=30] (obj9) to[bend left=30] (obj7);
     \draw[fill=black,fill opacity=0.1] (obj12) to[bend right=30]
(obj13) to[bend right=30] (obj14) to[bend left=30] (obj12);
     \draw[fill=black,fill opacity=0.1] (obj15) to[bend right=30]
(obj16) to[bend right=30] (obj17) to[bend left=30] (obj15);
     \draw[fill=black,fill opacity=0.1] (obj19) to[bend right=30]
(obj20) to[bend right=30] (obj21) to[bend left=30] (obj19);

     \draw[fill=black,fill opacity=0.1] (objin9) to[bend left=40]
(objin11) to[bend left=40] (objin1) to[bend right=100,looseness=2] (objin9);

     \draw[fill=black,fill opacity=0.1] (objin3) to[bend left=40]
(objin5) to[bend left=40] (objin7) to[bend right=100,looseness=2] (objin3);

    \polygon{(4,0)}{obj}{24}{2.5}
      {1,2,3,4,5,6,7,8,9,10,11,12,\overline{1},\overline{2},\overline{3},\overline{4},\overline{5},\overline{6},\overline{7},\overline{8},\overline{9},\overline{10},\overline{11},\overline{12}}

    \polygoninner{(4,0)}{objin}{12}{0.6}
      {\overline{15},,\overline{14},,\overline{13},,15,,14,,13,}

      \draw[line width=2.5pt,black] (obj1) to[bend left=50] (obj24);
      \draw[line width=2.5pt,black] (obj13) to[bend left=50] (obj12);

     \draw[fill=black,fill opacity=0.1] (obj2) to[bend right=30]
(obj3) to[bend right=30] (obj7) to[bend left=30] (obj2);
     \draw[fill=black,fill opacity=0.1] (obj4) to[bend right=30]
(obj5) to[bend right=30] (obj6) to[bend left=30] (obj4);
     \draw[fill=black,fill opacity=0.1] (obj1) to[bend left=20]
(obj11) to[bend right=20] (obj24) to[bend right=30] (obj1);
     \draw[fill=black,fill opacity=0.1] (obj23) to[bend right=20]
(obj12) to[bend right=30] (obj13) to[bend left=20] (obj23);
     \draw[fill=black,fill opacity=0.1] (obj8) to[bend right=30]
(obj9) to[bend right=30] (obj10) to[bend left=30] (obj8);
     \draw[fill=black,fill opacity=0.1] (obj14) to[bend right=30]
(obj15) to[bend right=30] (obj19) to[bend left=30] (obj14);
     \draw[fill=black,fill opacity=0.1] (obj16) to[bend right=30]
(obj17) to[bend right=30] (obj18) to[bend left=30] (obj16);
     \draw[fill=black,fill opacity=0.1] (obj20) to[bend right=30]
(obj21) to[bend right=30] (obj22) to[bend left=30] (obj20);
     \draw[fill=black,fill opacity=0.1] (objin9) to[bend left=40]
(objin11) to[bend left=40] (objin1) to[bend right=100,looseness=2] (objin9);

     \draw[fill=black,fill opacity=0.1] (objin3) to[bend left=40]
(objin5) to[bend left=40] (objin7) to[bend right=100,looseness=2] (objin3);

    \end{tikzpicture}
  \caption{The action of the pseudo-rotation $\rotD$, Case (1)}
\label{fig:23a}
\end{figure}

\begin{figure}
\begin{center}
  \begin{tikzpicture}[scale=1]

    \polygon{(-4,0)}{obj}{44}{3}
       {1,,,,a,,,,,,b,,,,d,,,,,,,\hspace*{25pt}m(n-1),\overline{1},,,,\overline{a},,,,,,\overline{b},,,,\overline{d},,,,,,,\overline{m(n-1)}\hspace*{25pt}}

    \polygoninner{(-4,0)}{objin}{16}{1.2}
      {\pm mn\hspace*{5pt},,,,,,,,\hspace*{5pt}\mp mn,,,,,,}

    % thick line connecting 1 and \overline{mn}
    \draw[line width=2.5pt,black] (obj1) to[bend left=50] (obj44);
    \draw[line width=2.5pt,black] (obj23) to[bend left=50] (obj22);

    % dotted line connecting 1 and a
    \draw[dash pattern=on 2pt off 2pt on 2pt off 2pt on 2pt off 2pt on
2pt off 16pt on 2pt off 2pt on 2pt off 2pt] (obj1) to[bend right=20]
(obj5);
    \draw[dash pattern=on 2pt off 2pt on 2pt off 2pt on 2pt off 2pt on
2pt off 16pt on 2pt off 2pt on 2pt off 2pt] (obj23) to[bend right=20]
(obj27);

    % line connecting a and \overline{mn}
    \draw (obj44) to[bend right=50] (obj5);
    \draw (obj22) to[bend right=50] (obj27);

    % area for d+1 ... m(n-1)-1
    \draw[dotted] (obj16) to[bend right=20] (obj21);
    \node at ($($0.92*($(obj19)+(4.0,0)$)$)-(4.0,0)$) {\tiny$X$};
    \draw[dotted] (obj38) to[bend right=20] (obj43);
    \node at ($($0.92*($(obj41)+(4.0,0)$)$)-(4.0,0)$) {\tiny$\overline{X}$};

    % line connecting \overlin{d} - b - e - ...
    \draw (obj11) to[bend right=30] (objin1) to[bend left=20] (obj15);
    \draw (obj33) to[bend right=30] (objin9) to[bend left=20] (obj37);

    % dotted line connecting b and d
    \draw[dash pattern=on 2pt off 2pt on 2pt off 2pt on 2pt off 2pt on
2pt off 25pt on 2pt off 2pt on 2pt off 2pt] (obj11) to[bend right=20]
(obj15);
    \draw[dash pattern=on 2pt off 2pt on 2pt off 2pt on 2pt off 2pt on
2pt off 25pt on 2pt off 2pt on 2pt off 2pt] (obj33) to[bend right=20]
(obj37);

    \node[inner sep=0pt] at (0,0) {$\mapsto$};

    \polygon{(4,0)}{obj}{44}{3}
       {1,2,,,,a+1,,,,,,b+1,,,,d+1,,,,,,\hspace*{25pt}m(n-1),\overline{1},\overline{2},,,,\overline{a+1},,,,,,\overline{b+1},,,,\overline{d+1},,,,,,\overline{m(n-1)}\hspace*{25pt}}

    \polygoninner{(4,0)}{objin}{16}{1.2}
      {,,\pm (m(n-1)+1)\hspace*{35pt},\mp
mn\hspace*{5pt},,,,,,,\hspace*{35pt}\mp (m(n-1)+1),\hspace*{5pt}\pm
mn,,,,}

    % thick line connecting 1 and \overline{mn}
%     \draw[line width=2.5pt,black] (obj1) to[bend left=50] (obj44);
%     \draw[line width=2.5pt,black] (obj23) to[bend left=50] (obj22);

    % dotted line connecting 2 and a+1
    \draw[dash pattern=on 2pt off 2pt on 2pt off 2pt on 2pt off 2pt on
2pt off 16pt on 2pt off 2pt on 2pt off 2pt] (obj2) to[bend right=50]
(obj6);
    \draw[dash pattern=on 2pt off 2pt on 2pt off 2pt on 2pt off 2pt on
2pt off 16pt on 2pt off 2pt on 2pt off 2pt] (obj24) to[bend right=50]
(obj28);

    % line connecting a and \overline{mn}
    \draw (obj2) to[bend right=30] (objin3) to[bend left=20] (obj6);
    \draw (obj24) to[bend right=30] (objin11) to[bend left=20] (obj28);

    % area for d+1 ... m(n-1)-1
    \draw[dotted] (obj17) to[bend right=20] (obj22);
    \node at ($($0.92*($(obj20)-(4.0,0)$)$)+(4.0,0)$) {\tiny$X$};
    \draw[dotted] (obj39) to[bend right=20] (obj44);
    \node at ($($0.92*($(obj42)-(4.0,0)$)$)+(4.0,0)$) {\tiny$\overline{X}$};

    % line connecting \overlin{d} - b - e - ...
    \draw (obj12) to[bend right=20] (obj23);
    \draw (obj34) to[bend right=20] (obj1);
    \draw[line width=2.5pt,black] (obj23) to[bend left=20] (obj16);
    \draw[line width=2.5pt,black] (obj1) to[bend left=20] (obj38);

    % dotted line connecting b and d
    \draw[dash pattern=on 2pt off 2pt on 2pt off 2pt on 2pt off 2pt on
2pt off 25pt on 2pt off 2pt on 2pt off 2pt] (obj12) to[bend right=20]
(obj16);
    \draw[dash pattern=on 2pt off 2pt on 2pt off 2pt on 2pt off 2pt on
2pt off 25pt on 2pt off 2pt on 2pt off 2pt] (obj34) to[bend right=20]
(obj38);

    \end{tikzpicture}
\end{center}
\caption{The action of the pseudo-rotation $\rotD$, Case (2)}
\label{fig:13}
\end{figure}

\begin{figure}
  \centering
  \begin{tikzpicture}[scale=1]
    \polygonnew{(-4,0)}{objleftout}{40}{3}
      {1,2,3,4,5,6,7,8,9,10,11,12,13,14,15,16,17,18,19,20,\overline{1},\overline{2},\overline{3},\overline{4},\overline{5},\overline{6},\overline{7},\overline{8},\overline{9},\overline{10},\overline{11},\overline{12},\overline{13},\overline{14},\overline{15},\overline{16},\overline{17},\overline{18},\overline{19},\overline{20}}{1.1}
    \polygonnew{(-4,0)}{objleftin}{8}{0.75}
      {\overline{22},\overline{21},24,23,22,21,\overline{24},\overline{23}}{0.65}

\draw[fill=black,fill opacity=0.1] (objleftout40) to[bend right=30] (objleftout1) to[bend right=5] (objleftin1) to[bend right=50] (objleftin8) to[bend right=5] (objleftout40);

     \draw[fill=black,fill opacity=0.1] (objleftout2) to[bend right=30] (objleftout3) to[bend left=5] (objleftout16) to[bend right=10] (objleftin2) to[bend right=10] (objleftout2);

     \draw[fill=black,fill opacity=0.1] (objleftout4) to[bend right=30] (objleftout9) to[bend right=10] (objleftout10) to[bend right=30] (objleftout11) to[bend left=30] (objleftout4);

     \draw[fill=black,fill opacity=0.1] (objleftout5) to[bend right=30] (objleftout6) to[bend right=30] (objleftout7) to[bend right=30] (objleftout8) to[bend left=30] (objleftout5);

     \draw[fill=black,fill opacity=0.1] (objleftout12) to[bend right=30] (objleftout13) to[bend right=30] (objleftout14) to[bend right=30] (objleftout15) to[bend left=30] (objleftout12);

     \draw[fill=black,fill opacity=0.1] (objleftout17) to[bend right=30] (objleftout18) to[bend right=30] (objleftout19) to[bend right=5] (objleftin3) to[bend right=5] (objleftout17);

     \draw[fill=black,fill opacity=0.1] (objleftout20) to[bend right=30] (objleftout21) to[bend right=5] (objleftin5) to[bend right=50] (objleftin4) to[bend right=5] (objleftout20);

     \draw[fill=black,fill opacity=0.1] (objleftout22) to[bend right=30] (objleftout23) to[bend left=5] (objleftout36) to[bend right=10] (objleftin6) to[bend right=10] (objleftout22);

     \draw[fill=black,fill opacity=0.1] (objleftout24) to[bend right=30] (objleftout29) to[bend right=10] (objleftout30) to[bend right=30] (objleftout31) to[bend left=30] (objleftout24);

     \draw[fill=black,fill opacity=0.1] (objleftout25) to[bend right=30] (objleftout26) to[bend right=30] (objleftout27) to[bend right=30] (objleftout28) to[bend left=30] (objleftout25);

     \draw[fill=black,fill opacity=0.1] (objleftout32) to[bend right=30] (objleftout33) to[bend right=30] (objleftout34) to[bend right=30] (objleftout35) to[bend left=30] (objleftout32);

     \draw[fill=black,fill opacity=0.1] (objleftout37) to[bend right=30] (objleftout38) to[bend right=30] (objleftout39) to[bend right=5] (objleftin7) to[bend right=5] (objleftout37);

     \node[inner sep=0pt] at (0,0) {$\mapsto$};

    \polygonnew{(4,0)}{objrightout}{40}{3}
      {2,3,4,5,6,7,8,9,10,11,12,13,14,15,16,17,18,19,20,\overline{1},\overline{2},\overline{3},\overline{4},\overline{5},\overline{6},\overline{7},\overline{8},\overline{9},\overline{10},\overline{11},\overline{12},\overline{13},\overline{14},\overline{15},\overline{16},\overline{17},\overline{18},\overline{19},\overline{20},1}{1.1}
    \polygonnew{(4,0)}{objrightin}{8}{0.75}
      {\overline{23},\overline{22},\overline{21},24,23,22,21,\overline{24}}{0.65}

     \draw[fill=black,fill opacity=0.1] (objrightin7) to[bend right=10] (objrightout1) to[bend right=5] (objrightin1) to[bend right=50] (objrightin8) to[bend right=50] (objrightin7);

     \draw[fill=black,fill opacity=0.1] (objrightout2) to[bend right=30] (objrightout3) to[bend left=5] (objrightout16) to[bend right=10] (objrightin2) to[bend right=10] (objrightout2);

     \draw[fill=black,fill opacity=0.1] (objrightout4) to[bend right=30] (objrightout9) to[bend right=10] (objrightout10) to[bend right=30] (objrightout11) to[bend left=30] (objrightout4);

     \draw[fill=black,fill opacity=0.1] (objrightout5) to[bend right=30] (objrightout6) to[bend right=30] (objrightout7) to[bend right=30] (objrightout8) to[bend left=30] (objrightout5);

     \draw[fill=black,fill opacity=0.1] (objrightout12) to[bend right=30] (objrightout13) to[bend right=30] (objrightout14) to[bend right=30] (objrightout15) to[bend left=30] (objrightout12);

     \draw[fill=black,fill opacity=0.1] (objrightout17) to[bend right=30] (objrightout18) to[bend right=30] (objrightout19) to[bend right=30] (objrightout20) to[bend left=30] (objrightout17);

     \draw[fill=black,fill opacity=0.1] (objrightin3) to[bend right=10] (objrightout21) to[bend right=5] (objrightin5) to[bend right=50] (objrightin4) to[bend right=50] (objrightin3);

     \draw[fill=black,fill opacity=0.1] (objrightout22) to[bend right=30] (objrightout23) to[bend left=5] (objrightout36) to[bend right=10] (objrightin6) to[bend right=10] (objrightout22);

     \draw[fill=black,fill opacity=0.1] (objrightout24) to[bend right=30] (objrightout29) to[bend right=10] (objrightout30) to[bend right=30] (objrightout31) to[bend left=30] (objrightout24);

     \draw[fill=black,fill opacity=0.1] (objrightout25) to[bend right=30] (objrightout26) to[bend right=30] (objrightout27) to[bend right=30] (objrightout28) to[bend left=30] (objrightout25);

     \draw[fill=black,fill opacity=0.1] (objrightout32) to[bend right=30] (objrightout33) to[bend right=30] (objrightout34) to[bend right=30] (objrightout35) to[bend left=30] (objrightout32);

     \draw[fill=black,fill opacity=0.1] (objrightout37) to[bend right=30] (objrightout38) to[bend right=30] (objrightout39) to[bend right=30] (objrightout40) to[bend left=30] (objrightout37);
    \end{tikzpicture}
  \caption{The action of the pseudo-rotation $\rotD$, Case (2)}
\label{fig:19}
\end{figure}
\begin{figure}
  \centering
  \begin{tikzpicture}[scale=1]
    \polygonnew{(-4,0)}{objleftout}{30}{3}
      {\overline{15},1,2,3,4,5,6,7,8,9,10,11,12,13,14,15,\overline{1},\overline{2},\overline{3},\overline{4},\overline{5},\overline{6},\overline{7},\overline{8},\overline{9},\overline{10},\overline{11},\overline{12},\overline{13},\overline{14}}{1.1}
    \polygonnew{(-4,0)}{objleftin}{6}{0.75}
      {\overline{17},\overline{16},18,17,16,\overline{18}}{0.65}

     \draw[fill=black,fill opacity=0.1] (objleftin6) to[bend right=5] (objleftout29) to[bend right=30] (objleftout30) to[bend right=5] (objleftin6);
     \draw[fill=black,fill opacity=0.1] (objleftin3) to[bend right=5] (objleftout14) to[bend right=30] (objleftout15) to[bend right=5] (objleftin3);

     \draw[fill=black,fill opacity=0.1] (objleftout2) to[bend right=5] (objleftin1) to[bend left=5] (objleftout1) to[bend right=10] (objleftout2);
     \draw[fill=black,fill opacity=0.1] (objleftout17) to[bend right=5] (objleftin4) to[bend left=5] (objleftout16) to[bend right=10] (objleftout17);

     \draw[fill=black,fill opacity=0.1] (objleftout3) to[bend right=30] (objleftout4) to[bend left=10] (objleftin2) to[bend right=5] (objleftout3);
     \draw[fill=black,fill opacity=0.1] (objleftout18) to[bend right=30] (objleftout19) to[bend left=10] (objleftin5) to[bend right=5] (objleftout18);

     \draw[fill=black,fill opacity=0.1] (objleftout5) to[bend right=30] (objleftout9) to[bend right=30] (objleftout13) to[bend left=5] (objleftout5);
     \draw[fill=black,fill opacity=0.1] (objleftout20) to[bend right=30] (objleftout24) to[bend right=30] (objleftout28) to[bend left=5] (objleftout20);

     \draw[fill=black,fill opacity=0.1] (objleftout6) to[bend right=30] (objleftout7) to[bend right=30] (objleftout8) to[bend left=30] (objleftout6);
     \draw[fill=black,fill opacity=0.1] (objleftout21) to[bend right=30] (objleftout22) to[bend right=30] (objleftout23) to[bend left=30] (objleftout21);

     \draw[fill=black,fill opacity=0.1] (objleftout10) to[bend right=30] (objleftout11) to[bend right=30] (objleftout12) to[bend left=30] (objleftout10);
     \draw[fill=black,fill opacity=0.1] (objleftout25) to[bend right=30] (objleftout26) to[bend right=30] (objleftout27) to[bend left=30] (objleftout25);

     \node[inner sep=0pt] at (0,0) {$\mapsto$};

    \polygonnew{(4,0)}{objrightout}{30}{3}
      {1,2,3,4,5,6,7,8,9,10,11,12,13,14,15,\overline{1},\overline{2},\overline{3},\overline{4},\overline{5},\overline{6},\overline{7},\overline{8},\overline{9},\overline{10},\overline{11},\overline{12},\overline{13},\overline{14},\overline{15}}{1.1}
    \polygonnew{(4,0)}{objrightin}{6}{0.75}
      {\overline{18},\overline{17},\overline{16},18,17,16}{0.65}

     \draw[fill=black,fill opacity=0.1] (objrightout1) to[bend left=30] (objrightout29) to[bend right=30] (objrightout30) to[bend right=30] (objrightout1);
     \draw[fill=black,fill opacity=0.1] (objrightout16) to[bend left=30] (objrightout14) to[bend right=30] (objrightout15) to[bend right=30] (objrightout16);

     \draw[fill=black,fill opacity=0.1] (objrightout2) to[bend right=5] (objrightin1) to[bend right=50] (objrightin6) to[bend right=10] (objrightout2);
     \draw[fill=black,fill opacity=0.1] (objrightout17) to[bend right=5] (objrightin4) to[bend right=50] (objrightin3) to[bend right=10] (objrightout17);

     \draw[fill=black,fill opacity=0.1] (objrightout3) to[bend right=30] (objrightout4) to[bend left=10] (objrightin2) to[bend right=5] (objrightout3);
     \draw[fill=black,fill opacity=0.1] (objrightout18) to[bend right=30] (objrightout19) to[bend left=10] (objrightin5) to[bend right=5] (objrightout18);

     \draw[fill=black,fill opacity=0.1] (objrightout5) to[bend right=30] (objrightout9) to[bend right=30] (objrightout13) to[bend left=5] (objrightout5);
     \draw[fill=black,fill opacity=0.1] (objrightout20) to[bend right=30] (objrightout24) to[bend right=30] (objrightout28) to[bend left=5] (objrightout20);

     \draw[fill=black,fill opacity=0.1] (objrightout6) to[bend right=30] (objrightout7) to[bend right=30] (objrightout8) to[bend left=30] (objrightout6);
     \draw[fill=black,fill opacity=0.1] (objrightout21) to[bend right=30] (objrightout22) to[bend right=30] (objrightout23) to[bend left=30] (objrightout21);

     \draw[fill=black,fill opacity=0.1] (objrightout10) to[bend right=30] (objrightout11) to[bend right=30] (objrightout12) to[bend left=30] (objrightout10);
     \draw[fill=black,fill opacity=0.1] (objrightout25) to[bend right=30] (objrightout26) to[bend right=30] (objrightout27) to[bend left=30] (objrightout25);
    \end{tikzpicture}
  \caption{The action of the pseudo-rotation $\rotD$, Case (2)}
\label{fig:20}
\end{figure}

\begin{figure}
\begin{tikzpicture}[scale=1]

\polygonnew{(-4,0)}{objleftout}{40}{3}
  {2,3,4,5,6,7,8,9,10,11,12,13,14,15,16,17,18,19,20,%
   \overline{1},\overline{2},\overline{3},\overline{4},\overline{5},
   \overline{6},\overline{7},\overline{8},\overline{9},\overline{10},
   \overline{11},\overline{12},\overline{13},\overline{14},\overline{15},
   \overline{16},\overline{17},\overline{18},\overline{19},\overline{20},1}{1.1}

\polygonnew{(-4,0)}{objleftin}{8}{0.75}
  {22,21,\overline{24},\overline{23},\overline{22},\overline{21},24,23}{0.65}

\draw[fill=black,fill opacity=0.1]
      (objleftout40) to[bend right=30] (objleftout1)
                     to[bend right= 5] (objleftin8)
                     to[bend right= 5] (objleftout39)
                     to[bend right=30] (objleftout40);

\draw[fill=black,fill opacity=0.1]
      (objleftout2)  to[bend right=30] (objleftout3)
                     to[bend left = 5] (objleftout16)
                     to[bend right=20] (objleftin1)
                     to[bend right=10] (objleftout2);

\draw[fill=black,fill opacity=0.1]
      (objleftout4)  to[bend right=30] (objleftout9)
                     to[bend right=10] (objleftout10)
                     to[bend right=30] (objleftout11)
                     to[bend left =30] (objleftout4);

\draw[fill=black,fill opacity=0.1]
      (objleftout5)  to[bend right=30] (objleftout6)
                     to[bend right=30] (objleftout7)
                     to[bend right=30] (objleftout8)
                     to[bend left =30] (objleftout5);

\draw[fill=black,fill opacity=0.1]
      (objleftout12) to[bend right=30] (objleftout13)
                     to[bend right=30] (objleftout14)
                     to[bend right=30] (objleftout15)
                     to[bend left =30] (objleftout12);

\draw[fill=black,fill opacity=0.1]
      (objleftout17) to[bend right=30] (objleftout18)
                     to[bend left =10] (objleftin3)
                     to[bend right=50] (objleftin2)
                     to[bend left =10] (objleftout17);

\draw[fill=black,fill opacity=0.1]
      (objleftout19) to[bend right=30] (objleftout20)
                     to[bend right=30] (objleftout21)
                     to[bend right= 5] (objleftin4)
                     to[bend right= 5] (objleftout19);

\draw[fill=black,fill opacity=0.1]
      (objleftout22) to[bend right=30] (objleftout23)
                     to[bend left = 5] (objleftout36)
                     to[bend right=20] (objleftin5)
                     to[bend right=10] (objleftout22);

\draw[fill=black,fill opacity=0.1]
      (objleftout24) to[bend right=30] (objleftout29)
                     to[bend right=10] (objleftout30)
                     to[bend right=30] (objleftout31)
                     to[bend left =30] (objleftout24);

\draw[fill=black,fill opacity=0.1]
      (objleftout25) to[bend right=30] (objleftout26)
                     to[bend right=30] (objleftout27)
                     to[bend right=30] (objleftout28)
                     to[bend left =30] (objleftout25);

\draw[fill=black,fill opacity=0.1]
      (objleftout32) to[bend right=30] (objleftout33)
                     to[bend right=30] (objleftout34)
                     to[bend right=30] (objleftout35)
                     to[bend left =30] (objleftout32);
\draw[fill=black,fill opacity=0.1]
      (objleftout37) to[bend right=30] (objleftout38)
                     to[bend left =10] (objleftin7)
                     to[bend right=50] (objleftin6)
                     to[bend left =10] (objleftout37);

\node[inner sep=0pt] at (0,0) {$\mapsto$};

\polygonnew{(4,0)}{objrightout}{40}{3}
  {2,3,4,5,6,7,8,9,10,11,12,13,14,15,16,17,18,19,20,%
   \overline{1},\overline{2},\overline{3},\overline{4},\overline{5},
   \overline{6},\overline{7},\overline{8},\overline{9},\overline{10},
   \overline{11},\overline{12},\overline{13},\overline{14},\overline{15},
   \overline{16},\overline{17},\overline{18},\overline{19},\overline{20},1}{1.1}

\polygonnew{(4,0)}{objrightin}{8}{0.75}
  {\overline{21},24,23,22,21,\overline{24},\overline{23},\overline{22}}{0.65}

\draw[fill=black,fill opacity=0.1]
      (objrightout1) to[bend right=30] (objrightout2)
                     to[bend left =20] (objrightin2)
                     to[bend right=50] (objrightin1)
                     to[bend right=20] (objrightout1);

\draw[fill=black,fill opacity=0.1]
      (objrightout3) to[bend right=30] (objrightout4)
                     to[bend left = 5] (objrightout17)
                     to[bend right=10] (objrightin3)
                     to[bend right=20] (objrightout3);

\draw[fill=black,fill opacity=0.1]
      (objrightout5) to[bend right=30] (objrightout10)
                     to[bend right=10] (objrightout11)
                     to[bend right=30] (objrightout12)
                     to[bend left =30] (objrightout5);

\draw[fill=black,fill opacity=0.1]
      (objrightout6) to[bend right=30] (objrightout7)
                     to[bend right=30] (objrightout8)
                     to[bend right=30] (objrightout9)
                     to[bend left =30] (objrightout6);

\draw[fill=black,fill opacity=0.1]
      (objrightout13) to[bend right=30] (objrightout14)
                      to[bend right=30] (objrightout15)
                      to[bend right=30] (objrightout16)
                      to[bend left =30] (objrightout13);

\draw[fill=black,fill opacity=0.1]
      (objrightout18) to[bend right=30] (objrightout19)
                      to[bend right =30] (objrightout20)
                      to[bend right = 5] (objrightin4)
                      to[bend right = 5] (objrightout18);

\draw[fill=black,fill opacity=0.1]
      (objrightin5)  to[bend right=10] (objrightout21)
                     to[bend right =20] (objrightout22)
                     to[bend left  =20] (objrightin6)
                     to[bend right =50] (objrightin5);

\draw[fill=black,fill opacity=0.1]
      (objrightout23) to[bend right=30] (objrightout24)
                      to[bend left = 5] (objrightout37)
                      to[bend right=10] (objrightin7)
                      to[bend right=20] (objrightout23);

\draw[fill=black,fill opacity=0.1]
      (objrightout25) to[bend right=30] (objrightout30)
                      to[bend right=10] (objrightout31)
                      to[bend right=30] (objrightout32)
                      to[bend left =30] (objrightout25);

\draw[fill=black,fill opacity=0.1]
      (objrightout26) to[bend right=30] (objrightout27)
                      to[bend right=30] (objrightout28)
                      to[bend right=30] (objrightout29)
                      to[bend left =30] (objrightout26);

\draw[fill=black,fill opacity=0.1]
      (objrightout33) to[bend right=30] (objrightout34)
                      to[bend right=30] (objrightout35)
                      to[bend right=30] (objrightout36)
                      to[bend left =30] (objrightout33);

\draw[fill=black,fill opacity=0.1]
      (objrightout38) to[bend right=30] (objrightout39)
                      to[bend right=30] (objrightout40)
                      to[bend right= 5] (objrightin8)
                      to[bend right= 5] (objrightout38);
\end{tikzpicture}
\caption{The action of the pseudo-rotation $\rotD$, Case (2)}
\label{fig:20B}
\end{figure}

\begin{figure}
\begin{center}
  \begin{tikzpicture}[scale=1]

    \polygon{(-4,0)}{obj}{44}{3}
       {1,,,,a,,,,,,,,,,,,,,,,,\hspace*{25pt}m(n-1),\overline{1},,,,\overline{a},,,,,,,,,,,,,,,,,\overline{m(n-1)}\hspace*{25pt}}

    \polygoninner{(-4,0)}{objin}{16}{1.2}
      {,,\overline{m(n-1)+1}\hspace*{25pt},mn,mn-1,,,,,,\hspace*{25pt}m(n-1)+1,\overline{mn},\overline{mn-1},,,}

    % thick line connecting 1 and \overline{mn}
    \draw[line width=2.5pt,black] (obj1) to[bend left=50] (obj44);
    \draw[line width=2.5pt,black] (obj23) to[bend left=50] (obj22);

    % dotted line connecting 1 and a
    \draw[dash pattern=on 2pt off 2pt on 2pt off 2pt on 2pt off 2pt on
2pt off 16pt on 2pt off 2pt on 2pt off 2pt] (obj1) to[bend right=20]
(obj5);
    \draw[dash pattern=on 2pt off 2pt on 2pt off 2pt on 2pt off 2pt on
2pt off 16pt on 2pt off 2pt on 2pt off 2pt] (obj23) to[bend right=20]
(obj27);

    % line connecting a and \overline{mn}
    \draw (obj44) to[bend right=50] (obj5);
    \draw (obj22) to[bend right=50] (obj27);

    % line connecting \overlin{d} - b - e - ...
     \draw[dashed] (objin5) to[bend left=40] (objin6) to[bend left=40]
(objin7) to[bend left=40] (objin8) to[bend left=40] (objin9) to[bend
left=40] (objin10) to[bend left=40] (objin11);
     \draw (objin11) to[bend left=40] (objin12) to[bend
right=100,looseness=2] (objin5);
     \draw[dashed] (objin13) to[bend left=40] (objin14) to[bend
left=40] (objin15) to[bend left=40] (objin16) to[bend left=40]
(objin1) to[bend left=40] (objin2) to[bend left=40] (objin3);
     \draw (objin3) to[bend left=40] (objin4) to[bend
right=100,looseness=2] (objin13);

    \node[inner sep=0pt] at (0,0) {$\mapsto$};

    \polygon{(4,0)}{obj}{44}{3}
       {1,2,,,,a+1,,,,,,,,,,,,,,,,\hspace*{25pt}m(n-1),\overline{1},\overline{2},,,,\overline{a+1},,,,,,,,,,,,,,,,\overline{m(n-1)}\hspace*{25pt}}

    \polygoninner{(4,0)}{objin}{16}{1.2}
      {,\overline{m(n-1)+2}\hspace*{25pt},\overline{m(n-1)+1}\hspace*{25pt},mn,,,,,,\hspace*{25pt}m(n-1)+2,\hspace*{25pt}m(n-1)+1,\overline{mn},,,,}

    % thick line connecting 1 and \overline{mn}
    \draw[line width=2.5pt,black] (obj1) to[bend right=40] (obj28);
    \draw[line width=2.5pt,black] (obj23) to[bend right=40] (obj6);

    % dotted line connecting 1 and a
    \draw[dash pattern=on 2pt off 2pt on 2pt off 2pt on 2pt off 2pt on
2pt off 16pt on 2pt off 2pt on 2pt off 2pt] (obj2) to[bend right=10]
(obj6);
    \draw[dash pattern=on 2pt off 2pt on 2pt off 2pt on 2pt off 2pt on
2pt off 16pt on 2pt off 2pt on 2pt off 2pt] (obj24) to[bend right=10]
(obj28);

    % line connecting 2 and \overline{1}
    \draw (obj2) to[bend left=60] (obj23);
    \draw (obj24) to[bend left=60] (obj1);

    % line connecting \overlin{d} - b - e - ...
     \draw[dashed] (objin4) to[bend left=40] (objin5) to[bend left=40]
(objin6) to[bend left=40] (objin7) to[bend left=40] (objin8) to[bend
left=40] (objin9) to[bend left=40] (objin10);
     \draw (objin4) to[bend right=40] (objin3) to[bend
left=100,looseness=2] (objin10);
     \draw[dashed] (objin12) to[bend left=40] (objin13) to[bend
left=40] (objin14) to[bend left=40] (objin15) to[bend left=40]
(objin16) to[bend left=40] (objin1) to[bend left=40] (objin2);
     \draw (objin12) to[bend right=40] (objin11) to[bend
left=100,looseness=2] (objin2);

    \end{tikzpicture}
\end{center}
\caption{The action of the pseudo-rotation $\rotD$, Case (3)}
\label{fig:14}
\end{figure}

\begin{figure}
  \centering
  \begin{tikzpicture}[scale=1.35]
    \polygon{(-2.9,0)}{obj}{26}{2.5}
      {1,2,3,4,5,6,7,8,9,10,11,,12,\overline{1},\overline{2},\overline{3},\overline{4},\overline{5},\overline{6},\overline{7},\overline{8},\overline{9},\overline{10},\overline{11},,\overline{12}}

    \polygoninner{(-2.9,0)}{objin}{12}{0.5}
      {15,,14,,13,,\overline{15},,\overline{14},,\overline{13}}

      \draw[line width=2.5pt,black] (obj1) to[bend left=50] (obj26);
      \draw[line width=2.5pt,black] (obj14) to[bend left=50] (obj13);

     \draw[fill=black,fill opacity=0.1] (obj26) to[bend right=30]
(obj1) to[bend right=30] (obj8) to[bend left=30] (obj26);
     \draw[fill=black,fill opacity=0.1] (obj2) to[bend right=30]
(obj6) to[bend right=30] (obj7) to[bend left=30] (obj2);
     \draw[fill=black,fill opacity=0.1] (obj3) to[bend right=30]
(obj4) to[bend right=30] (obj5) to[bend left=30] (obj3);
     \draw[fill=black,fill opacity=0.1] (obj9) to[bend right=30]
(obj10) to[bend right=30] (obj11) to[bend left=30] (obj9);
     \draw[fill=black,fill opacity=0.1] (obj13) to[bend right=30]
(obj14) to[bend right=30] (obj21) to[bend left=30] (obj13);
     \draw[fill=black,fill opacity=0.1] (obj15) to[bend right=30]
(obj19) to[bend right=30] (obj20) to[bend left=30] (obj15);
     \draw[fill=black,fill opacity=0.1] (obj16) to[bend right=30]
(obj17) to[bend right=30] (obj18) to[bend left=30] (obj16);
     \draw[fill=black,fill opacity=0.1] (obj22) to[bend right=30]
(obj23) to[bend right=30] (obj24) to[bend left=30] (obj22);

     \draw[fill=black,fill opacity=0.1] (objin9) to[bend left=40]
(objin11) to[bend left=40] (objin1) to[bend right=100,looseness=2] (objin9);

     \draw[fill=black,fill opacity=0.1] (objin3) to[bend left=40]
(objin5) to[bend left=40] (objin7) to[bend right=100,looseness=2] (objin3);

    \polygon{(2.9,0)}{obj}{26}{2.5}
      {1,,2,3,4,5,6,7,8,9,10,11,12,\overline{1},,\overline{2},\overline{3},\overline{4},\overline{5},\overline{6},\overline{7},\overline{8},\overline{9},\overline{10},\overline{11},\overline{12}}

    \node[inner sep=0pt] at (0,0) {$\mapsto$};

    \polygoninner{(2.9,0)}{objin}{12}{0.5}
      {15,,14,,13,,\overline{15},,\overline{14},,\overline{13}}

      \draw[line width=2.5pt,black] (obj1) to[bend left=30] (obj23);
      \draw[line width=2.5pt,black] (obj14) to[bend left=30] (obj10);

     \draw[fill=black,fill opacity=0.1] (obj3) to[bend right=30]
(obj10) to[bend right=30] (obj14) to (obj3);
     \draw[fill=black,fill opacity=0.1] (obj4) to[bend right=30]
(obj8) to[bend right=30] (obj9) to[bend left=30] (obj4);
     \draw[fill=black,fill opacity=0.1] (obj5) to[bend right=30]
(obj6) to[bend right=30] (obj7) to[bend left=30] (obj5);
     \draw[fill=black,fill opacity=0.1] (obj11) to[bend right=30]
(obj12) to[bend right=30] (obj13) to[bend left=30] (obj11);

     \draw[fill=black,fill opacity=0.1] (obj16) to[bend right=30]
(obj23) to[bend right=30] (obj1) to (obj16);
     \draw[fill=black,fill opacity=0.1] (obj17) to[bend right=30]
(obj21) to[bend right=30] (obj22) to[bend left=30] (obj17);
     \draw[fill=black,fill opacity=0.1] (obj18) to[bend right=30]
(obj19) to[bend right=30] (obj20) to[bend left=30] (obj18);
     \draw[fill=black,fill opacity=0.1] (obj24) to[bend right=30]
(obj25) to[bend right=30] (obj26) to[bend left=30] (obj24);

     \draw[fill=black,fill opacity=0.1] (objin11) to[bend left=40]
(objin1) to[bend left=40] (objin3) to[bend right=100,looseness=2] (objin11);

     \draw[fill=black,fill opacity=0.1] (objin5) to[bend left=40]
(objin7) to[bend left=40] (objin9) to[bend right=100,looseness=2] (objin5);
    \end{tikzpicture}
  \caption{The action of the pseudo-rotation $\rotD$, Case (3)}
\label{fig:24}
\end{figure}

\begin{remarks}
(i) \Cref{fig:12} provides a schematic illustration of 
the construction in Case~(1) of the statement of 
the above theorem, while \Cref{fig:23,fig:23a} provide two concrete
examples. In both examples, $m=3$, $n=5$, $a=2$, $b=6$, $d=11$, and
$e=10$.
However, they differ in the existence of a zero block: in \Cref{fig:23}
there is a zero block, while in \Cref{fig:23a} there is none.
Case~(2) of the statement of \Cref{prop:2D} is illustrated in
\Cref{fig:13}, while \Cref{fig:19,fig:20,fig:20B}
provide three concrete examples.
In these examples,
$m=4$, $n=6$, $a=\overline{23}$, $b=17$, and $d=19$, respectively
$m=3$, $n=6$, $a=\overline{17}$, $b=13$, and $d=14$, respectively
$m=3$, $n=6$, $a=23$, $b=21$, and $d=19$.
Finally, \Cref{fig:14} provides a schematic illustration of 
the construction in Case~(3) of the statement of 
the above theorem, while \Cref{fig:24} contains a concrete example,
in which $m=3$, $n=5$, and $a=8$.

\medskip
(ii) Case~(2) can be seen as a degenerate special case of Case~(1).
To see this, one chooses $b=\mp mn$ in Case~(1). 
\end{remarks}

\begin{proof}[Proof of \Cref{prop:2D}]
Let us first assume that, 
in the positive $m$-divisible type~$D$ non-crossing partition~$\pi$,
the block of $\overline{m(n-1)}$ contains another negative element
which is different from~$\overline{mn-1}$.
Among those, let $\overline{mi_1-1}$ be the one which is the
predecessor of $\overline{m(n-1)}$ in the block.
Obviously, we have $i_1>0$.
\Cref{fig:16D} provides sketches of such situations. 
Let $(w_0,w_1,\dots,w_m)$ denote the
element of $\mNCPlus[D_n]$ which corresponds to~$\pi$ under the
bijection $\Nam{D_n}m$. By the definition of $\Nam{D_n}m$, we
see that $w_{m-1}(n-1)=i_1$. From
\Cref{lem:LRD}(1), we infer $w_{m-1}^L=w_{m-1}$ and
$w_{m-1}^R=\one$. This means that the pseudo-rotation~$\rotD$
reduces to ordinary rotation in this special case, as we claimed. 

\begin{figure}
\begin{center}
\begin{tikzpicture}
    \polygonlabel{(-4,-21)}{obj}{44}{2.5}
      {1,,,,,,,,,,,,,,,,,,,,,,,,,,,,\hspace{1pt},,,,,,,,\overline{mi_1-1},,,,,,\overline{m(n-1)},}

    \draw[line width=2.5pt,black] (obj1) to[bend left=30] (obj29);
    \draw (obj1)  to[bend left=30]
          (obj29);
    \draw (obj43) to[bend left=50]
          (obj37);

    \draw[dash pattern=on 22pt off 100pt] (obj43) to[bend left=20] (obj29);
    \draw[dash pattern=on 22pt off 100pt] (obj37) to[bend left=20] (obj29);
    \draw[dash pattern=on 22pt off 140pt] (obj1)  to[bend right=20] (obj8);

    \node[inner sep=0pt] at (0,-21) {};

    \polygonlabel{( 4,-21)}{obj}{44}{2.5}
      {1,,,,,,,,,,,,,,,,,,,,,,,,,,,,,,,,,,,,\overline{mi_1-1},,,,,,\overline{m(n-1)},}

    \draw[line width=2.5pt,black] (obj1) to[bend left=50] (obj43);
%     \draw (obj1)  to[bend left=30]
%           (obj29);
    \draw (obj43) to[bend left=50]
          (obj37);

    \draw[dash pattern=on 22pt off 100pt] (obj37) to[bend left=20] (obj29);
    \draw[dash pattern=on 22pt off 140pt] (obj1)  to[bend right=20] (obj8);

\end{tikzpicture}
\end{center}
  \caption{The block of $\overline{m(n-1)}$ contains another negative element}
\label{fig:16D}
\end{figure}

\medskip
For the remainder of the proof, we assume that the block of
$\overline{m(n-1)}$ contains no other negative element which
is different from~$\overline{mn-1}$. By arguments that are completely
analogous to the corresponding ones in the proof of
\Cref{prop:2B}, one shows that~$1$ is an element 
of that block.

\medskip
Let again $(w_0,w_1,\dots,w_m)$ denote the
element of $\mNCPlus[D_n]$ which corresponds to~$\pi$ under the
bijection $\Nam{D_n}m$. 

\smallskip
{\sc Case (1).} 
This is proved in the same way as Case~(1) in the proof of
\Cref{prop:2B}.

\smallskip
{\sc Case (2).} This case is also similar to Case~(1) in the proof
of \Cref{prop:2B}. We are at times sketchy here,
leaving some details to the reader.

By the assumptions of this case, and by what we already
showed, we know that $a$, $\overline{m(n-1)}$, and~$1$
are successive elements in a block, and that
$\overline d$, $\mp{mn}$ and $\overline b$ are successive elements
in a block.

We write $a=mA-1$, $b=mB+1$, and $d=mD-1$,
for suitable integers~$A$, $B$, and $D$. Here, $D$ is positive.
The integer $A$ can be between $1$ and $n-1$ or equal to $\mp n$,
and $B$ can be between $A$ and $D-1$ or equal to $\mp(n-1)$.
In view of the above observations,
by the definition of $\Nam{D_n}m$, we have
\begin{align}
\notag
%\label{eq:wmex1}
w_{m-1}(\overline{n-1})&=A,\\
\notag
%\label{eq:wmex1}
w_{m-1}(\pm n)&=D,\\
\label{eq:wmex4c}
w_{m}(n-1)&=n-1,\\
\label{eq:wmex4d}
w_m(B)&=\pm n.
\end{align}
Consequently, the cycle of $w_{m-1}$ containing $n-1$ is of the
form 
$$((D,\dots,n-1,\overline A,\dots,\pm n)).$$
(At this point, one has to argue that $n-1$ and $\pm n$ are indeed
in the same cycle of $w_{m-1}$.)
By \Cref{lem:LRD}(3), we have 
$$w^R_{m-1}=((n-1,\pm n)),\quad \text{and}\quad 
w^L_{m-1}\text{ contains the cycles }((D,\dots,n-1))((\overline A,\dots,\pm n)).$$
Together with \eqref{eq:wmex4c} and \eqref{eq:wmex4d}, 
this implies the relations
\begin{align*}
%\label{eq:wmex1}
w^L_{m-1}(n-1)&=D,\\
%\label{eq:wmex1}
w^L_{m-1}(\pm n)&=\overline A,\\
%\label{eq:wmex3}
(cw^R_{m-1}w_mc^{-1})(1)&=\pm n,\\
%\label{eq:wmex3}
(cw^R_{m-1}w_mc^{-1})(B+1)&=\overline 1.
\end{align*}
By the definition of $\Nam{D_n}m$, this means that
$\overline{d+1}$, $1$, and~$\overline{b+1}$ are successive elements in 
their block in the image of~$\pi$ under~$\Krewplustilde^{(m)}$
(conjugated by~$\Nam{D_n}m$), that
$a+1$, $\pm(mn-m+1)$, and~$2$ are successive elements in 
their block, 
while all other ``block connections" are rotated by one unit
in clockwise direction on the outer circle and by one unit in
counter-clockwise direction on the inner circle.
This is in accordance with the corresponding assertion in the
proposition.

\smallskip
{\sc Case (3).} 
We are again somewhat sketchy here.
We write again $a=mA-1$ for a suitable positive integer~$A$.
By the definition of $\Nam{D_n}m$,
we have
\begin{align}
\notag
%\label{eq:wmex1}
w_{m-1}(n-1)&=\overline A,\\
\notag
%\label{eq:wmex1}
w_{m-1}(n)&=\overline n,\\
\label{eq:wmex4f}
w_m(n-1)&=n-1,\\
\label{eq:wmex4g}
w_m(n)&=n.
\end{align}
Consequently, the cycle of $w_{m-1}$ containing $n-1$ is of the
form 
$$[A,\dots,\dots,n-1][n].$$
By \Cref{lem:LRD}(4), we have 
$$w^R_{m-1}=[n-1][n],\quad \text{and }\quad 
w^L_{m-1}\text{ contains the cycles }((A,\dots,n-1))((n)).$$
Together with \eqref{eq:wmex4f} and \eqref{eq:wmex4g}, 
this implies the relations
\begin{align*}
%\label{eq:wmex1}
w^L_{m-1}(n-1)&=A,\\
%\label{eq:wmex1}
w^L_{m-1}(n)&=n,\\
%\label{eq:wmex3}
(cw^R_{m-1}w_mc^{-1})(1)&=\overline 1,\\
%\label{eq:wmex3}
(cw^R_{m-1}w_mc^{-1})(n)&=\overline n.
\end{align*}
By the definition of $\Nam{D_n}m$, this means that
$\overline{a+1}$, $1$, and~$\overline 2$ are successive elements in 
their block in the image of~$\pi$ under~$\Krewplustilde^{(m)}$
(conjugated by~$\Nam{D_n}m$), that $mn$, $\overline{mn-m+1}$, and $mn-m+2$
are successive elements in their block,
while the other ``block connections" are rotated by one unit
in clockwise direction on the outer circle and by {\it two} units in
counter-clockwise direction on the inner circle.
This is in accordance with the corresponding assertion in the
proposition.

This completes the proof of the proposition.
\end{proof}

\subsection{The combinatorial positive Kreweras maps}
\label{sec:combD}

The definition of $\rotD$ from \Cref{prop:2D} to
make sense on an \emph{arbitrary} non-crossing
partition needed that blocks should be of size at least~$2$.
It is a simple matter to extend this so that it
makes sense for non-crossing partitions without any restrictions.

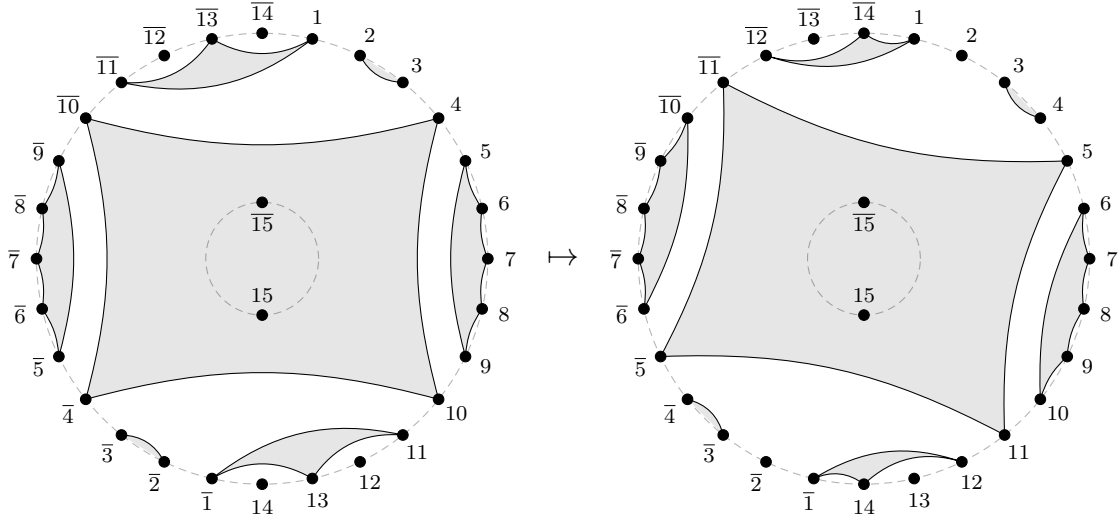
\begin{figure}
\begin{center}
  \begin{tikzpicture}[scale=1]
    \polygonnew{(-4,0)}{obj}{28}{3}
      {1,2,3,4,5,6,7,8,9,10,11,12,13,14,\overline{1},\overline{2},\overline{3},\overline{4},\overline{5},\overline{6},\overline{7},\overline{8},\overline{9},\overline{10},\overline{11},\overline{12},\overline{13},\overline{14}}{1.1}
    \polygonnew{(-4,0)}{objin}{2}{0.75}
      {15,\overline{15}}{0.65}

     \draw[fill=black,fill opacity=0.1] (obj18) to[bend right=15]
(obj24) to[bend right=15] (obj4) to[bend right=15] (obj10) to[bend
right=15] (obj18);
     \draw[fill=black,fill opacity=0.1] (obj19) to[bend right=15]
(obj20) to[bend right=15] (obj21) to[bend right=15] (obj22) to[bend
right=15] (obj23) to[bend left=15] (obj19);
     \draw[fill=black,fill opacity=0.1] (obj5) to[bend right=15]
(obj6) to[bend right=15] (obj7) to[bend right=15] (obj8) to[bend
right=15] (obj9) to[bend left=15] (obj5);
     \draw[fill=black,fill opacity=0.1] (obj1) to[bend left=25]
(obj25) to[bend right=30] (obj27) to[bend right=30] (obj1);
     \draw[fill=black,fill opacity=0.1] (obj15) to[bend left=25]
(obj11) to[bend right=30] (obj13) to[bend right=30] (obj15);
     \draw[fill=black,fill opacity=0.1] (obj2) to[bend right=30] (obj3);
     \draw[fill=black,fill opacity=0.1] (obj16) to[bend right=30] (obj17);

    \node[inner sep=0pt] at (0,0) {$\mapsto$};

    \polygonnew{(4,0)}{obj}{28}{3}
      {1,2,3,4,5,6,7,8,9,10,11,12,13,14,\overline{1},\overline{2},\overline{3},\overline{4},\overline{5},\overline{6},\overline{7},\overline{8},\overline{9},\overline{10},\overline{11},\overline{12},\overline{13},\overline{14}}{1.1}
    \polygonnew{(4,0)}{objin}{2}{0.75}
      {15,\overline{15}}{0.65}

     \draw[fill=black,fill opacity=0.1] (obj19) to[bend right=15]
(obj25) to[bend right=15] (obj5) to[bend right=15] (obj11) to[bend
right=15] (obj19);
     \draw[fill=black,fill opacity=0.1] (obj20) to[bend right=15]
(obj21) to[bend right=15] (obj22) to[bend right=15] (obj23) to[bend
right=15] (obj24) to[bend left=15] (obj20);
     \draw[fill=black,fill opacity=0.1] (obj6) to[bend right=15]
(obj7) to[bend right=15] (obj8) to[bend right=15] (obj9) to[bend
right=15] (obj10) to[bend left=15] (obj6);
     \draw[fill=black,fill opacity=0.1] (obj1) to[bend left=25]
(obj26) to[bend right=30] (obj28) to[bend right=30] (obj1);
     \draw[fill=black,fill opacity=0.1] (obj15) to[bend left=25]
(obj12) to[bend right=30] (obj14) to[bend right=30] (obj15);
     \draw[fill=black,fill opacity=0.1] (obj3) to[bend right=30] (obj4);
     \draw[fill=black,fill opacity=0.1] (obj17) to[bend right=30] (obj18);
    \end{tikzpicture}
\end{center}
\caption{Example of the action of the pseudo-rotation $\rotD$ under
presence of singleton blocks}
\label{fig:B8}
\end{figure}

\begin{definition} \label{def:phiallgD}
We define the action of $\rotD$ on a positive type~$D$ 
non-crossing partition~$\pi$
of $\{1,2,\dots, n,\overline1,\overline2,\dots,\overline n\}$ 
as in \Cref{prop:2D} with $m=1$
(cf.\ \Cref{fig:12,fig:13,fig:14}), 
except when the block containing $n-1$ is a
singleton block (in which case also $\{\overline{n-1}\}$ is a singleton 
block; here, \Cref{prop:2D} would not make sense).
If $\{n-1\}$ is a singleton block, then the image of~$\rotD$
is the non-crossing partition which arises from~$\pi$ 
by replacing the singleton block $\{n-1\}$ 
by the singleton block $\{\overline 2\}$, by replacing the
singleton block $\{\overline {n-1}\}$ by the singleton block $\{2\}$, 
and $i$ by~$i+1$ and $\overline i$ by~$\overline{i+1}$ for
$i=2,3,\dots,n-2$
in each of the remaining blocks of~$\pi$, leaving~$1$, $\overline1$, $n$,
and~$\overline n$ untouched; see \Cref{fig:B8} for an example
with $n=15$.
\end{definition}

As it turns out, this definition is also in agreement with the
image under $\Nam{D_{n}}1$ of the map $\Krewplustilde^{(1)}$ in
\Cref{def:K-alt} for the case where $m=1$.

\begin{theorem} \label{prop:2-m=1D}
Let $n$ be a positive integer.
Under the bijection $\Nam{D_{n}}1$, the map $\Krewplustilde^{(1)}$
from \Cref{def:K-alt} translates to the combinatorial map~$\rotD$ described in
\Cref{def:phiallgD}.
\end{theorem}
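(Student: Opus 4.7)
The plan is to follow the strategy of the proof of \Cref{prop:2-m=1}, adapted to the type~$D$ situation. First, I would recall that in the case $m=1$ the bijection $\Nam{D_n}1$ reduces to the Kreweras complement $w\mapsto cw^{-1}$ with $c=[1,2,\dots,n-1][n]$, whose combinatorial realisation on the annulus is described in \cite[Sec.~7]{KrMuAB}. Let $w=w^L=t_1\cdots t_k$ be an element of $\mNCPlus[D_n][1]$, so that every $t_i$ is a reflection lying in $\invc^L$, and let $\pi$ be the type-$D$ non-crossing partition corresponding to $cw^{-1}$. By \Cref{def:K-alt}, $\Krewplustilde^{(1)}$ sends each $t_i$ to its first conjugate $c^j t_i c^{-j}\in\invc^L$, after which these new reflections are rearranged according to the reflection ordering induced by~$\invc$.

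Next, I would argue that for a ``generic'' reflection $t_i=((a,b))$ whose conjugate $((a+1,b+1))$ still lies in $\invc^L$ (i.e., does not belong to the list of boundary reflections in $\invc^R$ given in \eqref{eq:reflsD}), the action on $t_i$ is exactly $((a,b))\mapsto((a+1,b+1))$ --- the straightforward cyclic shift induced by conjugation by~$c$, with the conventions that the outer-circle index wraps from $n-1$ to $\overline 1$ and that $n$ and $\overline n$ are exchanged. Translated back to the Kreweras complement~$\pi$, this produces a rotation by one unit (clockwise on the outer circle and counter-clockwise on the inner circle) of the arcs coming from such reflections, which matches the generic behaviour prescribed in \Cref{def:phiallgD}.

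The heart of the argument is the analysis of those~$t_i$ whose first $c$-conjugate falls into $\invc^R$, that is, becomes one of $((\overline i,n-1))$, $((n-1,\overline n))$, or $((n-1,n))$. Such $t_i$'s arise precisely from the cycle structures listed in Cases~(2),~(3) and~(4) of \Cref{lem:LRD}, and have to be conjugated two or three times by~$c$ before re-entering $\invc^L$. Tracking the minimal~$j$ together with the resulting reshuffling of arcs in~$\pi$ case by case, one recovers exactly the three non-trivial transformations described in Cases~(1)--(3) of \Cref{prop:2D} with $m=1$, as codified in \Cref{def:phiallgD}. In the degenerate subcase $t_i=((n-2,n-1))$, which creates a singleton block $\{n-1\}$ in~$\pi$, the computation shows that the image contains the singleton $\{\overline 2\}$, in agreement with the explicit extension stipulated in \Cref{def:phiallgD}.

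The main obstacle is the book-keeping required by the interplay between the outer and inner circles of the annulus. Unlike in types~$A$ or~$B$, the Kreweras complement in type~$D$ may contain bridging blocks linking both circles, and the action of~$c$ treats the reflections $((n-1,\pm n))$ and $[n-1][n]$ differently from the other boundary reflections. Verifying that, in each of the four cases of \Cref{lem:LRD}, the minimal $c$-conjugation sends the relevant transposition to the reflection realising the announced pseudo-rotation, while the other~$t_j$'s undergo genuine rotations, is the principal technical chore; once organised along the lines of \Cref{lem:LRD}, however, each case reduces to a direct verification that can be carried out in parallel with the proof of \Cref{prop:2D}.
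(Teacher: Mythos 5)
Your proposal is correct and follows exactly the route the paper intends: the paper omits this proof, stating it is completely analogous to the proof of \Cref{prop:2-m=1}, and your adaptation (reducing $\Nam{D_n}1$ to the Kreweras complement for $m=1$, treating generic reflections as plain rotation, and handling the boundary reflections of $\invc^R$ via the case analysis of \Cref{lem:LRD} in parallel with \Cref{prop:2D}) is precisely that analogue. No gap to report.
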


Since the proof of the theorem is completely analogous to the proof
of \Cref{prop:2-m=1}, we omit it here.

\medskip
We conclude with the following theorem describing the order of the
combinatorial\break map~$\rotD$. In view of our previous discussion, the
first part of the theorem follows from
\Cref{cor:order} for the type~$D_N$ with $m=1$.
We provide a direct, combinatorial, proof 
which also allows us to show the more refined assertions in the
second part of the theorem. See \Cref{app:order-D}.

\begin{theorem} \label{lem:N-2D}
Let $m$ and $n$ be positive integers.
The order of the map~$\rotD$ acting on $\mNCDPlus$
is $m(n-1)-1$ if $n$ is even, and it is equal
to $2m(n-1)-2$ if $n$ is odd. More precisely, given an element~$\pi$ of
$\mNCDPlus$ without bridging block, we have $\rotD^{m(n-1)-1}(\pi)=\pi$.
On the other hand, if $\pi\in \mNCDPlus$ contains a bridging block, 
then $\rotD^{m(n-1)-1}(\pi)=\pi$ if $n$ is even, while only
$\rotD^{2m(n-1)-2}(\pi)=\pi$ if $n$ is odd.
\end{theorem}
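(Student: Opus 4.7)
The plan is to split $\mNCDPlus$ according to whether $\pi$ contains a bridging block (i.e., a block with elements from both the outer and the inner circle) or not, and to analyse each class separately. I would first check that $\rotD$ preserves each of these two classes, which follows by direct inspection of the cases in \Cref{prop:2D}: Case~(1) and the ordinary rotation case preserve the bridging/non-bridging status of $\pi$, Case~(2) applies only to bridging partitions and keeps them bridging, and Case~(3) applies only to non-bridging partitions and keeps them non-bridging.

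For $\pi$ without a bridging block, condition~(D3) in the definition of $\mNCD$ determines the blocks on the inner circle uniquely from those on the outer circle. The map that simply forgets the inner circle therefore establishes a bijection between non-bridging partitions in $\mNCDPlus$ and $\mNCBPlus$ on the alphabet $\{1,\dots,m(n-1),\overline 1,\dots,\overline{m(n-1)}\}$. A direct comparison shows that, under this bijection, Cases~(1) and~(3) of \Cref{prop:2D} correspond to Cases~(1) and~(2) of \Cref{prop:2B}, so that $\rotD$ is intertwined with $\rotB$. Applying \Cref{lem:N-2B} with $N=m(n-1)$ then gives $\rotD^{m(n-1)-1}(\pi)=\pi$ in the non-bridging case, independently of the parity of $n$.

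For partitions $\pi$ with a bridging block, the outer-circle structure still evolves through a type $B$-like pseudo-rotation, so after $m(n-1)-1$ iterations the outer-circle configuration is restored. The remaining analysis consists in determining the cumulative action on the inner circle over this outer cycle. A careful case-by-case tracking of the inner-circle shift under the applicable cases (ordinary rotation, Case~(1), Case~(2)), together with a counting of their occurrences during one outer cycle, shows that the net counter-clockwise displacement of the inner-circle configuration after $m(n-1)-1$ steps equals $0 \pmod{2m}$ when $n$ is even and $m \pmod{2m}$ when $n$ is odd. Since the inner circle has $2m$ positions, a net shift by exactly $m$ is the sign flip on the inner elements alone, and this is \emph{not} a symmetry of a bridging partition (a type~$D$ partition is invariant only under the simultaneous sign flip on both circles, not on one of them alone). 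Consequently, $\rotD^{m(n-1)-1}(\pi)=\pi$ holds when $n$ is even, while for $n$ odd a further $m(n-1)-1$ iterations are required to cancel the inner shift of~$m$, yielding $\rotD^{2m(n-1)-2}(\pi)=\pi$ but $\rotD^{m(n-1)-1}(\pi)\neq\pi$.

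The main obstacle is the precise evaluation of the cumulative inner-circle shift modulo $2m$ after one outer cycle in the bridging setting. This requires a careful bookkeeping of how the frequencies of the various cases of \Cref{prop:2D} depend on~$n$ and on the specifics of~$\pi$, and of how each contribution to the inner shift combines to give the final displacement. The delicate point is to extract a universal formula for the net shift that depends only on the parity of~$n-1$ and not on the particular partition~$\pi$, which hinges on a parity argument controlling the number of Case~(2) occurrences during one full outer cycle of~$\rotD$.
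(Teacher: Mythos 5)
The non-bridging half of your plan is essentially the paper's: forget the inner circle, identify the partition with an element of $\mNCBPlus[m(n-1)][1]$ (more precisely, with a positive type~$B$ partition of $\{1,\dots,m(n-1),\overline1,\dots,\overline{m(n-1)}\}$), check that $\rotD$ intertwines with $\rotB$, and apply \Cref{lem:N-2B} with $N=m(n-1)$. The gap lies in the bridging case, and it sits exactly where you flag "the main obstacle": the value of the net inner-circle displacement is asserted, not proved, and as stated it does not match what $\rotD^{m(n-1)-1}$ actually does. After $m(n-1)-1$ applications the outer part of each block is \emph{not} pointwise restored; it is mapped to its negative (a shift by $m(n-1)$ on the outer circle of $2m(n-1)$ elements — this comes for free from \Cref{lem:N-2B}, since away from the inner circle $\rotD$ acts like $\rotB$), while the inner part of a bridging block is shifted counter-clockwise by exactly $m(n-1)$ units: one unit per application plus a single extra "jump" contributed by the Case~(2)-type step of \Cref{prop:2D}, independently of how often Case~(2) occurs. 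On the $2m$-element inner circle this absolute shift is $\equiv m$ (negation) when $n$ is even and $\equiv 0$ (identity) when $n$ is odd — the reverse of your assignment. The correct invariance criterion is therefore not "identity versus sign flip on the inner circle alone", but whether the inner shift matches the outer negation: for $n$ even both circles are negated, so the sign-invariant partition is preserved; for $n$ odd the outer parts are negated while the inner parts are fixed, so a bridging block is sent to a set that is neither a block nor the negative of a block, and one needs the second round of $m(n-1)-1$ steps. Your two imprecisions (outer "restored" and the swapped parities) happen to cancel and give the stated conclusion, but no argument in the proposal actually pins down the shift, and the mechanism you propose (a parity count of Case~(2) occurrences) is not what controls it.

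A second, smaller issue is the zero-block case. By condition~(D2) a zero block contains the entire inner circle and outer elements, so it is literally a bridging block, yet for such $\pi$ one has $\rotD^{m(n-1)-1}(\pi)=\pi$ for \emph{every} $n$; your key principle that a type~$D$ partition "is invariant only under the simultaneous sign flip on both circles, not on one of them alone" fails precisely there, since the inner sign flip fixes the zero block. So the bridging branch of your argument must exclude zero blocks (the paper handles them as a separate case, welding the inner circle to the zero block and again reducing to $\rotB$), and correspondingly your (D3)-based justification of the forgetful bijection in the non-bridging branch does not literally cover these partitions either (for them the inner circle is determined by~(D2), not~(D3)). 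This is easy to patch, but as written your two-case split either misclassifies the zero-block partitions or leaves them uncovered.
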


\subsection{Enumeration of positive non-crossing partitions}
\label{sec:enumD}

Here we present our enumeration results for positive non-crossing
partitions in $\mNCDPlus$.
In the theorem below we give the total number of these
non-crossing partitions, and as well the number of those of them
in which all block sizes are equal to~$m$.

\begin{theorem} \label{cor:2-D}
Let $m$ and $n$ be positive integers.
The number
of positive $m$-divisible non-crossing partitions of 
$\{1,2,\dots,mn,\overline1,\overline2,\dots,\overline{mn}\}$ of type~$D$
equals
\begin{equation} \label{eq:multichains-si-pos-a-D-1} 
\frac {2m(n-1)+n-2} {n}
\binom {(m+1)(n-1)-1} {n-1},
\end{equation}
while the number of these partitions of which all blocks have size~$m$ equals
\begin{equation} \label{eq:multichains-si-pos-D-1} 
\frac {2m(n-1)-n} {n}
\binom {m(n-1)-1} {n-1}.
\end{equation}
\end{theorem}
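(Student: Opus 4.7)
The plan is to derive both identities by direct specialisation of the uniform positive Fu\ss--Catalan count~\eqref{eq:mdivncplus} to type~$D_n$. Since the identity $|\mNCPlus[D_n][m]| = \mCatplus(D_n)$ has already been established uniformly (see \cite[Cor.~7.4]{BRT2012} in the crystallographic case and~\cite[Thm.~7.2.1]{STW2015} in general), the proof of~\eqref{eq:multichains-si-pos-a-D-1} reduces to evaluating the right-hand side of~\eqref{eq:mdivncplus} at the degrees $\{2, 4, \ldots, 2(n-1), n\}$ and the Coxeter number $h = 2(n-1)$ of~$D_n$.

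For the first formula, I would isolate the factor coming from the degree $d = n$, namely $(mh + n - 2)/n = (2m(n-1) + n - 2)/n$, and then gather the factors associated with the even degrees $2k$, $k = 1, \ldots, n-1$:
\[
  \prod_{k=1}^{n-1} \frac{mh + 2k - 2}{2k}
  \;=\; \prod_{k=1}^{n-1} \frac{m(n-1) + k - 1}{k}
  \;=\; \binom{m(n-1) + n - 2}{n-1}
  \;=\; \binom{(m+1)(n-1) - 1}{n-1}.
\]
Multiplying the two pieces gives precisely the expression in~\eqref{eq:multichains-si-pos-a-D-1}.

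For the second identity, my plan is to reduce to the first by exploiting the structure of positive $m$-divisible non-crossing partitions all of whose blocks have size exactly~$m$. By \Cref{rem:id-D}, such a partition corresponds under $\Nam{D_n}m$ to a tuple $(w_0, w_1, \ldots, w_m) \in \mNCPlus[D_n][m]$ with $w_0 = \one$. On the other hand, the embedding $\ph : \mNC[D_n][m-1] \hookrightarrow \mNC[D_n][m]$ from \Cref{sec:poskrewmap} sends $(u_0, \ldots, u_{m-1})$ to $(\one, u_0, \ldots, u_{m-1})$, and one checks directly from \Cref{def:NCpos} that this embedding restricts to a bijection between $\mNCPlus[D_n][m-1]$ and the positive tuples in $\mNCPlus[D_n][m]$ with vanishing initial component: positivity of the embedded tuple says that $\one \cdot u_0 \cdots u_{m-2} = u_0 \cdots u_{m-2}$ has full support, which is exactly the positivity condition for $(u_0, \ldots, u_{m-1}) \in \mNC[D_n][m-1]$. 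Consequently the count in question equals $\mCatplus[m-1](D_n)$, and substituting $m-1$ for $m$ in the first formula yields
\[
  \frac{2(m-1)(n-1) + n - 2}{n} \binom{m(n-1) - 1}{n-1}
  \;=\; \frac{2m(n-1) - n}{n} \binom{m(n-1) - 1}{n-1},
\]
which is~\eqref{eq:multichains-si-pos-D-1}.

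There is no genuine obstacle in this argument: both identities are immediate specialisations of a uniformly established formula, and the only point requiring care is the verification that $\ph$ restricts correctly to positive elements, which is transparent from the definition of positivity.
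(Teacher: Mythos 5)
Your proof is correct, but it takes a genuinely different route from the paper. You obtain \eqref{eq:multichains-si-pos-a-D-1} by specialising the uniform count $|\mNCPlus[D_n][m]|=\mCatplus(D_n)$ (quoted from Buan--Reiten--Thomas and \cite{STW2015}) to the degrees $2,4,\dots,2(n-1),n$ and $h=2(n-1)$, and \eqref{eq:multichains-si-pos-D-1} by the observation that partitions with all blocks of size~$m$ correspond via \Cref{rem:id-D} to tuples with $w_0=\one$, which under the embedding~$\ph$ are exactly the positive elements of $\mNC[D_n][m-1]$, so the count is $\mCatplus[m-1](D_n)$; both evaluations and the positivity check for~$\ph$ are right. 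The paper instead argues purely combinatorially inside its annulus model: it splits $\mNCDPlus$ into partitions without a bridging block, which are in bijection with positive type-$B_{n-1}$ partitions and counted by \eqref{eq:multichains-pos-B-1} resp.\ \eqref{eq:multichains-si-pos-B-1}, and partitions with a bridging block, which arise from Construction~3 with $\rRr=1$ and are counted by \Cref{thm:enumD-2} resp.\ \Cref{thm:enumD-4}; the two contributions $\binom{(m+1)(n-1)-1}{n-1}$ and $2\binom{(m+1)(n-1)-1}{n}$ then sum to the stated formulas. Your route is shorter and avoids the type-$D$ generating-function machinery, at the price of importing the uniform formula and the dictionary of \Cref{prop:1D}, which is stated only for $m\ge 2$ (for $m=1$ one should add a word, e.g.\ by checking the two formulas directly or invoking the $m=1$ discussion of \Cref{sec:combD}); the paper's decomposition, by contrast, is self-contained and is precisely the refinement (bridging versus non-bridging) that it reuses for the pseudo-rotation-invariant enumerations and the cyclic sieving theorem in type~$D$.
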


\begin{proof}
Elements of $\mNCDPlus$ may or may not have a bridging block.
The case of those without bridging block may be reduced 
to a type~$B$ situation. For the other case, we refer
to the special case of an enumeration result from \Cref{sec:rotenumD}.

To be precise,
as we argue in the proof of \Cref{lem:N-2D} given in \Cref{app:order-D},
partitions in 
$\mNCDPlus$ without a bridging block are in bijection with
partitions in $\mNCBPlus[n-1]$. By~\eqref{eq:multichains-pos-B-1} with $n$
replaced by~$n-1$, the number of these partitions is
$$
\binom {(m+1)(n-1)-1}{n-1}.
$$
On the other hand, a partition in $\mNCDPlus$ with a bridging
block must arise from Construction~3 in \Cref{sec:rotD} below with $\rRr=1$.
By \Cref{thm:enumD-2} below with $\rRr=1$, the number of these
partitions is
$$
2\binom {(m+1)(n-1)-1}{n}.
$$
It is easy to verify that the sum of these two numbers equals~\eqref{eq:multichains-si-pos-a-D-1}.

Similarly, by \eqref{eq:multichains-si-pos-B-1} with $b=n$ and
subsequent replacement of~$n$ by~$n-1$, the number of partitions in 
$\mNCDPlus$ without a bridging block, all of whose blocks have
size~$m$, is given by
$$
\binom {m(n-1)-1}{n-1}.
$$
Moreover, by \Cref{thm:enumD-4} below with $\rRr=1$, the number of 
partitions in $\mNCDPlus$ that contain a bridging block and 
all of whose blocks have size~$m$, is given by
$$
2\binom {m(n-1)-1}{n}.
$$
It is easy to verify that the sum of these two numbers equals~\eqref{eq:multichains-si-pos-D-1}.
\end{proof}

\subsection{Characterisation of pseudo-rotationally invariant elements}
\label{sec:rotD}

In this subsection, we describe the elements of
$\mNCDPlus$ that are invariant under a
power of~$\rotD$. 
We call an element of $\mNCDPlus$
\defn{$\rrr $-pseudo-rotationally 
  invariant\/} if it is invariant under the action
of~$\rotD^{2(m(n-1)-1)/\rrr }$.

We achieve the description of pseudo-rotationally invariant elements
in $\mNCDPlus$
by providing three constructions of such elements (see below)
together with \Cref{lem:allD} which then tells us how to obtain all
such invariant elements from the constructions. 

\begin{figure}
  \centering
  \begin{tikzpicture}[scale=1]
    \polygon{(0,0)}{obj}{30}{2.5}
      {1,2,3,4,5,6,7,8,9,10,11,12,13,14,15,\overline{1},\overline{2},\overline{3},\overline{4},\overline{5},\overline{6},\overline{7},\overline{8},\overline{9},\overline{10},\overline{11},\overline{12},\overline{13},\overline{14},\overline{15}}
    \polygoninner{(0,0)}{objin}{12}{0.6}
            {,\overline{16},,18,,17,,16,,\overline{18},,\overline{17},}

%      \draw[line width=2.5pt,black] (obj1) to[bend left] (obj30);
     \draw[fill=black,fill opacity=0.1] (obj1) to[bend right=30]
(obj2) to[bend right=30] (obj9) to[bend right=30] (obj16) to[bend
right=30] (obj17) to[bend right=30] (obj24) to[bend right=30] (obj1);
     \draw[fill=black,fill opacity=0.1] (obj3) to[bend right=30]
(obj7) to[bend right=30] (obj8) to[bend left=30] (obj3);
     \draw[fill=black,fill opacity=0.1] (obj4) to[bend right=30]
(obj5) to[bend right=30] (obj6) to[bend left=30] (obj4);
     \draw[fill=black,fill opacity=0.1] (obj10) to[bend right=30]
(obj14) to[bend right=30] (obj15) to[bend left=30] (obj10);
     \draw[fill=black,fill opacity=0.1] (obj11) to[bend right=30]
(obj12) to[bend right=30] (obj13) to[bend left=30] (obj11);
     \draw[fill=black,fill opacity=0.1] (obj18) to[bend right=30]
(obj22) to[bend right=30] (obj23) to[bend left=30] (obj18);
     \draw[fill=black,fill opacity=0.1] (obj19) to[bend right=30]
(obj20) to[bend right=30] (obj21) to[bend left=30] (obj19);
     \draw[fill=black,fill opacity=0.1] (obj25) to[bend right=30]
(obj29) to[bend right=30] (obj30) to[bend left=30] (obj25);
     \draw[fill=black,fill opacity=0.1] (obj26) to[bend right=30]
(obj27) to[bend right=30] (obj28) to[bend left=30] (obj26);
    \end{tikzpicture}
  \caption{A $4$-pseudo-rotationally invariant non-crossing
    partition
  in $\mNCDPlus[6][3]$}
\label{fig:25}
\end{figure}

\medskip
\noindent
{\sc Construction 1.} 
We start with a
$2\rRr $-pseudo-rotationally invariant positive
non-crossing partition of type~$B$ of $\{1,2,\dots,m(n-1),\overline
1,\overline2,\dots,\overline{m(n-1)}\}$,
all of whose blocks have size divisible by~$m$.
By \Cref{lem:allB}, this partition contains a zero block.
We add the elements 
$\{m(n-1)+1,\dots,mn,\overline{m(n-1)+1},\dots,\overline{mn}\}$
of the inner circle to this zero block, thereby creating a
$2\rRr $-pseudo-rotationally invariant 
positive $m$-divisible non-crossing partition of type~$D$.
We illustrate this construction in the case where $\rRr=2$, $m=3$, and $n=6$.
We start with the non-crossing partition 
\begin{multline*}
\{\{1,2,9,\overline1,\overline2,\overline9\},\{3,7,8\},\{4,5,6\},
\{10,14,15\},\{11,12,13\},\\
\{\overline3,\overline7,\overline8\},\{\overline4,\overline5,\overline6\},
\{\overline{10},\overline{14},\overline{15}\},
\{\overline{11},\overline{12},\overline{13}\}\}
\end{multline*}
of $\{1,2,\dots,\overline{15}\}$, 
which is indeed $4$-pseudo-rotationally invariant
in the above sense (see \Cref{fig:34}). Then one adds 
the elements $\{16,17,18,\overline{16},\overline{17},\overline{18}\}$
of the inner circle to the zero block; see \Cref{fig:25}.

\begin{figure}
  \centering
  \begin{tikzpicture}[scale=1]
    \polygon{(-4,0)}{obj}{24}{2.5}
      {1,2,3,4,5,6,7,8,9,10,11,12,\overline{1},\overline{2},\overline{3},\overline{4},\overline{5},\overline{6},\overline{7},\overline{8},\overline{9},\overline{10},\overline{11},\overline{12}}

    \polygoninner{(-4,0)}{objin}{12}{0.6}
      {,,\overline{14},,\overline{13},,15,,14,,13,,\overline{15}}

      \draw[line width=2.5pt,black] (obj1) to[bend left=50] (obj24);
      \draw[line width=2.5pt,black] (obj13) to[bend left=50] (obj12);

     \draw[fill=black,fill opacity=0.1] (obj3) to[bend right=30]
(obj4) to[bend right=30] (obj5) to[bend left=30] (obj3);
     \draw[fill=black,fill opacity=0.1] (obj2) to[bend right=30]
(obj6) to[bend right=30] (obj7) to[bend left=30] (obj2);
     \draw[fill=black,fill opacity=0.1] (obj1) to[bend right=10]
 (obj8) to[bend left=10] (obj24) to[bend right=30] (obj1);
     \draw[fill=black,fill opacity=0.1] (obj9) to[bend right=30]
(obj10) to[bend right=30] (obj11) to[bend left=30] (obj9);

     \draw[fill=black,fill opacity=0.1] (obj15) to[bend right=30]
(obj16) to[bend right=30] (obj17) to[bend left=30] (obj15);
     \draw[fill=black,fill opacity=0.1] (obj14) to[bend right=30]
(obj18) to[bend right=30] (obj19) to[bend left=30] (obj14);
     \draw[fill=black,fill opacity=0.1] (obj13) to[bend right=10]
 (obj20) to[bend left=10] (obj12) to[bend right=30] (obj13);
     \draw[fill=black,fill opacity=0.1] (obj21) to[bend right=30]
(obj22) to[bend right=30] (obj23) to[bend left=30] (obj21);

     \draw[fill=black,fill opacity=0.1] (objin9) to[bend left=40]
(objin12) to[bend left=40] (objin1) to[bend right=100,looseness=2]
(objin9);

     \draw[fill=black,fill opacity=0.1] (objin3) to[bend left=40]
(objin5) to[bend left=40] (objin7) to[bend right=100,looseness=2]
(objin3);

    \polygon{(4,0)}{obj}{24}{2.5}
      {1,2,3,4,5,6,7,8,9,10,11,12,\overline{1},\overline{2},\overline{3},\overline{4},\overline{5},\overline{6},\overline{7},\overline{8},\overline{9},\overline{10},\overline{11},\overline{12}}

    \polygoninner{(4,0)}{objin}{12}{0.6}
      {,,\overline{14},,\overline{13},,15,,14,,13,,\overline{15}}

      \draw[line width=2.5pt,black] (obj1) to[bend left=20] (obj21);
      \draw[line width=2.5pt,black] (obj13) to[bend left=20] (obj9);

     \draw[fill=black,fill opacity=0.1] (obj4) to[bend right=30]
(obj5) to[bend right=30] (obj6) to[bend left=30] (obj4);
     \draw[fill=black,fill opacity=0.1] (obj3) to[bend right=30]
(obj7) to[bend right=30] (obj8) to[bend left=30] (obj3);
     \draw[fill=black,fill opacity=0.1] (obj2) to[bend right=10]
(obj9) to[bend right=20] (obj13) to[bend right=20] (obj2);
     \draw[fill=black,fill opacity=0.1] (obj10) to[bend right=30]
(obj11) to[bend right=30] (obj12) to[bend left=30] (obj10);

     \draw[fill=black,fill opacity=0.1] (obj16) to[bend right=30]
(obj17) to[bend right=30] (obj18) to[bend left=30] (obj16);
     \draw[fill=black,fill opacity=0.1] (obj15) to[bend right=30]
(obj19) to[bend right=30] (obj20) to[bend left=30] (obj15);
     \draw[fill=black,fill opacity=0.1] (obj14) to[bend right=10]
(obj21) to[bend right=20] (obj1) to[bend right=20] (obj14);
     \draw[fill=black,fill opacity=0.1] (obj22) to[bend right=30]
(obj23) to[bend right=30] (obj24) to[bend left=30] (obj22);

     \draw[fill=black,fill opacity=0.1] (objin11) to[bend left=40]
(objin2) to[bend left=40] (objin3) to[bend right=100,looseness=2]
(objin11);

     \draw[fill=black,fill opacity=0.1] (objin5) to[bend left=40]
(objin7) to[bend left=40] (objin9) to[bend right=100,looseness=2]
(objin5);

    \end{tikzpicture}
  \caption{Two $2$-pseudo-rotationally invariant non-crossing
    partitions
  in $\mNCDPlus[5][3]$}
\label{fig:26}
\end{figure}

\medskip
\noindent
{\sc Construction 2.} 
We start with a positive
non-crossing partition of type~$B$ of $\{1,2,\dots,\break m(n-1),\overline
1,\overline2,\dots,\overline{m(n-1)}\}$ without zero block,
all of whose blocks have size divisible by~$m$. We remark that this
partition has order $m(n-1)-1$ as type~$B$ non-crossing partition; see~\Cref{lem:N-2B}.
We then add the elements $\{m(n-1)+1,\dots,mn,\overline{m(n-1)+1},\dots,\overline{mn}\}$
of the inner circle, which are partitioned into two blocks of size~$m$ according to
the rule~(D3) in \Cref{sec:realD}. Thereby we create a
positive $m$-divisible non-crossing partition of type~$D$ which has order
$m(n-1)-1$ (see \Cref{lem:N-2D}). 
Thus, this partition is $2$-pseudo-rotationally
invariant.
We illustrate this construction in the case where $m=3$ and $n=5$.
We start with the type~$B$ positive non-crossing partition 
\begin{equation*}
\{\{1,8,\overline{12}\},\{2,6,7\},\{3,4,5\},\{9,10,11\},
\{\overline1,\overline8,{12}\},\{\overline2,\overline6,\overline7\},
\{\overline3,\overline4,\overline5\},\{\overline9,\overline{10},\overline{11}\}\}
\end{equation*}
of $\{1,2,\dots,\overline{12}\}$ (see \Cref{fig:11b}).
Then we add the elements
$\{13,14,15,\overline{13},\overline{14},\overline{15}\}$ on the inner
circle, partitioned into the two blocks $\{\overline{15},13,14\}$
and $\{{15},\overline{13},\overline{14}\}$
according to Condition~(D3) in the definition of $\mNCD$. See
the left part of \Cref{fig:26}. On the right, the result of application
of~$\rotD$ (according to Case~(3) of \Cref{prop:2D}) is shown.

\begin{figure}
  \centering
  \begin{tikzpicture}[scale=1]
    \polygon{(-4,0)}{obj}{30}{2.5}
      {1,2,3,4,5,6,7,8,9,10,11,12,13,14,15,\overline{1},\overline{2},\overline{3},\overline{4},\overline{5},\overline{6},\overline{7},\overline{8},\overline{9},\overline{10},\overline{11},\overline{12},\overline{13},\overline{14},\overline{15}}

    \polygoninner{(-4,0)}{objin}{10}{0.9}
      {18,17,16,\overline{20},\overline{19},\overline{18},\overline{17},\overline{16},20,19}

      \draw[line width=2.5pt,black] (obj1) to[bend left=50] (obj30);
      \draw[line width=2.5pt,black] (obj16) to[bend left=50] (obj15);

     \draw[fill=black,fill opacity=0.1] (obj2) to[bend right=30]
(obj3) to[bend right=30]
(obj4) to[bend right=30] (obj5) to[bend right=30] (obj6) to[bend
left=30] (obj2);
     \draw[fill=black,fill opacity=0.1] (obj9) to[bend right=30]
(obj10) to[bend right=30]
(obj11) to[bend right=30] (obj12) to[bend right=30] (obj13) to[bend
left=30] (obj9);
     \draw[fill=black,fill opacity=0.1] (obj1) to[bend right=30]
(obj7) to[bend right=5] (objin1) to[bend right=100, looseness=2]
(objin10) to[bend right=10] (obj30) to[bend right=30] (obj1);
     \draw[fill=black,fill opacity=0.1] (obj8) to[bend right=30]
(obj14) to[bend right=5] (objin4) to[bend right=100, looseness=2]
(objin3) to[bend right=100, looseness=2] (objin2) to[bend left=10]
(obj8);

     \draw[fill=black,fill opacity=0.1] (obj17) to[bend right=30]
(obj18) to[bend right=30]
(obj19) to[bend right=30] (obj20) to[bend right=30] (obj21) to[bend
left=30] (obj17);
     \draw[fill=black,fill opacity=0.1] (obj24) to[bend right=30]
(obj25) to[bend right=30]
(obj26) to[bend right=30] (obj27) to[bend right=30] (obj28) to[bend
left=30] (obj24);
     \draw[fill=black,fill opacity=0.1] (obj16) to[bend right=30]
(obj22) to[bend right=5] (objin6) to[bend right=100, looseness=2]
(objin5) to[bend right=10] (obj15) to[bend right=30] (obj16);
     \draw[fill=black,fill opacity=0.1] (obj23) to[bend right=30]
(obj29) to[bend right=5] (objin9) to[bend right=100, looseness=2]
(objin8) to[bend right=100, looseness=2] (objin7) to[bend left=10]
(obj23);

    \end{tikzpicture}
  \caption{A $4$-pseudo-rotationally invariant non-crossing
    partition
  in $\mNCDPlus[4][5]$}
\label{fig:27}
\end{figure}

\medskip
\noindent
{\sc Construction 3.}
According to \Cref{lem:Const3} below, this construction only applies if $\rRr\mid n$
and $\rRr \mid(m+1)$.

We start with a block $B$ of size divisible by~$m$ 
that contains $\overline{m(n-1)}$ as only negative element on the
outer circle, and that contains~$1$ and possibly further elements;
if it contains 
elements from the inner circle then, according to rule~(D1) in 
\Cref{sec:realD}, among these there 
must be $mn-1$ or $\overline{mn-1}$). To this block, we add more
blocks of size divisible by~$m$ in a non-crossing fashion such that,
together, they form a non-crossing partition of successive elements
on the outer circle starting with
$\overline{m(n-1)},1, \dots$, and of~$x$ successive elements on the inner circle.
(According to rule~(D1), the latter must end in $mn-1$ or $\overline{mn-1}$
when read counter-clockwise.)
We call this (partial) non-crossing partition the \defn{fundamental seed}.
We place $\rRr -1$ \emph{modified\/} copies along the annulus, which,
together with the fundamental seed, cover the elements
$\{\overline{m(n-1)},1,2,\dots,m(n-1)-1\}$ on the outer circle and the
elements $\{\mp mn,\pm(m(n-1)+1),\pm(m(n-1)+2),\dots,\pm(mn-1)\}$
on the inner circle.
The modification consists in
deleting the ``first" element from $B$ on the outer circle
(that is, $\overline{m(n-1)}$) and replacing it by 
an element on
the inner circle. A copy of the obtained partition is then formed by
replacing $i$ by~$-i$, for all~$i$. Together, a
positive $m$-divisible non-crossing partition of type~$D$ which is
$\rRr $-pseudo-rotationally invariant is obtained.
We illustrate this construction in the case where $\rRr=2$, $m=5$, and $n=4$;
cf.\ \Cref{fig:27}.
We start with the block $\{\overline{15},1,7,18,19\}$ and augment it to
the partial non-crossing partition 
$$
\{\{\overline{15},1,7,18,19\},\{2,3,4,5,6\}\}.
$$
A modified copy of this fundamental seed is formed,
$$
\{\{8,14,\overline{20},16,17\},\{9,10,11,12,13\}\},
$$
and added to it. Finally, a copy of the obtained partition is formed by
replacing $i$ by~$-i$, for all~$i$. Together, a 
positive $m$-divisible non-crossing partition of type~$D$ is obtained;
see again \Cref{fig:27}. This non-crossing partition is indeed
$4$-pseudo-rotationally invariant, that is, it is invariant under
the action of~$\rotD^7$. 

\begin{lemma} \label{lem:Const3}
Let $\rRr\mid\big(m(n-1)-1\big)$.
Construction~{\em3} is only applicable if 
$\rRr\mid n$ and $\rRr \mid(m+1)$.
Moreover, the number of elements in the fundamental seed equals $mn/\rRr$.
\end{lemma}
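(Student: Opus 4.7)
The plan is to extract both divisibility conditions, as well as the claim on the seed size, from a short bookkeeping of how many elements of the annulus lie in the fundamental seed versus in each of its $\rRr-1$ modified copies, combined with the standing hypothesis $\rRr\mid(m(n-1)-1)$.

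First, I would let $o$ and $i$ denote, respectively, the number of outer-circle elements and inner-circle elements that lie in the fundamental seed, so that the seed has $o+i$ elements in total. By the description of Construction~3, every modified copy is obtained by deleting the unique ``first'' outer element $\overline{m(n-1)}$ of the seed and replacing it by an element on the inner circle; consequently each modified copy contains $o-1$ outer-circle elements and $i+1$ inner-circle elements. Since the seed together with the $\rRr-1$ modified copies partitions the $m(n-1)$ outer elements $\{\overline{m(n-1)},1,2,\dots,m(n-1)-1\}$ and the $m$ inner elements $\{\mp mn,\pm(m(n-1)+1),\dots,\pm(mn-1)\}$, I would obtain the two counting identities
\[
o+(\rRr-1)(o-1)=m(n-1),\qquad i+(\rRr-1)(i+1)=m,
\]
which simplify to
\[
\rRr(o-1)=m(n-1)-1,\qquad \rRr(i+1)=m+1.
\]
Adding these gives $\rRr(o+i)=mn$, which is precisely the assertion that the fundamental seed has $mn/\rRr$ elements.

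The two divisibility conditions then fall out as immediate arithmetic consequences. The identity $\rRr(i+1)=m+1$ shows directly that $\rRr\mid(m+1)$. Adding this to the hypothesis $\rRr\mid(m(n-1)-1)$ yields $\rRr\mid mn$. On the other hand, $\gcd(\rRr,m)$ divides both $\rRr$ and $m$, hence also $m+1$, and therefore divides $(m+1)-m=1$; so $\gcd(\rRr,m)=1$, and $\rRr\mid mn$ forces $\rRr\mid n$.

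The only non-routine aspect is the interpretive step of verifying that the prescription of the ``modified copy'' in Construction~3 trades exactly one outer-circle element for one inner-circle element, and that the $\rRr$ pieces indeed tile the indicated half-annulus without overlap. Once this is unpacked from the definition, the rest of the argument is the short computation above, and I do not anticipate any genuine obstacle.
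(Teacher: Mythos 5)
Your proof is correct and follows essentially the same route as the paper: the inner-circle count $i+(\rRr-1)(i+1)=m$, i.e.\ $\rRr(i+1)=m+1$, is exactly the paper's key identity, and the deduction $\rRr\mid mn$ together with $\gcd(\rRr,m)=1$ giving $\rRr\mid n$ matches the paper's argument. The only (harmless) difference is that you also record the outer-circle identity and add the two to get the seed size $mn/\rRr$, where the paper simply observes that the seed and its copies cover half the annulus and all pieces have equal size.
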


\begin{proof}
The two conditions are in fact equivalent. Indeed, if $\rRr\mid n$, then
$\rRr\mid(m+1)$ follows from $\rRr\mid \big(m(n-1)-1\big)$. Conversely, 
if $\rRr\mid(m+1)$, then, again due to $\rRr\mid \big(m(n-1)-1\big)$,
we infer $\rRr\mid mn$. Since $\rRr$ and $m$ must be mutually prime because of
$\rRr\mid(m+1)$, it follows that $\rRr\mid n$.

In order to show that $\rRr\mid(m+1)$,
we write $x$ for the number of elements on the inner circle that are
contained in the fundamental seed according to Construction~3. 
Since the $\rRr-1$ modified copies
of the fundamental seed that are placed around the annulus in
Construction~3 have one more element on the inner circle than the
fundamental seed itself, and since the fundamental seed
together with its $\rRr-1$ copies covers one half of the inner circle, we have
$$
x+(\rRr-1)(x+1)=m,
$$
or, equivalently,
\begin{equation} \label{eq:rotrel1} 
\rRr(x+1)=m+1.
\end{equation}
This implies that $\rRr\mid (m+1)$. 

\medskip
The second part of the assertion of the lemma is obvious since the
fundamental seed together with its $\rRr-1$ copies covers half of
the annulus, that is, $mn$ elements.
\end{proof}

\begin{theorem} \label{lem:allD}
Let $m,n,\rrr$ be positive integers with $\rrr \ge2$. Then
all $\rrr $-pseudo-rotationally invariant positive non-crossing
partitions of\/ $\{1,2,\dots,mn,\overline1,\overline2,\dots,\overline{mn}\}$
of type~$D$ 
are obtained by starting with a non-crossing partition
obtained by Construction~{\em1}, {\em2}, or~{\em3},
and applying the pseudo-rotation~$\rotD$ repeatedly to it.

More precisely, a partition arising from Construction~{\em1} is 
$2\rRr$-pseudo-rotationally invariant. Furthermore, a partition arising
from Construction~{\em2} is $2$-pseudo-rotationally invariant.
Finally, a partition arising from Construction~{\em3} is
$2\rRr$-pseudo-rotationally invariant if $n/\rRr$ is even, and it is
$\rRr$-pseudo-rotationally invariant if $n/\rRr$ is odd.
\end{theorem}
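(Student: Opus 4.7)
The plan is to prove invariance and completeness separately, using the refined order information from \Cref{lem:N-2D} together with the type~$B$ characterisation in \Cref{lem:allB}.

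For the invariance claims, I would verify each construction directly. For Construction~1, the input is a $2\rRr$-pseudo-rotationally invariant positive type~$B$ partition in $\mNCBPlus[n-1]$; since the inner elements are all absorbed into the zero block, the pseudo-rotation $\rotD$ acts on non-inner elements exactly as the type~$B$ pseudo-rotation $\rotB$ (all the exceptional cases of \Cref{prop:2D} collapse to Case~(1) or~(2) with the same combinatorics as \Cref{prop:2B}), so $\rotD^{(m(n-1)-1)/\rRr}$ fixes the partition. For Construction~2, the element has no bridging block, so by \Cref{lem:N-2D} we only need invariance under $\rotD^{m(n-1)-1}$, which reduces to applying $\rotD$ to a type~$B$ positive partition $m(n-1)-1$ times and checking via Case~(3) of \Cref{prop:2D} that the inner blocks return to their starting position; this is easy since Case~(3) rotates them by two units and there are $2m$ inner elements with an even number of rotations in this scenario. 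For Construction~3, using \Cref{lem:Const3}, the fundamental seed together with its $\rRr-1$ modified copies covers the annulus in a structurally symmetric way, and a careful tracking of $\rotD^{(2(m(n-1)-1))/(2\rRr)}$ shows the partition is sent to its copy under the appropriate cyclic shift; the parity of $n/\rRr$ determines whether one pass is enough or two are needed because of the mismatch between the outer-circle rotation and the inner-circle counter-rotation.

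For the completeness claim, I would classify a given $\rrr$-pseudo-rotationally invariant $\pi \in \mNCDPlus$ according to its bridging structure. If $\pi$ has a bridging block containing \emph{all} inner elements, then deleting those inner elements produces a $2\rRr$-pseudo-rotationally invariant type~$B$ positive partition with a zero block (where $\rrr = 2\rRr$); by \Cref{lem:allB} this is recovered from the type~$B$ construction, so $\pi$ arises from Construction~1. If $\pi$ has no bridging block, then Condition~(D3) forces the inner blocks to be uniquely determined, and the outer part is a type~$B$ positive partition without zero block; by \Cref{lem:N-2B} and the parity of the order, only $\rrr=2$ is possible here, giving Construction~2. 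The remaining case is that $\pi$ has at least one bridging block but no bridging block covers all inner elements, forcing multiple such blocks; by $\rrr$-pseudo-rotational invariance these bridging blocks are permuted cyclically, and one of them (the one containing~$1$) together with the non-bridging blocks between it and its image under the shortest invariant shift constitutes the fundamental seed of Construction~3. The arithmetic divisibility constraints $\rRr \mid n$ and $\rRr \mid (m+1)$ then arise automatically (cf.\ \Cref{lem:Const3}) from the counting of outer versus inner elements within one rotational fundamental domain.

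The main obstacle is the last case: ensuring that an invariant partition with multiple bridging blocks is completely determined by one ``seed,'' and that the structural relationship between the seed on the outer circle and its inner-circle portion is forced by the non-crossing condition together with the invariance. The subtlety is that the pseudo-rotation $\rotD$ differs substantially from ordinary rotation near the block containing~$1$, so one must verify that, after sufficiently many iterations, the map agrees with pure rotation on blocks not involving $1,\overline1,m(n-1),\overline{m(n-1)}$; the absence of a universal zero block is used here to ensure that each application of $\rotD$ stays in Case~(1) or~(2) of \Cref{prop:2D} and behaves predictably. Once this is established, a standard bookkeeping argument matching the number of elements in the fundamental seed with those in its modified copies (as in \Cref{lem:Const3}) completes the classification.
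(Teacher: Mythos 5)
Your overall architecture coincides with the paper's: Constructions~1 and~2 are handled by reduction to type~$B$ (the inner circle being either absorbed into the zero block or forced by rule~(D3)), Construction~3 by tracking the displacement of the inner circle, and completeness by classifying~$\pi$ according to its bridging structure and invoking \Cref{lem:allB}. Two places in your sketch, however, gloss over steps that require real work. First, in the completeness case with bridging blocks you never address odd~$\rrr$. There the hypothesis is invariance under $\rotD^{2(m(n-1)-1)/\rrr}$, whereas Construction~3 with $\rRr=\rrr$ produces structural recurrence only after $(m(n-1)-1)/\rrr$ steps; to pass from the former to the latter one needs the multiplicative-inverse-of-$2$ modulo~$\rrr$ argument (as in~\eqref{eq:rotB/2}) combined with the fact, extracted from the proof of \Cref{lem:N-2D}, that $\rotD^{m(n-1)-1}$ returns the partition up to a sign change on the inner circle. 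Your phrase ``its image under the shortest invariant shift'' presupposes exactly what has to be proved at this point.

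Second, in the no-bridging-block case you cite \Cref{lem:N-2B} (the order of the map on the whole set) where what is actually needed is the element-wise statement \Cref{lem:ord=N-1}: every positive type~$B$ partition without zero block has orbit of full length $m(n-1)-1$ under $\rotB$, and it is this that forces $\rrr=2$. Relatedly, your invariance argument for Construction~2 via counting how often Case~(3) of \Cref{prop:2D} is triggered is both unjustified (the number of such occurrences among the $m(n-1)-1$ applications is not obviously even) and unnecessary: since rule~(D3) determines the inner blocks uniquely from the outer ones, their return is automatic once the outer type~$B$ partition returns, which is how the paper argues. With these two repairs your sketch matches the paper's proof.
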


The proof of this theorem is given in \Cref{app:inv-D}.

\subsection{Enumeration of pseudo-rotationally invariant elements}
\label{sec:rotenumD}

With the characterisation of pseudo-rotationally invariant elements
of $\mNCDPlus$ at hand, we can now embark on
the enumeration of such elements. 
\Cref{thm:enumD-1} gives the total number of elements of
$\mNCDPlus$ that arise from Constructions~1 and~2, while
\Cref{thm:enumD-2} gives the total number of elements of
$\mNCDPlus$ that arise from Constructions~3.
\Cref{thm:enumD-3} and \Cref{thm:enumD-4} provide the cardinalities
of the subsets of those elements all of whose block sizes are~$m$.

\begin{proposition} \label{thm:enumD-1}
The total number of elements of $\mNCDPlus$ that 
arise from Constructions~{\em1} and~{\em2} in \Cref{sec:rotD} equals
\begin{equation} \label{eq:C1-2a} 
\binom {(m+1)(n-1)-1}{n-1}.
\end{equation}
If $\rrr$ is even, then the
number of elements of $\mNCDPlus$ that are $\rrr$-pseudo-rotationally
invariant and arise from Constructions~{\em1} and~{\em2} in \Cref{sec:rotD} equals
\begin{equation} \label{eq:C1-2b} 
\binom {\fl{2((m+1)(n-1)-1)/\rrr }}{\fl{2(n-1)/\rrr }}.
\end{equation}
\end{proposition}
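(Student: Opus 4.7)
The plan is to set up a bijection $\Psi$ between the elements of $\mNCDPlus$ produced by Constructions~1 and~2 and the type~$B$ positive $m$-divisible non-crossing partitions in $\mNCBPlus[n-1]$ on the set $\{1,2,\dots,m(n-1),\overline1,\overline2,\dots,\overline{m(n-1)}\}$. By the very way the two constructions are phrased, Construction~1 produces exactly those $\pi\in\mNCDPlus$ for which the zero block of the underlying partition is a zero bridging block containing the entire inner circle, while Construction~2 produces exactly those $\pi\in\mNCDPlus$ with no bridging block whatsoever (the inner circle being partitioned into the two blocks forced by rule~(D3)). These two cases are disjoint and exhaust all partitions built by~1 and~2, and the inverse of~$\Psi$ is transparent: either trim the inner-circle elements off the zero bridging block, or simply delete the two (D3)-blocks on the inner circle. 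With this bijection in hand, \eqref{eq:C1-2a} follows immediately from the total count~\eqref{eq:multichains-pos-B-1} in \Cref{cor:3-B} after replacing $n$ by $n-1$.

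For the refined count~\eqref{eq:C1-2b}, I would write $\rRr:=\rrr/2$ (using that $\rrr$ is even) and establish that $\Psi$ intertwines the type-$D$ action of $\rotD^{2(m(n-1)-1)/\rrr}$ with the type-$B$ action of $\rotB^{(m(n-1)-1)/\rRr}$ on $\mNCBPlus[n-1]$. Consequently, the $\rrr$-pseudo-rotationally invariant partitions produced by Constructions~1 and~2 correspond bijectively to the $2\rRr$-pseudo-rotationally invariant partitions in $\mNCBPlus[n-1]$, and the formula~\eqref{eq:multichains-pos-B} of \Cref{cor:5-B}, applied with $n$ replaced by $n-1$ and with $\rRr=\rrr/2$, reproduces the right-hand side of~\eqref{eq:C1-2b}.

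The main technical point is the $\Psi$-equivariance of these pseudo-rotations. On the outer circle, the action of $\rotD$ coincides with that of $\rotB$ on the corresponding type~$B$ picture by the very definitions of~$\rotD$ (\Cref{prop:2D}) and~$\rotB$ (\Cref{prop:2B}); this handles every block of a Construction~2 partition that avoids the inner circle, as well as every non-bridging block of a Construction~1 partition. For Construction~1 the zero bridging block is setwise fixed by every power of $\rotD$, so no further check is needed. The delicate case is Construction~2: one must verify that the pair of inner-circle blocks prescribed by rule~(D3) for the rotated outer partition is exactly the $\rotD$-image of the inner-circle pair prescribed by~(D3) for the original outer partition. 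This will follow from the ``right-most visible element'' formulation of~(D3), together with a direct tracking of how this element is transported by $\rotB$ (including the pseudo-rotation correction near~$1$ in Cases~(1) and~(2) of \Cref{prop:2D}). Once this compatibility is in place, both formulas fall out of the type-$B$ enumerations already proved.
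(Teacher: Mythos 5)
Your proposal is correct and follows essentially the same route as the paper: the paper also counts these elements via the bijection with $\mNCBPlus[n-1]$ obtained by deleting (respectively ignoring) the inner circle, invokes the equivariance of $\rotD$ and $\rotB$ on the outer circle (established in the arguments for \Cref{lem:N-2D} and \Cref{lem:allD}), and then applies \eqref{eq:multichains-pos-B-1} and \eqref{eq:multichains-pos-B} with $n$ replaced by $n-1$ and $\rRr=\rrr/2$. The compatibility of rule~(D3) with the pseudo-rotation that you flag as the delicate point is exactly the observation the paper relies on, so no gap remains.
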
 

\begin{proof}
As we already argued in \Cref{lem:N-2D} and \Cref{lem:allD},
the elements of\break $\mNCDPlus$ that are $\rrr$-pseudo-rotationally
invariant and arise from Constructions~1 and~2 (the latter is only
possible for $\rrr=1$ and $\rrr=2$) are in bijection with 
elements of $\mNCBPlus[n-1]$ that are $\rrr$-rotationally
invariant. This is seen by starting with an element of
$\mNCDPlus$ and deleting the inner circle, thereby obtaining
an element of $\mNCBPlus[n-1]$. Clearly, this map is reversible.
If $\rrr=1$, then, by~\eqref{eq:multichains-pos-B-1} with $n$
replaced by~$n-1$, this number equals~\eqref{eq:C1-2a}.
If $\rrr$ is even, then, by~\eqref{eq:multichains-pos-B} with $n$
replaced by~$n-1$ and $\nu=\rrr/2$, this number equals~\eqref{eq:C1-2b}.
\end{proof}

\begin{proposition} \label{thm:enumD-2}
The total number of elements of $\mNCDPlus$ that 
arise from Construction~{\em3} in \Cref{sec:rotD} equals
\begin{equation} \label{eq:enumConst3} 
2\binom {((m+1)(n-1)-\rRr)/\rRr}{n/\rRr}.
\end{equation}
\end{proposition}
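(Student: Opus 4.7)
The plan is to identify each element of $\mNCDPlus$ arising from Construction~3 with a pair consisting of a fundamental seed and a binary sign choice, thereby reducing the count to an enumeration of fundamental seeds.

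First, I would show that a partition produced by Construction~3 is uniquely reconstructed from the data: (i) a fundamental seed---that is, a positive $m$-divisible non-crossing partition of the $mn/\rRr$ elements arranged along the boundary of an annular sector, which (by \Cref{lem:Const3}) has $O = (m(n-1) - 1 + \rRr)/\rRr$ outer and $x = (m+1)/\rRr - 1$ inner positions, and whose distinguished block $B$ contains the two leftmost outer positions $\overline{m(n-1)}$ and $1$---together with (ii) a binary choice corresponding to the ``$\pm$ versus $\mp$'' convention in the statement of Construction~3 (specifying which half of the inner circle is covered by the fundamental seed and its modified copies). That these two pieces of data determine the partition follows because the pseudo-rotational and sign-flip symmetries imposed by Construction~3 propagate the fundamental seed to the entire annulus in a unique way, and conversely every partition arising from Construction~3 has the distinguished block $B$ located at a fixed position, so the fundamental seed is unambiguously extracted. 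This encoding accounts for the factor of $2$ in the formula and leaves only the enumeration of fundamental seeds.

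For this enumeration, I would set up a bivariate generating function in the spirit of \Cref{app:GF}, with separate marker variables for the outer and inner positions of the fundamental seed; the distinguished block $B$, which spans both arcs, couples the two markers, while rule~(D1) constrains how the inner elements inside $B$ attach to the outer elements. After recording this coupling and summing over the admissible block-size compositions of $mn/\rRr$ into parts divisible by $m$, a Lagrange-inversion extraction followed by a Chu--Vandermonde simplification---paralleling the arguments used in the proofs of \Cref{thm:countingA} and \Cref{thm:1-B}---yields the closed form $\binom{((m+1)(n-1) - \rRr)/\rRr}{n/\rRr}$.

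The main obstacle is the coupling between the outer and inner arcs of the fundamental seed via the block $B$, which has no direct analog in the type-$A$ or type-$B$ enumerations of \Cref{sec:enumA,sec:enumB}. Accordingly, the most delicate step is to set up the generating function so that this coupling, together with rule~(D1), is captured cleanly; once this is done, the final extraction is essentially routine.
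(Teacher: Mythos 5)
There is a genuine gap in your first step, and it propagates into the second. The set counted in \Cref{thm:enumD-2} is not in bijection with pairs (fundamental seed, sign choice). As the proposition is used later (see the proof of \Cref{cor:2-D} and of \Cref{thm:3-D}), ``arising from Construction~3'' means the partitions produced by Construction~3 \emph{together with all their images under repeated application of\/ $\rotD$}, in accordance with \Cref{lem:allD}. In a typical element of such a $\rotD$-orbit the distinguished block is not ``located at a fixed position,'' so your claim that the fundamental seed can be unambiguously extracted fails. Moreover, even among the directly constructed partitions there is a second source of ambiguity: when the seed contains $k$ bridging blocks, any one of the $k$ bridging blocks of the resulting partition can play the role of the first block of a seed (after a suitable rotation), so the same partition corresponds to $k$ different (seed, rotation) pairs. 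The correct relationship is
\[
\#\{\text{elements arising from Construction 3}\}
=2\sum_{k\ge 1}\frac{\big(m(n-1)-1\big)/\rRr}{k}\,N_k,
\]
where $N_k$ is the number of fundamental seeds with exactly $k$ bridging blocks, and not $2\sum_k N_k$ as your encoding would give.

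This is not a cosmetic discrepancy: the weight $\frac{(m(n-1)-1)/\rRr}{k}$ is exactly what makes the generating-function computation close up. In the paper's proof one writes the seed generating function for fixed $k$ (a factor $z\,f^{mk-x-1}(z)\bigl(1-f^m(z)\bigr)^{-k}$ times $\binom{x}{k-1}$ for the distribution of the $x$ inner elements), multiplies by $\frac{(m(n-1)-1)/\rRr}{k}=\frac{my-x-1}{k}$, and uses $\frac{my-x-1}{k}\binom{x}{k-1}=\frac{my-x-1}{x+1}\binom{x+1}{k}$ so that the sum over $k$ telescopes by the binomial theorem before Lagrange inversion (in the form~\eqref{eq:CBa}, with the specialisation $F(z)=z(1-z^m)$) produces $\binom{((m+1)(n-1)-\rRr)/\rRr}{n/\rRr}$. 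If you count seeds alone, the sum over $k$ does not simplify in this way and you do not obtain the stated binomial coefficient. Your parameters for the seed ($x=(m+1)/\rRr-1$ inner and $(m(n-1)-1+\rRr)/\rRr$ outer positions) are correct, but you need to (i) include the factor counting the applications of $\rotD$, (ii) divide by $k$ to correct the choice of ``first'' bridging block, and only then (iii) perform the extraction.
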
 

The proof of this proposition is postponed to \Cref{app:GF-D-inv}.

\begin{proposition} \label{thm:enumD-3}
The total number of elements of $\mNCDPlus$ that 
arise from Constructions~{\em1} and~{\em2} in \Cref{sec:rotD}
with all block sizes equal to~$m$ is
\begin{equation} \label{eq:C2-2a} 
\binom {m(n-1)-1}{n-1}.
\end{equation}
If $\rrr$ is even, then the
number of elements of $\mNCDPlus$ that are $\rrr$-pseudo-rotationally
invariant and arise from Constructions~{\em1} and~{\em2} in
\Cref{sec:rotD}, and all of whose blocks have size~$m$, equals
\begin{equation} \label{eq:C2-2b} 
\binom {\fl{2(m(n-1)-1)/\rrr }}{\fl{2(n-1)/\rrr }}.
\end{equation}
\end{proposition}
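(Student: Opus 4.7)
The plan is to parallel the proof of \Cref{thm:enumD-1}, exploiting the bijection between partitions in $\mNCDPlus$ arising from Constructions~1 or~2 of \Cref{sec:rotD} and partitions in $\mNCBPlus[n-1]$ obtained by deleting the inner annulus (this is the same bijection used in the proof of \Cref{lem:N-2D}). Under this bijection, Construction~1 partitions correspond to those elements of $\mNCBPlus[n-1]$ having a zero block (the zero block in $B$ being the restriction to the outer annulus of the bridging block in $D$), whereas Construction~2 partitions correspond to those without a zero block.

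The decisive observation for restricting to partitions whose blocks all have size $m$ is that a partition from Construction~1 necessarily has a bridging block of size at least $2m+2m = 4m>m$, since the starting type-$B$ zero block has size at least $2m$ and the inner annulus contributes another $2m$ elements. Hence no Construction~1 partition can have all blocks of size $m$, and the only contributions to the count come from Construction~2. Under the bijection these are in one-to-one correspondence with partitions in $\mNCBPlus[n-1]$ that have no zero block and all of whose blocks have size $m$. Counting these by \eqref{eq:multichains-si-pos-a-B-1} with $a=0$, with $b=n-1$ (this value being forced by the all-blocks-of-size-$m$ constraint together with $a=0$ and the total element count $2m(n-1)$), and with $n$ replaced by $n-1$ yields
\[
\binom{n-2}{n-2}\binom{m(n-1)-1}{n-1}=\binom{m(n-1)-1}{n-1},
\]
establishing \eqref{eq:C2-2a}.

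For the second part, I would observe that since partitions in $\mNCDPlus$ without a bridging block have $\rotD$-order $m(n-1)-1$ by \Cref{lem:N-2D}, which coincides with the $\rotB$-order on $\mNCBPlus[n-1]$ given by \Cref{lem:N-2B}, the bijection intertwines the respective pseudo-rotations on the two sides. This is exactly the correspondence invoked in the proof of \Cref{thm:enumD-1}. Consequently, $\rrr$-pseudo-rotationally invariant elements of $\mNCDPlus$ arising from Construction~2 are in bijection with $2\rRr$-pseudo-rotationally invariant elements of $\mNCBPlus[n-1]$, where $\rRr=\rrr/2$. Applying \Cref{cor:4-B} with $a=0$, $b=n-1$, $\rRr=\rrr/2$, and $n$ replaced by $n-1$ then produces the formula \eqref{eq:C2-2b}, the floor functions absorbing the divisibility conditions just as in the parallel step of the proof of \Cref{thm:enumD-1}.

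The main technical point is justifying that the bijection genuinely intertwines $\rotD$ (restricted to Construction~2 partitions) with $\rotB$. This requires a short case analysis comparing Cases~(1) and~(3) of \Cref{prop:2D} on the type-$D$ side (Case~(2) cannot arise for Construction~2 partitions, since these have no bridging block) with Cases~(1) and~(2) of \Cref{prop:2B}; in each case one checks that the modifications near $\overline{m(n-1)}$, $1$ and the inner annulus dictated by rule~(D3) induce precisely the corresponding $\rotB$-move on the outer annulus after deletion. This verification is implicit in the reduction used in the proof of \Cref{thm:enumD-1} and should go through by the same argument.
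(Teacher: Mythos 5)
The first half of your argument is essentially the paper's proof of \eqref{eq:C2-2a}: delete the inner circle to identify Construction-1/2 elements with $\mNCBPlus[n-1]$, observe that a Construction-1 element has a zero block of size larger than~$m$ and so cannot contribute, and count the type-$B$ partitions all of whose blocks have size~$m$. (The paper cites \eqref{eq:multichains-si-pos-B-1} with $b=n$ and then replaces $n$ by $n-1$, you cite \eqref{eq:multichains-si-pos-a-B-1} with $a=0$, $b=n-1$; these are the same count.) Your remarks on the intertwining of $\rotD$ and $\rotB$ are also in line with how the paper reduces to type~$B$.

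The second half, however, has a genuine gap. In \Cref{cor:4-B} the parameter~$b$ records $2\rRr b$ non-zero blocks, so ``all blocks of size~$m$'' in $B_{n-1}$ forces $2\rRr b = 2(n-1)$, i.e.\ $b = 2(n-1)/\rrr$, not $b=n-1$; the two agree only when $\rrr=2$. With your substitution ($a=0$, $b=n-1$, $\rRr=\rrr/2$, $n\to n-1$), formula \eqref{eq:multichains-si-pos-a-B} reads
\[
\binom{2(n-1)/\rrr-1}{\,n-2\,}\binom{2(m(n-1)-1)/\rrr}{\,n-1\,},
\]
which is not \eqref{eq:C2-2b}: for $\rrr\ge4$ the first factor vanishes (once $n\ge3$) and the lower index of the second factor should be $\fl{2(n-1)/\rrr}$, not $n-1$; no ``absorption by floor functions'' repairs this. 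Worse, even with the correct $b=2(n-1)/\rrr$, the $a=0$ case of \eqref{eq:multichains-si-pos-a-B} requires $\rRr\mid(n-1)$, which together with $\rRr\mid\big(m(n-1)-1\big)$ forces $\rRr=1$, i.e.\ $\rrr=2$; so your route (Construction~2 only, hence no zero block on the $B$-side) yields $0$ for every even $\rrr\ge4$ and cannot produce the binomial \eqref{eq:C2-2b}. This reflects a misidentification in your bijection step: by \Cref{lem:allB}, every $2\rRr$-pseudo-rotationally invariant element of $\mNCBPlus[n-1]$ with $\rRr\ge2$ has a zero block and therefore corresponds to a Construction-1 element, while by \Cref{lem:allD} Construction-2 elements are only $2$-pseudo-rotationally invariant; so ``invariant Construction-2 elements $\leftrightarrow$ invariant elements of $\mNCBPlus[n-1]$'' is false for $\rrr\ge4$. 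The paper avoids all of this by not restricting to the zero-block-free case: it applies \eqref{eq:multichains-si-pos-B} (the count summed over all zero-block sizes) with $b=\fl{(n-1)/\rRr}$ and $\rRr=\rrr/2$, which gives \eqref{eq:C2-2b} directly. Your computation happens to be correct at $\rrr=2$, but the statement you must prove covers all even~$\rrr$, so as written the second part does not go through.
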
 

\begin{proof}
As we argued before,
the elements of $\mNCDPlus$ that are $\rrr$-pseudo-rotationally
invariant and arise from Constructions~1 and~2 (the latter is only
possible for $\rrr=1$ and $\rrr=2$) are in bijection with 
elements of $\mNCBPlus[n-1]$ that are $\rrr$-rotationally
invariant. 
If $\rrr=1$, then, by~\eqref{eq:multichains-si-pos-B-1} with $b=n$ and
subsequent replacement of~$n$ by~$n-1$, this number equals~\eqref{eq:C2-2a}.
If $\rrr$ is even, then, by~\eqref{eq:multichains-si-pos-B} with $b=\fl{n/\nu}$
and subsequent replacement of~$n$
by~$n-1$ and setting $\nu=\rrr/2$, this number equals~\eqref{eq:C2-2b}.
\end{proof}

\begin{proposition} \label{thm:enumD-4}
The total number of elements of $\mNCDPlus$ that 
arise from Construction~{\em3} in \Cref{sec:rotD} 
and all of whose blocks have size~$m$ equals
\begin{equation} \label{eq:enumConst3-m} 
2\binom {(m(n-1)-1)/\rRr}{n/\rRr}.
\end{equation}
\end{proposition}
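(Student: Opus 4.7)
The plan is to mirror the proof of \Cref{thm:enumD-2} from \Cref{app:GF-D-inv}, restricting throughout to the sub-case where every block of the fundamental seed has size exactly $m$. Recall from \Cref{lem:Const3} that the construction applies only when $\rRr \mid n$ and $\rRr \mid (m+1)$, that the fundamental seed occupies $mn/\rRr$ elements of the annulus, and that of these exactly $x = (m+1)/\rRr - 1$ lie on the inner circle while $mn/\rRr - x$ lie on the outer circle. Since every block must have size divisible by $m$, imposing block size exactly $m$ forces the fundamental seed to contain precisely $n/\rRr$ blocks.

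First, I would encode a fundamental seed as a non-crossing partition of a linearly ordered set of $mn/\rRr$ elements obtained by cutting the annular picture along the radial segment separating $\overline{m(n-1)}$ from its predecessor and reading all outer-circle then inner-circle elements in order. The distinguished block $B$ is the one containing the leftmost element $\overline{m(n-1)}$ and, by rule~(D1), it must also contain the interface element $\pm(mn-1)$; this pins down how $B$ straddles the outer/inner divide and so, after identifying this bridging constraint, reduces the enumeration to counting type-$A$ positive $m$-divisible non-crossing partitions in a restricted sense.

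Second, I would apply \Cref{cor:countingA1} with all blocks of size~$m$ (that is, $b_1 = n/\rRr$, $b_i = 0$ for $i > 1$), but with the total element count and block count adjusted to $mn/\rRr$ and $n/\rRr$ respectively, and with an additional shift of $x+1 = (m+1)/\rRr$ coming from the fact that the bridging block has a fixed ``tail'' of inner-circle elements rather than being freely chosen. The combinatorial bookkeeping should collapse, via a Chu--Vandermonde or Lagrange-inversion step parallel to the one used in \Cref{app:GF-D-inv} for \Cref{thm:enumD-2}, to the single binomial coefficient $\binom{(m(n-1)-1)/\rRr}{n/\rRr}$; here $(m(n-1)-1)/\rRr = mn/\rRr - (m+1)/\rRr$, which is consistent with the ``free'' outer-circle positions after the bridging block has claimed its mandatory $x+1$ elements.

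Finally, the overall factor of $2$ arises from the two choices of orientation in Construction~3, namely whether the inner-circle portion of the fundamental seed ends in $mn-1$ or in $\overline{mn-1}$; these two options produce genuinely distinct $m$-divisible non-crossing partitions of type~$D$ once the $\rRr-1$ modified copies and the negative copy are assembled. The main obstacle I anticipate is the careful accounting at the outer/inner interface: the bridging block is constrained both by rule~(D1) and by the requirement that its restriction to the inner circle sit in a prescribed rightmost position, and these interacting constraints must be tracked precisely so that the coefficient extraction yields the claimed binomial rather than one off by a shift.
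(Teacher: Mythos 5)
There is a genuine gap at the heart of your sketch: your structural reduction assumes there is a single bridging block whose inner-circle ``tail'' is pinned down by rule~(D1), but Construction~3 allows an arbitrary number $k\ge 1$ of bridging blocks in the fundamental seed. The distinguished block $B_1$ (containing $\overline{m(n-1)}$ and $1$) may contain \emph{no} inner-circle elements at all, in which case the element $\pm(mn-1)$ belongs to a different bridging block; so the interface is not ``pinned down'' as you claim, and the enumeration does not reduce to a single shifted instance of \Cref{cor:countingA1}. The paper's proof (parallel to \Cref{thm:enumD-2}) has to sum over $k$, distribute the $x=(m+1)/\rRr-1$ inner-circle elements among the $k$ bridging blocks (with $i_1\ge 0$ and $i_2,\dots,i_k\ge 1$, giving the factor $\binom{x}{k-1}$), multiply by $\big(m(n-1)-1\big)/\rRr$ because the counted set consists of whole $\rotD$-orbits, and divide by $k$ because any of the $k$ bridging blocks could have served as the ``first'' block of a seed. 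Your proposal contains none of this bookkeeping, and it is exactly this bookkeeping, followed by Lagrange inversion with the specialisation $x_0=x_1=1$, $x_i=0$ for $i\ge 2$ (so $F(z)=z/(1+z^m)$, reflecting the all-blocks-of-size-$m$ hypothesis), and the identity $\rRr(x+1)=m+1$ from \Cref{lem:Const3}, that produces $\binom{(m(n-1)-1)/\rRr}{n/\rRr}$. Asserting that ``the combinatorial bookkeeping should collapse, via a Chu--Vandermonde or Lagrange-inversion step'' is precisely the part that needs to be done, and as set up (one bridging block, a shift of $x+1$) it would not come out right.

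What you do have right: fixing all block sizes to $m$ forces $n/\rRr$ blocks in the seed, the numerology $(m(n-1)-1)/\rRr = mn/\rRr-(m+1)/\rRr$, and the overall factor $2$, which indeed comes from the two sign choices for the inner-circle elements (the paper ignores these signs throughout and multiplies by $2$ at the end). To repair the argument, either follow the paper's route --- set up the generating function for seeds with $k$ bridging blocks, all blocks of size $m$, namely $\binom{x}{k-1}\,z\,f^{mk-x-1}(z)$, then apply the orbit factor, the division by $k$, the sum over $k$, and Lagrange inversion in the form \eqref{eq:CBa} --- or, if you want a genuinely type-$A$ reduction, you must first prove a bijection between the orbits of Construction-3 partitions and some explicitly described family of linear non-crossing objects, which your cut-along-a-radius description does not yet provide.
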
 

The proof of this proposition is also postponed to \Cref{app:GF-D-inv}.

\subsection{Cyclic sieving}
\label{sec:sievD}

With the enumeration results in \Cref{sec:enumD,sec:rotenumD}, we are
now ready to derive our cyclic sieving results in type~$D$; see
\Cref{thm:3-D} below.
\Cref{lem:1-D,lem:2-D} prepare for the proof of this theorem.

\begin{lemma} \label{lem:1-D}
Let $m$ and $n$ be positive integers.
Then the expression
\begin{equation}
\frac {[2m(n-1)+n-2]_{q}} {[n]_{q}}
\begin{bmatrix} (m+1)(n-1)-1\\n-1\end{bmatrix}_{q^2}
\label{eq:SD-D}
\end{equation}
is a polynomial in $q$ with non-negative integer coefficients.
\end{lemma}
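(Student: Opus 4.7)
The plan is to decompose expression~\eqref{eq:SD-D} into a sum of two terms, each of which is manifestly a polynomial in $q$ with non-negative integer coefficients.  The starting observation is the elementary identity $[A+B]_q=[A]_q+q^A[B]_q$, which splits
\begin{equation*}
[2m(n-1)+n-2]_q = [n]_q + q^n\,[2m(n-1)-2]_q.
\end{equation*}
Upon substitution into~\eqref{eq:SD-D}, the first summand cancels the denominator $[n]_q$ and produces exactly the $q^2$-binomial coefficient $\begin{bmatrix}(m+1)(n-1)-1\\n-1\end{bmatrix}_{q^2}$, which is a polynomial in $q$ with non-negative integer coefficients by standard properties.

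For the remaining summand, the plan is to invoke two standard $q$-identities.  First, $[2k]_q=[2]_q\,[k]_{q^2}$, applied with $k=m(n-1)-1$, rewrites $[2m(n-1)-2]_q$ as $[2]_q\,[m(n-1)-1]_{q^2}$.  Second, the absorption identity $[a-b]\begin{bmatrix}a\\b\end{bmatrix}=[b+1]\begin{bmatrix}a\\b+1\end{bmatrix}$, applied to the $q^2$-binomial with $a=(m+1)(n-1)-1$ and $b=n-1$ (so that $a-b=m(n-1)-1$ and $b+1=n$), transfers the factor $[m(n-1)-1]_{q^2}$ into $[n]_{q^2}$ while shifting the binomial to $\begin{bmatrix}(m+1)(n-1)-1\\n\end{bmatrix}_{q^2}$.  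Combining these with $[2]_q\,[n]_{q^2}=[2n]_q$ and $[2n]_q/[n]_q=1+q^n$ collapses the second summand to $q^n(1+q^n)\begin{bmatrix}(m+1)(n-1)-1\\n\end{bmatrix}_{q^2}$, which is again a polynomial in $q$ with non-negative integer coefficients.  Altogether this yields the explicit identity
\begin{equation*}
\frac{[2m(n-1)+n-2]_q}{[n]_q}\begin{bmatrix}(m+1)(n-1)-1\\n-1\end{bmatrix}_{q^2} = \begin{bmatrix}(m+1)(n-1)-1\\n-1\end{bmatrix}_{q^2} + q^n(1+q^n)\begin{bmatrix}(m+1)(n-1)-1\\n\end{bmatrix}_{q^2},
\end{equation*}
exhibiting expression~\eqref{eq:SD-D} as a sum of polynomials with non-negative integer coefficients.

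I do not anticipate any serious obstacle:  the entire argument is an elementary manipulation of $q$-factorials once the tailored splitting of the numerator is in hand.  The only mildly non-obvious step is spotting the correct splitting and the matching absorption identity, both of which are essentially forced by the requirement that the residual factor $[n]_q$ in the denominator be absorbed.
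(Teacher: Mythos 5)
Your proposal is correct and follows essentially the same route as the paper: the paper's proof consists precisely of the identity you derive, namely that expression~\eqref{eq:SD-D} equals $\begin{bmatrix}(m+1)(n-1)-1\\n-1\end{bmatrix}_{q^2}+q^n(1+q^n)\begin{bmatrix}(m+1)(n-1)-1\\n\end{bmatrix}_{q^2}$, followed by the observation that $q$-binomial coefficients have non-negative coefficients. Your write-up merely spells out the elementary splitting and absorption steps that the paper leaves implicit.
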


\begin{proof}
We have
\begin{multline*}
\frac {[2m(n-1)+n-2]_{q}} {[n]_{q}}
\begin{bmatrix} (m+1)(n-1)-1\\n-1\end{bmatrix}_{q^2}
=
\begin{bmatrix} (m+1)(n-1)-1\\n-1\end{bmatrix}_{q^2}\\
+q^{n}\left(1+q^n\right)
\begin{bmatrix} (m+1)(n-1)-1\\n\end{bmatrix}_{q^2}.
\end{multline*}
Since $q$-binomial coefficients are polynomials in~$q$ with non-negative
coefficients, the claim is obviously true.
\end{proof}

\begin{lemma} \label{lem:2-D}
Let $m$ and $n$ be integers.
Furthermore, let $\si$ be a positive integer such that $\si\mid (2m(n-1)-2)$.
Writing $\rrr =2(m(n-1)-1)/\si$ and 
$\om_\rrr =e^{2\pi i\si/(2m(n-1)-2)}=e^{2\pi i/\rrr }$, we have
\begin{align}
\notag
\frac {[2m(n-1)+n-2]_{q}} {[n]_{q}}
&\begin{bmatrix} (m+1)(n-1)-1\\n-1\end{bmatrix}_{q^2}
\Bigg\vert_{q=\om_\rrr }\\
&=\begin{cases} 
\frac {2m(n-1)+n-2} {n}
\binom {\fl{2((m+1)(n-1)-1)/\rrr }} {\fl{2(n-1)/\rrr }},&\text{if $\rrr$ is
  even and $\rrr\mid n$},\\
\binom {\fl{2((m+1)(n-1)-1)/\rrr }} {\fl{2(n-1)/\rrr }},&\text{if $\rrr$ is
  even and $\rrr\nmid n$},\\
\frac {2m(n-1)+n-2} {n}
\binom {\fl{((m+1)(n-1)-1)/\rrr }} {\fl{(n-1)/\rrr }},&\text{if $\rrr$ is
  odd and $\rrr\mid n$},\\
\binom {\fl{((m+1)(n-1)-1)/\rrr }} {\fl{(n-1)/\rrr }},&\text{if $\rrr$ is
  odd and $\rrr\nmid n$},
\end{cases}
\label{eq:SI-D}\\
\notag
\frac {[2m(n-1)-n]_{q}} {[n]_{q}}
&\begin{bmatrix} m(n-1)-1\\n-1\end{bmatrix}_{q^2}
\Bigg\vert_{q=\om_\rrr }\\
&=\begin{cases} 
\frac {(2m(n-1)-n)} {n}
\binom {m(n-1)-1}{n-1},&\text{if $\rrr=2$ and $n$ is even,}\\
\binom {m(n-1)-1}{n-1},
&\text{if $\rrr=2$ and $n$ is odd,}\\
2\binom {2(m(n-1)-1)/\rrr}{2n/\rrr},&\text{if $\rrr$ is even, $\rrr\ge4$,
%  $\rrr\nmid2(n-1)$, 
and $\rrr\mid n$,}\\
\frac {2m(n-1)-n} {n}
\binom {{m(n-1)-1}} {n-1},&\text{if $\rrr=1$},\\
2\binom {{(m(n-1)-1)/\rrr }} {n/\rrr },&\text{if $\rrr$ is
  odd, $\rrr\ge3$, and $\rrr\mid n$},\\
0,&\text{otherwise.}
\end{cases}
\label{eq:SJ-D}
\end{align}
\end{lemma}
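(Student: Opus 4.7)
The plan is to evaluate each expression at $q = \om_\rrr$ by splitting it into a rational prefactor $[\,\cdot\,]_q/[n]_q$ and a $q^2$-binomial, evaluating each piece via a combination of the additivity identity $[A+B]_q = [A]_q + q^A[B]_q$, L'Hôpital's rule, and the $q$-Lucas theorem, and then re-combining them using the fundamental identity
\begin{equation*}
\frac{[K]_{q^2}}{[K]_q} = \frac{q^K+1}{q+1}
\end{equation*}
in the cases where a prefactor pole meets a binomial zero. The driving input throughout is the hypothesis $\rrr \mid 2(m(n-1)-1)$.

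For the prefactor of \eqref{eq:SI-D}, the decomposition $2m(n-1)+n-2 = 2(m(n-1)-1)+n$ together with $\om_\rrr^{2(m(n-1)-1)} = 1$ gives $[2m(n-1)+n-2]_{\om_\rrr} = [n]_{\om_\rrr}$ for $\rrr > 1$. The ratio is thus $1$ when $\rrr \nmid n$ and $(2m(n-1)+n-2)/n$ when $\rrr \mid n$ (by L'Hôpital, using $\rrr \mid N-n$). For the prefactor of \eqref{eq:SJ-D}, the analogous decomposition $2m(n-1)-n = 2(m(n-1)-1)-(n-2)$ yields $[2m(n-1)-n]_{\om_\rrr} = -\om_\rrr^{-(n-2)}[n-2]_{\om_\rrr}$ for $\rrr > 1$, which vanishes when $\rrr \mid n-2$ and $\rrr \nmid n$, is finite and non-zero when $\rrr$ divides neither, is handled by L'Hôpital exactly when $\rrr \mid n$ and $\rrr \mid n-2$ (forcing $\rrr \in \{1,2\}$), and leaves a pole when $\rrr \mid n$ but $\rrr \nmid 2$.

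For the $q^2$-binomial, I would apply the $q$-Lucas theorem at $\om_\rrr^2$, which is a primitive $\rrr$-th root of unity if $\rrr$ is odd and a primitive $(\rrr/2)$-th root if $\rrr$ is even. In cases where the prefactor is finite, combining its value with the $q$-Lucas evaluation of the binomial directly yields the formulas claimed in \eqref{eq:SI-D} and, for several subcases, \eqref{eq:SJ-D}. The delicate cases are those in \eqref{eq:SJ-D} with $\rrr \ge 3$ and $\rrr \mid n$, where the prefactor carries a genuine pole. Here I would combine the factor $1/[n]_q$ with a factor from the $q^2$-binomial via the fundamental identity above, turning it into an expression of the form $(q^{K}+1)/(q+1)$; evaluating at $q = \om_\rrr$ with $\rrr \mid K$ extracts the value $2$ from $q^K+1$, while the $(q+1)$ in the denominator cancels against a matching $(1+\om_\rrr)$-type factor appearing either in $[n-2]_{\om_\rrr}$ or in a residual $q$-Pochhammer numerator, depending on the residues. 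This cancellation produces the factor of~$2$ appearing in both the odd and even subcases of \eqref{eq:SJ-D}, and the remaining $q$-Pochhammer quotients reduce to the stated ordinary binomial coefficient by standard $q$-Lucas. The ``otherwise'' branch arises when the $q$-Lucas residue condition fails for the $q^2$-binomial and the product evaluates to $0$.

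The main obstacle is the cyclotomic bookkeeping in \eqref{eq:SJ-D}: in each of the five non-zero subcases one must match the pole order of the prefactor (measured in multiplicities of the cyclotomic polynomial $\Phi_\rrr$) against the zero order of the $q^2$-binomial at $\om_\rrr$ and verify that the residual factors combine to produce exactly the claimed formula, in particular the factor of~$2$ in the $\rrr \ge 3$ subcases. Once this matching is organised along the lines used to prove \Cref{lem:2} and \Cref{lem:2-B} for types~$A$ and~$B$, the remaining calculations are algorithmic and reduce to the standard $q$-Lucas evaluations already deployed there.
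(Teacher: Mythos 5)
Your handling of \eqref{eq:SI-D} is sound, and in fact a little cleaner than the paper's: the prefactor never has a pole there (its numerator exponent is $\equiv n$ modulo $\rrr$), and since $\rrr\mid 2(m(n-1)-1)$ forces the top and bottom of the $q^2$-binomial to be congruent modulo $d$ (with $d=\rrr$ for odd $\rrr$ and $d=\rrr/2$ for even $\rrr$), the residual factor in the $q$-Lucas factorisation is $1$, so the four alternatives drop out of ``prefactor times Lucas''. The paper instead writes everything as a quotient of factors $1-q^{j}$ and evaluates with the elementary two-factor limit \eqref{eq:SM-D}; both routes work for \eqref{eq:SI-D}.

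The gap is in \eqref{eq:SJ-D}, which is where the content of the lemma lies. For $\rrr\ge3$ your own Lucas observation shows that the $q^2$-binomial \emph{always} vanishes at $\om_\rrr$ (its top is $\equiv0$ while its bottom is $\not\equiv0$ modulo $d\ge2$), so every non-zero entry with $\rrr\ge3$ arises from an indeterminate pole-times-zero, and one must (i) verify that the zero is exactly simple --- otherwise the value would be $0$ rather than $2\binom{\cdot}{\cdot}$ --- and (ii) compute the limit, producing the factor $2$ and the precise binomial coefficients. You defer both points as ``algorithmic'', but they \emph{are} the theorem: the paper's proof is exactly this bookkeeping, carried out through the explicit vanishing-order counts \eqref{eq:SA-D} and \eqref{eq:SB-D}, after which the values are read off from \eqref{eq:SM-D} by pairing congruent exponents, the factor $2$ emerging as a ratio $2K/n$ of a doubled exponent from the $q^2$-binomial against the exponent $n$ from the prefactor. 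Moreover, the device you propose for (ii) does not apply as stated: the identity $[K]_{q^2}/[K]_q=(q^K+1)/(q+1)$ needs the \emph{same} $K$ on both levels, and $[n]_{q^2}$ is in general not among the factors of the $q^2$-binomial in \eqref{eq:SJ-D}, so ``combining $1/[n]_q$ with a factor of the binomial via the fundamental identity'' requires repair (insert $[K]_q/[n]_q$, or better, use the direct limit $(1-q^{2K})/(1-q^{n})\to 2K/n$, which eliminates the $(q+1)$-cancellation against ``$[n-2]_{\om_\rrr}$ or a residual Pochhammer'' that you assert but never verify). Until the five non-zero subcases of \eqref{eq:SJ-D} are actually checked in this way, what you have is a plausible plan rather than a proof.
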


Due to its technical nature, the proof of this lemma is deferred
to the end of \Cref{app:unnec}.

\begin{theorem} \label{thm:3-D}
Let $m$ and $n$ be positive integers.
Furthermore, 
let $C$ be the cyclic group of pseudo-rotations of the annulus
with
$\{1,2,\dots,m(n-1),\overline1,\overline2,\dots,\overline{m(n-1)}\}$
on the outer circle and
$\{m(n-1)+1,\dots,mn,\overline{m(n-1)+1},\dots,\overline{mn}\}$
on the inner circle
generated by~$\rotD$, viewed as a group of order~$2m(n-1)-2$. Then
the triple $(M,P,C)$ exhibits the cyclic sieving phenomenon
for the following choices of sets $M$ and polynomials $P$:

\begin{enumerate}
\item[\em(1)]
$M=\mNCDPlus$, and
$$P(q)=\frac {[2m(n-1)+n-2]_{q}} {[n]_{q}}
\begin{bmatrix} (m+1)(n-1)-1\\n-1\end{bmatrix}_{q^2};$$
\item[\em(2)]
$M$ consists of the elements of
$\mNCDPlus$ all of whose blocks have size~$m$, and
$$P(q)=\frac {[2m(n-1)-n]_{q}} {[n]_{q}}
\begin{bmatrix} m(n-1)-1\\n-1\end{bmatrix}_{q^2}.$$
\end{enumerate}
\end{theorem}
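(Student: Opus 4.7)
The plan is to mirror the arguments used for types~$A$ and~$B$ (Theorems~\ref{thm:3} and~\ref{thm:3-B}). Two things need to be checked: first, that the polynomials $P_1(q)$ and $P_2(q)$ in items~(1) and~(2) have non-negative integer coefficients; and second, that for every positive divisor $\rrr$ of $2m(n-1)-2$, the evaluation $P(\omega_\rrr)$ at $\omega_\rrr = e^{2\pi i/\rrr}$ equals $|\{\pi \in M : \rotD^{(2m(n-1)-2)/\rrr}(\pi) = \pi\}|$. Non-negativity of $P_1(q)$ is \Cref{lem:1-D}; for $P_2(q)$ an entirely analogous manipulation --- expanding the prefactor $[2m(n-1)-n]_q/[n]_q$ in terms of $q$-binomial coefficients via a Pascal-type identity --- should suffice.

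For the root-of-unity evaluations, the analytic side is supplied by \Cref{lem:2-D} and the combinatorial side by the enumeration results of \Cref{sec:enumD,sec:rotenumD}. The case $\rrr=1$ follows immediately from \Cref{cor:2-D}. For $\rrr \ge 2$, I will invoke \Cref{lem:allD}, which asserts that every $\rrr$-pseudo-rotationally invariant element of $\mNCDPlus$ arises from one of Constructions~1, 2, 3 of \Cref{sec:rotD} followed by iterated applications of $\rotD$. Since Construction~1 with parameter $\rRr$ produces $2\rRr$-pseudo-rotationally invariant elements, Construction~2 produces $2$-pseudo-rotationally invariant ones, and Construction~3 with parameter $\rRr$ produces either $2\rRr$- or $\rRr$-pseudo-rotationally invariant elements according to the parity of $n/\rRr$, the count of $\rrr$-invariant elements decomposes as a sum of contributions from these three families. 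The Constructions~1 and~2 contributions are supplied by \Cref{thm:enumD-1} (respectively \Cref{thm:enumD-3} for item~(2)), and the Construction~3 contribution by \Cref{thm:enumD-2} (respectively \Cref{thm:enumD-4}). These sums should then match piecewise the alternatives in \Cref{lem:2-D}.

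The hard part will be the Construction~3 bookkeeping. Constructions~1 and~2 project cleanly onto $\mNCBPlus[n-1]$ by deleting the inner circle, so \Cref{thm:enumD-1,thm:enumD-3} essentially reduce to the type-$B$ situation already handled in \Cref{thm:3-B}. Construction~3, however, is subject to the extra divisibility constraints $\rRr \mid n$ and $\rRr \mid (m+1)$ of \Cref{lem:Const3}, and its parity dichotomy between $2\rRr$- and $\rRr$-invariance is precisely what controls the presence or absence of the prefactor $(2m(n-1)+n-2)/n$ in the first alternative of~\eqref{eq:SI-D} and of the doubling factor in~\eqref{eq:SJ-D}. Isolating exactly which pairs $(\rRr,\rrr)$ contribute and then verifying that the resulting sums collapse to the corresponding alternatives of \Cref{lem:2-D} should reduce to bookkeeping with elementary binomial identities of Chu--Vandermonde type.

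Once both cyclic sieving assertions are in place, \Cref{thm:CS} for type $D_n$ will follow in the same manner as \Cref{cor:CS-A} and \Cref{cor:CS-B}: item~(1) directly yields the phenomenon for $\big(\mNCPlus[D_n][m+1],\,\mCatplus[m+1](D_n;q),\,C\big)$, while the phenomenon for $\big(\mNCPlus[D_n][m],\,\mCatplus(D_n;q),\,\widetilde C\big)$ is obtained from item~(2) with $m$ replaced by $m+1$, after identifying the image of $\mNCPlus[D_n][m]$ inside $\mNCPlus[D_n][m+1]$ via \Cref{prop:PositiveKrewAlt} with the set partitions in $\mNCD[n][m+1]$ all of whose blocks have size $m+1$ (using \Cref{rem:id-D}).
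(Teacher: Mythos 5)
Your plan follows the paper's own proof essentially step for step: non-negativity via \Cref{lem:1-D} (with the analogous expansion for item~(2)), root-of-unity values via \Cref{lem:2-D}, and the invariant counts obtained by splitting, through \Cref{lem:allD}, into the Constructions~1--2 contribution (\Cref{thm:enumD-1}, respectively \Cref{thm:enumD-3}, i.e.\ the type-$B$ reduction) and the Construction~3 contribution (\Cref{thm:enumD-2}, respectively \Cref{thm:enumD-4}), matched case by case against the alternatives of \Cref{lem:2-D}. The only points you leave implicit --- that for odd~$\rrr$ an invariant element without bridging block is automatically $2\rrr$-invariant (the inverse-of-$2$ modulo~$\rrr$ argument as in the type-$B$ proof), and that for item~(2) with $\rrr\ge3$ Constructions~1 and~2 contribute nothing (a zero block is too large to have size~$m$, and Construction~2 yields only $2$-invariant elements), so one must not simply add the formula of \Cref{thm:enumD-3} there --- are precisely the bookkeeping carried out in the paper's case analysis.
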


\begin{proof}
The polynomials $P(q)$ in the assertion of the theorem are indeed
polynomials with non-negative coefficients due to \Cref{lem:1-D}.

Now, given a positive integer $\rrr$ with $\rrr\mid (2m(n-1)-2)$, we must show  
\begin{equation} \label{eq:P(q)-D} 
P(q)\big\vert_{q=\om_\rrr }=
\text{number of elements of $\mNCDPlus$ which are invariant under
  $\rotD^{(2m(n-1)-2)/\rrr}$},
\end{equation}
where $\om_\rrr=e^{2\pi i/\rrr}$.

\medskip
We begin with Item~(1).
Let first $\rrr$ be even. 
By~\eqref{eq:C1-2b}, there are
\begin{equation} \label{eq:Constr1-2A} 
\binom {\fl{2((m+1)(n-1)-1)/\rrr }}{\fl{2(n-1)/\rrr }}
\end{equation}
elements of $\mNCDPlus$ that are $\rrr$-pseudo-rotationally
invariant and do not contain a bridging block (and hence arise from
Constructions~1 and~2 in \Cref{sec:rotD}). 

Moreover, by \Cref{lem:allD}, there exist elements 
of $\mNCDPlus$ that are $\rrr$-pseudo-rotationally
invariant and \emph{do} contain a bridging block (and hence arise from
Construction~3 in \Cref{sec:rotD}) only if $n/(\rrr/2)$ is even or if
$n/\rrr$ is odd. In both cases, it follows that $\rrr\mid n$, and,
due to \Cref{thm:enumD-2} with $\rRr=\rrr/2$, there are then
\begin{equation} \label{eq:Constr1-2B} 
2\binom {(2(m+1)(n-1)-\rrr)/\rrr}{2n/\rrr}
\end{equation}
such elements of $\mNCDPlus$ that arise from Construction~3.
The sum of~\eqref{eq:Constr1-2A} and~\eqref{eq:Constr1-2B} equals the
first alternative on the right-hand side of~\eqref{eq:SI-D}, which confirms~\eqref{eq:P(q)-D} in this case.

On the other hand, if $\rrr\nmid n$, then there are no
$\rrr$-pseudo-rotationally invariant elements of $\mNCDPlus$
that arise from Construction~3. Consequently, the expression in~\eqref{eq:Constr1-2A} is already equal to the total number of
$\rrr$-pseudo-rotationally invariant elements, which agrees with the
second alternative on the right-hand side of~\eqref{eq:SI-D}.
This confirms~\eqref{eq:P(q)-D} in this other case.

\medskip
Now let $\rrr$ be odd. 
By an argument that is analogous to the one
around~\eqref{eq:rotB/2}, %respectively \eqref{eq:rh-odd}, 
we infer that
a $\rrr$-pseudo-rotationally invariant element of $\mNCDPlus$
without a bridging block (and thus arising from Construction~1 or~2 in
\Cref{sec:rotD})
is automatically also $2\rrr$-pseudo-rotationally invariant.
By~\eqref{eq:C1-2b} with $\rrr$ replaced by~$2\rrr$, there are
\begin{equation} \label{eq:Constr1-2C} 
\binom {\fl{((m+1)(n-1)-1)/\rrr }}{\fl{(n-1)/\rrr }}
\end{equation}
elements of $\mNCDPlus$ that are $2\rrr$-pseudo-rotationally
invariant (and also $\rrr$-pseudo-rota\-tionally invariant) 
and do not contain a bridging block.

Moreover, by \Cref{lem:allD}, there exist elements 
of $\mNCDPlus$ that are $\rrr$-pseudo-rotationally
invariant and \emph{do} contain a bridging block (and hence arise from
Construction~3 in \Cref{sec:rotD}) only if $\rrr\mid n$, and,
due to \Cref{thm:enumD-2} with $\rRr=\rrr$, there are then
\begin{equation} \label{eq:Constr1-2D} 
2\binom {((m+1)(n-1)-\rrr)/\rrr}{n/\rrr}
\end{equation}
such elements of $\mNCDPlus$ that arise from Construction~3.
The sum of~\eqref{eq:Constr1-2C} and~\eqref{eq:Constr1-2D} equals the
third alternative on the right-hand side of~\eqref{eq:SI-D}, which confirms~\eqref{eq:P(q)-D} in this case.

On the other hand, if $\rrr\nmid n$, then there are no
$\rrr$-pseudo-rotationally invariant elements of $\mNCDPlus$
that arise from Construction~3. Consequently, the expression in~\eqref{eq:Constr1-2C} is already equal to the total number of
$\rrr$-pseudo-rotationally invariant elements, which agrees with the
fourth alternative on the right-hand side of~\eqref{eq:SI-D}.
This confirms~\eqref{eq:P(q)-D} in this other case.

\medskip
We turn to Item (2). We will be brief here since the arguments 
are very similar to those above addressing Item~(1).
Again, let first $\rrr$ be even.

If $\rrr=2$ and $n$ is even, then the assertion follows immediately from~\eqref{eq:C2-2b} with $\rrr=2$, and from \Cref{lem:allD} and
\Cref{thm:enumD-4} with $\rRr=1$
and comparison with the first
alternative on the right-hand side of~\eqref{eq:SJ-D}.

If $\rrr=2$ and $n$ is odd, then, according to \Cref{lem:allD} with $\rRr=1$, 
elements of $\mNCDPlus$ that are $2$-pseudo-rotationally
invariant and all of whose blocks have size~$m$ cannot arise from
Construction~3.
The assertion follows from~\eqref{eq:C2-2b} with $\rrr=2$ and comparison with the second
alternative on the right-hand side of~\eqref{eq:SJ-D}.

If $\rrr$ is even and $\rrr\ge4$, then an element of $\mNCDPlus$
which is $\rrr $-pseudo-rotationally invariant and of which all blocks
have size~$m$ cannot arise from Construction~1 since a zero block
must contain all elements of the inner circle and therefore has more
than $2m$~elements. It also cannot arise from Construction~2 since,
according to \Cref{lem:allD}, these elements are
$2$-pseudo-rotationally invariant (but not pseudo-rotationally
invariant of higher order). 
Consequently, such an element must arise from Construction~3, either with
$\rRr=\rrr/2$ and where $n/(\rrr/2)$ is even, or with $\rRr=\rrr$ and 
where $n/\rrr$ is odd. In both cases, it follows that $\rrr\mid n$.
The number of these elements is then given by
\Cref{thm:enumD-4} with $\rRr=\rrr/2$ which agrees with the third
alternative on the right-hand side of~\eqref{eq:SJ-D}.

\medskip
Now let $\rrr$ be odd. If $\rrr=1$, comparison of~\eqref{eq:C2-2a} 
and the fourth alternative on the right-hand side of~\eqref{eq:SJ-D} 
confirms~\eqref{eq:P(q)-D} in this case.

On the other hand, if $\rrr\ge3$, then, as above in the case of even
$\rrr\ge4$, $\rrr $-pseudo-rotationally invariant elements of $\mNCDPlus$
all of whose blocks
have size~$m$ cannot arise from Construction~1 or~2.
Consequently, such elements must arise from Construction~3
with $\rRr=\rrr$. 
The number of these elements is given by
\Cref{thm:enumD-4} with $\rRr=\rrr$, which agrees with the fifth
alternative on the right-hand side of~\eqref{eq:SJ-D}.

\medskip
In all other cases, there are no positive $m$-divisible non-crossing
partitions which are $\rrr $-pseudo-rotationally invariant.

\medskip
This completes the proof of the theorem.
\end{proof}

\begin{corollary} \label{cor:CS-D}
  The conclusion of \Cref{thm:CS} holds for type $D_n$.
\end{corollary}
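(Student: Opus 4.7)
The plan is to follow the exact two-step pattern already used to prove Corollaries \ref{cor:CS-A} and \ref{cor:CS-B}. For type $D_n$ the Coxeter number is $h = 2(n-1)$, and the degrees are $d_1=2,\,d_2=4,\,\ldots,\,d_{n-1}=2(n-1)$ together with $d_n=n$. The essential algebraic tool is the identity $[2\alpha]_q/[2]_q = [\alpha]_{q^2}$, which collapses the contribution of the even degrees in the product formula for $\mCatplus(D_n;q)$ into a $q^2$-binomial coefficient, while the odd degree $d_n = n$ contributes the separate factor $[\,\cdot\,]_q/[n]_q$.

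For the first cyclic sieving phenomenon in \Cref{thm:CS}, I will compute
\[
\mCatplus[m+1](D_n;q)
=\frac{[2(m+1)(n-1)+n-2]_q}{[n]_q}\prod_{i=1}^{n-1}\frac{[2(m+1)(n-1)+2i-2]_q}{[2i]_q},
\]
and then convert the $q$-product over even arguments into a $q^2$-binomial coefficient, obtaining
\[
\mCatplus[m+1](D_n;q)
=\frac{[2(m+1)(n-1)+n-2]_q}{[n]_q}\begin{bmatrix}(m+2)(n-1)-1\\ n-1\end{bmatrix}_{q^2}.
\]
This is precisely the polynomial $P(q)$ of \Cref{thm:3-D}(1) with $m$ replaced by $m+1$. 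The order of the cyclic group there is $2(m+1)(n-1)-2 = (m+1)h-2$, which matches the order required in \Cref{thm:CS}, so \Cref{thm:3-D}(1) immediately yields the first cyclic sieving statement.

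For the second cyclic sieving phenomenon, I will use the embedding $\varphi:\mNCPlus[D_n][m]\hookrightarrow \mNCPlus[D_n][m+1]$ from \Cref{prop:PositiveKrewAlt}, whose image consists of tuples $(\varepsilon,w_1,\ldots,w_{m+1})$ and which intertwines $\Krewplus^{(m)}$ with $\Krewplustilde^{(m+1)}$. By \Cref{rem:id-D}, this image corresponds under $\Nam{D_n}{m+1}$ exactly to the non-crossing partitions in $\mNCD[n][m+1]$ all of whose blocks have size $m+1$. A computation analogous to the first step, using the identity $2(m+1)(n-1)-n = 2m(n-1)+n-2$, shows
\[
\mCatplus(D_n;q)
=\frac{[2m(n-1)+n-2]_q}{[n]_q}\begin{bmatrix}(m+1)(n-1)-1\\ n-1\end{bmatrix}_{q^2},
\]
which is precisely the polynomial of \Cref{thm:3-D}(2) after replacing $m$ by $m+1$. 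Applying \Cref{thm:3-D}(2) to this subset therefore gives the second cyclic sieving statement.

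I do not anticipate any real obstacle: every ingredient is already in place. The only work is the routine verification that the product formula defining the positive Fu\ss--Catalan polynomial in type $D_n$ matches the $q$-binomial expressions appearing in \Cref{thm:3-D}, and this reduces to the elementary identity $[2\alpha]_q=[2]_q[\alpha]_{q^2}$ together with index shifting. Conceptually, all the nontrivial combinatorial content, namely the orbit-counting for the pseudo-rotation $\rotD$ under the three constructions of \Cref{sec:rotD}, has already been absorbed into \Cref{thm:3-D}, so this corollary is simply a polynomial identification followed by invocation of that theorem.
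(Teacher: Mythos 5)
Your proposal is correct and follows essentially the same route as the paper: apply \Cref{thm:3-D}(1) with $m$ replaced by $m+1$ for the first triple, and for the second use the embedding from \Cref{prop:PositiveKrewAlt} together with \Cref{rem:id-D} to identify the image with the partitions all of whose blocks have size $m+1$, then invoke \Cref{thm:3-D}(2) with $m$ replaced by $m+1$. The only addition is your explicit check, via $[2\alpha]_q=[2]_q[\alpha]_{q^2}$ and the identity $2(m+1)(n-1)-n=2m(n-1)+n-2$, that $\mCatplus(D_n;q)$ matches the polynomials in \Cref{thm:3-D}, a verification the paper leaves implicit.
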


\begin{proof}
The cyclic sieving phenomenon for
  $\Big(\mNCPlus[D_n][m+1],\ \mCatplus[m+1](D_n;q),\ C\Big)$
follows directly from \Cref{thm:3-D}(1) with~$m$ replaced by~$m+1$.

The image of the
embedding of $\mNCPlus[D_n][m]$
in $\mNCPlus[D_n][m+1]$ as described in
\Cref{prop:PositiveKrewAlt} is given by all tuples
$(w_0,w_1,\dots,w_{m+1})$ with $w_0=\ep$. By
\Cref{rem:id-D}, these tuples correspond to non-crossing set partitions
in $\mNCD[n][m+1]$ all of whose non-zero blocks have size~$m+1$. 
The cyclic sieving phenomenon for
$\Big(\mNCPlus[D_n][m],\ \mCatplus(D_n;q),\ \widetilde C\Big)$
thus follows from \Cref{thm:3-D}(2) with $m$ replaced by~$m+1$.
\end{proof}

%%%%%%%%%%%%%%%%%%%%%%%%%%%%%%%%%%%%%%%%%%%%%%%%%%%%%%%%%%%%%%%%%%%%%%%%%%%%%%%%%
\section[Positive \texorpdfstring{$m$}{m}-divisible non-crossing partitions in dihedral and exceptional types]{Positive \texorpdfstring{$m$}{m}-divisible non-crossing partitions in dihedral\\ and exceptional types}
\label{sec:typeExc}
%%%%%%%%%%%%%%%%%%%%%%%%%%%%%%%%%%%%%%%%%%%%%%%%%%%%%%%%%%%%%%%%%%%%%%%%%%%%%%%%%

In this section, we study the action of the two positive Kreweras maps in the dihedral and the exceptional types.
The main tool will be a general approach for counting (positive) $m$-divisible non-crossing partitions that are invariant under powers of the positive Kreweras maps.
We then apply this approach to each type individually to derive the proposed cyclic sieving phenomena.

\medskip

It is well known that any two Coxeter elements~$c$ and~$c'$ are conjugate, and that this conjugation induces a bijection between $\mNCPlus$ and ${\NCsymb^{(m)}_+(W,c')}$.
This bijection is moreover $\Krewplusbar$-equivariant.
(Recall that $\Krewplusbar=\Krewplusbar^{(m)}$ has been
defined in \Cref{sec:poskrewmap2}.)
It therefore suffices to prove the proposed cyclic sieving phenomena in \Cref{thm:CS} for the \defn{bipartite Coxeter element}~$c = c_A c_B$ given by the decomposition $\reflS = A \sqcup B$ such that all simple generators in~$A$ pairwise commute, all simple generators in~$B$ pairwise commute, and where we write $c_A$ and $c_B$ for the
products of the generators in~$A$ and in~$B$.
This has the computational advantage that we have
\begin{equation}
  \wwo(\c) =  \underbrace{c_A\ c_B\ c_A\ \cdots}_h. \label{eq:wocword}
\end{equation}
That is, $\wwo(\c)$ is formed, as a word, of~$h$ alternating copies of~$\c_A$ and~$\c_B$ where we recall that~$h$ is the Coxeter number.
% By slight abuse of notation in the case that~$h$ is odd, we write $\big(\c_A\c_B)^{h/2}$ for this word.
We thus do not need to consider the situation ``up to commutations''.
The word $\invc^m\invc^L$ then consists of~$m+1$ concatenated copies of~$\invc$ with the last~$n$ letters removed, and this word equals
\begin{equation}
  \invc^m\invc^L =  \underbrace{c_A\ c_B\ c_A\ \cdots}_{(m+1)h-2}.
\end{equation}
We refer to \Cref{ex:biginvexample} for several examples and note that $\invc^{m-1}\invc^L$ consists of $mh-2$ alternating copies of~$\c_A$ and~$\c_B$.

This additional regularity (that is in general missing for arbitrary Coxeter elements) allows us to describe subwords that are invariant under powers of the positive Kreweras maps.

\medskip

Let $(m+1)h-2 = k \cdot \rrr$, where we assume~$\rrr$ to be even if~$\psi \equiv\one$.
(The situation $mh-2 = k \cdot \rrr$ is obtained by simply replacing~$m$ by~$m-1$.)
We always consider the application of $(\Krewplusbar)^k$ which is a $(\rrr/2)$-fold symmetry if $\psi\equiv\one$ and an~$\rrr$-fold symmetry if $\psi\not\equiv\one$ (except in dimension~$2$ with $m=1$, compare \Cref{thm:order}).
It follows that the orbit of a letter~$\r$ inside $\invc^m\invc^L$ contains $\rrr/2$ letters if and only if either $\psi\equiv\one$ or~$\rrr$ is even with~$\psi(r) = r$.
This yields the following first important restriction concerning the symmetries we need to consider.

\begin{lemma} \label{lem:possiblefoldings}
  Let $\r_1\cdots\r_{n'}$ be a subword of\/ $\invc^m\invc^L$ which is invariant under the application of $(\Krewplusbar)^k$.
  Then $\rrr$ divides~$2n'$.
\end{lemma}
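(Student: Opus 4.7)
The plan is to observe that invariance of a subword under $(\Krewplusbar)^k$ forces the subword to be a union of $(\Krewplusbar)^k$-orbits of letters inside $\invc^m\invc^L$, so it suffices to control the cardinalities of these orbits modulo~$\rrr/2$. The paragraph immediately preceding the lemma already identifies those cardinalities: each orbit has size either $\rrr/2$ (when $\psi\equiv\one$, or when $\rrr$ is even and the underlying reflection is fixed by~$\psi$) or~$\rrr$ (otherwise). So the argument is really a divisibility statement about a sum of orbit sizes.

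First I would spell out the orbit-decomposition step. By definition of the $(\Krewplusbar)^k$-action, invariance of $\r_1\cdots\r_{n'}$ means that the multiset of letters it consists of is closed under $(\Krewplusbar)^k$; since letters appear with multiplicity at most one inside $\invc^m\invc^L$, this multiset is genuinely a disjoint union of $(\Krewplusbar)^k$-orbits. Next I would record the two possible orbit lengths, citing the discussion just above the lemma: every orbit has cardinality in $\{\rrr/2,\rrr\}$, with the exceptional value $\rrr/2$ appearing only under the stated conditions on $\psi$ and on the letter. Writing $a$ and $b$ for the numbers of orbits of size $\rrr/2$ and $\rrr$ contained in the subword, we get
\[
 n' \;=\; a\cdot\tfrac{\rrr}{2} + b\cdot\rrr,
\]
so $2n' = a\rrr + 2b\rrr$ is a multiple of~$\rrr$, which is exactly the claim.

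The one thing I would double-check is the well-definedness of $\rrr/2$: the hypothesis explicitly requires $\rrr$ to be even when $\psi\equiv\one$, and in the case $\psi\not\equiv\one$ the value $\rrr/2$ only enters the formula through orbits of letters with $\psi(r)=r$, which by hypothesis of that case requires $\rrr$ to be even as well; otherwise every orbit has size $\rrr$ and the conclusion $\rrr\mid n'$ (hence $\rrr\mid 2n'$) is even stronger. So the unified statement $\rrr\mid 2n'$ holds in all cases.

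I do not expect a serious obstacle here; the only subtle point is bookkeeping the two exceptional regimes (the $\psi\equiv\one$ case versus the $\psi\not\equiv\one$ case with $\psi$-fixed letters), and making sure that the ``$\rrr$ even if $\psi\equiv\one$'' hypothesis is precisely what is needed for $\rrr/2$ to be an integer in the formula displayed above.
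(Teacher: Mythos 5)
Your proposal is correct and follows essentially the same argument as the paper: invariance forces the subword to be a union of $(\Krewplusbar)^k$-orbits, and since every orbit has size $\rrr/2$ or $\rrr$, writing $n'$ as a non-negative integer combination of these sizes immediately gives $\rrr\mid 2n'$. The extra bookkeeping you do about when $\rrr/2$ is an integer is consistent with the discussion preceding the lemma and does not change the substance of the proof.
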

\begin{proof}
  As the subword is invariant when applying $(\Krewplusbar)^k$, it must be the union of orbits of letters of this operation.
  The statement follows as we have observed that every orbit contains either $\rrr/2$ or $\rrr$ many letters.
\end{proof}

When counting positive $m$-divisible non-crossing partitions that are invariant under powers of either of the two positive Kreweras maps, we only consider subwords of $\invc^m\invc^L$ that are reduced factorisations of non-crossing partitions.
We therefore only consider subwords of length at most~$n$.
In other words, we only need to consider parameters~$\rrr$ dividing $2n'$ for some $n' = \lenR(w) \leq n$ with $w \in \NC$.

\medskip

After we have seen that we need to understand only a small number of symmetries, we next discuss how to find the appropriate numerical parameters for the situation that one considers for a $\rrr$- or $(\rrr/2)$-fold symmetry, respectively.
To this end, we make use of the following lemma, explaining for which parameters~$m$ we have that~$\rrr$ divides~$(m+1)h-2$.

\begin{lemma}
\label{lem:numbercrunching}
  Let $\alpha,\beta,\gamma$ be coprime natural numbers, that is, $\gcd(\alpha,\beta,\gamma) = 1$, and let $\mathcal{Y}$ be the set of integers~$Y$ such that $\alpha$ divides $\beta\cdot Y + \gamma$.
  Then
  \begin{enumerate}
    \item[\em(1)] $\mathcal{Y} = \emptyset$ if $\gcd(\alpha,\beta)>1$, and
    \item[\em(2)] $\mathcal{Y} = \{ Y \mid Y \equiv Y_0\pmod{\alpha} \}$ for a fixed $Y_0$ if $\gcd(\alpha,\beta)=1$.
  \end{enumerate}
\end{lemma}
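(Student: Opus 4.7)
The plan is to give a short elementary number-theoretic argument, treating the two cases according to whether $\beta$ is invertible modulo~$\alpha$ or not. Everything is routine modular arithmetic; the lemma is a bookkeeping tool, and I would present it concisely.

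For part~(1), I would proceed by contradiction. Suppose $d := \gcd(\alpha,\beta) > 1$ and that some $Y \in \mathcal{Y}$ exists. Since $d \mid \alpha$ and $\alpha \mid \beta Y + \gamma$, we would have $d \mid \beta Y + \gamma$. Combined with $d \mid \beta$, this forces $d \mid \gamma$. Hence $d$ would be a common divisor of $\alpha,\beta,\gamma$ strictly greater than~$1$, contradicting the hypothesis $\gcd(\alpha,\beta,\gamma)=1$. Thus $\mathcal{Y}=\emptyset$ in this case.

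For part~(2), with $\gcd(\alpha,\beta) = 1$, I would invoke the standard fact that $\beta$ is invertible modulo~$\alpha$; let $\beta^{-1}$ denote any such inverse. The defining congruence $\beta Y + \gamma \equiv 0 \pmod{\alpha}$ is then equivalent to $Y \equiv -\beta^{-1}\gamma \pmod{\alpha}$, which has a unique solution $Y_0 := -\beta^{-1}\gamma \bmod \alpha$ modulo~$\alpha$. Therefore $\mathcal{Y}$ is exactly the residue class $\{Y : Y \equiv Y_0 \pmod{\alpha}\}$, as claimed.

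There is no genuine obstacle here; the only subtlety worth flagging explicitly in the write-up is the distinction between the hypothesis $\gcd(\alpha,\beta,\gamma)=1$ and the auxiliary quantity $\gcd(\alpha,\beta)$ that governs which of the two cases one is in. Making this distinction clear is what drives part~(1), and once that is pointed out, both parts are one-line arguments.
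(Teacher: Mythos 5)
Your proof is correct and follows essentially the same route as the paper's: part~(1) via the observation that a common divisor of $\alpha$ and $\beta$ greater than $1$ would have to divide $\gamma$, contradicting $\gcd(\alpha,\beta,\gamma)=1$, and part~(2) by inverting $\beta$ modulo $\alpha$ to isolate the unique residue class $Y\equiv-\beta^{-1}\gamma\pmod\alpha$. No changes are needed.
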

\begin{proof}
  Suppose $Y$ is an integer such that~$\alpha$ divides $\beta\cdot Y + \gamma$, or, phrased differently, $\beta\cdot Y + \gamma\equiv0\pmod\alpha$.

  If $\gcd(\alpha,\beta)>1$, then that common divisor of $\alpha$ and~$\beta$ does not divide $\gamma$ (by coprimeness of all three), so no solution exists.
  Otherwise $\beta$ has a multiplicative inverse modulo~$\alpha$, say $\beta^{-1}$ with $\beta^{-1}\beta\equiv1\pmod\alpha$.
  Multiplication of both sides of the congruence by $\beta^{-1}$ gives
  $Y \equiv -\beta^{-1}\gamma\pmod\alpha$,
  and hence all solutions form the single residue class
  $\{Y_0 + k\alpha\mid k\in\mathbb{Z}\}$, with $Y_0\equiv-\beta^{-1}\gamma\pmod\alpha$.
\end{proof}

One now applies this lemma to the situation $mh+(h-2) = k\cdot\rrr$, where we want to determine all possible parameters~$m$ for a given parameter~$\rrr$.
First, we set
\[
  y = \begin{cases}
        2, &\text{ if~$\rrr$ and~$h$ are even}, \\
        1, &\text{ otherwise.}
      \end{cases}
\]
\Cref{lem:numbercrunching} then says that there exists an~$m$ such that~$\rrr$ divides $hm+(h-2)$ if and only if~$\rrr/y$ and~$h/y$ are coprime.
In this case,~$m$ is given by the fixed 
residue class~$m'$ modulo $\rrr/y$, $0 \leq m' < \rrr/y$, given by
\begin{equation}
  m' = \tfrac{2-h}{y}(h/y)^{-1} \pmod{\rrr/y},
\end{equation}
and we moreover obtain
\begin{equation}
  \tfrac{y}{\rrr}\big((m+1)h-2\big) = ah + \tfrac{y}{\rrr}\big((m'+1)h-2\big)
\end{equation}
for the parameter~$a$ given by
\[
  m = a\tfrac{\rrr}{y} + m',
\]
and integral $\frac{y}{\rrr}\big((m'+1)h-2\big)$.
We thus define in this case the reflection~$\rfix$ inside $\invc$ to be the letter in position
\begin{equation}
  \ifix = \tfrac{n}{2}\cdot\tfrac{y}{\rrr}\big((m'+1)h-2\big)+1, \label{eq:ifix}
\end{equation}
and $\invc^{L'}$ to be the subword of $\invc$ consisting of the first $\ifix-1$ letters (so that $\rfix$ is the first letter in $\invc$ not contained in $\invc^{L'}$).
This yields the following lemma.

\begin{lemma}
\label{lem:missingletter}
  We have:
  \begin{enumerate}
    \item[\em(i)] if~$h$ is even, the length of $\invc^{L'}$ is a multiple of~$n$;
    \item[\em(ii)] if~$h$ is odd, the length of $\invc^{L'}$ is a multiple of~$n/2$.
  \end{enumerate}
\end{lemma}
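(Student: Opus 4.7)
The plan is to compute the length of $\invc^{L'}$ directly from its definition and then carry out a short case analysis on the parities of $h$ and $\rrr$. By construction, the length equals $\ifix - 1 = \tfrac{n}{2}\cdot\tfrac{y}{\rrr}\big((m'+1)h-2\big)$. The defining congruence $(h/y)m' \equiv (2-h)/y \pmod{\rrr/y}$ for $m'$ multiplies out to $(m'+1)h \equiv 2 \pmod{\rrr}$, so $\rrr$ divides $(m'+1)h - 2$. Writing $(m'+1)h - 2 = c\rrr$ for some integer $c$, the length becomes simply $\tfrac{nyc}{2}$.

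Now I would split into three cases.
\begin{itemize}
\item If $h$ is even and $\rrr$ is even, then $y = 2$ by definition, and the length is $nc$, a multiple of $n$.
\item If $h$ is even and $\rrr$ is odd, then $y = 1$. The right-hand side of $c\rrr = (m'+1)h - 2$ is even, forcing $c\rrr$ to be even; since $\rrr$ is odd, $c$ itself must be even. Hence the length $nc/2 = n(c/2)$ is a multiple of $n$.
\item If $h$ is odd, then $y = 1$ and the length is $nc/2$. Here I would invoke that the number of positive roots $N = nh/2$ must be a positive integer; since $h$ is odd, $n$ is even. Consequently $n/2 \in \mathbb{Z}$ and the length $(n/2)\,c$ is a multiple of $n/2$.
\end{itemize}

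This covers (i) by the first two cases and (ii) by the third. The proof is essentially bookkeeping once the key equality $(m'+1)h - 2 = c\rrr$ is in place; the only subtlety is the parity argument in the second case and the appeal to $N = nh/2 \in \mathbb{Z}$ in the third, so I do not anticipate a significant obstacle.
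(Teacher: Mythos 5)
Your proof is correct and follows essentially the same route as the paper: both compute the length as $\ifix-1=\tfrac{n}{2}\cdot\tfrac{y}{\rrr}\big((m'+1)h-2\big)$, use that $\rrr$ divides $(m'+1)h-2$ (immediate from the defining congruence for $m'$), and finish by a parity case analysis. Your only deviations are cosmetic: you make the divisibility and the parity step in the case ``$h$ even, $\rrr$ odd'' explicit, and you justify that $n$ is even for odd $h$ via $N=nh/2\in\Z$ rather than by citing \Cref{lem:comphn}.
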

In the second case in the lemma, we recall in \Cref{lem:comphn} that~$n$ is indeed even.

\begin{proof}[Proof of \Cref{lem:missingletter}]
  If~$h$ is even, we consider the two situations that $\rrr$ is even or odd.
  If~$\rrr$ is even then $y=2$ and
  \[
    \tfrac{n}{2}\cdot\tfrac{y}{\rrr}\big((m'+1)h-2\big) = n \cdot \tfrac{1}{\rrr}\big((m'+1)h-2\big),
  \]
  and~$\rrr$ divides $\big((m'+1)h-2\big)$.
  If~$\rrr$ is odd then $y=1$ and
  \[
    \tfrac{n}{2}\cdot\tfrac{y}{\rrr}\big((m'+1)h-2\big) = n \cdot \tfrac{1}{\rrr}\big((m'+1)h/2-1\big),
  \]
  and~$\rrr$ divides $\big((m'+1)h/2-1\big)$.

  If~$h$ is odd, then $y=1$ and
  \[
    \tfrac{n}{2}\cdot\tfrac{y}{\rrr}\big((m'+1)h-2\big) = \tfrac{n}{2} \cdot \tfrac{1}{\rrr}\big((m'+1)h-2\big)
  \]
  and~$\rrr$ divides $\big((m'+1)h-2\big)$.
\end{proof}

We refer the reader to \Cref{ex:biginvexample} were we give several examples of the above computations.
Next, we use this description of~$m$ as a residue class modulo~$\rrr/y$ to obtain another 
important restriction.
Let~$\r$ be a letter in $\invc^m\invc^L$ and let $\r_1\cdots\r_{\rrr'}$ be the subword of $\invc^m\invc^L$ obtained from~$\r$ under the application of~$(\Krewplusbar)^k$ (in particular, $\rrr' \in \{\rrr/2,\rrr\}$).
We then have the following lemma.

\begin{lemma}
\label{lem:sequenceindependence}
  We have that
  \begin{enumerate}
    \item[\em(1)] the sequence $r_1,r_2,\ldots,r_{\rrr'}$ of reflections in the subword $\r_1\cdots\r_{\rrr'}$, and
    \item[\em(2)] the order in which the letters $\r_1,\r_2,\ldots,\r_{\rrr'}$ are visited when applying $(\Krewplusbar)^k$
  \end{enumerate}
   are independent of the parameter~$m$.
\end{lemma}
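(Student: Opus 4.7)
The plan is to reduce both statements to an explicit computation on letter positions in the bi-infinite word, exploiting the rigidity of the bipartite form. By \Cref{prop:PositiveKrewAlt} and the definition of $\Krewplusbar$, one application of $\Krewplusbar$ is the composition of the shift functor $\Sigma$ (applied $m$ or $m+1$ times, depending on the variant) with the Auslander--Reiten translate $\tau$. In the bipartite presentation \eqref{eq:wocword}, the bi-infinite word $^\infty\c^\infty$ reads as $\cdots c_A c_B c_A c_B \cdots$, and $\Sigma$ acts by shifting positions by $N = nh/2$ letters while $\tau$ acts by shifting positions by $-n$ letters. Consequently, $\Krewplusbar$ acts on positions in this word as a translation by a fixed amount $s$ that depends on~$m$, and $(\Krewplusbar)^k$ acts as translation by $ks$.

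The first step is then to observe that the reflection attached to a position $i$ in the bi-infinite word depends only on $i \pmod{2N}$: indeed, two positions differing by $2N$ lie in copies of $\wwo$ that are conjugate by $c^h = \one$, and therefore carry the same reflection. Hence the sequence of reflections $r_1, r_2, \ldots, r_{\rrr'}$ in the orbit of a letter $\r$ at position $p_0$ is completely determined by the residues $p_0, p_0 + ks, p_0 + 2ks, \ldots \pmod{2N}$, and similarly the order of visits is determined by the residue $ks \pmod{2N}$.

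It remains to show that, as $m$ varies over the residue class $m \equiv m' \pmod{\rrr/y}$ guaranteed by \Cref{lem:numbercrunching}, the position $p_0$ (after the canonical identification of the initial segment of $\invc^m\invc^L$ with that of $\invc^{m'}\invc^L$) and the value $ks \pmod{2N}$ are both unchanged. The first is immediate from the regular structure of $\invc$: the word $\invc^m\invc^L$ for $m = m' + a\rrr/y$ is obtained from $\invc^{m'}\invc^L$ by prepending $a\rrr h/(2y)$ full periods $c_A c_B$, so positions of letters in the first $(m'+1)h-2$ blocks agree. For the second, writing $k = k' + ah/y$ and $s = s' + aN\rrr/y$ where $k', s'$ are the values for $m'$, a direct expansion
\[
  ks - k's' \;=\; a\bigl(N\rrr k'/y + h s'/y\bigr) \,+\, a^2 h N \rrr / y^2
\]
shows that each summand is an integer multiple of~$2N$; this uses the definition of~$y$ (which ensures the relevant parities), the identity $(m'+1)h - 2 = k'\rrr$, and the fact that $\rrr/y$ and $h/y$ are coprime. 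Both claims of the lemma then follow from the description via residues modulo $2N$.

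The main obstacle will be the bookkeeping in the divisibility verification above, which requires a careful case distinction according to the parity of~$\rrr$ and of~$h$ (equivalently, the value of $y \in \{1, 2\}$) and according to whether $\psi \equiv \one$ or not, since this affects whether the orbit has size $\rrr$ or $\rrr/2$ and consequently whether it is $k$ or $2k$ applications of $\Krewplusbar$ that must act trivially on the reflection at~$p_0$. A further point of care is the identification of starting positions across different fundamental domains: one must confirm that the natural truncation $\invc^L$ at the end of $\invc^m\invc^L$ is the same word of reflections for every $m$ in the fixed residue class, which again follows from the bipartite regularity together with \Cref{lem:missingletter}.
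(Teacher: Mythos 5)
There is a genuine gap, and it sits at the very core of your argument: you identify $\Krewplusbar$ with $\Sigma^m\circ\tau$ and conclude that it ``acts on positions as a translation by a fixed amount $s$ that depends on $m$''; your later expansion $s=s'+aN\rrr/y$ shows that this $s$ is the translation amount of $\Sigma^m\circ\tau$, which equals the \emph{length} of the fundamental domain ($mN-n$, resp.\ $(m+1)N-n$). But $\Sigma^m\circ\tau$ is precisely the identification that defines the orbit category: translation by $s$, hence by $ks$, sends every letter of the fundamental domain to another bi-infinite-word representative of the \emph{same} object, so under your reading $(\Krewplusbar)^k$ would act trivially, and the positions $p_0,p_0+ks,p_0+2ks,\dots$ are the lifts of one single letter, not its $(\Krewplusbar)^k$-orbit. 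The actual positive Kreweras map is induced by the shift $\Sigma$ alone, i.e.\ by translation by $N$ --- an amount \emph{independent} of $m$ --- acting on positions reduced modulo the $m$-dependent domain length. \Cref{ex:AA} makes this concrete: in type $A_2$ with $m=2$ one has $s=2N-n=4$, equal to the domain length, so your recipe makes $\Krewplusbar$ the identity (or, read in the bi-infinite word, it produces the reflection cycle $(12),(13),(23)$ of length three), whereas the true orbit is $(12)^{(1)}\mapsto(12)^{(2)}\mapsto(23)^{(1)}\mapsto(13)^{(1)}$, of order four.

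Because of this, the divisibility check $ks-k's'\in 2N\Z$ does not bear on the lemma, and the remaining work is not the ``bookkeeping'' you defer: with the correct description one must control how reduction modulo the $m$-dependent modulus $L$ (which selects the canonical representatives whose reflections one actually reads off) interacts with reduction modulo $2N$; note that the reflection is \emph{not} invariant under the identification --- in \Cref{ex:AA} the letters $(13)(23)$ are identified with $(12)(13)$ --- so ``residues mod $2N$'' alone do not determine the sequence of reflections in the orbit. This interaction is exactly what the paper's argument handles: by \Cref{lem:numbercrunching}, passing from $m$ to $m+\rrr/y$ increases $k$ by $h/y$, hence increases the gap between consecutive orbit letters by a controlled number of alternating copies of $\c_A$ and $\c_B$ (cf.\ the discussion around \eqref{eq:ifix} and \Cref{lem:missingletter}), and the case analysis on $y$, the parity of $h$, and $\psi$ in \Cref{lem:invariantsitting} is what guarantees that both the reflections and the visiting order are unchanged. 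If you want to salvage your position-arithmetic approach, replace ``translation by $ks$'' by ``translation by $kN$ modulo $L$'' and prove the compatibility of this reduction with the period-$2N$ (and period-$N$ up to $\psi$) structure of the reflection labels as $L$ varies over the residue class; that statement, not the one you verified, is the actual content of the lemma.
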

\begin{proof}
  The above considerations imply that both the sequence $r_1,r_2,\ldots,r_{\rrr'}$ of reflections and the order in which the letters are visited do not change when passing between the parameters $m = a\frac{\rrr}{y}+m'$ and $m+\frac{\rrr}{y} = m + (a+1)\frac{\rrr}{y}+m'$.
\end{proof}

From now on, we distinguish several disjoint cases which in total cover all possible situations:
\begin{enumerate}
  \item The Coxeter number~$h$ is odd. \label{case:hodd}
  \item The Coxeter number~$h$ is even and \label{case:heven}
  \begin{enumerate}
    \item $\psi \equiv \one$, that is, $\wo s \wo = s$ for all $s \in \reflS$, \label{case:psione}
    \item $\psi \not\equiv \one$ and $\rrr$ is even,\label{case:psinotoneeven}
    \item $\psi \not\equiv \one$ and $\rrr$ is odd.\label{case:psinotoneodd}
  \end{enumerate}
\end{enumerate}

Here, we observe that it is well-known that $\psi \equiv \one$ implies that~$h$ is even, as seen in the following auxiliary lemma.

\begin{lemma}
\label{lem:comphn}
  We have $h$ is odd if and only if $c_B = \psi(c_A)$, and~$h$ is even if and only if $\psi(c_A) = c_A, \psi(c_B) = c_B$.
  In particular,~$n$ is even if~$h$ is odd.
\end{lemma}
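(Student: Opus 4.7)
The plan is to compute $\psi(c) = \wo c \wo$ in two different ways using the bipartite factorization $c = c_A c_B$, and then read off the action of $\psi$ on $c_A$ and $c_B$ by comparison. Equation~\eqref{eq:wocword} is the starting point: it identifies $\wwo(\c)$, as a word, with $h$ alternating copies of $\c_A$ and $\c_B$.

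First I would translate~\eqref{eq:wocword} from a word to a group-element identity by using that $c_A$ and $c_B$ are each involutions in~$W$ (being products of pairwise commuting simple reflections). This gives $\wo = (c_A c_B)^{h/2} = c^{h/2}$ when $h$ is even, and $\wo = (c_A c_B)^{(h-1)/2} c_A$ when $h$ is odd. Substituting into $\wo c \wo$, the even case collapses trivially to $c^{h+1} = c$ via $c^h = \one$; the odd case collapses to $c^{-1}$ after a short manipulation using $c_A(c_A c_B) = c_B$ together with the telescoping identity $c_B(c_A c_B)^{(h-1)/2} = (c_B c_A)^{(h-1)/2} c_B$ (one-line induction on the exponent). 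The bookkeeping of this odd-case reduction is the only place where a little care is needed.

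Next I would invoke that $\psi$ is a Coxeter diagram automorphism of $(W,\reflS)$: it permutes $\reflS$ and preserves the unlabelled Coxeter graph. Since the diagram is connected and bipartite with \emph{unique} bipartition $\reflS = A \sqcup B$, the involution $\psi$ either preserves this bipartition or swaps the two parts. In the preserving case, $\psi$ acts as a permutation of the commuting set $A$, so $\psi(c_A) = c_A$ (and likewise $\psi(c_B) = c_B$), forcing $\psi(c) = c$. In the swapping case, $\psi(c_A) = c_B$ and $\psi(c_B) = c_A$, forcing $\psi(c) = c_B c_A = c^{-1}$. Matching these two alternatives with the explicit computations of $\psi(c)$ above yields exactly the two equivalences claimed by the lemma.

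For the final assertion, it suffices to observe that if $h$ is odd then, by what we have just shown, $\psi$ restricts to a bijection $A \to B$, whence $|A| = |B|$ and $n = |A| + |B|$ is even.
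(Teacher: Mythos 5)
Your argument is correct, but it takes a more roundabout route than the paper's own proof. The paper simply multiplies out \eqref{eq:wocword} to get $\wo=(c_Ac_B)^{h/2}$ (for $h$ even) or $\wo=(c_Ac_B)^{(h-1)/2}c_A$ (for $h$ odd) and then conjugates $c_A$ and $c_B$ \emph{directly}: using only that $c_A$, $c_B$, $\wo$ are involutions, one finds in two lines that $\wo c_A\wo=c_B$ when $h$ is odd and $\wo c_A\wo=c_A$, $\wo c_B\wo=c_B$ when $h$ is even; the equivalences and $|A|=|B|$ then drop out at once. You instead compute $\psi(c)=\wo c\wo$ (getting $c$ for $h$ even and $c^{-1}$ for $h$ odd — your telescoping manipulation is fine) and then pin down the action on $c_A,c_B$ via the extra structural input that $\psi$ is a graph automorphism of the connected bipartite Coxeter diagram, so it preserves or swaps the unique bipartition. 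That buys a pleasant conceptual statement ($\psi$ fixes or inverts the bipartite Coxeter element according to the parity of $h$), but it costs you a matching step that needs $c\neq c^{-1}$, i.e.\ $h\geq 3$, to separate the two alternatives; for irreducible systems this only fails in the trivial rank-one case $A_1$ (where $B=\emptyset$ and the "unique bipartition" claim also degenerates), so the gap is cosmetic, but it is worth stating, whereas the direct conjugation of $c_A$ avoids both the case distinction and the appeal to diagram automorphisms. The final assertion ($n$ even when $h$ is odd, via $\psi(A)=B$, hence $|A|=|B|$) is handled the same way in both arguments.
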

\begin{proof}
  This follows from \Cref{eq:wocword} together with the observation that~$c_A$,~$c_B$, and~$\wo$ are involutions.
  As $\psi(c_A) = c_B$ implies that $|A| = |B|$ and $\reflS = A \sqcup B$, we deduce (the well-known property) that~$n = |\reflS|$ is even in this case.
\end{proof}

This lemma implies that
\begin{equation}
  (\c_A\c_B)^h = \wwo(\c)\psi(\wwo(\c)), \label{eq:cAcB}
\end{equation}
without commutations in Case~\eqref{case:psione}, respectively in
Case~\eqref{case:hodd} with the additional assumptions that $\c_B =
\psi(\c_A)$ and $\c = \c_A\c_B$.
On the other hand,~\eqref{eq:cAcB} cannot hold in
Cases~\eqref{case:psinotoneeven} and~\eqref{case:psinotoneodd} as in
both we have $c_A = \psi(c_A)$ inside~$W$ but $\c_A \neq \psi(\c_A)$
as words.

\medskip

Before continuing, we look at explicit examples of all these four cases.

\begin{example}
\label{ex:biginvexample}
  To visualise the previous analysis, we consider all four situations.
  \begin{enumerate}[(i)]
    \item In type~$A_4$, we are in Case~\eqref{case:hodd}.
      In this situation with $m=3$, we have
      \[
        \invc^3\invc^L \longleftrightarrow \underset{\bf 1}{\c_A}\ \underset{12}{\c_B}\ \underset{5}{\c_A}\ \underset{\bf 16}{\c_B}\ \underset{9}{\c_A}\ |\ \underset{2}{\c_B}\ \underset{\bf 13}{\c_A}\ \underset{6}{\c_B}\ \underset{17}{\c_A}\ \underset{\bf 10}{\c_B}\ |\ \underset{3}{\c_A}\ \underset{14}{\c_B}\ \underset{\bf 7}{\c_A}\ \underset{18}{\c_B}\ \underset{11}{\c_A}\ |\ \underset{\bf 4}{\c_B}\ \underset{15}{\c_A}\ \underset{8}{\c_B}
      \]
      where the numbers below all copies of~$\c_A$ and of~$\c_B$ indicate the order in which the orbit of one letter in the first copy of~$\c_A$ is visited by applying~$\Krewplusbar$.
      One can thus consider $k \in \{1,2,3,6,9,18\}$ as $(m+1)h-2 = 18$.
      For example, the numbers $1,16,13,10,7,4$ (highlighted above) sit in
      every $k$\th\ alternating copy of~$\c_A$ and~$\c_B$, here with $k=3$.

    \item In type~$B_3$, we are in Case~\eqref{case:psione}.
      In this situation with $m=2$, we have
      \[
        \invc^3\invc^L \longleftrightarrow \underset{\bf 1}{\c_A}\ \c_B\ \underset{4}{\c_A}\ \c_B\ \underset{\bf 7}{\c_A}\ \c_B\ |\ \underset{2}{\c_A}\ \c_B\ \underset{\bf 5}{\c_A}\ \c_B\ \underset{8}{\c_A}\ \c_B\ |\ \underset{\bf 3}{\c_A}\ \c_B\ \underset{6}{\c_A}\ \c_B
      \]
      where the numbers below all copies of~$\c_A$ indicate the order in which the orbit of one letter in the first copy of~$\c_A$ is visited by applying $\Krewplusbar$.
      One can thus consider $k \in \{1,2,4\}$ as $(m+1)h-2 = 8$.
      For example, the numbers $1,7,5,3$ sit in every $(2k)$\th\ alternating copy
      of~$\c_A$ and~$\c_B$, here with $k=2$.

      \item\label{ex:biginvexampleiii} In type~$A_3$, we are in Case~\eqref{case:psinotoneeven} or in Case~\eqref{case:psinotoneodd}.
      In this situation with $m=4$, we have
      \begin{align*}
        \invc^3\invc^L
        &\longleftrightarrow \underset{\substack{\bf 1 \\ \bf 10}}{\c_A}\ \c_B \ \underset{\substack{6 \\ 15}}{\c_A}\ \c_B \ |\ \underset{\substack{2 \\ 11}}{\c_A}\ \c_B \ \underset{\substack{\bf 7 \\ \bf 16}}{\c_A}\ \c_B \ |\ \underset{\substack{3 \\ 12}}{\c_A}\ \c_B \ \underset{\substack{8 \\ 17}}{\c_A}\ \c_B \ |\ \underset{\substack{\bf 4 \\ \bf 13}}{\c_A}\ \c_B \ \underset{\substack{9 \\ 18}}{\c_A}\ \c_B \ |\ \underset{\substack{5 \\ 14}}{\c_A}\ \c_B \\
        &\longleftrightarrow \underset{\substack{\bf 1 \\ 10}}{\c_A}\ \c_B \ \underset{\substack{6 \\ \bf 15}}{\c_A}\ \c_B \ |\ \underset{\substack{2 \\ \bf 11}}{\c_A}\ \c_B \ \underset{\substack{\bf 7 \\ 16}}{\c_A}\ \c_B \ |\ \underset{\substack{\bf 3 \\ 12}}{\c_A}\ \c_B \ \underset{\substack{8 \\ \bf 17}}{\c_A}\ \c_B \ |\ \underset{\substack{4 \\ \bf 13}}{\c_A}\ \c_B \ \underset{\substack{\bf 9 \\ 18}}{\c_A}\ \c_B \ |\ \underset{\substack{\bf 5 \\ 14}}{\c_A}\ \c_B
      \end{align*}
      where the numbers below all copies of~$\c_A$ indicate the order in which the orbit of one letter in the first copy of~$\c_A$ is visited by applying $\Krewplusbar$, with the top letter number indicating this letter, and the bottom number indicating the letter after applying the involution~$\psi$.
      One can thus consider $k \in \{1,2,3,6,9,18\}$ as $(m+1)h-2 = 18$.
      Thus, we are in Case~\eqref{case:psinotoneeven} for $k \in \{1,3,9\}$ and in Case~\eqref{case:psinotoneodd} for $k \in \{2,6,18\}$.
      For example, the numbers $1,10,7,16,4,13$ come in pairs $\big(\r,\psi(\r)\big)$, and the pairs sit in every $(2k)$\th\ alternating copy of~$\c_A$ and~$\c_B$, here with $k=3$.
      Similarly, the numbers $1,15,11,7,3,17,13,9,5$ sit in every $k$\th\ alternating copy of~$\c_A$ and~$\c_B$, here with $k=2$.
      Moreover, this word is travelled by~$(\Krewplusbar)^k$ in the order
      \[
        1,8,6,4,2,9,7,5,3,
      \]
      so the letters in positions $1,4,5,8,9$ are copies of~$\r$ and the letters in positions $2,3,6,7$ are copies of~$\psi(\r)$, as desired.
      The other possible~$k$'s are analogous.
\end{enumerate}
\end{example}

Given the various cases, we can directly further describe the orbits in \Cref{lem:sequenceindependence}.

\begin{lemma}
\label{lem:sequenceindependence2}
  In the situation of \Cref{lem:sequenceindependence}, the order in which letters are visited by~$\Krewplusbar$ is given by cyclically visiting
  \begin{enumerate}
    \setlength\itemindent{30pt}
    \item[\sc Case~\eqref{case:hodd}:]
       every~$h$\th\ letter,
    \item[\sc Case~\eqref{case:psione}:]
       every~$(h/2)$\th\ letter,
    \item[\sc Case~\eqref{case:psinotoneeven}:] 
       every~$(h/2)$\th\ letter if $\psi(r) = r$ or consecutive pair of letters if $\psi(r) \neq r$,
    \item[\sc Case~\eqref{case:psinotoneodd}:] 
       every~$h$\th\ letter.
  \end{enumerate}
  Moreover, in
    \begin{enumerate}
    \setlength\itemindent{100pt}
    \item[\sc Case~\eqref{case:psinotoneeven} with $\psi(r) \neq r$:]
       first, the $h/2$ letters $\r_1,\r_3,\ldots,\r_{h-1}$ are visited, and then the $h/2$ letters $\r_2,\r_4,\ldots,\r_h$ are visited, in this order, where $\psi(r_{2k-1}) = r_{2k}$ for $1 \leq k \leq h/2$,
    \item[\sc Case~\eqref{case:psinotoneodd} with $\psi(r) = r$:]
       the $h/2$ letters $\r_1,\r_2,\ldots,\r_{h/2}$ are visited, in this order.
  \end{enumerate}
\end{lemma}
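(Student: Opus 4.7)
My plan is to unpack $\Krewplusbar = \Krewplusbar^{(m+1)}$ as an explicit permutation of the letters in the fundamental domain $\invc^m\invc^L$, derived from the action of $\Sigma^{m+1}\circ\tau$ on the bi-infinite word $\cdots\wwo(\c)\,\psi(\wwo(\c))\,\wwo(\c)\,\psi(\wwo(\c))\cdots$. I first show that $\Sigma$ shifts a letter by $N = nh/2$ positions, equivalently by $h$ consecutive $\c_A$/$\c_B$ blocks (since $h/2$ copies of the pair $\c_A\c_B$ occupy~$N$ reflection letters), while $\tau$ shifts backward by one copy of~$\c$, i.e.\ by two blocks. Consequently $\Sigma^{m+1}\tau$ shifts by exactly $(m+1)h-2$ blocks, which coincides with the length of the fundamental domain, so on the level of $\Sigma^{m+1}\tau$-orbits the map $\Krewplusbar$ is realised by the single shift $\Sigma$ alone, combined whenever $\psi\not\equiv\one$ with the letter-level involution that applies $\psi$ each time the letter crosses from a $\wwo(\c)$-copy into the adjacent $\psi(\wwo(\c))$-copy.

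The four cases then proceed by reading off the resulting shift pattern together with the $\psi$-twist. In Case~\eqref{case:hodd}, \Cref{lem:comphn} gives $\psi(\c_A) = \c_B$ as words, so shifting by~$h$ (odd) blocks toggles the block type, and the orbit traverses every $h$-th block cyclically. In Case~\eqref{case:psione}, $\psi$ is trivial and~$h$ is even, so the shift by~$h$ blocks remains inside a single block-type (only $\c_A$'s or only $\c_B$'s) and the orbit visits every $(h/2)$-th such block. In Case~\eqref{case:psinotoneeven}, the same block-shift applies but $\psi$ now permutes letters within each block: if $\psi(r) = r$ the situation reduces to Case~\eqref{case:psione}; if $\psi(r) \neq r$ the orbit additionally sweeps up the $\psi$-partner of~$r$ inside each visited block, producing the consecutive-pair pattern. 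In Case~\eqref{case:psinotoneodd}, the oddness of~$\rrr$ prevents both~$r$ and~$\psi(r)$ from sitting in the same $(\Krewplusbar)^k$-orbit (such an orbit would necessarily have even size), forcing $\psi(r)=r$ and recovering the shift-by-$h$-blocks description.

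The main technical obstacle will be pinning down the precise visit ordering in Case~\eqref{case:psinotoneeven} with $\psi(r) \neq r$: I have to show that the first $h/2$ applications of $\Krewplusbar$ produce the $r$-letters $\r_1,\r_3,\ldots,\r_{h-1}$ in $h/2$ distinct blocks before any $\psi(r)$-letter appears, and only then do the next $h/2$ applications produce the $\psi(r)$-letters $\r_2,\r_4,\ldots,\r_h$ in the same blocks. The argument hinges on tracking the parity of $\wwo(\c)/\psi(\wwo(\c))$-boundaries crossed by the composition of~$\Sigma^j$ with the orbit-reduction back into~$\invc^m\invc^L$: each such boundary applies~$\psi$ to the underlying letter, and using \Cref{lem:numbercrunching} together with the coprimality of the shift and the fundamental domain size one verifies that the first $\psi(r)$-visit occurs precisely at step $h/2+1$, matching the visit numbering displayed for type~$A_3$ in \Cref{ex:biginvexample}.
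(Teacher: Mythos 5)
Your overall strategy -- making the $\Sigma^{m+1}\tau$/$\Sigma$ mechanics explicit, doing block arithmetic, invoking the coprimality from the discussion after \Cref{lem:numbercrunching}, and checking against \Cref{ex:biginvexample} -- is the same skeleton as the paper's (very short) argument, but two of your concrete claims fail. First, the $\psi$-bookkeeping: every application of $\Sigma$ moves a letter into the adjacent copy of $\wwo(\c)$ or $\psi(\wwo(\c))$, so your rule ``apply $\psi$ at each crossing from a $\wwo(\c)$-copy into the adjacent $\psi(\wwo(\c))$-copy'' would flip the label $\r\leftrightarrow\psi(\r)$ at every step (or at best every other step). That is incompatible with the very pattern you must prove in Case~\eqref{case:psinotoneeven}: all visits to $\r$-letters precede all visits to $\psi(\r)$-letters. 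The actual switch between the two families is caused not by the window crossings of $\Sigma$ but by the reduction modulo $\Sigma^{m+1}\tau$ back into $\invc^m\invc^L$: this identification is not a pure shift by $(m+1)h-2$ blocks, it also acts on letters by $\psi^{m+1}$ together with the within-block adjustment underlying \Cref{lem:auxpsinotoneeven}, and the family changes only at (certain of) these wrap-arounds. If you suppress this and work with the naive shift by $h$ blocks modulo $(m+1)h-2$ blocks, you even get the wrong orbit: in the type-$A_3$ illustration of \Cref{ex:biginvexample} the naive shift has order $9$, whereas the $\Krewplusbar$-orbit consists of $18$ letters. Relatedly, your verification target does not match the example you cite: there the first $\psi(\r)$-letter is reached only after all nine $\r$-letters, i.e.\ at step $\rrr'/2+1=10$ of the $\Krewplusbar$-orbit (the fourth element of the $(\Krewplusbar)^3$-orbit), not at step $h/2+1=3$.

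Second, your reduction of Case~\eqref{case:psinotoneodd} is wrong. You argue that odd $\rrr$ forces $\psi(r)=r$ because an orbit containing both $r$ and $\psi(r)$ ``would necessarily have even size''. It need not: in \Cref{ex:biginvexample}(iii) with $k=2$, hence $\rrr=9$ odd, the $(\Krewplusbar)^k$-orbit has nine letters, of which five are copies of $\r$ and four are copies of $\psi(\r)$ -- this lopsided mixture is exactly what the replacement rule in \Cref{lem:invariantsitting}(iv) and the corresponding clause of the present lemma are designed to capture. So Case~\eqref{case:psinotoneodd} cannot be collapsed to the $\psi(r)=r$ situation; it requires the same wrap-around/parity analysis as Case~\eqref{case:psinotoneeven}, with the coprimality of $h$ (respectively $h/2$) and $\rrr'$ entering as in the paper's proof to guarantee that the stated cyclic stepping exhausts the orbit.
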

\begin{proof}
  This is a direct consequence of the way the letters in $\invc^m\invc^L$ are visited under the application of~$\Krewplusbar$.
  In particular observe that we have seen in the discussion after \Cref{lem:numbercrunching} that~$h$ respectively $h/2$ are indeed coprime with $\rrr'$.
  This can be easily rechecked in all cases in \Cref{ex:biginvexample}.
  Observe in type~$A_3$ --- which corresponds to Case~\eqref{case:psinotoneeven} --- that the word is $1,10,7,16,4,13$, and the involution~$\psi$ combines $1,10$, $7,16$, and $4,13$, respectively.
  Since~$h/2 = 2$, the order is $1,4,7$ followed by $10,13,16$, as expected.
\end{proof}

The following lemma describes the orbits of a letter inside $\invc^m\invc^L$ as subwords of $(m+1)h-2$ alternating copies of~$\c_A$ and~$\c_B$ in the four cases.

\begin{lemma}\label{lem:invariantsitting}
  Let~$\r_1,\dots,\r_{\rrr'}$ be a subword of\/ $\invc^m\invc^L$.
  Then this subword is an orbit of the action of $(\Krewplusbar)^k$ if and only if
  \begin{enumerate}
    \setlength\itemindent{30pt}
    \item[\sc Case~\eqref{case:hodd}:]
      its length~$\rrr'$ equals~$\rrr$ and it is invariant under the cyclic shift sending a letter~$\s$ in the $j$\th\ alternating copy of~$\c_A$ or~$\c_B$ to the letter $\s$ {\em(}if~$k$ is even{\em)} or $\psi(\s)$ {\em(}if~$k$ is odd{\em)} in the $(j+k)$\th\ alternating copy of~$\c_A$ or~$\c_B$;
    \item[\sc Case~\eqref{case:psione}:]
      its length~$\rrr'$ equals~$\rrr/2$ and it is invariant under the cyclic shift sending a letter~$\s$ in the $j$\th\ alternating copy of~$\c_A$ or~$\c_B$ to the letter $\s$ in the $(j+2k)$\th\ alternating copy of~$\c_A$ or~$\c_B$;
    \item[\sc Case~\eqref{case:psinotoneeven}:] 
      its length~$\rrr'$ equals~$\rrr$ if $\psi(\r_1) \neq \r_1$ or $\rrr' = \rrr/2$ if $\psi(\r_1) = \r_1$, and it is invariant under the cyclic shift sending a letter~$\s$ in the $j$\th\ alternating copy of~$\c_A$ or~$\c_B$ to the letter $\s$ in the $(j+2k)$\th\ alternating copy of~$\c_A$ or~$\c_B$ and under the involution sending this letter~$\s$ to the letter~$\psi(\s)$ in the same copy of~$\c_A$ or of~$\c_B$;
    \item[\sc Case~\eqref{case:psinotoneodd}:] 
      its length~$\rrr'$ equals~$\rrr$ and it is invariant under the cyclic shift sending a letter~$\s$ in the $j$\th\ alternating copy of~$\c_A$ or~$\c_B$ to the letter $\s$ in the $(j+k)$\th\ alternating copy of~$\c_A$ or~$\c_B$ after the following replacement: cyclically visit every $h$\th\ letter starting with the leftmost.
      Leave the first $(\rrr+1)/2$ unchanged while replace the later $(\rrr-1)/2$ by $\psi(\r)$ in the same alternating copy of~$\c_A$ or~$\c_B$.
      Finally replace every letter smaller~$\r$ with $\r \lec r_1$ by $\psi(\r)$ in the same alternating copy of~$\c_A$ or~$\c_B$.
  \end{enumerate}
\end{lemma}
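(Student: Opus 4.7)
The plan is to derive the characterisation directly from \Cref{lem:sequenceindependence2}, which already specifies, in each of the four cases, the precise order in which the letters of a $\Krewplusbar$-orbit inside $\invc^m\invc^L$ are visited. An orbit of $(\Krewplusbar)^k$ is then obtained simply by selecting every $k$-th element of such a $\Krewplusbar$-orbit, so the task reduces to rewriting this selection as the cyclic shift on alternating copies of $\c_A$ and $\c_B$ claimed in each case. It suffices to prove the forward implication (orbits of $(\Krewplusbar)^k$ satisfy the stated length and closure); the converse then follows because both the orbits of $(\Krewplusbar)^k$ and the subwords characterised by the claimed shift partition $\invc^m\invc^L$ into classes of matching cardinality.

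I would first fix the orbit length $\rrr'$ using \Cref{thm:order}: the full $\Krewplusbar$-orbit of a letter has length $(m+1)h-2$ in Cases~\eqref{case:hodd}, \eqref{case:psinotoneeven}, and \eqref{case:psinotoneodd}, and length $\tfrac{(m+1)h}{2}-1$ in Case~\eqref{case:psione}. Combined with the identity $(m+1)h-2=k\rrr$ this immediately gives $\rrr'\in\{\rrr,\rrr/2\}$ in exactly the pattern stated, the sub-case $\rrr'=\rrr/2$ in Case~\eqref{case:psinotoneeven} being precisely when the starting letter is $\psi$-fixed (so that it lies on a single $\Krewplusbar$-orbit of half length). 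I would then translate the ``every $h$-th letter'' respectively ``every $(h/2)$-th letter'' description of \Cref{lem:sequenceindependence2} into a geometric jump on alternating copies. In Case~\eqref{case:hodd}, a single step of $\Krewplusbar$ advances by one alternating copy and twists by $\psi$ (using $\c_B=\psi(\c_A)$ from \Cref{lem:comphn}), so $(\Krewplusbar)^k$ shifts by $k$ copies with $\psi$ applied $k$ times, i.e.\ $\psi$ if $k$ is odd and the identity if $k$ is even; in Case~\eqref{case:psione}, the step advances by $h/2$ letters without any twist (because $\psi\equiv\one$), hence $(\Krewplusbar)^k$ shifts by $2k$ copies; and in Case~\eqref{case:psinotoneeven}, the paired visit order of \Cref{lem:sequenceindependence2} combines the shift by $2k$ copies with the $\psi$-involution within each copy exactly as prescribed.

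The main obstacle I anticipate is Case~\eqref{case:psinotoneodd}, where $\psi\not\equiv\one$ and $\rrr$ is odd, so the orbit has odd length but $\psi$ is an involution, forcing some visits to be $\psi$-twisted and others not. To identify exactly which letters acquire a twist, I would trace the orbit of a reference letter $\r_1$ step by step through $\invc^m\invc^L$, recording at each application of $\Krewplusbar$ whether the journey crosses a boundary between consecutive copies of $\wwo(\c)$ and $\psi(\wwo(\c))$ in the bi-infinite word ${}^\infty\c^\infty$, as such boundary crossings are exactly where a factor of $\psi$ is introduced. Using the definition~\eqref{eq:ifix} of $\ifix$ together with $k\rrr=(m+1)h-2$, a careful counting of these crossings should show that the first $(\rrr+1)/2$ visits remain untwisted while the remaining $(\rrr-1)/2$ acquire a $\psi$. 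The final adjustment for letters $\r\lec\r_1$ reflects a realignment of the starting letter within a single period of $\c_A\c_B$, and should fall out of the same bookkeeping.
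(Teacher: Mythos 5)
Your overall route is the same as the paper's: reduce everything to the visiting-order description of \Cref{lem:sequenceindependence2}, identify an orbit of $(\Krewplusbar)^k$ with every $k$\th\ element of a $\Krewplusbar$-orbit, and isolate Case~\eqref{case:psinotoneodd} as the only delicate point (your partition-counting argument for the converse is fine and in fact more explicit than the paper). However, two of your mechanisms are wrong as stated and would derail the execution. First, a single application of $\Krewplusbar$ does \emph{not} advance by one alternating copy in Case~\eqref{case:hodd}, nor does it give a shift by $2k$ copies after $k$ steps in Case~\eqref{case:psione}: by the very lemma you invoke, each step jumps $h$ (resp.\ $h/2$) positions \emph{within the orbit subword}, which is $h$ alternating copies in both cases. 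Hence $(\Krewplusbar)^k$ is not the cyclic shift appearing in the statement; what is true, and what must be proved, is that its orbit is \emph{invariant} under that shift. The bridge is the coprimality of $h$ (resp.\ $h/2$) with $\rrr$ (resp.\ $\rrr/2$) from \Cref{lem:numbercrunching}, which makes the letters selected at every $k$\th\ visit into an arithmetic progression of step $k$ (resp.\ $2k$) in alternating-copy index. You use this coprimality only in Case~\eqref{case:psinotoneodd}; it is equally needed in Cases~\eqref{case:hodd}--\eqref{case:psinotoneeven}.

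Second, in Case~\eqref{case:psinotoneodd} your criterion that ``a factor of $\psi$ is introduced exactly where the journey crosses a boundary between copies of $\wwo(\c)$ and $\psi(\wwo(\c))$'' cannot be right: each application of $\Krewplusbar$ is one application of the shift functor and jumps exactly $N$ letters, so it crosses exactly one such boundary, yet the letter name does not alternate -- along the full $\Krewplusbar$-orbit started at the first letter of $\invc$, \emph{all} letters named $\s$ are visited before all letters named $\psi(\s)$ (compare the type $A_3$ example in \Cref{ex:biginvexample}). Your per-step crossing count would therefore predict a twist at every step and the careful counting would not produce the $(\rrr+1)/2$ versus $(\rrr-1)/2$ split. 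The correct bookkeeping, which is the paper's, is exactly this block structure of the visiting order: since $\gcd(h,\rrr)=1$, picking every $k$\th\ visit places the first $(\rrr+1)/2$ picks in the untwisted block and the remaining $(\rrr-1)/2$ in the twisted block, and the final replacement for letters $\r\lec\r_1$ is the offset correction when the starting letter is not the first letter of $\invc$. A smaller point: \Cref{thm:order} gives the order of the map, not the length of each letter orbit; letters with $\psi(r)=r$ have $\Krewplusbar$-orbits of half length also in Case~\eqref{case:psinotoneodd}, where the conclusion $\rrr'=\rrr$ survives only because $k$ is forced to be even there -- this needs to be said rather than subsumed under a blanket claim that the letter orbit has length $(m+1)h-2$.
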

\begin{proof}
  Recall that $\invc^m\invc^L$ is given by $(m+1)h-2$ alternating copies of~$\c_A$ and~$\c_B$.
  Let now~$\r$ be a letter of $\invc^m\invc^L$ corresponding to some letter~$\s$ inside some copy of~$\c_A$.
  (The case that $\s$ is inside some copy of~$\c_B$ is completely analogous.)
  As we have seen in \Cref{sec:poskrewmap2}, the orbit of~$\r$ under $\Krewplusbar$ is given in this case by all letters~$\r'$ corresponding to all letters~$\s$ and to all letters~$\psi(\s)$ inside all copies of~$\c_A$ and of~$\c_B$.
  The four described cases are then obtained by a direct analysis of the order in which the letters in this orbit of~$\r$ under $\Krewplusbar$ are visited as given in \Cref{lem:sequenceindependence}.

  The only difficult situation is the replacement at the end of Item~(iv):
  we have seen in \Cref{lem:sequenceindependence} that the letters are visited in the cyclic order $\r_1,\r_{h+1},\ldots,r_{(\rrr'-1)h+1}$.
  As~$\rrr' = \rrr$ is odd, we have that~$\rrr$ and~$h$ are coprime by \Cref{lem:numbercrunching}.
  One can therefore iterate through the letters $\r_1,\ldots,\r_{\rrr'}$ each once in this order.
  If~$\r_1$ is the very first letter inside $\invc$, then this iteration first visits the $(\rrr+1)/2$ letters corresponding again to the letter~$\r_1$ and then visits the $(\rrr-1)/2$ letters corresponding to~$\psi(\r_1)$.
  This is captured by the first replacement above.
  If~$\r_1$ is instead any letter inside $\invc$, then the resulting word is correct exactly in the positions starting at~$\r_1$, while the letters left of~$\r_1$ inside~$\invc$ have again to be replaced.
\end{proof}

To make it explicit, we record from \Cref{lem:invariantsitting} the possible orbit sizes in the various cases.

\begin{corollary}
  In the situation of \Cref{lem:invariantsitting}, the orbit sizes are given by
  \begin{enumerate}
    \item[\em(i)] $\rrr$ if we are in Case~\eqref{case:hodd}, in Case~\eqref{case:psinotoneeven} with $\psi(\r_1) \neq \r_1$, or in Case~\eqref{case:psinotoneodd}, and
    \item[\em(ii)] $\rrr/2$ if we are in Case~\eqref{case:psione}, or in Case~\eqref{case:psinotoneeven} with $\psi(\r_1) = \r_1$.
  \end{enumerate}
\end{corollary}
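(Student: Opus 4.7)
The plan is to deduce this corollary as an immediate consequence of \Cref{lem:invariantsitting}. That lemma describes, case by case, the subwords of $\invc^m\invc^L$ that arise as orbits of a single letter under the action of $(\Krewplusbar)^k$, and in each case it already records the length $\rrr'$ of such an orbit; the corollary is nothing more than a regrouping of these four statements according to whether $\rrr' = \rrr$ or $\rrr' = \rrr/2$.

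To frame the extraction conceptually, I would first recall that since $(m+1)h-2 = k\cdot\rrr$, \Cref{thm:order} yields $\left(\Krewplusbar^{(m+1)}\right)^{k\rrr} = \mathbf{1}$, so the order of $(\Krewplusbar)^k$ divides~$\rrr$ and every orbit has size dividing~$\rrr$. The only orbit sizes that appear in \Cref{lem:invariantsitting} are~$\rrr$ and~$\rrr/2$, so the dichotomy is between a full-size orbit and one that has collapsed by a factor of two. From the description of $\Krewplusbar$ as the composition of the shift functor~$\shift$ and the Auslander--Reiten translate~$\ARtranslate$ given in \Cref{sec:poskrewmap2}, a collapse to $\rrr/2$ amounts to the condition that $(\Krewplusbar)^{k\rrr/2}$ already fixes the letter in question, which in turn forces $\psi(\r) = \r$.

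Running this through the four cases of \Cref{lem:invariantsitting} gives the claimed classification. In Case~\eqref{case:psione} the hypothesis $\psi \equiv \one$ fixes every letter, so every orbit has length~$\rrr/2$. In Case~\eqref{case:psinotoneeven} the collapse occurs precisely when $\psi(\r_1) = \r_1$, yielding length~$\rrr/2$ in that subcase and length~$\rrr$ otherwise. In Cases~\eqref{case:hodd} and~\eqref{case:psinotoneodd} the involution~$\psi$ is redistributed non-trivially across the orbit (via the parity-of-$k$ rule in Case~\eqref{case:hodd} and via the asymmetric replacement rule at the end of \Cref{lem:invariantsitting}(iv) in Case~\eqref{case:psinotoneodd}), so no collapse occurs and every orbit has length~$\rrr$. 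There is essentially no hard step; the only mildly subtle point requiring a direct verification is that in Case~\eqref{case:psinotoneodd} the $\rrr$ positions visited by $(\Krewplusbar)^k$ are genuinely distinct inside $\invc^m\invc^L$ even when $\psi(\r) = \r$, which is immediate from the asymmetric replacement rule in \Cref{lem:invariantsitting}(iv).
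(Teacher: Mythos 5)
Your proposal is correct and follows the paper's route: the corollary is stated in the paper as a direct read-off of the orbit lengths $\rrr'$ already recorded in each of the four cases of \Cref{lem:invariantsitting}, with no separate argument given. Your additional framing via \Cref{thm:order} and the collapse condition $\psi(\r)=\r$ is harmless extra justification but not needed beyond the regrouping you describe.
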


We next describe how to decompose a subword of $\invc^m\invc^L$ which is invariant under $(\Krewplusbar)^k$ into orbits, sharpening in particular \Cref{lem:possiblefoldings}.

\begin{lemma}\label{lem:orbitdecomposition}
  Let~$\r_1,\dots,\r_{n'}$ be a subword of\/ $\invc^m\invc^L$ which is invariant under the action of\/ $(\Krewplusbar)^k$.
  The orbit decomposition of\/ $\r_1,\dots,\r_{n'}$ is then given as follows:
  \begin{enumerate}
    \setlength\itemindent{30pt}
    \item[\sc Case~\eqref{case:hodd}:]
      $\rrr$ divides~$n'$ and the orbits consist of those letters that are multiples of $k'$~positions apart, where~$k'$ is given by~$n' = \rrr\cdot k'$.
    \item[\sc Case~\eqref{case:psione}:]
      $\rrr/2$ divides~$n'$ and the orbits consist of those letters that are multiples of $k'$~positions apart, where~$k'$ is given by~$n' = \rrr/2 \cdot k'$.
    \item[\sc Case~\eqref{case:psinotoneeven}:]
      divide the word $\r_1,\ldots\r_{n'}$ into subwords $\r'_1,\r'_2,\ldots,\r'_{n''}$ and $\r''_1,\r''_2,\ldots,\r''_{n'''}$ such that the former have the property that $\psi(\r'_i) \neq \r'_i$ and the latter have the property $\psi(\r''_i) = \r''_i$.
      Then~$\rrr$ divides~$n''$ and the orbits of size~$\rrr$ consist of those letters in $\r'_1,\r'_2,\ldots,\r'_{n''}$ that are multiples of $k'$~positions apart, where~$k'$ is given by~$n'' = \rrr\cdot k'$, and~$\rrr/2$ divides~$n'''$ and the orbits of size~$\rrr/2$ consist of those letters in $\r''_1,\r''_2,\ldots,\r''_{n'''}$ that are multiples of $k''$~positions apart, where~$k''$ is given by~$n''' = \rrr/2 \cdot k''$.
    \item[\sc Case~\eqref{case:psinotoneodd}:]
      $\rrr$ divides~$n'$ and the orbits consist of those letters that are multiples of $k'$~positions apart, where~$k'$ is given by~$n' = \rrr\cdot k'$.
  \end{enumerate}
\end{lemma}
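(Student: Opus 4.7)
The plan is to combine the explicit orbit description of \Cref{lem:invariantsitting} with a cyclic-order preservation argument, transferring the orbit structure on $\invc^m\invc^L$ into arithmetic progressions within the subword.

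First, I would note that a $(\Krewplusbar)^k$-invariant subword is a disjoint union of full orbits of the action. In Cases~\eqref{case:hodd}, \eqref{case:psione}, and~\eqref{case:psinotoneodd}, \Cref{lem:invariantsitting} guarantees that all orbits share a common size ($\rrr$, $\rrr/2$, and $\rrr$ respectively), which directly gives the claimed divisibility of $n'$. In Case~\eqref{case:psinotoneeven}, the orbit sizes are bimodal: $\psi$-fixed letters lie in orbits of size $\rrr/2$, non-$\psi$-fixed letters lie in orbits of size $\rrr$, and the two families never share an orbit, as is visible directly in Case~(iii) of \Cref{lem:invariantsitting}. This forces the canonical split of the subword into $\r'_1,\ldots,\r'_{n''}$ (the non-$\psi$-fixed letters, with $\rrr \mid n''$) and $\r''_1,\ldots,\r''_{n'''}$ (the $\psi$-fixed letters, with $\tfrac{\rrr}{2} \mid n'''$).

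The crux is to identify the induced permutation on the subword index set as a cyclic rotation. In the three homogeneous cases, as well as on the $\psi$-fixed part in Case~\eqref{case:psinotoneeven}, \Cref{lem:invariantsitting} exhibits $(\Krewplusbar)^k$ as a positional shift $p \mapsto p + \Delta$ on $\invc^m\invc^L$ by a fixed offset $\Delta$. Such a shift preserves cyclic order on positions, and hence induces a cyclic-order-preserving bijection on the ordered indices $1,2,\ldots,n'$ of the surviving subword; any such bijection is a rotation $i \mapsto i + K \pmod{n'}$ for some~$K$. Since its orbits have the prescribed size $\rrr'$, the value $\gcd(n',K)$ must equal $k' = n'/\rrr'$, so the orbits are precisely the residue classes modulo $k'$, exactly as claimed.

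The main obstacle will be the non-$\psi$-fixed half in Case~\eqref{case:psinotoneeven}, where $(\Krewplusbar)^k$ acts as the composite of the positional shift with the $\psi$-swap inside each copy of $\c_A$ or $\c_B$. Using the freedom to reorder the commuting letters within each copy so that $\psi$-paired letters sit in adjacent positions consistently across all copies, this composite can itself be realised as a positional rotation (a shift by~$\Delta$ followed by a transposition within each adjacent $\psi$-pair, which together again preserve cyclic order), and the cyclic-rotation argument of the previous paragraph then applies verbatim. Verifying that such a compatible ordering is indeed available and is globally consistent with the positional shift is the one technical point requiring genuine care.
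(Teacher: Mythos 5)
Your overall route is the same as the paper's: there the lemma is proved in one sentence, as a direct consequence of \Cref{lem:invariantsitting}, the point being that the letters of an orbit of $(\Krewplusbar)^k$ are equally spread out inside $\invc^m\invc^L$. Your first two paragraphs (the divisibility statements coming from the orbit sizes, and equal spacing turning the induced permutation of the subword indices into a cyclic rotation, whose orbits are the residue classes modulo $k'$) are a correct and slightly more formal rendering of exactly this argument in Case~\eqref{case:hodd}, Case~\eqref{case:psione}, and for the $\psi$-fixed letters in Case~\eqref{case:psinotoneeven}, granting the paper's choice of bipartite word in which $\s$ and $\psi(\s)$ sit in corresponding respectively adjacent slots.

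The gap is in your final paragraph, i.e.\ precisely at the point you yourself single out as needing care. A positional shift followed by a transposition inside each adjacent $\psi$-pair does \emph{not} preserve cyclic order (already for two adjacent pairs the map $1,2,3,4\mapsto 2,1,4,3$ is not a rotation), so the rotation argument cannot be applied ``verbatim'' to the non-$\psi$-fixed letters in Case~\eqref{case:psinotoneeven}. Indeed, by \Cref{lem:invariantsitting}(iii) such an orbit consists of complete $\psi$-pairs, one pair in every $(2k)$\th\ alternating copy of~$\c_A$ or~$\c_B$ (see \Cref{ex:biginvexample}(iii)); its letters are therefore not equally spaced, and the permutation induced on the non-fixed part of the subword is in general not a rotation of its index set. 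What makes this case work --- and what the paper only makes explicit later, in \Cref{lem:auxpsinotoneeven} and in Case~\eqref{case:psinotoneeven} of \Cref{prop:invariantsittingdetails} --- is to pass to one representative from each $\psi$-pair: on that sub-subword the action is again an honest equally-spaced shift, your rotation argument applies, and the orbit structure of the full non-fixed part is then read off through the pairing. A similar caveat applies to Case~\eqref{case:psinotoneodd}, which you also classify as a pure positional shift: by \Cref{lem:invariantsitting}(iv) the action there involves the $\psi$-replacement at the wrap-around, and the property one should quote instead is that each orbit contains exactly one letter in every $k$\th\ alternating copy. So your idea coincides with the paper's (whose one-line proof is admittedly no more detailed here), but your resolution of the one delicate point rests on a false claim and should be replaced by the representative-per-pair reduction.
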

\begin{proof}
  This is a direct consequence of the description of the orbits in \Cref{lem:invariantsitting} as there we have seen that the letters in an orbit are equally far apart inside $\invc^m\invc^L$.
\end{proof}

Given \Cref{lem:invariantsitting,lem:orbitdecomposition}, we obtain a purely numerical (and computationally tract\-able) way to give a necessary and sufficient condition on a reduced factorisation $\r_1\cdots\r_{n'}$ of a non-crossing partition $w = r_1\cdots r_{n'} \in \NC$ to correspond to an invariant positive $m$-divisible non-crossing partition for some~$m$.
We then later explicitly give, as a polynomial in the parameter~$m$ (or rather its appropriate residue class), the number of subwords of $\invc^m\invc^L$ for $\c = \c_A \c_B$ that are invariant under the positive Kreweras map that spell out this word.
To this end, we assume
\begin{itemize}
  \item in Case~\eqref{case:hodd} that $\c_B = \psi(\c_A)$ and
  \item in Case~\eqref{case:heven} that the letters~$\s$ and~$\psi(\s)$ inside $\c_A$ and inside~$\c_B$ are equal or next to each other.
\end{itemize}
The situation is almost the same in all cases, but with the slight modifications already present in the previous lemmas.
We need one more auxiliary lemma to nicely describe Case~\eqref{case:psinotoneeven}.

\begin{lemma}
\label{lem:auxpsinotoneeven}
  Let~$h$ be even, let~$\r$ be a letter in~$\invc$, and let~$\s$ be the corresponding letter in the~$h$ alternating copies of~$\c_A$ and~$\c_B$.
  Let moreover $\r'$ be the letter in~$\invc$ given by~$\psi(\r)$ with corresponding letter~$\s'$.
  Then~$\s' = \psi(\s)$ in the same copy of~$\c_A$ or of~$\c_B$ as~$\s$.
\end{lemma}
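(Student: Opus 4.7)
The plan is to exploit the rigid block structure of $\wwo(\c)$ in Case~\eqref{case:heven} to compute the reflection associated to a position of $\invc$ explicitly in terms of the underlying simple reflection. Since $h$ is even, $\wwo(\c) = \c_A\c_B\c_A\c_B\cdots\c_A\c_B$ is the concatenation of $h$ alternating blocks, each of length $n/2$. By the paragraph following \Cref{lem:comphn}, the identity $(c_Ac_B)^h = \wwo(\c)\,\psi(\wwo(\c))$ up to commutations forces $\psi(c_A)=c_A$ and $\psi(c_B)=c_B$ as group elements; since the support of $c_A$ is exactly $A$, it follows that $\psi$ preserves the sets $A$ and $B$, so the simple reflection $\psi(s)$ of any letter of $\c_A$ (resp.\ $\c_B$) is again a letter of $\c_A$ (resp.\ $\c_B$).

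Next, I would fix a letter $\s$ of $\wwo(\c)$ in the $k$-th block, say at position $i$ with corresponding simple reflection $s_i$, and let $u_{k-1}$ denote the product of the first $k-1$ blocks. The key observation is that all letters inside a single block pairwise commute. Writing $\sigma$ for the partial-block-$k$ factor strictly to the left of $s_i$, this lets us telescope
\[
r_i \,=\, s_1\cdots s_{i-1}\,s_i\,s_{i-1}\cdots s_1 \,=\, (u_{k-1}\sigma)\,s_i\,(u_{k-1}\sigma)^{-1} \,=\, u_{k-1}\,s_i\,u_{k-1}^{-1},
\]
where the last equality uses $\sigma s_i\sigma^{-1} = s_i$ from commutativity inside the block. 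Applying $\psi$ and using $\psi(u_{k-1})=u_{k-1}$ (as a group element, which follows from $\psi(c_A)=c_A$ and $\psi(c_B)=c_B$) then yields $\psi(r_i) = u_{k-1}\,\psi(s_i)\,u_{k-1}^{-1}$.

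By the arrangement assumption for Case~\eqref{case:heven}, the simple reflection $\psi(s_i)$ appears as a letter $\s_{i'}$ of the same $k$-th copy of $\c_A$ (or $\c_B$) as $\s_i$, either equal to $\s_i$ or at the adjacent position. Applying the same block telescoping to $r_{i'}$ gives $r_{i'} = u_{k-1}\,s_{i'}\,u_{k-1}^{-1} = u_{k-1}\,\psi(s_i)\,u_{k-1}^{-1} = \psi(r_i)$. Since the map $i\mapsto r_i$ is a bijection between positions of $\invc$ and reflections, the letter $\r'$ corresponding to $\psi(\r)$ sits at position $i'$, and its associated letter $\s' = \s_{i'}$ is $\psi(\s)$ and lies in the same $k$-th copy of $\c_A$ or $\c_B$ as $\s$. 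The argument is entirely routine once the block structure is in hand; there is no substantive obstacle, with the only minor check being that the support of $c_A$ is $A$, so that $\psi(c_A) = c_A$ really forces $\psi(A)=A$.
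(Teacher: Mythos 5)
Your proof is correct and follows essentially the same route as the paper: both rest on \Cref{lem:comphn} giving $\psi(c_A)=c_A$, $\psi(c_B)=c_B$, apply the automorphism $\psi$ letterwise to the palindromic word defining each reflection, and use commutativity of the letters within a block to identify $\psi(r_i)$ with the reflection at the position of $\psi(\s)$ in the same copy of $\c_A$ or $\c_B$. Your explicit telescoping of the partial-block prefix simply spells out the step the paper leaves implicit, so no substantive difference.
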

\begin{proof}
  This is a well-known fact that holds for bipartite Coxeter elements with~$h$ even:
  write $\wo = (c_Ac_B)^{h/2} = s_1\cdots s_N$, and let $r = s_1\cdots s_is_{i+1}s_i\cdots s_1$.
  As $\psi(r) = \wo r \wo$, we obtain $\psi(r) = \psi(s_1)\cdots \psi(s_i)\psi(s_{i+1})\psi(s_i)\cdots \psi(s_1)$, implying the statement because $\psi(c_A) = c_A$ and $\psi(c_B) = c_B$ from \Cref{lem:comphn}.
\end{proof}

This lemma enables us in Case~\eqref{case:psinotoneeven} in \Cref{lem:invariantsitting} to not consider all letters~$\r$ and~$\psi(\r)$, but to restrict attention to a consistent choice of one of the two.
To this end, let~$\r_1,\dots,\r_{n'}$ be a subword of $\invc^m\invc^L$ in Case~\eqref{case:psinotoneeven} with $\psi(\r_1) \neq \r_1$.
Then we have seen in \Cref{lem:sequenceindependence2} that this is an orbit of the action of $(\Krewplusbar)^k$ if and only if $n'=\rrr$, $\psi(\r_{2i}) = \r_{2i+1}$, and the subword $\r_1,\r_3,\dots,\r_{n'-1}$ is invariant under the cyclic shift sending a letter~$\s$ in the $j$\th\ alternating copy of~$\c_A$ or~$\c_B$ to the letter $\s$ in the $(j+2k)$\th\ alternating copy of~$\c_A$ or~$\c_B$.
We refer to \Cref{ex:biginvexample}\eqref{ex:biginvexampleiii} for comparison.

\medskip

For the following proposition, recall the discussion after \Cref{lem:numbercrunching} and in particular the definition of~$y$ and of~$m'$.

\begin{proposition}
\label{prop:invariantsittingdetails}
  Let $\r_1\cdots\r_{n'}$ be a reduced $\reflR$-word for $w = r_1\cdots r_{n'} \in \NC$, and let $i_1,\ldots,i_{n'}$ be the sequence of indices of the letters  $\r_1,\ldots,\r_{n'}$ inside $\invc$.
  Let~$\rrr$ be such that~$\rrr/y$ and $h/y$ are coprime and let {\em$m \equiv m'$ (mod $\rrr/y$)}, where we assume in Case~\eqref{case:psione} that~$h$ is even.
  Then there exists a subword of $\invc^m\invc^L$ which is invariant under the $\rrr$-fold application of~$\Krewplusbar$ and which spells out $\r_1\cdots\r_{n'}$ if and only if the cyclic distances
  \[
  \begin{array}{rlll}
    a_1         &\equiv& i_{k'+1}-i_1 & (\text{\em mod }N) \\
    a_2         &\equiv& i_{k'+2}-i_2 & (\text{\em mod }N) \\
                &\vdots \\
    a_{n'-k'}   &\equiv& i_{n'}-i_{n'-k'} & (\text{\em mod }N) \\
    a_{n'-k'+1} &\equiv& i_{1}-i_{n'-k'+1}+n & (\text{\em mod }N) \\
                &\vdots \\
    a_{n'}      &\equiv& i_{k'}-i_{n'}+n & (\text{\em mod }N) 
  \end{array}
  \]
  inside $\invc$ satisfy
  \[
    a_1\equiv a_2 \equiv \dots \equiv a_{n'} \equiv 0 \begin{array}{ll}
                                          \pmod \rrr    & \text{ in Case~\eqref{case:hodd}},\\
                                          \pmod   {\rrr/2}  & \text{ in Case~\eqref{case:heven}},
                                        \end{array}
  \]
  where in
  \begin{enumerate}
    \setlength\itemindent{30pt}
    \setlength\itemsep{5pt}

    \item[\sc Case~\eqref{case:hodd}:]
      $\rrr$ divides~$n'$, with $k'$ defined by $n' = \rrr \cdot k'$;

    \item[\sc Case~\eqref{case:psione}:]
      $\rrr/2$ divides~$n'$, with $k'$ defined by $n' = \rrr/2 \cdot k'$;

    \item[\sc Case~\eqref{case:psinotoneodd}:]
      $\rrr$ divides~$n'$, with $k'$ defined by $n' = \rrr \cdot k'$, and after one has taken into account the modification described in \Cref{lem:invariantsitting};

    \item[\sc Case~\eqref{case:psinotoneeven}:]
      $\rrr/2$ divides~$n''$, with $k''$ defined by $n'' = \rrr/2 \cdot k''$, and where first, for every letter $\r_i$, also $\psi(\r_i)$ must be in $\r_1\cdots\r_{n'}$, and one uses the subword of\/ $\r_1\cdots\r_{n'}$ given by $\r'_1\cdots\r'_{n''}$ after \Cref{lem:auxpsinotoneeven}. In this case, let $i'_1,\ldots,i'_{n''}$ be the sequence of indices of the letters $\r'_1,\ldots,\r'_{n''}$ inside $\invc$.
  \end{enumerate}
\end{proposition}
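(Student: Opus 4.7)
My plan is to deduce the statement directly from the orbit descriptions of \Cref{lem:invariantsitting} together with the orbit decomposition of \Cref{lem:orbitdecomposition}. The proposition asserts that certain cyclic gaps $a_1,\dots,a_{n'}$ between consecutive indices of $\r_1\cdots\r_{n'}$ inside $\invc$ are all divisible by~$\rrr$ (or $\rrr/2$). The core content to be proved is that this divisibility is precisely the translation, from the ambient word $\invc^m\invc^L$ down to a single copy of $\invc$, of the equal-spacing condition characterising $(\Krewplusbar)^k$-orbits.

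First I would invoke \Cref{prop:nc_unique_factorization} to observe that the ordering of any reduced $\reflR$-word for $w\in\NC$ with letters arranged increasingly in $\lec$ is rigid. Combined with \Cref{lem:orbitdecomposition}, this forces the partition of $\r_1\cdots\r_{n'}$ into $(\Krewplusbar)^k$-orbits to take the rotational form described in the proposition: every $k'$\th\ letter belongs to the same orbit. It then remains to identify the intra-orbit step inside $\invc$. Writing $\invc=\wwo(\c)=\underbrace{\c_A\c_B\c_A\cdots}_{h}$, a one-alternating-copy shift inside $\invc^m\invc^L$ translates to a specific shift modulo~$N$, while passing from the last letter of an orbit back to the first within~$\invc$ requires an additional detour through $\invc^R$ of length~$n$; this is exactly the $+n$ correction appearing in $a_{n'-k'+1},\dots,a_{n'}$. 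A direct calculation, using \Cref{lem:invariantsitting}, then produces $a_j\equiv 0\pmod{\rrr}$ in Cases~\eqref{case:hodd} and~\eqref{case:psinotoneodd}, and $a_j\equiv 0\pmod{\rrr/2}$ in Cases~\eqref{case:psione} and~\eqref{case:psinotoneeven}.

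The main obstacle I foresee is Case~\eqref{case:psinotoneodd}. There, the orbit of a letter~$\r$ visits both letters corresponding to~$\r$ and letters corresponding to~$\psi(\r)$ inside the alternating copies of $\c_A,\c_B$, with the intricate re-labelling described in the last bullet of \Cref{lem:invariantsitting}. One must verify that, after performing this $\psi$-substitution, the resulting sequence really lies in $\NC$ with increasing $\lec$-order, and that the equal spacing in $\invc^m\invc^L$ survives the relabelling as a full $\rrr$-divisibility (rather than merely $\rrr/2$-divisibility) of the cyclic gaps inside $\invc$; this will rely on the coprimality of $h$ and~$\rrr$ from \Cref{lem:numbercrunching}. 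Case~\eqref{case:psinotoneeven} requires an analogous but milder preprocessing: by \Cref{lem:auxpsinotoneeven} one first checks that $\psi(\r_i)$ must sit adjacent to $\r_i$ inside $\invc$, and the general rotational-orbit argument then applies directly to the representative subword $\r'_1\cdots\r'_{n''}$.
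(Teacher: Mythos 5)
Your proposal follows essentially the same route as the paper's proof: both derive the conditions on the cyclic distances $a_j$ from the orbit description in \Cref{lem:orbitdecomposition} (letters $k'$ positions apart must be equally spaced in $\invc^m\invc^L$, with the $+n$ correction accounting for the $n$ letters of $\invc^R$ missing at the end of that word), use \Cref{lem:invariantsitting} for the converse, and dispose of Cases~\eqref{case:psinotoneeven} and~\eqref{case:psinotoneodd} by the stated modifications, just as the paper does. Your extra appeal to \Cref{prop:nc_unique_factorization} is superfluous but harmless, and your modulus~$\rrr$ in Case~\eqref{case:psinotoneodd} agrees with the statement's $\rrr/2$ because $\rrr$ is odd there.
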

\begin{proof}
  We prove Case~\eqref{case:psione}, the others are completely analogous, with the appropriate modifications in the last two cases.

  First assume that there is a $(\rrr/2)$-fold invariant subword of $\invc^m\invc^L$ spelling out $\r_1\cdots\r_{n'}$.
  This means that this subword is invariant under the action of~$(\Krewplusbar)^k$.

  We have seen in \Cref{lem:orbitdecomposition} that orbits of the reflections $r_1,\dots,r_{n'}$ in this word under $(\Krewplusbar)^k$ must be all letters that are $k'$~positions apart for~$k'$ given by $n' = \rrr/2 \cdot k'$.
  The letters that are $k'$~positions apart must thus be spread equally inside the word $\invc^m\invc^L$, implying the numerical condition
  \[
    a_1 = a_2 = \dots = a_{n'},
  \]
  where the shift by~$n$ in $a_{n'-k'+1},\ldots,a_{n'}$ comes from the fact that these distances have passed the end of the word $\invc^m\invc^L$ and there are exactly~$n$ letters missing in the final $\invc^L$.
  Moreover, the congruence condition
  \[
    a_1 \equiv a_2 \equiv \dots \equiv a_{n'} \equiv 0\pmod{\rrr/2}
  \]
  means that all letters that are $k'$~positions apart are indeed in the same orbit of~$\Krewplusbar$, as desired.

  \smallskip

  On the other hand, if these numerical conditions are satisfied for a reduced $\reflR$-word $\r_1\cdots\r_{n'}$ of some $w \in \NC$, then $\r_1\cdots\r_{n'}$ fits properly into~$\invc^m\invc^L$ as described in \Cref{lem:invariantsitting}.
\end{proof}

Using this proposition, we now have a numerical algorithm to determine which $w \in \NC$ and which reduced $\reflR$-words of~$w$ possibly contribute to the counting formula of $\rrr$- respectively $(\rrr/2)$-fold symmetric positive $m$-divisible non-crossing partitions.
It is now only left to determine to which~$m$ they contribute and how much they contribute.

Recall from the paragraph below \Cref{ex:AA} that, for all $m\ge1$,
both versions of the positive Kreweras maps $\Krewplustilde^{(m)}$
and $\Krewplus^{(m)}$ can be considered as the action of~$\Krewplusbar$ on the positive elements in $\mNCnabla$,
respectively in $\mNCdelta$.

\begin{theorem}\label{thm:countingbinomial}
  Let $(W,\reflS)$ be a finite Coxeter system, let~$\c = \c_A\c_B$ be a fixed word for the bipartite Coxeter element $c = c_Ac_B \in W$ such that $\c_B = \psi(\c_A)$ if we are in Case~\eqref{case:hodd} and such that the letters~$\s$ and~$\psi(\s)$ inside $\c_A$ and inside~$\c_B$ are equal or next to each other in Case~\eqref{case:heven}.
  Let $(m+1)h-2 = k \cdot \rrr$ such that $\rrr$ is even in Case~\eqref{case:psione}.
  Let $a = \lfloor m/\frac{\rrr}{y} \rfloor$, and let~$\ifix$ be as defined in~\eqref{eq:ifix}.
  Then the number of positive $(m+1)$-divisible non-crossing partitions $\mNCPlus[W][m+1]$ that are invariant under the $\rrr$-fold application of\/ $\Krewplustilde$ is given by the sum
      \begin{equation}
        \sum \binom{a+\asc(\r_1,\ldots,\r_{k'},\rfix)}{k'},
      \end{equation}
      which ranges over all factorisations $\r_1\cdots \r_{n'}$ of all elements $w \in \NC$ that satisfy the properties in \Cref{prop:invariantsittingdetails}, and where in
  \begin{enumerate}
    \setlength\itemindent{30pt}
    \setlength\itemsep{10pt}

    \item[\sc Case~\eqref{case:hodd}:]
      $\rrr$ divides~$n'$ and~$k'$ is given by $n' = \rrr \cdot k'$;

    \item[\sc Case~\eqref{case:psione}:]
      $\rrr/2$ divides~$n'$ and~$k'$ is given by $n' = \rrr/2 \cdot k'$;

    \item[\sc Case~\eqref{case:psinotoneodd}:]
      $\rrr$ divides~$n'$ and $k'$ is given by $n' = \rrr \cdot k'$;
      furthermore, $\r'_1,\ldots,\r'_{k'}$ is the sequence obtained from $\r_1,\ldots,\r_{k'}$ inside $\invc$ via the modification described in \Cref{lem:invariantsitting};

    \item[\sc Case~\eqref{case:psinotoneeven}:]
      $\rrr/2$ divides~$n''$ where $\r'_1\cdots\r'_{n''}$ is the subword of $\r_1\cdots\r_{n'}$ given in \Cref{prop:invariantsittingdetails}, and $k''$ is given by $n'' = \rrr \cdot k''$.
  \end{enumerate}
\end{theorem}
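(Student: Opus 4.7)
The plan is to reduce the count of $\rrr$-fold invariant positive $(m+1)$-divisible non-crossing partitions to a placement count of a ``fundamental piece'' of an invariant subword inside a suitably contracted quotient of $\invc^m\invc^L$, and then to invoke the ascent-refined enumeration that already underlies Corollary~\ref{cor:fusscatcounting}.

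First, I would use Proposition~\ref{prop:invariantsittingdetails} to restrict attention to those reduced $\reflR$-words $\r_1\cdots\r_{n'}$ for elements $w\in\NC$ whose cyclic gaps satisfy the listed congruences; only these can be realised as the underlying word of an invariant subword of $\invc^m\invc^L$, and every such word \emph{is} so realised. Next, by Lemmas~\ref{lem:invariantsitting} and~\ref{lem:orbitdecomposition}, the orbits of $(\Krewplusbar)^k$ on such a subword consist of letters spaced $k'$ (respectively $k''$) positions apart in the factorisation, after the $\psi$-twist of Lemma~\ref{lem:invariantsitting}(iv) in Case~\eqref{case:psinotoneodd}, and after restriction to the $\psi$-non-fixed letters in Case~\eqref{case:psinotoneeven}. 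Consequently an invariant subword is entirely determined by the positions of its ``first orbit representatives'': $\r_1,\ldots,\r_{k'}$ in Cases~\eqref{case:hodd}, \eqref{case:psione} and~\eqref{case:psinotoneodd}, and $\r'_1,\ldots,\r'_{k''}$ in Case~\eqref{case:psinotoneeven}.

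Second, I would show that dividing out the $\rrr/y$-fold cyclic periodicity collapses the ambient word $\invc^m\invc^L$ to $a+1$ full copies of $\invc$ truncated by the initial factor $\invc^{L'}$ of $\invc$, where $a=\lfloor m/(\rrr/y)\rfloor$, $\ifix$ is as in~\eqref{eq:ifix}, and $\invc^{L'}$ is well-defined thanks to Lemma~\ref{lem:missingletter}. The residual freedom in an invariant embedding is then exactly the choice, for each of the $k'$ representative letters, of which of the $a+1$ contracted copies of $\invc$ it sits in, subject to the reduced-subword condition that successive chosen letters respect the cyclic reflection order on $\invc$. This is precisely the setup from which Corollary~\ref{cor:fusscatcounting} was deduced: the same stars-and-bars argument, refined by the ascent statistic, then yields
\[
\binom{a+\asc(\r_1,\ldots,\r_{k'},\rfix)}{k'}
\]
placements per admissible factorisation, with the trailing $\rfix$ encoding that the rightmost chosen letter must lie strictly before $\rfix$ inside the truncated final copy $\invc^{L'}$. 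Summing this binomial over all factorisations admitted by Proposition~\ref{prop:invariantsittingdetails} gives the claimed formula.

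The main obstacle I anticipate is verifying that the ascent count transfers intact through the $\psi$-twist in Cases~\eqref{case:psinotoneodd} and~\eqref{case:psinotoneeven}. In Case~\eqref{case:psinotoneodd} one must check that after the cyclic shift composed with the $\psi$-replacement of Lemma~\ref{lem:invariantsitting}(iv), the ``effective'' fundamental word still has its ascents in literal bijection with those of $(\r_1,\ldots,\r_{k'},\rfix)$, so that no correction term appears at the boundary where the $\psi$-flip takes effect. In Case~\eqref{case:psinotoneeven} one must invoke Lemma~\ref{lem:auxpsinotoneeven} to confirm that each $\psi$-fixed letter of $w$ is forced into a single half-size orbit and thus contributes trivially to the placement count, so that the binomial factor genuinely involves only the non-fixed representatives $\r'_1,\ldots,\r'_{k''}$. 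Once these two compatibility checks are carried out, the remainder is a bookkeeping assembly of Proposition~\ref{prop:invariantsittingdetails} with the ascent-enumeration argument behind Corollary~\ref{cor:fusscatcounting}.
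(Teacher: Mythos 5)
Your proposal is correct and follows essentially the same route as the paper: the paper's own proof is a one-line appeal to the analysis in \Cref{prop:invariantsittingdetails} combined with the ascent-refined counting argument behind \Cref{cor:fusscatcounting}, which is precisely the reduction and placement count you carry out (including the role of $\rfix$ and $\invc^{L'}$ and the $\psi$-twist bookkeeping in Cases~\eqref{case:psinotoneodd} and~\eqref{case:psinotoneeven}). Your version simply makes explicit the details the paper leaves implicit.
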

\begin{proof}
  This is a direct consequence of the analysis in \Cref{prop:invariantsittingdetails}, together with the same counting argument as used for \Cref{cor:fusscatcounting}.
\end{proof}

\begin{corollary}
  In the situation of \Cref{thm:countingbinomial}, the number of positive $m$-divisible non-crossing partitions $\mNCPlus[W][m]$ that are invariant under the $k$-fold application of\/ $\Krewplus$ is given by the same counting formula where one only considers factorisations $\r_1\cdots\r_n$ of the Coxeter element~$c = c_Lc_R \in W$.
\end{corollary}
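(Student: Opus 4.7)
The plan is to exploit the common letter-wise description of both positive Kreweras maps given in \Cref{prop:PositiveKrewAlt}. Both $\Krewplustilde^{(m+1)}$ and $\Krewplus^{(m)}$ arise as restrictions of the single action $\Krewplusbar^{(m+1)}$ on letters inside the word $\invc^m\invc^L$: the former acts on positive elements of $\mNCnabla[W][m+1]$, the latter on positive elements of the subset $\mNCdelta[W][m]\subset\mNCnabla[W][m+1]$. In terms of subwords of $\invc^m\invc^L$, this inclusion is exactly the restriction from subwords spelling out an arbitrary reduced $\reflR$-word of some $w\in\NC$ to those spelling out a reduced $\reflR$-word of the full Coxeter element $c = c_Lc_R$ itself (equivalently, to those tuples $(w_0,w_1,\dots,w_{m+1})\in\mNCPlus[W][m+1]$ with $w_0=\one$).

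With this identification in place, the proof of \Cref{thm:countingbinomial} transfers verbatim. The numerical criterion in \Cref{prop:invariantsittingdetails} for a subword of $\invc^m\invc^L$ to form an orbit (or union of orbits) of $(\Krewplusbar^{(m+1)})^k$ depends only on the positions of the letters inside $\invc^m\invc^L$, and not on whether the product of the corresponding reflections ranges over all of $\NC$ or is pinned to $c$. Consequently, the binomial counting argument of \Cref{thm:countingbinomial}, together with \Cref{lem:invariantsitting,lem:orbitdecomposition}, applies word-for-word; the sole change is that the outer sum over factorisations $\r_1\cdots\r_{n'}$ of elements $w\in\NC$ is replaced by the sum over factorisations $\r_1\cdots\r_n$ of $c$ alone, so that throughout $n' = \lenR(c) = n$ in all four cases, and the divisibility conditions $\rrr\mid n$ or $\tfrac{\rrr}{2}\mid n$ are imposed on the full rank $n$ rather than on a variable length~$n'$.

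The only point meriting a moment's verification is that the $\Krewplusbar^{(m+1)}$-equivariance of the inclusion $\mNCPlus[W][m]\hookrightarrow\mNCPlus[W][m+1]$ is compatible with the orbit decomposition used to derive the binomial; but since \Cref{prop:PositiveKrewAlt} identifies the two actions literally on the level of letters of $\invc^m\invc^L$, this compatibility is automatic. The corollary therefore follows by specialising the sum in \Cref{thm:countingbinomial} to $w=c$, which is the main obstacle only in the sense of bookkeeping: one must check in each of the four cases that the definition of $k'$ (respectively $k''$) and of $\ifix$, $\rfix$ remains unchanged when one passes from $\Krewplustilde^{(m+1)}$ to $\Krewplus^{(m)}$, which is immediate from the construction.
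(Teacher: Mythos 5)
Your proposal is correct and follows essentially the same route as the paper: the corollary is an immediate restriction of \Cref{thm:countingbinomial}, obtained by viewing $\Krewplus^{(m)}$ via \Cref{prop:PositiveKrewAlt} as the action of $\Krewplusbar^{(m+1)}$ on the subset $\mNCdelta[W][m]\subset\mNCnabla[W][m+1]$ (equivalently, tuples with $w_0=\one$), so the invariance criterion of \Cref{prop:invariantsittingdetails} and the binomial count apply unchanged with the sum restricted to factorisations of $c$ (hence $n'=n$).
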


To finish these counting considerations, we recall the steps to count positive $m$-divisible non-crossing partitions that are invariant under a $\rrr$- respectively $(\rrr/2)$-fold symmetry, and compare it to the evaluation of the $q$-extension of the positive Fu\ss--Catalan numbers at the corresponding primitive root of unity:
\begin{enumerate}
  \item\label{item:1} Compute all reduced $\reflR$-words of the Coxeter element $c = c_A c_B$ using the transitivity of the Hurwitz action (see~\cite[Prop.~1.6.1]{BesDAA}).
  \begin{enumerate}
    \item In the case of~$\Krewplustilde$, one then computes as well all prefixes of these reduced $\reflR$-words to obtain all reduced factorisations of elements in $\NC$.
  \end{enumerate}
  \item Represent all reduced words computed in \Cref{item:1} as indices of positions of subwords of~$\invc$.
  \item Use \Cref{prop:invariantsittingdetails} to determine which factorisations contribute to the enumeration. Disregard all factorisations not satisfying these properties.
  \item Use \Cref{thm:countingbinomial} to determine the contribution of each factorisation as a polynomial in the parameter~$a$.
  \item Evaluate $\mCatplus(W;q)$ or $\mCatplus[m+1](W;q)$ (depending on whether one considers $\Krewplus$ or $\Krewplustilde$) at the root~$\zeta^{k}$ where $\zeta$ is a primitive $\big((m+1)h-2\big)$\th\ root of unity and express the result in terms of the parameter~$a$.
  \item Compare the two results in the previous two steps to verify the cyclic sieving phenomenon in this situation.
\end{enumerate}

We use the described machinery to prove \Cref{thm:CS} for dihedral and exceptional types and record this here for reference.

\begin{theorem} \label{thm:cycexc}
  The conclusion of \Cref{thm:CS} holds for dihedral and exceptional types.
\end{theorem}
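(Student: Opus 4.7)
The plan is to verify the cyclic sieving phenomenon in \Cref{thm:CS} by applying the general enumerative machinery developed throughout \Cref{sec:typeExc} to each of the remaining types individually. For every such type $W$, both sides of the required identity~\eqref{eq:siev} (for every divisor $\rrr$ of $(m+1)h-2$) will be expressed as explicit polynomials in a single integer parameter $a$ encoding the residue class of $m$ modulo $\rrr/y$, and then compared.

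More concretely, I would execute the six-step recipe stated just before the theorem. First, using the transitivity of the Hurwitz action, compute once and for all the set of reduced $\reflR$-words of the bipartite Coxeter element $c=c_Ac_B$ chosen according to the restrictions of \Cref{thm:countingbinomial}; for $\Krewplustilde$ one also records all prefixes of these words, so as to capture every reduced factorisation of every element of $\NC$. Second, translate each reduced factorisation into the associated sequence of position indices in $\invc$, and filter out those that fail the congruence conditions of \Cref{prop:invariantsittingdetails} (distinguishing the four cases \eqref{case:hodd}--\eqref{case:psinotoneodd}, in particular separating reflections $\r$ with $\psi(\r)=\r$ from those with $\psi(\r)\neq\r$ in Case~\eqref{case:psinotoneeven} via \Cref{lem:auxpsinotoneeven}). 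Third, attach to each surviving factorisation its binomial contribution $\binom{a+\asc(\r_1,\ldots,\r_{k'},\rfix)}{k'}$ (or the analogous expression with the modified word $\r'_1,\ldots,\r'_{k'}$ in Case~\eqref{case:psinotoneodd}), and sum these contributions according to \Cref{thm:countingbinomial}. Fourth, evaluate $\mCatplus(W;q)$ (respectively $\mCatplus[m+1](W;q)$) at $q=\zeta^k$ for $\zeta$ a primitive $((m+1)h-2)$-th root of unity, re-express the resulting complex number as a polynomial in $a$, and check that it matches the previous sum. The computation is finite for each exceptional type $H_3, H_4, F_4, E_6, E_7, E_8$, and I would carry it out on a computer and record the numerical details in \Cref{app:exc}.

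The dihedral types $I_2(h)$ require slightly separate treatment because of the exception to \Cref{thm:order} in rank $2$ for $\Krewplusbar^{(2)}$ on $\mNCdelta$ with $h$ odd. Here, however, everything remains elementary: $\mNCPlus[I_2(h)][m]$ is small enough that the sets $\mNCdelta$ and $\mNCnabla$ can be enumerated by hand, and for each divisor $\rrr$ of $(m+1)h-2$ one directly counts the reduced $\reflR$-words of the bipartite Coxeter element that satisfy the congruences of \Cref{prop:invariantsittingdetails} and compares with the root-of-unity evaluation of the corresponding $q$-Fu{\ss}--Catalan number. The anomalous exponent in \Cref{thm:order} only affects the \emph{length} of the orbit, not the enumeration of invariant elements, so the comparison goes through as in the other rank-two situations.

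The main obstacle I foresee is a bookkeeping one rather than a structural one. On the one hand, the raw counts of reduced factorisations of $c$ in type $E_8$, together with all their prefixes for $\Krewplustilde$, are large, so the filtering and binomial bookkeeping in steps two and three must be organised carefully to stay tractable. On the other hand, each of the four cases \eqref{case:hodd}--\eqref{case:psinotoneodd} carries its own subtle modification (the pairing with $\psi$ in Case~\eqref{case:psinotoneeven}, the replacement inside $\invc$ in Case~\eqref{case:psinotoneodd}, and the role of $\rfix$ and $\invc^{L'}$ from \Cref{lem:missingletter} in reducing everything to a polynomial in $a$); I will need to verify that these modifications are implemented faithfully before the cases can be aggregated. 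Once the two polynomials in $a$ agree for every relevant $\rrr$ and every type, the cyclic sieving phenomenon of \Cref{thm:CS} in these types follows, and the details of the verification are collected in \Cref{app:exc}.
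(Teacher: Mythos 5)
Your proposal follows essentially the same route as the paper: the paper proves this theorem exactly by the six-step recipe you describe, applying \Cref{prop:invariantsittingdetails} and \Cref{thm:countingbinomial} type by type (with the dihedral groups handled by a short hand computation and the exceptional groups by a computer implementation), and comparing the resulting polynomials in the parameter $a$ with the root-of-unity evaluations of the positive $q$-Fu\ss--Catalan numbers in \Cref{app:exc}. Your handling of the rank-two anomaly and of the case distinctions \eqref{case:hodd}--\eqref{case:psinotoneodd} matches what the paper does, so the proposal is correct and not materially different.
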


The concrete calculations are postponed to \Cref{app:exc}.

%%%%%%%%%%%%%%%%%%%%%%%%%%%%%%%%%%%%%%%%%%%%%%%%%%%%%%%%%%%%%%%%%%%%%%%%%%%%%%%%%
\section{Further open problems}
\label{sec:open}
%%%%%%%%%%%%%%%%%%%%%%%%%%%%%%%%%%%%%%%%%%%%%%%%%%%%%%%%%%%%%%%%%%%%%%%%%%%%%%%%%

We use this final section to make explicit some more open problems
raised by our work.

\subsection{Chain enumeration}

The attentive reader may have observed that our enumerative treatment 
for types $B_n$ and $D_n$ does not include any results on chain
enumeration, as opposed to type $A_{n-1}$ (cf.\ \Cref{sec:enumA}).
It may entirely be possible that formulae analogous to the one in
\Cref{thm:countingA} exist also for types $B_n$ and $D_n$. However, we
have not been able to push our approach through in these cases.

\begin{openproblem}
  Find formulae for the number of multichains in $\mNCBPlus$ and/or
  $\mNCDPlus$ where the ranks of the elements of the chain are
  prescribed, and/or even the block structure of the bottom element
  of the chain.
\end{openproblem}

More generally, the question of formulae for chain enumeration of
pseudo-rotationally invariant elements arises for all types.
Here, we have not even been successful in type~$A_{n-1}$. In all
(classical) types, the reason is that the positive Kreweras map does
not preserve the order relation, but the characterisation of
pseudo-rotationally invariant partitions (see
\Cref{sec:rotA,sec:rotB,sec:rotD}) involves repeated application of
the positive Kreweras map in an essential way.

\begin{openproblem}
  Find formulae for the number of multichains in the subposets of
  $\mNCAPlus$, $\mNCBPlus$, and/or
  $\mNCDPlus$ consisting of $\rrr$-pseudo-rotationally invariant elements,
  where the ranks of the elements of the chain are
  prescribed, and/or even the block structure of the bottom element
  of the chain.
\end{openproblem}

If there is a positive solution of the above problem in type~$D$, then
we may also ask for more general cyclic sieving phenomena in this type.

\begin{openproblem}
  Prove a cyclic sieving phenomenon for the subset of elements of
  $\mNCDPlus$ where their rank is prescribed, and, more generally,
  where their block structure is prescribed.
\end{openproblem}

Closely related to chain enumeration in a poset is its M\"obius
function; see~\cite{MR0595933}.
\Cref{cor:moebiusA} leads us right-away to the next open problem.

\begin{openproblem}
Determine the M\"obius function for $\mNCBPlus$ and/or
  $\mNCDPlus$. More generally,
determine the M\"obius function for the subposets of $\mNCAPlus$,\break $\mNCBPlus$, and/or
  $\mNCDPlus$ consisting of $\rrr$-pseudo-rotationally invariant elements.
\end{openproblem}

\subsection{Block enumeration}

In~\cite{ReSoAA}, Reiner and Sommers proved cyclic sieving
phenomena for $m$-divisible non-crossing partitions with prescribed
block structure. Their uniform definition of ``block structure" is
also applicable in exceptional types. This suggests the following 
problems.

\begin{question}
  Does the positive Kreweras map preserve the block structure, as
  defined in~\cite{ReSoAA}, of positive $m$-divisible non-crossing partitions?
\end{question}

\begin{openproblem}
In case of an affirmative answer to the above question,
prove a cyclic sieving phenomenon for \emph{positive} $m$-divisible non-crossing
partitions in exceptional types with prescribed block structure.
\end{openproblem}

\subsection{A positive Panyushev map on non-nesting partitions}

Recall that $\Delta \subseteq \Phi^+$ denotes the simple and positive roots for the Coxeter systems $(W,\reflS)$.
For crystallographic Coxeter systems, define the \defn{root poset} $\rootPoset(W)$ to be the partial order on $\Phi^+$ induced by the cover relations given by $\alpha \prec \beta$ if $\beta - \alpha \in \Delta$.
The \defn{non-nesting partitions} associated with~$W$ were defined by Panyushev~\cite{Pan2008} as the set of all antichains in $\rootPoset$,
\[
  \NN := \big\{ \antichain \subseteq \rootPoset \mid \antichain \text{ antichain} \big\}.
\]
As for non-crossing partitions, there is also a positive variant of non-nesting partitions given by
\[
  \NNplus := \big\{ \antichain \subseteq \rootPoset\setminus \Delta \mid \antichain \text{ antichain} \big\}.
\]
These (positive) non-nesting partitions turn out to share many enumerative properties with non-crossing partitions; see~\cite{Arm2006} for a detailed treatment thereof. In particular, we have
\[
  \big| \NN \big| = \big| \NC \big| \text{ and } \big| \NNplus \big| = \big| \NCPlus \big|.
\]
Obviously, these equalities suggest to construct bijections, and indeed multiple bijections were constructed in various types~\cite{CDDSY2007,FG2009,RS2010}, the only uniformly described bijection was given in~\cite{AST2010}.
One particular avenue to follow --- as explained by Armstrong, Thomas,
and the second author in~\cite{AST2010} --- are two maps on $\NC$ and
on $\NN$ given by the Kreweras map $\Krew : \NC \longrightarrow \NC$
and the \defn{Panyushev map} $\Pan : \NN \longrightarrow \NN$ which is
given by mapping an antichain $\antichain$ to the collection of
minimal elements in the root poset among all elements that are not
below $\antichain$.
(This map has in fact been earlier defined by Cameron and
Fon-Der-Flaass~\cite{CaFlAA} for general posets.\footnote{Still, we prefer to
call this map on non-nesting partitions the ``Panyushev map" since
it was Panyushev who studied this map for root posets.})
In~\cite{AST2010}, a uniformly described bijection between non-crossing and non-nesting partitions is constructed sending the Kreweras map to the Panyushev map. This bijection is uniquely determined (but actually strongly overdetermined) by certain inductive properties on parabolic subgroups. The proof of its existence is given by explicitly constructing it in the various types. Observe that this bijection does not send positive non-crossing partitions to positive non-nesting partitions. Moreover, as mentioned above for the Kreweras map, the Panyushev map does not stabilise positive non-nesting partitions either. But again, there is a similarly defined \defn{positive Panyushev map} $\Panplus : \NNplus \longrightarrow \NNplus$ given by the same rule as for $\Pan$ but considered on the root poset $\rootPoset$ with the simple roots $\Delta$ removed.

Before raising two questions on the connections between positive non-crossing and positive non-nesting partitions, we copy two conjectures on the maps $\Pan$ and $\Panplus$ by Panyushev~\cite[Conj.~2.1 and~2.3]{Pan2008}, the first of which was proven in~\cite{AST2010} by constructing an equivariant bijection between $\NN$ and $\NC$ as briefly mentioned above.

\begin{theorem}[\sc Armstrong, Stump, Thomas, conjectured by Panyushev]
  Let~$W$ be a finite, crystallographic Coxeter group with Coxeter number~$h$, and let $\Pan$ be the Panyushev map on $\NN$. The order of\/ $\Pan$ is given by
  $$
    \order(\Pan) =
    \begin{cases}
      (m+1)h, & \text{if } \psi \nequiv \one, \\
      (m+1)h/2, & \text{if } \psi \equiv \one.
    \end{cases}
  $$
\end{theorem}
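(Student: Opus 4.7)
\medskip

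My plan is to reduce the order computation for the Panyushev map to that of the (generalised) Kreweras map, by constructing (or invoking) an equivariant bijection between $m$-divisible non-nesting partitions and $m$-divisible non-crossing partitions that intertwines $\Pan$ with $\Krew^{(m)}$. Since the order of $\Krew^{(m)}$ on $\mNC$ was computed by Bessis--Reiner and Krattenthaler--M\"uller to be exactly $(m+1)h$ when $\psi \nequiv \one$ and $(m+1)h/2$ when $\psi \equiv \one$, this transport along an equivariant bijection immediately yields the claimed order for $\Pan$. The point is that the $(m+1)$-factor in the stated order --- which is absent from Panyushev's original $m=1$ conjecture --- comes precisely from passing to multichains/$m$-divisible objects, where the Kreweras map acquires an extra cyclic shift among the $m+1$ components.

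The first step is to extend the AST2010 bijection $\NN \to \NC$ to a uniform bijection $\mNN \to \mNC$ that conjugates $\Pan$ to $\Krew^{(m)}$. I would attempt to do this by combining the AST bijection with the natural $(m+1)$-tuple descriptions: an element of $\mNC$ is a tuple $(w_0,w_1,\dots,w_m)$ with $w_0w_1\cdots w_m=c$ and additive reflection lengths, and there should be an analogous $(m+1)$-tuple description of $\mNN$ obtained by chopping an antichain according to the $m$ ``colours" (see, e.g., Athanasiadis's non-nesting interpretation of geometric multichains of filters). With those parallel descriptions in place, one extends the AST map component-wise and checks equivariance on a single component, since the remaining components are cyclically permuted by both $\Pan$ and $\Krew^{(m)}$ in the same way.

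The second step is then purely enumerative/structural: once equivariance is established, the order statement reduces to the known order of $\Krew^{(m)}$ on $\mNC$. The proof of that order in the classical types is explicit (the $m$-fold rotation on annulus/polygon models), and in the exceptional types it follows either from the Bessis--Reiner proof via the cluster category (where the Auslander--Reiten translate $\tau$ composed with an appropriate shift has the required order) or from the case-by-case verification in~\cite{KratCG,KrMuAD}. The distinction between the two cases $\psi \equiv \one$ and $\psi \nequiv \one$ is controlled by whether the longest element $\wo$ acts trivially on $\reflS$, which halves the order in the former case because $\wo$ itself is central.

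The main obstacle I anticipate is the \emph{equivariance} check in step one: the AST2010 bijection is defined only after a case-by-case analysis together with an induction on parabolic subgroups, and it is not obvious that the obvious ``component-wise'' extension to multichains continues to be equivariant. A fallback strategy is a direct type-by-type proof: in types $A_{n-1}$, $B_n$, $D_n$ the Panyushev map admits a concrete combinatorial description on $m$-divisible non-nesting partitions (tuples of nested partitions, signed versions thereof, and annular versions), and one can verify the order on orbits by a finite, explicit rotation argument; in the exceptional and dihedral types, one follows a computational verification similar to the one organised in \Cref{sec:typeExc} for the positive Kreweras maps, reducing to a finite calculation on orbits of reflections in the reflection ordering.
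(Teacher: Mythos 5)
This theorem is not proved in the paper at all: it is quoted as a known result, with the remark that it ``was proven in~\cite{AST2010} by constructing an equivariant bijection between $\NN$ and $\NC$'' intertwining the Panyushev map with the Kreweras complement. Your core strategy --- transport the known order of the Kreweras map through an equivariant bijection --- is therefore exactly the cited proof. The problem is the layer you build on top of it. Your Step~1 requires (a) a generalised Panyushev map on $\mNN$ and (b) an equivariant bijection $\mNN \to \mNC$ conjugating it to $\Krew^{(m)}$. Neither object exists: the paper states explicitly, two paragraphs below this very theorem, that ``no extension of the Panyushev map in that direction is known so far'' and poses the construction of such a map with the correct orbit structure as an Open Problem. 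So your first step is not a step; it is an open problem at least as hard as anything you are trying to prove, and your proof is circular in the sense that producing the $m$-divisible equivariant bijection already presupposes knowing the orbit structure you want to derive from it.

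The way out is to read the statement correctly: $\Pan$ is defined only on $\NN$ (plain antichains in the root poset), so the only consistent reading of the displayed order is $m=1$, i.e.\ $2h$ when $\psi\nequiv\one$ and $h$ when $\psi\equiv\one$; the ``$(m+1)$'' is cosmetic, written to parallel the Fu\ss{} formulas elsewhere in the section. For that statement no multichain machinery is needed. The AST2010 bijection $\NN\to\NC$ (whose equivariance is itself established type by type, essentially your ``fallback'') reduces everything to the order of the ordinary Kreweras complement $\Krew$ on $\NC$, and that is elementary: $\Krew^2$ is conjugation by the Coxeter element~$c$, which has order $h$ in general and order $h/2$ when $c^{h/2}=\wo=-\mathbf{1}$ is central (the case $\psi\equiv\one$), giving order $2h$ respectively $h$ for $\Krew$. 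Your remark that the $(m+1)$-factor ``comes from passing to multichains'' is a red herring for this theorem; it would only be relevant for the generalised Panyushev map whose existence is precisely what is open.
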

\begin{conjecture}[\sc Panyushev]
  Under the same assumptions as in the previous theorem with $\Panplus$ being the positive Panyushev map on $\NNplus$, the order of\/ $\Panplus$ is given by
  $$
    \order(\Panplus) =
    \begin{cases}
      (m+1)h - 2, & \text{if } \psi|_{\reflR \setminus \reflS} \nequiv \one, \\
      (m+1)h/2 - 1, & \text{if } \psi|_{\reflR \setminus \reflS} \equiv \one.
    \end{cases}
  $$  
\end{conjecture}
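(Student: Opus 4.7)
The plan is to transport the conjecture to the non-crossing side and apply \Cref{thm:order}. More precisely, following the strategy used by Armstrong, Thomas and the second author in~\cite{AST2010} to prove Panyushev's conjecture on the order of~$\Pan$, one would construct a bijection $\beta_+:\NNplus\to\NCPlus$ (and, more generally, an equivariant bijection between their $m$-divisible analogues) that intertwines the positive Panyushev map on the non-nesting side with the positive Kreweras map~$\Krewplustilde$ on the non-crossing side. Once such an equivariant bijection is available, \Cref{thm:order} immediately yields that $\order(\Panplus)$ agrees with the order of $\Krewplustilde^{(m+1)}$, namely $(m+1)h-2$ or $(m+1)h/2-1$ depending on how~$\psi$ acts.

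First I would attempt to obtain $\beta_+$ by restriction from the bijection~$\beta$ of~\cite{AST2010}. The characterisation of positive $m$-divisible non-crossing partitions in \Cref{thm:positivedescription} is given in terms of a decomposition $w = w^L \cdot w^R$ induced by~$\wwo(\c)$, and the analogous positivity condition on non-nesting partitions is the absence of simple roots from the antichain. The first technical step is therefore to translate these two ``positivity'' conditions into matching Coxeter-theoretic data under~$\beta$. Concretely, one would check that for every antichain $\antichain\in\NN$, the image $\beta(\antichain)\in\NC$ avoids~$\invc^R$ (in the sense of~\eqref{eq:LR}) if and only if $\antichain\subseteq\rootPoset\setminus\Delta$. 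Given the inductive construction of $\beta$ on parabolic subgroups in~\cite{AST2010}, this should be checkable by induction on the rank.

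Next I would verify the equivariance $\beta_+\circ\Panplus=\Krewplustilde\circ\beta_+$. By the construction of~$\Krewplustilde$ in \Cref{sec:poskrewmap}, this map differs from the ordinary Kreweras map only in how it treats the terminal factor $w_m = w_m^L$ through the twist $w_{m-1}^R w_m$. On the non-nesting side, $\Panplus$ differs from $\Pan$ by restricting to the root poset with simple roots removed. The plan is to match these two ``corrections'' directly: on a positive element, $\Pan$ and $\Panplus$ differ precisely by the $\psi$-image of whatever simple roots were supposed to be created, and on the non-crossing side this corresponds to the removal of the letters of~$\invc^R$ from the relevant factorisation. Once this matches up, equivariance reduces to the equivariance result of~\cite{AST2010} in one smaller rank.

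The hard part will be twofold. First, a uniform construction of $\beta_+$ seems out of reach, and the [AST2010] strategy required case-by-case verification in the exceptional types; one would likely need to redo such case analyses in types $E_6$, $E_7$, $E_8$, $F_4$ for $\beta_+$, or alternatively establish the equivariance computationally as in our \Cref{thm:cycexc} for $\Krewplustilde$. Second, the dichotomy of conditions in the conjecture (based on $\psi|_{\reflR\setminus\reflS}$) does not literally coincide with the dichotomy in \Cref{thm:order} (based on $\psi|_\reflS$), and the two conditions do differ in some small types such as $A_2$. Reconciling this discrepancy will either require a sharper statement of the conjecture in crystallographic types (where in fact the condition $\psi|_{\reflR\setminus\reflS}\equiv\one$ forces $\psi\equiv\one$ on all reflections in almost all cases), or a separate, direct combinatorial argument using rowmotion dynamics on $\rootPoset\setminus\Delta$ in the remaining small exceptional cases, handling dihedral types $I_2(h)$ by hand.
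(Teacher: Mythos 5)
The statement you are addressing is not a theorem of the paper but an open conjecture (due to Panyushev, with the case distinction corrected by the authors); the paper offers no proof of it, and indeed lists the existence of an equivariant bijection between $\NNplus$ and $\NCPlus$ as an explicit open question. Your proposal does not close this gap: it is a research plan whose central ingredient --- a bijection $\beta_+:\NNplus\to\NCPlus$ intertwining $\Panplus$ with $\Krewplustilde$ --- is precisely the unresolved question, and neither the restriction argument from the bijection of~\cite{AST2010} nor the case-by-case verifications in the exceptional types are actually carried out. As written, every step that would constitute the proof is deferred ("one would check", "should be checkable", "the plan is to match"), so the conjecture remains exactly as open after your argument as before it.

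There is moreover a concrete obstruction to the strategy itself, which you mention but do not resolve. An equivariant bijection forces the two maps to have equal order, yet the dichotomy governing $\order(\Krewplustilde^{(m+1)})$ in \Cref{thm:order} is stated in terms of $\psi|_{\reflS}$, while the conjecture for $\Panplus$ is stated in terms of $\psi|_{\reflR\setminus\reflS}$, and these genuinely disagree in type $A_2$: there $\psi$ swaps the two simple reflections but fixes the unique non-simple reflection, so \Cref{thm:order} gives order $(m+1)h-2$ for the positive Kreweras map while the (corrected) conjecture predicts $(m+1)h/2-1$ for $\Panplus$. Hence no $\Panplus$--$\Krewplustilde$ equivariant bijection can exist in that case, and the transport argument cannot "immediately yield" the conjecture uniformly; at minimum you would need to isolate and treat separately all types where the two conditions diverge, and supply an independent argument there (plus the dihedral and exceptional verifications), none of which is present.
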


(Here, we have corrected a subtle mistake in the original conjecture in that we added the restriction of~$\psi$ to non-simple reflections in the above case distinction.
The only difference occurs in type $A_2$, where $\psi$ interchanges the two simple reflections $s_1,s_2$ while fixing the third reflection $s_1s_2s_1 = s_2s_1s_2$.) The combinatorial models in types~$A$ and~$B$ for the bijection in~\cite{AST2010} immediately give that, in these types, the triple
$$\Big( \NNplus, \Cat_+(W;q), \langle\Panplus\rangle \Big)$$
exhibits the cyclic sieving phenomenon. 
This holds as well in the exceptional types,
which was checked using a computer,
thus leaving only type $D$ open. 
Together with \Cref{thm:order} that now raises 
the following two questions.

\begin{question}
  Is there an equivariant bijection between $\NNplus$ and $\NCPlus$ for the Kreweras map respectively the Panyushev map, similar to the bijection between $\NN$ and $\NC$?
\end{question}

We remark that there are as well $m$-extensions $\mNN$ and $\mNNplus$ of (positive) non-nesting partitions, cf.\ \cite[Cor.~1.3]{Ath2004}.
No extension of the Panyushev map in that direction is known so far.
In particular, we completely lack an understanding of uniform connections between $m$-divisible non-crossing partitions and the $m$-extension of non-nesting partitions.

\begin{openproblem}
  Construct a generalised (positive) Panyushev map on $\mNN$ and on $\mNNplus$ which generalises the (positive) Panyushev map and at the same time provides the same orbit structure as the generalised (positive) Kreweras map on $\mNC$ and on $\mNCPlus$.
\end{openproblem}

\bibliographystyle{amsalpha}
\bibliography{positive}

\newcommand{\etalchar}[1]{$^{#1}$}
\providecommand{\bysame}{\leavevmode\hbox to3em{\hrulefill}\thinspace}
\providecommand{\MR}{\relax\ifhmode\unskip\space\fi MR }
% \MRhref is called by the amsart/book/proc definition of \MR.
\providecommand{\MRhref}[2]{%
  \href{http://www.ams.org/mathscinet-getitem?mr=#1}{#2}
}
\providecommand{\href}[2]{#2}
\begin{thebibliography}{CDD{\etalchar{+}}07}

\bibitem[ABW07]{ABW2007}
C.~A. Athanasiadis, T.~Brady, and C.~Watt, \emph{Shellability of noncrossing
  partition lattices}, Proc. Amer. Math. Soc. \textbf{135} (2007), no.~4,
  939--950.

\bibitem[And76]{AndrAF}
G.~E. Andrews, \emph{The {T}heory of {P}artitions}, Encyclopedia of Math.\ and
  its Applications, vol.~2, Addison--Wesley, Reading, 1976, reprinted by
  Cambridge University Press, Cambridge, 1998.

\bibitem[And04]{AndrCB}
G.~E. Andrews, \emph{The {F}riedman--{J}oichi--{S}tanton monotonicity
  conjecture at primes}, Unusual Applications of Number Theory (M.~Nathanson,
  ed.), DIMACS Ser.\ Discrete Math.\ Theor.\ Comp.\ Sci., vol.~64, Amer.\
  Math.\ Soc., Providence, R.I., 2004, pp.~9--15.

\bibitem[AR04]{AR2004}
C.~A. Athanasiadis and V.~Reiner, \emph{Noncrossing partitions for the group
  {$D_n$}}, SIAM J. Discrete Math. \textbf{18} (2004), no.~2, 397--417.

\bibitem[Arm06]{Arm2006}
D.~Armstrong, \emph{Generalized noncrossing partitions and combinatorics of
  {C}oxeter groups}, Mem. Amer. Math. Soc., vol.~202, no.~949, 2006.

\bibitem[Arm24]{ArmDAB}
D.~Armstrong, \emph{Lattice points and rational $q$-{C}atalan numbers},
  preprint, {\tt ar$\chi$iv:2403.06318}, 2024.

\bibitem[AST13]{AST2010}
D.~Armstrong, C.~Stump, and H.~Thomas, \emph{A uniform bijection between
  noncrossing and nonnesting partitions}, Trans. Amer. Math. Soc. \textbf{365}
  (2013), 4121--4151.

\bibitem[Ath04]{Ath2004}
C.~A. Athanasiadis, \emph{Generalized {C}atalan numbers, {W}eyl groups and
  arrangements of hyperplanes}, Bull. London Math. Soc. \textbf{36} (2004),
  294--302.

\bibitem[Bes03]{BesDAA}
D.~Bessis, \emph{The dual braid monoid}, Ann. Sci. \'Ecole Norm. Sup.
  \textbf{36} (2003), no.~5, 647--683.

\bibitem[Bia97]{BianAA}
P.~Biane, \emph{Some properties of crossings and partitions}, Discrete Math.
  \textbf{175} (1997), 41--53.

\bibitem[BR11]{BR2007}
D.~Bessis and V.~Reiner, \emph{Cyclic sieving of noncrossing partitions for
  complex reflection groups}, Ann. Comb. \textbf{15} (2011), 197--222.

\bibitem[Bra01]{brady2001partial}
T.~Brady, \emph{A partial order on the symmetric group and new {$K(\pi, 1)$}'s
  for the braid groups}, Adv. Math. \textbf{161} (2001), no.~1, 20--40.

\bibitem[BRT12]{BRT2012}
A.~B. Buan, I.~Reiten, and H.~Thomas, \emph{From $m$-clusters to
  $m$-noncrossing partitions via exceptional sequences}, Math. Z. \textbf{271}
  (2012), 1117--1139.

\bibitem[BW02]{BRWaAA}
T.~Brady and C.~Watt, \emph{{$K(\pi,1)$}'s for {A}rtin groups of finite type},
  Geom. Dedicata \textbf{94} (2002), 225--250.

\bibitem[BW08]{BRWaAB}
\bysame, \emph{Non-crossing partition lattices in finite real reflection
  groups}, Trans. Amer. Math. Soc. \textbf{360} (2008), 1983--2005.

\bibitem[CDD{\etalchar{+}}07]{CDDSY2007}
W.~Y.~C. Chen, E.~Y.~P. Deng, R.~R.~X. Du, R.~P. Stanley, and C.~H. Yan,
  \emph{Crossings and nestings of matchings and partitions}, Trans. Amer. Math.
  Soc. \textbf{359} (2007), no.~4, 1555--1575.

\bibitem[CFDF95]{CaFlAA}
P.~J. Cameron and D.~G. Fon-Der-Flaass, \emph{Orbits of antichains revisited},
  European J. Combin. \textbf{16} (1995), no.~6, 545--554.

\bibitem[Cha04]{Cha2004}
F.~Chapoton, \emph{Enumerative properties of generalized associahedra},
  S{\'e}m. Lothar. Combin. \textbf{51} (2004), Article~B51b, 16~pp.

\bibitem[CLS14]{CLS2011}
C.~Ceballos, J.-P. Labb\'e, and C.~Stump, \emph{Subword complexes, cluster
  complexes, and generalized multi-associahedra}, J. Algebraic Combin.
  \textbf{39} (2014), 17--51.

\bibitem[Dye93]{Dye1993}
M.~J. Dyer, \emph{Hecke algebras and shellings of {B}ruhat intervals}, Compos.
  Math. \textbf{89} (1993), no.~1, 91--115.

\bibitem[Ede80a]{EdelAA}
P.~Edelman, \emph{Chain enumeration and non-crossing partitions}, Discrete
  Math. \textbf{31} (1980), 171--180.

\bibitem[Ede80b]{MR0595933}
P.~H. Edelman, \emph{Zeta polynomials and the {M}\"{o}bius function}, European
  J. Combin. \textbf{1} (1980), no.~4, 335--340. \MR{595933}

\bibitem[FG10]{FG2009}
A.~Fink and B.~Iriarte Giraldo, \emph{A bijection between noncrossing and
  nonnesting partitions for classical reflection groups}, Port. Math.
  \textbf{67} (2010), no.~3, 369--401.

\bibitem[FR05]{FR2005}
S.~Fomin and N.~Reading, \emph{Generalized cluster complexes and {C}oxeter
  combinatorics}, Int. Math. Res. Not. \textbf{44} (2005), 2709--2757.

\bibitem[GKP89]{GrKPAA}
R.~L. Graham, D.~E. Knuth, and O.~Patashnik, \emph{Concrete {M}athematics},
  Addison-Wesley, Reading, Massachusetts, 1989.

\bibitem[Hen74]{HenrAA}
P.~Henrici, \emph{Applied and {C}omputational {C}omplex {A}nalysis}, John Wiley
  \& Sons, New York, 1974.

\bibitem[Hum90]{Hum1990}
J.~E. Humphreys, \emph{Reflection {G}roups and {C}oxeter {G}roups}, Cambridge
  University Press, Cambridge, 1990.

\bibitem[Kim11]{JKimAA}
J.~S. Kim, \emph{Chain enumeration of $k$-divisible noncrossing partitions of
  classical types}, J. Combin. Theory Ser.~A \textbf{118} (2011), 879--898.

\bibitem[KM10]{KrMuAB}
C.~Krattenthaler and T.~W. M\"uller, \emph{Decomposition numbers for finite
  {C}oxeter groups and generalised non-crossing partitions}, Trans. Amer. Math.
  Soc. \textbf{362} (2010), 2723--2787.

\bibitem[KM13]{KrMuAD}
\bysame, \emph{Cyclic sieving for generalised non-crossing partitions
  associated with complex reflection groups of exceptional type}, Advances in
  Combinatorics, volume in memory of Herbert S. Wilf (E.~Zima I.~Kotsireas,
  ed.), Springer-Verlag, 2013, pp.~209--247.

\bibitem[Kra13]{KratCG}
C.~Krattenthaler, \emph{Non-crossing partitions on an annulus}, in preparation,
  2013.

\bibitem[Kre72]{KrewAA}
G.~Kreweras, \emph{Sur les partitions non crois\'ees d'un cycle}, Discrete
  Math. \textbf{1} (1972), 333--350.

\bibitem[Pan09]{Pan2008}
D.~I. Panyushev, \emph{On orbits of antichains of positive roots}, European J.
  Combin. \textbf{30} (2009), no.~2, 586--594.

\bibitem[Rea07a]{Rea2007}
N.~Reading, \emph{Clusters, {C}oxeter-sortable elements and noncrossing
  partitions}, Trans. Amer. Math. Soc. \textbf{359} (2007), 5931--5958.

\bibitem[Rea07b]{Rea2005}
\bysame, \emph{Sortable elements and {C}ambrian lattices}, Algebra Universalis
  \textbf{56} (2007), no.~3--4, 411--437.

\bibitem[Rei97]{ReivAG}
V.~Reiner, \emph{Non-crossing partitions for classical reflection groups},
  Discrete Math. \textbf{177} (1997), 195--222.

\bibitem[RRS17]{RRS2017}
V.~Reiner, V.~Ripoll, and C.~Stump, \emph{On non-conjugate {C}oxeter elements
  in well-generated reflection groups}, Math. Z. \textbf{285} (2017), no.~3-4,
  1041--1062.

\bibitem[RS10]{RS2010}
M.~Rubey and C.~Stump, \emph{Crossings and nestings in set partitions of
  classical types}, Electron. J. Combin. \textbf{17} (2010), no.~1,
  Article~R120, 19~pp.

\bibitem[RS11]{RS2008}
N.~Reading and D.~Speyer, \emph{Sortable elements in infinite {C}oxeter
  groups}, Trans. Amer. Math. Soc. (2011), no.~363, 699--761.

\bibitem[RS18]{ReSoAA}
V.~Reiner and E.~Sommers, \emph{Weyl group {$q$}-{K}reweras numbers and cyclic
  sieving}, Ann. Comb. \textbf{22} (2018), no.~4, 819--874.

\bibitem[RSW04]{RSW2004}
V.~Reiner, D.~Stanton, and D.~White, \emph{The cyclic sieving phenomenon}, J.
  Combin. Theory Ser.~A \textbf{108} (2004), 17--50.

\bibitem[Sim12]{Sim2012}
R.~C. Sim{\~o}es, \emph{Hom-configurations and noncrossing partitions}, J.
  Algebraic Combin. \textbf{35} (2012), no.~2, 313--343.

\bibitem[Sta99]{StanBI}
R.~P. Stanley, \emph{Enumerative {C}ombinatorics, vol.~2}, Cambridge University
  Press, Cambridge, 1999.

\bibitem[STW25]{STW2015}
C.~Stump, H.~Thomas, and N.~Williams, \emph{Cataland: Why the {F}u{\ss}?}, Mem.
  Amer. Math. Soc., vol.~305, no.~1535, 2025.

\end{thebibliography}

\appendix

\section{Proofs: The order of the positive Kreweras map}
\label{app:order}

This appendix is devoted to the proofs of \Cref{lem:N-2,,lem:N-2B,,lem:N-2D} describing the order of the positive Kreweras map in classical types.

\subsection{The order of the positive Kreweras map in
  type $A_{n-1}$: Proof of \Cref{lem:N-2}}
\label{app:order-A}

The proof proceeds in several steps. Let $\pi$ be some non-crossing
partition of~$\{1,2,\dots,N\}$.

{
\medskip
\hangindent2\parindent\hangafter1
{\sc Step 1.} We argue that singleton blocks are not ``important", in
  the sense that it suffices to prove the claim for non-crossing
  partitions without singleton blocks.

\hangindent2\parindent\hangafter1
{\sc Step 2.} 
We prove that a block of~$\pi$ of the form $\{N,1,2,\dots,k\}$
returns to its position after $N-2$ applications of~$\rotA$.

\hangindent2\parindent\hangafter1
{\sc Step 3.} We prove that a block of~$\pi$ 
containing $N$ and $1$ but not~$N-1$
returns to its position after $N-2$ applications of~$\rotA$.

\hangindent2\parindent\hangafter1
{\sc Step 4.} We argue that Steps~1--3 suffice to complete the proof.
\par
}

We now enter in the details of each step.

\medskip\noindent
{\sc Step 1.} If we concentrate ourselves on a particular singleton
block and what it does under successive applications of the
pseudo-rotation~$\rotA$, then we see that $\rotA$ always rotates
this block by $1$~unit in clockwise direction,
except that the singleton block $\{N-1\}$ is mapped
to $\{2\}$, that is, it is --- so-to-speak --- rotated by $3$~units
in clockwise direction.
In particular, after $N-2$ applications of~$\rotA$, 
a singleton block returns to its original position. 

Let us suppose for the moment that the assertion of the
lemma is already established for the case of non-crossing partitions
without singleton blocks.
We furthermore suppose that $\pi$ contains exactly $k$ singleton blocks.
It is straightforward to see that, if we remove the singleton blocks
and consider the remainder of~$\pi$ as a non-crossing partition on
the remaining elements, the pseudo-rotation~$\rotA$ acts
as the identity if $\{N-1\}$ is a
singleton block, and otherwise 
in the way illustrated in \Cref{fig:7}. In particular, if we denote the
non-crossing partition arising from $\pi'$ by ignoring its singleton
blocks, then, according to our assumption, 
after $N-k-2$ applications of~$\rotA$ on $\pi'$ we return
back to $\pi'$. This implies that $N-2$ applications
of~$\rotA$ to~$\pi$ produce a non-crossing partition $\hat\pi$ 
in which the singleton blocks are in their original place, and which
is identical with $\pi'$ after removal of the singleton blocks. 
Since there is a unique way to reinsert the singleton blocks into
$\pi'$ such that they sit in their original position, this shows
that, under the above assumption, the map~$\rotA^{N-2}$ acts
as the identity on~$\pi$.

\medskip
From now on, we assume that $\pi$ does not contain any singleton blocks.

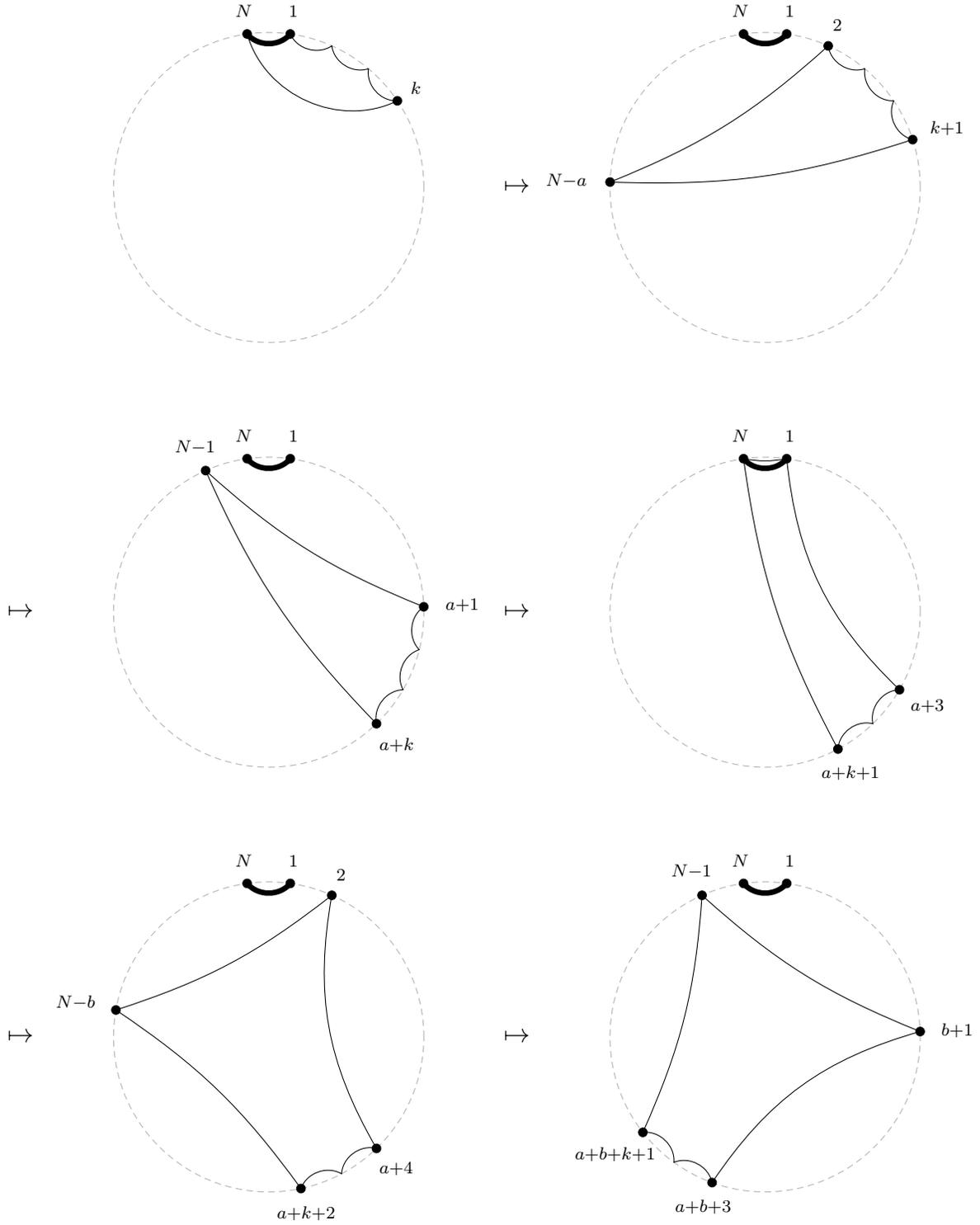
\begin{figure}
\begin{center}
  \begin{tikzpicture}[scale=1]
    \polygonlabel{(-4,0)}{obj}{45}{2.5}
      {1,,,,,,k,,,,,,,,,,,,,,,,,,,,,,,,,,,,,,,,,,,,,N,}

    \draw[line width=2.5pt,black] (obj1) to[bend left=50] (obj44);
    \draw (obj1) to[bend right=50]
          (obj3) to[bend right=50]
          (obj5) to[bend right=50]
          (obj7) to[bend left=50]
          (obj44);

    \node[inner sep=0pt] at (0,0) {$\mapsto$};

    \polygonlabel{( 4,0)}{obj}{45}{2.5}
      {1,,2,,,,,,k+1,,,,,,,,,,,,,,,,,,,,,,,,,N-a,,,,,,,,,,N,}

    \draw[line width=2.5pt,black] (obj1) to[bend left=50] (obj44);
    \draw (obj3)  to[bend right=50]
          (obj5)  to[bend right=50]
          (obj7)  to[bend right=50]
          (obj9)  to[bend left =10]
          (obj34) to[bend right=10]
          (obj3);

    \node[inner sep=0pt] at (-8,-6.85) {$\mapsto$};

    \polygonlabel{(-4,-6.85)}{obj}{45}{2.5}
      {1,,,,,,,,,,a+1,,,,,,a+k,,,,,,,,,,,,,,,,,,,,,,,,,N-1,,N,}

    \draw[line width=2.5pt,black] (obj1) to[bend left=50] (obj44);
    \draw (obj11)  to[bend right=50]
          (obj13)  to[bend right=50]
          (obj15)  to[bend right=50]
          (obj17)  to[bend left =10]
          (obj42) to[bend right=10]
          (obj11);

        \node[inner sep=0pt] at (0,-6.85) {$\mapsto$};

    \polygonlabel{( 4,-6.85)}{obj}{45}{2.5}
      {1,,,,,,,,,,,,,,a+3,,,,a+k+1,,,,,,,,,,,,,,,,,,,,,,,,,N,}

    \draw[line width=2.5pt,black] (obj1) to[bend left=50] (obj44);
    \draw (obj1)  to[bend right=20]
          (obj15)  to[bend right=50]
          (obj17)  to[bend right=50]
          (obj19)  to[bend left =10]
          (obj44) to[bend right=10]
          (obj1);

    \node[inner sep=0pt] at (-8,-13.7) {$\mapsto$};

    \polygonlabel{(-4,-13.7)}{obj}{45}{2.5}
      {1,,2,,,,,,,,,,,,,,a+4,,,,a+k+2,,,,,,,,,,,,,,N-b,,,,,,,,,N,}

    \draw[line width=2.5pt,black] (obj1) to[bend left=50] (obj44);
    \draw (obj3)  to[bend right=20]
          (obj17) to[bend right=50]
          (obj19) to[bend right=50]
          (obj21) to[bend right=10]
          (obj35) to[bend right=10]
          (obj3);

  \node[inner sep=0pt] at (0,-13.7) {$\mapsto$};

  \polygonlabel{( 4,-13.7)}{obj}{45}{2.5}
      {1,,,,,,,,,,b+1,,,,,,,,,,,,,,a+b+3,,,,a+b+k+1,,,,,,,,,,,,,N-1,,N,}

    \draw[line width=2.5pt,black] (obj1) to[bend left=50] (obj44);
    \draw (obj11)  to[bend right=20]
          (obj25) to[bend right=50]
          (obj27) to[bend right=50]
          (obj29) to[bend right=10]
          (obj42) to[bend right=10]
          (obj11);

\end{tikzpicture}
\end{center}
\caption{Pseudo-rotation of a block of successive elements}
\label{fig:30a}
\end{figure}

\begin{figure}
\begin{center}
\begin{tikzpicture}

    \node[inner sep=0pt] at (-8,-21) {$\mapsto$};

    \polygonlabel{(-4,-21)}{obj}{45}{2.5}
      {1,,,,,,,,,,,,,,,,,,,,,,,,,,a+b+4,,,,a+b+k+2,,,,,,,,,,,,,N,}

    \draw[line width=2.5pt,black] (obj1) to[bend left=50] (obj44);
    \draw (obj1)  to[bend left =10]
          (obj27) to[bend right=50]
          (obj29) to[bend right=50]
          (obj31) to[bend right=10]
          (obj44);

    \node[inner sep=0pt] at (0,-21) {$\mapsto$};

    \polygonlabel{( 4,-21)}{obj}{45}{2.5}
      {1,,2,,,,,,,,,,,,,,,,,,,,,,,,,,,,N-c-k+1,,,,,,N-c,,,,,,,N,}

    \draw[line width=2.5pt,black] (obj1) to[bend left=50] (obj44);
    \draw (obj3)  to[bend left =10]
          (obj31) to[bend right=50]
          (obj33) to[bend right=50]
          (obj35) to[bend right=50]
          (obj37) to[bend right=10]
          (obj3);1

    \node[inner sep=0pt] at (-8,-28) {$\mapsto$};

    \polygonlabel{(-4,-28)}{obj}{45}{2.5}
      {1,,,,,,,c+1,,,,,,,,,,,,,,,,,,,,,,,,,,,,N-k,,,,,,N-1,,N,}

    \draw[line width=2.5pt,black] (obj1) to[bend left=50] (obj44);
    \draw (obj8)  to[bend left =10]
          (obj36) to[bend right=50]
          (obj38) to[bend right=50]
          (obj40) to[bend right=50]
          (obj42) to[bend right=10]
          (obj8);

    \node[inner sep=0pt] at (0,-28) {$\mapsto$};

    \polygonlabel{( 4,-28)}{obj}{45}{2.5}
      {1,,,,,,,,,,,,,,,,,,,,,,,,,,,,,,,,,,,,,N-k+1,,,,,,N,}

    \draw[line width=2.5pt,black] (obj1) to[bend left=50] (obj44);
    \draw (obj1)  to[bend left =50]
          (obj38) to[bend right=50]
          (obj40) to[bend right=50]
          (obj42) to[bend right=50]
          (obj44);

  \end{tikzpicture}
\end{center}
\caption{Pseudo-rotation of a block of successive elements}
\label{fig:30b}
\end{figure}
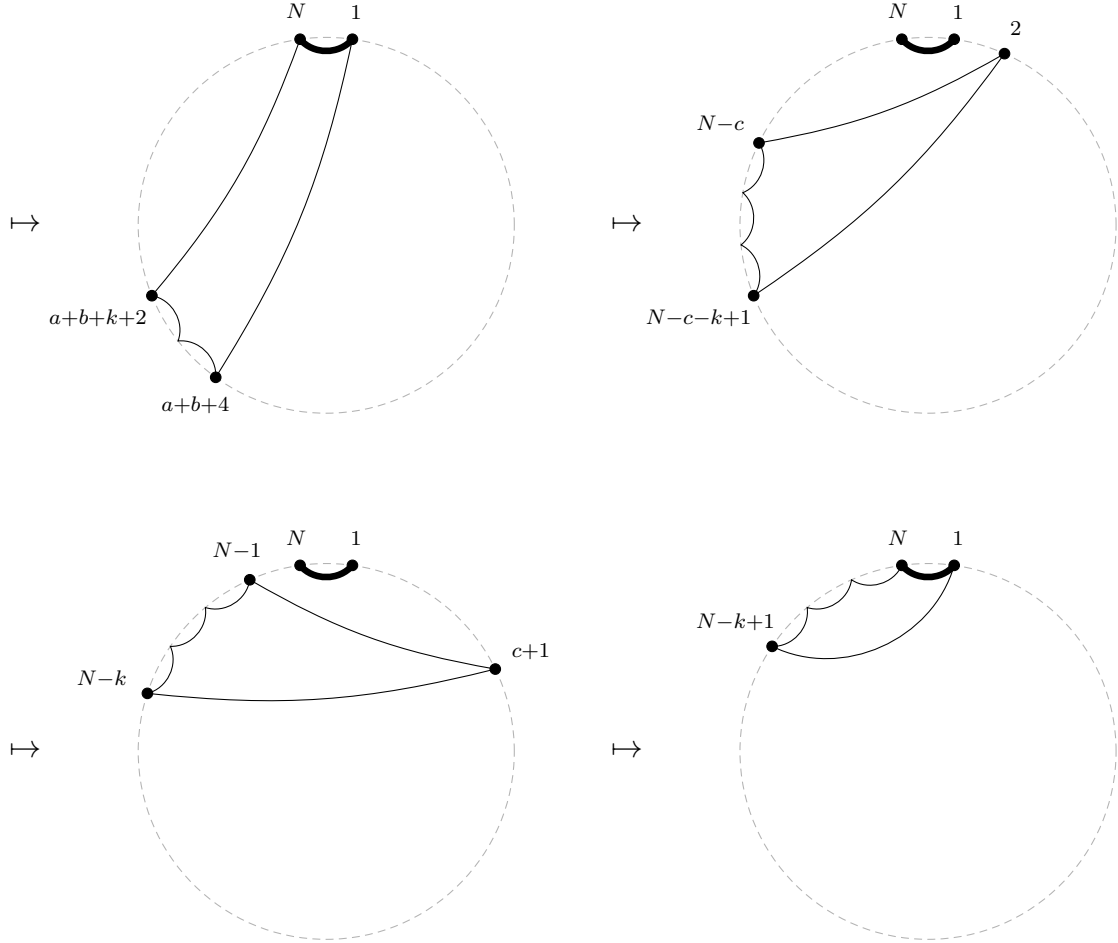

\medskip\noindent
{\sc Step 2.} Let $\{N,1,2,\dots,k\}$ be a block of~$\pi$. 
See \Cref{fig:30a} for an illustration. The first
application of~$\rotA$ maps this block to
$\{N-a,2,\dots,k+1\}$, for a suitable positive integer~$a$; see
the second image in \Cref{fig:30a}. Each of the next $a-1$ applications
of~$\rotA$ rotates this block by one unit in clockwise direction, so that we obtain
$\{N-1,a+1,\dots,a+k\}$. See the third image
in \Cref{fig:30a} for the resulting non-crossing partition. If we
we now apply $\rotA$ another time, then the block on which
we are concentrating ourselves becomes $\{N,1,a+3,\dots,a+k+1\}$;
see the fourth image in \Cref{fig:30a}.
It should be noted that $a+3$ is not an error: one element in the
maximal sequence of successive elements has been ``lost'' under this
operation. Another application of~$\rotA$ then turns this
block into $\{N-b,2,a+4,\dots,a+k+2\}$, for a suitable positive
integer~$b$. See the fifth image in \Cref{fig:30a}.

There follow $b-1$ applications of~$\rotA$ which act as
ordinary rotations on our block of interest. Our block becomes
$\{N-1,b+1,a+b+3,\dots,a+b+k+1\}$; see the sixth image in
\Cref{fig:30a}. The next application of~$\rotA$ has the
effect that it ``removes'' $b+1$ from this block. More precisely, the
block becomes $\{N,1,a+b+4,\dots,a+b+k+2\}$; see the first image in
\Cref{fig:30b}.

Comparison with the fourth image in \Cref{fig:30a} shows that, except
for the location of the maximal sequence of successive elements, we are in the
same situation again. In other words, we may now run through the
situations illustrated by the fourth, fifth, sixth image in
\Cref{fig:30a}, and the first image
in \Cref{fig:30b} several times, until the ``added'' element ($N-b$ in
the fifth image in \Cref{fig:30a}) eventually
``joins'' the maximal sequence of successive
elements (i.e., $N-b=a+k+3$). Once this happens, we are in the
situation illustrated by the second image in \Cref{fig:30b}. 

Referring to this image, $c-1$ further applications
of~$\rotA$ lead to a block of the form $\{N-k,\dots,N-1,c+1\}$;
see the third image in \Cref{fig:30b}.
One more application of~$\rotA$ turns this block into
$\{N-k+1,\dots,N,1\}$; see the last image in \Cref{fig:30b}. 
(One ``loses'' $c+1$ under this operation.)
Finally, the next $k-1$ applications of~$\rotA$ act as
ordinary rotations on our block, so that we return to the situation
illustrated by the first image in \Cref{fig:30a}. 

It can be checked that, in total, we performed $N-2$ applications of~$\rotA$. 

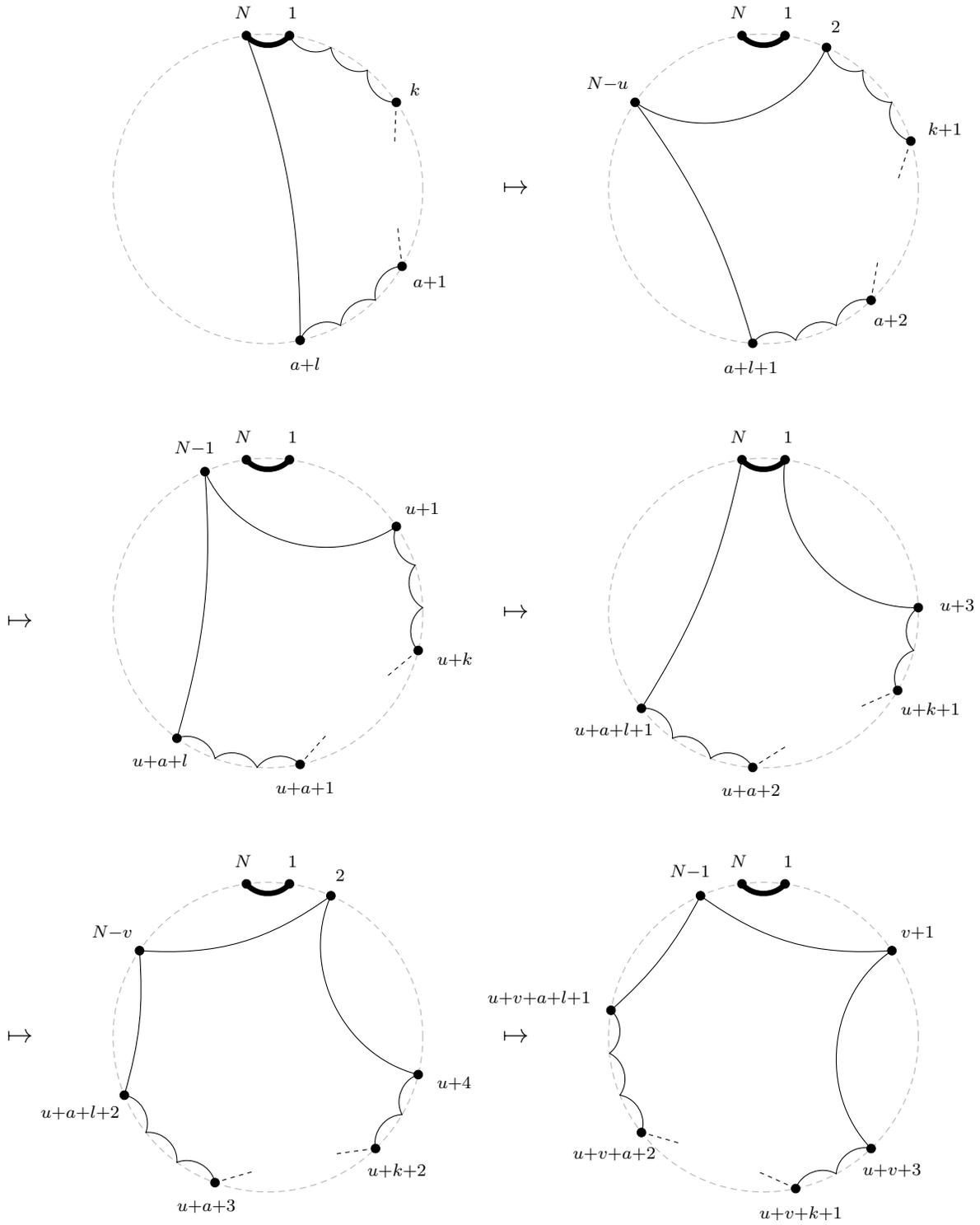
\begin{figure}
  \begin{center}
  \begin{tikzpicture}[scale=1]
    \polygonlabel{(-4,0)}{obj}{45}{2.5}
      {1,,,,,,k,,,,,,,,a+1,,,,,,a+l,,,,,,,,,,,,,,,,,,,,,,,N,}

    \draw[line width=2.5pt,black] (obj1) to[bend left=50] (obj44);
    \draw (obj1) to[bend right=50]
          (obj3) to[bend right=50]
          (obj5) to[bend right=50]
          (obj7);
    \draw (obj15) to[bend right=50]
          (obj17) to[bend right=50]
          (obj19) to[bend right=50]
          (obj21) to[bend right=10]
          (obj44);

    \draw[dash pattern=on 2pt off 2pt on 2pt off 2pt on 2pt off 2pt on
2pt off 2pt on 2pt off 40pt on 2pt off 2pt on 2pt off 2pt on 2pt off
2pt on 2pt off 2pt] (obj7) to[bend right=5] (obj15);

    \node[inner sep=0pt] at (0,0) {$\mapsto$};

    \polygonlabel{( 4,0)}{obj}{45}{2.5}
      {1,,2,,,,,,k+1,,,,,,,,a+2,,,,,,a+l+1,,,,,,,,,,,,,,,N-u,,,,,,N,}

    \draw[line width=2.5pt,black] (obj1) to[bend left=50] (obj44);
    \draw (obj38) to[bend right=50]
          (obj3) to[bend right=50]
          (obj5) to[bend right=50]
          (obj7) to[bend right=50]
          (obj9);
    \draw (obj17) to[bend right=50]
          (obj19) to[bend right=50]
          (obj21) to[bend right=50]
          (obj23) to[bend right=10]
          (obj38);

    \draw[dash pattern=on 2pt off 2pt on 2pt off 2pt on 2pt off 2pt on
2pt off 2pt on 2pt off 40pt on 2pt off 2pt on 2pt off 2pt on 2pt off
2pt on 2pt off 2pt] (obj9) to[bend right=5] (obj17);

    \node[inner sep=0pt] at (-8,-7) {$\mapsto$};

    \polygonlabel{(-4,-6.85)}{obj}{45}{2.5}
      {1,,,,,,u+1,,,,,,u+k,,,,,,,,u+a+1,,,,,,u+a+l,,,,,,,,,,,,,,,N-1,,N,}

    \draw[line width=2.5pt,black] (obj1) to[bend left=50] (obj44);
    \draw (obj42) to[bend right=50]
          (obj7) to[bend right=50]
          (obj9) to[bend right=50]
          (obj11) to[bend right=50]
          (obj13);
    \draw (obj21) to[bend right=50]
          (obj23) to[bend right=50]
          (obj25) to[bend right=50]
          (obj27) to[bend right=10]
          (obj42);

    \draw[dash pattern=on 2pt off 2pt on 2pt off 2pt on 2pt off 2pt on
2pt off 2pt on 2pt off 40pt on 2pt off 2pt on 2pt off 2pt on 2pt off
2pt on 2pt off 2pt] (obj13) to[bend right=5] (obj21);

    \node[inner sep=0pt] at (0,-6.85) {$\mapsto$};

    \polygonlabel{( 4,-6.85)}{obj}{45}{2.5}
      {1,,,,,,,,,,u+3,,,,u+k+1,,,,,,,,u+a+2,,,,,,u+a+l+1,,,,,,,,,,,,,,,N,}

    \draw[line width=2.5pt,black] (obj1) to[bend left=50] (obj44);
    \draw (obj1)  to[bend right=50]
          (obj11) to[bend right=50]
          (obj13) to[bend right=50]
          (obj15);
    \draw (obj23) to[bend right=50]
          (obj25) to[bend right=50]
          (obj27) to[bend right=50]
          (obj29) to[bend right=10]
          (obj44);

    \draw[dash pattern=on 2pt off 2pt on 2pt off 2pt on 2pt off 2pt on
2pt off 2pt on 2pt off 40pt on 2pt off 2pt on 2pt off 2pt on 2pt off
2pt on 2pt off 2pt] (obj15) to[bend right=5] (obj23);

    \node[inner sep=0pt] at (-8,-13.7) {$\mapsto$};

    \polygonlabel{(-4,-13.7)}{obj}{45}{2.5}
      {1,,2,,,,,,,,,,u+4,,,,u+k+2,,,,,,,,u+a+3,,,,,,u+a+l+2,,,,,,,N-v,,,,,,N,}

    \draw[line width=2.5pt,black] (obj1) to[bend left=50] (obj44);
    \draw (obj3)  to[bend right=50]
          (obj13) to[bend right=50]
          (obj15) to[bend right=50]
          (obj17);
    \draw (obj25) to[bend right=50]
          (obj27) to[bend right=50]
          (obj29) to[bend right=50]
          (obj31) to[bend right=10]
          (obj38) to[bend right=20]
          (obj3);

    \draw[dash pattern=on 2pt off 2pt on 2pt off 2pt on 2pt off 2pt on
2pt off 2pt on 2pt off 40pt on 2pt off 2pt on 2pt off 2pt on 2pt off
2pt on 2pt off 2pt] (obj17) to[bend right=5] (obj25);

    \node[inner sep=0pt] at (0,-13.7) {$\mapsto$};

    \polygonlabel{( 4,-13.7)}{obj}{45}{2.5}
      {1,,,,,,v+1,,,,,,,,,,u+v+3,,,,u+v+k+1,,,,,,,,u+v+a+2,,,,,,u+v+a+l+1,,,,,,,N-1,,N,}

    \draw[line width=2.5pt,black] (obj1) to[bend left=50] (obj44);
    \draw (obj7)  to[bend right=50]
          (obj17) to[bend right=50]
          (obj19) to[bend right=50]
          (obj21);
    \draw (obj29) to[bend right=50]
          (obj31) to[bend right=50]
          (obj33) to[bend right=50]
          (obj35) to[bend right=10]
          (obj42) to[bend right=20]
          (obj7);

    \draw[dash pattern=on 2pt off 2pt on 2pt off 2pt on 2pt off 2pt on
2pt off 2pt on 2pt off 40pt on 2pt off 2pt on 2pt off 2pt on 2pt off
2pt on 2pt off 2pt] (obj21) to[bend right=5] (obj29);

\end{tikzpicture}
\end{center}
\caption{Pseudo-rotation of a block}
\label{fig:31a}
\end{figure}

\begin{figure}
\begin{center}
\begin{tikzpicture}
    \node[inner sep=0pt] at (-8,-21) {$\mapsto$};

    \polygonlabel{(-4,-21)}{obj}{45}{2.5}
      {1,,,,,,,,,,,,,,,,,,u+v+4,,,,u+v+k+2,,,,,,,,u+v+a+3,,,,,,u+v+a+l+2,,,,,,,N,}

    \draw[line width=2.5pt,black] (obj1) to[bend left=50] (obj44);
    \draw (obj1)  to[bend right=10]
          (obj19) to[bend right=50]
          (obj21) to[bend right=50]
          (obj23);
    \draw (obj31) to[bend right=50]
          (obj33) to[bend right=50]
          (obj35) to[bend right=50]
          (obj37) to[bend right=30]
          (obj44);

    \draw[dash pattern=on 2pt off 2pt on 2pt off 2pt on 2pt off 2pt on
2pt off 2pt on 2pt off 40pt on 2pt off 2pt on 2pt off 2pt on 2pt off
2pt on 2pt off 2pt] (obj23) to[bend right=5] (obj31);

    \node[inner sep=0pt] at ( 0,-21) {$\mapsto$};

    \polygonlabel{( 4,-21)}{obj}{45}{2.5}
      {1,,2,,,,,,,,,,,,z+2,,,,z+k,,,,,,,,z+a+1,,,,,,,,z+a+l+1,,,,,,,,,N,}

    \draw[line width=2.5pt,black] (obj1) to[bend left=50] (obj44);
    \draw (obj3)  to[bend right=10]
          (obj15) to[bend right=50]
          (obj17) to[bend right=50]
          (obj19);
    \draw (obj27) to[bend right=50]
          (obj29) to[bend right=50]
          (obj31) to[bend right=50]
          (obj33) to[bend right=50]
          (obj35) to[bend right=30]
          (obj3);

    \draw[dash pattern=on 2pt off 2pt on 2pt off 2pt on 2pt off 2pt on
2pt off 2pt on 2pt off 40pt on 2pt off 2pt on 2pt off 2pt on 2pt off
2pt on 2pt off 2pt] (obj19) to[bend right=5] (obj27);

    \node[inner sep=0pt] at (-8,-28) {$\mapsto$};

    \polygonlabel{(-4,-28)}{obj}{45}{2.5}
      {1,,,,,,,,,N-z-a-l,,,,,,,,,,,,N-a-l,,,,N-a-l+k-2,,,,,,,,N-l-1\!\!\!\!,,,,,,,,N-1,,N,}

    \draw[line width=2.5pt,black] (obj1) to[bend left=50] (obj44);
    \draw (obj10)  to[bend right=10]
          (obj22) to[bend right=50]
          (obj24) to[bend right=50]
          (obj26);
    \draw (obj34) to[bend right=50]
          (obj36) to[bend right=50]
          (obj38) to[bend right=50]
          (obj40) to[bend right=50]
          (obj42) to[bend right=30]
          (obj10);

    \draw[dash pattern=on 2pt off 2pt on 2pt off 2pt on 2pt off 2pt on
2pt off 2pt on 2pt off 40pt on 2pt off 2pt on 2pt off 2pt on 2pt off
2pt on 2pt off 2pt] (obj26) to[bend right=5] (obj34);

    \node[inner sep=0pt] at ( 0,-28) {$\mapsto$};

    \polygonlabel{( 4,-28)}{obj}{45}{2.5}
      {1,,,,,,,,,,,,,,,,,,,,,,,N-a-l+1,,,,N-a-l+k-1,,,,,,,,N-l,,,,,,,,N,}

    \draw[line width=2.5pt,black] (obj1) to[bend left=50] (obj44);
    \draw (obj1)  to[bend right=10]
          (obj24) to[bend right=50]
          (obj26) to[bend right=50]
          (obj28);
    \draw (obj36) to[bend right=50]
          (obj38) to[bend right=50]
          (obj40) to[bend right=50]
          (obj42) to[bend right=50]
          (obj44);

    \draw[dash pattern=on 2pt off 2pt on 2pt off 2pt on 2pt off 2pt on
2pt off 2pt on 2pt off 40pt on 2pt off 2pt on 2pt off 2pt on 2pt off
2pt on 2pt off 2pt] (obj28) to[bend right=5] (obj36);

    \node[inner sep=0pt] at (-8,-35) {$\mapsto$};

    \polygonlabel{(-4,-35)}{obj}{45}{2.5}
      {1,,,,,,,,l+1,,,,,,,,,,,,,,,,,,,,,,,N-a+1,,,,N-a+k-1,,,,,,,,N,}

    \draw[line width=2.5pt,black] (obj1) to[bend left=50] (obj44);
    \draw (obj1) to[bend right=50]
          (obj3) to[bend right=50]
          (obj5) to[bend right=50]
          (obj7) to[bend right=50]
          (obj9) to[bend right=10]
          (obj32) to[bend right=50]
          (obj34) to[bend right=50]
          (obj36);

    \draw[dash pattern=on 2pt off 2pt on 2pt off 2pt on 2pt off 2pt on
2pt off 2pt on 2pt off 40pt on 2pt off 2pt on 2pt off 2pt on 2pt off
2pt on 2pt off 2pt] (obj36) to[bend right=5] (obj44);

\end{tikzpicture}
\end{center}
\caption{Pseudo-rotation of a block}
\label{fig:31b}
\end{figure}

\medskip\noindent
{\sc Step 3.} Now we consider the case of a block that contains
more than one maximal sequence of successive elements.
Let the block contain
$N,1,2,\dots,k$, and let the maximal sequence of successive elements
(cyclically) preceding~$N$ be $a+1,\dots,a+l$. The first image
in \Cref{fig:31a} provides an illustration.

We now apply $\rotA$. Thereby, our block turns into
$\{N-u,2,\dots,k+1,*,a+2,\dots,a+l+1\}$;
see the second image in \Cref{fig:31a}. Here, the symbol $*$ indicates
further possible elements in the block lying between $k+1$ and $a+2$.
The next $u-1$ applications of~$\rotA$ act as ordinary
rotations on our block; see the third image in \Cref{fig:31a}.

By a further application of $\rotA$, the maximal sequence of
$k$~successive elements in the block is shortened by one element.
The result is that our block has become $\{N,1,\break u+3,\dots,u+k+1,*,
u+a+2,\dots,u+a+l+1\}$; see the fourth image in \Cref{fig:31a}.

In a sense, we already know from Step~2 what happens next: the next
application of~$\rotA$ removes $N$ and $1$ from our block,
rotates the other elements by one unit, and adds $N-v$ and $2$ to it,
for a suitable positive integer~$v$; see the fifth image in
\Cref{fig:31a}. There follow $v-1$ applications of~$\rotA$
which act as ordinary rotations on our block; see the sixth image in
\Cref{fig:31a}. The following application of~$\rotA$ then
turns our block into the form $\{N,1,u+v+4,\dots,u+v+k+2,*,
u+v+a+3,\dots,u+v+a+l+2\}$; see the first image in \Cref{fig:31b},
a form we have already seen before as comparison with the fourth image
in \Cref{fig:31a} reveals. Thus, this procedure may repeat itself
several times until, eventually, we reach the situation where our
block becomes $\{2,z+2,\dots,z+k,*,z+a+1,\dots,z+a+l+1\}$; see the second image
in \Cref{fig:31b}. It should be noted that the maximal sequence of
successive elements consisting originally of $l$~elements now has
been enlarged to the $l+1$ elements $z+a+1,\dots,z+a+l+1$.

Now follow $N-z-a-l-2$ applications of~$\rotA$, each of them
acting as ordinary rotations on our block of interest; see the third
image in \Cref{fig:31b}. The next application of~$\rotA$\break
makes us ``lose'' the ``isolated'' element $N-z-a-l$, so that we
obtain the block\break $\{N-a-l+1,\dots,N-a-l+k-1,*,N-l,\dots,N,1\}$;
see the fourth image in \Cref{fig:31b}. Finally, $l$ further
applications of~$\rotA$ let us arrive at
$\{N,1,\dots,l+1,N-a+1,\dots,N-a+k-1,*\}$; see the last image in
\Cref{fig:31b}. 

At this point, let us summarise what happened. Comparison of the
first image in \Cref{fig:31a} and the last image in \Cref{fig:31b}
shows that the (cyclically) maximal sequence of successive elements 
that originally contained $N$ and $1$ at its beginning
(namely $N,1,\dots,k$ in the first image in \Cref{fig:31a}) got rotated
further but ``lost'' its first two elements.
(It became $N-a+1,\dots,N-a+k-1$; see the last image in
\Cref{fig:31b}.) By contrast, the maximal
sequence of successive elements that preceded that sequence
(namely $a+1,\dots,a+l$ in the first image in \Cref{fig:31a}) now got
rotated to the top. It gained two elements so that it now contains 
$N$ and $1$ at its beginning. (It became $N,1,\dots,l+1$; see the last
image in \Cref{fig:31b}.) 

This process now will repeat itself. Every time a maximal sequence
of successive elements is rotated away from $\{N,1\}$, it will
``lose'' two elements, and every time a maximal sequence of successive
elements is moved into $\{N,1\}$ it will gain two elements.
In particular, such a maximal sequence is almost always rotated
ordinarily. At one point, however, it ``loses'' its first element,
and at a later point it ``gains'' a new last element. These two events
together explain an extra ``jump'' of~$1$. A further such ``jump''
happens when the maximal sequence is moved into $\{N,1\}$.
Thus, after $N-2$ applications of~$\rotA$, our block
will be back in its original position.

\medskip\noindent
{\sc Step 4.} We have shown that, for a non-crossing partition~$\pi$
without singleton block, a block containing a maximal sequence of
(cyclically) successive elements $N,1,\dots,k$ will be back in its
original position after $N-2$ applications of the
pseudo-rotation~$\rotA$. If we now consider any other block,
then we may apply $\rotA$ as many times as it is necessary
to move that block into a position where it contains a (cyclically) maximal
sequence of successive elements beginning with $N$ and $1$. Then the
above arguments apply. This shows that also any other block returns
back to its original position after $N-2$ applications of~$\rotA$. 

Since, in Step~1, we have shown that it suffices to prove the
assertion for non-crossing partitions without singleton blocks, this
completes the proof of the theorem.
\qed

\subsection{The order of the positive Kreweras map in
  type~$B_n$: Proof of \Cref{lem:N-2B}}
\label{app:order-B}

This proof is rather similar in spirit to the one of \Cref{lem:N-2}.
We shall therefore be somewhat briefer here.

Again, we proceed in several steps. 

{
\medskip
\hangindent2\parindent\hangafter1
{\sc Step 1.} We argue that singleton blocks are not ``important", in
  the sense that it suffices to prove the claim for non-crossing
  partitions without singleton blocks.

\hangindent2\parindent\hangafter1
{\sc Step 2.} 
We prove that a block of~$\pi$ of the form 
$\{\overline N,1,2,\dots,k\}$, with $k\le N$,
becomes its negative, $\{N,\overline 1,\overline 2,\dots,\overline k\}$,
after $N-1$ applications of~$\rotB$.

\hangindent2\parindent\hangafter1
{\sc Step 3.} We prove that a block of~$\pi$ 
containing $\overline N$ and $1$ but not~$\overline {N-1}$
becomes its negative after $N-1$ applications of~$\rotB$.

\hangindent2\parindent\hangafter1
{\sc Step 4.} We argue that Steps~1--3 suffice to complete the proof.
\par
}

\medskip
We now discuss each step somewhat more in detail.

\medskip\noindent
{\sc Step 1.} The arguments here are completely analogous to the ones
in Step~1 of the preceding proof of \Cref{lem:N-2}. We leave the details to the
reader.

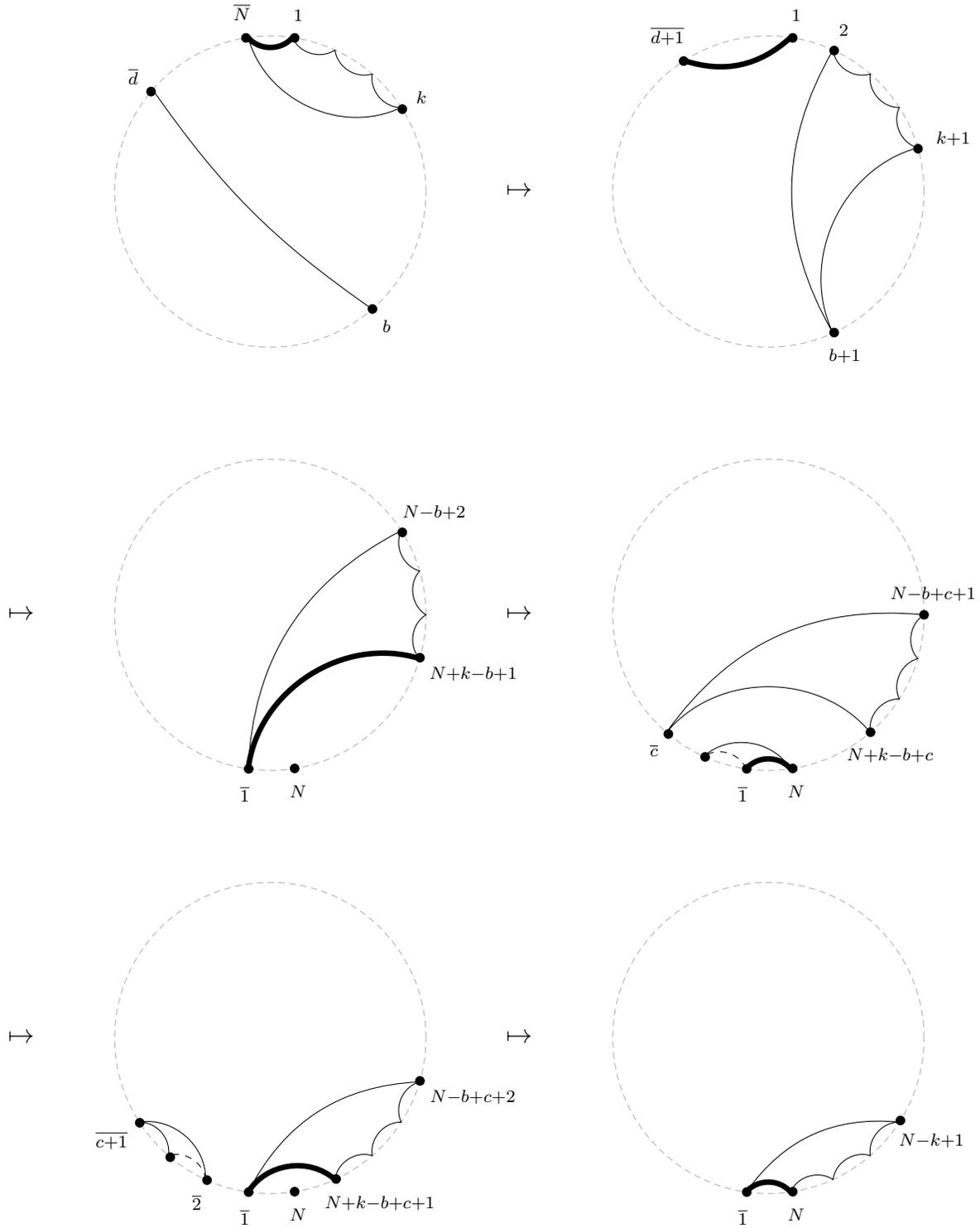
\begin{figure}
\begin{center}
    \begin{tikzpicture}[scale=1]
    \polygonlabel{(-4,0)}{obj}{44}{2.5}
      {1,,,,,,k,,,,,,,,,,b,,,,,,,,,,,,,,,,,,,,,\overline{d},,,,,\overline{N},}

    \draw[line width=2.5pt,black] (obj1) to[bend left=50] (obj43);
    \draw (obj1) to[bend right=50]
          (obj3) to[bend right=50]
          (obj5) to[bend right=50]
          (obj7) to[bend left=50]
          (obj43);

    \draw (obj17) to [bend left=10] (obj38);

    \node[inner sep=0pt] at (0,0) {$\mapsto$};

    \polygonlabel{( 4,0)}{obj}{44}{2.5}
      {1,,2,,,,,,k+1,,,,,,,,,,b+1,,,,,,,,,,,,,,,,,,,,,\overline{d+1},,,,,}

    \draw[line width=2.5pt,black] (obj1) to[bend left=30] (obj40);
    \draw (obj3)  to[bend right=50]
          (obj5)  to[bend right=50]
          (obj7)  to[bend right=50]
          (obj9)  to[bend right=50]
          (obj19) to[bend left =30]
          (obj3);

    \node[inner sep=0pt] at (-8,-6.8) {$\mapsto$};

    \polygonlabel{(-4,-6.8)}{obj}{44}{2.5}
      {,,,,,,N-b+2,,,,,,N+k-b+1,,,,,,,,N,,\overline{1},,,,,,,,,,,,,,,,,,,,,,}

    \draw[line width=2.5pt,black] (obj13) to[bend right=50] (obj23);
    \draw (obj7)  to[bend right=50]
          (obj9)  to[bend right=50]
          (obj11)  to[bend right=50]
          (obj13)  to[bend right=50]
          (obj23) to[bend left =30]
          (obj7);

    \node[inner sep=0pt] at (0,-6.8) {$\mapsto$};

    \polygonlabel{( 4,-6.8)}{obj}{44}{2.5}
      {,,,,,,,,,,\hspace{-25pt}\raise20pt\hbox{$\scriptstyle N-b+c+1$},,,,,,N+k-b+c,,,,N,,\overline{1},,{\quad },,\overline{c},,,,,,,,,,,,,,,,,}

    \draw[line width=2.5pt,black] (obj21) to[bend right=50] (obj23);
    \draw (obj11)  to[bend right=50]
          (obj13)  to[bend right=50]
          (obj15)  to[bend right=50]
          (obj17)  to[bend right=50]
          (obj27) to[bend left =30]
          (obj11);

    \draw (obj21) to[bend right=50]
          (obj23);
    \draw (obj25) to[bend left =50]
          (obj21);
    \draw[dashed] (obj23) to[bend right=50]
          (obj25);

    \node[inner sep=0pt] at (-8,-13.6) {$\mapsto$};

    \polygonlabel{(-4,-13.6)}{obj}{44}{2.5}
      {,,,,,,,,,,,,N-b+c+2,,,,,,\hspace*{30pt}N+k-b+c+1,,N,,\overline{1},,\overline{2},,{\quad },,\overline{c+1},,,,,,,,,,,,,,}

    \draw[line width=2.5pt,black] (obj19) to[bend right=50] (obj23);
    \draw (obj13) to[bend right=50]
          (obj15) to[bend right=50]
          (obj17) to[bend right=50]
          (obj19) to[bend right=50]
          (obj23) to[bend left =30]
          (obj13);

    \draw[dashed] (obj25) to[bend right=50]
          (obj27);
    \draw (obj27) to[bend right=50]
          (obj29) to[bend left =50]
          (obj25);

  \node[inner sep=0pt] at (0,-13.6) {$\mapsto$};

  \polygonlabel{( 4,-13.6)}{obj}{44}{2.5}
      {,,,,,,,,,,,,,,N-k+1,,,,,,N,,\overline{1},,,,,,,,,,,,,,,,,,,,,}

    \draw[line width=2.5pt,black] (obj21) to[bend right=50] (obj23);
    \draw (obj15)  to[bend right=50]
          (obj17)  to[bend right=50]
          (obj19)  to[bend right=50]
          (obj21)  to[bend right=50]
          (obj23) to[bend left =30]
          (obj15);

\end{tikzpicture}
\end{center}
\caption{Pseudo-rotation of a block of successive elements}
\label{fig:32a}
\end{figure}

\medskip\noindent
{\sc Step 2.} Let $\{\overline N,1,2,\dots,k\}$ be a block of~$\pi$. 
There are two cases depending on whether or not there is a block
other than the above block and its ``companion''
$\{N,\overline 1,\overline 2,\dots,\overline k\}$  
containing both positive and negative elements. They are illustrated in
\Cref{fig:32a} respectively in \Cref{fig:32b}.

Let us start with the case where there is such an additional block
containing both positive and negative elements; see the first image in
\Cref{fig:32a}. 
The first application of~$\rotB$ maps our block to
$\{2,\dots,k+1,b+1\}$, for a suitable positive integer~$b$; see
the second image in \Cref{fig:32a}. Each of the next $N-b$ applications
of~$\rotB$ rotates this block by one unit, so that we obtain
$\{N-b+2,\dots,N-b+k+1,\overline1\}$. See the third image
in \Cref{fig:32a} for the resulting non-crossing partition. 

There will be some further applications of~$\rotB$ which rotate
this block ordinarily, until a situation is reached which is indicated
in the fourth image of \Cref{fig:32a} (which assumes that $c-1$ applications
of~$\rotB$ have taken place, with $c\ge2$): there is a block
containing~$N$ and otherwise only negative elements, and
the connection between $\overline c$ and $N+k-b+c$ is the closest to
this block which is between a negative and a positive element.
The next application of~$\rotB$ then turns our
block of interest into $\{N-b+c+2,\dots,N+k-b+c+1,\overline{1}\}$;
see the fifth image in \Cref{fig:32a}.

Comparison with the third image in \Cref{fig:32a} shows that, except
for the location of the maximal sequence of 
(cyclically) successive elements, we are in the
same situation again. In other words, we may now run through the
situations illustrated by the third, fourth, and fifth image in
\Cref{fig:32a} several times, until our maximal sequence of successive
elements eventually reaches~$N$ and thereby joins with~$\overline1$ to form
the block $\{N-k+1,\dots,N,\overline1\}$; see
the sixth image in \Cref{fig:32a}. 

Finally, another $k-1$ applications of~$\rotB$ rotate this block
to $\{N,\overline1,\overline2,\dots,\overline k\}$, the negative of the
block with which we started. Thus, we are back
to the situation illustrated by the first image in \Cref{fig:32a}. 

It can be checked that, in total, we performed $N-1$ applications of~$\rotB$. 

\begin{figure}
\begin{center}
    \begin{tikzpicture}[scale=1]
    \polygonlabel{(-4,0)}{obj}{44}{2.5}
      {1,,,,,,k,,,,,,,,,,,,,,N,,\overline{1},,,,,,\overline{k},,,,,,,,,,,,,,\overline{N},}

    \draw[line width=2.5pt,black] (obj1) to[bend left=50] (obj43);
    \draw[line width=2.5pt,black] (obj21) to[bend right=50] (obj23);
    \draw (obj1) to[bend right=50]
          (obj3) to[bend right=50]
          (obj5) to[bend right=50]
          (obj7) to[bend left=50]
          (obj43);

    \draw (obj21) to[bend right=50]
          (obj23) to[bend right=50]
          (obj25) to[bend right=50]
          (obj27) to[bend right=50]
          (obj29) to[bend left=50]
          (obj21);

    \node[inner sep=0pt] at (0,0) {$\mapsto$};

    \polygonlabel{( 4,0)}{obj}{44}{2.5}
      {1,,2,,,,,,k+1,,,,,,,,,,,,N,,\overline{1},,\overline{2},,,,,,\overline{k+1},,,,,,,,,,,,\overline{N},}

    \draw[line width=2.5pt,black] (obj1) to[bend left=10] (obj31);
    \draw[line width=2.5pt,black] (obj23) to[bend left=10] (obj9);
    \draw (obj23) to[bend right=10]
          (obj3) to[bend right=50]
          (obj5) to[bend right=50]
          (obj7) to[bend right=50]
          (obj9);

    \draw (obj1) to[bend right=10]
          (obj25) to[bend right=50]
          (obj27) to[bend right=50]
          (obj29) to[bend right=50]
          (obj31);

\end{tikzpicture}
\end{center}
\caption{Pseudo-rotation of a block of successive elements}
\label{fig:32b}
\end{figure}

\medskip
The second case, namely when there is no additional block containing
both positive and negative elements, can be quickly reduced to the
previous case; see \Cref{fig:32b}. Namely, the first application
of~$\rotB$ maps our block to $\{2,3,\dots,k+1,\overline1\}$.
This is the same situation as the one which we encountered in the
third image in \Cref{fig:32a} (namely the one with $b=N$). Thus,
the subsequent arguments from the previous case apply again and
show that also here $N-1$ applications of~$\rotB$ map
the pair of blocks $\{\overline N,1,\dots,k\}$ and
$\{N,\overline1,\dots,\overline k\}$ to itself.

\begin{figure}
\begin{center}
    \begin{tikzpicture}[scale=1]
    \polygonlabel{(-4,0)}{obj}{44}{2.5}
      {1,,,k,,,,a+1,,,,a+l,,,,,b,,,,,,,,,,,,,,,,,,,,,\overline{d},,,,,,\overline{N}}

    \draw[line width=2.5pt,black] (obj1) to[bend left=50] (obj44);
    \draw (obj1) to[bend right=50]
          (obj2) to[bend right=50]
          (obj3) to[bend right=50]
          (obj4);
    \draw (obj8) to[bend right=50]
          (obj9) to[bend right=50]
          (obj10) to[bend right=50]
          (obj11) to[bend right=50]
          (obj12);

    \draw[dash pattern=on 2pt off 2pt on 2pt off 2pt on 2pt off 2pt on
2pt off 18pt on 2pt off 2pt on 2pt off 2pt] (obj4) to[bend right=50]
(obj8);

    \draw (obj12) to [bend left=10] (obj44);
    \draw (obj17) to [bend left=10] (obj38);

    \node[inner sep=0pt] at (0,0) {$\mapsto$};

    \polygonlabel{( 4,0)}{obj}{44}{2.5}
      {1,2,,,k+1,,,,a+2,,,,a+l+1,,,,,b+1,,,,,,,,,,,,,,,,,,,,,\overline{d+1},,,,,\overline{N}}

    \draw[line width=2.5pt,black] (obj1) to[bend left=50] (obj39);
    \draw (obj2) to[bend right=50]
          (obj3) to[bend right=50]
          (obj4) to[bend right=50]
          (obj5);
    \draw (obj9) to[bend right=50]
          (obj10) to[bend right=50]
          (obj11) to[bend right=50]
          (obj12) to[bend right=50]
          (obj13) to[bend right=50]
          (obj18);

    \draw[dash pattern=on 2pt off 2pt on 2pt off 2pt on 2pt off 2pt on
2pt off 18pt on 2pt off 2pt on 2pt off 2pt] (obj5) to[bend right=50]
(obj9);

    \draw (obj2) to [bend right=10] (obj18);
    \draw (obj1) to [bend left=50] (obj39);

    \node[inner sep=0pt] at (-8,-6.78) {$\mapsto$};

    \polygonlabel{(-4,-6.78)}{obj}{44}{2.5}
      {,,,,,,N-b+2,,,N-b+k+1,,,,N-b+a+2,,,,\hspace{20pt}N-b+a+l+1,,,,N,\overline{1},,,,,,,,,,,,,,,,,,,,,}

    \draw[line width=2.5pt,black] (obj18) to[bend right=50] (obj23);
    \draw (obj7) to[bend right=50]
          (obj8) to[bend right=50]
          (obj9) to[bend right=50]
          (obj10);
    \draw (obj14) to[bend right=50]
          (obj15) to[bend right=50]
          (obj16) to[bend right=50]
          (obj17) to[bend right=50]
          (obj18) to[bend right=50]
          (obj23);

    \draw[dash pattern=on 2pt off 2pt on 2pt off 2pt on 2pt off 2pt on
2pt off 18pt on 2pt off 2pt on 2pt off 2pt] (obj10) to[bend right=50]
(obj14);

    \draw (obj7) to [bend right=10] (obj23);
%     \draw (obj6) to [bend left=50] (obj44);

    \node[inner sep=0pt] at (0,-6.78) {$\mapsto$};

    \polygonlabel{( 4,-6.78)}{obj}{44}{2.5}
      {,,,,,,,,\hspace{-25pt}\raise15pt\hbox{$\scriptstyle N-b+c+1$},,,\hspace{-25pt}\raise20pt\hbox{$\scriptstyle N-b+k+c$},,,,\hspace{12pt}N-b+a+c+1,,,,\hspace{20pt}N-b+a+l+c,,N,\overline{1},\hspace{1pt},\overline{c},,,,,,,,,,,,,,,,,,,}

    \draw[line width=2.5pt,black] (obj22) to[bend right=50] (obj23);
    \draw (obj9) to[bend right=50]
          (obj10) to[bend right=50]
          (obj11) to[bend right=50]
          (obj12);
    \draw (obj16) to[bend right=50]
          (obj17) to[bend right=50]
          (obj18) to[bend right=50]
          (obj19) to[bend right=50]
          (obj20) to[bend right=50]
          (obj25);

    \draw (obj22) to[bend right=50]
          (obj23);
    \draw[dotted] (obj23) to[bend right=50]
          (obj24);
    \draw (obj24) to[bend left=50]
          (obj22);

    \draw[dash pattern=on 2pt off 2pt on 2pt off 2pt on 2pt off 2pt on
2pt off 18pt on 2pt off 2pt on 2pt off 2pt] (obj12) to[bend right=50]
(obj16);

    \draw (obj9) to [bend right=10] (obj25);

    \node[inner sep=0pt] at (-8,-13.56) {$\mapsto$};

    \polygonlabel{(-4,-13.56)}{obj}{44}{2.5}
      {,,,,,,,,,N-b+c+2,,,N-b+k+c+1,,,,\hspace{20pt}N-b+a+c+2,,,,\hspace{40pt}N-b+a+l+c+1,N,\overline{1},,,,,,,,,,,,,,,,,,,,,}

    \draw[line width=2.5pt,black] (obj21) to[bend right=50] (obj23);
    \draw (obj10) to[bend right=50]
          (obj11) to[bend right=50]
          (obj12) to[bend right=50]
          (obj13);
    \draw (obj17) to[bend right=50]
          (obj18) to[bend right=50]
          (obj19) to[bend right=50]
          (obj20) to[bend right=50]
          (obj21) to[bend right=50]
          (obj23);

    \draw[dash pattern=on 2pt off 2pt on 2pt off 2pt on 2pt off 2pt on
2pt off 18pt on 2pt off 2pt on 2pt off 2pt] (obj13) to[bend right=50]
(obj17);

    \draw (obj10) to [bend right=10] (obj23);

  \node[inner sep=0pt] at (0,-13.56) {$\mapsto$};

  \polygonlabel{( 4,-13.56)}{obj}{44}{2.5}
      {,,,,,,,,,,\hspace{-25pt}\raise20pt\hbox{$\scriptstyle N-a-l+1$},,,\hspace{-5pt}N-a-l+k,,,,N-l+1,,,,N,\overline{1},,,,,,,,,,,,,,,,,,,,,}

    \draw[line width=2.5pt,black] (obj22) to[bend right=50] (obj23);
    \draw (obj11) to[bend right=50]
          (obj12) to[bend right=50]
          (obj13) to[bend right=50]
          (obj14);
    \draw (obj18) to[bend right=50]
          (obj19) to[bend right=50]
          (obj20) to[bend right=50]
          (obj21) to[bend right=50]
          (obj22) to[bend right=50]
          (obj23);

    \draw[dash pattern=on 2pt off 2pt on 2pt off 2pt on 2pt off 2pt on
2pt off 18pt on 2pt off 2pt on 2pt off 2pt] (obj14) to[bend right=50]
(obj18);

    \draw (obj11) to [bend right=10] (obj23);

\end{tikzpicture}
\end{center}
\caption{Pseudo-rotation of a block}
\label{fig:33a}
\end{figure}

\begin{figure}
\begin{center}
    \begin{tikzpicture}[scale=1]
    \polygonlabel{(-4,0)}{obj}{44}{2.5}
      {,,,,,,,,,,,,,,N-a,,,N-a+k-1,,,,N,\overline{1},,,,\overline{l},,,,,,,,,,,,,,,,,}

    \draw[line width=2.5pt,black] (obj22) to[bend right=50] (obj23);
    \draw (obj15) to[bend right=50]
          (obj16) to[bend right=50]
          (obj17) to[bend right=50]
          (obj18);
    \draw (obj22) to[bend right=50]
          (obj23) to[bend right=50]
          (obj24) to[bend right=50]
          (obj25) to[bend right=50]
          (obj26) to[bend right=50]
          (obj27);

    \draw[dash pattern=on 2pt off 2pt on 2pt off 2pt on 2pt off 2pt on
2pt off 18pt on 2pt off 2pt on 2pt off 2pt] (obj18) to[bend right=50]
(obj22);

    \draw (obj15) to [bend right=10] (obj27);

    \node[inner sep=0pt] at (0,0) {$\mapsto$};

    \polygonlabel{( 4,0)}{obj}{44}{2.5}
      {,,,,,,,,,,,,,,,,,N-a+e,,,\hspace{40pt}N-a+k-1+e,N,\overline{1},\hspace*{1pt},\overline{e},,,,,\overline{l+e},,,,,,,,,,,,,,}

    \draw[line width=2.5pt,black] (obj22) to[bend right=50] (obj23);
    \draw (obj18) to[bend right=50]
          (obj19) to[bend right=50]
          (obj20) to[bend right=50]
          (obj21);
    \draw (obj25) to[bend right=50]
          (obj26) to[bend right=50]
          (obj27) to[bend right=50]
          (obj28) to[bend right=50]
          (obj29) to[bend right=50]
          (obj30);

    \draw[dash pattern=on 2pt off 2pt on 2pt off 2pt on 2pt off 2pt on
2pt off 18pt on 2pt off 2pt on 2pt off 2pt] (obj21) to[bend right=50]
(obj25);

    \draw (obj18) to [bend right=10] (obj30);
    \draw[dotted] (obj23) to[bend right=50]
          (obj24);
    \draw (obj24) to[bend left=50]
          (obj22);

    \node[inner sep=0pt] at (-8,-7) {$\mapsto$};

    \polygonlabel{(-4,-7)}{obj}{44}{2.5}
      {,,,,,,,,,,,,,,,,,N-a+e+1,,,\hspace{30pt}N-a+k+e,N,\overline{1},,,,\overline{e+2},,,,\overline{l+e+1},,,,,,,,,,,,,}

%     \draw[line width=2.5pt,black] (obj22) to[bend right=50] (obj23);
    \draw (obj18) to[bend right=50]
          (obj19) to[bend right=50]
          (obj20) to[bend right=50]
          (obj21);
    \draw (obj23) to[bend right=50]
          (obj27) to[bend right=50]
          (obj28) to[bend right=50]
          (obj29) to[bend right=50]
          (obj30) to[bend right=50]
          (obj31);
    \draw (obj18) to[bend right=20] (obj31);

    \draw[dash pattern=on 2pt off 2pt on 2pt off 2pt on 2pt off 2pt on
2pt off 20pt] (obj21) to[bend right=50,looseness=3] (obj23);
    \draw[line width=2.5pt,dash pattern=on 2pt off 2pt on 2pt off 2pt
on 2pt off 2pt on 2pt off 20pt] (obj23) to[bend left=50,looseness=3]
(obj21);

\end{tikzpicture}
\end{center}
\caption{Pseudo-rotation of a block}
\label{fig:33b}
\end{figure}

\medskip\noindent
{\sc Step 3.} Now we consider the case of a block that contains
more than one maximal sequence of successive elements.
Let the block contain $\overline N,1,2,\dots,k$ but not $\overline{N-1}$.
In principle, we distinguish between several cases depending on
whether further maximal sequences of successive elements contain positive
or negative elements. We shall see, however, that it suffices to look
at the following situation.

Let the maximal sequence of successive elements
(cyclically) preceding~$\overline N$ be $a+1,\dots,\break a+l$ (consisting of positive 
numbers). The first image in \Cref{fig:33a} provides an illustration.

The first application of~$\rotB$ maps this block to
$\{2,\dots,k+1,*,a+2,\dots,a+l+1,b+1\}$; see the second image
in \Cref{fig:33a}. (We tacitly include the Case~(2) of the map~$\rotB$
by the choice of~$b=N$ and the identification $b+1=N+1\equiv \overline1$.)
Here, as in the proof of \Cref{lem:N-2}, the symbol $*$ indicates
further possible elements in the block lying between $k+1$ and $a+2$.

The next applications of $\rotB$ act as ordinary
rotations on our block. Thereby, the element~$b+1$ will be rotated
into $\overline1$ (cf.\ the third image in \Cref{fig:33a}) and
eventually into the situation illustrated in the fourth image in
\Cref{fig:33a}. Here, the connection between $\overline c$ and
$N-b+a+l+c$ is supposed to be the closest to the block containing
$N$ and $\overline1$ among the blocks containing a positive and
a negative element. 

The next application of $\rotB$ brings us into the situation
illustrated in the fifth image in \Cref{fig:33a}. Comparison with
the third image shows that we shall now (possibly) revisit such
situations several times until our maximal sequence of $l$~successive 
elements reaches~$N$ and joins with~$\overline1$. See the sixth image
in \Cref{fig:33a}. 

Further applications of $\rotB$ map our block to
$\{N-a,\dots,N-a+k-1,*,N,\overline1,\dots,\overline l\}$; see
the first image in \Cref{fig:33b}.

\medskip
At this point, we should pause for a moment and make an
intermediate summary of what we achieved so far: we started with a
block containing two (or more) maximal sequences of
(cyclically) successive numbers,
one maximal sequence starting with $\overline N,1,\dots$, the one
preceding $\overline N$ being one consisting entirely of positive
numbers; see the first image in \Cref{fig:33a}. If we regard the
first image in \Cref{fig:33b}, and take into account that also the
``negative'' of the considered block is a block in the type~$B$
non-crossing partition, then we have now obtained a block with
one maximal sequence of successive elements starting with
$\overline N,1,\dots$, however with the preceding maximal sequence
of successive elements consisting entirely of \defn{negative} elements.
Thus, if we continue our analysis, then we will also have covered that
case.

\medskip
If we now continue applying the pseudo-rotation $\rotB$, then
we know what is going to happen: a few applications will act as ordinary
rotation by one unit on our block, until we reach the situation where
the connection between the maximal sequence of successive negative
numbers (the ``leading'' maximal sequence) 
and the preceding maximal sequence of successive numbers will be
next to a block containing $N,\overline1$, and otherwise only negative
numbers; see the second image in \Cref{fig:33b}. The next application
of~$\rotB$ makes the ``leading'' maximal sequence lose its
last element, connects what remains from it to $\overline1$, which
itself is connected to the ``next'' maximal sequence; see the third
image in \Cref{fig:33b}. Further applications of~$\rotB$ let
one potentially run through situations illustrated in the second and
third images in \Cref{fig:33b} several times, until the ``next'' maximal
sequence reaches $N$, thereby also joining with $\overline1$.
Thus, we have again reached a situation as in the first image in
\Cref{fig:33b}, with now two maximal sequences of successive numbers
on the negative side. 

\medskip
We may now summarise: whenever a maximal sequence of (cyclically)
successive elements in a block is in the position $\{\overline N,1,\dots,k\}$, 
then, when $\rotB$ is applied, it gets rotated by one unit,
it ``loses'' its last element, and in turn it ``gains'' an element in 
front of it (cf.\ the first and second images in \Cref{fig:33a}).
Subsequent applications of~$\rotB$ act as ordinary rotations
on this maximal sequence plus ``front runner'', until the front runner
moves beyond~$\overline1$. Upon further applications of~$\rotB$,
the ``front runner'' is eventually lost and replaced by~$\overline1$,
and this may happen several times (cf.\ the third, fourth, and fifth
images in \Cref{fig:33a}), until our maximal sequence has reached~$N$ and
thereby joins with $\overline1$. 

The short (and rough) version of this process is 
that our maximal sequence of successive elements gets rotated by one
unit by each application of~$\rotB$, and, on the way, ``loses''
its last element and eventually ``gains'' the element in front of it.
This implies that after $N-1$ applications of~$\rotB$ a block
is mapped to its ``mirror image'' consisting of all negatives of its
elements. Since we started with a type~$B$ non-crossing partition,
this shows that $\rotB^{N-1}$ acts as the identity on the pair
consisting of the block and its ``mirror image".

\medskip\noindent
{\sc Step 4.} So far, we have shown that, for a non-crossing partition~$\pi$
without singleton block, a block containing a maximal sequence of
(cyclically) successive elements $\overline N,1,\dots,k$ is mapped to
its negative or to itself (if it is a zero block) by~$\rotB^{N-1}$.
If we now consider any other block,
then we may apply $\rotB$ as many times as it is necessary
to move that block into a position where it contains a (cyclically) maximal
sequence of successive elements beginning with $\overline N$ and $1$. Then the
above arguments apply. This shows that also any other block is mapped to its
negative or to itself by~$N-1$ applications of~$\rotB$. 

Since, in Step~1, we have shown that it suffices to prove the
assertion for non-crossing partitions without singleton blocks, this
completes the proof of the theorem.
\qed

\subsection{The order of the positive Kreweras map in
  type~$D_n$: Proof of \Cref{lem:N-2D}}
\label{app:order-D}

As earlier, it is not difficult to see that it suffices
to prove the assertion of the theorem for non-crossing partitions
without singleton blocks.
So, let $\pi$ be a partition in $\mNCDPlus$ without singleton blocks.
We distinguish between the following three cases:

{
\medskip
\hangindent2\parindent\hangafter1
{\sc Case 1.} $\pi$ contains a zero block;

\hangindent2\parindent\hangafter1
{\sc Case 2.} $\pi$ contains no zero block and no bridging block, so that
the inner circle is split into two blocks;

\hangindent2\parindent\hangafter1
{\sc Case 3.} $\pi$ contains at least one bridging block.
\par
}

\medskip
We now discuss each case in more detail.

\medskip\noindent
{\sc Case 1.} When we apply $\rotD$ repeatedly
to a partition~$\pi$ in $\mNCDPlus$
with a zero block, then it is always Case~(1) of
\Cref{prop:2D} which occurs. If we ignore the elements of~$\pi$ on
the inner circle, then $\pi$ reduces to a partition $\hat\pi$ in
$\mNCBPlus[n-1]$, and the action of~$\rotD$ on~$\pi$ behaves like the action
of~$\rotB$ on $\hat\pi$ as described in Case~(1) of \Cref{prop:2B}. 
\Cref{lem:N-2B} with $N=m(n-1)$ says that $\rotB^{m(n-1)-1}$ acts on
$\hat\pi$ as the identity. Consequently, $\rotD^{m(n-1)-1}$ acts on
$\pi$ as the identity. 

\medskip\noindent
{\sc Case 2.} When we apply $\rotD$ repeatedly
to a partition~$\pi$ in $\mNCDPlus$
without a zero block and without any bridging block, 
then it is the Cases~(1) and~(3) of
\Cref{prop:2D} which occur. If we ignore the elements of~$\pi$ on
the inner circle, then $\pi$ reduces to a partition $\hat\pi$ in
$\mNCBPlus[n-1]$, and the action of~$\rotD$ on~$\pi$ behaves like the action
of~$\rotB$ on $\hat\pi$ as described in \Cref{prop:2B}, with Case~(1)
corresponding to Case~(1) of \Cref{prop:2D}, and Case~(2)
corresponding to Case~(3) of \Cref{prop:2D}. 
Here again, \Cref{lem:N-2B} with $N=m(n-1)$ says that $\rotB^{m(n-1)-1}$ acts on
$\hat\pi$ as the identity, and hence $\rotD^{m(n-1)-1}$ acts on
$\pi$ as the identity. 

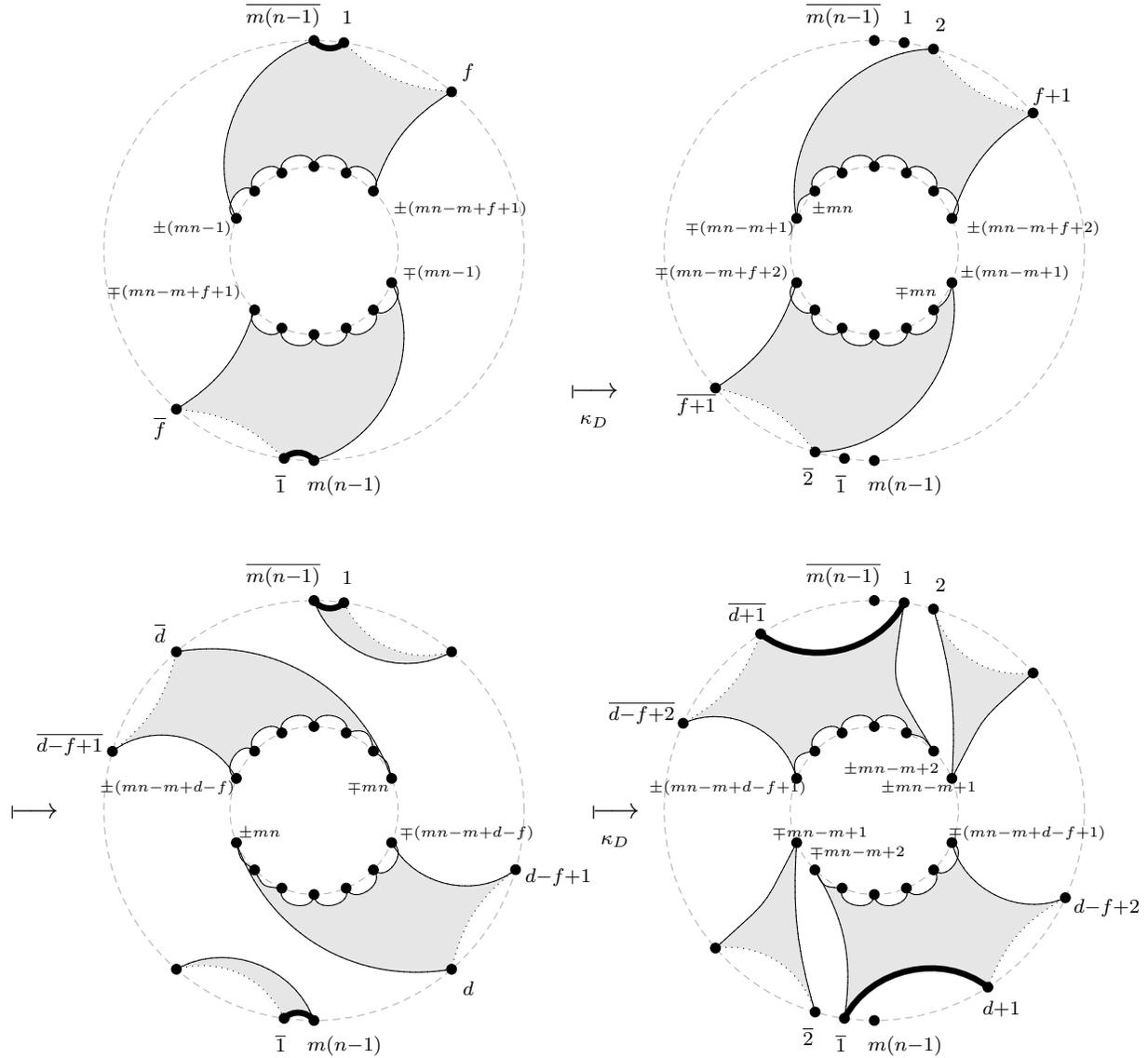
\begin{figure}
\begin{center}
  \begin{tikzpicture}[scale=1]
    \polygon{(-4,0)}{obj}{44}{3}
       {1,,,,f,,,,,,,,,,,,,,,,,\hspace*{25pt}m(n-1),\overline{1},,,,\overline{f},,,,,,,,,,,,,,,,,\overline{m(n-1)}\hspace*{25pt}}

    \polygoninner{(-4,0)}{objin}{16}{1.2}
      {\ ,\hspace*{85pt}\pm{(mn-m+f+1)},,,\hspace*{60pt}\mp(mn-1),\ ,\ ,\ ,\ ,\mp{(mn-m+f+1)}\hspace*{80pt},,,\pm{(mn-1)}\hspace*{55pt},\ ,\ ,\ }

    % thick line connecting 1 and \overline{mn}
    \draw[line width=2.5pt,black] (obj1) to[bend left=50] (obj44);
    \draw[line width=2.5pt,black] (obj23) to[bend left=50] (obj22);

    % dotted line connecting 1 and a

    % line connecting a and \overline{mn}
%    \draw (obj44) to[bend right=50] (obj5);
%    \draw (obj22) to[bend right=50] (obj27);

    % line connecting \overlin{d} - b - e - ...
%     \draw (objin11) to[bend left=40] (objin12) to[bend
%right=100,looseness=2] (objin5);
\draw[dotted, fill=black, fill opacity=0.1] (obj5) to[bend right=20]
    (objin2) to[bend right=100,looseness=1.5] (objin1)
    to[bend right=100,looseness=1.5] (objin16)
    to[bend right=100,looseness=1.5] (objin15)
    to[bend right=100,looseness=1.5] (objin14)
    to[bend right=100,looseness=1.5] (objin13)
    to[bend
left=50] (obj44) to[bend right=50] (obj1) to[bend right=20] (obj5);
     \draw[] (obj5) to[bend right=20] (objin2);
     \draw[] (objin13) to[bend left=50] (obj44);
     \draw[] (objin2) to[bend right=100,looseness=1.5] (objin1);
     \draw[] (objin1) to[bend right=100,looseness=1.5] (objin16);
     \draw[] (objin16) to[bend right=100,looseness=1.5] (objin15);
     \draw[] (objin15) to[bend right=100,looseness=1.5] (objin14);
     \draw[] (objin14) to[bend right=100,looseness=1.5] (objin13);
     \draw[dotted, fill=black, fill opacity=0.1] (obj27) to[bend
       right=20] (objin10)
     to[bend right=100,looseness=1.5] (objin9)
     to[bend right=100,looseness=1.5] (objin8)
     to[bend right=100,looseness=1.5] (objin7)
     to[bend right=100,looseness=1.5] (objin6)
     to[bend right=100,looseness=1.5] (objin5)
     to[bend
left=50] (obj22) to[bend right=50] (obj23) to[bend right=20] (obj27);
     \draw[] (obj27) to[bend right=20] (objin10);
     \draw[] (objin5) to[bend left=50] (obj22);
     \draw[] (objin10) to[bend right=100,looseness=1.5] (objin9);
     \draw[] (objin9) to[bend right=100,looseness=1.5] (objin8);
     \draw[] (objin8) to[bend right=100,looseness=1.5] (objin7);
     \draw[] (objin7) to[bend right=100,looseness=1.5] (objin6);
     \draw[] (objin6) to[bend right=100,looseness=1.5] (objin5);

  \newcommand{\longrightmapsto}{\shortmid\!\longrightarrow}
    \node[inner sep=0pt] (M3) at (0,-2) {$\longrightmapsto$};
    \node[inner sep=0pt, below=0.2 of M3] {$\scriptstyle{\rotD}$};

%XXX
    \polygon{(4,0)}{obj}{44}{3}
       {1,2,,,,f+1,,,,,,,,,,,,,,,,\hspace*{25pt}m(n-1),\overline{1},\overline{2},,,,\overline{f+1},,,,,,,,,,,,,,,,\overline{m(n-1)}\hspace*{25pt}}

    \polygoninner{(4,0)}{objin}{16}{1.2}
      {\ ,\ ,\hspace*{85pt}\pm{(mn-m+f+2)},,\hspace*{70pt}\pm(mn-m+1),\hspace*{0pt}\mp{mn},\ ,\ ,\ ,\ ,\mp{(mn-m+f+2)}\hspace*{80pt},,\mp(mn-m+1)\hspace*{65pt},\pm{mn}\hspace*{0pt},\ ,\ }

    % thick line connecting 1 and \overline{mn}
%    \draw[line width=2.5pt,black] (obj1) to[bend left=50] (obj44);
%    \draw[line width=2.5pt,black] (obj23) to[bend left=50] (obj22);

    % dotted line connecting 1 and a

    % line connecting a and \overline{mn}
%    \draw (obj44) to[bend right=50] (obj5);
%    \draw (obj22) to[bend right=50] (obj27);

    % line connecting \overlin{d} - b - e - ...
%     \draw (objin11) to[bend left=40] (objin12) to[bend
%right=100,looseness=2] (objin5);
     \draw[dotted, fill=black, fill opacity=0.1] (obj6) to[bend
       right=20] (objin3)
     to[bend right=100,looseness=1.5] (objin2)
     to[bend right=100,looseness=1.5] (objin1)
     to[bend right=100,looseness=1.5] (objin16)
     to[bend right=100,looseness=1.5] (objin15)
     to[bend right=100,looseness=1.5] (objin14)
     to[bend right=30,looseness=1.5] (objin13) to[bend
left=50] (obj2) to[bend right=20] (obj6);
     \draw[] (obj6) to[bend right=20] (objin3);
     \draw[] (objin13) to[bend left=50] (obj2);
     \draw[] (objin14) to[bend right=30,looseness=1.5] (objin13);
     \draw[] (objin15) to[bend right=100,looseness=1.5] (objin14);
     \draw[] (objin16) to[bend right=100,looseness=1.5] (objin15);
     \draw[] (objin1) to[bend right=100,looseness=1.5] (objin16);
     \draw[] (objin2) to[bend right=100,looseness=1.5] (objin1);
     \draw[] (objin3) to[bend right=100,looseness=1.5] (objin2);
     \draw[dotted, fill=black, fill opacity=0.1] (obj28) to[bend
       right=20] (objin11)
     to[bend right=100,looseness=1.5] (objin10)
     to[bend right=100,looseness=1.5] (objin9)
     to[bend right=100,looseness=1.5] (objin8)
     to[bend right=100,looseness=1.5] (objin7)
     to[bend right=100,looseness=1.5] (objin6)
     to[bend
right=30] (objin5) to[bend left=50] (obj24) to[bend right=20] (obj28);
     \draw[] (obj28) to[bend right=20] (objin11);
     \draw[] (objin5) to[bend left=50] (obj24);
     \draw[] (objin6) to[bend right=30] (objin5);
     \draw[] (objin7) to[bend right=100,looseness=1.5] (objin6);
     \draw[] (objin8) to[bend right=100,looseness=1.5] (objin7);
     \draw[] (objin9) to[bend right=100,looseness=1.5] (objin8);
     \draw[] (objin10) to[bend right=100,looseness=1.5] (objin9);
     \draw[] (objin11) to[bend right=100,looseness=1.5] (objin10);

    \node[inner sep=0pt] (M3) at (-8.0,-8) {$\longrightmapsto$};
%    \node[inner sep=0pt, below=0.2 of M3] {$\scriptstyle{\rotD}$};

%YYY
    \polygon{(-4,-8)}{obj}{44}{3}
       {1,,,,\ ,,,,,,,,\hspace*{15pt}d-f+1,,,,d,,,,,\hspace*{25pt}m(n-1),\overline{1},,,,\ ,,,,,,,,\overline{d-f+1}\hspace*{15pt},,,,\overline{d},,,,,\overline{m(n-1)}\hspace*{25pt}}

    \polygoninner{(-4,-8)}{objin}{16}{1.2}
      {\ ,\ ,\hspace*{0pt}\mp{mn},,\hspace*{80pt}\mp(mn-m+d-f),\ ,\ ,\ ,\ ,\ ,\pm{mn}\hspace*{00pt},,\pm(mn-m+d-f)\hspace*{75pt},\ ,\ ,\ }

    % thick line connecting 1 and \overline{mn}
%    \draw[line width=2.5pt,black] (obj1) to[bend left=50] (obj44);
%    \draw[line width=2.5pt,black] (obj23) to[bend left=50] (obj22);

    % dotted line connecting 1 and a

    % line connecting a and \overline{mn}
    \draw[line width=2.5pt,black] (obj1) to[bend left=50] (obj44);
    \draw[line width=2.5pt,black] (obj23) to[bend left=50] (obj22);
    \draw (obj44) to[bend right=50] (obj5);
    \draw (obj22) to[bend right=50] (obj27);

    % line connecting \overlin{d} - b - e - ...
%     \draw (objin11) to[bend left=40] (objin12) to[bend
%right=100,looseness=2] (objin5);
     \draw[dotted, fill=black, fill opacity=0.1] (obj17) to[bend
       left=40] (objin11)
     to[bend right=20,looseness=1.5] (objin10)
     to[bend right=30,looseness=1.5] (objin9)
     to[bend right=100,looseness=1.5] (objin8)
     to[bend right=100,looseness=1.5] (objin7)
     to[bend right=100,looseness=1.5] (objin6)
     to[bend right=100,looseness=1.5] (objin5) to[bend
right=50] (obj13) to[bend right=20] (obj17);
     \draw[dotted, fill=black, fill opacity=0.1] (obj1) to[bend
       right=40] (obj5) to[bend left=50] (obj44)
     to[bend right=40] (obj1);
     \draw[] (obj17) to[bend left=40] (objin11);
     \draw[] (objin5) to[bend right=50] (obj13);
     \draw[] (objin6) to[bend right=100,looseness=1.5] (objin5);
     \draw[] (objin7) to[bend right=100,looseness=1.5] (objin6);
     \draw[] (objin8) to[bend right=100,looseness=1.5] (objin7);
     \draw[] (objin9) to[bend right=100,looseness=1.5] (objin8);
     \draw[] (objin10) to[bend right=30,looseness=1.5] (objin9);
     \draw[] (objin11) to[bend right=20,looseness=1.5] (objin10);
     \draw[dotted, fill=black, fill opacity=0.1] (obj39) to[bend
       left=40] (objin3)
     to[bend right=20,looseness=1.5] (objin2)
     to[bend right=50,looseness=1.5] (objin1)
     to[bend right=100,looseness=1.5] (objin16)
     to[bend right=100,looseness=1.5] (objin15)
     to[bend right=100,looseness=1.5] (objin14)
     to[bend right=100,looseness=1.5] (objin13)
     to[bend right=50] (obj35) to[bend right=20] (obj39);
     \draw[dotted, fill=black, fill opacity=0.1] (obj23) to[bend
       right=40] (obj27) to[bend left=50] (obj22)
     to[bend right=40] (obj23);
     \draw[] (obj39) to[bend left=40] (objin3);
     \draw[] (objin13) to[bend right=50] (obj35);
     \draw[] (objin14) to[bend right=100,looseness=1.5] (objin13);
     \draw[] (objin15) to[bend right=100,looseness=1.5] (objin14);
     \draw[] (objin16) to[bend right=100,looseness=1.5] (objin15);
     \draw[] (objin1) to[bend right=100,looseness=1.5] (objin16);
     \draw[] (objin2) to[bend right=50,looseness=1.5] (objin1);
     \draw[] (objin3) to[bend right=20,looseness=1.5] (objin2);

    \node[inner sep=0pt] (M3) at (.3,-8) {$\longrightmapsto$};
    \node[inner sep=0pt, below=0.2 of M3] {$\scriptstyle{\rotD}$};

    %ZZZ
    \polygon{(4,-8)}{obj}{44}{3}
       {1,2,,,,\ ,,,,,,,,\hspace*{15pt}d-f+2,,,,d+1,,,,\hspace*{25pt}m(n-1),\overline{1},\overline{2},,,,\ ,,,,,,,,\overline{d-f+2}\hspace*{15pt},,,,\overline{d+1},,,,\overline{m(n-1)}\hspace*{25pt}}

    \polygoninner{(4,-8)}{objin}{16}{1.2}
      {\ ,\pm{mn-m+2}\hspace*{20pt},\hspace*{0pt}\pm{mn-m+1},,\hspace*{80pt}\mp(mn-m+d-f+1),\ ,\ ,\ ,\ ,\hspace*{20pt}\mp{mn-m+2},\mp{mn-m+1}\hspace*{00pt},,\pm(mn-m+d-f+1)\hspace*{75pt},\ ,\ ,\ }

    % thick line connecting 1 and \overline{mn}
%    \draw[line width=2.5pt,black] (obj1) to[bend left=50] (obj44);
%    \draw[line width=2.5pt,black] (obj23) to[bend left=50] (obj22);

    % dotted line connecting 1 and a

    % line connecting a and \overline{mn}
    \draw[line width=2.5pt,black] (obj1) to[bend left=50] (obj40);
    \draw[line width=2.5pt,black] (obj23) to[bend left=50] (obj18);
    \draw (obj2) to[bend left=10] (objin3);
    \draw (obj6) to[bend right=10,looseness=1.5] (objin3);
    \draw (obj24) to[bend left=10] (objin11);
    \draw (obj28) to[bend right=10,looseness=1.5] (objin11);

    % line connecting \overlin{d} - b - e - ...
%     \draw (objin11) to[bend left=40] (objin12) to[bend
%right=100,looseness=2] (objin5);
     \draw[dotted, fill=black, fill opacity=0.1] (obj23) to[bend
       right=20] (objin10)
     to[bend right=30,looseness=1.5] (objin9)
     to[bend right=100,looseness=1.5] (objin8)
     to[bend right=100,looseness=1.5] (objin7)
     to[bend right=100,looseness=1.5] (objin6)
     to[bend right=100,looseness=1.5] (objin5) to[bend
       right=50] (obj14) to[bend right=20] (obj18)
     to[bend right=50] (obj23);
     \draw[dotted, fill=black, fill opacity=0.1] (obj2) to[bend
       right=40] (obj6)
          to[bend right=10,looseness=1.5] (objin3)
     to[bend right=10] (obj2);
     \draw[] (obj23) to[bend right=20] (objin10);
     \draw[] (objin5) to[bend right=50] (obj14);
     \draw[] (objin6) to[bend right=100,looseness=1.5] (objin5);
     \draw[] (objin7) to[bend right=100,looseness=1.5] (objin6);
     \draw[] (objin8) to[bend right=100,looseness=1.5] (objin7);
     \draw[] (objin9) to[bend right=100,looseness=1.5] (objin8);
     \draw[] (objin10) to[bend right=30,looseness=1.5] (objin9);
%     \draw[] (objin11) to[bend right=20,looseness=1.5] (objin10);
     \draw[dotted, fill=black, fill opacity=0.1] (obj40) to[bend
       right=50] (obj1)
     to[bend right=20,looseness=1.5] (objin2)
     to[bend right=20,looseness=1.5] (objin1)
     to[bend right=100,looseness=1.5] (objin16)
     to[bend right=100,looseness=1.5] (objin15)
     to[bend right=100,looseness=1.5] (objin14)
     to[bend right=50,looseness=1.5] (objin13)
     to[bend right=50] (obj36) to[bend right=20] (obj40);
     \draw[dotted, fill=black, fill opacity=0.1] (obj24) to[bend
       right=40] (obj28)
               to[bend right=10,looseness=1.5] (objin11)
     to[bend right=10] (obj24);
     \draw[] (obj1) to[bend right=20,looseness=1.5] (objin2);
     \draw[] (objin13) to[bend right=50] (obj36);
     \draw[] (objin14) to[bend right=50,looseness=1.5] (objin13);
     \draw[] (objin15) to[bend right=100,looseness=1.5] (objin14);
     \draw[] (objin16) to[bend right=100,looseness=1.5] (objin15);
     \draw[] (objin1) to[bend right=100,looseness=1.5] (objin16);
     \draw[] (objin2) to[bend right=20,looseness=1.5] (objin1);
%     \draw[] (objin3) to[bend right=20,looseness=1.5] (objin2);

    \end{tikzpicture}
\end{center}
\caption{Movements of a bridging block under the pseudo-rotation $\rotD$}
\label{fig:B3}
\end{figure}

\medskip\noindent
{\sc Case 3.} 
Let $\pi$ be an element of $\mNCDPlus$ with a bridging block. 
By repeated application of~$\rotD$, we may move this block into a
position where its ``first" element on the outer circle (in clockwise
direction) is $\overline{m(n-1)}$, its next element is $1$, and its
``last" element on the outer circle is $f$, say. The successor of~$f$
in the block then is $\pm(mn-m+f+1)$,
an element of the inner circle (to achieve this,
$mn-m+f+1$ might have to be reduced modulo~$m$),
and the predecessor of $\overline{m(n-1)}$ is $\pm(mn-1)$, also an
element of the inner circle. Let us call this block~$B$, that is,
$B=\{\overline{m(n-1)},1,*,f,\pm(mn-m+f+1),\dots,\pm(mn-1)\}$.
Here, as before, the symbol $*$ indicates
further possible elements in the block lying between $1$ and $f$,
while the dots $\dots$ indicate that all possible elements lying
between $\pm(mn-m+f+1)$ and $\pm(mn-1)$ are included in the block.
See the first image in \Cref{fig:B3}.
In a degenerate case, it may also be that $B$
contains no element from the inner circle. Nevertheless, in such a
case we consider it still as a bridging block since another
application of~$\rotD$ will make it
a bridging block (via Case~(2) of \Cref{prop:2D}).

We claim that, after $m(n-1)-1$ applications of~$\rotD$, the block~$B$
is moved to the ``opposite side" of the annulus; more precisely, it
becomes the block $\tilde B$, say, which consists of the elements
of~$B$ on the outer circle with opposite sign, and of the elements
of~$B$ on the inner circle if $n$ is odd, and otherwise of the
elements of~$B$ on the inner circle with opposite sign.

In order to see this, we notice that, for the first application
of~$\rotD$, we are in Case~(2) of \Cref{prop:2D}, with $a=\pm(mn-1)$.
This first application of~$\rotD$ then removes $\overline{m(n-1)}$ from
the block, adds $\mp(mn-m+1)$ to it, and replaces $i$ by~$i+1$ and
$\overline i$ by~$\overline{i+1}$ for $i\in B$ respectively $\overline
i\in B$.
See the second image in \Cref{fig:B3}.
From here on, application of~$\rotD$ acts as ordinary
rotation, that is, elements on the outer circle are shifted by one
unit in clockwise direction, and elements on the inner circle are shifted by one
unit in counter-clockwise direction, until a situation is reached
where Case~(2) of \Cref{prop:2D} applies again
and involves our block of interest.
More precisely, this situation occurs when our block has been (ordinarily)
rotated $d-f-1$ times, for some~$d>f+1$, the block containing $\overline1$
contains~$m(n-1)$ and otherwise only negative elements, and there is no
other block containing positive and negative elements that separates
the latter block from our block of interest. The third image in
\Cref{fig:B3} provides a schematic illustration of that situation.
In particular, our block has been (pseudo-)rotated to
$\{d-f+1,*,d,\pm mn,\dots,\mp(mn-m+d-f)\}$, where, again,
$mn-m+d-f$ might have to be reduced modulo~$m$.
It should be observed that the fact that there are no blocks
separating the block containing~$\overline{1}$ and our block
of interest implies that the number of elements lying on the
outer circle between $d$ and $m(n-1)$ must be divisible by~$m$.
Hence, we have $d\equiv-1$~(mod~$m$).

As we already mentioned, for the next application of~$\rotD$ we are
in Case~(2) of \Cref{prop:2D}. It makes our block of interest
``lose"~$\pm mn$, ``gain"~$\overline{1}$, and otherwise $i$ is replaced
by~$i+1$, respectively $\overline i$ by $\overline{i+1}$, for
elements $i$ respectively~$\overline i$ in the block. That is, our
block has become
$\{d-f+2,*,d+1,\overline{1},\pm(mn-m+2),\dots,\mp(mn-m+d-f+1)\}$.
See the fourth image in \Cref{fig:B3}.

At this point, we should pause for a moment and make an
intermediate summary of what happened so far: we started with a
block containing $\overline{m(n-1)}$ and~$1$, some further
positive elements on the outer circle with maximal element~$f$,
and some successive elements on the inner circle ending
in~$\pm(mn-1)$ (when read in counter-clockwise direction).
In the first application of~$\rotD$ we ``lost" $\overline{m(n-1)}$
on the outer circle and we ``gained" $\mp(mn-m+1)$ on the
inner circle, while all other elements were rotated by one unit,
in clockwise direction on the outer circle and in
counter-clockwise direction on the inner circle.
Subsequently, this (new) block was ordinarily rotated (in the
same sense concerning the different directions on the
outer circle and the inner circle). In the last application
of~$\rotD$, we ``gained"~$\overline{1}$ but ``lost" the ``first" element
on the inner circle (read in counter-clockwise direction),
while all other elements were rotated by one unit.
This implies in particular that the successive elements
on the inner circle made --- so-to-speak ---
a further ``jump" by one unit in addition to the ``ordinary"
rotations.

From here on, further applications of~$\rotD$ let 
the ``first" and ``last" elements on outer and inner
circle of the block ``move forward" by one unit; the map~$\rotD$
either acts by ordinary rotation or via Case~(1) of
\Cref{prop:2D}. Both act by ordinary rotation on the inner circle,
while in the latter case the action is equivalent to an action
of~$\rotB$ on the outer circle (ignoring the inner circle). 
Since we have shown in \Cref{lem:N-2B} that the action of~$\rotB$ on
positive type~$B$ non-crossing partitions of
$\{1,2,\dots,m(n-1),\overline1,\overline2,\dots,\overline{m(n-1)}\}$
has order~$m(n-1)-1$, the above arguments show that the elements on
the outer circle of block~$B$ will be ``moved forward" --- in clockwise
direction --- by~$m(n-1)$
units, that is, to their negatives, while the elements on the inner
circle will be ``moved forward" --- in counter-clockwise direction --- by
$m(n-1)$ units. The latter implies that they are mapped to the
elements on the inner circle of~$B$ if $n$~is odd, and otherwise to
their negatives. This establishes the claim.

\medskip
We are now in the position to conclude the proof. We have shown that\break
$m(n-1)-1$ applications of~$\rotD$ map the elements on the outer
circle of a bridging block to their negatives, and the elements on the
inner circle to themselves if $n$~is odd, while to their negatives if
$n$~is even. Moreover, the other blocks (which must lie on the outer
circle since the inner circle has only $2m$~elements and all block
sizes are divisible by~$m$) are moved by~$\rotB$-operations when
$\rotD$~is applied. Consequently, by \Cref{lem:N-2B}, $m(n-1)-1$
applications of~$\rotD$ map them to their negatives. Thus, if $n$~is
even, \emph{all\/} blocks are mapped to their negatives, which means
that we obtained the original partition since it was invariant under
substitution of~$i$ by~$-i$, for all~$i$, from the very beginning. On the other hand, if
$n$~is odd, then we need another $m(n-1)-1$ applications of~$\rotD$ to
get back to our original partition.
\qed

\section{Proofs: Enumeration results}
\label{app:GF}

This appendix is devoted to the proofs of \Cref{thm:countingA,,thm:1-B,,cor:2-D} in which we enumerate multichains of positive $m$-divisible non-crossing set partitions in classical types according to their block structure, and to the proofs of \Cref{thm:2,,thm:2-B} and \Cref{thm:enumD-2,,thm:enumD-4} in which we enumerate pseudo-rotationally invariant positive $m$-divisible non-crossing set partitions of classical types.

\subsection{Preparations and auxiliary results}
\label{app:GF-aux}

We use the generating function approach from~\cite{KratCG}.
We start by recalling the two forms of Lagrange inversion
(cf.\ \cite[Thm.~1.9b]{HenrAA}, \cite[Thm.~5.4.2]{StanBI}.

\begin{lemma}[\sc Lagrange inversion] \label{lem:TA}
Let $f(z)$ be a formal power series with $f(0)=0$ and $f'(0)\ne0$,
and let $F(z)$ be its compositional inverse. Then, for all
integers~$a$ and~$b$,
\begin{equation} \label{eq:CA} 
\coef{z^0}z^aF^{b-1}(z)=\coef{z^0}z^bf^{a-1}(z)f'(z)
\end{equation}
and
\begin{equation} \label{eq:CB} 
a\coef{z^0}z^aF^{b}(z)=-b\coef{z^0}z^bf^{a}(z).
\end{equation}
More generally, if $g(z)$ is a Laurent series with only finitely many terms
containing negative powers of~$z$, then
\begin{equation} \label{eq:CBa} 
\coef{z^{-1}}g'(z)F^{-b}(z)=b\coef{z^b}g(f(z)).
\end{equation}
\end{lemma}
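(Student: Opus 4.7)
The plan is to prove all three identities uniformly using formal residue calculus. Writing $\coef{z^{-1}} h(z)$ for the formal residue of a Laurent series $h$, the central tool is the substitution formula
\[
  \coef{z^{-1}} h(z) = \coef{w^{-1}} h(f(w))\, f'(w),
\]
valid for any formal Laurent series $h$ when $f(0)=0$ and $f'(0)\ne 0$. This follows from linearity together with the identities $\coef{z^{-1}} H'(z) = 0$ for any Laurent series $H$ and a direct check in the monomial case $h(z)=z^k$, where for $k<0$ one uses that $f(w)/w$ is a unit in the power series ring so $f(w)^k$ is a well-defined Laurent series.

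For identity \eqref{eq:CA}, I would first rewrite the left-hand side as $\coef{z^{-1}} z^{a-1} F(z)^{b-1}$. Substituting $z=f(w)$ via the formula above and using $F(f(w))=w$ converts this into $\coef{w^{-1}} f(w)^{a-1}\, w^{b-1}\, f'(w)$, which is precisely the right-hand side after rewriting $\coef{w^{-1}} w^{b-1} (\cdot)$ as $\coef{w^0} w^b (\cdot)$.

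For identity \eqref{eq:CB}, the idea is to apply $\coef{z^{-1}}H'(z)=0$ to $H(z)=z^a F(z)^b$. The product rule yields
\[
  a \coef{z^{-1}} z^{a-1} F(z)^b \;=\; -\,b \coef{z^{-1}} z^a F(z)^{b-1} F'(z).
\]
The left-hand side is $a \coef{z^0} z^a F(z)^b$ directly. On the right-hand side, substituting $z=f(w)$ and using the chain-rule identity $F'(f(w))\,f'(w)=1$ makes $F'$ and $f'$ cancel, leaving $-b \coef{w^{-1}} w^{b-1} f(w)^a = -b \coef{w^0} w^b f(w)^a$, as required. For the more general identity \eqref{eq:CBa}, I would substitute $z=f(w)$ directly to get
\[
  \coef{z^{-1}} g'(z) F(z)^{-b} \;=\; \coef{w^{-1}} g'(f(w))\, f'(w)\, w^{-b} \;=\; \coef{w^{-1}} \tfrac{d}{dw}\bigl(g(f(w))\bigr)\, w^{-b},
\]
and then shift the derivative by using $\coef{w^{-1}}(HK)'=0$ with $H=g\circ f$, $K=w^{-b}$, producing $b \coef{w^{-1}} g(f(w))\, w^{-b-1} = b \coef{w^b} g(f(w))$.

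The main potential obstacle is making the formal manipulations rigorous: in \eqref{eq:CBa} one must know that $g(f(w))$ is a well-defined formal Laurent series, which is where the hypothesis that $g$ has only finitely many negative-power terms is used (combined with $f(w)/w$ being a unit so that $f(w)^{-1}$ has only finitely many negative powers); and in \eqref{eq:CB} the substitution of a Laurent series into a Laurent series must be carried out termwise under the same caveat. Once these foundational points and the residue substitution formula are in place, the three identities follow by short algebraic computations with no further enumerative input.
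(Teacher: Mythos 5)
Your argument is correct: the residue-substitution formula, the vanishing of residues of derivatives, and the chain-rule cancellation $F'(f(w))\,f'(w)=1$ are all applied soundly, and the caveats you flag (well-definedness of $g(f(w))$ and of negative powers of $f$ and $F$, using that $f(w)/w$ and $F(z)/z$ are units) are exactly the points that need care. The paper itself gives no proof of \Cref{lem:TA} but cites Henrici and Stanley, whose treatments are essentially this same formal-residue argument, so your proposal is a correct, self-contained version of the standard approach rather than a genuinely different route.
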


Let $x_i,y_i$, $i=1,2,\dots$, be variables, and $x_0=y_0=1$.
For a positive $m$-divisible non-crossing partition~$\pi$ let the weight
$w_{\mathbf y}^{(m)}(\pi)$ be defined by
$$w_{\mathbf y}^{(m)}(\pi)=\prod _{i=1} ^{\infty}
y_i^{\#(\text {special blocks of $\pi$ of size
$mi$})}x_i^{\#(\text {non-special blocks of $\pi$ of size
$mi$})}.$$
Recall that the special block of a positive $m$-divisible non-crossing
partition~$\pi$ is the one which contains both $1$ and $mn$, so that,
clearly, in $w_{\mathbf y}^{(m)}(\pi)$ there appears exactly one variable
$y_i$, and it does so with exponent $1$.
For example, the weight $w_{\mathbf y}^{(3)}(\,.\,)$ of the
positive $m$-divisible non-crossing partition on the left of 
\Cref{fig:1} is $y_2x_1^6$, while the weight of the
non-crossing partition on the right of the same figure is
$y_1x_1^5x_2$.

Sometimes, we shall consider the special case where $y_i$ is
set equal to $x_i$ for all~$i$. We abbreviate this specialised weight by
$w^{(m)}$, that is,
$$w^{(m)}(\pi)=\prod _{i=1} ^{\infty}x_i^{\#(\text {blocks of $\pi$ of size
$mi$})}.$$
For example, the specialised weight $w^{(m)}(\,\cdot\,)$ of the two
positive $m$-divisible non-crossing partitions in
\Cref{fig:1} is $x_1^6x_2$.

We write $\vert\pi\vert$ for the \defn{size} of~$\pi$, that is, 
for the number of elements of the set that is partitioned by~$\pi$.

First we quote an auxiliary result from~\cite{KratCG},
providing a functional equation for the generating function
for \emph{all}
$m$-divisible non-crossing partitions
with respect to the specialised weight $w^{(m)}$.
For the sake of completeness, we also sketch its proof.

\begin{lemma} \label{lem:TB}
Let $C^{(m)}(z)$ be the generating function $\sum _{\pi}
^{}w^{(m)}(\pi)z^{\vert \pi\vert}$, where the sum is over all
$m$-divisible non-crossing partitions. Then
\begin{equation} \label{eq:CC} 
C^{(m)}(z)=\sum _{i=0} ^{\infty}x_iz^{mi}\left(C^{(m)}(z)\right)^{mi}.
\end{equation}
\end{lemma}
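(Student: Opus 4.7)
The plan is to prove the functional equation by the classical ``block-of-$1$'' decomposition of non-crossing partitions, adapted to the $m$-divisible setting. The idea is to classify each non-empty $m$-divisible non-crossing partition of $\{1,2,\dots,mn\}$ according to the block containing the element~$1$, and to observe that the non-crossing condition forces the remainder to decompose independently into a product of smaller $m$-divisible non-crossing partitions.

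First, I would handle the empty partition: it contributes $1$ to $C^{(m)}(z)$ and accounts exactly for the $i=0$ summand on the right-hand side. For any non-empty $m$-divisible non-crossing partition $\pi$ of $\{1,2,\dots,mn\}$, I would consider the block $B$ containing $1$. By $m$-divisibility, $\vv B=mi$ for some $i\ge1$; writing $B=\{a_1=1<a_2<\dots<a_{mi}\}$ and setting $a_{mi+1}:=mn+1$, the non-crossing condition implies that every other block of $\pi$ lies entirely inside some interval $I_j:=\{a_j+1,\dots,a_{j+1}-1\}$ for $j=1,2,\dots,mi$. Thus $\pi$ restricts, via the order-preserving bijection $I_j\to\{1,2,\dots,\vv{I_j}\}$, to an $m$-divisible non-crossing partition $\pi_j$ on $\vv{I_j}$ elements.

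Next, I would verify that each $\vv{I_j}$ is a multiple of $m$, which is the one subtle point: since the elements of $I_j$ form a union of complete blocks of $\pi$ (again by the non-crossing property), and since each of those blocks has size divisible by $m$, the cardinality $\vv{I_j}$ is automatically divisible by $m$. Hence each $\pi_j$ contributes a well-defined term $w^{(m)}(\pi_j)z^{\vv{I_j}}$ to $C^{(m)}(z)$. Conversely, given $i\ge1$, any tuple of $m$-divisible non-crossing partitions $(\pi_1,\dots,\pi_{mi})$ together with the choice of block sizes $\vv{I_j}$ reconstructs $\pi$ uniquely by inserting the elements of $B$ between the blocks and shifting indices; this gives a bijection between non-empty $m$-divisible non-crossing partitions with special block size $mi$ and the set of such tuples.

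Finally, translating the bijection into a generating function identity, the block $B$ contributes the weight $x_iz^{mi}$, while the $mi$ independent choices $\pi_1,\dots,\pi_{mi}$ contribute the product $\prod_{j=1}^{mi}w^{(m)}(\pi_j)z^{\vv{I_j}}$, which sums to $(C^{(m)}(z))^{mi}$. Summing over all $i\ge0$ yields the functional equation~\eqref{eq:CC}. There is no real obstacle in this argument; the only step that requires explicit attention is the divisibility of each $\vv{I_j}$ by $m$, which is handled by the union-of-blocks observation above.
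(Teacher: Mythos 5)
Your proposal is correct and uses essentially the same decomposition as the paper: classifying a partition by the block containing~$1$ (contributing $x_iz^{mi}$) and observing that the $mi$ regions between its consecutive elements carry independent smaller $m$-divisible non-crossing partitions, each accounted for by a factor $C^{(m)}(z)$. The only difference is that you spell out the divisibility of each interval length by~$m$, a detail the paper leaves implicit.
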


\begin{figure}
  \begin{tikzpicture}[scale=1]
    \polygon{(-3.5,0)}{obj}{45}{2.5}
      {1,,,,,,\hspace{5pt},,,,,,,,,\hspace{5pt},,,,\hspace{4pt},,,,,\hspace{4pt},,,,\hspace{4pt},,,,,,,\hspace{15pt},,,,,,\hspace{4pt},,,}

    \draw[dash pattern=on 2pt off 2pt on 2pt off 2pt on 2pt off 2pt on
2pt off 2pt on 2pt off 2pt on 2pt off 46pt on 2pt off 2pt on 2pt off
2pt on 2pt off 2pt on 2pt off 2pt] (obj7) to[bend right=50] (obj16);

    \draw (obj16) to[bend right=50]
          (obj20) to[bend right=50]
          (obj25) to[bend right=50]
          (obj29) to[bend right=50]
          (obj36) to[bend right=50]
          (obj42) to[bend right=50]
          (obj1) to[bend right=50]
          (obj7);

    \draw[dotted, fill=black, fill opacity=0.1] (obj2)  to[bend
right=50] (obj6) to[bend right=15] (obj2);
    \draw[dotted, fill=black, fill opacity=0.1] (obj17) to[bend
right=50] (obj19) to[bend right=10] (obj17);
    \draw[dotted, fill=black, fill opacity=0.1] (obj21) to[bend
right=50] (obj24) to[bend right=13] (obj21);
    \draw[dotted, fill=black, fill opacity=0.1] (obj26) to[bend
right=50] (obj28) to[bend right=10] (obj26);
    \draw[dotted, fill=black, fill opacity=0.1] (obj30) to[bend
right=50] (obj35) to[bend right=17] (obj30);
    \draw[dotted, fill=black, fill opacity=0.1] (obj37) to[bend
right=50] (obj41) to[bend right=15] (obj37);
    \draw[dotted, fill=black, fill opacity=0.1] (obj43) to[bend
right=50] (obj45) to[bend right=10] (obj43);

    \end{tikzpicture}
\caption{Illustration of the decomposition explaining~\eqref{eq:CC}:
between the arcs of the block containing~$1$ there are (smaller) non-crossing
partitions, indicated by shaded ``half disks".}
\label{fig:40}
\end{figure}
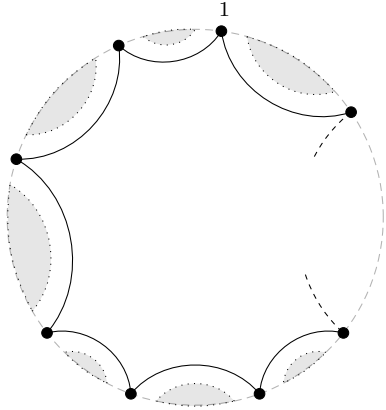

\begin{proof}The equation is best explained by a picture; see
\Cref{fig:40}.
Given an $m$-divisible non-crossing partition~$\pi$, let $B$ be the
block containing the element~$1$. This block has a size that is
divisible by~$m$, say~$mi$. The (weight) contribution of that block
is then $x_iz^{mi}$.
Between the ``arcs" of that block~$B$ we find smaller $m$-divisible
non-crossing partitions, each one contributing $C^{(m)}(z)$ to
the term. Since the block~$B$ has $mi$~arcs, this explains the summand
in~\eqref{eq:CC} by standard generating function calculus.
The equation follows once one observes that the
summand for $i=0$ corresponds to the (empty) partition of~$\emptyset$.
\end{proof}

From now on, 
\begin{equation} \label{eq:CD} 
F(z)=\frac {z} {\sum _{i=0} ^{\infty}x_iz^{mi}},
\end{equation}
and $f(z)$ denotes its compositional inverse.

Slightly rewriting \eqref{eq:CC}, we see that it is equivalent to
$$z=\frac {zC^{(m)}(z)} {\sum _{i=0} ^{\infty}x_i(zC^{(m)}(z))^{im}}=
F(zC^{(m)}(z)).$$
Thus, we have $zC^{(m)}(z)=f(z)$.

We quote an easy corollary from~\cite{KratCG}.

\begin{corollary} \label{cor:TC}
The generating function $\sum _{\pi}
^{}w^{(m)}(\pi)$, where the sum is over all
$m$-divisible non-crossing partitions of\/ $\{1,2,\dots,mn\}$, equals
\begin{equation} \label{eq:CE} 
\frac {1} {mn+1}\coef{z^0}zF^{-mn-1}(z),
\end{equation}
where $F(z)$ is defined by \eqref{eq:CD}.
\end{corollary}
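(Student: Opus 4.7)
The plan is to exploit the identity $zC^{(m)}(z)=f(z)$ recorded immediately after the proof of \Cref{lem:TB}, and then extract the desired coefficient via Lagrange inversion. Concretely, the quantity we want, namely the generating function $\sum_\pi w^{(m)}(\pi)$ over all $m$-divisible non-crossing partitions of $\{1,2,\dots,mn\}$, is precisely $\coef{z^{mn}}C^{(m)}(z)$. Since $C^{(m)}(z)=f(z)/z$, this equals $\coef{z^{mn+1}}f(z)$, so the task reduces to expressing $\coef{z^{mn+1}}f(z)$ in terms of the compositional inverse $F(z)$.

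For that I would apply \eqref{eq:CBa} of \Cref{lem:TA} with the trivial choice $g(z)=z$ (so $g'(z)=1$) and $b=mn+1$. That formula then reads
\begin{equation*}
\coef{z^{-1}}F^{-(mn+1)}(z)=(mn+1)\coef{z^{mn+1}}f(z).
\end{equation*}
Since $\coef{z^{-1}}F^{-(mn+1)}(z)=\coef{z^{0}}zF^{-(mn+1)}(z)$, rearranging gives
\begin{equation*}
\coef{z^{mn+1}}f(z)=\frac{1}{mn+1}\coef{z^{0}}zF^{-(mn+1)}(z),
\end{equation*}
which is exactly \eqref{eq:CE}. Alternatively, one could invoke the more classical form of Lagrange inversion (i.e.\ \eqref{eq:CB} of \Cref{lem:TA}) after rewriting $zF^{-(mn+1)}(z)=\phi(z)^{mn+1}/z^{mn}$, where $\phi(z)=\sum_{i\ge 0}x_iz^{mi}$, so that $\coef{z^{0}}zF^{-(mn+1)}(z)=\coef{z^{mn}}\phi(z)^{mn+1}$ — this is the version of Lagrange inversion most commonly stated for the equation $f(z)=z\phi(f(z))$, which is just a restatement of \eqref{eq:CC}.

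There is no real obstacle here: the only subtle point is making sure that the Laurent-series/negative-power bookkeeping in \eqref{eq:CBa} is carried out correctly, and that the translation between $F^{-(mn+1)}$ (a Laurent series beginning at $z^{-(mn+1)}$) and the polynomial $\phi(z)^{mn+1}$ is done with the right shift. Both are routine once one observes that $F(z)$ has a simple zero at the origin with $F'(0)\ne 0$, so that Lagrange inversion applies to $F$ and $f$ directly.
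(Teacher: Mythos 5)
Your proposal is correct and follows essentially the same route as the paper: start from $\coef{z^{mn}}C^{(m)}(z)$, use $zC^{(m)}(z)=f(z)$, and apply Lagrange inversion from \Cref{lem:TA}. The only cosmetic difference is that you invoke the form \eqref{eq:CBa} with $g(z)=z$ (mentioning \eqref{eq:CB} as an alternative), whereas the paper applies \eqref{eq:CB} directly with $a=1$ and $b=-mn-1$; these are equivalent instances of the same lemma.
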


\begin{proof}
By \Cref{lem:TB}, the generating function in question is
$$
\coef{z^{mn}}C^{(m)}(z)=\coef{z^0}z^{-mn-1}f(z).
$$
The assertion now follows from the application of the Lagrange
inversion formula in the form~\eqref{eq:CB}.
\end{proof}

\subsection{Enumeration of positive $m$-divisible non-crossing partitions
in type $A$}
\label{app:GF-A}

We turn to the subset of all \emph{positive} $m$-divisible non-crossing 
partitions.

\begin{lemma} \label{lem:TBa}
Let $C^{(m)}_+(z)$ be the generating function $\sum _{\pi}
^{}w_{\mathbf y}^{(m)}(\pi)z^{\vert \pi\vert}$, where the sum is over all
positive $m$-divisible non-crossing partitions. Then
\begin{equation} \label{eq:CCa} 
C^{(m)}_+(z)=\sum _{i=1} ^{\infty}y_iz^{mi}\left(C^{(m)}(z)\right)^{mi-1}.
\end{equation}
\end{lemma}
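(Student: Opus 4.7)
The plan is to mimic the decomposition argument that proves \Cref{lem:TB}, adapting it to account for the special block. I will decompose an arbitrary positive $m$-divisible non-crossing partition of $\{1,2,\dots,mn\}$ according to its (uniquely determined) special block, that is, the block containing both $1$ and $mn$. Suppose this special block has size $mi$ for some $i\ge 1$; its weight contribution is exactly $y_iz^{mi}$, accounting for the factors $y_iz^{mi}$ in~\eqref{eq:CCa}.

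Next I would analyse the regions cut out on the disc by the arcs of the special block. For a general block of size $mi$ sitting around the circle, there are $mi$ cyclically consecutive arcs; each arc bounds a region that contains (possibly empty) $m$-divisible non-crossing partitions of the elements of $\{1,2,\dots,mn\}$ lying in that region, contributing a factor of $C^{(m)}(z)$ apiece. This is the content of the proof of \Cref{lem:TB}, giving $mi$ factors of $C^{(m)}(z)$ there. The key difference in the positive case is that one specific arc of the special block, namely the one connecting $mn$ directly back to $1$ along the outside of the circle, encloses no elements at all, simply because no indices lie cyclically between $mn$ and $1$ in that direction. Hence only $mi-1$ of the $mi$ arcs enclose genuine regions, producing $(C^{(m)}(z))^{mi-1}$ rather than $(C^{(m)}(z))^{mi}$. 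Summing over $i\ge 1$ yields~\eqref{eq:CCa}.

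I do not expect any serious obstacle: the argument is a clean, pictorial bijective decomposition entirely parallel to the one in \Cref{lem:TB}. The only point that deserves a line of verification is that each of the $mi-1$ non-degenerate regions does indeed contain an \emph{ordinary} (and not \emph{positive}) $m$-divisible non-crossing partition of its elements; this is immediate from the definition of \emph{positive}, since the positivity constraint is a global condition on the block containing $1$ and $mn$ and imposes nothing on the sub-partitions living between the arcs of the special block. It is also worth pointing out explicitly that the summation starts at $i=1$ rather than $i=0$, because the special block must contain at least the two elements $1$ and $mn$, so there is no counterpart to the $i=0$ term (the empty partition) in the formula~\eqref{eq:CC}.
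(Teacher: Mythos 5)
Your proposal is correct and is essentially the paper's own argument: decompose by the special block of size $mi$ (contributing $y_iz^{mi}$), observe that the gap between $mn$ and $1$ is empty so only the $mi-1$ gaps between consecutive elements $v_1=1,v_2,\dots,v_{mi}=mn$ of the special block carry arbitrary (not necessarily positive) $m$-divisible non-crossing partitions, each contributing a factor $C^{(m)}(z)$. Your remarks about why the sub-partitions are unrestricted and why the sum starts at $i=1$ match the paper's reasoning exactly.
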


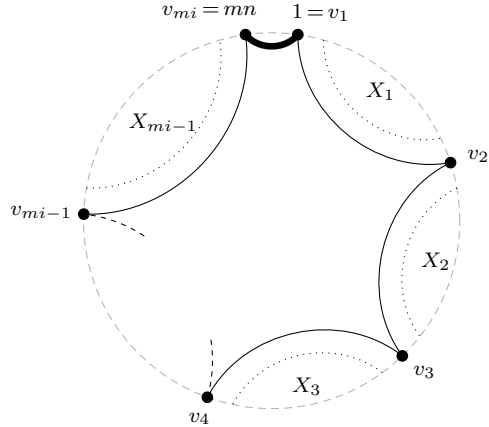
\begin{figure}
  \centering
  \begin{tikzpicture}[scale=1]
    \polygon{(0,0)}{obj}{45}{2.5}
      {\hspace{15pt}1\,=\,v_1,,,,,,,,\hspace{5pt}v_2,,,,,,,,\hspace{5pt}v_3,,,,,,,,v_4,,,,,,,,,v_{mi-1}\hspace{15pt},,,,,,,,,,v_{mi}\,{}={}\,mn\hspace{25pt},}
     \draw[line width=2.5pt,black] (obj1) to[bend left=50] (obj44);
     \draw (obj1) to[bend right=50] (obj9) to[bend right=50] (obj17) to[bend right=50] (obj25);
     \draw (obj34) to[bend right=50] (obj44) to[bend right=50] (obj1);
     \draw[black, dash pattern=on 2pt off 2pt on 2pt off 2pt on 2pt off 2pt on 2pt off 2pt on 2pt off 2pt on 2pt off 48pt on 2pt off 2pt on 2pt off 2pt on 2pt off 2pt on 2pt off 2pt] (obj25) to[bend right=50] (obj34);
     \draw[dotted] (obj2)  to[bend right=50] (obj8);
     \draw[dotted] (obj10) to[bend right=50] (obj16);
     \draw[dotted] (obj18) to[bend right=50] (obj24);
     \draw[dotted] (obj35) to[bend right=50] (obj43);
     \node at ($0.9*(obj5)$) {\tiny$X_1$};
     \node at ($0.9*(obj13)$) {\tiny$X_2$};
     \node at ($0.9*(obj21)$) {\tiny$X_3$};
     \node at ($0.775*(obj39)$) {\tiny$X_{mi-1}$};
    \end{tikzpicture}
  \caption{The decomposition of a positive $m$-divisible non-crossing partition}
\label{fig:2}
\end{figure}

\begin{proof}
The vertices $1$ and $mn$ 
must be in one of the blocks, say in a block of size~$mi$. Then,
if $v_1=1,v_2,\dots,v_{mi}=mn$ are all the vertices in this block in
clockwise order, then the vertices between $v_j$ and $v_{j+1}$,
$j=1,2,\dots,mi-1$ are
involved in a smaller (arbitrary, not necessarily positive) 
$m$-divisible non-crossing partition (see
\Cref{fig:2}). 
Equation~\eqref{eq:CCa} then follows by standard generating function calculus,
the block containing $1$ and $mn$ contributing the term
$y_iz^{mi}$, and the ``small" $m$-divisible non-crossing partitions
between $v_j$ and $v_{j+1}$ each contributing a term
$C^{(m)}(z)$, $j=1,2,\dots,mi-1$.
\end{proof}

The analogue of \Cref{cor:TC} for positive $m$-divisible
non-crossing partitions reads as follows.

\begin{corollary} \label{cor:TCa}
The generating function $\sum _{\pi}
^{}w_{\mathbf y}^{(m)}(\pi)$, where the sum is over all
positive $m$-divisible non-crossing partitions of\/ $\{1,2,\dots,mn\}$, equals
\begin{equation} \label{eq:CEa} 
\sum_{i=1}^\infty \frac {mi-1} {mn-1}y_i\coef{z^0}z^{mi-1}F^{-mn+1}(z),
\end{equation}
where $F(z)$ is defined by~\eqref{eq:CD}.
\end{corollary}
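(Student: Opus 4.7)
The plan is to reduce \Cref{cor:TCa} to an application of Lagrange inversion on top of the functional description established in \Cref{lem:TBa}. The first step is to use the identity $zC^{(m)}(z)=f(z)$ --- which was already derived on the way to \Cref{cor:TC} by reading \eqref{eq:CC} as $z = F(zC^{(m)}(z))$ --- in order to rewrite \eqref{eq:CCa} in the more convenient form
$$C^{(m)}_+(z) \;=\; \sum_{i=1}^\infty y_i\, z^{mi}\bigl(C^{(m)}(z)\bigr)^{mi-1} \;=\; \sum_{i=1}^\infty y_i\, z\, f(z)^{mi-1}.$$
The key observation is that the factor $z^{mi}$ and the $(mi-1)$-fold power of $C^{(m)}(z)=f(z)/z$ conspire to leave a single power of $z$ multiplied by a pure power of $f(z)$, which is exactly the form to which Lagrange inversion can be applied.

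The second step is to extract the coefficient of $z^{mn}$ on both sides, which by the definition of $w_{\mathbf y}^{(m)}$ yields the generating function for positive $m$-divisible non-crossing partitions of $\{1,2,\dots,mn\}$:
$$\sum_{\pi} w_{\mathbf y}^{(m)}(\pi) \;=\; \coef{z^{mn}} C^{(m)}_+(z) \;=\; \sum_{i=1}^\infty y_i\, \coef{z^{mn-1}} f(z)^{mi-1}.$$

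The third and final step is to apply \Cref{lem:TA}, Equation~\eqref{eq:CB}, with the choice $a := mi-1$ and $b := -mn+1$, noting that \eqref{eq:CB} is valid for arbitrary integer exponents (including negative~$b$), cf.\ \cite[Thm.~5.4.2]{StanBI}. This gives
$$(mi-1)\coef{z^0} z^{mi-1} F^{-mn+1}(z) \;=\; (mn-1)\coef{z^{mn-1}} f^{mi-1}(z),$$
and substituting the resulting expression for $\coef{z^{mn-1}} f^{mi-1}(z)$ into the sum above yields precisely~\eqref{eq:CEa}. There is no real obstacle in this argument; the only care required is in the bookkeeping of signs and exponents when invoking Lagrange inversion with a negative power of~$F$, and in the preliminary algebraic manipulation that collapses $z^{mi}C^{(m)}(z)^{mi-1}$ to $z\cdot f(z)^{mi-1}$.
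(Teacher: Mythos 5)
Your argument is correct and follows the paper's own proof essentially verbatim: rewrite \eqref{eq:CCa} via $C^{(m)}(z)=f(z)/z$ so the coefficient of $z^{mn}$ becomes $\sum_i y_i\coef{z^{mn-1}}f^{mi-1}(z)$, then apply the Lagrange inversion formula \eqref{eq:CB} with $a=mi-1$ and $b=-mn+1$. The only difference is cosmetic (you extract $\coef{z^{mn-1}}$ directly rather than writing it as $\coef{z^0}z^{-mn+1}f^{mi-1}(z)$), so there is nothing further to add.
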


\begin{proof}
Because of $C^{(m)}(z)=f(z)/z$, we know from~\eqref{eq:CCa} 
that the desired generating function 
equals 
\begin{equation} \label{eq:CEb} 
\coef{z^{0}}\sum _{i=1} ^{\infty}y_iz^{-mn+1}f^{mi-1}(z).
\end{equation}
Hence, the claim follows from~\eqref{eq:CB} with
$a=mi-1$ and $b=-mn+1$.
\end{proof}

\begin{proposition} \label{prop:TG}
Let $m,n,l$ be positive integers with $l\ge2$. 
Furthermore, let $s'_2,s_3,\dots,s_l$ be
non-negative integers with $s'_2+s_3+\dots+s_l=n-1$. 
The generating function
\begin{equation} \label{eq:DD} 
{\sum }{}^{\displaystyle\prime}
w_{\mathbf y}^{(m)}(\pi_1),
\end{equation}
where the sum is over all multichains
$\pi_1\le\pi_2\le\dots\le\pi_{l-1}$ of positive $m$-divisible non-crossing
partitions of\/ $\{1,2,\dots,mn\}$, where the rank of~$\pi_i$ is
$s'_2+s_3+\cdots +s_i$, $i=2,3,\dots,\break l-1$, 
is given by
\begin{multline} \label{eq:DE}
\frac {1} {mn-1}
\binom {mn-1}{s_3}\cdots\binom {mn-1}{s_{l}}\\
\times
\sum _{k=0} ^{s_3+\dots+s_l}\!\!
(-1)^{k+s_3+\dots+s_l}
\binom {s_3+\dots+s_l}k 
\sum _{a=1} ^{\infty}
\coef{z^0}(ma-1)y_az^{{ma}+k-1}F^{-{mn}-k+1}(z).
\end{multline}
If $l=2$, empty sums have to interpreted as $0$, and empty products
have to be interpreted as $1$.
\end{proposition}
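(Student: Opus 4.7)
The plan is to generalise the generating function approach of \Cref{cor:TCa} (which corresponds to the $l=2$ case of this proposition) to multichains with prescribed ranks. My starting point will be the decomposition of a positive $m$-divisible non-crossing partition $\pi_1$ illustrated in \Cref{fig:2}: its special block of size $ma$, containing $1 = v_1$ and $mn = v_{ma}$, splits the remaining vertices into $ma-1$ gaps, each filled by an arbitrary (non-positive) $m$-divisible non-crossing partition. This decomposition produced the identity~\eqref{eq:CCa} for single positive partitions, and I would lift it to multichains by recording the coarsening data above $\pi_1$.

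The crucial structural observation is that, since each $\pi_i$ with $i\ge 2$ is a coarsening of $\pi_1$ whose special block must still contain both $1$ and $mn$, every coarsening step either merges two blocks lying entirely inside a single gap, or absorbs an entire gap partition into the special block. Thus, for fixed $\pi_1$, the multichains $\pi_1\le \pi_2\le\dots\le \pi_{l-1}$ with prescribed ranks are encoded by independent multichains inside each gap, together with a marking of which gaps are absorbed at each step. The first ingredient I would need is therefore the generating function for multichains in the ordinary (non-positive) $m$-divisible non-crossing partition lattice of given size with prescribed rank increments; I expect this to involve the binomial factors $\binom{mn-1}{s_j}$ via an Edelman-type enumeration.

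Substituting this count into the gap decomposition and using $f(z) = zC^{(m)}(z)$ from the proof of \Cref{cor:TC}, the contribution of each gap becomes a power of $f$, while each gap-to-special-block merge contributes a factor of $z/F(z) - 1 = \sum_{i\ge 1} x_i z^{mi}$. Recognising
\begin{equation*}
\left(\frac{z}{F(z)} - 1\right)^{M} = \sum_{k=0}^{M} (-1)^{M-k}\binom{M}{k}\left(\frac{z}{F(z)}\right)^{k}
\end{equation*}
with $M = s_3 + \dots + s_l$ is precisely how the sign-alternating sum in~\eqref{eq:DE} will appear. Finally, applying Lagrange inversion in the form~\eqref{eq:CB} with $a = ma - 1$ and $b = -mn-k+1$ converts the remaining coefficient extraction in $f$ into one in $F$, producing both the prefactor $(ma-1)/(mn-1)$ and the monomial $z^{ma+k-1}F^{-mn-k+1}(z)$ in the claim.

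The main obstacle will be pinning down the correct Edelman-type count of multichains in the (non-positive) $m$-divisible non-crossing partition lattice with a prescribed rank sequence, and verifying that the gap decomposition carries all the multichain data without double-counting. In particular, when two adjacent gaps get absorbed into the special block, the arc of the original special block between them ``dissolves'' in the coarser partition, and one must check that this merging is consistent with the factorisation of the upper interval in $\mNC$. Once the bookkeeping is in place, the limiting case $l = 2$ (empty sums zero, empty products one) should recover exactly \Cref{cor:TCa}, providing a useful sanity check.
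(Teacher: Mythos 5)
Your bottom-up decomposition founders on a structural claim that is not true. You assert that every coarsening of $\pi_1$ either merges blocks lying inside a single gap or ``absorbs an entire gap partition into the special block.'' In fact the special block of a coarser positive partition $\pi_2$ may absorb only the \emph{outermost} (visible) blocks of a gap while leaving nested blocks intact: for instance, with $m=3$, if a gap of $\pi_1$ carries the partition $\{\{2,6,7\},\{3,4,5\}\}$, then the special block may swallow $\{2,6,7\}$ while $\{3,4,5\}$ survives as a separate block of $\pi_2$, and the result is still non-crossing, $m$-divisible and positive. Consequently your proposed encoding --- independent multichains inside each gap together with a marking of which gaps are absorbed at each step --- does not parametrize the multichains $\pi_1\le\pi_2\le\dots\le\pi_{l-1}$, and the ensuing generating-function bookkeeping (in particular the appearance of the factors $\binom{mn-1}{s_j}$ from an ``Edelman-type'' count per gap) is not justified. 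This is not a small repair: describing the order filter above a fixed $\pi_1$ in terms of its gaps is exactly the hard part, because the upper structure does not factor over the gaps.

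The paper avoids this by arguing in the opposite direction, by induction on $l$: one conditions on $\pi_2$ (using the inductively known generating function for the multichain $\pi_2\le\dots\le\pi_{l-1}$) and models the refinement $\pi_1\le\pi_2$ by substituting, in that generating function, $x_b\mapsto\coef{u^{mb}}tC^{(m)}(u)$ for each non-special block and $y_a\mapsto\coef{u^{ma}}tC^{(m)}_+(u)$ for the special block, with $t$ tracking the number of blocks (hence the rank) of $\pi_2$; one then extracts the coefficient of $t^{s_3+\dots+s_l+1}$, evaluates the resulting $k$-sum by Chu--Vandermonde, and finishes with Lagrange inversion in the form~\eqref{eq:CA}. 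The point is that \emph{lower} intervals (all refinements of a fixed coarser partition) do factor block-by-block, which is precisely the product structure you tried, incorrectly, to impose on the upper structure above $\pi_1$. Your formal manipulations at the end (the expansion of $\bigl(z/F(z)-1\bigr)^{M}$, the alternating sum, the Lagrange inversion step) do mirror ingredients of the actual proof, but without a correct combinatorial decomposition feeding them they do not constitute a proof.
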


\begin{proof}
We prove the assertion by induction on $l$. If $l=2$, the assertion
is true due to~\eqref{eq:CEa} and our conventions for $l=2$. Let us now
assume that the assertion is true for multichains consisting of~$l-2$
elements and consider the multichain
$\pi_1\le\pi_2\le\dots\le\pi_{l-1}$. The induction hypothesis implies that
the generating function 
$${\sum }
w^{(m)}_{\mathbf y}(\pi_2),$$
where the sum is over all multichains
$\pi_2\le\dots\le\pi_{l-1}$ of positive $m$-divisible non-crossing
partitions of $\{1,2,\dots,mn\}$, where the rank of~$\pi_i$ is
$(s'_2+s_3)+s_4+\cdots +s_i$, $i=3,\dots,l-1$, 
is given by
\begin{multline} \label{eq:DF}
\frac {1} {mn-1}\binom {mn-1}{s_4}\cdots\binom {mn-1}{s_{l}}\\
\times
\sum _{k=0} ^{s_4+\dots+s_l}\!\!
(-1)^{k+s_4+\dots+s_l}
\binom {s_4+\dots+s_l}k 
\sum _{a=1} ^{\infty}
\coef{z^0}(ma-1)y_az^{{ma}+k-1}F^{-{mn} -k+1}(z).
\end{multline}

Next we model the relation
$\pi_1\le\pi_2$ by appropriate replacements
of the variables $x_b$ and $y_a$ in the generating function~\eqref{eq:DF}.
The partition $\pi_1$ arises from $\pi_2$ by replacing non-special blocks
of~$\pi_2$ of size~$mb$ by $m$-divisible non-crossing partitions of
$mb$~elements, and by replacing the special block, say of size~$ma$, 
by a \emph{positive} $m$-divisible non-crossing partition of
$ma$~elements. Hence, to model this in the
generating function, we will replace in~\eqref{eq:DF} the variable $x_b$ by the
coefficient of~$u^{mb}$ in $tC^{(m)}(u)$, and we will replace
the variable $y_a$ by the coefficient of~$u^{ma}$ in $tC^{(m)}_+(u)$.
(Recall that the series $C^{(m)}(u)$ is
the generating function for \emph{all\/} 
$m$-divisible non-crossing partitions, while $C^{(m)}_+(u)$ is the
one for \emph{positive} $m$-divisible non-crossing partitions.)
Here, the variable $t$ keeps
track of the number of blocks of~$\pi_2$ and, thus, by the equation
$$n-(\text{number of blocks of $\pi_2$})=(\text{rank of $\pi_2$})
=s'_2,$$
respectively equivalently,
$$(\text{number of blocks of $\pi_2$})
=s_3+s_4+\dots+s_l+1,$$
of the rank of $\pi_2$.
From the result we will then extract the coefficient of
$t^{s_3+s_4+\dots+ s_l+1}$ to obtain the generating function~\eqref{eq:DD}. 

Next,
we compute the effect of
substitution of $\coef{u^{mb}}tC^{(m)}(u)$ for $x_{b}$,
$b=1,2,\dots$, in the series $F(z)$ (recall its definition in \Cref{eq:CD}):
\begin{align} 
\notag
z\(1+
\sum _{b=1} ^{\infty}z^{mb}\coef{u^{mb}}tC^{(m)}(u)\)^{-1}&=
z\(1+t(C^{(m)}(z)-1)\)^{-1}\\
&=
z\(1+t\(\frac {f(z)} {z}-1\)\)^{-1}.
\label{eq:CHa}
\end{align}
On the other hand, we need to compute the effect of
substitution of $\coef{u^{ma}}tC^{(m)}_+(u)=
\coef{u^{ma}}t\sum _{\ell=1} ^{\infty}y_\ell uf(u)^{m\ell-1}$
(see~\eqref{eq:CEa}, respectively~\eqref{eq:CEb}) for
$y_a$ in $\sum _{a=1} ^{\infty}(ma-1)y_az^{{ma} -1}$. We obtain
\begin{align} \notag
\sum _{a=1} ^{\infty}(ma-1)z^{{ma} -1}
&\left(\coef{u^{{ma}}}
t\sum _{\ell=1} ^{\infty}y_\ell uf(u)^{m\ell-1}\right)\\
\notag
&=
\sum _{a=1} ^{\infty}\left(\frac {d} {dz}-\frac {1}
     {z}\right)\big(z^{{ma} }\big) \cdot
\left(\coef{u^{{ma}}}
t\sum _{\ell=1} ^{\infty}y_\ell uf(u)^{m\ell-1}\right)\\
\notag
&=
t\left(\frac {d} {dz}\sum _{\ell=1} ^{\infty}
y_\ell zf^{m\ell-1}(z)-
\frac {1} {z}\sum _{\ell=1} ^{\infty} y_\ell zf^{m\ell-1}(z)\right)\\
&=
t
\sum _{\ell=1} ^{\infty}
(m\ell-1)y_\ell zf'(z)f^{m\ell-2}(z).
\label{eq:DG} 
\end{align}

Using the substitutions \eqref{eq:CHa} and \eqref{eq:DG}
in~\eqref{eq:DF}, subsequent extraction of the
coefficient of $t^{s_3+s_4+\dots+ s_l+1}$ leads to 
\begin{align}
\notag
&\coef{t^{s_3+s_4+\dots+s_l+1}}
\frac {1} {mn-1}
\binom {mn-1}{s_4}\cdots\binom {mn-1}{s_{l}}
\sum _{k=0} ^{s_4+\dots+s_l}\!\!
(-1)^{k+s_4+\dots+s_l}
\binom {s_4+\dots+s_l}k \\
\notag
&\kern2cm
\cdot 
t\sum _{\ell=1} ^{\infty}
\coef{z^0}(m\ell-1)y_\ell zf'(z) f^{{m\ell} -2}(z)z^{-{mn} +1}
\(1+t\(\frac {f(z)} {z}-1\)\)^{{mn} +k-1}\\
\notag
&=\frac {1} {mn-1}
\binom {mn-1}{s_4}\cdots\binom {mn-1}{s_{l}}
\sum _{k=0} ^{s_4+\dots+s_l}\!\!
(-1)^{k+s_4+\dots+s_l}
\binom {s_4+\dots+s_l}k \\
\notag
&\kern.5cm
\cdot 
\sum _{\ell=1} ^{\infty}
\coef{z^0}(m\ell-1)y_\ell f'(z) f^{{m\ell} -2}(z)z^{-{mn} +2}\\
\notag
&\kern5.5cm
\cdot 
\binom {{mn} +k-1}{s_3+s_4+\dots+s_l}
\(\frac {f(z)} {z}-1\)^{s_3+s_4+\dots+s_l}\\
\notag
&=\frac {1} {mn-1}
\binom {mn-1}{s_4}\cdots\binom {mn-1}{s_{l}}%\\
%&\kern.5cm
%\cdot
\sum _{k=0} ^{s_4+\dots+s_l}\!\!
(-1)^{k}
\binom {{mn} +k-1}{s_3+s_4+\dots+s_l}
\binom {s_4+\dots+s_l}k \\
\notag
&\kern2cm
\cdot 
\sum _{\ell=1} ^{\infty}
\sum _{j=0} ^{s_3+s_4+\dots+s_l}(-1)^{s_3+j}
\binom {s_3+s_4+\dots+s_l}j\\
&\kern6cm
\cdot 
\coef{z^0}(m\ell-1)y_\ell f'(z) f^{{m\ell}+j -2}(z)z^{-{mn}-j +2}
\label{eq:DH}
\end{align}
The sums over $k$ and $j$ have
become completely independent, and the sum over $k$ can be
evaluated by means of the Chu--Vandermonde summation formula 
(see e.g.\ \cite[Sec.~5.1, Eq.~(5.27)]{GrKPAA}). Namely, we have
\begin{align}
\notag
&\sum _{k=0} ^{s_4+\dots+s_l}\!\!
(-1)^k
\binom {mn+k-1}{s_3+s_4+\dots+s_l}
\binom {s_4+\dots+s_l}{k} \\
\notag
&\qquad 
=(-1)^{mn-s_3-s_4-\dots-s_l-1}
\sum _{k=0} ^{s_4+\dots+s_l}\!\!
\binom {-s_3-s_4-\dots-s_l-1}{mn+k-s_3-s_4-\dots-s_l-1}
\binom {s_4+\dots+s_l}{s_4+\dots+s_l-k} \\
\notag
&\qquad 
=(-1)^{mn-s_3-s_4-\dots-s_l-1}
\binom {-s_3-1}{mn-s_3-1}\\
&\qquad 
=(-1)^{s_4+\dots+s_l}
\binom {mn-1}{mn-s_3-1}.
\label{eq:CIa}
\end{align}
If we substitute this in \eqref{eq:DH} and use the Lagrange inversion
formula~\eqref{eq:CA} from\break \Cref{lem:TA}  with $a= {m\ell}+j-1$ and 
$b=-{mn}
-j+2$, we obtain exactly~\eqref{eq:DE}.
\end{proof}

\begin{proof}[Proof of \Cref{thm:countingA}]
We use \Cref{prop:TG} with $s'_2=s_1+s_2$. We have
$$n-(\text{number of blocks of $\pi_1$})=(\text{rank of $\pi_1$})
=s_1,$$
or, equivalently,
$$\text{number of blocks of $\pi_1$}
=s_2+s_3+\dots+s_l+1.$$
Hence, we must replace both $x_b$ and $y_b$ by~$tx_b$
in~\eqref{eq:DE} and extract the coefficient of 
$$t^{s_2+s_3+\dots+s_l+1}x_1^{b_1}x_2^{b_2}\cdots x_n^{b_n}.$$
(In the sequel, we write $t^{s_2+s_3+\dots+s_l+1}\mathbf x^{\mathbf b}$ for
short for the above monomial.) 
To begin with, we perform the substitution $y_b\to x_b$ in the
sum over $a$ in~\eqref{eq:DE} and simplify:
\begin{align*} %\label{}
\sum _{a=1} ^{\infty}
\coef{z^0}&(ma-1)x_az^{{ma}+k-1}F^{-{mn}-k+1}(z)\\
&=\coef{z^0}z^{k+1}F^{-{mn}-k-1}(z)
\frac {\sum _{a=1} ^{\infty}
(ma-1)x_az^{{ma}}}
{\left(1+\sum _{a=1} ^{\infty}
x_az^{ma}\right)^2}\\
&=\coef{z^0}z^{k+1}F^{-{mn}-k-1}(z)
\left(-F'(z)+z^{-2}F^2(z)\right)\\
&=\coef{z^0}\left(-z^{k+1}F'(z)F^{-{mn}-k-1}(z)
+z^{k-1}F^{-mn-k+1}(z)\right)\\
&=\coef{z^0}\left(z^{k+1}\frac {1} {mn+k}\frac {d} {dz}F^{-{mn}-k}(z)
+z^{k-1}F^{-mn-k+1}(z)\right)\\
&=\coef{z^0}\left(-\frac {k} {mn+k}z^{k}F^{-{mn}-k}(z)
+z^{k-1}F^{-mn-k+1}(z)\right).
\end{align*}
This is now used in~\eqref{eq:DE}. Subsequently, as we already said,
we replace $x_b$ by~$tx_b$ for all~$b$ and extract the coefficient of 
$t^{s_2+s_3+\dots+s_l+1}\mathbf x^{\mathbf b}$. This leads to
\begin{align*}
&\frac {1} {mn-1}
\binom {mn-1}{s_3}\cdots\binom {mn-1}{s_{l}}
\sum _{k=0} ^{s_3+\dots+s_l}\!\!
(-1)^{k+s_3+s_4+\dots+s_l}
\binom {s_3+s_4+\dots+s_l}{k} \\
&\kern2cm
\cdot
\coef{t^{s_2+s_3+\dots+s_l+1}\mathbf x^{\mathbf b}z^{mn}}
\Bigg(-\frac {k} {mn+k}\(1+t
\sum _{i=1} ^{\infty}x_iz^{mi}\)^{mn+k}\\
&\kern8.5cm
+\(1+t
\sum _{i=1} ^{\infty}x_iz^{mi}\)^{mn+k-1}\Bigg)\\
&=
\frac {1} {mn-1}
\binom {mn-1}{s_3}\cdots\binom {mn-1}{s_{l}}
\sum _{k=0} ^{s_3+\dots+s_l}\!\!
(-1)^{k+s_3+s_4+\dots+s_l}
\binom {s_3+s_4+\dots+s_l}{k} \\
&\kern1cm
\cdot\Bigg(-\frac {k} {mn+k}\binom {mn+k}{s_2+s_3+\dots+s_l+1}
\coef{\mathbf x^{\mathbf b}z^{mn}}
\(\sum _{i=1} ^{\infty}x_iz^{mi}\)^{s_2+s_3+\dots+s_l+1}\\
&\kern3.5cm
+\binom {mn+k-1}{s_2+s_3+\dots+s_l+1}
\coef{\mathbf x^{\mathbf b}z^{mn}}
\(\sum _{i=1} ^{\infty}x_iz^{mi}\)^{s_2+s_3+\dots+s_l+1}
\Bigg)\\
&=\frac {1} {mn-1}
\binom {mn-1}{s_3}\cdots\binom {mn-1}{s_{l}}
\sum _{k=0} ^{s_3+\dots+s_l}\!\!
(-1)^{k+s_3+s_4+\dots+s_l}
\binom {s_3+s_4+\dots+s_l}{k} \\
&\kern1cm
\cdot
\frac {-k+(mn+k-s_2-s_3-\dots-s_l-1)} {s_2+s_3+\dots+s_l+1}
\binom {mn+k-1}{s_2+s_3+\dots+s_l}\\
&\kern7cm
\cdot
\coef{\mathbf x^{\mathbf b}z^{mn}}
\(\sum _{i=1} ^{\infty}x_iz^{mi}\)^{s_2+s_3+\dots+s_l+1}.
\end{align*}
The sum over $k$ is completely analogous to the sum over $k$ in the
proof of \Cref{prop:TG} and, hence, can as well be evaluated by
means of the Chu--Vandermonde summation formula. Repeating the
arguments from there, we arrive at the expression
\begin{multline} \label{eq:x^b}
\frac {mn-s_2-s_3-\dots-s_l-1} {(mn-1)(s_2+s_3+\dots+s_l+1)}
\binom {mn-1}{s_2}\binom {mn-1}{s_3}\cdots\binom {mn-1}{s_{l}}\\
\times
\coef{\mathbf x^{\mathbf b}z^{mn}}
\(\sum _{i=1} ^{\infty}x_iz^{mi}\)^{s_2+s_3+\dots+s_l+1}.
\end{multline}
Noting that the conditions in the statement of the theorem say that
$$b_1+2b_2+3b_3+\dots+nb_n=n$$
and
$$
s_1+b_1+b_2+\dots+b_n=n,
$$
and, hence,
$$b_1+b_2+\dots+b_n=s_2+s_3+\dots+s_l+1,$$
we arrive at~\eqref{eq:multichains-bi-si} without any difficulty.
\end{proof}

\subsection{Enumeration of pseudo-rotationally invariant
  positive $m$-divisible non-cros\-sing partitions
in type $A$}
\label{app:GF-A-inv}

Again, we use a generating function approach as in \Cref{app:GF-A}.
When we consider $\rrr $-pseudo-rotationally invariant non-crossing
partitions, we need to introduce a ``simplified" weight, which
we denote by $w_{\rrr }^{(m)}$, and which is defined by
$$w^{(m)}_\rrr (\pi)=\prod _{i=1} ^{\infty}
x_i^{\#(\text {non-central blocks of $\pi$ of size
$mi$})/\rrr }.$$
For example, the weight $w_{4}^{(3)}(\,.\,)$ of both non-crossing partitions
in \Cref{fig:3,fig:4} is $x_1^2$, 
while the weight $w_{2}^{(3)}(\,.\,)$ of both partitions
in \Cref{fig:5,fig:6} is $x_1^4x_2$.

\begin{lemma} \label{lem:TF-ma=2}
Let $\rrr ,m,n$ be positive integers with $\rrr \ge2$, $\rrr \mid (mn-2)$.
The generating function $\sum _{\pi}
^{}w_{\rrr }^{(m)}(\pi)$, where the sum is over all positive
$m$-divisible non-crossing partitions of\/ $\{1,2,\dots,mn\}$
which are invariant under the
$\rrr $-pseudo-rotation~$\rotA^{(mn-2)/\rrr }$ and have a central block of
size~$2$ equals
\begin{equation} \label{eq:DA-ma=2} 
\coef{z^0}F^{-(mn-2)/\rrr }(z),
\end{equation}
where $F(z)$ is defined by \eqref{eq:CD} and
$f(z)$ is the compositional inverse of~$F(z)$, as
before. 
\end{lemma}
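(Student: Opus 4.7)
\emph{Proof proposal.} The plan is to reduce, via~\Cref{lem:allA}, to counting $\rrr$-fold rotationally invariant $m$-divisible non-crossing partitions with no rotation-fixed block on a cycle of $mn-2$ points, and then to compute the corresponding generating function via a fundamental-domain argument.

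A central block of size~$2$ in a positive $m$-divisible non-crossing partition must coincide with $\{1,mn\}$, which forces $m\in\{1,2\}$. Under the $\rrr$-pseudo-rotational invariance assumption, I would observe (consistent with the parametrisation in~\Cref{thm:2}, where $ma$ records the \emph{total} size of $\rotA^{(mn-2)/\rrr}$-fixed blocks) that no further $\rotA$-fixed block occurs, so $\{1,mn\}$ is the unique fixed block. By~\Cref{lem:allA}(1) --- together with the direct verification for $\rrr=2$ that Construction~2 always introduces an additional fixed block $\{mn/2,mn-1\}$ and therefore cannot yield total fixed-block size~$2$ --- such partitions are exactly those produced by the degenerate case of Construction~1. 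Equivalently, the restriction $\pi'$ of $\pi$ to $\{2,3,\dots,mn-1\}$ is a $\rot^{(mn-2)/\rrr}$-rotationally invariant non-crossing partition with no rotation-fixed block, in which all block sizes are divisible by~$m$.

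Writing $K=(mn-2)/\rrr$, the non-central blocks of $\pi$ partition into orbits of cardinality exactly~$\rrr$ under the rotation, and the weight $w^{(m)}_\rrr(\pi)$ records one factor $x_i$ per orbit of size-$mi$ blocks. Choose a fundamental domain consisting of $K$ consecutive points of the $(mn-2)$-cycle; by the rotational symmetry, the data of $\pi'$ is determined by the induced structure on this fundamental domain. Each of the $K$ positions independently contributes either a $1$ (the position is interior to an already-started orbit-representative block) or an $x_iz^{mi}$ (an orbit representative of size $mi$ begins there), so the generating function equals $\coef{z^K}G(z)^K$ with $G(z):=1+\sum_{i\geq 1}x_iz^{mi}$. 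Using~\eqref{eq:CD}, i.e.\ $F(z)=z/G(z)$, one concludes
\[
\coef{z^K}G(z)^K=\coef{z^0}z^{-K}G(z)^K=\coef{z^0}F(z)^{-K},
\]
which is precisely~\eqref{eq:DA-ma=2}.

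The main obstacle is establishing the precise bijection between $\rrr$-rotationally invariant non-crossing partitions with no fixed block on the $(mn-2)$-cycle and the compositions $(c_1,\dots,c_K)\in\{0,m,2m,\dots\}^K$ with $c_1+\cdots+c_K=K$: orbit-representative blocks whose elements are not all consecutive in the cycle (for instance, blocks that ``wrap around'' past the fundamental-domain boundary) must be canonically assigned to exactly one fundamental-domain position, and conversely every composition must be shown to arise from a valid non-crossing partition. A natural alternative that avoids this combinatorial subtlety is to derive a functional equation for the generating function of such rotationally invariant non-crossing partitions, analogous to~\eqref{eq:CC}, and to apply Lagrange inversion in the spirit of the proofs of~\Cref{cor:TC} and~\Cref{cor:TCa}.
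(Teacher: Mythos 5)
Your reduction breaks at the very first step: it is not true that a central block of size~$2$ must be $\{1,mn\}$. The pseudo-rotation~$\rotA$ maps central blocks to central blocks, but it moves them around the circle; the right half of \Cref{fig:22} shows a $3$-pseudo-rotationally invariant positive partition whose size-$2$ central block is $\{2,30\}$, while $1$ and $32$ lie in the three-element block $\{31,32,1\}$. Accordingly, \Cref{lem:allA} does not say that the partitions counted in the lemma are exactly the outputs of the degenerate case of Construction~1; it says they are those outputs \emph{together with all their images under repeated application of~$\rotA$}, and these images make up most of the set. For the same reason your ``equivalent'' reformulation is false: restricting the partition in the right half of \Cref{fig:22} to $\{2,\dots,31\}$ leaves the singleton $\{31\}$, which the ordinary rotation by $10$ sends to $\{11\}\subset\{10,11,12\}$, so the restriction is not rotation-invariant at all. (A smaller slip: partitions from Construction~2 have no central block whatsoever, rather than an extra fixed block $\{mn/2,mn-1\}$, so your exclusion of them is correct but for the wrong reason.)

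Even granting a correct identification, the step you yourself flag as the ``main obstacle'' --- that the objects are counted by $\coef{z^{K}}\bigl(1+\sum_{i\ge1}x_iz^{mi}\bigr)^{K}$ with $K=(mn-2)/\rrr$ via independent choices at the $K$ positions of a fundamental domain --- is precisely where the work lies: blocks such as $\{2,mn-1\}$, or the block $\{2,30\}$ above, straddle the boundary of any fundamental domain, and no canonical assignment is given. The paper's proof resolves both difficulties at once: it decomposes the seed of Construction~1 into connected components, shows that iterating~$\rotA$ cyclically permutes these components past the position $\{mn,1\}$, so that the partitions to be counted correspond to cyclic sequences of components; this yields the generating function $-\frac{mn-2}{\rrr}\coef{z^{(mn-2)/\rrr}}\log\bigl(1-G(z)\bigr)$, where $G(z)=1-z/f(z)$ is the generating function for connected components, and \eqref{eq:DA-ma=2} then follows from Lagrange inversion in the form~\eqref{eq:CA}. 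Your algebraic identity $\coef{z^{K}}\bigl(1+\sum_{i}x_iz^{mi}\bigr)^{K}=\coef{z^0}F^{-K}(z)$ does match \eqref{eq:DA-ma=2}, but neither your combinatorial model nor the suggested fallback (a functional equation in the spirit of~\eqref{eq:CC}) supplies the missing argument, so the proposal does not constitute a proof.
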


\begin{remark}
Since $m$ divides the sizes of all blocks, $m$-divisible non-crossing
partitions as described in the lemma only exist for $m=1$ and
$m=2$. Indeed, the formula~\eqref{eq:DA-ma=2} produces $0$ for $m\ge3$.
\end{remark}

\begin{proof}[Proof of \Cref{lem:TF-ma=2}]
We call a subpartition of a non-crossing partition a \defn{connected
component\/} if it is a partition of consecutive elements, of
$\{a,a+1,\dots,b\}$ say, and $a$ and $b$ are in the same block.
In Construction~1 in \Cref{sec:rotA} with $N=mn$ and a central block
of size~2, $\{2,3,\dots,\frac {mn-2} {\rrr }+1\}$ is
a union of blocks. Likewise, $\{mn-\frac {mn-2} {\rrr },\dots,mn-2,mn-1\}$ is
a union of blocks. These blocks are organised in connected components.
Let the connected component next to $mn$ cover the (consecutive)
elements $\{a,\dots,mn-2,mn-1\}$. The first application of~$\rotA$
moves this connected component ``into $\{mn,1\}$", while a block
$\{a+1,2\}$ is created covering this moved component. The next
$mn-a-2$ applications of~$\rotA$ rotate this arc-like block
ordinarily, until a further application of~$\rotA$ --- so-to-speak ---
maps it back to $\{mn,1\}$, while the connected component now covers
the elements $\{2,3,\dots,mn-a+1\}$. 

What the above arguments show is that the set of non-crossing
partitions that one obtains from a particular partition, $\hat\pi$~say, of
$\{mn-\frac {mn-2} {\rrr },\dots,mn-2,mn-1\}$ in Construction~1 and
subsequent applications of~$\rotA$ will be the same as from the
partition where one has circularly permuted the connected components
of~$\hat\pi$ and subsequent applications of~$\rotA$. Consequently,
using elementary generating function calculus, if
$G(z)$ denotes the generating function $\sum _{\ga}
^{}w^{(m)}(\ga)$, where the sum is over all possible connected
components, the generating function that we want to compute equals
\begin{equation} \label{eq:G(z)}
 \frac {mn-2} {\rrr }\coef{z^{(mn-2) /{\rrr }}}
\sum_{k=1}^\infty \frac {1} {k}G^k(z)=
- \frac {mn-2} {\rrr }\coef{z^{(mn-2) /{\rrr }}}\log \big(1-G(z)\big).
\end{equation}

In order to find an expression for $G(z)$, we observe that this can be
achieved by following the proof of \Cref{lem:TBa} (with the role of
the arc covering the connected component being played by the arc
$\{1,mn\}$  there). The result is
$$
G(z)=\sum _{i=1} ^{\infty}x_iz^{mi}\left(C^{(m)}(z)\right)^{mi-1}
=1-\frac {1} {C^{(m)}(z)}=1-\frac {z} {f(z)},
$$
where we used \eqref{eq:CC} and the definition of $f(z)$ to find the right-hand side.
We substitute this in~\eqref{eq:G(z)} to find that the generating
function that we are interested in is
\begin{align*}
- \frac {mn-2} {\rrr }\coef{z^{(mn-2) /{\rrr }}}\log \frac {z} {f(z)}
&=
\coef{z^{(mn-2) /{\rrr }}}z\frac {d} {dz}\log \frac {f(z)} z\\
&=\coef{z^{(mn-2) /{\rrr }}}
\frac {zf'(z)-f(z)} { f(z)}\\
&=\coef{z^0} z^{1-(mn-2) /{\rrr }}f^{-1}(z)f'(z).
\end{align*}
By Lagrange inversion in the form~\eqref{eq:CA}, this expression can
be transformed into the one in~\eqref{eq:DA-ma=2}.
\end{proof}

\begin{lemma} \label{lem:TF}
Let $\rrr ,m,n,a$ be positive integers with $\rrr \ge2$, $\rrr \mid (mn-2)$, 
and $a\equiv n\ (\text{\rm mod}\ \rrr )$.
The generating function $\sum _{\pi}
^{}w_{\rrr }^{(m)}(\pi)$, where the sum is over all positive
$m$-divisible non-crossing partitions of\/ $\{1,2,\dots,mn\}$
which are invariant under the
$\rrr $-pseudo-rotation~$\rotA^{(mn-2)/\rrr }$ and have a central block of
size~$ma$ equals
\begin{equation} \label{eq:DA} 
\frac {mn-2} {ma-2}\coef{z^{(mn-2)/\rrr }}f^{(ma-2)/\rrr }(z)=
\coef{z^0}z^{(ma-2)/\rrr }F^{-(mn-2)/\rrr }(z),
\end{equation}
where $F(z)$ is defined by \eqref{eq:CD} and
$f(z)$ is the compositional inverse of~$F(z)$, as
before. In the special case where $ma=2$, the left-hand side
of~\eqref{eq:DA} must be ignored.
\end{lemma}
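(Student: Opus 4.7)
The degenerate case $ma=2$ is precisely \Cref{lem:TF-ma=2}, so my plan is to establish the formula for $ma\ge 3$ and verify that it reduces correctly in the degenerate case. Set $L=(mn-2)/\rrr$ and $k'=(ma-2)/\rrr$, so that $k'\ge 1$ precisely when $ma\ge 3$. The overall strategy is to parametrize the relevant invariant partitions via Construction~1 from \Cref{sec:rotA}, then perform a careful orbit analysis under $\rotA$, and finally extract a closed form via Lagrange inversion.

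By \Cref{lem:allA}~(1), every $\rrr$-pseudo-rotationally invariant positive $m$-divisible non-crossing partition $\pi$ of $\{1,\dots,mn\}$ with central block of size~$ma$ lies in the $\rotA$-orbit of a Construction~1 partition --- that is, of a $\rot^L$-invariant non-crossing partition $\hat\pi$ of $\{2,\dots,mn-1\}$ whose central block $\hat C$ has size $ma-2$ and contains~$2$, with the elements $\{1,mn\}$ added to~$\hat C$ to form the central block of the full partition. To enumerate the Construction~1 partitions weighted by $w_\rrr^{(m)}$, I will first choose the $k'$ $\rot^L$-orbits in $\{2,\dots,mn-1\}$ whose union is~$\hat C$ (one of which must be the orbit of~$2$), and then fill each of the $k'$ orbit-representative arcs of $\hat C$ in the fundamental domain $\{2,3,\dots,L+1\}$ with an arbitrary $m$-divisible non-crossing partition, the total size of the non-central representatives being $L-k'=(mn-ma)/\rrr$. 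Standard generating-function bookkeeping (using that the weight $w_\rrr^{(m)}$ records each non-central block-orbit once and that a block-orbit contributes one representative block per fundamental domain) then yields
\[
 N_{C1}=\coef{z^{L-k'}}\bigl(C^{(m)}(z)\bigr)^{k'}.
\]

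The crucial and most delicate step will then be to count the Construction~1 representatives in each $\rotA$-orbit. I expect to show that each $\rotA$-orbit has size~$L$ (generically) and contains exactly $k'$ Construction~1 representatives. The intuition is that the central block of a Construction~1 partition contains $k'$ distinct $\rot^L$-orbits of $\{2,\dots,mn-1\}$, exactly one of which is the orbit of~$2$; as $\rotA$ is iterated around a single $\rotA$-orbit, each of these $k'$ orbits in turn takes on the role of ``the orbit of~$2$'', producing exactly $k'$ Construction~1 representatives. Establishing this count rigorously --- by tracking how the central block transforms under~$\rotA$ and how the pseudo-rotation Case~(2) of \Cref{prop:2} interacts with the boundary elements~$1$ and~$mn$ at each step --- will be the main technical obstacle. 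Granting the claim, one obtains
\[
 \sum_\pi w_\rrr^{(m)}(\pi)=\frac{L}{k'}\cdot N_{C1}=\frac{mn-2}{ma-2}\coef{z^{L-k'}}\bigl(C^{(m)}(z)\bigr)^{k'}.
\]

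Finally, the identity $f(z)=zC^{(m)}(z)$ derived in the preparations of \Cref{app:GF-aux} transforms the right-hand side into $\tfrac{mn-2}{ma-2}\coef{z^{(mn-2)/\rrr}}f^{(ma-2)/\rrr}(z)$, which is the first expression in~\eqref{eq:DA}; applying Lagrange inversion in the form~\eqref{eq:CA} with $a=(ma-2)/\rrr$ and $b=-(mn-2)/\rrr+1$ then converts this into the second expression $\coef{z^0}z^{(ma-2)/\rrr}F^{-(mn-2)/\rrr}(z)$. Since the latter form continues to make sense (and to evaluate to the correct count via \Cref{lem:TF-ma=2}) when $k'=0$, it gracefully unifies the non-degenerate case with the degenerate case $ma=2$, completing the argument.
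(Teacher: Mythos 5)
Your proposal follows essentially the same route as the paper's own proof: reduce via \Cref{lem:allA}(1) to Construction~1 seeds, count those through the fundamental domain (your $\coef{z^{L-k'}}\bigl(C^{(m)}(z)\bigr)^{k'}$ is exactly the paper's $\coef{z^{(mn-2)/\rrr}}f^{(ma-2)/\rrr}(z)$ since $f(z)=zC^{(m)}(z)$), scale by $(mn-2)/(ma-2)$ through the same orbit/overcounting accounting, and pass between the two sides of \eqref{eq:DA} by Lagrange inversion (the paper uses \eqref{eq:CB} with $b=-(mn-2)/\rrr$, which is equivalent to your use of \eqref{eq:CA}). The double-counting claim you leave as ``granted'' is precisely the step the paper also dispatches in one sentence --- multiply by $(mn-2)/\rrr$ for the insertion of $1,mn$ and the subsequent applications of $\rotA$, and divide by $(ma-2)/\rrr$ because any central-block element can be moved to position~$2$ by the rotation --- so your argument is complete to the same standard as the paper's.
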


\begin{remark}
The condition $a\equiv n\ (\text{\rm mod}\ \rrr )$ is actually redundant,
but is recorded in the statement of the lemma for the convenience of the
reader. More precisely, according to \Cref{lem:allA}, the non-crossing
partitions in the statement of the lemma arise exclusively from Construction~1.
In that construction, the number of the
non-central blocks must be divisible by~$\rrr$. Consequently, the number
of elements contained in non-central blocks is divisible by~$m\rrr$.
Thus, we have $mn=ma+m\rrr B$, for some~$B$. Clearly, this implies
$a\equiv n\ (\text{\rm mod}\ \rrr )$.
\end{remark}

\begin{proof}[Proof of \Cref{lem:TF}]
We have already treated the case where $ma=2$ in \Cref{lem:TF-ma=2} above.
So, let us assume that $ma\ne2$ from now on.

The equality in \eqref{eq:DA} is a direct consequence of the Lagrange
inversion formula~\eqref{eq:CB} with $a$ replaced by $(ma-2)/\rrr $ and 
$b=-(mn-2)/\rrr $.
Hence, it suffices to derive one of the two formulae.

\Cref{lem:allA}(1) says that --- essentially --- enumerating
positive $m$-divisible non-crossing partitions of $\{1,2,\dots,mn\}$ 
which are invariant
under the $\rrr $-pseudo
rotation~$\rotA^{(mn-2)/\rrr }$ is equivalent to enumerating (ordinary)
non-crossing partitions of $mn-2$ elements,
with all block sizes except the one of the central block multiples
of~$m$, which are invariant under the
$\rrr $-rotation~$\rot^{(mn-2)/\rrr }$.

\begin{figure}
  \begin{tikzpicture}[scale=1]
    \polygon{(0,0)}{obj}{28}{2.5}
      {2,3,4,5,6,7,8,9,10,11,12,13,14,15,16,17,18,19,20,21,22,23,24,25,26,27,28,29}
     \draw[fill=black,fill opacity=0.1] (obj1) to[bend right=30]
(obj8) to[bend right=30] (obj15) to[bend right=30] (obj22) to[bend
right=30] (obj1);
     \draw[fill=black,fill opacity=0.1] (obj2) to[bend right=30]
(obj6) to[bend right=30] (obj7) to[bend left=30] (obj2);
     \draw[fill=black,fill opacity=0.1] (obj3) to[bend right=30]
(obj4) to[bend right=30] (obj5) to[bend left=30] (obj3);
     \draw[fill=black,fill opacity=0.1] (obj9) to[bend right=30]
(obj13) to[bend right=30] (obj14) to[bend left=30] (obj9);
     \draw[fill=black,fill opacity=0.1] (obj10) to[bend right=30]
(obj11) to[bend right=30] (obj12) to[bend left=30] (obj10);
     \draw[fill=black,fill opacity=0.1] (obj16) to[bend right=30]
(obj20) to[bend right=30] (obj21) to[bend left=30] (obj16);
     \draw[fill=black,fill opacity=0.1] (obj17) to[bend right=30]
(obj18) to[bend right=30] (obj19) to[bend left=30] (obj17);
     \draw[fill=black,fill opacity=0.1] (obj23) to[bend right=30]
(obj27) to[bend right=30] (obj28) to[bend left=30] (obj23);
     \draw[fill=black,fill opacity=0.1] (obj24) to[bend right=30]
(obj25) to[bend right=30] (obj26) to[bend left=30] (obj24);
    \end{tikzpicture}
\caption{A $4$-rotationally invariant non-crossing partition of
$\{2,3,\dots,29\}$}
\label{fig:16d}
\end{figure}
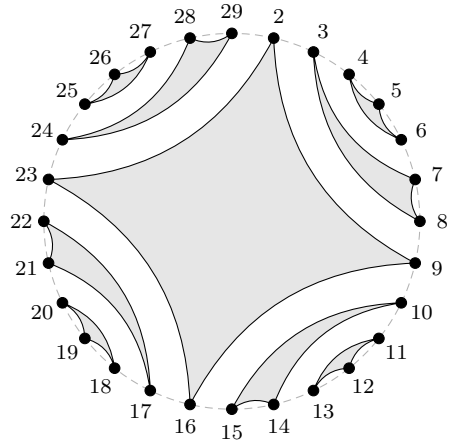

For the following considerations it will be useful to simultaneously
look at \Cref{fig:16d}
which shows a non-crossing partition
of $\{2,3,\dots,29\}$ which is invariant under the
$4$-pseudo-rotation~$\rot^{7}$ (in the sense described in Construction~1,
that is,
identifying $30$ and $2$) and which has a central block of size $4=3\cdot 2-2$.
This is the non-crossing partition which underlies the partitions in
\Cref{fig:3,fig:4} through Construction~1.

We want to count non-crossing partitions of $\{2,3,\dots,mn-1\}$
with all block sizes except the one of the central block multiples
of~$m$, the central block containing the element~$2$ and having size $ma-2$,
which remain invariant under a $\rrr $-rotation. The insertion
of~$1$ and~$mn$ and subsequent action
of~$\rotA$ which is described in \Cref{lem:allA}
to obtain \emph{all\/} positive $m$-divisible non-crossing partitions
of $\{1,2,\dots,mn\}$ then implies that the result has to
be multiplied by $\frac {(mn-2)/\rrr } {(ma-2)/\rrr }=\frac {mn-2} {ma-2}$.
(In particular, we divide by $(ma-2)/\rrr $ in order to avoid
overcounting: any element of the central block can be moved to the
element~$2$ by repeated application of the rotation~$\rot$!)

Since the partition is invariant under the $\rrr $-rotation~$\rot^{(mn-2)/\rrr }$, 
it suffices to concentrate on the ``fundamental" part of
the partition, which we may choose as the restriction of the partition
to $\mathcal F=\{2,3,\dots,\frac {mn-2} {\rrr }+1\}$.

Now let $s_1=2,s_2,\dots,s_{(ma-2)/\rrr }$ be the elements of the central block
which are at the same time contained in the fundamental section
$\mathcal F$.
The restriction of our original partition to the vertices in between
$s_i$ and $s_{i+1}$, $i=1,2,\dots,\frac {ma-2} {\rrr }$ (with the convention that
$s_{(ma-2)/\rrr +1}=s_1+\frac {mn-2} {\rrr }=\frac {mn-2} {\rrr }+2$) is an 
$m$-divisible non-crossing partition on $s_{i+1}-s_i-1$
vertices. That is, the restriction to the fundamental section
$\mathcal F$
consists of $(ma-2)/\rrr $ $m$-divisible non-crossing partitions 
``interleaved" by elements of the central block.
From \Cref{lem:TB} and the subsequent arguments we know that
the generating function for $m$-divisible non-crossing partitions is
given by $f(z)/z$. Then, by elementary
generating function calculus, we infer that
the generating function $\sum _{\pi}
^{}w_{\rrr }^{(m)}(\pi)$ of the statement in the lemma is equal to
$$\frac {mn-2} {ma-2}\coef{z^{(mn-2)/\rrr }}f^{(ma-2)/\rrr }(z).$$
This is exactly~\eqref{eq:DA}.
\end{proof}

\begin{lemma} \label{lem:TFa}
Let $m$ and $n$ be positive integers with $2\mid mn$.
The generating function $\sum _{\pi}
^{}w_{2}^{(m)}(\pi)$, where the sum is over all positive
$m$-divisible non-crossing partitions of\/ $\{1,2,\break\dots,mn\}$
without central block
which are invariant under the $2$-pseudo-rotation~$\rotA^{(mn-2)/2}$ equals
\begin{equation} \label{eq:DAa} 
\sum_{a=1}^\infty 
\frac {mn-2} {2}x_a\coef{z^{(mn-2)/2}}f^{ma-1}(z)=
\sum_{a=1}^\infty 
(ma-1)x_a\coef{z^{-(ma-1)}}F^{-(mn-2)/2}(z),
\end{equation}
where $F(z)$ is defined by~\eqref{eq:CD} and
$f(z)$ is the compositional inverse of~$F(z)$, as
before. 
\end{lemma}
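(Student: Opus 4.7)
The plan is to parallel the proof of \Cref{lem:TF}, replacing Construction~1 by Construction~2 from \Cref{sec:rotA}. By \Cref{lem:allA}(2), every positive $m$-divisible non-crossing partition without central block that is invariant under $\rotA^{(mn-2)/2}$ is obtained from some seed $\bar\pi$ of Construction~2 by iterated application of $\rotA$. The first key step will be to produce the overall factor $(mn-2)/2$ in~\eqref{eq:DAa} as the size of each such $\rotA$-orbit: since $\rotA$ has order $mn-2$ by \Cref{lem:N-2}, and such a $\pi$ cannot be $\rrr$-pseudo-rotationally invariant for any $\rrr\geq3$ (as \Cref{lem:10} would then force the existence of a central block), the stabilizer of $\pi$ under $\rotA$ is exactly the two-element subgroup generated by $\rotA^{(mn-2)/2}$, so each orbit has cardinality $(mn-2)/2$. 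Moreover each orbit contains a unique partition arising directly from Construction~2 (namely the one whose block $B_0$ of $mn$ and $1$ has the canonical Construction~2 shape), so it suffices to compute the weighted number of seeds $\bar\pi$ and multiply the total by $(mn-2)/2$.

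Next I will determine the generating function for seeds. A seed $\bar\pi$ is a non-crossing partition of the linearly ordered $mn/2$-element set $\{mn,1,2,\dots,(mn-2)/2\}$ in which $mn$ and $1$ sit in a common block $B_0$ and every block size is divisible by $m$; since each non-$B_0$ block of $\bar\pi$ pairs with its $\rot^{(mn-2)/2}$-twin in $\pi$ (and $B_0$ pairs with the dedicated twin $\rot^{(mn-2)/2}(\{v_1,\dots,v_{ma-2}\})\cup\{mn/2,mn-1\}$), the weight satisfies
\[
  w_2^{(m)}(\pi)\;=\;\prod_{i\geq1} x_i^{\#\{\text{blocks of size }mi\text{ in }\bar\pi\}}.
\]
Writing $B_0=\{mn,1,v_1,\dots,v_{ma-2}\}$ with $1<v_1<\dots<v_{ma-2}\leq(mn-2)/2$, the gaps between consecutive elements of $B_0$ in this linear order together with the tail after $v_{ma-2}$ each accommodate an arbitrary $m$-divisible non-crossing partition; since the initial gap between $mn$ and $1$ is empty, exactly $ma-1$ non-trivial slots remain. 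By standard generating function calculus (parallel to the proofs of \Cref{lem:TB} and \Cref{lem:TBa}) this yields
\[
  \bar G(z) \;=\; \sum_{a\geq1} x_a\,z^{ma}\,\bigl(C^{(m)}(z)\bigr)^{ma-1}.
\]

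To conclude, I will extract $\coef{z^{mn/2}}\bar G(z)$, multiply by $(mn-2)/2$, and use the identity $f(z)=z\,C^{(m)}(z)$ established after \Cref{lem:TB}, so that $\bigl(C^{(m)}(z)\bigr)^{ma-1}=f^{ma-1}(z)/z^{ma-1}$; this produces the first expression in~\eqref{eq:DAa} directly. The equality with the second expression is then immediate from the Lagrange inversion formula~\eqref{eq:CB} applied with the substitutions $a\leadsto ma-1$ and $b\leadsto-(mn-2)/2$. The main obstacle I foresee lies in the orbit-size argument of the first step, which requires reconciling the formal rotation-fixed definition of a central block with the construction-theoretic notion of a partition ``without central block'' (namely a Construction~2 seed), and in the appeal to \Cref{lem:10} to pin the stabilizer down to order two. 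The generating function manipulation itself is routine once the decomposition of $\bar\pi$ around $B_0$ is in place.
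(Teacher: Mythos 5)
Your proposal is correct and follows essentially the same route as the paper's proof: reduce via \Cref{lem:allA}(2) to Construction~2 seeds, argue that each orbit under $\rotA$ has exactly $(mn-2)/2$ elements (the paper phrases this as distinctness of the iterates $\rotA^i(\pi)$, $0\le i<(mn-2)/2$, using \Cref{lem:allA}(1) where you invoke \Cref{lem:10} — same content), decompose around the block $B_0$ containing $mn$ and $1$ into $ma-1$ slots each carrying $C^{(m)}(z)$ to get $x_az^{ma}\bigl(C^{(m)}(z)\bigr)^{ma-1}=x_az\,f^{ma-1}(z)$, extract $\coef{z^{mn/2}}$, and finish with Lagrange inversion~\eqref{eq:CB}. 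No substantive differences to report.
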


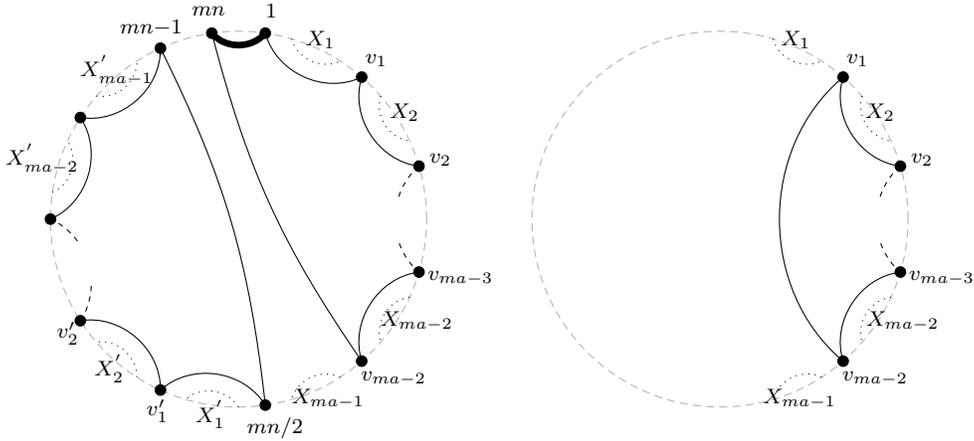
\begin{figure}
  \centering
  \begin{tikzpicture}[scale=1]
    \polygon{(4.4,0)}{obj}{44}{2.5}
      {,,,,v_1,,,,v_2,,,,\hspace{15pt}v_{ma-3},,,,\hspace{13pt}v_{ma-2},,,,,,,,,,,,,,,,,,,,,,,,,,,}

    \draw[dash pattern=on 2pt off 2pt on 2pt off 2pt on 2pt off 2pt on 2pt off 18pt on 2pt off 2pt on 2pt off 2pt] (obj9) to[bend right=50] (obj13);
    
    \draw (obj13) to[bend right=50]
          (obj17) to[bend left=50]
          (obj5)  to[bend right=50]
          (obj9);

    \draw[dotted] (obj2)  to[bend right=50] (obj4);
    \node at ($($1.05*($(obj3)+(4,0)$)$)-(4.5,0)$) {\tiny$X_1$};

    \draw[dotted] (obj6)  to[bend right=50] (obj8);
    \node at ($($1.05*($(obj7)+(4,0)$)$)-(4.5,0)$) {\tiny$X_2$};

    \draw[dotted] (obj14)  to[bend right=50] (obj16);
    \node at ($($1.15*($(obj15)+(4,0)$)$)-(5.25,-0.2)$) {\tiny$X_{ma-2}$};

    \draw[dotted] (obj18)  to[bend right=50] (obj20);
    \node at ($($1.05*($(obj19)+(4,0)$)$)-(4.45,0)$) {\tiny$X_{ma-1}$};

    \polygon{(-2,0)}{obj}{44}{2.5}
      {\hspace{2pt}1,,,,v_1,,,,v_2,,,,\hspace{15pt}v_{ma-3},,,,\hspace{13pt}v_{ma-2},,,,\hspace{5pt}mn/2,,,,\hspace{5pt}v_1',,,,\hspace{5pt}v_2',,,,\hspace{5pt},,,,\hspace{5pt},,,,mn-1,,mn,}

    \draw[line width=2.5pt,black] (obj1) to[bend left=50] (obj43);

    \draw[dash pattern=on 2pt off 2pt on 2pt off 2pt on 2pt off 2pt on 2pt off 18pt on 2pt off 2pt on 2pt off 2pt] (obj9) to[bend right=50] (obj13);
    
    \draw (obj13) to[bend right=50]
          (obj17) to[bend left=10]
          (obj43) to[bend right=50]
          (obj1)  to[bend right=50]
          (obj5)  to[bend right=50]
          (obj9);

    \draw[dotted] (obj2)  to[bend right=50] (obj4);
    \node at ($($1.05*($(obj3)-(4,0)$)$)+(4.3,0)$) {\tiny$X_1$};

    \draw[dotted] (obj6)  to[bend right=50] (obj8);
    \node at ($($1.05*($(obj7)-(4,0)$)$)+(4.3,0)$) {\tiny$X_2$};

    \draw[dotted] (obj14)  to[bend right=50] (obj16);
    \node at ($($1.15*($(obj15)-(4,0)$)$)+(4.85,0.2)$) {\tiny$X_{ma-2}$};

    \draw[dotted] (obj18)  to[bend right=50] (obj20);
    \node at ($($1.05*($(obj19)-(4,0)$)$)+(4.4,0)$) {\tiny$X_{ma-1}$};

    \draw[dash pattern=on 2pt off 2pt on 2pt off 2pt on 2pt off 2pt on 2pt off 18pt on 2pt off 2pt on 2pt off 2pt] (obj29) to[bend right=50] (obj33);
    
    \draw (obj33) to[bend right=50]
          (obj37) to[bend right=50]
          (obj41) to[bend left=10]
          (obj21) to[bend right=50]
          (obj25) to[bend right=50]
          (obj29);

    \draw[dotted] (obj22)  to[bend right=50] (obj24);
    \node at ($($1.05*($(obj23)-(4,0)$)$)+(4.3,0)$) {\tiny$X^{'}_{1}$};

    \draw[dotted] (obj26)  to[bend right=50] (obj28);
    \node at ($($1.05*($(obj27)-(4,0)$)$)+(4.3,0)$) {\tiny$X_{2}^{'}$};

    \draw[dotted] (obj34)  to[bend right=50] (obj36);
    \node at ($($1.15*($(obj35)-(4,0)$)$)+(5.05,0)$) {\tiny$X_{ma-2}^{'}$};

    \draw[dotted] (obj38)  to[bend right=50] (obj40);
    \node at ($($1.05*($(obj39)-(4,0)$)$)+(4.4,0)$) {\tiny$X_{ma-1}^{'}$};

  \end{tikzpicture}
  \caption{The decomposition of a $2$-pseudo-rotationally
    invariant non-crossing partition in 
$\mNCAPlus$ without central block}
\label{fig:9}
\end{figure}

\begin{proof}
According to \Cref{lem:allA}(2), all 
$2$-pseudo-rotationally invariant positive $m$-di\-vi\-si\-ble non-crossing
  partitions of $\{1,2,\dots,mn\}$ 
are obtained by starting with a non-crossing partition
obtained by Construction~2, $\pi$ say,
and applying the pseudo-rotation~$\rotA$ repeatedly to it.
Since, by \Cref{lem:allA}(1), $\rrr$-pseudo-rotational invariant 
positive non-crossing partitions necessarily have a central block if
$\rrr\ge3$, the fact that $\pi$ does not contain a central block implies that
the partitions $\rotA^i(\pi)$, $i=0,1,\dots,\frac {mn-2}2-1$,
are all different.
Therefore, what we need to do is to count the number of
non-crossing partitions of $\{1,2,\dots,mn\}$ which can be obtained
by Construction~2, and then multiply the result by $(mn-2)/2$.

If we fix the size of the block containing $mn$ and $1$ in 
Construction~2 to be $ma$, then we claim that
the generating function $\sum _{\pi}
^{}w_{2}^{(m)}(\pi)$ for the non-crossing partitions arising from
Construction~2 with the size of the block containing $mn$ and $1$
being equal to $ma$ is given by 
\begin{equation} \label{eq:decomp9}
x_az^{ma}\left(C^{(m)}(z)\right)^{ma-1}
=x_azf^{ma-1}(z),
\end{equation}
where $C^{(m)}(z)$ is the generating function for $m$-divisible
non-crossing partitions from\break \Cref{lem:TB}. To see this,
the reader must consult \Cref{fig:9}: the left part
of the figure shows a schematic
illustration of Construction~2 with $N=mn$ and $a$ replaced by~$ma$,
while the right part extracts the relevant
portions that explain the above expression. Here, $X_1,X_2,\dots,X_{ma-1}$
symbolise $m$-divisible non-crossing partitions that may be inserted
between $1,v_1,\dots,v_{ma-2}$ and after $v_{am-2}$, while
$X_i'=R^{(mn-2)/2}X_i$ and $v_i'=R^{(mn-2)/2}v_i$. The generating
function for each of the $X_i$'s is given by $C^{(m)}(z)$.
By elementary generating function
calculus, this gives the expression in~\eqref{eq:decomp9}.

In view of this expression, the 
number we want to compute is 
$$
\frac {mn-2} {2}x_a\coef{z^{mn/2}}zf^{ma-1}(z)=
\frac {mn-2} {2}x_a\coef{z^{(mn-2)/2}}f^{ma-1}(z).
$$
This number has to be summed over all possible~$a$'s. 
This leads directly to the left-hand side expression in~\eqref{eq:DAa}.
The right-hand side expression is then a consequence of the
Lagrange inversion formula~\eqref{eq:CB}.
\end{proof}

\begin{proof}[Proof of \Cref{thm:2}]
We distinguish two cases depending on whether $a>0$ 
or $a=0$, that is, whether the choice of~$b_k$'s implies the existence
of a central block or not.

If $a>0$, that is, $n>\rrr \sum_{j=1}^njb_j$, then,
using the right-hand side expression in~\eqref{eq:DA}
and~\eqref{eq:CD},
we see that the number which we want to compute is given by
\begin{align*}
\coef{\mathbf x^{\mathbf b}z^0}z^{(ma-2)/\rrr }F^{-(mn-2)/\rrr }(z)
&=
\coef{\mathbf x^{\mathbf b}z^0}z^{(ma-mn)/\rrr }\(1+\sum_{\ell=1}^\infty x_\ell
z^{m\ell}\)^{(mn-2)/\rrr }\\
&=\coef{\mathbf x^{\mathbf b}z^{m(n-a)/\rrr }}
\sum_{k=0}^{(mn-2)/\rrr }\binom {(mn-2)/\rrr }k
\(\sum_{\ell=1}^\infty x_\ell z^{m\ell}\)^k\\
&=
\binom {(mn-2)/\rrr }{b_1+b_2+\dots+b_n}
\binom {b_1+b_2+\dots+b_n}{b_1,b_2,\dots,b_n},
\end{align*}
where the short notation $\mathbf x^{\mathbf b}$ stands again for
$x_1^{b_1}x_2^{b_2}\cdots x_n^{b_n}$.
This is exactly the asserted expression.

On the other hand, if $\rrr =2$ and $n=2\sum_{j=1}^njb_j$, then,
using the right-hand side expression in~\eqref{eq:DAa}
and~\eqref{eq:CD}, 
we see that the number which we want to compute is given by
\begin{align*}
\sum_{a=1}^n
(ma-1)&\coef{x_a^{-1}\mathbf x^{\mathbf b}z^{-(ma-1)}}F^{-(mn-2)/2}(z)\\
&=\sum_{a=1}^n
(ma-1)\coef{x_a^{-1}\mathbf x^{\mathbf b}z^{\frac {mn} {2}-ma}}
\(1+\sum_{\ell=1}^\infty x_\ell z^{m\ell}\)^{(mn-2)/2}\\
&=\sum_{a=1}^n 
(ma-1)
\binom {(mn-2)/2}{b_1+b_2+\dots+b_n-1}
\binom {b_1+b_2+\dots+b_n-1}
{b_1,\dots,b_{a-1},b_a-1,b_{a+1},\dots,b_n}\\
&=
\binom {(mn-2)/2}{b_1+b_2+\dots+b_n}
\binom {b_1+b_2+\dots+b_n}
{b_1,b_2,\dots,b_n}
\sum_{a=1}^n 
\frac {(ma-1)b_a}
{\frac {mn} {2}-b_1-b_2-\dots-b_n}.
\end{align*}
Since $\sum_{a=1}^n {(ma-1)b_a}=\frac {mn}
{2}-b_1-b_2-\dots-b_n$,
this proves the formula in~\eqref{eq:multichains-bi-si-pos} also
in this case.
 
According to \Cref{lem:10} and \Cref{lem:allA}, in all other cases
there are no $\rrr $-pseudo-rotationally invariant positive $m$-divisible
non-crossing partitions.
\end{proof}

\begin{proof}[Proof of \Cref{cor:4}]
The expression~\eqref{eq:multichains-si-pos-a} could be obtained
from~\eqref{eq:multichains-bi-si-pos} by summing over all possible~$b_i$'s,
$i=1,2,\dots,n$, with $a=n-\rrr (b_1+2b_2+\dots+nb_n)$ and 
$b_1+b_2+\dots+b_n=b$. It is however simpler to start again with
the right-hand side of~\eqref{eq:DA} (if $a>0$), respectively
the right-hand side of~\eqref{eq:DAa} (if $a=0$), 
set all~$x_j$'s (except $x_0$
which, by definition, equals~$1$) equal to $t$, 
%sum the resulting
%expression over all non-negative integers $a\equiv n$~(mod~$r$),
and finally extract
the coefficient of~$t^{b}$. 
Indeed, since in
$\rrr $-pseudo-rotationally invariant $m$-divisible non-crossing partitions
the number of non-central blocks of size~$mi$ must be a multiple of~$\rrr $,
implying that the size of the central block must be $ma=mn-m\rrr B$, for some
positive integer $B$, we have
% it suffices indeed to sum the expression
%\eqref{eq:DA} over 
$a\equiv n$~(mod~$\rrr $).
%in order to cover all possible sizes of central blocks.

Hence, if $a>0$, from \eqref{eq:DA} we obtain
\begin{align*}
\coef{t^{b}z^0}
&z^{(ma-2)/\rrr }F^{-(mn-2)/\rrr }(z)\Big\vert_{x_1=x_2=\dots=t}\\
&=
\coef{t^{b}z^0}
z^{(ma-mn)/\rrr }\(1+t\sum_{\ell=1}^\infty 
z^{m\ell}\)^{(mn-2)/\rrr }\\
&=\coef{t^{b}z^{(mn-ma)/\rrr }}
\sum_{k=0}^{(mn-2)/\rrr }\binom {(mn-2)/\rrr }k
t^k\(\frac {z^m} {1-z^m}\)^k\\
&=\coef{z^{m\frac {n-a} {\rrr }-mb}}
\binom {(mn-2)/\rrr }{b}
\(\frac {1} {1-z^m}\)^{b}\\
&=
\binom {(mn-2)/\rrr }{b}
\binom {{(n-a-\rrr )/\rrr }}{b-1},
\end{align*}
%\begin{align*}
%\coef{t^{s}z^0}\sum_{a\equiv n~\text{(mod $r$)}}
%&z^{(ma-2)/r}F(z)^{-(mn-2)/r}\Big\vert_{x_1=x_2=\dots=t}\\
%&=
%\coef{t^{s}z^0}\sum_{a\equiv n~\text{(mod $r$)}}
%z^{(ma-mn)/r}\(1+t\sum_{\ell=1}^\infty 
%z^{m\ell}\)^{(mn-2)/r}\\
%&=\coef{t^{s}z^{0}}
%\frac {z^{-m\fl{\frac nr}}} {1-z^m}
%\sum_{k=0}^{(mn-2)/r}\binom {(mn-2)/r}k
%t^k\(\frac {z^m} {1-z^m}\)^k\\
%&=\coef{z^{ms}}
%\binom {(mn-2)/r}{\fl{n/r}-s}
%\(\frac {1} {1-z^m}\)^{\fl{n/r}-s+1}\\
%&=
%\binom {(mn-2)/r}{\fl{n/r}-s}
%\binom {\fl{n/r}}{s},
%\end{align*}
which is exactly~\eqref{eq:multichains-si-pos-a}.

On the other hand, if $a=0$ (and hence $n\equiv0$~(mod~$2$) and $\rrr=2$), 
from~\eqref{eq:DAa} we obtain
\begin{align*}
\coef{t^{b}z^0}
&\sum_{\al=1}^\infty
(m\al-1)x_\al z^{m\al-1}F^{-(mn-2)/2}(z)
\Big\vert_{x_1=x_2=\dots=t}\\
&=
\coef{t^{b}z^0}\sum_{\al=1}^\infty
(m\al-1)t z^{m\al-\frac {mn} {2}}
\(1+t\sum_{\ell=1}^\infty 
z^{m\ell}\)^{(mn-2)/2 }\\
&=\sum_{\al=1}^\infty (m\al-1)
\coef{t^{b-1}z^{\frac {mn} {2}-m\al}}
\sum_{k=0}^{(mn-2)/2 }\binom {(mn-2)/2 }k
t^k\(\frac {z^m} {1-z^m}\)^k\\
&=\sum_{\al=1}^\infty (m\al-1)
\coef{z^{\frac {mn} {2}-m\al-m(b-1)}}
\binom {(mn-2)/2 }{b-1}
\(\frac {1} {1-z^m}\)^{b-1}\\
&=\sum_{\al=1}^\infty (m\al-1)
\binom {(mn-2)/2 }{b-1}
\binom {{\frac {n} {2}-\al-1 }}{b-2}\\
&=\binom {(mn-2)/2 }{b-1}
\sum_{\al=1}^\infty \left(m\left(\al-\tfrac {n} {2}\right)
  +\left(m\tfrac {n} {2}-1\right)\right)
\binom {{\frac {n} {2}-\al-1 }}{b-2}\\
&=\binom {(mn-2)/2 }{b-1}
\sum_{\al=1}^\infty 
\left(-m(b-1)\binom {{\frac {n} {2}-\al }}{b-1}
+\left(m\tfrac {n} {2}-1\right)\binom {{\frac {n} {2}-\al-1 }}{b-2}\right)\\
&=\binom {(mn-2)/2 }{b-1}
\left(-m(b-1)\binom {{\frac {n} {2}}}{b}
+\left(m\tfrac {n} {2}-1\right)\binom {{\frac {n} {2}-1 }}{b-1}\right)\\
&=\binom {(mn-2)/2 }{b-1}\binom {{\frac {n} {2}-1 }}{b-1}
%\left(-\frac {m(s-1)\frac {n} {2}} s
%+\left(ms\tfrac {n} {2}-1\right)/s\right)
\frac {m\frac {n} {2}-b} {b}\\
&=\binom {(mn-2)/2 }{b}\binom {{\frac {n} {2}-1 }}{b-1},
\end{align*}
which is exactly \eqref{eq:multichains-si-pos-a} with $a=0$.

\medskip
Finally, by summing the expression~\eqref{eq:multichains-si-pos-a} over all $a\equiv n$~(mod~$\rrr $),
we obtain~\eqref{eq:multichains-si-pos}.
\end{proof}

\subsection{Enumeration of  positive $m$-divisible non-cros\-sing partitions
in type $B$}
\label{app:GF-B}

As in \Cref{app:GF-A},
we use the generating function approach from~\cite{KratCG}.
Let $x_i$, $i=1,2,\dots$, be variables, and $x_0=1$.
For a positive $m$-divisible non-crossing partition~$\pi$ of type~$B$ let the weight
$w^{(m)}(\pi)$ be defined by
$$w^{(m)}(\pi)=\prod _{i=1} ^{\infty}
x_i^{\#(\text {non-zero blocks of $\pi$ of size
$mi$})/2}.$$
For example, the weight $w^{(3)}(\,.\,)$ of the
positive $m$-divisible non-crossing partitions of type~$B$ in
\Cref{fig:10b} is $x_1^3$, while the weight of the
non-crossing partitions in \Cref{fig:11b} is
$x_1^4$.

\begin{lemma} \label{lem:TF-ma=1}
Let $n$ be a positive integer.
The generating function $\sum _{\pi}
^{}w^{(1)}(\pi)$, where the sum is over all positive
non-crossing partitions of\/
$\{1,2,\dots,n,\overline1,\overline2,\dots,\overline n\}$ of type~$B$
which have a zero block of size~$2$ equals
\begin{equation} \label{eq:DA-ma=1} 
\coef{z^0}F^{-(n-1)}(z),
\end{equation}
where $F(z)$ is defined by~\eqref{eq:CD} and
$f(z)$ is the compositional inverse of~$F(z)$, as
before. 
\end{lemma}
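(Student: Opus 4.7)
My plan is to parametrize the partitions in question by the zero block and the restriction to one of the two resulting arcs, translate the positivity condition into a block-structure condition on that arc, and then evaluate the resulting count via Lagrange--B\"urmann inversion. First, every such $\pi$ is uniquely encoded by a pair $(a,\tilde\pi)$, where $\{a,\overline a\}$ with $a\in\{1,\dots,n\}$ is the zero block and $\tilde\pi$ is the (linear) non-crossing partition obtained by restricting $\pi$ to $\mathrm{Arc}_A:=\{a+1,\dots,n,\overline1,\dots,\overline{a-1}\}$, a set of $n-1$ elements; the type~$B$ symmetry then reconstructs $\pi$ on the opposite arc. Since all non-zero blocks of $\pi$ come in mirror pairs, one has $w^{(1)}(\pi)=\prod_{B\in\tilde\pi}x_{|B|}$.

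Next, I would unwind the positivity condition. For $a=1$ positivity is automatic, since the block of $1$ is the zero block $\{1,\overline1\}$ itself, which already contains $\overline1$. For $a\ge2$ the mirror correspondence shows that positivity is equivalent to $\overline1$ \emph{not} being the minimum of its block in $\tilde\pi$, where $\mathrm{Arc}_A$ is linearly ordered and $\overline1$ occupies position $n-a+1$. Fixing $\tilde\pi$ and letting $a$ range over $\{2,\dots,n\}$, the corresponding position sweeps $\{1,\dots,n-1\}$, of which exactly $(n-1)-k(\tilde\pi)$ entries are non-minima of their blocks, where $k(\tilde\pi)$ denotes the number of blocks. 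Including the always-valid case $a=1$ yields a total of $n-k(\tilde\pi)$ valid $a$'s, and hence
\[
\sum_\pi w^{(1)}(\pi)\;=\;\sum_{\tilde\pi}\bigl(n-k(\tilde\pi)\bigr)\prod_{B\in\tilde\pi}x_{|B|},
\]
the sum ranging over all non-crossing partitions $\tilde\pi$ of $\{1,\dots,n-1\}$.

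Third, I would evaluate the right-hand side by packaging it into $R(z):=\sum_{N\ge0}z^N\sum_{\tilde\pi}\bigl(N+1-k(\tilde\pi)\bigr)\prod x_{|B|}$ and computing a closed form. Writing $C(z):=C^{(1)}(z)=f(z)/z$ for brevity (as in Lemma~\ref{lem:TB} with $m=1$, where $f(z)=z\phi(f(z))$ with $\phi(z)=\sum_ix_iz^i$) and decomposing $\tilde\pi$ by its block of $1$, of size $j$ and with $j$ induced sub-partitions $\pi_i$, one has $N+1-k(\tilde\pi)=j+\sum_i\rk(\pi_i)$. Together with the auxiliary series $B(z):=\sum_Mz^M\sum_{\tilde\pi}(M-k(\tilde\pi))\prod x_{|B|}$, which satisfies an analogous functional equation and can be shown to equal $B(z)=zf'(z)/f(z)-f(z)/z$, this yields the clean identity
\[
R(z)\;=\;\frac{zf'(z)}{f(z)}\;=\;1+\frac{zC'(z)}{C(z)},
\]
the crucial simplification hinging on the identity $\phi'(f)f'=(f/z)'=f'/z-f/z^2$ obtained by differentiating $f=z\phi(f)$. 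Finally, Lagrange--B\"urmann (with the formal choice $H(y)=\log y$) gives $\coef{z^n}\log C(z)=\tfrac1n\coef{y^n}\phi(y)^n$ for $n\ge1$, whence $\coef{z^n}R(z)=\coef{z^n}\phi(z)^n$ for every $n\ge0$. Specialising to $n-1$ then produces the desired
\[
\sum_\pi w^{(1)}(\pi)=\coef{z^{n-1}}R(z)=\coef{z^{n-1}}\phi(z)^{n-1}=\coef{z^0}F^{-(n-1)}(z).
\]
The hardest part will be the generating-function bookkeeping in the third step: simultaneously tracking the contribution $j$ from the block of $1$, the ranks $\rk(\pi_i)$ from the $j$ sub-partitions, and the block weights, and matching everything against $B(z)$ to collapse the whole expression into the clean form $R(z)=zf'/f$.
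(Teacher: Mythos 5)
Your proposal is correct, and it shares its combinatorial core with the paper's proof while evaluating the resulting sum by a different generating-function route. Like the paper, you encode each partition by the position $a$ of the zero block together with a non-crossing partition of the $n-1$ remaining points on one arc, and you arrive at the same key count: for each such partition $\tilde\pi$ there are exactly $n-k(\tilde\pi)$ admissible positions $a$ (your justification via ``$\overline1$ is not the minimum of its block, and its position sweeps all of $\{1,\dots,n-1\}$ as $a$ varies'' is in fact more explicit than the paper's one-line assertion). The difference lies in how the sum $\sum_{\tilde\pi}\bigl(n-k(\tilde\pi)\bigr)\prod x_{|B|}$ is evaluated: the paper marks blocks by a variable $t$, writes the answer as $\coef{z^{n-1}}\bigl((n-\tfrac{d}{dt})C^{(1)}(z)\vert_{x_i\to tx_i}\bigr)\vert_{t=1}$, and then pushes this through Lagrange inversion in the form \eqref{eq:CB} with some algebraic massaging of $F$; you instead identify the full generating function in closed form as $R(z)=zf'(z)/f(z)$ and then invert. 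Your closed form is correct: differentiating $f=z\phi(f)$ with respect to the block-marking variable gives the block-counting series $K(z)=f'(z)\bigl(1-z/f(z)\bigr)$, so $R=f'-K=zf'/f$, confirming the identities you state for $B(z)$ and $R(z)$. Two small remarks on the last step: the logarithmic form of Lagrange--B\"urmann you invoke is not literally contained in \Cref{lem:TA} (whose statement \eqref{eq:CBa} requires a Laurent series $g$), though it is of course standard; alternatively, you can avoid the logarithm entirely, since
\[
\coef{z^{n-1}}\frac{zf'(z)}{f(z)}=\coef{z^0}z^{2-n}f^{-1}(z)f'(z)=\coef{z^0}F^{-(n-1)}(z)
\]
follows directly from \eqref{eq:CA} with $a=0$ and $b=2-n$ --- which is exactly the manipulation the paper uses in its proof of the type~$A$ analogue, \Cref{lem:TF-ma=2}. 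So your argument in effect transports the paper's ``central block of size $2$'' computation from type~$A$ to type~$B$, whereas the paper's own proof of this lemma proceeds by the $t$-derivative bookkeeping; both are valid, and yours is arguably the cleaner of the two.
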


\begin{proof}
Let $\pi$ be a positive non-crossing partition as described in the
statement of the lemma, and let the zero block be $\{a,\overline a\}$.
We may map $\pi$ to the pair $(a,\si)$, where $\si$ is the non-crossing
partition we see between $\overline {a+1}$ and $a-1$, considered as a
non-crossing partition of $\{1,2,\dots,n-1\}$, by appropriate
relabelling. Thus, we have defined a mapping from the non-crossing
partitions of the statement of the lemma and pairs $(a,\si)$, where
$1\le a\le n$ and $\si$ is a non-crossing partition of
$\{1,2,\dots,n-1\}$. However, this mapping, while injective, is not
surjective due to the condition in~\Cref{prop:1B} that the
element~$1$ must be in a block that also contains negative elements.
To be precise, given a non-crossing partition~$\si$, there are
$n-\#(\text{blocks of }\si)$ numbers $a$ such that there is a
corresponding non-crossing partition of type~$B$ as in the statement
of the lemma under this mapping. This observation can be modelled by
generating functions as follows: the generating function for the
non-crossing partitions in the statement of the lemma is given by
$$
\coef{z^{n-1}}
\left(\left(n-\tfrac {d} {dt}\right)C^{(1)}(z)\Big\vert_{x_i\to tx_i}
\right)\bigg\vert_{t=1}.
$$
Here, the replacement of the $x_i$'s is restricted to the $x_i$'s with
$i\ge1$ ($x_0$ being~$1$ by definition from the outset).
We use again that $C^{(1)}(z)=f(z)/z$ and subsequently apply Lagrange
inversion in the form~\eqref{eq:CB}. This leads to
\begin{align*}
n\coef{z^{n}}f(z)-\coef{z^n}
&\left(\tfrac {d} {dt}f(z)\Big\vert_{x_i\to tx_i}
\right)\bigg\vert_{t=1}
=
n\coef{z^{0}}z^{-n}f(z)-\coef{z^0}z^{-n}
\left(\tfrac {d} {dt}f(z)\Big\vert_{x_i\to tx_i}
\right)\bigg\vert_{t=1}\\
&=\coef{z^{0}}zF^{-n}(z)-\frac {1} {n}\coef{z^0}z
\left(\tfrac {d} {dt}F^{-n}(z)\Big\vert_{x_i\to tx_i}
\right)\bigg\vert_{t=1}\\
&=\coef{z^{0}}zF^{-n}(z)-\frac {1} {n}\coef{z^0}z^{1-n}
\tfrac {d} {dt}\left(1+t\sum_{i=1}^\infty x_iz^i\right)^n
\Bigg\vert_{t=1}\\
&=\coef{z^{0}}zF^{-n}(z)-\coef{z^0}z^{1-n}
\left(\sum_{i=1}^\infty x_iz^i\right)
\left(1+\sum_{i=1}^\infty x_iz^i\right)^{n-1}\\
&=\coef{z^{0}}zF^{-n}(z)-\coef{z^0}
\left(\frac {z} {F(z)}-1\right)
F^{-(n-1)}(z)\\
&=\coef{z^{0}}F^{-(n-1)}(z),
\end{align*}
as desired.
\end{proof}

\begin{lemma} \label{lem:TF-B-1}
Let $m,n,a$ be positive integers. 
The generating function $\sum _{\pi}
^{}w^{(m)}(\pi)$, where the sum is over all positive
$m$-divisible non-crossing partitions of\/ 
$\{1,2,\dots,mn,\overline1,\overline2,\dots,\break\overline{mn}\}$
of type~$B$ which have a zero block of
size~$2ma$ equals
\begin{equation} \label{eq:DA-B-1} 
\frac {mn-1} {ma-1}\coef{z^{mn-1}}f^{ma-1}(z)=
\coef{z^0}z^{ma-1}F^{-(mn-1)}(z),
\end{equation}
where $F(z)$ is defined by~\eqref{eq:CD} and
$f(z)$ is the compositional inverse of~$F(z)$, as
before. In the special case where $m=a=1$, the left-hand side
of~\eqref{eq:DA-B-1} must be ignored.
\end{lemma}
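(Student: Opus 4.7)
The equivalence of the two expressions in~\eqref{eq:DA-B-1} follows, whenever $ma \geq 2$, from the Lagrange inversion formula~\eqref{eq:CB} applied with $a \mapsto ma-1$ and $b \mapsto -(mn-1)$; in the degenerate case $m=a=1$ only the right-hand side is to be used. In either situation it therefore suffices to establish the right-hand formula $\coef{z^0}z^{ma-1}F^{-(mn-1)}(z)$.

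Expanding the relevant power of $F$ via the definition~\eqref{eq:CD} and the multinomial theorem gives
\begin{equation*}
\coef{z^0}z^{ma-1}F^{-(mn-1)}(z) = \coef{z^{mn-ma}}\Bigl(1 + \sum_{i \geq 1} x_i z^{mi}\Bigr)^{mn-1} = \sum \binom{mn-1}{B}\binom{B}{b_1, \ldots, b_n}\prod_{i \geq 1} x_i^{b_i},
\end{equation*}
where the sum ranges over tuples $(b_1, \ldots, b_n)$ of non-negative integers satisfying $\sum_{i \geq 1} i b_i = n - a$, and where $B = b_1 + \cdots + b_n$. Comparing coefficients of the formal variables $x_i$, we see that \Cref{lem:TF-B-1} is equivalent to the combinatorial identity of \Cref{thm:1-B}, which asserts that the number of positive $m$-divisible non-crossing partitions of type~$B$ with zero block of size $2ma$ and exactly $2b_i$ non-zero blocks of size $mi$ equals $\binom{mn-1}{B}\binom{B}{b_1, \ldots, b_n}$.

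My approach to \Cref{thm:1-B} is to exhibit an explicit bijection between such partitions $\pi$ and pairs $(S, \lambda)$, where $S \subseteq \{2, 3, \ldots, mn\}$ is a subset of size~$B$ and $\lambda : S \to \{1, 2, \ldots, n\}$ is a labelling with $|\lambda^{-1}(i)| = b_i$ for every $i$. The map assigns to each non-zero block pair of $\pi$ a distinguished \emph{initiator} in $\{2, \ldots, mn\}$ as follows. For a non-zero block pair $\{B', -B'\}$ neither of whose members contains the element~$1$, the initiator is the smallest positive element in $B'$ (equivalently, in $-B'$). For the unique non-zero block pair whose members contain $1$ and $\bar 1$ respectively --- which exists precisely when $1$ does not lie in the zero block, and is then forced by \Cref{prop:1B} to be a crossing pair --- the initiator is the smallest positive element of the block containing $\bar 1$. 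In each case the initiator lies in $\{2, \ldots, mn\}$, and the label $\lambda$ on each initiator records the block size divided by~$m$.

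The main obstacle will be to verify that the inverse of this map is well defined, namely that every pair $(S, \lambda)$ gives rise to a unique positive $m$-divisible non-crossing type~$B$ partition with the prescribed block structure. The critical point is that, given the set of initiators together with their labels, the positions $j_1 < \cdots < j_{ma}$ of the positive zero block elements, as well as the precise arrangement of each non-zero block pair on the circle, are forced by the combination of the circular non-crossing condition, the $i \mapsto -i$ symmetry, and the prescribed initiators and labels. In particular, the positivity condition from \Cref{prop:1B} naturally distinguishes between the configuration in which $1$ belongs to the zero block and the one in which $1$ lies in a crossing non-zero block, and both are produced by the encoding in a controlled way. Working through this reconstruction in detail --- and checking in particular that the resulting block structure always respects the $i \mapsto -i$ symmetry and the circular non-crossing condition --- is the technical heart of the argument.
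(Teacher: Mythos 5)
Your reduction steps are fine: the Lagrange-inversion equivalence of the two sides of~\eqref{eq:DA-B-1} is exactly how the paper links them, and the multinomial expansion correctly shows that the right-hand side is, coefficient-by-coefficient in the $x_i$, the refined count $\binom{mn-1}{B}\binom{B}{b_1,\dots,b_n}$ of positive type~$B$ partitions with zero block of size $2ma$ and $2b_i$ non-zero blocks of size $mi$. Be aware, though, that this count is precisely \Cref{thm:1-B}, which in the paper is \emph{deduced from} this lemma (and from \Cref{lem:TFa-B}), so you cannot invoke it; you correctly plan to prove it independently, but that is where your argument stops being a proof.

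The genuine gap is that the bijection carrying the entire combinatorial content is only announced, not established. You yourself defer "the technical heart" --- the well-definedness of the inverse, i.e.\ that every pair $(S,\lambda)$ with $S\subseteq\{2,\dots,mn\}$, $|S|=B$, reconstructs to a unique non-crossing, $i\mapsto -i$ symmetric, positive partition with zero block of exactly size $2ma$ --- and nothing in the proposal indicates how the positions of the zero block and the wrapping of blocks across the positive/negative boundary are forced by the data. Even the forward map is not well defined as stated: when a zero block is present, a non-zero block pair can sit in a gap of the zero block that straddles the boundary, so that \emph{both} members of the pair contain positive elements (and neither contains $1$ or $\overline 1$); then "the smallest positive element in $B'$ (equivalently, in $-B'$)" is not a property of the unordered pair, and a canonical choice must be specified and shown compatible with the reconstruction. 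Until the encoding is pinned down and the reconstruction argument is carried out, the lemma is not proved. For comparison, the paper avoids a bijection altogether: it shows via the pseudo-rotation $\rotB$ that every element of $\mNCBPlus$ with a zero block arises from the Construction of \Cref{sec:rotB} with $\rRr=1$ (add $1,\overline 1$ to the zero block of a partition of $\{2,\dots,mn,\overline2,\dots,\overline{mn}\}$ and rotate), and then computes the generating function by the same fundamental-domain decomposition as in \Cref{lem:TF}, the factor $\frac{mn-1}{ma-1}$ accounting for the orbit length and the overcounting by zero-block elements. If you want to keep your bijective route, you would in effect be giving a new combinatorial proof of \Cref{thm:1-B}; that would be a legitimate and arguably more elementary alternative, but it requires the full reconstruction argument you have postponed.
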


\begin{proof}
We have already treated the case where $m=a=1$ in \Cref{lem:TF-ma=1} above.
So, let us assume that $ma\ne1$ from now on.

Given an element of $\mNCBPlus$, by applying the
pseudo-rotation~$\rotB$ repeatedly to it, we will achieve to obtain
a non-crossing
partition in which the zero block contains $\overline{mn}$ {\em and\/}~$1$.
Another application of~$\rotB$ then just normally rotates the zero block,
and thus yields a non-crossing
partition in which the zero block contains $1$ {\em and\/}~$2$.
Hence, we may obtain all elements of $\mNCBPlus$ with a zero
block via the Construction in \Cref{sec:rotB} with $\rRr=1$. 
More precisely, we obtain all elements of $\mNCBPlus$ with a
zero block of size~$2ma$ by
starting with an ordinary $m$-divisible non-crossing
partition of
$\{2,3,\dots,mn,\overline2,\overline3,\dots,\overline{mn}\}$
containing a zero block of size~$2ma-2$ and that remains invariant under the map
that exchanges $i$ and $\overline i$ for all~$i$, then adding $1$ and
$\overline1$ to the zero block,
and subsequently applying the pseudo-rotation~$\rotB$ repeatedly.

The remaining arguments are
completely analogous to the proof of \Cref{lem:TF} and are therefore
left to the reader.
\end{proof}

\begin{lemma} \label{lem:TFa-B}
Let $m$ and $n$ be positive integers.
The generating function $\sum _{\pi}
^{}w^{(m)}(\pi)$, where the sum is over all positive
$m$-divisible non-crossing partitions of 
$\{1,2,\dots,mn,\overline1,\overline2,\break\dots,\overline{mn}\}$
of type~$B$ without zero block equals
\begin{equation} \label{eq:DAa-B} 
\sum_{a=1}^\infty 
(mn-1)x_a\coef{z^{mn-1}}f^{ma-1}(z)=
\sum_{a=1}^\infty 
(ma-1)\,x_a\coef{z^{-(ma-1)}}F^{-(mn-1)}(z),
\end{equation}
where $F(z)$ is defined by~\eqref{eq:CD} and
$f(z)$ is the compositional inverse of~$F(z)$, as
before. 
\end{lemma}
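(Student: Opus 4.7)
My plan is to establish the formula by adapting the proof of \Cref{lem:TFa} to the type~$B$ setting. First, the equality of the two expressions in~\eqref{eq:DAa-B} follows immediately from the Lagrange inversion formula~\eqref{eq:CB} of \Cref{lem:TA}, applied term-by-term in~$a$ with parameters $a \mapsto ma-1$ and $b \mapsto -(mn-1)$; so it suffices to prove the left-hand side formula.

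I would partition the positive $m$-divisible type~$B$ non-crossing partitions $\pi$ of $\{1,\dots,mn,\overline{1},\dots,\overline{mn}\}$ without zero block according to the size $ma$ of the block~$B$ containing~$1$. By \Cref{prop:1B}, $B$ contains a negative element, and by the mirror invariance of~$\pi$, the block $-B$ is distinct from~$B$ and also has size~$ma$. The counting strategy is to count $\rotB$-orbits and multiply by the orbit size. By \Cref{lem:N-2B}, $\rotB$ has order $mn-1$, so orbits have size dividing $mn-1$. The crucial observation is that each orbit of such a partition has size exactly $mn-1$: were $\pi$ invariant under $\rotB^{(mn-1)/\rRr}$ for some $\rRr \geq 2$, then $\pi$ would be $2\rRr$-pseudo-rotationally invariant in the sense of \Cref{sec:rotB}; by \Cref{lem:allB} it would arise from the Construction there, which always produces a zero block, contradicting our hypothesis.

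It remains to count the $\rotB$-orbits. My plan is to identify each orbit with a combinatorial seed living in a fundamental region for the mirror symmetry, in the spirit of Construction~2 in \Cref{sec:rotA}. For a suitable canonical orbit representative --- for instance, the one obtained by rotating so that $\overline{mn}$ is in $B$ in a normalised position --- the representative is fully encoded by the block~$B$ together with the non-crossing partitions filling the gaps between consecutive elements of~$B$ along the fundamental half-annulus. By the non-crossing structure and \Cref{lem:TB}, each such gap is an $m$-divisible type~$A$ non-crossing partition, contributing a factor $C^{(m)}(z) = f(z)/z$. This yields the seed generating function
\[
  x_a z^{ma} \bigl(C^{(m)}(z)\bigr)^{ma-1} = x_a\, z\, f^{ma-1}(z)
\]
for fixed $|B|=ma$, with $z$ tracking the $mn$ elements in the fundamental region. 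Extracting $\coef{z^{mn}}$ gives $x_a \coef{z^{mn-1}} f^{ma-1}(z)$; multiplying by the orbit size $mn-1$ and summing over $a \geq 1$ produces exactly the left-hand side of~\eqref{eq:DAa-B}.

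The principal technical hurdle will be to define the canonical form rigorously and to verify the seed-gap decomposition. In particular, care is needed to handle the interaction of~$B$ with the mirror axis --- since $B$ may contain both positive and negative elements, its ``shape'' straddling the fundamental region is non-trivial --- and to confirm that each $\rotB$-orbit contributes exactly once to the seed generating function. A consistency check for small parameters, such as $m=1$ and $n=3$, where the formula evaluates to $4$ and a direct enumeration yields the same $4$ positive non-crossing partitions without zero block, supports the overall strategy.
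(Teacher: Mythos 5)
Your proposal is correct and follows essentially the same route as the paper: count one canonical representative per $\rotB$-orbit via a gap decomposition giving $x_a\,z\,f^{ma-1}(z)$ for block size $ma$, multiply by the orbit length $mn-1$, and obtain the second expression in~\eqref{eq:DAa-B} by Lagrange inversion~\eqref{eq:CB}. The only real difference is bookkeeping: you derive the orbit length from \Cref{lem:allB} (a shorter orbit would force a zero block), while the paper appeals directly to the proof of \Cref{lem:ord=N-1}; and the ``technical hurdle'' you flag — pinning down the canonical form and the exactly-once property — is precisely what the paper settles by choosing as representative the partition whose block containing~$1$ also contains $\overline{2}$ and otherwise only negative elements, a normalisation extracted from that same proof, for which the $ma-1$ gaps are manifestly independent type-$A$ $m$-divisible non-crossing partitions and each orbit passes through such a configuration exactly once.
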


\begin{proof}
%We claim that all positive $m$-divisible non-crossing
%partitions of $\{1,2,\dots,mn,\overline1,\overline2,\dots,\overline{mn}\}$ 
%of type~$B$ without central block
%can be obtained by starting with a non-crossing partition
%that has a block containing~$1$ as its only positive element,
%while at the same time there is no other block of larger width that has~$1$
%in its support,
%and then applying the pseudo-rotation $\rotB$ repeatedly to this partition.
We claim that all positive $m$-divisible non-crossing
partitions of $\{1,2,\dots,mn,\overline1,\overline2,\break\dots,\overline{mn}\}$ 
of type~$B$ without zero block
can be obtained by starting with a non-crossing partition
that has a block containing~$1$ and $\overline2$, and otherwise only
negative elements,
and then applying the pseudo-rotation~$\rotB$ repeatedly to this partition.
Indeed, the claim follows immediately by a careful reading of the proof of
\Cref{lem:ord=N-1}.

Consequently, what we need to do is to count the number of
non-crossing partitions of 
$\{1,2,\dots,mn,\overline1,\overline2,\dots,\overline{mn}\}$ 
as described above, and then multiply the result by~$mn-1$.

If we fix the size of the block containing $1$ and $\overline2$ in
the above claim to be $ma$, then, by elementary generating function
calculus,  we obtain that
the generating function $\sum _{\pi}
^{}w^{(m)}(\pi)$ for these non-crossing partitions 
is given by 
$$
x_az^{ma}\left(C^{(m)}(z)\right)^{ma-1}
=x_azf^{ma-1}(z),
$$
where $C^{(m)}(z)$ is the generating function for $m$-divisible
non-crossing partitions from\break \Cref{lem:TB}. Hence, the 
number we want to compute is 
$$
(mn-1)x_a\coef{z^{mn}}zf^{ma-1}(z)=
(mn-1)x_a\coef{z^{mn-1}}f^{ma-1}(z).
$$
This number has to be summed over all possible~$a$'s. 
This leads directly to the left-hand side expression in~\eqref{eq:DAa-B}.
The right-hand side expression is then a consequence of the
Lagrange inversion formula~\eqref{eq:CB}.
\end{proof}

\begin{proof}[Proof of \Cref{thm:1-B}]
We distinguish two cases depending on whether $a>0$ 
or $a=0$, that is, whether the choice of~$b_k$'s implies the existence
of a zero block or not.

If $a>0$, that is, $n> \sum_{j=1}^njb_j$, then,
using the right-hand side expression in~\eqref{eq:DA-B-1}
and~\eqref{eq:CD},
we see that the number which we want to compute is given by
\begin{align*}
\coef{\mathbf x^{\mathbf b}z^0}z^{ma-1 }F^{-(mn-1)}(z)
&=
\coef{\mathbf x^{\mathbf b}z^0}z^{ma-mn}\(1+\sum_{\ell=1}^\infty x_\ell
z^{m\ell}\)^{mn-1}\\
&=\coef{\mathbf x^{\mathbf b}z^{m(n-a)}}
\sum_{k=0}^{mn-1}\binom {mn-1}k
\(\sum_{\ell=1}^\infty x_\ell z^{m\ell}\)^k\\
&=
\binom {mn-1}{b_1+b_2+\dots+b_n}
\binom {b_1+b_2+\dots+b_n}{b_1,b_2,\dots,b_n},
\end{align*}
where the short notation $\mathbf x^{\mathbf b}$ stands again for
$x_1^{b_1}x_2^{b_2}\cdots x_n^{b_n}$.
This is exactly the asserted expression.

On the other hand, if $a=0$ and hence $n=\sum_{j=1}^njb_j$, then,
using the right-hand side expression in~\eqref{eq:DAa-B}
and~\eqref{eq:CD}, 
we see that the number which we want to compute is given by
\begin{align*}
\sum_{a=1}^n
(ma-1)&\coef{x_a^{-1}\mathbf x^{\mathbf b}z^{-(ma-1)}}F^{-(mn-1)}(z)\\
&=\sum_{a=1}^n
(ma-1)\coef{x_a^{-1}\mathbf x^{\mathbf b}z^{mn-ma}}
\(1+\sum_{\ell=1}^\infty x_\ell z^{m\ell}\)^{mn-1}\\
&=\sum_{a=1}^n 
(ma-1)
\binom {mn-1}{b_1+b_2+\dots+b_n-1}
\binom {b_1+b_2+\dots+b_n-1}
{b_1,\dots,b_{a-1},b_a-1,b_{a+1},\dots,b_n}\\
&=
\binom {mn-1}{b_1+b_2+\dots+b_n}
\binom {b_1+b_2+\dots+b_n}
{b_1,b_2,\dots,b_n}
\sum_{a=1}^n 
\frac {(ma-1)b_a}
{mn-b_1-b_2-\dots-b_n}.
\end{align*}
Since $\sum_{a=1}^n {(ma-1)b_a}=mn
-b_1-b_2-\dots-b_n$,
this proves the formula in~\eqref{eq:block-B}.
\end{proof}

\begin{proof}[Proof of \Cref{cor:2-B}]
The expression~\eqref{eq:multichains-si-pos-a-B-1} could be obtained
from~\eqref{eq:block-B} by summing over all possible~$b_i$'s,
$i=1,2,\dots,n$, with $a=n- (b_1+2b_2+\dots+nb_n)$ and 
$b_1+b_2+\dots+b_n=b$. It is however simpler to start again with
the right-hand sides of~\eqref{eq:DA-B-1} respectively~\eqref{eq:DAa-B},
set all~$x_j$'s equal to~$t$ (except $x_0$
which, by definition, equals~$1$), 
and finally extract
the coefficient of~$t^{b}$. 

For the case where $a>0$, we would start with the right-hand side
of~\eqref{eq:DA-B-1} and then perform the above described steps.
A more general calculation is done later in the proof of
\Cref{cor:4-B}. More precisely, if we put $\rRr=1$ there, then
the calculation there reduces to the one which is relevant here.
The result is~\eqref{eq:multichains-si-pos-a-B} with $\rRr=1$, which
indeed agrees with~\eqref{eq:multichains-si-pos-a-B-1}.

On the other hand, if $a=0$, then from~\eqref{eq:DAa-B} we get
\begin{align*}
\coef{t^{b}z^0}
&\sum_{\al=1}^\infty 
(m\al-1)\,x_\al\coef{z^{-(m\al-1)}}F^{-(mn-1)}(z)\Big\vert_{x_1=x_2=\dots=t}\\
&=
\coef{t^{b}z^0}
\sum_{\al=1}^\infty (m\al-1)t
z^{m\al-mn}\(1+t\sum_{\ell=1}^\infty 
z^{m\ell}\)^{mn-1}\\
&=\coef{t^{b-1}z^{m(n-1)}}
\left(\frac {m} {(1-z^m)^2}-\frac {1} {1-z^m}\right)
\sum_{k=0}^{mn-1}\binom {mn-1}k
t^k\(\frac {z^m} {1-z^m}\)^k\\
&=\coef{z^{m(n-b)}}
\binom {mn-1}{b-1}
\left(
m\(\frac {1} {1-z^m}\)^{b+1}
-\(\frac {1} {1-z^m}\)^{b}
\right)\\
&=
\binom {mn-1}{b-1}
\left(
m\binom {n}{b}
-\binom {n-1}{b-1}
\right)\\
&=
\binom {mn-1 }{b-1}
\frac {(n-1)!} {b!\,(n-b)!}(mn-b),
\end{align*}
which can be simplified to~\eqref{eq:multichains-si-pos-a-B-1} with $a=0$.

By summing the expression~\eqref{eq:multichains-si-pos-a-B-1} over all
possible~$a$, we obtain~\eqref{eq:multichains-si-pos-B-1}. 
\end{proof}

\subsection{Enumeration of pseudo-rotationally invariant
  positive $m$-divisible non-cros\-sing partitions
in type $B$}
\label{app:GF-B-inv}

Again, we use a generating function approach as in \Cref{app:GF-A}.
Abusing notation from \Cref{app:GF-A-inv},
we introduce the weight function $w_{\rRr }^{(m)}$, defined by
$$w^{(m)}_\rRr (\pi)=\prod _{i=1} ^{\infty}
x_i^{\#(\text {non-zero blocks of~$\pi$ of size
$mi$})/2\rRr }.$$
For example, the weight $w_{2}^{(3)}(\,.\,)$ of both non-crossing partitions
in \Cref{fig:34,fig:35} is $x_1^2$.

\begin{lemma} \label{lem:TF-B}
Let $\rRr ,m,n,a$ be positive integers with $\rRr \ge1$, $\rRr \mid (mn-1)$, 
and $a\equiv n\ (\text{\rm mod}\ \rRr )$.
The generating function $\sum _{\pi}
^{}w_{\rRr }^{(m)}(\pi)$, where the sum is over all positive
$m$-divisible non-crossing partitions of\/ 
$\{1,2,\dots,mn,\overline1,\overline2,\dots,\overline{mn}\}$
of type~$B$ which are invariant under the
$2\rRr $-pseudo-rotation~$\rotB^{(mn-1)/\rRr }$ and have a zero block of
size~$ma$ equals
\begin{equation} \label{eq:DA-B} 
\frac {mn-1} {ma-1}\coef{z^{(mn-1)/\rRr }}f^{(ma-1)/\rRr }(z)=
\coef{z^0}z^{(ma-1)/\rRr }F^{-(mn-1)/\rRr }(z),
\end{equation}
where $F(z)$ is defined by~\eqref{eq:CD} and
$f(z)$ is the compositional inverse of~$F(z)$, as
before. In the special case where $m=a=1$, the left-hand side
of~\eqref{eq:DA-B} must be ignored.
\end{lemma}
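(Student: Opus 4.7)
The plan is to adapt the proof of \Cref{lem:TF} for type $A$, with the modifications dictated by the type $B$ Construction of \Cref{sec:rotB} (as codified in \Cref{lem:allB}) and the fact that the relevant fundamental domain in type $B$ has size $(mn-1)/\rRr$ rather than $(mn-2)/\rrr$.

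First, I would observe that the equality of the two expressions on the right-hand side of \eqref{eq:DA-B} is a direct application of Lagrange inversion \eqref{eq:CB} with its $a$ replaced by $(ma-1)/\rRr$ and $b$ replaced by $-(mn-1)/\rRr$; the divisibility hypotheses $\rRr \mid (mn-1)$ and $a \equiv n \pmod{\rRr}$ (implying $ma \equiv mn \equiv 1 \pmod{\rRr}$) ensure that both exponents are integers. Hence it suffices to establish one of the two forms.

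For the main case $ma \ne 1$, I would invoke \Cref{lem:allB}: every $2\rRr$-pseudo-rotationally invariant element of $\mNCBPlus$ with zero block of size $2ma$ arises from an (ordinary) $2\rRr$-rotationally invariant type~$B$ non-crossing partition of $\{2,\dots,mn,\overline{2},\dots,\overline{mn}\}$ whose zero block has size $2ma-2$ and contains the element $2$, followed by repeated applications of $\rotB$. Each orbit of $\rotB$ on the counted set has size $(mn-1)/\rRr$, while within one orbit exactly $(ma-1)/\rRr$ members (corresponding to the zero-block elements in one fundamental section) have their zero block containing the element $2$. Thus the count we seek equals $(mn-1)/(ma-1)$ times the number of such ``base'' partitions.

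To count the base partitions, I would restrict to a fundamental section $\mathcal{F}=\{2,3,\dots,(mn-1)/\rRr+1\}$ of size $(mn-1)/\rRr$. In $\mathcal{F}$, the elements $s_1=2<s_2<\dots<s_{(ma-1)/\rRr}$ of the zero block partition $\mathcal{F}$ into $(ma-1)/\rRr$ intervals (with the cyclic convention $s_{(ma-1)/\rRr+1}:=s_1+(mn-1)/\rRr$), each of which carries an ordinary $m$-divisible non-crossing partition. Using $C^{(m)}(z)=f(z)/z$ from \Cref{lem:TB}, standard generating function calculus then yields that the number of base partitions equals
\[
\coef{z^{(mn-1)/\rRr}}\bigl(z\,C^{(m)}(z)\bigr)^{(ma-1)/\rRr}=\coef{z^{(mn-1)/\rRr}}f^{(ma-1)/\rRr}(z),
\]
and multiplying by $(mn-1)/(ma-1)$ gives the claimed expression.

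The main obstacle is the degenerate case $m=a=1$, where the zero block of $\pi$ is forced to be $\{1,\overline{1}\}$, the corresponding base partition has no zero block at all, and the interleaving argument above breaks down (the multiplicative factor $(mn-1)/(ma-1)$ becomes undefined). Here I would follow the strategy of \Cref{lem:TF-ma=1}: parametrise each $\rRr$-pseudo-rotationally invariant positive non-crossing partition with zero block of size $2$ by a pair consisting of an index $a\in\{1,\dots,n\}$ and a suitable (ordinary) $\rRr$-rotationally invariant non-crossing partition of $n-1$ elements, model the resulting overcounting by the differential operator $n-\tfrac{d}{dt}$ applied to $C^{(1)}(z)$ with a bookkeeping variable $t$, and then apply Lagrange inversion \eqref{eq:CB} exactly as in the proof of \Cref{lem:TF-ma=1} to collapse the resulting expression to $\coef{z^0}F^{-(n-1)/\rRr}(z)$. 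This final step requires careful tracking of the $\rRr$-symmetric fundamental section (which plays no role in \Cref{lem:TF-ma=1}) and is where the proof genuinely diverges from the type $A$ template.
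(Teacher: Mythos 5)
Your treatment of the generic case $ma\neq 1$ is correct and coincides with the paper's intended argument: the paper proves this lemma by declaring it completely analogous to \Cref{lem:TF} (and \Cref{lem:TF-ma=2}), and your orbit factor $(mn-1)/(ma-1)$, the fundamental section of size $(mn-1)/\rRr$ containing $(ma-1)/\rRr$ zero-block elements, the interleaving of ordinary $m$-divisible non-crossing partitions giving $f^{(ma-1)/\rRr}(z)$, and the Lagrange-inversion equivalence of the two sides of \eqref{eq:DA-B} via \eqref{eq:CB} reproduce that adaptation faithfully. One small caveat: \Cref{lem:allB} is stated only for $\rRr\ge2$, so for $\rRr=1$ you cannot invoke it verbatim; the needed fact that every positive partition with a zero block arises from the Construction is the one argued in the proof of \Cref{lem:TF-B-1} (indeed, for $\rRr=1$ the present lemma is \Cref{lem:TF-B-1}).

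The genuine gap is in the degenerate case $m=a=1$ with $\rRr\ge2$. You chose \Cref{lem:TF-ma=1} as the template, but that lemma contains no rotational symmetry at all: its $(a,\sigma)$ parametrisation and the operator $n-\tfrac{d}{dt}$ encode only the positivity constraint, and your sketch never explains how invariance under $\rotB^{(n-1)/\rRr}$ is to be imposed on the pair $(a,\sigma)$. This is not a routine adjustment, because the exceptional behaviour of the pseudo-rotation is anchored at the fixed positions $1,\overline{1},n,\overline{n}$ rather than at the zero block, so invariance is a global condition on the pair that does not decouple into ``a position $a$ plus an invariant $\sigma$''; the asserted collapse to $\coef{z^0}F^{-(n-1)/\rRr}(z)$ is precisely the step left unproved. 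The intended argument (the one the paper points to via \Cref{lem:TF-ma=2}) is different in kind: by \Cref{lem:allB}, the base partition in the Construction must have empty zero block, one decomposes the fundamental section of length $(n-1)/\rRr$ into connected components with generating function $G(z)=1-z/f(z)$, observes that cyclic permutation of these components produces the same $\rotB$-orbit, and therefore counts $\tfrac{n-1}{\rRr}\sum_{k\ge1}\tfrac1k G^k(z)=-\tfrac{n-1}{\rRr}\log\bigl(1-G(z)\bigr)$, which after extracting the coefficient of $z^{(n-1)/\rRr}$ and applying \eqref{eq:CA} yields $\coef{z^0}F^{-(n-1)/\rRr}(z)$. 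You need either to supply this necklace-type argument or to actually carry out (not merely announce) a symmetric version of the \Cref{lem:TF-ma=1} computation; as written, the special case is asserted rather than proved.
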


\begin{remark}
Similarly to \Cref{lem:TF}, also here the 
condition $a\equiv n\ (\text{\rm mod}\ \rRr )$ is actually redundant.
\end{remark}

\begin{proof}[Proof of \Cref{lem:TF-B}]
Using \Cref{lem:allB},
this is completely analogous to the proofs of \Cref{lem:TF,lem:TF-ma=2} and 
is therefore left to the reader.
\end{proof}

\begin{proof}[Proof of \Cref{thm:2-B}]
Using the right-hand side expression in~\eqref{eq:DA-B}
and~\eqref{eq:CD},
we see that the number which we want to compute is given by
\begin{align*}
\coef{\mathbf x^{\mathbf b}z^0}z^{(ma-1)/\rRr }F^{-(mn-1)/\rRr }(z)
&=
\coef{\mathbf x^{\mathbf b}z^0}z^{(ma-mn)/\rRr }\(1+\sum_{\ell=1}^\infty x_\ell
z^{m\ell}\)^{(mn-1)/\rRr }\\
&=\coef{\mathbf x^{\mathbf b}z^{m(n-a)/\rRr }}
\sum_{k=0}^{(mn-1)/\rRr }\binom {(mn-1)/\rRr }k
\(\sum_{\ell=1}^\infty x_\ell z^{m\ell}\)^k\\
&=
\binom {(mn-1)/\rRr }{b_1+b_2+\dots+b_n}
\binom {b_1+b_2+\dots+b_n}{b_1,b_2,\dots,b_n},
\end{align*}
where the short notation $\mathbf x^{\mathbf b}$ stands again for
$x_1^{b_1}x_2^{b_2}\cdots x_n^{b_n}$.
This is exactly the asserted expression.
\end{proof}

\begin{proof}[Proof of \Cref{cor:4-B}]
The expression~\eqref{eq:multichains-si-pos-a-B} could be obtained
from~\eqref{eq:multichains-bi-si-pos-B} by summing over all possible~$b_i$'s,
$i=1,2,\dots,n$, with $a=n-\rRr (b_1+2b_2+\dots+nb_n)$ and 
$b_1+b_2+\dots+b_n=b$. It is however simpler to start again with
the right-hand side of~\eqref{eq:DA-B},
set all~$x_j$'s equal to $t$ (except $x_0$
which, by definition, equals~$1$), 
and finally extract
the coefficient of~$t^{b}$. 
Indeed, since in
$2\rRr $-pseudo-rotationally invariant $m$-divisible non-crossing partitions
of type~$B$
the number of non-zero blocks of size~$mi$ must be a multiple of~$2\rRr $,
implying that the size of the zero block must be $2ma=2mn-2m\rRr B$, for some
positive integer $B$, we have
$a\equiv n$~(mod~$\rRr $).
Hence, we obtain
\begin{align*}
\coef{t^{b}z^0}
&z^{(ma-1)/\rRr }F^{-(mn-1)/\rRr }(z)\Big\vert_{x_1=x_2=\dots=t}\\
&=
\coef{t^{b}z^0}
z^{(ma-mn)/\rRr }\(1+t\sum_{\ell=1}^\infty 
z^{m\ell}\)^{(mn-1)/\rRr }\\
&=\coef{t^{b}z^{(mn-ma)/\rRr }}
\sum_{k=0}^{(mn-1)/\rRr }\binom {(mn-1)/\rRr }k
t^k\(\frac {z^m} {1-z^m}\)^k\\
&=\coef{z^{m\frac {n-a} {\rRr }-mb}}
\binom {(mn-1)/\rRr }{b}
\(\frac {1} {1-z^m}\)^{b}\\
&=
\binom {(mn-1)/\rRr }{b}
\binom {{(n-a-\rRr )/\rRr }}{b-1},
\end{align*}
which is exactly \eqref{eq:multichains-si-pos-a-B}.

By summing the expression \eqref{eq:multichains-si-pos-a-B} over all
possible~$a$, that is, over all~$a$ with $a\equiv n$~(mod~$\rRr $),
we obtain~\eqref{eq:multichains-si-pos-B}. 
\end{proof}

\subsection{Enumeration of pseudo-rotationally invariant
  positive $m$-divisible non-cros\-sing partitions
in type $D$}
\label{app:GF-D-inv}

%Proceeding again by the use of generating functions,
%as in type~$B$ (cf.\ \Cref{app:GF-B-inv}),
%we define a weight function $w_{\rrr }^{(m)}$ by
%$$w^{(m)}_\rrr (\pi)=\prod _{i=1} ^{\infty}
%x_i^{\#(\text {non-zero blocks of $\pi$ of size
%$mi$})/2\rrr }.$$
%For example, the weight $w_{2}^{(3)}(\,.\,)$ of the non-crossing partition
%in \Cref{fig:25} is $x_1^2$,
%the weight $w_{1}^{(3)}(\,.\,)$ of the non-crossing partition
%in \Cref{fig:26} is $x_1^5$, while
%the weight $w_{1}^{(5)}(\,.\,)$ of the non-crossing partition
%in \Cref{fig:27} is $x_1^2$.

\begin{proof}[Proof of \Cref{thm:enumD-2}]
We assume that the reader recalls the definition of ``fundamental seed"
given in Construction~3 in \Cref{sec:rotD}.

Let $B_1$ be the block containing $\overline{m(n-1)}$ and $1$.
We assume that it contains $i_1$ elements of the inner circle, $i_1\ge0$.
We ignore the choice of signs of these elements for the moment, but put on record that,
in the end, the result must be multiplied by~$2$ since there are two
possible choices for these signs.

Likewise, let $B_2,B_3,\dots,B_k$ be
the further blocks of the fundamental seed that contain elements
from the outer \emph{and\/} the inner circle. For $j=2,3,\dots,k$,
we assume that block
$B_j$ contains $i_j$ elements of the inner circle, $i_j\ge1$.
For $j=1,2,\dots,k$, we let the block size of~$B_j$ 
equal~$ma_j$.

As in the proof of \Cref{lem:Const3}, we write $x$ for the number 
of elements on the inner circle that are
contained in the fundamental seed according to Construction~3,
and we write $my$ for the total number of elements in this
fundamental seed.

\begin{figure}
\renewcommand{\polygon}[5]{ % #1 = center
                          % #2 = object name
                          % #3 = number of vertices
                          % #4 = diameter
                          % #5 = labels
  \pgfmathsetmacro{\angle}{360/#3}
  \foreach \t in {1,...,#3} {
    \coordinate (#2\t) at ($#1+(90-\t*\angle:#4)$);
  }
  \draw[thin,black,fill=white,opacity=0.3,densely dashed] #1 circle (#4);
  \setcounter{intege}{1}
  \pgfmathsetcounter{intege}{1}
  \foreach \object in {#5}{
    \ifthenelse{\not\equal{\object}{}}{
      \filldraw[black] ($#1+($(90-\theintege*\angle:#4)$)$) circle(2pt);
    }{
%       \filldraw[black] ($#1+($(90-\theintege*360/#3:#4)$)$) circle(2pt);
    }
    \node[inner sep=0pt] at ($#1+($1.05*(90-\theintege*\angle:#4)$)$)
{$\scriptstyle\object$};
    \pgfmathsetcounter{intege}{\theintege+1}
    \setcounter{intege}{\theintege}
  }
}
\vspace*{-50pt}
\begin{center}
\hspace*{65pt}
  \begin{tikzpicture}[scale=1.8]
      \begin{scope}
      \clip (-1,-0) rectangle (6,6);
       \polygon{(0,0)}{obj}{120}{4}
         {,\hspace{5pt},,,\hspace{5pt},,,\hspace{5pt},,,\hspace{5pt},,,\hspace{5pt},,,\hspace{5pt},,,\hspace{5pt},,,\hspace{5pt},,,\hspace{5pt},,,,,,,,,,,,,,,,,,,,,,,,,,,,,,,,,,,,,,,,,,,,,,,,,,,,,,,,,,,,,,,,,,,,,,,,,,,,,,,,,,,,,,,,,,,,\overline{7(n-1)}\hspace*{25pt},\hspace*{5pt}1,}
      \end{scope}
      \begin{scope}
      \clip (-0.3,0) rectangle (6,6);
      \polygon{(0,0)}{objin}{20}{1.5}
        {,,,\hspace{5pt},,,,,,,,,,,,,,,,\hspace{5pt},\hspace{5pt}\raise-30pt\hbox{\kern-50pt\tiny $\pm(7n-1)$\hfill}}{0.65}
      \end{scope}

         \draw[line width=2.5pt,black] (obj119) to[bend left=100,
looseness=2] (obj118);

       \draw[fill=black,fill opacity=0.1] (obj119) to[bend right=70]
             (obj2) to[bend right=70]
             (obj5) to[bend right=70]
             (obj8) to
             (objin1) to[bend right=50]
             (objin20) to
             (obj118) to[bend right=100, looseness=2]
             (obj119);

       \draw[fill=black,fill opacity=0.1]
             (obj11) to[bend right=70]
             (obj14) to[bend right=70]
             (obj17) to[bend right=70]
             (obj20) to[bend right=70]
             (obj23) to[bend right=70]
             (obj26) to
             (objin4) to
             (obj11);

      \draw[dotted, fill=black, fill opacity=0.1] (obj120)  to[bend
right=100, looseness=1.5] (obj1) to[bend right=10] (obj120);
      \draw[dotted, fill=black, fill opacity=0.1] (obj3)  to[bend
right=100, looseness=1.5] (obj4) to[bend right=15] (obj3);
      \draw[dotted, fill=black, fill opacity=0.1] (obj6)  to[bend
right=100, looseness=1.5] (obj7) to[bend right=15] (obj6);
      \draw[dotted, fill=black, fill opacity=0.1] (obj9)  to[bend
right=100, looseness=1.5] (obj10) to[bend right=15] (obj9);
      \draw[dotted, fill=black, fill opacity=0.1] (obj12)  to[bend
right=100, looseness=1.5] (obj13) to[bend right=15] (obj12);
      \draw[dotted, fill=black, fill opacity=0.1] (obj15)  to[bend
right=100, looseness=1.5] (obj16) to[bend right=15] (obj15);
      \draw[dotted, fill=black, fill opacity=0.1] (obj18)  to[bend
right=100, looseness=1.5] (obj19) to[bend right=15] (obj18);
      \draw[dotted, fill=black, fill opacity=0.1] (obj21)  to[bend
right=100, looseness=1.5] (obj22) to[bend right=15] (obj21);
      \draw[dotted, fill=black, fill opacity=0.1] (obj24)  to[bend
right=100, looseness=1.5] (obj25) to[bend right=15] (obj24);
      \draw[dotted, fill=black, fill opacity=0.1] (obj27)  to[bend
right=100, looseness=1.5] (obj28) to[bend right=15] (obj27);
  \end{tikzpicture}
\end{center}
  \caption{A fundamental seed with two bridging blocks}
\label{fig:bridging}
\end{figure}

The generating function $\sum z^{|\sigma|_o}$, with the sum running over all
$m$-divisible non-crossing partitions~$\sigma$ defining a fundamental seed
with $k$ bridging blocks, is given by 
\begin{equation} \label{eq:GFConstr3} 
\underset{a_1,\dots,a_k\ge1}
{\underset{i_1\ge0,\,i_2,\dots,i_k\ge1}
{\sum_{i_1+\dots +i_k=x}}}z\,(zC(z))^{ma_1-i_1-1}
(zC(z))^{ma_2-i_2}\cdots (zC(z))^{ma_k-i_k},
\end{equation}
where $C(z)$ is short for our earlier series $C^{(m)}(z)$
in which all~$x_i$'s are set equal to~1, and
where $|\sigma|_o$ denotes the number of
elements on the outer circle that are
covered by~$\sigma$. Indeed, by definition
(see \Cref{lem:TB}) $C(z)=C^{(m)}(z)$ (with $x_i=1$ for all~$i$)
is the generating function
for $m$-divisible non-crossing partitions. Between any two vertices
of a bridging block that lie on the outer circle we may place an
$m$-divisible non-crossing partition, as well as between the last
vertex on the outer circle of a bridging block and the first
vertex of the next bridging block; the only exception is that
$\overline{m(n-1)}$ and~1, the first two elements on the outer circle
of the first bridging block, $B_1$, must be successive, and hence
nothing can be placed between these two. Since block~$B_j$ has
$ma_j-i_j$ elements on the outer circle, this explains the summand
in~\eqref{eq:GFConstr3}. \Cref{fig:bridging} gives a schematic
illustration of a seed with two bridging blocks, in which
$m=7$, $a_1=1$, $i_1=2$, $a_2=1$, $i_2=1$,
$x=i_1+i_2=3$, and $y=a_1+a_2=2$. The shaded ``half disks"
indicate smaller non-crossing partitions that are placed between
vertices on the outer circle.

Recalling our earlier notation $f(z)=zC(z)$ and summing over
$a_1,a_2,\dots,a_k$, we obtain
$$
{\underset{i_1\ge0,\,i_2,\dots,i_k\ge1}
{\sum_{i_1+\dots +i_k=x}}}z\,f^{mk-i_1-\dots-i_k-1}(z)
\big(1-f^m(z)\big)^{-k}
=
{\underset{i_1\ge0,\,i_2,\dots,i_k\ge1}
{\sum_{i_1+\dots +i_k=x}}}z\,f^{mk-x-1}(z)
\big(1-f^m(z)\big)^{-k}.
$$
It is an easy combinatorial exercise to see that the number of
tuples $(i_1,i_2,\dots,i_k)$ summing to~$x$, where $i_1$ is
non-negative and all other~$i_j$'s are positive, equals
$\binom {x}{k-1}$. Thus we obtain
$$
\binom x{k-1}
z\,f^{mk-x-1}(z)
\big(1-f^m(z)\big)^{-k}
$$
for our generating function.
However, since we have to apply $\rotD$ multiple times, we have to
multiply the result by $(m(n-1)-1)/\rRr$, and we have to divide by~$k$
since any of the $k$ bridging blocks could have been the ``first"
block in a fundamental seed. Subsequently, we must sum the
result over all possible~$k$. In other words, we must compute
\begin{align*}
\sum_{k\ge1}
\frac {(m(n-1)-1)/\rRr} {k}&\binom x{k-1}
z\,f^{mk-x-1}(z)
\big(1-f^m(z)\big)^{-k}\\
&=
\sum_{k\ge1}
\frac {my-x-1} {x+1}\binom {x+1}{k}
z\,f^{mk-x-1}(z)
\big(1-f^m(z)\big)^{-k}\\
&=
\frac {my-x-1} {x+1}
\left(
z\,f^{-x-1}(z)\left(1+\frac {f^m(z)} {1-f^m(z)}\right)^{x+1}
-
z\,f^{-x-1}(z)\right)\\
&=
\frac {my-x-1} {x+1}
z\,f^{-x-1}(z)
\left(
\left( {1-f^m(z)}\right)^{-x-1}
-1
\right).
\end{align*}
Here we used \eqref{eq:rotrel1} to obtain the second line.

In the last expression, we want to extract the coefficient of $z^{my-x}$.
Equivalently, we want to compute
$$
\frac {my-x-1} {x+1}
\coef{z^{my-x-1}}
f^{-x-1}(z)
\left(
\left( {1-f^m(z)}\right)^{-x-1}
-1
\right).
$$
By using the Lagrange inversion formula in the form~\eqref{eq:CBa},
with $b=my-x-1$ and $g(z)=z^{-x-1}\big(( {1-z^m})^{-x-1}-1\big)$, 
this is the same as
\begin{multline*}
\frac {my-x-1} {x+1}
\frac {1} {my-x-1}\coef{z^{-1}}
F^{-my+x+1}(z)
\frac {d} {dz}\left(z^{-x-1}
\left(
\left( {1-z^m}\right)^{-x-1}
-1
\right)\right)\\
%=
%\coef{z^{-1}}
%F^{-my+x+1}(z)
%\left(-
%z^{-x-2}
%\left(
%\left( {1-z^m}\right)^{-x-1}
%-1
%\right)
%+mz^{m-x-2}
%\left( {1-z^m}\right)^{-x-2}
%\right)
=
\coef{z^{-1}}
F^{-my+x+1}(z)
z^{-x-2}
\left(1
+((m+1)z^{m}-1)
\left( {1-z^m}\right)^{-x-2}
\right).
%\label{eq:GFConstr3}
\end{multline*}
Since we specialised all $x_i$'s to~$1$, the series $F(z)$ equals
$z(1-z^m)$. If we substitute this in the above expression, then we get
\begin{align*}
\coef{z^{-1}}
z^{-my-1}&(1-z^m)^{-my+x+1}
\left(1
+((m+1)z^{m}-1)
\left( {1-z^m}\right)^{-x-2}
\right)\\
&=
\coef{z^{my}}
(1-z^m)^{-my+x+1}
+((m+1)z^{m}-1)
\left( {1-z^m}\right)^{-my-1}\\
&=\binom {my-x-2+y}y+(m+1)\binom {my+y-1}{y-1}-\binom {my+y}y\\
&=\binom {((m+1)(n-1)-\rRr)/\rRr}{n/\rRr}.
\end{align*}
This is the desired result since, as we mentioned at the beginning of
the proof, we must multiply the counting result by~$2$.
\end{proof}

\begin{proof}[Proof of \Cref{thm:enumD-4}]
We proceed as in the proof of \Cref{thm:enumD-2}.
In particular, we first concentrate on fundamental seeds with
$k$ bridging blocks. Also here, we ignore the signs of the elements of
the inner circle for the moment, so that, in the end, we must multiply
the result by~$2$.
The corresponding generating function is given
by~\eqref{eq:GFConstr3}, except that there is no summation over~$a_i$'s,
but rather all~$a_i$'s are equal to~$1$ since here all block sizes are
equal to~$m$. In other words, the expression that we must consider here is
$$
{\underset{i_1\ge0,\,i_2,\dots,i_k\ge1}
{\sum_{i_1+\dots +i_k=x}}}z\,(zC(z))^{m-i_1-1}
(zC(z))^{m-i_2}\cdots (zC(z))^{m-i_k}.
$$
Again using our earlier notation $f(z)=zC(z)$, this can be rewritten as
$$
{\underset{i_1\ge0,\,i_2,\dots,i_k\ge1}
{\sum_{i_1+\dots +i_k=x}}}z\,f^{mk-i_1-\dots-i_k-1}(z)
=
{\underset{i_1\ge0,\,i_2,\dots,i_k\ge1}
{\sum_{i_1+\dots +i_k=x}}}z\,f^{mk-x-1}(z).
$$
As before, the sum over the $i_j$'s is not difficult to carry out, and
we obtain
$$
\binom x{k-1}
z\,f^{mk-x-1}(z)
$$
for our generating function.
This expression must be multiplied by
$(m(n-1)-1)/\rRr$, divided by~$k$, and then summed over all~$k$.
Hence, we must compute
\begin{align*}
\sum_{k\ge1}
\frac {(m(n-1)-1)/\rRr} {k}\binom x{k-1}&
z\,f^{mk-x-1}(z)
\\
&=
\sum_{k\ge1}
\frac {my-x-1} {x+1}\binom {x+1}{k}
z\,f^{mk-x-1}(z)\\
&=
\frac {my-x-1} {x+1}
\left(
z\,f^{-x-1}(z)(1+f^m(z))^{x+1}
-
z\,f^{-x-1}(z)\right)\\
&=
\frac {my-x-1} {x+1}
z\,f^{-x-1}(z)
\left(
\left( {1+f^m(z)}\right)^{x+1}
-1
\right).
\end{align*}
Again we used \eqref{eq:rotrel1} to obtain the second line.

In the last expression, we want to extract the coefficient of $z^{my-x}$.
Equivalently, we want to compute
$$
\frac {my-x-1} {x+1}
\coef{z^{my-x-1}}
f^{-x-1}(z)
\left(
\left( {1+f^m(z)}\right)^{x+1}
-1
\right).
$$
By using the Lagrange inversion formula in the form~\eqref{eq:CBa},
with $b=my-x-1$ and $g(z)=z^{-x-1}\big(( {1+z^m})^{x+1}-1\big)$, 
this is the same as
\begin{multline*}
\frac {my-x-1} {x+1}
\frac {1} {my-x-1}\coef{z^{-1}}
F^{-my+x+1}(z)
\frac {d} {dz}\left(z^{-x-1}
\left(
\left( {1+z^m}\right)^{x+1}
-1
\right)\right)\\
=
\coef{z^{-1}}
F^{-my+x+1}(z)
z^{-x-2}
\left(1
+((m-1)z^{m}-1)
\left( {1+z^m}\right)^{x}
\right).
%\label{eq:GFConstr3}
\end{multline*}
Since here all block sizes are~$m$, we must specialise $x_0=1$ (as
always), $x_1=1$, and $x_i=0$ for $i\ge2$.
With these choices, the series $F(z)$ equals
$z/(1+z^m)$. If we substitute this in the above expression, then we get
\begin{align*}
\coef{z^{-1}}
z^{-my-1}&(1+z^m)^{my-x-1}
\left(1+((m-1)z^{m}-1)
\left( {1+z^m}\right)^{x}
\right)\\
&=
\coef{z^{my}}
(1+z^m)^{my-x-1}
+((m-1)z^{m}-1)
\left( {1+z^m}\right)^{my-1}\\
&=\binom {my-x-1}y+(m-1)\binom {my-1}{y-1}-\binom {my-1}y\\
&=\binom {(m(n-1)-1)/\rRr}{n/\rRr}.
\end{align*}
This is the desired result since, as we mentioned at the beginning of
the proof, we must multiply the counting result by~$2$.
\end{proof}

\section{Proofs: Characterising pseudo-rotationally invariant positive non-crossing partitions}
\label{app:inv}

This appendix is devoted to the proofs of \Cref{lem:allA,,lem:allB,,lem:allD} in which we characterise pseudo-rotationally invariant positive non-crossing set partitions in classical types.

\subsection{Characterisation of pseudo-rotationally invariant positive
  non-crossing\break partitions in type~$A_{n-1}$}
\label{app:inv-A}

Before we are able to prove \Cref{lem:allA}, we need to first establish some
auxiliary results which provide a detailed examination how
blocks are moved by successive applications of the
pseudo-rotation~$\rotA$. In order to have a convenient language to
carry this analysis out, we need some more definitions.

%We call an element of $\widetilde {N\kern-1ptC}(N)$ \defn{$\rrr
%  $-rotationally invariant\/} if it is invariant under the usual
%$\rrr$-fold cyclic rotation, say $\rot$ sending a letter~$i$
%cyclically to the letter $i + N/\rrr$. 
%In this case, we call one of its blocks, say $B$, \defn{$\widetilde
%  R$-central} if $B = \rot(B)$. 
%
%In this case, we call a block, say $B$, \defn{$\rotA$-central} if $B = \rotA^{(N-2)/\rrr }(B)$.

Recall from \Cref{sec:rotenumA} that,
for a $\rrr $-pseudo-rotationally invariant
element~$\pi$ of $\NCAPlus$
(that is, $\pi$ is invariant with respect to~$\rotA^{(N-2)/\rrr }$), 
a block $B$ of~$\pi$ is called
\defn{$\rotA$-central block\/} (or simply \defn{central block} if $\rotA$
is clear from the context)
if $B=\rotA^{(N-2)/\rrr }(B)$.
(How $\rotA$ acts on a block~$B$ should be intuitively clear from
the definition of~$\rotA$, cf.\ \Cref{fig:7,fig:1,fig:8}.)
For example, both non-crossing partitions in \Cref{fig:3,fig:4} 
have central blocks: in the partition of \Cref{fig:3}
the central block is $\{1,2,9,16,23,30\}$, while in 
the partition of \Cref{fig:4}
the central block is $\{3,4,11,18,25,26\}$. On the other hand,
neither of the non-crossing partitions in \Cref{fig:5,fig:6} 
has a central block.
While a singleton block can never be a central block, it may
happen that --- slightly counter-intuitively --- $\{N,1\}$ is a central block. 
An example with $N=32$ and $\rrr=3$ is shown in \Cref{fig:22},
where indeed
$\rotA^{(32-2)/3}(\(1,32\))=\rotA^{10}(\(1,32\))=\(1,32\)$.

Furthermore, we define a \defn{cyclic interval\/} of $\{1,2,\dots,N\}$
to be an interval of the form $\{e,e+1,\dots,f-1,f\}$, where all
elements of the interval are taken modulo~$N$. Thus, for $N=12$,
$\{3,4,5,6\}$, $\{9,10,11\}$, as well as $\{10,11,12,1,2\}$ are cyclic
intervals.
Given an element~$\pi$ of $\NCAPlus$,
the \defn{width} of a block $B$ of~$\pi$, denoted by~$\width(B)$,
is defined as the minimal
cardinality of a cyclic interval containing~$B$. 
Such a minimal-cardinality cyclic interval will also be called
a \defn{support\/} for~$B$. (With this definition, a support of a
block does not need to be unique.)
A block $B$ in a $\rrr$-pseudo-rotationally invariant element of
$\NCAPlus$ is called
\defn{fat\/} if $\width(B)>\frac {N-2} {\rrr }+1$.
See \Cref{fig:4}, where $\{3,4,11,18,25,26\}$ is a fat block
of width~$24>\frac {30-2}4+1=8$.

The key for the proof of \Cref{lem:allA} is to show that, for
$\rrr\ge3$, every $\rrr$-pseudo-rotationally invariant element of
$\NCAPlus$ has a central block. We are going to
finally achieve this in \Cref{lem:10}. In turn, that proposition needs
another fact that we shall prove in \Cref{lem:fat}:
for every $\rrr $-pseudo-rotationally invariant element of 
$\NCAPlus$, a fat block is automatically a central block.
In the proof of \Cref{lem:fat}, we need certain properties of
the pseudo-rotation~$\rotA$ which we state and prove
separately in the next two lemmas, before we embark on
\Cref{lem:fat} and \Cref{lem:10} themselves and their proofs.

\begin{lemma} \label{lem:rotungl}
Let $B$ be a block of $\pi\in \NCAPlus$
which has a support not containing
$1$ and~$N$. Then $\width\big(\rotA^j(B)\big)\le
\width(B)$ for any~$j$.
\end{lemma}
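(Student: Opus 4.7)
My plan is to prove the bound $\width(\rotA^j(B)) \le \width(B)$ by induction on~$j$, analysing how a single application of~$\rotA$ affects a block whose support avoids the ``special edge'' between $N$ and~$1$. The base case $j=0$ is immediate. For the inductive step, the behaviour of a single application of~$\rotA$ on $\rotA^{j-1}(B)$ is described by the two cases of \Cref{prop:2}, and the task is to show that neither of them can push the width above $\width(B)$.

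In the first case of \Cref{prop:2} (the ``normal mode'', where $N-1$ lies in the special block of $\rotA^{j-1}(\pi)$), the map~$\rotA$ is ordinary clockwise rotation on every block, so widths are preserved and the inductive hypothesis immediately yields the conclusion. In the second, ``twisted'' mode one should first observe that $\rotA^{j-1}(B)$ cannot be the special block of $\rotA^{j-1}(\pi)$: the special block always has a support containing both $1$ and~$N$, and one checks that this property is not created from the hypothesis during the induction. Consequently, for every block other than the distinguished block~$T$ containing $N-1$, the action is again ordinary rotation and nothing more is to prove.

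The crucial sub-case is therefore when $\rotA^{j-1}(B) = T$. Here one uses the explicit description $\rotA(T) = T' = \{2,\ldots,a+1,b_1+1\}$ from \Cref{prop:2} to bound $\width(T')$ in terms of $\width(B)$. This requires exploiting the non-crossing structure of $\rotA^{j-1}(\pi)$ — which constrains the parameter~$a$ coming from the special block and the parameter~$b_1$ coming from~$T$ — together with additional structural information that must be maintained through the induction, namely that the support of $\rotA^{j-1}(B) = T$ sits inside a cyclic interval of width at most $\width(B)$ that avoids $\{1,N\}$.

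The main obstacle is precisely this twisted sub-case: a purely width-based induction is not strong enough, since $\width(T')$ can a priori involve a wrap-around through the special edge. The remedy, which I would carry out, is to strengthen the induction hypothesis to retain the concrete cyclic interval serving as a support of $\rotA^j(B)$, and then to verify by direct inspection — using the non-crossing property to force $a+1$ and $b_1+1$ to lie inside this retained interval — that the twisted rule cannot move the support outside a cyclic interval of no greater width. This geometric analysis, most transparently done in the style of \Cref{fig:7}, appears to be the genuine technical heart of the lemma and of the subsequent \Cref{lem:fat}.
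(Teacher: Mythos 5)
Your plan founders on a misreading of how $\rotA$ transports blocks in the ``twisted'' case of \Cref{prop:2}. There, the block $T=\{b_1,b_2,\dots,N-1\}$ containing $N-1$ is carried to the \emph{new special block} $\{1,b_2+1,\dots,N\}$ (its elements $b_2,\dots,N-1$ shift by one unit and it absorbs $1$), while it is the \emph{old} special block $\{1,\dots,a,N\}$ that becomes $\{2,\dots,a+1,b_1+1\}$; this is exactly what \Cref{fig:7} depicts and is the tracking used throughout the paper (in the proof of \Cref{lem:N-2}, and in the notion of central block through which \Cref{lem:fat} later invokes the present lemma). Consequently both of your key assertions fail: the iterate of $B$ \emph{does} become the special block --- at the very first application after it has rotated into the position containing $N-1$ --- so the invariant you propose to carry through the induction, namely that the support of $\rotA^{j}(B)$ stays in a cyclic interval avoiding $\{1,N\}$ of width at most $\width(B)$, is false, and it breaks precisely at the step your induction is meant to control. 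And if one insisted on your identification $\rotA(T)=\{2,\dots,a+1,b_1+1\}$, the lemma itself would be false: for $N=10$, $\pi=\big\{\{1,2,10\},\{3,4,5\},\{6,7,8,9\}\big\}$ and $B=\{6,7,8,9\}$ one has $a=2$, $b_1=6$, and your ``image'' $\{2,3,7\}$ has width $6>4=\width(B)$; the set $\{2,\dots,a+1,b_1+1\}$ has width $b_1$, which in general exceeds $\width(B)$.

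The mechanism that actually makes the lemma true is absent from your sketch. After normalising (as the paper does) so that the support of $B$ is $[e,\dots,N-1]$, the critical application sends $B$ to the new special block, which lies entirely under the arc of $\{2,\dots,a+1,b_1+1\}$ joining $b_1+1$ back to $2$ across the edge $\{N,1\}$; by non-crossingness this overarching block, which is merely rotated by the subsequent applications, persists for the next $N-e-2$ steps and confines every further iterate of $B$ to a region of exactly $N-e=\width(B)$ elements --- the confinement comes from nesting under that arc, not from avoidance of $\{1,N\}$. One further application then makes the iterate an ordinarily rotated copy of $B$, a fact deduced from the order result \Cref{lem:N-2}, after which widths are equal until the orbit closes. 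These two ingredients --- the persistence of the overarching block produced from the old special block, and the use of \Cref{lem:N-2} to ``restore'' $B$ --- are precisely the content you deferred as ``the geometric analysis to be carried out'', so even with the block-tracking corrected, the essential argument is missing.
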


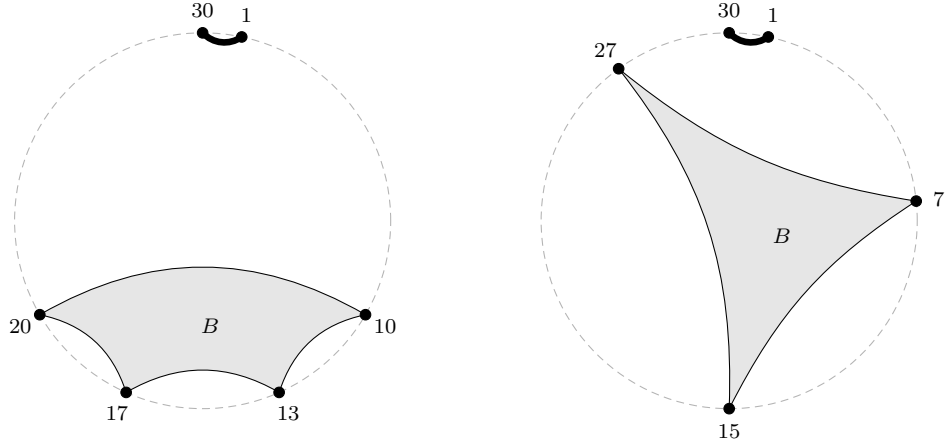
\begin{figure}
\begin{center}
  \begin{tikzpicture}[scale=1]
    \polygon{(-3.5,0)}{objleft}{30}{2.5}
      {1,,,,,,,,,10,,,13,,,,17,,,20,,,,,,,,,,30}
     \draw[fill=black,fill opacity=0.1] (objleft10) to[bend right=30]
(objleft13) to[bend right=30] (objleft17) to[bend right=30]
(objleft20) to[bend left=30] (objleft10);
    \draw[line width=2.5pt,black] (objleft1) to[bend left=40] (objleft30);
    \node at (-3.4,-1.4) {\tiny$B$};

    \polygon{(3.5,0)}{objright}{30}{2.5}
      {1,,,,,,7,,,,,,,,15,,,,,,,,,,,,27,,,30}
     \draw[fill=black,fill opacity=0.1] (objright7) to[bend right=15]
(objright15) to[bend right=20] (objright27) to[bend right=15]
(objright7);
    \draw[line width=2.5pt,black] (objright1) to[bend left=40] (objright30);
        \node at (4.2,-0.2) {\tiny$B$};
    \end{tikzpicture}
\end{center}
\caption{Example (on the left) and non-example (on the right) of a
  block~$B$ in \Cref{lem:rotungl}}
\label{fig:B1}
\end{figure}

\begin{remark}
On the left of \Cref{fig:B1}, there is an example of a block whose
only support is $[10,11,\dots,20]$, which does not contain $1$ and~$N=30$.
On the other hand, the right of \Cref{fig:B1} shows a block whose only
support is $[27,28,29,30,1,2,\dots,15]$, which does contain $1$ and $N=30$.
\end{remark}

\begin{proof}[Proof of \Cref{lem:rotungl}]
Clearly, the case of $|B|=1$ being obvious, we may assume
$\width(B)>1$. 
Without loss of generality we may restrict our attention to
the case where the mentioned support of~$B$ has the form
$C=[e,e+1,\dots,N-1]$; see the left image in 
\Cref{fig:7} for a schematic illustration. After application
of~$\rotA$, the block~$\rotA(B)$ 
is contained in the cyclic interval $\{e+2,\dots,1\}$,
while
$e+1$ and $2$ are both contained in a different block
``overarching"~$\rotA(B)$; see the right image in
\Cref{fig:7}. During the next
$N-e-2$ applications of~$\rotA$, this overarching
block persists, ensuring that
$\width\big(\rotA^j(B)\big)\le \width(B)$ for all~$j$
with $0\le j\le N-e-1$. A further application of~$\rotA$
``restores" $B$, i.e., $\rotA^{N-e}(B)$ is 
a(n ordinarily) rotated version of~$B$. (That we get indeed a rotated
version follows from 
\Cref{lem:N-2} and the observation that the next applications
of~$\rotA$ will (ordinarily) rotate our block.)
Thus we have
$\width\big(\rotA^j(B)\big)=\width(B)$ for all~$j$
with $N-e \le j\le N-2$. This concludes the proof of the lemma.
\end{proof}

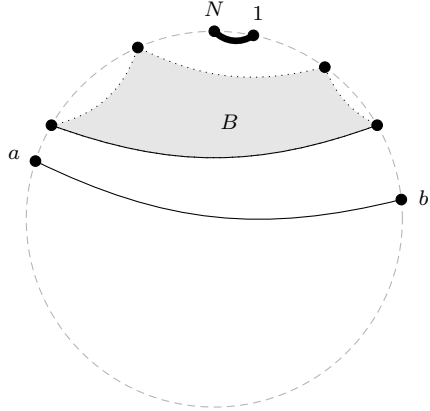
\begin{figure}
\begin{center}
  \begin{tikzpicture}[scale=1]
    \polygon{(-3.5,0)}{objleft}{30}{2.5}
      {1,,\hspace{1pt},,\hspace{1pt},,b,,,,,,,,,,,,,,,,,a,\hspace{1pt},,,\hspace{1pt},,N}
     \draw[dotted, fill=black,fill opacity=0.1] (objleft3) to[bend
right=20] (objleft5) to[bend left=20] (objleft25) to[bend right=30]
(objleft28) to[bend right=20] (objleft3);

     \draw[] (objleft5) to[bend left=20] (objleft25);
     \draw[] (objleft7) to[bend left=20] (objleft24);
     \node at (-3.3,1.3) {\tiny$B$};
    \draw[line width=2.5pt,black] (objleft1) to[bend left=40] (objleft30);
    \end{tikzpicture}
\end{center}
\caption{Illustration of a block $B$ ``covered" as described in \Cref{lem:Bogen}}
\label{fig:B2}
\end{figure}

\begin{lemma} \label{lem:Bogen}
Let $B$ be a block in a non-crossing partition $\pi\in \NCAPlus$ 
which is ``covered" by two elements $a$ and $b$
(that is, $a$ and $b$ belong to a block $B'$ different from~$B$) with
$a<N$, $b>1$, and $b+N-a\le N/2$.
{\em(}See \Cref{fig:B2} for a schematic
illustration of such a configuration.{\em)}
Then
$\width\big(\rotA^j(B)\big)\le b+N-a-1$ for all~$j$.
\end{lemma}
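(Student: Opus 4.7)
My approach would mirror the proof of \Cref{lem:rotungl}: the block $B'$ containing $a$ and $b$ serves as a persistent ``overarching'' block preventing $B$ from ever expanding to width more than $b+N-a-1$. Without loss of generality I may assume that the pair $(a,b)$ is chosen so that $B'$ is the tightest cover, i.e., so that $B'$ contains no element of $\{b+1,\dots,a-1\}$ and its support is exactly $C=\{a,a+1,\dots,N,1,\dots,b\}$ of size $b+N-a+1$. I would then prove inductively on $j\ge 0$ that there is a block $B'_j$ in the partition $\rotA^j(\pi)$ which covers $\rotA^j(B)$ and whose support has size at most $b+N-a+1$; this yields the claim.

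For the inductive step, I would split the application of $\rotA$ to $\rotA^j(\pi)$ into two regimes. In the ordinary regime (when $N-1$ lies in the special block containing $\{1,N\}$), the map $\rotA$ acts as a plain clockwise rotation by one unit, so $B'_{j+1}=\rotA(B'_j)$ trivially covers $\rotA^{j+1}(B)$ with the same support size. In the non-trivial regime (Case~1 of \Cref{prop:2}), the block containing $N-1$ merges with part of the special block and creates the two new blocks $\{1,b_2+1,\dots,N\}$ and $\{2,\dots,a'+1,b_1+1\}$ in the notation of that proposition. The decisive fact here is the size bound $b+N-a\le N/2$: the support of $B'_j$ occupies at most half the circle, so its complementary cyclic interval $\{b+1,\dots,a-1\}$ has at least $N/2-1$ elements. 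This guarantees that, in any one step, the perturbation caused by $\rotA$ in Case~1 of \Cref{prop:2} cannot simultaneously touch the ``long arc'' of $B'_j$ (where $B'_j$ lives) and the special arc $\{1,N\}$ in a way that destroys the covering.

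More concretely, I would analyse the following sub-cases: either $B'_j$ is disjoint from the elements $1,N-1,N$ involved in the non-trivial action (then $\rotA(B'_j)$ is again a cover of $\rotA^{j+1}(B)$), or $B'_j$ meets these elements. In the latter situation the only way this can happen, given the size restriction, is that $B'_j$ itself either is the special block of $\rotA^j(\pi)$ or contains $N-1$; I would then take $B'_{j+1}$ to be the new special block in $\rotA^{j+1}(\pi)$ (the one containing $1$ and $N$). A direct check from the formulas in \Cref{prop:2} shows that the support of this new special block sits again inside a cyclic interval of size $b+N-a+1$, because its elements are obtained from those of $B'_j$ by exchanging the pair $(N-1,N)$-\,style boundary with the pair $(1,2)$-\,style boundary inside the same half-circle.

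The main obstacle will be carrying out this final sub-case analysis cleanly: one has to verify, from the explicit description of the blocks $\{1,b_2+1,\dots,N\}$ and $\{2,\dots,a'+1,b_1+1\}$ in \Cref{prop:2}, that no matter which of these (if any) becomes the new $B'_{j+1}$, its support size is still at most $b+N-a+1$. The size assumption $b+N-a\le N/2$ is used crucially here to exclude the pathological configurations in which the perturbation would wrap all the way around the circle. Once the covering block is re-established in each iteration, the width bound $\width(\rotA^j(B))\le\width(B'_j)-1\le b+N-a-1$ follows immediately by the definition of covering.
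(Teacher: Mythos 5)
Your overall intuition (a persistent overarching arc, with the bound $b+N-a\le N/2$ preventing wrap-around pathologies) is the right one, but the proof as set up has two genuine gaps. First, your ``WLOG'' is not a legitimate reduction: you may assume that $a$ and $b$ are cyclically \emph{successive} in $B'$ (this is the only normalisation the argument needs), but you cannot assume that $B'$ has no elements in $\{b+1,\dots,a-1\}$, i.e.\ that its support is exactly $C$. This matters because your closing inequality $\width\big(\rotA^j(B)\big)\le\width(B'_j)-1$ rests on the covering \emph{block} having small support, whereas what is actually controlled is only the covering \emph{arc} between two successive elements; a covering block with elements on the far side of the circle can have width close to $N$, and then your inequality says nothing.

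Second, and more seriously, the invariant you propose to maintain --- for every $j$ a covering block $B'_j$ of support at most $b+N-a+1$ --- is false, and your last sub-case is exactly where it breaks. After $N-a-1$ ordinary rotations the clockwise endpoint of the covering arc sits at $N-1$, so at the next step the covering block is the block containing $N-1$ in Case~(1) of \Cref{prop:2}; it is absorbed into the new block $\{1,b_2+1,\dots,N\}$, whose support has size up to $a-b+1\ge b+N-a+1$ (strict inequality unless $b+N-a=N/2$), and whose relevant arc $(1,b_2+1)$ covers an interval of size $b_2-1$ that need not be $\le b+N-a-1$. Worse, at later times $\rotA^j(B)$ can itself become the special block containing $1$ and $N$, and then it is covered by no block at all, so no cover can be maintained for all $j$. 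The way out is the two-phase structure used in the paper: keep the cover only for the first $N-a-1$ steps (where it persists because, with $a,b$ successive in $B'$, the blocks involved in the surgery lie strictly inside the covered region); then observe that the $(N-a)$-th application places $\rotA^{N-a}(B)$ inside the interval $\{2,\dots,b+N-a\}$, so that --- thanks to $b+N-a\le N/2$ --- its support avoids $1$ and $N$ and has size at most $b+N-a-1$; and finally invoke \Cref{lem:rotungl} to control all subsequent steps. You only ``mirror'' the proof of \Cref{lem:rotungl} but never actually apply it to the tail, and without that (or an equivalent argument replacing the unattainable global cover) your induction cannot be closed.
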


\begin{proof}
The argument is similar to the ones in the proof of the previous
lemma.
Without loss of generality we may assume that $a$ and $b$ are
(cyclically) successive elements in~$B'$. Thus, the arc $a-b$
of the block $B'$ is ``overarching"~$B$. 
During the first
$N-a-1$ applications of~$\rotA$, this overarching
block persists, ensuring that
$\width\big(\rotA^j(B)\big)\le b+N-a-1$ for all~$j$
with $0\le j\le N-a-1$. 
(It is here, where we use the condition $b+N-a\le N/2$,
otherwise we could not be sure to draw this conclusion for
the width of the pseudo-rotations of~$B$.)
A further application of~$\rotA$
produces the block~$\rotA^{N-a}(B)$ which arises from
$\rotA^{N-a-1}(B)$ by adding one to all elements of the latter
block (modulo~$N$), removing~$1$ and inserting an element~$e$
with $e\le b+N-a$ to it.
Clearly, for this new block we have also
$\width\big(\rotA^{N-a}(B)\big)=e-2+1\le b+N-a-1$.
In order to now complete the proof, we appeal to \Cref{lem:rotungl}.
\end{proof}

We are now in the position for proving that fat blocks are at the same
time central.

\begin{lemma} \label{lem:fat}
A fat block $B$ in a $\rrr $-pseudo-rotationally invariant positive
non-crossing partition is automatically a central block.
%Let $B$ be a fat block in an $r$-pseudo-rotationally invariant positive
%non-crossing partition. If $B$ can be embedded into a cyclic interval
%that does not contain $1$ and $N$ and is of cardinality $\width(B)$,
%then $B$ is a central block.
\end{lemma}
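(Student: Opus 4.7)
I would argue by contradiction. Suppose $B$ is fat but not $\rotA$-central, so the $\rotA^{(N-2)/\rho}$-orbit of $B$ among the blocks of $\pi$ has size $d\mid\rho$ with $d\ge2$; write $B_k:=\rotA^{k(N-2)/\rho}(B)$ for $k=0,\ldots,d-1$, and note that the $B_k$ are pairwise distinct blocks of the non-crossing partition $\pi$, hence pairwise non-crossing. The first step is to dispose of the special block (the unique block of $\pi$ containing both $1$ and $N$, which exists by positivity): since $\rotA$ preserves positivity and $\pi=\rotA^{(N-2)/\rho}(\pi)$, the induced permutation of the blocks of $\pi$ must send the special block to the special block, hence fixes it. So from here on $B$ may be assumed non-special.

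\textbf{Widths along the orbit are all equal.} I would next establish that $\width(B_k)=w:=\width(B)$ for every $k$. Since $w\le N-2$, after possibly pseudo-rotating the whole orbit I can arrange that the starting block admits a support avoiding at least one of $\{1,N\}$. Then \Cref{lem:rotungl} applies step-by-step to the $\rotA$-orbit of $B$, making $j\mapsto\width(\rotA^j(B))$ monotone non-increasing; for the (rare) steps where the hypothesis of \Cref{lem:rotungl} momentarily fails, I would invoke \Cref{lem:Bogen} applied to the arc of the special block that currently covers the rotate of $B$. Combined with the periodicity $\rotA^{N-2}(B)=B$ from \Cref{lem:N-2}, monotone non-increase forces constancy, so every $B_k$ has width $w>(N-2)/\rho+1$.

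\textbf{Geometric contradiction.} Finally I would examine the supports $I_0,I_1$ of $B_0,B_1$, which are cyclic intervals of equal length $w$. The pseudo-rotation $\rotA$ acts as translation by $+1$ on every non-special block, with distortions only in the region of the special block as described in \Cref{prop:2}; consequently $I_1$ differs from $I_0$ by a cyclic shift of at most $(N-2)/\rho+1$ positions. Since $|I_0|=|I_1|=w>(N-2)/\rho+1$, the intervals $I_0$ and $I_1$ must overlap in at least two consecutive positions. Three cases then arise: (i) $I_0=I_1$ forces the two common endpoints to lie in both $B_0$ and $B_1$, contradicting the disjointness of distinct blocks; (ii) strict containment $I_0\subsetneq I_1$ or $I_1\subsetneq I_0$ violates $\width(B_0)=\width(B_1)$; and (iii) proper overlap with neither interval containing the other makes the four support endpoints interleave cyclically as $a_0<a_1<b_0<b_1$ with $\{a_0,b_0\}\subseteq B_0$ and $\{a_1,b_1\}\subseteq B_1$, forcing $B_0$ and $B_1$ to cross, contrary to $\pi$ being non-crossing. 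Every case is impossible, so $d=1$ and $B$ is central.

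\textbf{Main obstacle.} The most delicate point will be the cyclic-shift bound between $I_0$ and $I_1$ in the last step. The map $\rotA$ is not a genuine rotation near the special block, and a single element can jump by up to three positions per application (cf.\ \Cref{fig:7}); over $(N-2)/\rho$ iterations these extra shifts accumulate, and although their total is pinned down globally by \Cref{lem:N-2} (exactly $N$ over $N-2$ applications), tracking them sharply enough to obtain the clean estimate ``shift $\le(N-2)/\rho+1$'' requires a careful case analysis of how often $B$'s support encounters the special block during the orbit. \Cref{lem:Bogen} should be the right tool for controlling this, since it bounds exactly the deformation of a block covered by an arc of another --- precisely the situation produced when a rotate of $B$ sits next to the special block.
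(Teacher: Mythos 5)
There are genuine gaps at each of your three steps, and the first two fail already on the paper's own examples. In Step~1, the induced map on blocks does \emph{not} fix the special block: by \Cref{prop:2}, whenever $N-1$ is not in the special block, $\rotA$ sends the special block $\{1,\dots,a,N\}$ to the non-special block $\{2,\dots,a+1,b_1+1\}$; concretely, in the $4$-pseudo-rotationally invariant partition of \Cref{fig:4} (so $N=30$, $(N-2)/\rrr=7$) the special block is $\{30,1,2\}$, and tracking it through seven applications of $\rotA$ gives $\rotA^{7}(\{30,1,2\})=\{5,9,10\}$, not $\{30,1,2\}$. The same computation refutes the mechanism behind Step~2: your width-constancy argument nowhere uses fatness, so if it worked it would apply to every block, yet here a block of width $3$ is sent by $\rotA^{(N-2)/\rrr}$ to a block of width $6$. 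The tools you cite cannot close this: \Cref{lem:rotungl} requires a support containing \emph{neither} $1$ nor $N$ (not merely "avoiding at least one of" them), and at the steps where every support of the current rotate of $B$ contains both $N$ and $1$, the fallback via \Cref{lem:Bogen} is unavailable — the rotate then wraps over the gap between $N$ and $1$, so by non-crossingness the special block lies inside a gap of the rotate rather than covering it, and there is no covering arc to which \Cref{lem:Bogen} (which moreover needs the covering arc to span at most half the circle) could be applied.

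The final step has two further gaps. The shift bound "$I_1$ differs from $I_0$ by at most $(N-2)/\rrr+1$" is exactly the unproved crux (you say so yourself); the two extra unit jumps a block picks up over a full period of $N-2$ applications need not be separated by your window, so a priori the shift can be $(N-2)/\rrr+2$, which already kills the overlap estimate when $\width(B)=(N-2)/\rrr+2$. More seriously, case~(iii) of your trichotomy is false for large widths: two distinct cyclic intervals of the same length $w$ with $2w>N$ can overlap at both ends, in which case the four support endpoints do not interleave and no crossing is forced; nothing is said either when the supports wrap past $N,1$. These are precisely the configurations the paper isolates (its Cases~2 and~3, where $\width(B)>N/2$ or every support of $B$ contains $N$ and $1$). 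Its proof runs differently: it takes the preimage block $B'$ with $\rotA^{(N-2)/\rrr}(B')=B$, bounds the widths of the rotates of $B'$ via \Cref{lem:rotungl} and via \Cref{lem:Bogen} applied with the extreme elements of $B$ as the covering arc, uses fatness through the explicit estimate $\max\rotA^{(N-2)/\rrr}(B')\le e+(N-2)/\rrr<N-1$, and bootstraps when $B'$ is itself fat. As written, your proposal does not establish the lemma.
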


\begin{proof}
Let $\pi$ be a $\rrr $-pseudo-rotationally invariant element of 
$\NCAPlus$, and let $B$ be a fat block of~$\pi$.
By way of contradiction, let us assume that $B$ is not a central block,
that is, $\rotA^{(N-2)/\rrr }(B)\ne B$. Then there must
exist another block of~$\pi$, say $B'$ with $B'\ne B$, for which
$\rotA^{(N-2)/\rrr }(B')=B$.

We distinguish between three cases (cf.\ \Cref{fig:B4,fig:B5,fig:B6}):

\begin{figure}
\begin{center}
  \begin{tikzpicture}[scale=1]
    \polygon{(-3.5,0)}{objleft}{32}{2.5}
      {1,,,,5,,7,,,10,,,,,,16,,,,,,,,,,,,,,,,32}
     \draw[fill=black,fill opacity=0.1] (objleft5) to[bend right=30]
(objleft7) to[bend right=30] (objleft10) to[bend right=30]
(objleft16) to[bend left=30] (objleft5);
    \draw[line width=2.5pt,black] (objleft1) to[bend left=40] (objleft32);
    \node at (-2.0,0) {\tiny$B$};
    \end{tikzpicture}
\end{center}
\caption{A block $B$ with support $[5,\dots,16]$ not containing 1
and $N=32$, and with width $12\le N/2=16$}
\label{fig:B4}
\end{figure}

\begin{figure}
\begin{center}
  \begin{tikzpicture}[scale=1]
    \polygon{(-3.5,0)}{objleft}{32}{2.5}
      {1,,,,5,,,,,,11,,,14,,,17,,,,,22,,,,,,,,,,32}
     \draw[fill=black,fill opacity=0.1] (objleft5) to[bend right=30]
(objleft11) to[bend right=30] (objleft14) to[bend right=30]
(objleft17) to[bend right=30]
(objleft22) to[bend left=30] (objleft5);
    \draw[line width=2.5pt,black] (objleft1) to[bend left=40] (objleft32);
    \node at (-3.0,-0.5) {\tiny$B$};
    \end{tikzpicture}
\end{center}
\caption{A block $B$ with support $[5,\dots,22]$ not containing 1
and $N=32$, and with width $18>N/2=16$}
\label{fig:B5}
\end{figure}
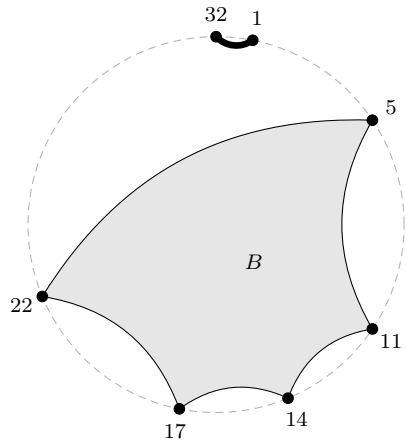

\begin{figure}
\begin{center}
  \begin{tikzpicture}[scale=1]
    \polygon{(-3.5,0)}{objleft}{32}{2.5}
      {1,2,,,,,,,,,11,,,,,,,,,20,,,,,,,,,29,,,32}
     \draw[fill=black,fill opacity=0.1] (objleft2) to[bend right=10]
(objleft11) to[bend right=10] (objleft20) to[bend right=10]
(objleft29) to[bend right=30] (objleft2);
    \draw[line width=2.5pt,black] (objleft1) to[bend left=40] (objleft32);
    \node at (-3.5,0.5) {\tiny$B$};
    \end{tikzpicture}
\end{center}
\caption{A block $B$ with supports $[11,\dots,32,1,2]$,
  $[20,\dots,32,1,\dots,11]$, and $[29,\dots,32,1,\dots,20]$, all of them
containing 1 and $N=32$}
\label{fig:B6}
\end{figure}
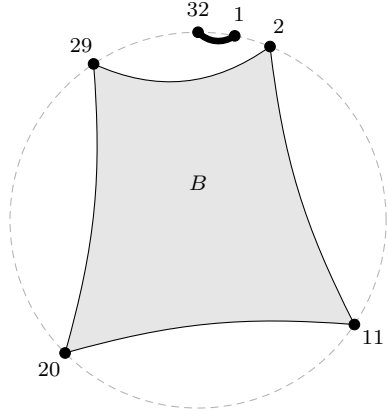

\medskip

\begin{enumerate}[\labelsep10pt]
\item[{\sc Case 1.}] $B$ has a support not containing $N$ and $1$, and
$\width(B)\le N/2$.
\item[{\sc Case 2.}] $B$ has a support not containing $N$ and $1$, and
$\width(B)> N/2$.
\item[{\sc Case 3.}] All supports of $B$ contain $N$ and $1$.
\end{enumerate}

\medskip
\noindent
{\sc Case 1.}
Since $\pi$ has a central block if and only if $\rotA^{j}(\pi)$
has a central block for any~$j$, we may without loss of generality assume
that $B$ is embedded in $C=\{e,e+1,\dots,N-1\}$ with $|C|=\width(B)$,
that is, $N-e=\width(B)$. It should be
noted that $e$ and $N-1$ must be elements of the block $B$. 

We must split our argument again, depending on whether
$B'$ is contained in $C$ or not.

If $B'$ is contained in $C$ then $\width(B')<\width(B)$.
This is absurd since, by \Cref{lem:rotungl}, we have
$\width\big(\rotA^j(B')\big)\le \width(B')$ for all~$j$.

On the other hand, if $B'$ is not contained in $C$, then 
it must actually be disjoint from $C$. In particular, $\max B'<e$.
If $N\notin B'$, then $\rotA^j(B')$ is a(n ordinarily)
rotated version of~$B'$ for $1\le j\le (N-2)/\rrr $. Thus,
$$\max \rotA^{(N-2)/\rrr }(B')=\max B'+\frac {N-2} {\rrr }
<e+\frac {N-2} {\rrr }<N-1,
$$
the last inequality following from the assumption that $B$ is fat.
If $N\in B'$, then\break $\max\rotA(B')=
e+1$. Further $\frac {N-2} {\rrr }-1$ 
applications of~$\rotA$ now (ordinarily)
rotate~$\rotA(B')$. Consequently,
$$\max \rotA^{(N-2)/\rrr }(B')=e+1+\left(\frac {N-2} {\rrr }-1\right)
=e+\frac {N-2} {\rrr }<N-1.
$$
This shows that in both cases
$\rotA^{(N-2)/\rrr }(B')$ cannot contain $N-1$,
a contradiction.

\medskip
\noindent
{\sc Case 2.} Again, we argue differently depending on whether
$B'$ is contained in the support of~$B$ or not.

If $B'$ is contained in the support of~$B$, then
$\width(B')<\width(B)$.
Again, this is absurd since, by \Cref{lem:rotungl}, we have
$\width\big(\rotA^j(B')\big)\le \width(B')$ for all~$j$.

On the other hand, if $B'$ is not contained in the support of~$B$, then 
we may apply \Cref{lem:Bogen} with $a$ the largest element of~$B$ ---
that is, $a=N-1$ ---
and $b$ the smallest element of~$B$
--- that is, $b=e$. The conclusion is that
$\width\big(\rotA^j(B')\big)<N/2<\width(B)$, a contradiction.

\medskip
\noindent
{\sc Case 3.} If $B'$ is contained in the support of~$B$, then
$\width(B')<\width(B)$. \Cref{lem:Bogen} applies with
with $a$ the largest element of~$B$
and $b$ the smallest element of~$B$. The conclusion is 
as in the above case and leads to a contradiction.

Finally, let $B'$ be not contained in the support of~$B$.
If $B'$ is not fat, then we know that $\width(B')<\width(B)$.
By \Cref{lem:rotungl}, this is absurd. On the other hand, if
$B'$ is fat, then $B'$ fits into Case~1 or Case~2, depending on its
width. In both cases, by what we have already established we see
that $B'$ must be a central block. But this means that
$\rotA^{(N-2)/\rrr }(B')=B'\ne B$, which is again absurd.

This completes the proof of the lemma.
\end{proof}

\begin{proposition} \label{lem:10}
If $\rrr \ge3$,
there is no $\rrr $-pseudo-rotationally invariant positive
non-crossing partition without central block.
\end{proposition}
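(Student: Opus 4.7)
The plan is to argue by contradiction, reducing to \Cref{lem:fat}. Let $\pi$ be an $\rrr$-pseudo-rotationally invariant element of $\NCAPlus$ with $\rrr\ge 3$, and assume $\pi$ has no central block. By \Cref{lem:fat}, no block of $\pi$ is fat, so every block $B$ of $\pi$ satisfies $\width(B)\le\frac{N-2}{\rrr}+1$.

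First I would examine the special block $B_0$ of $\pi$, i.e.\ the block containing both $1$ and $N$. Its non-fatness forces the largest cyclic gap in $B_0$ to have size at least $N-\frac{N-2}{\rrr}$, so $B_0$ is squeezed into a short cyclic interval straddling the $N$--$1$ boundary, while the wide arc on the opposite side is filled by other blocks that are each of width at most $\frac{N-2}{\rrr}+1$.

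Next I would track the ``special block'' across the iterates $\pi,\rotA(\pi),\rotA^2(\pi),\dots,\rotA^{(N-2)/\rrr}(\pi)=\pi$. Using the explicit description in \Cref{prop:2}, at each step the block containing $\{1,N\}$ is either rigidly rotated (when $N-1$ lies in it, Case~0 of the rotation) or swaps its non-$\{1,N\}$ elements with those of the $N-1$ block (Case~1). In Case~1 the new special block of $\rotA(\pi)$ is built out of the old $N-1$ block, and its size equals the old $|B_{N-1}|$. Because $\rotA^{(N-2)/\rrr}(\pi)=\pi$, the sequence of special blocks must close up after $\frac{N-2}{\rrr}$ steps back to $B_0$, which also bounds the widths of the intermediate $N-1$ blocks by $\frac{N-2}{\rrr}+1$ and forces them to lie in a narrow arc.

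The key step, and the main obstacle, is to show that this closure condition, combined with the width bound, is impossible unless in fact $\rotA^{(N-2)/\rrr}(B_0)=B_0$: tracking the parity of Case~1 swaps modulo $\rrr$ along the orbit and using \Cref{lem:rotungl,lem:Bogen} to bound how much the tracked blocks can ``drift'', I would show that for $\rrr\ge 3$ the trajectory of the special block cannot return to $B_0$ without passing through a block that equals one of its pseudo-rotational iterates. Such a block is then a central block, contradicting our assumption. The argument must crucially isolate what fails for $\rrr=2$: in Construction~2 the two Case~1 swaps along the half orbit exactly cancel, whereas the arithmetic behind $\rrr\ge 3$ prevents such cancellation, since $\frac{N-2}{\rrr}$ cannot be split into a cancelling pair of half-periods when $\rrr\ge 3$.
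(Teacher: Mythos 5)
Your setup is the right one (contradiction, then \Cref{lem:fat} to exclude fat blocks, then tracking the block containing $\{1,N\}$ under iteration of~$\rotA$), but the proof has a genuine gap exactly where you yourself flag ``the main obstacle'': you never actually show that the orbit cannot close up for $\rrr\ge3$. The sentence ``I would show that for $\rrr\ge3$ the trajectory of the special block cannot return to $B_0$ without passing through a block that equals one of its pseudo-rotational iterates'' is the entire content of the proposition, and the mechanism you suggest for it --- ``tracking the parity of Case~1 swaps modulo $\rrr$'' and a ``cancelling pair of half-periods'' --- is not substantiated and does not correspond to an argument that works: the obstruction is not a parity/cancellation phenomenon but a quantitative one about positions and widths.

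Concretely, the argument that closes the gap runs as follows (with $M=(N-2)/\rrr$). One first needs a genuine normalisation step, which is more delicate than your ``$B_0$ is squeezed into a short interval'' remark: after applying $\rotA$ suitably often one may assume the block $B$ containing $N,1$ has maximal element $k\le M$ \emph{and} the block containing $N-1$ has support $[N-a,\dots,N-1]$ with $a\le M+1$; arranging the second condition requires handling a possible non-fat block whose support contains $N$ and $1$ and ``covers'' the $N-1$ block, and pushing it across the boundary (this is where non-fatness of \emph{that} block, not of $B_0$, is used). Then one applies $\rotA$ and follows \Cref{prop:2}: after one step the tracked block contains $2,k+1,N-a+1$, after $a-1$ steps it contains $a,k+a-1,N-1$, and after $a$ steps it again contains $N,1$ but now with maximal element $k+a>k$. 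If $a\le M$ this configuration has the same shape as the start with a strictly larger maximum, and since non-fatness forces $k+a\le M$, only finitely many such rounds are possible. The terminal case $a=M+1$ means $\rotA^{M}(\pi)=\rotA^{(N-2)/\rrr}(\pi)$ has already been reached, and invariance would force $N-a\in\rotA^{M}(B)$, which is impossible because $N-a\ge N-M-1>2M\ge k+a-1$; this chain of inequalities is precisely where $\rrr\ge3$ enters, and its failure for $\rrr=2$ is exactly what produces Construction~2 in \Cref{lem:allA}(2). Without this normalisation and the monotonicity-plus-inequality argument (or some equally explicit substitute), your proposal is a plan rather than a proof; \Cref{lem:rotungl} and \Cref{lem:Bogen} alone only give width bounds and do not by themselves rule out the orbit closing.
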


\begin{proof}
Let $\pi$ be a $\rrr $-pseudo-rotationally invariant element of 
$\NCAPlus$. Arguing by contradiction,
we assume that $\pi$ contains no central block. Then,
according to \Cref{lem:fat}, it is at the same time without
a fat block,
and the same is true for any pseudo-rotation
of~$\pi$, that is, $\rotA^j(\pi)$ does not contain a
fat block for any~$j$.

From now on we write $M$ for $(N-2)/\rrr $. 
We claim that, in the orbit of~$\pi$ under the action of
the pseudo-rotation~$\rotA$, we find a
non-crossing partition in which the block containing $N$ and $1$
has a maximal element~$k$ with $k\le M$, and the block containing
$N-1$ has a support of the form $[N-a,N-a+1,\dots,N-1]$ with
$a\le M+1$. See the first image of \Cref{fig:inv_has_central}
for a schematic illustration of this block structure.

To see this, we proceed as follows. It is obvious that by sufficiently
many applications of~$\rotA$ to~$\pi$ we may obtain a non-crossing
partition in which the block containing $N$ and $1$ has the
claimed form. (Here we also use that we cannot have any fat blocks.) 
However, at this point it is not at all obvious that
the block containing $N-1$, $B'$ say, has the claimed form; it may rather
be a block with a support containing $N$ and $1$; see \Cref{fig:fat}
for an illustration (at this point, $e$, $f$, and $\widehat b$ should
be ignored). Moreover, $B'$ may be ``covered'' by another
block $B''$ with a support containing $N$ and $1$. Let $\widehat B$
be the block with maximal width with a support containing $N$ and~$1$.
We let the support of~$\widehat B$ be $[e,e+1,\dots,f]$.
See again \Cref{fig:fat} for an illustration of that situation.
(In the figure, we have $\widehat B\ne B'$. However, it might well be
that $\widehat B=B'$.)

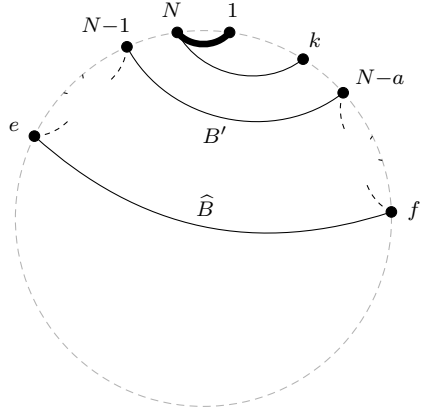
\begin{figure}
\begin{center}
  \begin{tikzpicture}[scale=1]
     \polygon{(0,0)}{obj}{45}{2.5}
       {1,,,k,,\hspace{15pt}N-a,,,,,f,,,,,,,,,,,,,,,,,,,,,,,,,,e,,,,,N-1\hspace{9pt},,N\hspace{2pt},}

     \draw[line width=2.5pt,black] (obj1) to[bend left=50] (obj44);

     \draw (obj1)  to[bend left=50]
           (obj44) to[bend right=50]
           (obj4);

     \draw (obj6) to[bend left=50]
           (obj42);

     \draw[dash pattern=on 2pt off 2pt on 2pt off 2pt on 2pt off 2pt
on 2pt off 18pt on 2pt off 2pt on 2pt off 2pt] (obj6)  to[bend
right=50] (obj9);

     \draw[dash pattern=on 2pt off 2pt on 2pt off 2pt on 2pt off 2pt
on 2pt off 18pt on 2pt off 2pt on 2pt off 2pt] (obj42)  to[bend
left=50] (obj39);

     \draw (obj37)  to[bend right=30] (obj11);

     \draw[dash pattern=on 2pt off 2pt on 2pt off 2pt on 2pt off 2pt
on 2pt off 18pt on 2pt off 2pt on 2pt off 2pt] (obj37)  to[bend
right=50] (obj40);

     \draw[dash pattern=on 2pt off 2pt on 2pt off 2pt on 2pt off 2pt
on 2pt off 18pt on 2pt off 2pt on 2pt off 2pt] (obj11)  to[bend
left=50] (obj8);

    \node at ($($0.07*($(obj1)$)$)$) {\tiny$\widehat B$};
    \node at ($($0.45*($(obj1)$)$)$) {\tiny$B'$};
    \end{tikzpicture}
\end{center}
\caption{The situation at the beginning of the proof of \Cref{lem:10}}
\label{fig:fat}
\end{figure}

According to our assumptions, the block $\widehat B$ cannot be fat,
hence $\width(\widehat B)\le M+1$.
Furthermore, all blocks which are disjoint
from $\widehat B$ cannot be fat either. 

Now we apply $\rotA$ $N-e$ times. From the definition of the
pseudo-rotation~$\rotA$ applied to our particular situation, 
we may see that $\widehat B$ will be ``moved into $N$", in the sense that 
$\rotA^{N-e}\big(\widehat B\big)$ will have a support of the form
$[N,1,\dots,l]$ with $l\le f+N-e$, 
while the blocks originally outside the support
of~$\widehat B$ will just be (ordinarily) rotated $N-e$ times.
Thus, we have indeed reached the claimed situation, illustrated
in the first image of \Cref{fig:inv_has_central}.
Here we announce the award of a
published version of this article,
autographed by~C. To receive 
the award, a 
5-pseudo-rotationally invariant positive non-crossing partition
in $\mNCAPlus[8][3]$ different from the one-block-partition
has to be sent to the authors.

\begin{comment}
\begin{figure}
\begin{center}
  \begin{tikzpicture}[scale=1]
    \polygon{(0,0)}{obj}{27}{2.5}
      {1,2,3,4,5,6,7,8,9,10,11,12,13,14,15,16,17,18,19,20,21,22,23,24,25,26,27}
     \draw[fill=black,fill opacity=0.1] (obj1) to[bend right=30]
(obj2) to[bend right=30] (obj3) to[bend right=40] (obj7) to[bend
right=30] (obj8) to[bend right=40] (obj12) to[bend right=30] (obj13)
to[bend right=40] (obj17) to[bend right=30] (obj18) to[bend right=40]
(obj22) to[bend right=30] (obj23) to[bend right=40] (obj27) to[bend
right=30] (obj1);
     \draw[fill=black,fill opacity=0.1] (obj4) to[bend right=30]
(obj5) to[bend right=30] (obj6) to[bend left=30] (obj4);
     \draw[fill=black,fill opacity=0.1] (obj9) to[bend right=30]
(obj10) to[bend right=30] (obj11) to[bend left=30] (obj9);
     \draw[fill=black,fill opacity=0.1] (obj14) to[bend right=30]
(obj15) to[bend right=30] (obj16) to[bend left=30] (obj14);
     \draw[fill=black,fill opacity=0.1] (obj19) to[bend right=30]
(obj20) to[bend right=30] (obj21) to[bend left=30] (obj19);
     \draw[fill=black,fill opacity=0.1] (obj24) to[bend right=30]
(obj25) to[bend right=30] (obj26) to[bend left=30] (obj24);
    \end{tikzpicture}
\end{center}
\caption{A valid token for the claim}
\label{fig:offer2}
\end{figure}
\end{comment}

In view
of the fact that $\pi$ contains a central block if and only if
$\rotA^{j}(\pi)$ has a central block for any~$j$, 
we now assume without loss of generality that $\pi$
contains a block $B$ which has a support of the form
$[N,1,\dots,k]$, with $k\le M$, and the
block $B'$ containing
$N-1$ has a support of the form $[N-a,N-a+1,\dots,N-1]$ with
$a\le M+1$. 
The first image in \Cref{fig:inv_has_central} provides a schematic
sketch of the combinatorial structure of~$\pi$. 
In the figure, the block $B'$ containing $N-1$ is indicated as well.
It is indeed disjoint from $B$ since
$$
k\le M<N-M-1\le N-a.
$$

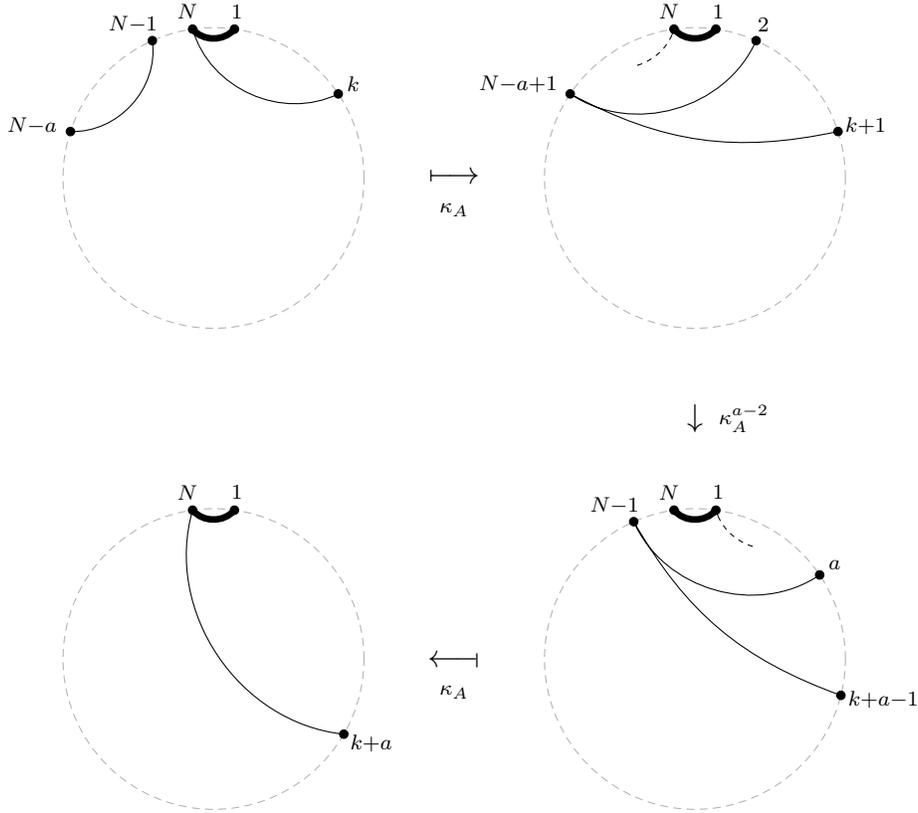
\begin{figure}
  \centering
  \newcommand{\longleftmapsto}{\longleftarrow\!\shortmid}
  \begin{tikzpicture}[scale=0.8]
    \polygon{(0,0)}{obj}{45}{2.5}
      {1,,,,,,k,,,,,,,,,,,,,,,,,,,,,,,,,,,,,N-a\hspace{16pt},,,,,,N-1\hspace{9pt},,N\hspace{2pt},}

    \draw[line width=2.5pt,black] (obj1) to[bend left=50] (obj44);

    \draw (obj1)  to[bend left=50]
          (obj44) to[bend right=50]
          (obj7);

    \draw (obj36) to[bend right=50]
          (obj42);

    \node[inner sep=0pt] (M1) at (4,0) {$\longmapsto$};
    \node[inner sep=0pt, below=0.25 of M1] {$\scriptstyle{\rotA}$};

    \polygon{(8,0)}{obj}{45}{2.5}
      {1,,2,,,,,,\hspace{8pt}k+1,,,,,,,,,,,,,,,,,,,,,,,,,,,,,N-a+1\hspace{28pt},,,,,,N\hspace{2pt},}

    \draw[line width=2.5pt,black] (obj1) to[bend left=50] (obj44);

    \draw[dash pattern=on 0pt off 15pt on 2pt off 2pt on 2pt off 2pt
on 2pt off 2pt on 2pt off 2pt on 2pt] (obj40) to[bend right=50]
(obj44);
    \draw (obj44) to[bend right=50]
          (obj1);

    \draw (obj9)  to[bend left=20]
          (obj38) to[bend right=50]
          (obj3);

    \node[inner sep=0pt] (M2) at (8,-4) {$\downarrow$};
    \node[inner sep=0pt, right=0.2 of M2]
{$\scriptstyle{\rotA^{a-2}}$};

    \polygon{(8,-8)}{obj}{45}{2.5}
      {1,,,,,,a,,,,,,\hspace{19pt}k+a-1,,,,,,,,,,,,,,,,,,,,,,,,,,,,,N-1\hspace{9pt},,N\hspace{2pt},}

    \draw[line width=2.5pt,black] (obj1) to[bend left=50] (obj44);

    \draw (obj44) to[bend right=50]
          (obj1);
    \draw[dash pattern=on 0pt off 15pt on 2pt off 2pt on 2pt off 2pt
on 2pt off 2pt on 2pt off 2pt on 2pt] (obj5) to[bend left=50] (obj1);

    \draw (obj13)  to[bend left=20]
          (obj42) to[bend right=50]
          (obj7);

    \node[inner sep=0pt] (M3) at (4,-8) {$\longleftmapsto$};
    \node[inner sep=0pt, below=0.2 of M3] {$\scriptstyle{\rotA}$};

    \polygon{(0,-8)}{obj}{45}{2.5}
      {1,,,,,,,,,,,,,,\hspace{9pt}k+a,,,,,,,,,,,,,,,,,,,,,,,,,,,,,N\hspace{2pt},}

    \draw[line width=2.5pt,black] (obj1) to[bend left=50] (obj44);

    \draw (obj1)  to[bend left=50]
          (obj44) to[bend right=50]
          (obj15);
    \end{tikzpicture}
  \caption{A schematic illustration of the construction in \Cref{lem:10}}
\label{fig:inv_has_central}
\end{figure}

If $B=\{N,1\}$ then it is a central block as is easy to see directly.
Hence, we may restrict ourselves to the case where $k\ge2$, or, 
in other words, to
the case where $B$ contains at least three elements.

We now investigate what happens when we apply 
${\rotA}$ repeatedly to~$\pi$.
In~$\rotA(\pi)$, we find the block~$\rotA(B)$
that contains $2,k+1,N-a+1$, and we find the block $\rotA(B')$
that contains $N$ and~$1$; see the second image in
\Cref{fig:inv_has_central}. 

The next $a-2$ applications of~$\rotA$ (ordinarily) rotate
$\rotA(B')$, until we arrive at $\rotA^{a-1}(\pi)$,
in which $\rotA^{a-1}(B)$ is a block containing
$a,k+a-1,N-1$; see the third image in \Cref{fig:inv_has_central}.
At no point do we arrive at a structure that would be in agreement
with the structure at the very beginning; compare the second and
third image in \Cref{fig:inv_has_central} with the first image.

If $a=M+1$, then we have already obtained
$\rotA^{a-1}(\pi)=\rotA^{M}(\pi)=\rotA^{(N-2)/\rrr }(\pi)$, 
which should be the same as~$\pi$.
However, comparison of the first and third image in
\Cref{fig:inv_has_central} shows that we should have $(N-a)\in
\rotA^{M}(B)$, which is impossible since
\begin{equation} \label{eq:imposs} 
N-a\ge N-M-1>2M\ge k+a-1
\end{equation}
for $\rrr \ge3$. Thus, we have reached a contradiction.

If $a\le M$, then we have to apply $\rotA$ 
another time. We arrive at~$\rotA^a(\pi)$, in which the
block~$\rotA^a(B)$ contains $N,1,k+a$; see the fourth
image in \Cref{fig:inv_has_central}.
The structure of the obtained non-crossing partition is the
same as in the beginning --- cf.\ the first image in
\Cref{fig:inv_has_central} --- except that the maximum element of
the block containing $N,1$ (namely $k+a$) is larger than at
the beginning (where it was $k$). 
Since this block cannot be a fat block according to our
assumptions, we have $k+a\le M$.
We now run through the
arguments of this proof another time, starting with
$\rotA^a(\pi)$ instead of~$\pi$. After finitely
many rounds we arrive at a contradiction.

This completes the proof of the lemma.
\end{proof}

\begin{proof}[Proof of \Cref{lem:allA}]
It is not difficult to see from the analysis of what
happens to a block under repeated application of the
pseudo-rotation~$\rotA$ in Step~3 of the proof of \Cref{lem:N-2}
(given in \Cref{app:order-A}) that Constructions~1
and~2 from \Cref{sec:rotA}
do indeed yield $\rrr $-pseudo-rotationally invariant positive
non-crossing partitions.

\medskip
For the converse, that is, to show that 
$\rrr $-pseudo-rotationally invariant positive
non-crossing partitions cannot arise in a different
way, we discuss the two cases separately.

\medskip\noindent
(1) Let $\rrr \ge3$ and $\pi$ be a $\rrr $-pseudo-rotationally invariant 
non-crossing partition in $\NCAPlus$. 
By \Cref{lem:10}, we know that $\pi$ must contain a central block, $Z$ say.
If $|Z|=2$, then it is not difficult to see directly that $\pi$
must arise from the corresponding (degenerate) case of Construction~1.

From now on we let $|Z|\ge3$.
We now consider $\rotA^j(\pi)$ for $j=0,1,\dots$. At some point,
$\rotA^j(Z)$ must contain $N$ and $1$. If in all such situations
also $N-1$ is contained in $Z$, then $Z=\{1,2,\dots,N\}$ and there
is nothing to prove. Otherwise, for some~$j$, $\rotA^j(Z)$ will
contain $N$ and $1$ but not $N-1$. We write $\pi'$ for
$\rotA^j(\pi)$ and $Z'$ for~$\rotA^j(Z)$. 
By the definition of the pseudo-rotation~$\rotA$, we see that the elements
$N-\frac {N-2} {\rrr }k$ must also be in $Z'$ for $k=1,2,\dots,\rrr $.
In particular, we have $2\in Z'$.

Let $B$ a block of $\pi'$
different from~$Z'$. Its support does not contain $N$ and~$1$,
and, by the last observation above, we have $\width(B)<(N-2)/\rrr $.
Hence, by the proof of \Cref{lem:rotungl}, the pseudo-rotation~$\rotA$
acts as ordinary rotation, except while moving across $N$ and~$1$.
In particular, $\rotA^{(N-2)/\rrr }$ acts by
(ordinarily) rotating $B$ by $\frac {N-2} {\rrr }$ units, except when we move
across $N$ and~$1$ where $B$ is rotated by $\frac {N-2}
{\rrr }+2$ units. Thus we have shown that $Z'$ arises from Construction~1,
and all other elements in the orbit of~$\pi$ arise by applying the
pseudo-rotation~$\rotA$ several times.

\medskip\noindent
(2) Let $\pi$ be a $2$-pseudo-rotationally invariant 
non-crossing partition in $\NCAPlus$. 
If it contains a central block, then the arguments from Case~(1) apply
also here and show that $\pi$ arises from Construction~1 possibly 
followed by several applications of~$\rotA$.

If $\pi$ does not contain a central block, then we run through the
arguments of the proof of \Cref{lem:10}. There, the restriction of
the proposition that $\rrr \ge3$ enters only at exactly one point, namely
in the chain of inequalities~\eqref{eq:imposs}. So, if
we want to find a $2$-pseudo-rotationally invariant
non-crossing partitions in $\NCAPlus$, then it
is found at this point of the proof of \Cref{lem:10}.
Thus, in the construction there (cf.\ the first and the third image
in \Cref{fig:inv_has_central}) we must have $a=M+1=\frac {N-2}
{2}+1=\frac {N} {2}$. However,
examining the arguments in the proof of \Cref{lem:10}
leading from the first to the third image in
\Cref{fig:inv_has_central}, we see that these describe exactly
how to generate an element of $\NCAPlus$ by
means of Construction~2.

\medskip
This completes the proof.
\end{proof}

\subsection{Characterisation of pseudo-rotationally invariant positive
  non-crossing\break partitions in type~$B_n$}
\label{app:inv-B}

In the proof of \Cref{lem:ord=N-1} below, 
we make use of the following notions, modifying notions
from the previous subsection.
Given $N$, we define a \defn{cyclic interval\/} of 
$\{1,2,\dots,N,\overline1,\overline2,\dots,\overline{N}\}$
to be an interval of the form $\{e,e+1,\dots,f-1,f\}$, where 
two successive elements in this set are successive in
$\{1,2,\dots,N,\overline1,\overline2,\dots,\overline{N}\}$ or
are equal to $\overline{N},1$. Thus, for $N=12$,
the sets $\{3,4,5,6\}$, $\{10,11,12,\overline1\}$, 
as well as the set 
$\{\overline{10},\overline{11},\overline{12},1,2\}$ are cyclic
intervals.
Given an element~$\pi$ of $\NCBPlus$,
the \defn{width} of a block $B$ of~$\pi$, denoted by~$\width(B)$,
is defined as the minimal
cardinality of a cyclic interval containing~$B$. 
Such a minimal-cardinality cyclic interval will also be called
a \defn{support\/} for~$B$. (With this definition, a support of a
block need not be unique; it is unique however for non-zero blocks.)

\begin{lemma} \label{lem:ord=N-1}
Let $\pi$ be a non-crossing partition in $\NCBPlus$ without zero block.
Then the order of~$\pi$ under the action of~$\rotB$ is~$N-1$.
\end{lemma}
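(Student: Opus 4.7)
We argue by contradiction. Suppose the order of $\pi$ under $\rotB$ is some $d$ with $1 \leq d < N-1$. Since $\rotB^{N-1}$ acts as the identity on $\NCBPlus$ by \Cref{lem:N-2B}, $d$ must divide $N-1$; set $q = (N-1)/d \geq 2$. The map $\rotB^d$ then permutes the blocks of $\pi$.

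For each block $B$, let $\ell_B$ denote the size of its orbit under $\rotB^d$. The proof of \Cref{lem:N-2B} (in particular Step~3 there) shows that $\rotB^{N-1}$ maps each non-zero block $B$ to its negative $-B$; since $\pi$ has no zero block, $B \neq -B$, and consequently $\ell_B$ is even. Combined with $\rotB^{qd}(B) = -B$ and $\rotB^{2qd}(B) = B$, a direct cyclic-group argument gives $\ell_B = 2r_B$, where $r_B$ divides $q$, the quotient $q/r_B$ is odd, and $\rotB^{r_B d}(B) = -B$. Counting all $2N$ elements of $\pi$ across orbits then yields
\[
  \sum_{\text{orbits}} 2r_B\,|B_{\text{orbit}}| \;=\; 2N,\qquad \text{equivalently,} \qquad \sum_{\text{orbits}} r_B\,|B_{\text{orbit}}| \;=\; N.
\]

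The crux of the proof is to establish that in fact $r_B = q$ for every block $B$. Granting this, the counting identity reduces to $q\sum_{\text{orbits}}|B_{\text{orbit}}| = N$, so $q \mid N$. Together with $q \mid (N-1)$ and $\gcd(N, N-1) = 1$, this forces $q = 1$ and hence $d = N-1$, contradicting $d < N-1$.

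The main obstacle is thus the claim that $r_B = q$ for every block $B$, equivalently, that $\rotB^e(B) \neq -B$ for all $1 \leq e < N-1$. This will be established by revisiting the detailed block dynamics from Steps~2 and~3 of the proof of \Cref{lem:N-2B} (illustrated in \Cref{fig:32a,fig:33a,fig:33b}), where a distinguished maximal sequence of successive elements of $B$ is tracked through $N-1$ applications of $\rotB$. The plan is to show that this sequence advances by precisely one angular unit per application of $\rotB$ (absorbing a single net ``boundary shift'' over the full course of the $N-1$ steps), so that $B$ first reaches the antipodal configuration $-B$ exactly at step $N-1$ and at no earlier step. The delicate part of this argument is unifying the three cases of \Cref{prop:2B} --- ordinary rotation, Case~(1), and Case~(2) --- and properly accounting for those steps at which the tracked block $B$ itself is the block undergoing the special pseudo-rotation transformation; once this bookkeeping is carried out, the angular position advances deterministically and the desired inequality $\rotB^e(B) \neq -B$ for $e < N-1$ follows.
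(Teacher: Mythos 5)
Your argument breaks at its first substantive step: the claim, imported from Step~3 of the proof of \Cref{lem:N-2B}, that $\rotB^{N-1}$ sends \emph{every} block $B$ to its negative $-B$. That statement is argued there only for certain distinguished blocks (and the order argument itself is reduced to partitions without singletons); with the natural size-preserving block-tracking dictated by the figures for \Cref{prop:2B} (the block containing $\overline{mn},1,\dots,a$ goes to the block containing $2,\dots,a+1,b+1$, the block containing $\overline d,b,e$ goes to the block containing $\overline{d+1},1,e+1$), it is in fact false. Take $N=5$ and $\pi=\big\{\{1,\overline2\},\{2,\overline1\},\{3,4,5\},\{\overline3,\overline4,\overline5\}\big\}\in\NCBPlus[5]$, which has no zero block and no singletons. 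The first two applications of $\rotB$ are ordinary rotations, the third is Case~(1) of \Cref{prop:2B} with $a=2$, $b=3$, $d=4$, $e=\overline4$, and the fourth is Case~(2) with $a=1$; tracking blocks gives
\begin{gather*}
\{1,\overline2\}\mapsto\{2,\overline3\}\mapsto\{3,\overline4\}\mapsto\{1,\overline5\}\mapsto\{1,\overline2\},\\
\{3,4,5\}\mapsto\{4,5,\overline1\}\mapsto\{5,\overline1,\overline2\}\mapsto\{\overline2,\overline3,\overline4\}\mapsto\{\overline3,\overline4,\overline5\}.
\end{gather*}
So $\{3,4,5\}$ does reach its negative after $N-1=4$ steps, but $\{1,\overline2\}$ (and likewise $\{2,\overline1\}$) returns to itself and never visits its negative at all. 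Consequently the parity conclusion that $\ell_B$ is even, the decomposition $\ell_B=2r_B$ with $q/r_B$ odd, and the identity $\sum r_B\,|B_{\text{orbit}}|=N$ are all unjustified, and your ``crux'' claim that every block first reaches $-B$ exactly at step $N-1$ is false as stated, so the plan of proving it by refining Steps~2--3 of the proof of \Cref{lem:N-2B} cannot succeed. Note also that ``the orbit of a block under $\rotB^d$'' is not intrinsic: $\rotB$ acts on partitions, and images of blocks only make sense after fixing a tracking convention; with the convention above your premise fails, while with the other conceivable convention block sizes are not preserved, so the count $\sum 2r_B|B_{\text{orbit}}|=2N$ is not even well posed. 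Finally, the proposal defers its hardest step to a plan rather than a proof, so it is incomplete independently of the above.

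For comparison, the paper proceeds quite differently and directly: it chooses the block of largest width among those whose support contains $1$ and follows its leading boundary through successive applications of $\rotB$ (distinguishing whether that block has one or several positive elements), showing that none of the configurations arising after $1,\dots,N-2$ steps can coincide structurally with the original one; combined with $\rotB^{N-1}(\pi)=\pi$ from \Cref{lem:N-2B}, this gives order exactly $N-1$. If you want to rescue a counting scheme of your type, you would have to replace the false block-to-negative premise by a correct invariant (for instance one attached to the $\pm$-symmetric pair $\{B,-B\}$ rather than to $B$ itself) and prove it from scratch, since nothing of that strength is established in the proof of \Cref{lem:N-2B}.
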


\begin{figure}
\begin{center}
  \begin{tikzpicture}[scale=1]
    \polygon{(-3.5,0)}{obj}{44}{2.5}
      {1,,,,,,\hspace*{1pt},,,,,,,,,,,,,\hspace*{1pt},,N,\overline
1,,,,,,,,,,,,,,,,\hspace*{1pt},,,\hspace*{1pt},,\overline N}

    \draw[fill=black, fill opacity=0.1] (obj4) to[bend right=50]
          (obj7) to[bend left=50]
          (obj39) to[bend right=50]
          (obj41) to[bend right=50]
          (obj4);

    \draw[line width=2.5pt,black] (obj1) to[bend left=40] (obj42);
    \node at (-3.4,1.2) {\small$B$};
    \node at (-5.3,2.) {\tiny$e$};
    \node at (-1.2,1.4) {\tiny$f$};

    \draw[fill=black, fill opacity=0.1] (obj26) to[bend right=50]
          (obj29) to[bend left=50]
          (obj17) to[bend right=50]
          (obj19) to[bend right=50]
          (obj26);

    \draw[line width=2.5pt,black] (obj23) to[bend left=40] (obj20);
    \node at (-3.6,-1.2) {\small$B$};

    \draw[dotted, fill=black, fill opacity=0.1] (obj8)  to[bend
right=50] (obj16) to[bend right=30] (obj8);
    \draw[dotted, fill=black, fill opacity=0.1] (obj30)  to[bend
right=50] (obj38) to[bend right=30] (obj30);

    \end{tikzpicture}
\end{center}
\caption{A non-crossing partition in $\NCBPlus$ without zero block}
\label{fig:B7}
\end{figure}

\begin{proof}
We consider all blocks which contain $1$ in their support.
Among these, let $B$ be the one with largest width. 
Let the support of~$B$ be $[e,e+1,\dots,f]$.
We must have $e<1\le f$.
\Cref{fig:B7} illustrates the structure of such a non-crossing
partition~$\pi$.

We start with the observation that the support of~$\rotB^j(B)$
will have the form $[e+j,\dots]$, for $j=0,1,\dots,N-e$.
(In less precise terms: the ``first" element~$e$ of~$B$ is shifted by one
unit during each of the first $N-e$ applications of~$\rotB$.)

Next, we distinguish between two cases, depending on whether there is
exactly one positive element in $B$ or more than one. (It must
contain \emph{at least one} positive element.)

\medskip
\noindent
{\sc Case 1. The block $B$ contains only one positive element.}
We consider first the case where $1\in B$. If also $\overline{N}\in
B$, then one application of~$\rotB$ maps $B$ to a non-crossing
partition, in which $\rotB(B)$ is a block that contains $1$ 
\emph{and\/}~$2$, that is, \emph{two} positive elements. Thus, we are in
Case~2 which is discussed below. If $\overline{N}\notin B$
(see \Cref{fig:BB1} for a schematic illustration), then
the first application of~$\rotB$ will act on $B$ as (ordinary) rotation.
Also subsequent applications of~$\rotB$ may act as rotations, until
another application of~$\rotB$ will have to be performed 
according to Case~(1) in \Cref{prop:2B}; see \Cref{fig:10,fig:10b}.
Then the block which arose from $B$ will ``lose" its ``largest element"
(its unique positive element),
which is replaced by~$1$. Thus, we are back at the original situation
of this case and we may continue by repeating the above arguments.

\begin{figure}
\begin{center}
  \begin{tikzpicture}[scale=1]
    \polygon{(0,0)}{obj}{44}{2.5}
      {1,,,,,,,,,e,,,,,,,,,,\hspace*{1pt},,N,\overline
1,,,,,,,,,e,,,,,,,,,,\hspace*{1pt},,\overline N}

    \draw[dotted, fill=black, fill opacity=0.1] (obj42) to[bend
right=50] (obj1) to[bend left=50] (obj32) to[bend right=20] (obj42);
    \draw (obj42) to[bend right=50] (obj1) to[bend left=50] (obj32);

    \draw[dotted, fill=black, fill opacity=0.1] (obj20) to[bend
right=50] (obj23) to[bend left=50] (obj10) to[bend right=20] (obj20);
    \draw (obj20) to[bend right=50] (obj23) to[bend left=50] (obj10);

    \draw[dotted, fill=black, fill opacity=0.1] (obj2)  to[bend
right=50] (obj9) to[bend right=25] (obj2);
    \draw[dotted, fill=black, fill opacity=0.1] (obj24)  to[bend
right=50] (obj31) to[bend right=25] (obj24);

    \node at (0.5,-1.2) {\small$B$};
    \node at (-0.5,1.2) {\small$B$};
    \end{tikzpicture}
\end{center}
\caption{A non-crossing partition in $\NCBPlus$ without zero block}
\label{fig:BB1}
\end{figure}

The only other possibility arises after $N-e$ applications of~$\rotB$.
Then we encounter the situation schematically illustrated in \Cref{fig:N-e}.
For the next application of~$\rotB$ we are in Case~(2) of
\Cref{prop:2B}. This implies in particular that $\overline1\in
\rotB^{N-e+1}(B)$. The ``twin block" of
$\rotB^{N-e+1}(B)$, that is, the block which consists of all
the negatives of the elements of $\rotB^{N-e+1}(B)$, then contains~$1$
and it is its largest element. This is again the same situation as 
at the very beginning of this case. The above arguments apply and
show that subsequent applications of~$\rotB$ either act as (ordinary)
rotations, or the block that arose from the original block $B$
``loses" its ``first" element (the element closest to~$\overline N$
in clockwise direction), which gets replaced by~$\overline1$.

\begin{figure}
\begin{center}
  \begin{tikzpicture}[scale=1]
    \polygon{(-4,0)}{obj}{44}{2.5}
       {1,,,,a,,,,,,,,,,,,,,,,\hspace{20pt}N-1,N,\overline{1},,,,\overline{a},,,,,,,,,,,,,,,,\overline{N-1}\hspace{20pt},\overline{N}}

%     thick line connecting 1 and \overline{mn}
    \draw[line width=2.5pt,black] (obj1) to[bend left=50] (obj44);
    \draw[line width=2.5pt,black] (obj23) to[bend left=50] (obj22);

    % dotted line connecting 1 and a
    \draw[dash pattern=on 2pt off 2pt on 2pt off 2pt on 2pt off 2pt on
2pt off 16pt on 2pt off 2pt on 2pt off 2pt] (obj1) to[bend right=50]
(obj5);
    \draw[dash pattern=on 2pt off 2pt on 2pt off 2pt on 2pt off 2pt on
2pt off 16pt on 2pt off 2pt on 2pt off 2pt] (obj23) to[bend right=50]
(obj27);

    % line connecting a and \overline{mn}
    \draw (obj44) to[bend right=50] (obj5);
    \draw (obj22) to[bend right=50] (obj27);

    % area for a+1 ... mn-1
    \draw[dotted] (obj6) to[bend right=20] (obj21);
    \node at ($($0.62*($(obj13)+(4.0,0)$)$)-(4.0,0)$) {\tiny$X$};
    \draw[dotted] (obj28) to[bend right=20] (obj43);
    \node at ($($0.62*($(obj35)+(4.0,0)$)$)-(4.0,0)$) {\tiny$\overline{X}$};

    \node[inner sep=0pt] at (0,0) {$\mapsto$};

    \polygon{(4,0)}{obj}{44}{2.5}
       {1,2,,,,a+1,,,,,,,,,,,,,,,,N,\overline{1},\overline{2},,,,\overline{a+1},,,,,,,,,,,,,,,,\overline{N}}

%     thick line connecting 1 and \overline{a+1}
    \draw[line width=2.5pt,black] (obj1) to[bend left=20] (obj28);
    \draw[line width=2.5pt,black] (obj23) to[bend left=20] (obj6);

    % dotted line connecting 1 and \overline{2}
    \draw (obj1) to[bend right=0] (obj24);
    \draw (obj23) to[bend right=0] (obj2);

    % dotted line connecting 2 and a+1
    \draw[dash pattern=on 2pt off 2pt on 2pt off 2pt on 2pt off 2pt on
2pt off 16pt on 2pt off 2pt on 2pt off 2pt] (obj2) to[bend right=50]
(obj6);
    \draw[dash pattern=on 2pt off 2pt on 2pt off 2pt on 2pt off 2pt on
2pt off 16pt on 2pt off 2pt on 2pt off 2pt] (obj24) to[bend right=50]
(obj28);

    % area for a+2 ... mn
    \draw[dotted] (obj7) to[bend right=20] (obj22);
    \node at ($($0.62*($(obj14)-(4.0,0)$)$)+(4.0,0)$) {\tiny$X$};
    \draw[dotted] (obj29) to[bend right=20] (obj44);
    \node at ($($0.62*($(obj36)-(4.0,0)$)$)+(4.0,0)$) {\tiny$\overline{X}$};
    \end{tikzpicture}
\end{center}
\caption{After $N-e$ applications of $\rotB$}
\label{fig:N-e}
\end{figure}

The important point is that at no point during the first $N-2$
applications of~$\rotB$ we obtain a structure that matches the
original structure of~$\pi$; cf.\ again \Cref{fig:BB1,fig:N-e}.
Thus, we return
to~$\pi$ after $N-1$ applications of~$\rotB$, but not earlier.

\medskip
\noindent
{\sc Case 2. The block $B$ contains at least two positive elements.}
In this case, it is straightforward to see that, for
$j=0,1,\dots,N-e$, the support of~$\rotB^j(B)$ equals
$[e+j,e+j+1,\dots,f+j+1]$. When we apply~$\rotB$ to $\rotB^{N-e}(\pi)$,
then we are in Case~(2) of \Cref{prop:2B}. The remaining arguments are
identical with the arguments of the last two paragraphs of the
previous case.
\end{proof}

\begin{proof}[Proof of \Cref{lem:allB}]
It is not difficult to see from the analysis of what
happens to a block under repeated application of the
pseudo-rotation~$\rotB$ in Step~3 of the proof of \Cref{lem:N-2B}
(given in \Cref{app:order-B}) that the
Construction from \Cref{sec:rotB}
does indeed yield $2\rRr $-pseudo-rotationally invariant positive
non-crossing partitions of type~$B$.

\medskip
For the converse, let $\pi$ be a $2\rRr $-pseudo-rotationally invariant 
non-crossing partitions in $\NCBPlus$. 
We have to show that 
$2\rRr $-pseudo-rotationally invariant positive
non-crossing partitions of type~$B$ cannot arise in a different
way. 
By \Cref{lem:ord=N-1}, we know that $\pi$ must contain a zero block, $Z$ say.
Since a zero block in a type~$B$ non-crossing partition is unique, it
must satisfy $\rotB^{(N-1)/\rRr}(B)=B$, that is, it remains invariant under the
pseudo-rotation $\rotB^{(N-1)/\rRr}$.

If $|Z|=2$, then it is not difficult to see directly that $\pi$
must arise from the corresponding (degenerate) case of the Construction.

Now let $|Z|\ge3$.
For some~$j$, the block $\rotB^j(Z)$ must contain $1$. Since
$|Z|\ge3$, the degenerate case of the Construction never applies
when applying~$\rotB$ to~$\pi$.
But then $1-\frac {N-1} {\rRr }k$ must also be in~$\rotB^j(Z)$.
For $k=\rRr $, we see that $1-(N-1)=\overline{2}$ is an element
of~$\rotB^{j}(Z)$, and thus also~$2$. It is easy to see that
$\rotB^{(N-1)/\rRr }$ must act by
(ordinarily) rotating the other blocks $B$ of~$\rotB^j(\pi)$ 
by $\frac {N-1} {\rRr }$ units, except when we move
across $1$ where $B$ is rotated by $\frac {N-1}
{\rRr }+1$ units. Thus we have shown that $\rotB^j(Z)$ arises from 
the Construction,
and all other elements in the orbit of~$\pi$ arise by applying the
pseudo-rotation~$\rotB$ several times.
\end{proof}

\subsection{Characterisation of pseudo-rotationally invariant positive
  non-crossing\break partitions in type~$D_n$}
\label{app:inv-D}

\begin{proof}[Proof of \Cref{lem:allD}]
Let first $\pi$ be a partition arising from Construction~1.
It is in particular a partition with a zero block.
If we remove the elements of the inner circle from~$\pi$, then what
remains is a partition in $\mNCBPlus[n-1]$ with a zero block
which is
$2\rRr$-pseudo-rotationally invariant with respect to~$\rotB$
(cf.\ the Construction in \Cref{sec:rotB}). Since
$\rotD$ acts on the elements of the outer circle in that case in the
same way as~$\rotB$, the entire partition~$\pi$ is 
$2\rRr$-pseudo-rotationally invariant with respect to~$\rotD$.

\medskip
Next, let $\pi$ be a partition arising from Construction~2.
Since in that case the partition of the elements of the inner circle
is uniquely determined, we may as well ignore the elements of the
inner circle. What remains is a partition in $\mNCBPlus[n-1]$
without zero block. By \Cref{lem:N-2B} with $N=m(n-1)$, this reduced
partition is invariant under~$\rotB^{m(n-1)-1}$. 
Since
$\rotD$ acts on the elements of the outer circle in that case in the
same way as~$\rotB$, the entire partition~$\pi$ is 
$2$-pseudo-rotationally invariant with respect to~$\rotD$.

\medskip
Finally, let $\pi$ be a partition arising from Construction~3.
It is not difficult to see from an analysis in the spirit of the
proof of \Cref{lem:N-2D} (given in \Cref{app:order-D}) that after 
${\big(m(n-1)-1\big)} /{\rRr}$
applications of~$\rotD$ we will obtain a partition that is the same
as~$\pi$, except that the elements on the inner circle may have
changed their sign. In order to see what
happened with the elements on the inner circle, we note that the
elements of the inner circle of the block~$B$ in Construction~3 are
rotated by $\frac {1} {\rRr}\big(m(n-1)-1\big)+1$ units in
counter-clockwise direction, the ``$+1$" coming from the very first
application of~$\rotD$ when an additional element on the inner circle
is added to the block. After this number of applications of~$\rotD$, before
that rotated block (in clockwise direction) 
we find again the fundamental seed, possibly
with changed sign for the elements of the inner circle. By definition,
the fundamental seed has $x$ elements on the inner circle.
Furthermore, we assume that it contains in total $my$ elements.
We know from the second assertion in \Cref{lem:Const3} that $y=n/\rRr$.
Hence, after these ${\big(m(n-1)-1\big)} /{\rRr}$ applications
of~$\rotD$, the ``first" element on the inner circle of the fundamental
seed  is
$$
\pm(mn-1)+\frac {1} {\rRr}\big(m(n-1)-1\big)+1+x
=\pm(mn-1)+(my-x-1)+1+x
=\pm(mn-1)+my,
$$
taken modulo $2m$ (the number of elements on the inner circle). 
In order to get the above equality we used~\eqref{eq:rotrel1}.
Consequently, if $y=n/\rRr$ is even
then this is $\pm(mn-1)$, which means that $\rotD^{(m(n-1)-1)/\rRr}$
leaves $\pi$ invariant. On the other hand, if $y=n/\rRr$ is odd
then this is $\mp(mn-1)$, which means that we need $2\big(m(n-1)-1\big)/\rRr$
applications of~$\rotD$ until we return to~$\pi$.

\medskip
For the converse, let $\pi$ be a $\rrr $-pseudo-rotationally invariant 
non-crossing partitions in $\mNCDPlus$. 
We have to show that 
$\rrr $-pseudo-rotationally invariant positive
non-crossing partitions of type~$D$ cannot arise in a different
way than from Constructions~1, 2, or~3 plus subsequent applications of~$\rotD$. 
If, in $\pi$, the inner circle as a whole belongs to the zero block, 
then the action of~$\rotD$ 
on~$\pi$ can be described in the following way: let $\widehat\pi$ be
the type~$B$ positive non-crossing partition that arises from~$\pi$ by
removing the elements of the inner circle from the zero block of~$\pi$. Then form
$\rotB(\widehat\pi)$. Finally add the elements of the inner circle
to the zero block of $\rotB(\widehat\pi)$. 
In other words: the pseudo-rotation~$\rotD$ acts as~$\rotB$ on the
outer circle, while the inner circle is completely contained in the
zero block anyway. By \Cref{lem:allB}, it follows that $\pi$ must
arise from Construction~1.

If, in $\pi$, the inner circle decomposes into two blocks, 
then the action of~$\rotD$ 
on~$\pi$ can be described in the following way: let $\widehat\pi$ be
the type~$B$ positive non-crossing partition that arises from~$\pi$ by
ignoring the elements of the inner circle. Then form
$\rotB(\widehat\pi)$. Finally add the elements of the inner circle
decomposed into two blocks in the unique way
determined by $\rotB(\widehat\pi)$ according to rule~(D3) in
\Cref{sec:realD}.
In other words: the pseudo-rotation~$\rotD$ acts as~$\rotB$ on the
outer circle, while the inner circle is completely determined by
the partition on the outer circle.
By \Cref{lem:ord=N-1}, it follows that $\pi$ must
arise from Construction~2 and that it is $2$-pseudo-rotationally invariant.

Finally, let $\pi$ be an element of $\mNCDPlus$
that contains a bridging block and is $\rrr$-pseudo-rotationally
invariant, i.e., $\rotD^{2(m(n-1)-1)/\rrr}(\pi)=\pi$.
If $\rrr$ is even, then
$$
\frac {2\big(m(n-1)-1\big)} {\rrr}=\frac {\big(m(n-1)-1\big)} {\rrr/2}
$$
applications on~$\pi$ bring us back to~$\pi$. By arguing in the spirit
of the proof of \Cref{lem:N-2D} (given in \Cref{app:order-D}),
it follows that~$\pi$ must arise from
Construction~3 with $\rRr=\rrr/2$ (and possibly subsequent
applications of~$\rotD$). 

On the other hand, let $\rrr$ be odd.
Let $\la$ be the multiplicative inverse of~$2$ modulo~$\rrr$. Then
there exists a positive integer~$\ka$ such that $2\la=1+\rrr\ka$.
Thus, we have
\begin{equation} \label{eq:rh-odd} 
\la\cdot\frac {2\big(m(n-1)-1\big)} {\rrr}
=\frac {2\la} {\rrr}(m(n-1)-1)
=\frac {\big(m(n-1)-1\big)} {\rrr}+\ka\big(m(n-1)-1\big).
\end{equation}
Now, by assumption $\pi$ is invariant under $\rotD^{2(m(n-1)-1)/\rrr}$.
Recall from the proof of \Cref{lem:N-2D} that $\rotD^{m(n-1)-1}$
applied to~$\pi$ yields structurally the same partition as~$\pi$,
except that the elements on the inner circle may have changed their
sign. If these two facts are combined with Equation~\eqref{eq:rh-odd},
then we see that $\rotD^{(m(n-1)-1)/\rrr}$ applied to~$\pi$
yields structurally the same partition as~$\pi$, 
except that the elements on the inner circle may have changed their
sign. However, this means that $\pi$ must arise from
Construction~3 with $\rRr=\rrr$ (and possibly subsequent
applications of~$\rotD$). 

\medskip
This finishes the proof of the theorem.
\end{proof}

\section{Proofs: Evaluations and non-negativity results for the positive $q$-Fu\ss--Catalan numbers}
\label{app:unnec}

This appendix is devoted to the proofs of the three auxiliary lemmas in \Cref{lem:1,,lem:2,,lem:2-D} giving evaluations and non-negativity results for the positive $q$-Fu\ss--Catalan numbers.

\subsection{An auxiliary result}
\label{app:unnec-aux}

We start with a strengthening of the fact that {\it rational
  $q$-Catalan polynomials} (cf.\ \cite[Sec.~5]{ArmDAB}) have
non-negative coefficients.
The proof which we give follows largely the proofs in \cite[Thm.~2]{AndrCB}
and \cite[Prop.~10.1(iii)]{RSW2004} of the above fact.

%We shall frequently refer
%to its line of argument later on.

\begin{lemma} \label{lem:Andrews}
Let $a$ and $b$ be positive integers.
Then $\frac {[\gcd(a,b)]_q} {[a+b]_q}\left[\begin{smallmatrix}
a+b\\a\end{smallmatrix}\right]_q$ is a polynomial in $q$ with non-negative
integer coefficients.
\end{lemma}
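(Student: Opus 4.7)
The plan is as follows. Set $d = \gcd(a,b)$ and write $a = d\alpha$, $b = d\beta$ with $\gcd(\alpha,\beta) = 1$. As a first step, I would reduce the claim to showing that $[\alpha]_{q^d}$ divides $\begin{bmatrix}a+b-1\\a-1\end{bmatrix}_q$ in $\mathbb{Z}[q]$ with non-negative quotient, using the identity
\[
\frac{[d]_q}{[a+b]_q}\begin{bmatrix}a+b\\a\end{bmatrix}_q \;=\; \frac{1}{[\alpha]_{q^d}}\begin{bmatrix}a+b-1\\a-1\end{bmatrix}_q,
\]
which follows from the standard reduction $\begin{bmatrix}n\\a\end{bmatrix}_q = \frac{[n]_q}{[a]_q}\begin{bmatrix}n-1\\a-1\end{bmatrix}_q$ together with the computation $\frac{[d]_q}{[a]_q} = \frac{1-q^d}{1-q^{d\alpha}} = \frac{1}{[\alpha]_{q^d}}$.

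Next I would establish the polynomiality by a cyclotomic factor analysis. Using the factorization $[\alpha]_{q^d} = \prod_{k\mid\alpha,\,k>1}\Phi_k(q^d)$ together with the classical decomposition $\Phi_k(q^d) = \prod_{m}\Phi_m(q)$, where $m$ ranges over positive integers with $m/\gcd(m,d)=k$, the task reduces to verifying that $\Phi_m(q)$ divides $\begin{bmatrix}a+b-1\\a-1\end{bmatrix}_q$ at least once for each such $m$. By the $q$-analogue of Kummer's theorem (on $p$-adic valuations of binomial coefficients), this becomes the inequality $((a-1)\bmod m) + (b\bmod m) \geq m$, which I would check by noting $m\mid a$ (since writing $m = gk$ with $g = \gcd(m,d)\mid d$ and $k\mid\alpha$ gives $gk\mid d\alpha = a$) and $m\nmid b$ (if $m\mid d\beta$, then after cancelling~$g$ one obtains $k\mid d'\beta$ where $d' = d/g$; since $\gcd(k,d')=1$ by construction one deduces $k\mid\beta$, contradicting $\gcd(k,\beta)=1$, which in turn follows from $k\mid\alpha$ and $\gcd(\alpha,\beta)=1$). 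The combination $m\mid a$ and $m\nmid b$ gives $(a-1)\bmod m = m-1$ and $b\bmod m \geq 1$, so their sum is at least $m$.

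The main obstacle is the non-negativity of the coefficients, which is not captured by the preceding divisibility analysis. Here I would follow the general strategy of Andrews~\cite{AndrCB} and of Reiner--Stanton--White~\cite{RSW2004}. Concretely, I would adapt Andrews's inductive argument by setting up a $q$-Pascal-type recurrence for
\[
X_{a,b}(q) := \frac{[\gcd(a,b)]_q}{[a+b]_q}\begin{bmatrix}a+b\\a\end{bmatrix}_q,
\]
proceeding by induction on $a+b$, with separate case analysis depending on how $\gcd(a-1,b)$ and $\gcd(a,b-1)$ compare to $\gcd(a,b)$. The delicate point is that the gcd can change unpredictably as $(a,b)$ varies, so the correction terms arising in the recurrence must be identified explicitly and shown to be themselves polynomials with non-negative integer coefficients. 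An alternative route would be to exhibit $X_{a,b}(q)$ as the generating polynomial, by some area-type statistic, over equivalence classes of lattice paths from $(0,0)$ to $(b,a)$ under a suitable cyclic action, extending the combinatorial model for rational $q$-Catalan numbers to the non-coprime setting; but this alternative requires equally delicate bookkeeping to handle orbits of nontrivial stabilizer.
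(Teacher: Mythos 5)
Your polynomiality argument is fine: the reduction to $\frac{1}{[\alpha]_{q^d}}\left[\begin{smallmatrix}a+b-1\\a-1\end{smallmatrix}\right]_q$, the observation that each relevant $\Phi_m$ occurs in $[\alpha]_{q^d}$ with multiplicity at most one, and the $q$-Kummer check $((a-1)\bmod m)+(b\bmod m)\ge m$ via $m\mid a$, $m\nmid b$ all hold up; this is a legitimate (slightly repackaged) version of the cyclotomic-exponent computation the paper also uses for polynomiality.

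The genuine gap is the non-negativity of the coefficients, which is the crux of the lemma and which you only outline as a plan rather than prove. The proposed $q$-Pascal induction ``with separate case analysis depending on how $\gcd(a-1,b)$ and $\gcd(a,b-1)$ compare to $\gcd(a,b)$'' is exactly the step that is not known to go through in any straightforward way: the gcd of the neighbouring pairs can collapse to $1$, the factor $[\gcd]_q/[a+b]_q$ changes discontinuously, and the resulting ``correction terms'' are neither identified nor shown to have non-negative coefficients --- you acknowledge this yourself, and the alternative orbit-counting route is likewise left unexecuted. The argument that actually closes this (and the one the paper uses, following Andrews and Reiner--Stanton--White) is much softer: since $\left[\begin{smallmatrix}a+b\\a\end{smallmatrix}\right]_q$ is a reciprocal and unimodal polynomial of degree $ab$, the first $\lfloor ab/2\rfloor$ coefficients of $(1-q^{\gcd(a,b)})\left[\begin{smallmatrix}a+b\\a\end{smallmatrix}\right]_q$ are non-negative (unimodality gives $p_i-p_{i-\gcd(a,b)}\ge 0$ up to the peak), hence so are the corresponding coefficients of the quotient by $(1-q^{a+b})$; since by your polynomiality step that quotient is a genuine polynomial, of degree $ab+\gcd(a,b)-(a+b)$, and is reciprocal, non-negativity of its first half forces non-negativity of all coefficients. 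Without this (or some completed substitute), your proposal proves only that the expression is a polynomial, not the statement of the lemma.
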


\begin{proof}
We first show that the expression in
$\frac {[\gcd(a,b)]_q} {[a+b]_q}\left[\begin{smallmatrix}
a+b\\a\end{smallmatrix}\right]_q$ is a polynomial in $q$, and then derive the
non-negativity of the coefficients from the fact that the $q$-binomial
coefficient 
$\left[\begin{smallmatrix} a+b\\a\end{smallmatrix}\right]_q$ is a 
reciprocal\footnote{A polynomial
$P(q)$ in $q$ of degree $d$ is called \defn{reciprocal\/} if
$P(q)=q^dP(1/q)$.} and unimodal\footnote{\label{foot:2}%
A polynomial $P(q)=
\sum _{i=0} ^{d}p_iq^i$ is called \defn{unimodal\/} if there is an
integer $r$ with $0\le r\le d$ and $0\le p_0\le\dots\le p_r\ge\dots\ge
p_d\ge0$. It is well-known that $q$-binomial coefficients are 
unimodal; see \cite[Ex.~7.75.d]{StanBI}.} polynomial in $q$. 
In order to show polynomiality, one recalls the well-known fact that
$$
q^n-1=\prod _{d\mid n} ^{}C_d(q),
$$
where $C_d(q)$ denotes the $d$\th\ cyclotomic polynomial in $q$.
Consequently, 
$$
\frac {[\gcd(a,b)]_q} {[a+b]_q}\begin{bmatrix} a+b\\a\end{bmatrix}_q=
\prod _{d=2} ^{a+b-1}C_d(q)^{e_d},
$$
with
\begin{equation} \label{eq:SH} 
e_d=\chi(d\mid \gcd(a,b))
+\fl{\frac {a+b-1} {d}}
-\fl{\frac {a} {d}}
-\fl{\frac {b} {d}},
\end{equation}
where $\chi(\mathcal S)=1$ if $\mathcal S$ is
true and $\chi(\mathcal S)=0$ otherwise.
Let us write 
$A=\{a/d\}$ and
$B=\{b/d\}$,
where $\{\al\}:=\al-\fl{\al}$ denotes the fractional
part of~$\al$.
Using this notation, Equation~\eqref{eq:SH} becomes
$$e_d=\chi(d\mid \gcd(a,b))+\fl{A+B-\frac {1} {d}}.$$ 
This is clearly non-negative, unless 
$A=B=0$. However, in that case we have $d\mid a$ and $d\mid b$,
that is, $d\mid\gcd(a,b)$, so that $e_d$ is non-negative
also in this case.

Since we know that 
$\left[\begin{smallmatrix} a+b\\a\end{smallmatrix}\right]_q$ is a 
reciprocal and unimodal polynomial in $q$ of degree $ab$
(see \Cref{foot:2}), 
the first $\fl{ab/2}$ coefficients of
$(1-q^{\gcd(a,b)})\left[\begin{smallmatrix} a+b\\a\end{smallmatrix}\right]_q$
are non-negative, and therefore the same must be true for
$$
\frac {1-q^{\gcd(a,b)}} {1-q^{a+b}}
\begin{bmatrix} a+b\\a\end{bmatrix}_q
=\frac {[\gcd(a,b)]_q} {[a+b]_q}\begin{bmatrix} a+b\\a\end{bmatrix}_q.
$$
However, we already know that the last expression 
is actually a \emph{polynomial} in $q$, of degree
$ab-(a+b)+\gcd(a,b)$, and it is as well a
reciprocal polynomial. Hence, also the remaining coefficients are
non-negative, which is what we wanted to prove.
\end{proof}

\subsection{Polynomiality with non-negative coefficients}
\label{app:unnec-pol}

\begin{proof}[Proof of \Cref{lem:1}]
Expression~\eqref{eq:SD} is the special case of~\eqref{eq:SE} where $s=0$,
and consequently does not need to be treated separately.

\medskip
In order to establish polynomiality of Expression~\eqref{eq:SE},
we rewrite the expression~\eqref{eq:SE} in the form
{\refstepcounter{equation}\label{eq:SEa}}
\alphaeqn
\begin{gather} \label{eq:SEa.1}
\frac {[mn-n+s]_q} 
{[\gcd(mn-1,n-s-1)]_q\,[\gcd(n,s)]_q}
\kern4cm
\\
\label{eq:SEa.2}
\times
\frac {[\gcd(n,s)]_q} {[n]_q}
\begin{bmatrix} {n}\\{s}\end{bmatrix}_q\\
\label{eq:SEa.3}
\kern4cm
\times
\frac {[\gcd(mn-1,n-s-1)]_q} {[mn-1]_q}
\begin{bmatrix} {mn-1}\\ {n-s-1}\end{bmatrix}_q.
\end{gather}
\reseteqn
We note that
both  $\gcd(n,s)$ and $\gcd(mn-1,n-s-1)$ 
divide $mn-n+s$. Moreover, we claim that the two numbers
are coprime. For, if there is a $d$ dividing both 
$\gcd(n,s)$ and $\gcd(mn-1,n-s-1)$, then 
we infer that $d\mid (n-1)$, which, in combination with $d\mid n$, means
that actually $d=1$.
It is then not difficult to see
that the quotient in~\eqref{eq:SEa.1} is a polynomial in $q$
(see e.g.\ \cite[Lemma~5]{KrMuAD}). 
Clearly, \Cref{lem:Andrews} implies that both~\eqref{eq:SEa.2}
and~\eqref{eq:SEa.3} are also polynomials.

\medskip
Finally, to see that \eqref{eq:SE} is not only a polynomial, but also
has non-negative coefficients, we proceed as in the proof of
\Cref{lem:Andrews}. Since
products of reciprocal and unimodal polynomials are again reciprocal
and unimodal (see \cite[Thm.~3.9]{AndrAF}), the polynomials
$$
\begin{bmatrix} {n}\\{s}\end{bmatrix}_q
\quad \text{and}\quad 
\begin{bmatrix} {mn-1}\\ {n-s-1}\end{bmatrix}_q
$$
are reciprocal and unimodal. The first one is of degree $D_1:=s(n-s)$,
while the second is of degree $D_2:=(n-s-1)(mn-n+s)$. 
As a consequence, the first $\fl{D_1/2}$ coefficients of 
$(1-q)\left[\begin{smallmatrix} {n}\\{s}\end{smallmatrix}\right]_q$
and the first $\fl{D_2/2}$ coefficients of
$$
(1-q^{mn-n+s})
\begin{bmatrix} {mn-1}\\ {n-s-1}\end{bmatrix}_q
$$
are non-negative. But then the first $\fl{D_1/2}+\fl{D_2/2}$
coefficients of the series
$$
\frac {(1-q)(1-q^{mn-n+s})} {(1-q^{mn-1})(1-q^n)}
\begin{bmatrix} {n}\\{s}\end{bmatrix}_q
\begin{bmatrix} {mn-1}\\ {n-s-1}\end{bmatrix}_q,
$$
are also non-negative. 
However, we know that the last expression (which agrees with~\eqref{eq:SE}) 
is actually a \emph{polynomial} in $q$, of degree
$D_1+D_2-(n-1)-(n-s-1)$, and it is as well a
reciprocal polynomial. Hence, also the remaining coefficients are
non-negative, which is what we wanted to prove.

\medskip
The claims about \eqref{eq:SF} can be established in a completely analogous
fashion. To prove polynomiality, we rewrite~\eqref{eq:SF} in the form
{\refstepcounter{equation}\label{eq:SFa}}
\alphaeqn
\begin{gather} \label{eq:SFa.1} 
\frac {[mn-b_1-b_2-\dots-b_n]_q} 
{[\gcd(b_1,b_2,\dots,b_n)]_q\,[\gcd(mn-1,b_1+b_2+\dots+b_n-1)]_q}
\kern4cm\\
\label{eq:SFa.2} 
\times
\frac {[\gcd(b_1,b_2,\dots,b_n)]_q} {[b_1+b_2+\dots+b_n]_q}
\begin{bmatrix}
  {b_1+b_2+\dots+b_n}\\{b_1,b_2,\dots,b_n}\end{bmatrix}_q\\
\label{eq:SFa.3} 
\kern4cm
\times
\frac {[\gcd(mn-1,b_1+b_2+\dots+b_n-1)]_q} {[mn-1)]_q}
\begin{bmatrix} {mn-1}\\ {b_1+b_2+\dots+b_n-1}\end{bmatrix}_q.
\end{gather}
\reseteqn
It follows from \eqref{eq:SB} that $\gcd(b_1,b_2,\dots,b_n)$ divides
$(mn-b_1-b_2-\dots-b_n)$. 
Moreover, one sees directly that $\gcd(mn-1,b_1+b_2+\dots+b_n-1)$ divides
$(mn-b_1-b_2-\dots-b_n)$.
Furthermore, if there would be a $d$ dividing both
$\gcd(b_1,b_2,\dots,b_n)$ and
$\gcd(mn-1,b_1+b_2+\dots+b_n-1)$, then~\eqref{eq:SB} would imply that $d\mid
n$, so that, if this is combined with $d\mid (mn-1)$, we see that we
must have $d=1$. As earlier, as a consequence, the
fraction~\eqref{eq:SFa.1} is a polynomial in $q$. That~\eqref{eq:SFa.2} is a
polynomial in $q$ can be seen by proceeding along the lines of the
proof of \Cref{lem:Andrews}. We leave it to the reader to work
out the details. In its turn, \Cref{lem:Andrews} implies
polynomiality of~\eqref{eq:SFa.3}.
This establishes the
polynomiality of~\eqref{eq:SF}. Non-negativity of the coefficients
follows now as before of those of~\eqref{eq:SE}. We omit 
the details for the sake of brevity.
\end{proof}

\subsection{Evaluations at roots of unity}
\label{app:unnec-eval}

\begin{proof}[Proof of \Cref{lem:2}]
We have
$$
\frac {1} {[n]_q}\bmatrix (m+1)n-2\\n-1\endbmatrix_q
=\frac {(1-q^{(m+1)n-2})(1-q^{(m+1)n-3})\cdots(1-q^{mn})} 
{(1-q^{n})(1-q^{n-1})\cdots(1-q^{2})}.
$$
In order to perform the limit $q\to \om_\rrr =e^{2\pi
  i\si/(mn-2)}=e^{2\pi i/\rrr}$, we use 
the simple fact 
\begin{equation} \label{eq:SM}
\lim_{q\to \om_\rrr } \frac {1-q^\al} {1-q^\be}=
\begin{cases}
\frac \alpha \beta,&\text{if }\alpha\equiv\beta\equiv0\pmod \rrr ,\\
1,&\text{otherwise},
\end{cases}	
\end{equation}
where $\alpha,\beta$ are
non-negative integers such that 
$\alpha\equiv\beta\pmod \rrr $. Since $\rrr \mid (mn-2)$, the claimed limit
in~\eqref{eq:SI} is obtained straightforwardly.

\medskip
In order to see~\eqref{eq:SJ}, we write
\begin{equation} \label{eq:SN}
\frac {1} {[n]_q}\bmatrix mn-2\\n-1\endbmatrix_q
=\frac {(1-q^{mn-2})(1-q^{mn-3})\cdots(1-q^{mn-n})} 
{(1-q^{n})(1-q^{n-1})\cdots(1-q^{2})}.
\end{equation}
Here, we first check in which cases the number of factors
in the numerator
which vanish when $q\to e^{2\pi i\si/(mn-2)}=e^{2\pi i/\rrr}$ does not exceed
the number of those
in the denominator, because otherwise the limit of the left-hand side 
of~\eqref{eq:SN} as $q\to e^{2\pi i\si/(mn-2)}$ is zero. 
Now, the difference of the numbers of vanishing factors
in the numerator and the denominator is
$$
\fl{\frac {mn-2} {\rrr }}-\fl{\frac {mn-n-1} {\rrr }}-\fl{\frac {n} {\rrr }}=
-\fl{\frac {-n+1} {\rrr }}-\fl{\frac {n} {\rrr }}.
$$
Writing $N=\{n/\rrr \}$ for the fractional part of $n/\rrr $, this reduces to
$
-\fl{-N+\frac {1} {\rrr }},
$
which can only be zero if $N=0$ or $N=1/\rrr $, or, equivalently,
if $n\equiv0,1$~(mod~$\rrr $). If $n\equiv0$~(mod~$\rrr $), then, because of
the assumption $\rrr \mid(mn-2)$, we infer $\rrr =2$. The first alternative
on the right-hand side of~\eqref{eq:SJ} follows immediately using~\eqref{eq:SM}.
On the other hand, if $n\equiv1$~(mod~$\rrr $) then the second
alternative on the right-hand side of~\eqref{eq:SJ}
follows again from~\eqref{eq:SM}.

\medskip
Next we consider \eqref{eq:SK}. We rewrite the expression on the left-hand
side in an analogous fashion as product/quotient of factors of the
form $1-q^j$. Then, again, we count how many of these factors 
vanish when $q\to e^{2\pi i\si/(mn-2)}=e^{2\pi i/\rrr}$. Letting 
$N=\{(n-1)/\rrr \}$,
$S=\{s/\rrr \}$, and remembering that $\rrr \mid(mn-2)$, 
the difference of the numbers of vanishing factors in
the numerator and in the denominator equals
\begin{equation} \label{eq:SO}
-\fl{N-S+\frac {1} {\rrr }}-\fl{N-S}-\fl{-N+S}
=
-\fl{N-S+\frac {1} {\rrr }}+1-\chi\big(\rrr \mid(n-s-1)\big).
\end{equation}
Now there are three cases to distinguish. 

\smallskip
{\sc Case 1: $\fl{N-S+\frac {1} {\rrr }}=1$.} 
This means that $N=\frac {\rrr -1} {\rrr }$ and $S=0$, that is, 
$\rrr \mid n$ and $\rrr \mid s$. In combination with $\rrr \mid (mn-2)$, it follows that
$\rrr =2$. A further consequence is that $2=\rrr \nmid(n-s-1)$, so that~\eqref{eq:SO}
is satisfied.
The result of the substitution on the left-hand side
of~\eqref{eq:SK} is easily seen to be as asserted in the second alternative
on the right-hand side.

\smallskip
{\sc Case 2: $\fl{N-S+\frac {1} {\rrr }}=0$.}
Another way to describe the defining condition of this case is
to say that $\{n/\rrr \}\ge\{s/\rrr \}$ and not both $\rrr \mid n$ and $\rrr \mid s$.
In order \eqref{eq:SO} to be satisfied, we must have $\rrr \mid(n-s-1)$,
implying that $\{s/\rrr \}=\{(n-1)/\rrr \}\le \{n/\rrr \}$,
and that both $\rrr \mid n$ and $\rrr \mid s$ is impossible.
The result of the substitution on the left-hand
side of~\eqref{eq:SK} is as asserted in the first alternative 
on the right-hand side.

\smallskip
{\sc Case 3: $\fl{N-S_1+\frac {1} {\rrr }}=-1$.} 
Here, the expression~\eqref{eq:SO} is always positive, so
that the left-hand side of~\eqref{eq:SK} vanishes.

\medskip
Finally we address~\eqref{eq:SL}. As before,
we rewrite the expression on the left-hand
side as product/quotient of factors of the
form $1-q^j$. Subsequently we count how many of these factors 
vanish when $q\to \om_\rrr =e^{2\pi i\si/(mn-2)}=e^{2\pi i/\rrr}$. Letting 
$B_i=\{b_i/\rrr \}$, $i=1,2,\dots,n$, and again remembering that $\rrr \mid(mn-2)$,
the difference of the numbers of vanishing factors in
the numerator and in the denominator equals
\begin{equation} \label{eq:SP}
-\fl{-\sum_{k=1}^nB_k+\frac {1} {\rrr }}.
%\chi\big(r\mid (2-b_1-\dots-b_n)\big)
%\fl{\sum_{k=1}^n B_k-\frac {1} {r}}-\sum_{k=2}^l\fl{-S_k+\frac {1} {r}}.
\end{equation}
Again we distinguish three cases. 

\smallskip
{\sc Case 1: $\sum_{k=1}^n B_k=0$.} In other words, we have $B_k=0$
for all~$k$, or, equivalently, $\rrr \mid b_k$ for all~$k$. 
Because of~\eqref{eq:SB}, this implies that also $\rrr \mid n$.
From $\rrr \mid (mn-2)$ it then follows that $\rrr \mid 2$, that is, that
$\rrr =2$. The result of the substitution on 
the left-hand side of~\eqref{eq:SL} is as asserted in the second
alternative on the right-hand side.

\smallskip
{\sc Case 2: $\sum_{k=1}^n B_k=\frac {1} {\rrr }$.} 
Here we have $B_k=0$ for all~$k$ but one; let $j$ be the exception.
More explicitly, we have $\rrr \mid b_k$ for all $k\ne j$ and
$b_j\equiv1$~(mod~$\rrr $). 
The result of the substitution on 
the left-hand side of~\eqref{eq:SL} is as asserted in the first
alternative on the right-hand side.

\smallskip
{\sc Case 3: $\sum_{k=1}^n B_k>\frac {1} {\rrr }$.} 
Then $\fl{-\sum_{k=1}^n B_k+\frac {1} {\rrr }}<0$, and consequently
the sum in~\eqref{eq:SP} is positive, so that the left-hand side
of~\eqref{eq:SL} vanishes.

This completes the proof of the lemma.
\end{proof}

\begin{proof}[Proof of \Cref{lem:2-B}]
We have
$$
\bmatrix (m+1)n-1\\n\endbmatrix_{q^2}
=\frac {(1-q^{2(m+1)n-2})(1-q^{2(m+1)n-4})\cdots(1-q^{2mn})} 
{(1-q^{2n})(1-q^{2n-2})\cdots(1-q^2)}.
$$
In order to perform the limit $q\to \om_\rrr =e^{2\pi
  i\si/(2mn-2)}=e^{2\pi i/\rrr}$, we use 
again the simple fact 
\begin{equation} \label{eq:SM-B}
\lim_{q\to \om_\rrr } \frac {1-q^\al} {1-q^\be}=
\begin{cases}
\frac \alpha \beta,&\text{if }\alpha\equiv\beta\equiv0\pmod \rrr ,\\
1,&\text{otherwise},
\end{cases}	
\end{equation}
where $\alpha,\beta$ are
non-negative integers such that 
$\alpha\equiv\beta\pmod \rrr $. Since $\rrr \mid (mn-1)$, the claimed limit
in~\eqref{eq:SI-B} is obtained without much effort.

\medskip
In order to see~\eqref{eq:SJ-B}, we write
\begin{equation} \label{eq:SN-B}
\bmatrix mn-1\\n\endbmatrix_{q^2}
=\frac {(1-q^{2mn-2})(1-q^{2mn-4})\cdots(1-q^{2mn-2n})} 
{(1-q^{2n})(1-q^{2n-2})\cdots(1-q^2)}.
\end{equation}
Here, we first check in which cases the number of factors
in the numerator
which vanish when $q\to e^{2\pi i\si/2(mn-1)}=e^{2\pi i/\rrr}$ does not exceed
the number of those
in the denominator, because otherwise the limit of the left-hand side 
of~\eqref{eq:SN-B} as $q\to e^{2\pi i\si/2(mn-1)}=e^{2\pi i/\rrr}$ is zero. 

If $\rrr$ is even, the difference of the numbers of vanishing factors
in the numerator and the denominator is
$$
\fl{\frac {2(mn-1)} {\rrr }}-\fl{\frac {2(mn-n-1)} {\rrr }}-\fl{\frac {2n} {\rrr }}=
-\fl{\frac {-2n} {\rrr }}-\fl{\frac {2n} {\rrr }}.
$$
Writing $N=\{2n/\rrr \}$ for the fractional part of $2n/\rrr $, this reduces to
$
-\fl{-N}
$,
which can only by zero if $N=0$.
This means that $2n\equiv0$~(mod~$\rrr $). In this case, because of
the assumption $\rrr \mid2(mn-1)$, we infer $\rrr =2$. The first alternative
on the right-hand side of~\eqref{eq:SJ-B} follows immediately using~\eqref{eq:SM-B}.

If $\rrr$ is odd, the difference of the numbers of vanishing factors
in the numerator and the denominator is
$$
\fl{\frac {mn-1} {\rrr }}-\fl{\frac {mn-n-1} {\rrr }}-\fl{\frac {n} {\rrr }}=
-\fl{\frac {-n} {\rrr }}-\fl{\frac {n} {\rrr }}.
$$
Writing $N=\{n/\rrr \}$ for the fractional part of $n/\rrr $, this reduces to
$
-\fl{-N}
$,
which can only by zero if $N=0$.
This means that $n\equiv0$~(mod~$\rrr $). In this case, because of
the assumption $\rrr \mid2(mn-1)$, we infer $\rrr =1$. The first alternative
on the right-hand side of~\eqref{eq:SJ-B} follows immediately using~\eqref{eq:SM-B}.

\medskip
Next we consider~\eqref{eq:SK-B}. We rewrite the expression on the left-hand
side in an analogous fashion as product/quotient of factors of the
form $1-q^j$. Then, again, we count how many of these factors 
vanish when $q\to e^{2\pi i\si/(mn-1)}=e^{2\pi i/\rrr}$. 

If $\rrr$ is even, then let $N=\{2n/\rrr \}$ and
$S=\{2s/\rrr \}$. Remembering that $\rrr \mid(mn-1)$, 
the difference of the numbers of vanishing factors in
the numerator and in the denominator equals
\begin{equation} \label{eq:SO-B}
-2\fl{N-S}-\fl{-N+S}.
\end{equation}
If $N<S$, then this expression equals~$2$. If $N>S$, then it is~$1$.
Only if $N=S$, or, equivalently, $2n\equiv 2s$~(mod~$\rrr$), 
the expression~\eqref{eq:SO-B} vanishes. 
The first alternative
on the right-hand side of~\eqref{eq:SK-B} follows immediately using~\eqref{eq:SM-B}.

If $\rrr$ is odd, then we let $N=\{n/\rrr \}$ and
$S=\{s/\rrr \}$. Here,
the difference of the numbers of vanishing factors in
the numerator and in the denominator equals again~\eqref{eq:SO-B},
however with the modified meanings of~$N$ and~$S$ that we consider
here. The remaining steps are completely analogous to the ones in the
previous paragraph, and they lead to the second alternative in~\eqref{eq:SK-B}.

\medskip
Finally we address \eqref{eq:SL-B}. As before,
we rewrite the expression on the left-hand
side as product/quotient of factors of the
form $1-q^j$. Subsequently we count how many of these factors 
vanish when $q\to \om_\rrr =e^{2\pi i\si/(mn-1)}=e^{2\pi i/\rrr}$. 

If $\rrr$ is even, then write
$B_i=\{2b_i/\rrr \}$, $i=1,2,\dots,n$. Again remembering that $\rrr \mid(mn-1)$,
we see that the difference of the numbers of vanishing factors in
the numerator and in the denominator equals
\begin{equation} \label{eq:SP-B}
-\fl{-\sum_{k=1}^nB_k}.
\end{equation}
This expression can only be zero if
$\sum_{k=1}^n B_k=0$, that is, if $B_k=0$
for all~$k$. Equivalently, we have $\rrr \mid 2b_k$ for all~$k$. 
The result of the substitution on 
the left-hand side of~\eqref{eq:SL-B} is as asserted in the first
alternative on the right-hand side.

If $\rrr$ is odd, then we let
$B_i=\{b_i/\rrr \}$, $i=1,2,\dots,n$. Here,
the difference of the numbers of vanishing factors in
the numerator and in the denominator turns out to again
equal~\eqref{eq:SP-B}, with the modified meaning of the $B_i$'s, however.
The remaining steps are completely analogous to the ones in the
previous paragraph, and they lead to the second alternative in~\eqref{eq:SL-B}.

\medskip
This completes the proof of the lemma.
\end{proof}

\begin{proof}[Proof of \Cref{lem:2-D}]
We have
\begin{multline*}
\frac {[2m(n-1)+n-2]_{q}} {[n]_{q}}
\begin{bmatrix} (m+1)(n-1)-1\\n-1\end{bmatrix}_{q^2}\\
=\frac {(1-q^{2m(n-1)+n-2})} {(1-q^{n})}\cdot
\frac {(1-q^{2(m+1)(n-1)-2})(1-q^{2(m+1)(n-1)-4})\cdots(1-q^{2m(n-1)})} 
{(1-q^{2n-2})\cdots(1-q^4)(1-q^2)}.
\end{multline*}
In order to perform the limit $q\to \om_\rrr =e^{2\pi
  i\si/(2m(n-1)-2)}=e^{2\pi i/\rrr}$, we use 
again the simple fact 
\begin{equation} \label{eq:SM-D}
\lim_{q\to \om_\rrr } \frac {1-q^\al} {1-q^\be}=
\begin{cases}
\frac \alpha \beta,&\text{if }\alpha\equiv\beta\equiv0\pmod \rrr ,\\
1,&\text{otherwise},
\end{cases}	
\end{equation}
where $\alpha,\beta$ are
non-negative integers such that 
$\alpha\equiv\beta\pmod \rrr $. Since $\rrr \mid (2m(n-1)-2)$, the claimed limit
in~\eqref{eq:SI-D} is obtained without much effort.

\medskip
In order to see \eqref{eq:SJ-D}, we write
\begin{multline} \label{eq:SN-D}
\frac {[2m(n-1)-n]_{q}} {[n]_{q}}
\begin{bmatrix} m(n-1)-1\\n-1\end{bmatrix}_{q^2}\\
=\frac {(1-q^{2m(n-1)-n})} {(1-q^{n})}\cdot
\frac {(1-q^{2m(n-1)-2})(1-q^{2m(n-1)-4})\cdots(1-q^{2m(n-1)-2(n-1)})} 
{(1-q^{2n-2})\cdots(1-q^4)(1-q^2)}.
\end{multline}
Here, we first check in which cases the number of factors
in the numerator
which vanish when $q\to e^{2\pi i\si/(2m(n-1)-2)}=e^{2\pi i/\rrr}$ does not exceed
the number of those
in the denominator, because otherwise the limit of the left-hand side 
of~\eqref{eq:SN-D} as $q\to e^{2\pi i\si/(2m(n-1)-2)}=e^{2\pi i/\rrr}$ is zero. 

If $\rrr$ is even, the difference of the numbers of vanishing factors
in the numerator and the denominator is
\begin{multline*}
\chi\big(\rrr\mid(2m(n-1)-n)\big)-\chi(\rrr\mid n)+
\fl{\frac {2(m(n-1)-1)} {\rrr }}
-\fl{\frac {2(m(n-1)-n)} {\rrr }}-\fl{\frac {2(n-1)} {\rrr }}\\
=\chi\big(\rrr\mid(2-n)\big)-\chi(\rrr\mid n)
-\fl{\frac {-2(n-1)} {\rrr }}-\fl{\frac {2(n-1)} {\rrr }}.
\end{multline*}
Writing $N=\{2(n-1)/\rrr \}$ for the fractional part of $2(n-1)/\rrr$, 
this reduces to
\begin{equation} \label{eq:SA-D} 
\chi\big(\rrr\mid(2-n)\big)-\chi(\rrr\mid n)
-\fl{-N}.
\end{equation}
We now distinguish several cases.

\medskip
{\sc Case 1: $\rrr=2$.} In this case, we have $N=0$, and, regardless
whether $n$ is even or odd, the expression in~\eqref{eq:SA-D} vanishes.
The result of the limit $q\to\om_\rrr$ of~\eqref{eq:SN-D}
is as asserted in the first and the second
alternative on the right-hand side of~\eqref{eq:SJ-D}, respectively.

\medskip
{\sc Case 2: $\rrr\ge4$ and $N=0$.}
The condition $N=0$ means that $\rrr\mid 2(n-1)$. Together with
$\rrr\mid (2m(n-1)-2)$, this entails that $\rrr=2$, a contradiction
to $\rrr\ge4$.

\medskip
{\sc Case 3: $\rrr\ge4$, $N>0$ and $\rrr \mid n$.} 
Here, the expression~\eqref{eq:SA-D} vanishes.
The result of the limit $q\to\om_\rrr$ of~\eqref{eq:SN-D} 
is as asserted in the third
alternative on the right-hand side of~\eqref{eq:SJ-D}.

\medskip
In all other cases, the expression~\eqref{eq:SA-D} is strictly positive.
The limit $q\to\om_\rrr$ of~\eqref{eq:SN-D} is therefore zero.

\medskip
Now we treat the case where $\rrr$ is odd.
The difference of the numbers of vanishing factors
in the numerator and the denominator of~\eqref{eq:SN-D} is
\begin{multline*}
\chi\big(\rrr\mid(2m(n-1)-n)\big)-\chi(\rrr\mid n)+
\fl{\frac {m(n-1)-1} {\rrr }}-\fl{\frac {m(n-1)-n} {\rrr }}-\fl{\frac {n-1}
  {\rrr }}\\
=
\chi\big(\rrr\mid(2m(n-1)-n)\big)-\chi(\rrr\mid n)
-\fl{\frac {-(n-1)} {\rrr }}-\fl{\frac {n-1} {\rrr }}.
\end{multline*}
Writing $N=\{(n-1)/\rrr \}$ for the fractional part of $(n-1)/\rrr $, this reduces to
\begin{equation} \label{eq:SB-D} 
\chi\big(\rrr\mid(2-n)\big)-\chi(\rrr\mid n)
-\fl{-N}.
\end{equation}
Again, we distinguish several cases.

\medskip
{\sc Case 1: $N=0$.}
The condition $N=0$ means that $\rrr\mid (n-1)$. Together with
$\rrr\mid (2m(n-1)-2)$, this entails that $\rrr=1$.
The result of the limit $q\to\om_\rrr$ of~\eqref{eq:SN-D} 
is as asserted in the fourth
alternative on the right-hand side of~\eqref{eq:SJ-D}.

\medskip
{\sc Case 2: $N>0$ and $\rrr \mid n$.} 
Here, the expression~\eqref{eq:SB-D} vanishes.
The result of the limit $q\to\om_\rrr$ of~\eqref{eq:SN-D} 
is as asserted in the fifth
alternative on the right-hand side of~\eqref{eq:SJ-D}.

\medskip
In all other cases, the expression~\eqref{eq:SB-D} is strictly positive.
The limit $q\to\om_\rrr$ of~\eqref{eq:SN-D} is therefore zero.

\medskip
This completes the proof of the lemma.
\end{proof}

\section{Proofs: Cyclic sieving for dihedral and exceptional groups}
\label{app:exc}

In this appendix, we prove \Cref{thm:cycexc}.
We treat the cyclic sieving for the dihedral groups explicitly in \Cref{sec:siev0} and the exceptional groups in detail in \Cref{sec:siev1}, each separately for~$\Krewplus$ and for~$\Krewplustilde$.

\medskip

In each situation, we exactly follow the approach detailed in the paragraph before \Cref{thm:cycexc}, based on the results in \Cref{sec:typeExc}.

\subsection{Cyclic sieving for the dihedral groups}
\label{sec:siev0}

We consider the situation for the dihedral group $I_2(h)$.
The degrees are $2,h$, and we hence have
\begin{align}
\mCatplus(I_2(h);q)=\frac 
{[hm+(h-2)]_q\, [hm]_q}
{[h]_q\, [2]_q}.\label{eq2:I2}
\end{align}

\subsubsection{Cyclic sieving for~$\Krewplus$}
\label{sec:siev0-Krewplus}

Given $(m+1)h-2 = hm + (h-2) = k \cdot \rrr$,
let $\zeta$ be a primitive $\rrr$\th\ root of unity. 
The following cases on the
right-hand side of \eqref{eq2:I2} occur:
\alphaeqn
\begin{align} 
\label{eq2:I2.1}
\lim_{q\to\zeta}\mCatplus(I_2(h);q)&=\mCatplus(I_2(h)),
&\text{if }\rrr = 1,\\
\label{eq2:I2.2}
\lim_{q\to\zeta}\mCatplus(I_2(h);q)&=\mCatplus(I_2(h)),
&\text{if }h\equiv0\text{ (mod $2$)},\ \rrr = 2,\\
\label{eq2:I2.3}
\lim_{q\to\zeta}\mCatplus(I_2(h);q)&=\tfrac {(m+1)h}2-1,
&\text{if }m\equiv h\equiv1\text{ (mod $2$)},\ \rrr = 2,\\
\label{eq2:I2.4}
\lim_{q\to\zeta}\mCatplus(I_2(h);q)&=0,
&\text{otherwise.}
\end{align}

To compute the invariant subwords, we distinguish the two situations where~$h$ is even or odd.

\medskip

First, let~$h \equiv 0$ (mod $2$).
Then~$\rrr$ is even and~$\rrr/2$ must divide~$n = 2$, so $\rrr \in \{2,4\}$.
The case where $\rrr = 2$ is clear, and it only remains to consider
the case where $\rrr = 4$.
In this case, $\invc$ consists of the~$h$ reflections and the corresponding word of~$h$ alternating copies of the two reflections~$\s$ and~$\t$ has even length (so that it starts with~$\s$ and ends with~$\t$).
The subwords of~$\invc$ are given by any two consecutive reflections in cyclically reverse order.
For example, in type~$I_2(6)$, we have
\[
  \invc = \r_1\r_2\r_3\r_4\r_5\r_6 = s,sts,ststs,tstst,tst,t,
\]
so that the reduced $\reflR$-words for the Coxeter element $c=st$ are
\[
  \r_1\r_6 = \r_2\r_1 = \r_3\r_2 = \r_4\r_3 = \r_5\r_4 = \r_6\r_5.
\]
The corresponding $\reflS$-word is $\c_A\c_B\c_A\c_B\c_A\c_B = \s\t\s\t\s\t$ with $\c_A = \s$ and $\c_B = \t$.
We thus see that all factorisations of~$\c=\s\t$ use one letter from~$\c_A$ and one letter from~$\c_B$, as above for $I_2(6)$.
However, by \Cref{lem:invariantsitting}(ii), %\eqref{it:psionemod}, 
none of these factorisations can correspond to an~$\rrr/2$-fold symmetry.

\medskip

Second, let~$h \equiv 1$ (mod $2$).
Then~$\rrr$ must divide~$n=2$, so $\rrr \in \{1,2\}$.
The case where $\rrr=1$ is clear, and it only remains to consider the
case where $\rrr = 2$.
In this case, $\invc$ consists of the~$h$ reflections and the corresponding word of~$h$ alternating copies of the two reflections~$\s$ and~$\t$ has odd length (so that it starts with~$\s$ and ends with~$\s$).
The subwords of~$\invc$ are given by any two consecutive reflections in cyclically reverse order.
For example, in type~$I_2(5)$, we have
\[
  \invc = \r_1\r_2\r_3\r_4\r_5 = s,sts,ststs,tst,t,
\]
so that the reduced $\reflR$-words for the Coxeter element $c=st$ are
\[
  \r_1\r_5 = \r_2\r_1 = \r_3\r_2 = \r_4\r_3 = \r_5\r_4.
\]
The corresponding $\reflS$-word is $\c_A\c_B\c_A\c_B\c_A = \s\t\s\t\s$ with $\c_A = \s$ and $\c_B = \t$.
From \Cref{lem:numbercrunching}, we obtain that $2$ divides $(m+1)h-2$ if and only if $m \equiv 1~(\text{mod }2)$, and we therefore write $m = 2a + 1$ and finally get
\[
  \ifix = \tfrac{1}{2}\big((1+1)h-2\big)+1 = h.
\]
Thus, as all factorisations are invariant in this case, we finally sum over all factorisations to obtain that for $m = 2a+1$, the number of invariant subwords of $\invc^m\invc^L$ whose product is~$c = st$ is given by
\[
  \binom{a}{1} + (h-1)\binom{a+1}{1} = h(a+1)-1,
\]
which equals~\eqref{eq2:I2.3} with $m = 2a+1$, as desired.

\subsubsection{Cyclic sieving for~$\Krewplustilde$}

Given $mh-2 = k \cdot \rrr$,
let $\zeta$ be a primitive $\rrr$\th\ root of unity. 
The following cases on the right-hand side of \eqref{eq2:I2} occur:
\begin{align} 
\label{eq1:I2.1}
\lim_{q\to\zeta}\mCatplus(I_2(h);q)&=\mCatplus(I_2(h)),
&\text{if }\rrr = 1,\\
\label{eq1:I2.2}
\lim_{q\to\zeta}\mCatplus(I_2(h);q)&=\mCatplus(I_2(h)),
&\text{if }h\equiv0\text{ (mod $2$)},\ \rrr = 2,\\
\label{eq1:I2.3}
\lim_{q\to\zeta}\mCatplus(I_2(h);q)&=\tfrac {mh}2,
&\text{if }m\not\equiv h\equiv1\text{ (mod $2$)},\ \rrr = 2,\\
\label{eq1:I2.4}
\lim_{q\to\zeta}\mCatplus(I_2(h);q)&=1,
&\text{otherwise.}
\end{align}
\reseteqn

This situation can be treated using the obtained counting formula for $\Krewplus$ in \Cref{sec:siev0-Krewplus}: the cases $\rrr = 1$ and $\rrr = 2$ with $h\equiv 0~\text{(mod }2)$ are clear.
So we only need to consider the cases $\rrr = 2$ for $h \equiv 1~\text{(mod }2)$ and $\rrr = 4$ for $h \equiv 0~\text{(mod }4)$.
But in both situations there is no single reflection that is invariant itself, so we have the same cases as in the previous situation with the parameter $m$ shifted to $m-1$, and with the empty factorisation (which is always invariant) added to both situations.
In the case where $h \equiv 1~\text{(mod }2)$ and $\rrr = 4$, this is the shift from~$0$ (no invariant factorisation) to~$1$ (only the empty invariant factorisation).
In the case where $h \equiv 0~\text{(mod }2)$ and $\rrr = 2$, this is the shift from~$\tfrac{(m+1)h}{2}-1$ (factorisation of length~$2$) to~$\tfrac{mh}{2}$ (factorisation of length~$2$ plus the empty invariant factorisation).

\subsection{Cyclic sieving for the exceptional groups}
\label{sec:siev1}

For our two positive Kreweras maps, we determine in detail the parameters in the considerations detailed at the end of \Cref{sec:typeExc}.
These are then used in the algorithm implemented in the computer algebra system SageMath\footnote{The computations are available upon request.}.

We remark that we do not explicitly mention the \emph{otherwise} case (being~$0$ for~$\Krewplus$ and $1$ for~$\Krewplustilde$, respectively) in each situation, but emphasise that these have all been computationally verified.
Moreover, we list the computed indices of invariant factorisations only in several small example situations.

\subsubsection{Cyclic sieving for~$\Krewplus$}
\label{sec:siev11}

In this situation, we treat factorisations $\r_1\ldots\r_n$ of the bipartite Coxeter element $c = c_Lc_R$ that are invariant under appropriate~$k$\th\ powers of~$\Krewplus$ with $(m+1)h-2 = k \cdot \rrr$.
From \Cref{thm:order}, we see that the order of $\Krewplus$ equals $k \cdot \rrr$ if $\psi\equiv\one$ and $k \cdot \rrr/2$ if $\psi\not\equiv\one$.
We can thus assume that $\rrr$ is even if $\psi\not\equiv\one$.
Moreover, we have seen in \Cref{prop:invariantsittingdetails} that we only need to consider parameters~$\rrr$ such that~$\rrr$ (if~$\psi\not\equiv\one$) or, respectively,~$\rrr/2$ (if~$\psi\equiv\one$) divides~$n$.

\medskip

\paragraph{\sc\bf Case $H_3$}
The degrees are $2,6,10$, and $(m+1)h-2 = 10m+8 = k \cdot \rrr$.
We hence have
\begin{align}
\mCatplus(H_3;q)=\frac 
{[10m+8]_q\, [10m+4]_q\, [10m]_q} 
{[10]_q\, [6]_q\, [2]_q} .\label{eq2:H3}
\end{align}
Let $\zeta$ be a primitive $\rrr$\th\ root of unity. 
The following cases on the right-hand side of~\eqref{eq2:H3} occur:
\alphaeqn
\begin{align} 
\label{eq2:H3.1}
\lim_{q\to\zeta}\mCatplus(H_3;q)&=\mCatplus(H_3),
&\text{if }\rrr \in\{ 1,2\},\\
\label{eq2:H3.2}
\lim_{q\to\zeta}\mCatplus(H_3;q)&=\tfrac {5m+4}3,
&\text{if }m\equiv1\text{ (mod $3$)},\ \rrr \in \{ 3,6\},\\
\label{eq2:H3.3}
\lim_{q\to\zeta}\mCatplus(H_3;q)&=0,
&\text{otherwise.}
\end{align}
\reseteqn

We may assume that $\rrr$ is even.
In this case $\rrr/2$ divides~$n=3$, implying $\rrr \in \{2,6\}$.

\medskip

The case $\rrr = 2$ is computed using all ascent-descent configurations of the~$50$ factorisations of~$c$, giving, with $m = a$,
\[
  21\binom{a+2}{3} + 28\binom{a+1}{3} + \binom{a}{3} = \mCatplus(H_3).
\]

\medskip

The case $\rrr = 6$ is given by the parameters
\begin{align*}
  y = 2,\quad m' = 1,\quad \ifix = 10.
\end{align*}
There are~$5$ factorisations of the bipartite Coxeter element that satisfy the properties in \Cref{prop:invariantsittingdetails}.
As positions of letters in $\invc$ (which lists the~$15$ reflections in the reflection ordering), these are given by
\[
  \big\{(2, 11, 5), (5, 14, 8), (8, 2, 11), (11, 5, 14), (14, 8, 2) \big\}.
\]
Their ascent-descent sequences imply that the number of $(\Krewplustilde)^k$-invariant subwords as
\[
  3\binom{a+1}{1} + 2\binom{a}{1} = 5a+3,
\]
With $m' = 1$, we obtain $m = 3a+1$, which, by \Cref{eq2:H3.2}, leads to
\[
  \tfrac{1}{3}(5m+4) = \tfrac{1}{3}(15a+9) = 5a+3,
\]
as desired.

\medskip

\paragraph{\sc\bf Case $H_4$}
The degrees are $2,12,20,30$, and $(m+1)h-2 = 30m+28 = k \cdot \rrr$. We hence have
\begin{align}
\mCatplus(H_4;q)=\frac 
{[30m+28]_q\, [30m+18]_q\, [30m+10]_q\, [30m]_q} 
{[30]_q\, [20]_q\, [12]_q\, [2]_q} .\label{eq2:H4}
\end{align}
Let $\zeta$ be a primitive $\rrr$\th\ root of unity. 
The following cases on the right-hand side of~\eqref{eq2:H4} occur:
\alphaeqn
\begin{align} 
\label{eq2:H4.1}
\lim_{q\to\zeta}\mCatplus(H_4;q)&=\mCatplus(H_4),
&\text{if }\rrr \in \{1,2\},\\
\label{eq2:H4.2}
\lim_{q\to\zeta}\mCatplus(H_4;q)&=\tfrac {(15m+14)m}4,
&\text{if } m\equiv0\text{ (mod $2$)},\ \rrr = 4,\\
\label{eq2:H4.3}
\lim_{q\to\zeta}\mCatplus(H_4;q)&=0,
&\text{otherwise.}
\end{align}
\reseteqn

We may assume that $\rrr$ is even.
In this case $\rrr/2$ divides~$n=4$, implying $\rrr \in \{2,4,8\}$.

\medskip

The case $\rrr = 2$ is computed using all ascent-descent configurations of the~$1150$ factorisations of~$c$, giving, with $m = a$,
\[
  232\binom{a+3}{4} + 842\binom{a+2}{4} + 275\binom{a+1}{4} + \binom{a}{4} = \mCatplus(H_4).
\]

\medskip

The case $\rrr = 4$ is given by the parameters
\begin{align*}
  y = 2,\quad m' = 0,\quad \ifix =  29.
\end{align*}
There are $30$ factorisations of the bipartite Coxeter element that satisfy the properties in \Cref{prop:invariantsittingdetails}.
As positions of letters in $\invc$ (which lists the~$60$ reflections in the reflection ordering), these are given by
\[
  \left\{\begin{array}{ccccc}
  (4, 44, 32, 12), & (4, 52, 32, 20), & (8, 48, 36, 16), & (8, 56, 36, 24), & (12, 52, 40, 20), \\
  (12, 60, 40, 28), & (16, 4, 44, 32), & (16, 56, 44, 24), & (20, 8, 48, 36), & (20, 60, 48, 28), \\
  (24, 4, 52, 32), & (24, 12, 52, 40), & (28, 8, 56, 36), & (28, 16, 56, 44), & (32, 12, 60, 40), \\
  (32, 20, 60, 48), & (36, 16, 4, 44), & (36, 24, 4, 52), & (40, 20, 8, 48), & (40, 28, 8, 56), \\
  (44, 24, 12, 52), & (44, 32, 12, 60), & (48, 28, 16, 56), & (48, 36, 16, 4), & (52, 32, 20, 60),\\
  (52, 40, 20, 8), & (56, 36, 24, 4), & (56, 44, 24, 12), & (60, 40, 28, 8), & (60, 48, 28, 16)
  \end{array}
  \right\}
\]
Their ascent-descent sequences imply that the number of $(\Krewplus)^k$-invariant subwords as
\[
  22\binom{a+1}{2} + 8\binom{a}{2} = (15a+7)a,
\]
With $m' = 0$, we obtain $m = 2a$, which, by \Cref{eq2:H4.2}, leads to
\[
  \tfrac{1}{4}(15m+14)m = \tfrac{1}{4}(30a+14)2a = (15a+7)a,
\]
as desired.

\medskip

The case $\rrr = 8$ is given by the parameters
\[
  y = 2,\quad m' = 2,\quad \ifix =  45.
\]
In this case, there are no factorisations of the bipartite Coxeter element that satisfy the properties in \Cref{prop:invariantsittingdetails}, in agreement with \Cref{eq2:H4.3}.

\medskip

\paragraph{\sc\bf Case $F_4$}
The degrees are $2,6,8,12$, and $(m+1)h-2 = 12m+10 = k \cdot \rrr$. We hence have
\begin{align}
\mCatplus(F_4;q)=\frac 
{[12m+10]_q\, [12m+6]_q\, [12m+4]_q\, [12m]_q} 
{[12]_q\, [8]_q\, [6]_q\, [2]_q} .\label{eq2:F4}
\end{align}
Let $\zeta$ be a primitive $\rrr$\th\ root of unity.
The following cases on the right-hand side of~\eqref{eq2:F4} occur:
\alphaeqn
\begin{align} 
\label{eq2:F4.1}
\lim_{q\to\zeta}\mCatplus(F_4;q)&=\mCatplus(F_4),
&\text{if }\rrr \in \{1,2\},\\
\label{eq2:F4.2}
\lim_{q\to\zeta}\mCatplus(F_4;q)&=0,
&\text{otherwise.}
\end{align}
\reseteqn

We may assume that $\rrr$ is even.
In this case $\rrr/2$ divides~$n=4$, implying $\rrr \in \{2,4,8\}$.

\medskip

The case $\rrr = 2$ is computed using all ascent-descent configurations of the $432$ factorisations of~$c$, giving, with $m = a$,
\[
  66\binom{a+3}{4} + 265\binom{a+2}{4} + 100\binom{a+1}{4} + \binom{a}{4} = \mCatplus(F_4).
\]

\medskip

The cases $\rrr \in \{4,8\}$ reduce to \Cref{lem:numbercrunching}(1).

\medskip

\paragraph{\sc\bf Case $E_6$}
The degrees are $2,5,6,8,9,12$, and $(m+1)h-2 = 12m+10 = k \cdot \rrr$. We hence have
\begin{align}
\mCatplus(E_6;q)=\frac 
{[12m+10]_q\, [12m+7]_q\, [12m+6]_q\, [12m+4]_q\, [12m+3]_q\, [12m]_q} 
{[12]_q\, [9]_q\, [8]_q\, [6]_q\, [5]_q\, [2]_q} .\label{eq2:E6}
\end{align}
Let $\zeta$ be a primitive $\rrr$\th\ root of unity.
The following cases on the right-hand side of~\eqref{eq2:E6} occur:
\alphaeqn
\begin{align} 
\label{eq2:E6.1}
\lim_{q\to\zeta}\mCatplus(E_6;q)&=\mCatplus(E_6),
&\text{if }\rrr=1,\\
\label{eq2:E6.2}
\lim_{q\to\zeta}\mCatplus(E_6;q)&=\tfrac {(6m+5)(3m+1)(2m+1)m}2,
&\text{if }\rrr= 2,\\
\label{eq2:E6.4}
\lim_{q\to\zeta}\mCatplus(E_6;q)&=0,
&\text{otherwise.}
\end{align}
\reseteqn

In this case $\rrr$ divides~$n=6$, implying $\rrr \in \{1,2,3,6\}$.

\medskip

The case $\rrr = 1$ is computed using all ascent-descent configurations of the~$41{,}472$ factorisations of~$c$, giving, with $m = a$,
\begin{multline*}
  418\binom{a+5}{6} + 8141\binom{a+4}{6} + 21308\binom{a+3}{6} + \\
  10778\binom{a+2}{6} + 826\binom{a+1}{6} + \binom{a}{6} = \mCatplus(E_6).
\end{multline*}

\medskip

The case $\rrr = 2$ is given by the parameters
\begin{align*}
  y = 2,\quad m' = 0,\quad \ifix =  31.
\end{align*}
There are $432$ factorisations of the bipartite Coxeter element that satisfy the properties in \Cref{prop:invariantsittingdetails}.
Their ascent-descent sequences imply that the number of $(\Krewplus)^k$-invariant subwords as
\[
  66\binom{a+3}{4} + 265\binom{a+2}{4} + 100\binom{a+1}{4} + 1\binom{a}{4} = \tfrac{1}{2}(6a + 5)(3a + 1)(2a + 1)a.
\]
With $m' = 0$, we obtain $m = a$, which yields \Cref{eq2:E6.2}, as desired.

\medskip

The cases $\rrr \in \{3,6\}$ reduce to \Cref{lem:numbercrunching}(1).

\medskip

\paragraph{\sc\bf Case $E_7$}

The degrees are $2,6,8,10,12,14,18$, and $(m+1)h-2 = 18m+16 = k \cdot \rrr$. We hence have
\begin{multline}
\mCatplus(E_7;q)=\frac 
{[18m+16]_q\, [18m+12]_q\, [18m+10]_q} 
{[18]_q\, [14]_q\, [12]_q}\\
\times
\frac 
{[18m+8]_q\, [18m+6]_q\, [18m+4]_q\, [18m]_q} 
{[10]_q\, [8]_q\, [6]_q\, [2]_q} .\label{eq2:E7}
\end{multline}
Let $\zeta$ be a primitive $\rrr$\th\ root of unity.
The following cases on the right-hand side of~\eqref{eq2:E7} occur:
\alphaeqn
\begin{align} 
\label{eq2:E7.1}
\lim_{q\to\zeta}\mCatplus(E_7;q)&=\mCatplus(E_7),
&\text{if }\rrr \in \{ 1,2 \},\\
\label{eq2:E7.2}
\lim_{q\to\zeta}\mCatplus(E_7;q)&=\tfrac {9m+8}7,
&\text{if }  m\equiv3\text{ (mod $7$)},\ \rrr\in \{7,14\},\\
\label{eq2:E7.3}
\lim_{q\to\zeta}\mCatplus(E_7;q)&=0,
&\text{otherwise.}
\end{align}
\reseteqn

We may assume that $\rrr$ is even.
In this case $\rrr/2$ divides~$n=7$, implying $\rrr \in \{2,14\}$.

\medskip

The case $\rrr = 2$ is computed using all ascent-descent configurations of the $1{,}062{,}882$ factorisations of~$c$, giving, with $m = a$,
\begin{multline*}
  2431\binom{a+6}{7} + 85800\binom{a+5}{7} + 412764\binom{a+4}{7} + 446776\binom{a+3}{7} \\
  + 110958\binom{a+2}{7} + 4152\binom{a+1}{7} + \binom{a}{7} = \mCatplus(E_7).
\end{multline*}

\medskip

The case $\rrr = 14$ is given by the parameters
\begin{align*}
  y = 2,\quad m' = 3,\quad \ifix =  36.
\end{align*}
There are $9$ factorisations of the bipartite Coxeter element that satisfy the properties in \Cref{prop:invariantsittingdetails}.
As positions of letters in $\invc$ (which lists the~$63$ reflections in the reflection ordering), these are given by
\[
  \left\{\begin{array}{ccc}
 (7, 42, 14, 49, 21, 56, 28), &
 (14, 49, 21, 56, 28, 63, 35),&
 (21, 56, 28, 63, 35, 7, 42),\\
 (28, 63, 35, 7, 42, 14, 49),&
 (35, 7, 42, 14, 49, 21, 56),&
 (42, 14, 49, 21, 56, 28, 63),\\
 (49, 21, 56, 28, 63, 35, 7),&
 (56, 28, 63, 35, 7, 42, 14),&
 (63, 35, 7, 42, 14, 49, 21)
  \end{array}
  \right\}
\]
Their ascent-descent sequences imply that the number of $(\Krewplus)^k$-invariant subwords as
\[
  5\binom{a+1}{1} + 4\binom{a}{1} = 9a+5,
\]
With $m' = 3$, we obtain $m = 7a+3$, which, by \Cref{eq2:E6.2}, leads to
\[
  \tfrac{1}{7}(9m+8) = \tfrac{1}{7}(63a+35) = 9a+5,
\]
as desired.

\medskip

\paragraph{\sc\bf Case $E_8$}
The degrees are $2,8,12,14,18,20,24,30$, and $(m+1)h-2 = 30m+28 = k
\cdot \rrr$. We hence have 
\begin{multline}
\mCatplus(E_8;q)=\frac
{[30m+28]_q\, [30m+22]_q\, [30m+18]_q\, [30m+16]_q} 
{[30]_q\, [24]_q\, [20]_q\, [18]_q}\\
\times
\frac 
{[30m+12]_q\, [30m+10]_q\, [30m+6]_q\, [30m]_q} 
{[14]_q\, [12]_q\, [8]_q\, [2]_q} .\label{eq2:E8}
\end{multline}
Let $\zeta$ be a primitive $\rrr$\th\ root of unity.
The following cases on the right-hand side of~\eqref{eq2:E8} occur:
\alphaeqn
\begin{align} 
\label{eq2:E8.1}
\lim_{q\to\zeta}\mCatplus(E_8;q)&=\mCatplus(E_8),
&\text{if }\rrr \in \{1,2\},\\
\label{eq2:E8.2}
\lim_{q\to\zeta}\mCatplus(E_8;q)&=\tfrac {(15m+14)(15m+8)(5m+2)m}{64},
&\text{if }m\equiv0\text{ (mod $2$)},\ \rrr=4,\\
\label{eq2:E8.3}
\lim_{q\to\zeta}\mCatplus(E_8;q)&=\tfrac{(15m+14)(5m+2)}{16},
&\text{if }m\equiv2\text{ (mod $4$)},\ \rrr=8,\\
\label{eq2:E8.4}
\lim_{q\to\zeta}\mCatplus(E_8;q)&=0,
&\text{otherwise.}
\end{align}
\reseteqn

We may assume that $\rrr$ is even.
In this case $\rrr/2$ divides~$n=8$, implying $\rrr \in \{2,4,8,16\}$.

\medskip

The case $\rrr = 2$ is computed using all ascent-descent configurations of the $37{,}968{,}750$ factorisations of~$c$ (which we omit here).

\medskip

The case $\rrr = 4$ is given by the parameters
\begin{align*}
  y = 2,\quad m' = 0,\quad \ifix =  57.
\end{align*}
There are $6{,}750$ factorisations of the bipartite Coxeter element that satisfy the properties in \Cref{prop:invariantsittingdetails}.
Their ascent-descent sequences imply that the number of $(\Krewplus)^k$-invariant subwords as
\[
  627\binom{a+3}{4} + 3784\binom{a+2}{4} + 2251\binom{a+1}{4} + 88\binom{a}{4} = \tfrac{1}{4}(15a + 7)(15a + 4)(5a + 1)a.
\]
With $m' = 0$, we obtain $m = 2a$, which, by \Cref{eq2:E8.2}, leads to
\begin{align*}
  \tfrac{1}{64}(15m+14)(15m+8)(5m+2)m &= \tfrac{1}{64}(30a+14)(30a+8)(10a+2)2a \\
                                     &= \tfrac{1}{4} (15a+7)(15a+4)(5a+1)a,
\end{align*}
as desired.

\medskip

The case $\rrr = 8$ is given by the parameters
\begin{align*}
  y = 2,\quad m' = 2,\quad \ifix =  89.
\end{align*}
There are $150$ factorisations of the bipartite Coxeter element that satisfy the properties in \Cref{prop:invariantsittingdetails}.
Their ascent-descent sequences imply that the number of $(\Krewplus)^k$-invariant subwords as
\[
  33\binom{a+2}{4} + 109\binom{a+1}{4} + 8\binom{a}{4} = (15a + 11)(5a + 3).
\]
With $m' = 2$, we obtain $m = 4a+2$, which, by \Cref{eq2:E8.3}, leads to
\[
  \tfrac{1}{16}(15m+14)(5m+2) = \tfrac{1}{16}(60a+44)(20a+12) = (15a+11)(5a+3),
\]
as desired.

\medskip

The case $\rrr = 16$ is given by the parameters
\[
  y = 2,\quad m' = 6,\quad \ifix =  105.
\]
In this case, there are no factorisations of the bipartite Coxeter element that satisfy the properties in \Cref{prop:invariantsittingdetails}, in agreement with \Cref{eq2:E8.4}.

\subsubsection{Cyclic sieving for~$\Krewplustilde$}
\label{sec:siev12}

In this situation, we treat factorisations $\r_1\ldots\r_{n'}$ that can be extended to factorisations of the bipartite Coxeter element $c = c_Lc_R$ and that are invariant under appropriate~$k$\th\ powers of~$\Krewplustilde$ with $(m+1)h-2 = k \cdot \rrr$.
From \Cref{thm:order}, we see that the order of~$\Krewplustilde$ equals $k \cdot \rrr$ if $\psi\equiv\one$ and $k \cdot \rrr/2$ if $\psi\not\equiv\one$.
We can thus assume that $\rrr$ is even if $\psi\not\equiv\one$.
Moreover, we have seen in \Cref{prop:invariantsittingdetails} that we need to consider all parameters~$\rrr$ such that~$\rrr$ (if~$\psi\not\equiv\one$) or, respectively,~$\rrr/2$ (if~$\psi\equiv\one$) divides some~$n' \leq n$, i.e., all parameters~$\rrr$ such that~$\rrr$ or, respectively,~$\rrr/2$ is less than or equal to~$n$.
Otherwise, there will only be the invariant trivial factorisation.

\medskip

\paragraph{\sc\bf Case $H_3$}
The degrees are $2,6,10$, and $mh-2 = 10m-2 = k\cdot\rrr$.
We hence have
\begin{align}
\mCatplus(H_3;q)=\frac 
{[10m+8]_q\, [10m+4]_q\, [10m]_q} 
{[10]_q\, [6]_q\, [2]_q} .\label{eq:H3}
\end{align}
Let $\zeta$ be a primitive $\rrr$\th\ root of unity.
The following cases on the right-hand side of~\eqref{eq:H3} occur:
\alphaeqn
\begin{align} 
\label{eq:H3.1}
\lim_{q\to\zeta}\mCatplus(H_3;q)&=\mCatplus(H_3),
&\text{if } \rrr\in\{1,2\},\\
\label{eq:H3.2}
\lim_{q\to\zeta}\mCatplus(H_3;q)&=\tfrac {5m+2}3,
&\text{if } m\equiv 2\text{ (mod $3$)},\ \rrr\in\{3,6\},\\
\label{eq:H3.3}
\lim_{q\to\zeta}\mCatplus(H_3;q)&=1,
&\text{otherwise.}
\end{align}
\reseteqn

We may assume that $\rrr$ is even.
In this case $\rrr/2 \leq n = 3$, implying $\rrr \in \{2,4,6\}$.

\medskip

The case $\rrr = 2$ is computed using all ascent-descent configurations of the $116$ factorisations that can be extended to factorisations of~$c$, giving, with $m = a+1$,
\begin{multline*}
21\binom{a+2}{3}+28\binom{a+1}{3}+\binom{a}{3}+8\binom{a+2}{2}+39\binom{a+1}{2} \\
+3\binom{a}{2}+12\binom{a+1}{1}+3\binom{a}{1}+\binom{a}{0}
= \mCatplus(H_3).
\end{multline*}

\medskip

The case $\rrr = 4$ is given by the parameters
\[
  y = 2,\quad m' = 1,\quad \ifix =  7.
\]
In this case, there is only the trivial factorisation that satisfies the properties in \Cref{prop:invariantsittingdetails}, in agreement with \Cref{eq:H3.3}.

\medskip

The case $\rrr = 6$ is given by the parameters
\begin{align*}
  y = 2,\quad m' = 2,\quad \ifix =  10.
\end{align*}
There are $6$ factorisations of the bipartite Coxeter element that satisfy the properties in \Cref{prop:invariantsittingdetails}.
As positions of letters in $\invc$ (which lists the~$15$ reflections in the reflection ordering), these are given by the empty factorisation and the $5$ factorisations of~$c$ listed in the above considerations for type~$H_3$.
Their ascent-descent sequences imply that the number of $(\Krewplustilde)^k$-invariant subwords as
\[
  3\binom{a+1}{1}+2\binom{a}{1}+\binom{a}{0}
  = 5a + 4
\]
With $m' = 2$, we obtain $m = 3a+2$, which, by \Cref{eq:H3.2}, leads to
\[
  \tfrac{1}{3}(5m+2) = \tfrac{1}{3}(15a+12) = 5a+4,
\]
as desired.

\medskip

\paragraph{\sc\bf Case $H_4$}
The degrees are $2,12,20,30$, and $mh-2 = 30m-2 = k\cdot\rrr$.
We hence have
\begin{align}
\mCatplus(H_4;q)=\frac 
{[30m+28]_q\, [30m+18]_q\, [30m+10]_q\, [30m]_q} 
{[30]_q\, [20]_q\, [12]_q\, [2]_q} .\label{eq:H4}
\end{align}
Let $\zeta$ be a primitive $\rrr$\th\ root of unity.
The following cases on the right-hand side of~\eqref{eq:H4} occur:
\alphaeqn
\begin{align} 
\label{eq:H4.1}
\lim_{q\to\zeta}\mCatplus(H_4;q)&=\mCatplus(H_4),
&\text{if }\rrr\in\{1,2\},\\
\label{eq:H4.2}
\lim_{q\to\zeta}\mCatplus(H_4;q)&=\tfrac {(5m+3)(3m+1)}4,
&\text{if }m\equiv1\text{ mod $2$},\ \rrr = 4,\\
\label{eq:H4.3}
\lim_{q\to\zeta}\mCatplus(H_4;q)&=1,
&\text{otherwise.}
\end{align}
\reseteqn

We may assume that $\rrr$ is even.
In this case $\rrr/2 \leq n = 4$, implying $\rrr \in \{2,4,6,8\}$.

\medskip

The case $\rrr = 2$ is computed using all ascent-descent configurations of the $3{,}226$ factorisations that can be extended to factorisations of~$c$, giving, with $m = a+1$,
\begin{multline*}
232\binom{a+3}{4}+842\binom{a+2}{4}+275\binom{a+1}{4}+\binom{a}{4}+42\binom{a+3}{3} \\
+760\binom{a+2}{3}+544\binom{a+1}{3}+4\binom{a}{3}+133\binom{a+2}{2}+326\binom{a+1}{2} \\
+6\binom{a}{2}+56\binom{a+1}{1}+4\binom{a}{1}+\binom{a}{0}
= \mCatplus(H_4).
\end{multline*}

\medskip

The case $\rrr = 4$ is given by the parameters
\begin{align*}
  y = 2,\quad m' = 1,\quad \ifix =  29.
\end{align*}
There are $46$ factorisations of the bipartite Coxeter element that satisfy the properties in \Cref{prop:invariantsittingdetails}.
As positions of letters in $\invc$ (which lists the~$60$ reflections in the reflection ordering), these are given by the empty factorisation, the $30$ factorisations of~$c$ listed in the above considerations for type~$H_4$, and
\[
  \left\{\begin{array}{ccccc}
    (4, 32), & (8, 36), & (12, 40), & (16, 44), & (20, 48), \\
    (24, 52), & (28, 56), & (32, 60), & (36, 4), & (40, 8), \\
    (44, 12), & (48, 16), & (52, 20), & (56, 24), & (60, 28)
  \end{array}
  \right\}
\]
Their ascent-descent sequences imply that the number of $(\Krewplustilde)^k$-invariant subwords as
\[
  22\binom{a+1}{2}+8\binom{a}{2}+7\binom{a+1}{1}+8\binom{a}{1}+\binom{a}{0}
  = (5a + 4)(3a + 2)
\]
With $m' = 1$, we obtain $m = 2a+1$, which, by \Cref{eq:H4.2}, leads to
\[
  \tfrac{1}{4}(5m+3)(3m+1) = \tfrac{1}{4}(10a+8)(6a+4) = (5a+4)(3a+2),
\]
as desired.

\medskip

The case $\rrr = 6$ reduces to \Cref{lem:numbercrunching}(1).

\medskip

The case $\rrr = 8$ is given by the parameters
\[
  y = 2,\quad m' = 3,\quad \ifix =  45.
\]
In this case, there is only the trivial factorisation that satisfies the properties in \Cref{prop:invariantsittingdetails}, in agreement with \Cref{eq:H4.3}.

\medskip

\paragraph{\sc\bf Case $F_4$}
The degrees are $2,6,8,12$, and $mh-2 = 12m-2 = k\cdot\rrr$.
We hence have
\begin{align}
\mCatplus(F_4;q)=\frac 
{[12m+10]_q\, [12m+6]_q\, [12m+4]_q\, [12m]_q} 
{[12]_q\, [8]_q\, [6]_q\, [2]_q} .\label{eq:F4}
\end{align}
Let~$\zeta$ be a primitive $\rrr$\th\ root of unity.
The following cases on the right-hand side of~\eqref{eq:F4} occur:
\alphaeqn
\begin{align} 
\label{eq:F4.1}
\lim_{q\to\zeta}\mCatplus(F_4;q)&=\mCatplus(F_4),
&\text{if }\rrr\in\{1,2\},\\
\label{eq:F4.2}
\lim_{q\to\zeta}\mCatplus(F_4;q)&=1,
&\text{otherwise.}
\end{align}
\reseteqn

We may assume that $\rrr$ is even.
In this case $\rrr/2 \leq n = 4$, implying $\rrr \in \{2,4,6,8\}$.

\medskip

The case $\rrr = 2$ is computed using all ascent-descent configurations of the $1{,}045$ factorisations that can be extended to factorisations of~$c$, giving, with $m = a+1$,
\begin{multline*}
66\binom{a+3}{4}+265\binom{a+2}{4}+100\binom{a+1}{4}+\binom{a}{4}+10\binom{a+3}{3} \\
+224\binom{a+2}{3}+194\binom{a+1}{3}+4\binom{a}{3}+35\binom{a+2}{2}+115\binom{a+1}{2} \\
+6\binom{a}{2}+20\binom{a+1}{1}+4\binom{a}{1}+\binom{a}{0}
= \mCatplus(F_4).
\end{multline*}

\medskip

The cases $\rrr \in \{4,6,8\}$ reduce to \Cref{lem:numbercrunching}(1).

\medskip

\paragraph{\sc\bf Case $E_6$}
The degrees are $2,5,6,8,9,12$, and $mh-2 = 12m-2 = k\cdot\rrr$.
We hence have
\begin{align}
\mCatplus(E_6;q)=\frac 
{[12m+10]_q\, [12m+7]_q\, [12m+6]_q\, [12m+4]_q\, [12m+3]_q\, [12m]_q} 
{[12]_q\, [9]_q\, [8]_q\, [6]_q\, [5]_q\, [2]_q} .\label{eq:E6}
\end{align}
Let~$\zeta$ be a primitive $\rrr$\th\ root of unity.
The following cases on the right-hand side of~\eqref{eq:E6} occur:
\alphaeqn
\begin{align} 
\label{eq:E6.1}
\lim_{q\to\zeta}\mCatplus(E_6;q)&=\mCatplus(E_6),
&\text{if }\rrr=1,\\
\label{eq:E6.2}
\lim_{q\to\zeta}\mCatplus(E_6;q)&=\tfrac {(6m+5)(3m+1)(2m+1)m}{2},
&\text{if }\rrr=2,\\
\label{eq:E6.3}
\lim_{q\to\zeta}\mCatplus(E_6;q)&=\tfrac {12m+3}5,
&\text{if }m\equiv1\text{ (mod $5$)},\ \rrr=5,\\
\label{eq:E6.4}
\lim_{q\to\zeta}\mCatplus(E_6;q)&=1,
&\text{otherwise.}
\end{align}
\reseteqn

In this case $\rrr \leq n = 6$, implying $\rrr \in \{1,2,3,4,5,6\}$.

\medskip

The case $\rrr = 1$ is computed using all ascent-descent configurations of the $104{,}653$ factorisations that can be extended to factorisations of~$c$, giving, with $m = a+1$,
\begin{multline*}
418\binom{a+5}{6}+814\binom{a+4}{6}+21308\binom{a+3}{6}+10778\binom{a+2}{6} \\
+826\binom{a+1}{6}+\binom{a}{6}+7\binom{a+5}{5}+2466\binom{a+4}{5}+18258\binom{a+3}{5} \\
+18272\binom{a+2}{5}+2463\binom{a+1}{5}+6\binom{a}{5}+70\binom{a+4}{4}+4039\binom{a+3}{4} \\
+10375\binom{a+2}{4}+278\binom{a+1}{4}+15\binom{a}{4}+175\binom{a+3}{3}+2226\binom{a+2}{3} \\
+1467\binom{a+1}{3}+20\binom{a}{3}+135\binom{a+2}{2}+354\binom{a+1}{2}+15\binom{a}{2} \\
+30\binom{a+1}{1}+6\binom{a}{1}+\binom{a}{0}
= \mCatplus(E_7).
\end{multline*}

\medskip

The case $\rrr = 2$ is given by the parameters
\begin{align*}
  y = 2,\quad m' = 1,\quad \ifix =  31.
\end{align*}
There are $1045$ factorisations of the bipartite Coxeter element that satisfy the properties in \Cref{prop:invariantsittingdetails}.
Their ascent-descent sequences imply that the number of $(\Krewplustilde)^k$-invariant subwords as
\begin{multline*}
  66\binom{a+3}{4}+265\binom{a+2}{4}+100\binom{a+1}{4}+\binom{a}{4}+10\binom{a+3}{3} \\
  +224\binom{a+2}{3}+194\binom{a+1}{3}+4\binom{a}{3}+35\binom{a+2}{2}+115\binom{a+1}{2} \\
  +6\binom{a}{2}+20\binom{a+1}{1}+4\binom{a}{1}+\binom{a}{0}
  = \frac{(6a + 11)(3a + 4)(2a + 3)(a + 1)}{2}.
\end{multline*}
With $m' = 1$, we obtain $m = a+1$, which, by \Cref{eq:E6.2}, leads to
\begin{align*}
  \tfrac{1}{2}(6m+5)(3m+1)(2m+1)m &= \tfrac{1}{2}(6a+11)(3a+4)(2a+3)(a+1),
\end{align*}
as desired.

\medskip

The cases $\rrr \in \{3,4\}$ reduce to \Cref{lem:numbercrunching}(1).

\medskip

The case $\rrr = 5$ is given by the parameters
\begin{align*}
  y = 1,\quad m' = 1,\quad \ifix =  7.
\end{align*}
There are $13$ factorisations of the bipartite Coxeter element that satisfy the properties in \Cref{prop:invariantsittingdetails}.
As positions of letters in $\invc$ (which lists the~$36$ reflections in the reflection ordering), these are given by the empty factorisation and
\[
  \left\{\begin{array}{cccc}
 (1, 8, 13, 20, 25), &
 (2, 7, 14, 19, 26), &
 (7, 14, 19, 26, 31), &
 (8, 13, 20, 25, 32), \\
 (13, 20, 25, 32, 2), &
 (14, 19, 26, 31, 1), &
 (19, 26, 31, 1, 8), &
 (20, 25, 32, 2, 7), \\
 (25, 32, 2, 7, 14), &
 (26, 31, 1, 8, 13), &
 (31, 1, 8, 13, 20), &
 (32, 2, 7, 14, 19)
  \end{array}
  \right\}
\]
Their ascent-descent sequences imply that the number of $(\Krewplustilde)^k$-invariant subwords as
\[
  2\binom{a+1}{1}+10\binom{a}{1}+\binom{a}{0} = 12a+3.
\]
With $m' = 1$, we obtain $m = 5a+1$, which, by \Cref{eq:E6.3}, leads to
\[
  \tfrac{1}{5}(12m+3) = \tfrac{1}{5}(60a+15) = 12a+3,
\]
as desired.

\medskip

The case $\rrr = 6$ reduces to \Cref{lem:numbercrunching}(1).

\medskip

\paragraph{\sc\bf Case $E_7$}
The degrees are $2,6,8,10,12,14,18$, and $mh-2 = 18m-2 = k\cdot\rrr$.
We hence have
\begin{multline}
\mCatplus(E_7;q)=\frac 
{[18m+16]_q\, [18m+12]_q\, [18m+10]_q} 
{[18]_q\, [14]_q\, [12]_q}\\
\times
\frac 
{[18m+8]_q\, [18m+6]_q\, [18m+4]_q\, [18m]_q} 
{[10]_q\, [8]_q\, [6]_q\, [2]_q} .\label{eq:E7}
\end{multline}
Let~$\zeta$ be a primitive $\rrr$\th\ root of unity.
The following cases on the right-hand side of~\eqref{eq:E7} occur:
\alphaeqn
\begin{align} 
\label{eq:E7.1}
\lim_{q\to\zeta}\mCatplus(E_7;q)&=\mCatplus(E_7),
&\text{if }\rrr\in\{1,2\},\\
\label{eq:E7.2}
\lim_{q\to\zeta}\mCatplus(E_7;q)&=\tfrac{(9m+5)(3m+1)}8,
&\text{if }m\equiv1\text{ (mod $2$)},\ \rrr=4,\\
\label{eq:E7.3}
\lim_{q\to\zeta}\mCatplus(E_7;q)&=\tfrac {9m+3}4,
&\text{if }m\equiv1\text{ (mod $4$)},\ \rrr=8,\\
\label{eq:E7.4}
\lim_{q\to\zeta}\mCatplus(E_7;q)&=\tfrac {9m+4}5,
&\text{if }m\equiv4\text{ (mod $5$)},\ \rrr\in\{5,10\},\\
\label{eq:E7.5}
\lim_{q\to\zeta}\mCatplus(E_7;q)&=\tfrac {9m+6}7,
&\text{if }m\equiv4\text{ (mod $7$)},\ \rrr\in\{7,14\},\\
\label{eq:E7.6}
\lim_{q\to\zeta}\mCatplus(E_7;q)&=1,
&\text{otherwise.}
\end{align}
\reseteqn

We may assume that $\rrr$ is even.
In this case $\rrr/2 \leq n = 7$, implying $\rrr \in \{2,4,6,8,10,12,\break 14\}$.

\medskip

The case $\rrr = 2$ is computed using all ascent-descent configurations of the $2{,}702{,}332$ factorisations that can be extended to factorisations of~$c$ (which we omit here).

\medskip

The case $\rrr = 4$ is given by the parameters
\begin{align*}
  y = 2,\quad m' = 1,\quad \ifix =  29.
\end{align*}
There are $37$ factorisations of the bipartite Coxeter element that satisfy the properties in \Cref{prop:invariantsittingdetails}.
As positions of letters in $\invc$ (which lists the~$63$ reflections in the reflection ordering), these are given by the empty factorisation and
\[
  \left\{\begin{array}{ccccc}
 \multicolumn{5}{c}{(1, 29),
 (8, 36),
 (15, 43),
 (22, 50),
 (29, 57),
 (36, 1),
 (43, 8),
 (50, 15),
 (57, 22), }\\
 (1, 15, 29, 43), &
 (1, 43, 29, 8), &
 (1, 50, 29, 15), &
 (8, 22, 36, 50), &
 (8, 50, 36, 15), \\
 (8, 57, 36, 22), &
 (15, 1, 43, 29), &
 (15, 29, 43, 57), &
 (15, 57, 43, 22), &
 (22, 1, 50, 29), \\
 (22, 8, 50, 36), &
 (22, 36, 50, 1), &
 (29, 8, 57, 36), &
 (29, 15, 57, 43), &
 (29, 43, 57, 8), \\
 (36, 15, 1, 43), &
 (36, 22, 1, 50), &
 (36, 50, 1, 15), &
 (43, 22, 8, 50), &
 (43, 29, 8, 57), \\
 (43, 57, 8, 22), &
 (50, 1, 15, 29), &
 (50, 29, 15, 57), &
 (50, 36, 15, 1), &
 (57, 8, 22, 36), \\
 \multicolumn{5}{c}{(57, 36, 22, 1),
 (57, 43, 22, 8)}
 \end{array}
  \right\}
\]
Their ascent-descent sequences imply that the number of $(\Krewplustilde)^k$-invariant subwords as
\[
  2\binom{a+2}{2}+20\binom{a+1}{2}+5\binom{a}{2}+4\binom{a+1}{1}+5\binom{a}{1}+\binom{a}{0} = \frac{(9a + 7)(3a + 2)}{2}.
\]
With $m' = 1$, we obtain $m = 2a+1$, which, by \Cref{eq:E7.2}, leads to
\[
  \tfrac{1}{8}(9m+5)(3m+1) = \tfrac{1}{8}(18a+14)(6a+4) = \tfrac{1}{2}(9a+7)(3a+2),
\]
as desired.

\medskip

The case $\rrr = 6$ reduce to \Cref{lem:numbercrunching}(1).

\medskip

The case $\rrr = 8$ is given by the parameters
\begin{align*}
  y = 2,\quad m' = 1,\quad \ifix =  15.
\end{align*}
There are $10$ factorisations of the bipartite Coxeter element that satisfy the properties in \Cref{prop:invariantsittingdetails}.
As positions of letters in $\invc$ (which lists the~$63$ reflections in the reflection ordering), these are given by the empty factorisation and
\[
  \left\{\begin{array}{ccc}
(1, 15, 29, 43), &
 (8, 22, 36, 50), &
 (15, 29, 43, 57), \\
 (22, 36, 50, 1), &
 (29, 43, 57, 8), &
 (36, 50, 1, 15), \\
 (43, 57, 8, 22), &
 (50, 1, 15, 29), &
 (57, 8, 22, 36)
 \end{array}
  \right\}
\]
Their ascent-descent sequences imply that the number of $(\Krewplustilde)^k$-invariant subwords as
\[
  2\binom{a+1}{1}+7\binom{a}{1}+\binom{a}{0} = 9a+3.
\]
With $m' = 1$, we obtain $m = 4a+1$, which, by \Cref{eq:E7.3}, leads to
\[
  \tfrac{1}{4}(9m+3) = \tfrac{1}{4}(36a+12) = 9a+3,
\]
as desired.

\medskip

The case $\rrr = 10$ is given by the parameters
\begin{align*}
  y = 2,\quad m' = 4,\quad \ifix =  50.
\end{align*}
There are $10$ factorisations of the bipartite Coxeter element that satisfy the properties in \Cref{prop:invariantsittingdetails}.
As positions of letters in $\invc$ (which lists the~$63$ reflections in the reflection ordering), these are given by the empty factorisation and
\[
  \left\{\begin{array}{ccc}
 (7, 56, 42, 28, 14), &
 (14, 63, 49, 35, 21), &
 (21, 7, 56, 42, 28), \\
 (28, 14, 63, 49, 35), &
 (35, 21, 7, 56, 42), &
 (42, 28, 14, 63, 49), \\
 (49, 35, 21, 7, 56), &
 (56, 42, 28, 14, 63), &
 (63, 49, 35, 21, 7)
 \end{array}
  \right\}
\]
Their ascent-descent sequences imply that the number of $(\Krewplustilde)^k$-invariant subwords as
\[
  7\binom{a+1}{1}+2\binom{a}{1}+\binom{a}{0} = 9a+8.
\]
With $m' = 4$, we obtain $m = 5a+4$, which, by \Cref{eq:E7.4}, leads to
\[
  \tfrac{1}{5}(9m+4) = \tfrac{1}{5}(45a+40) = 9a+8,
\]
as desired.

\medskip

The cases $\rrr = 12$ reduces to \Cref{lem:numbercrunching}(1).

\medskip

The case $\rrr = 14$ is given by the parameters
\begin{align*}
  y = 2,\quad m' = 4,\quad \ifix =  36.
\end{align*}
There are $10$ factorisations of the bipartite Coxeter element that satisfy the properties in \Cref{prop:invariantsittingdetails}.
As positions of letters in $\invc$ (which lists the~$63$ reflections in the reflection ordering), these are given by the empty factorisation and
\[
  \left\{\begin{array}{ccc}
 (7, 42, 14, 49, 21, 56, 28), &
 (14, 49, 21, 56, 28, 63, 35), &
 (21, 56, 28, 63, 35, 7, 42), \\
 (28, 63, 35, 7, 42, 14, 49), &
 (35, 7, 42, 14, 49, 21, 56), &
 (42, 14, 49, 21, 56, 28, 63), \\
 (49, 21, 56, 28, 63, 35, 7), &
 (56, 28, 63, 35, 7, 42, 14), &
 (63, 35, 7, 42, 14, 49, 21)
 \end{array}
  \right\}
\]
Their ascent-descent sequences imply that the number of $(\Krewplustilde)^k$-invariant subwords as
\[
  5\binom{a+1}{1}+4\binom{a}{1}+\binom{a}{0} = 9a + 6.
\]
With $m' = 4$, we obtain $m = 7a+4$, which, by \Cref{eq:E7.5}, leads to
\[
  \tfrac{1}{7}(9m+6) = \tfrac{1}{7}(63a+42) = 9a+6,
\]
as desired.

\medskip

\paragraph{\sc\bf Case $E_8$}
The degrees are $2,8,12,14,18,20,24,30$, and $mh-2 = 30m-2 = k\cdot\rrr$.
We hence have
\begin{multline}
\mCatplus(E_8;q)=\frac 
{[30m+28]_q\, [30m+22]_q\, [30m+18]_q\, [30m+16]_q} 
{[30]_q\, [24]_q\, [20]_q\, [18]_q}\\
\times
\frac 
{[30m+12]_q\, [30m+10]_q\, [30m+6]_q\, [30m]_q} 
{[14]_q\, [12]_q\, [8]_q\, [2]_q} . \label{eq:E8}
\end{multline}
Let~$\zeta$ be a primitive $\rrr$\th\ root of unity.
The following cases on the right-hand side of~\eqref{eq:E8} occur:
\alphaeqn
\begin{align} 
\label{eq:E8.1}
\lim_{q\to\zeta}\mCatplus(E_8;q)&=\mCatplus(E_8),
&\text{if }\rrr\in\{1,2\},\\
\label{eq:E8.2}
\lim_{q\to\zeta}\mCatplus(E_8;q)&=\tfrac {(15m+11)(5m+3)(5m+1)(3m+1)}{64},
&\text{if } m\equiv1\text{ (mod $2$)},\ \rrr=4,\\
\label{eq:E8.3}
\lim_{q\to\zeta}\mCatplus(E_8;q)&=\tfrac{(15m+11)(5m+1)}{16},
&\text{if }m\equiv3\text{ (mod $4$)},\ \rrr=8,\\
\label{eq:E8.4}
\lim_{q\to\zeta}\mCatplus(E_8;q)&=\tfrac {15m+6}7,
&\text{if }m\equiv1\text{ (mod $7$)},\ \rrr\in\{7,14\},\\
\label{eq:E8.5}
\lim_{q\to\zeta}\mCatplus(E_8;q)&=1,
&\text{otherwise.}
\end{align}
\reseteqn

We may assume that $\rrr$ is even.
In this case $\rrr/2 \leq n = 8$, implying $\rrr \in \{2,4,6,8,10,12,\break 14,16\}$.

\medskip

The case $\rrr = 2$ is computed using all ascent-descent configurations of the $97{,}126{,}171$ factorisations that can be extended to factorisations of~$c$ (which we omit here).

\medskip

The case $\rrr = 4$ is given by the parameters
\begin{align*}
  y = 2,\quad m' = 1,\quad \ifix =  57.
\end{align*}
There are $10{,}771$ factorisations of the bipartite Coxeter element that satisfy the properties in \Cref{prop:invariantsittingdetails}.
Their ascent-descent sequences imply that the number of $(\Krewplustilde)^k$-invariant subwords as
\begin{multline*}
  627\binom{a+3}{4}+3784\binom{a+2}{4}+2251\binom{a+1}{4}+88\binom{a+0}{4} \\
  +14\binom{a+3}{3}+1198\binom{a+2}{3}+1987\binom{a+1}{3}+176\binom{a+0}{3} \\
  +42\binom{a+2}{2}+446\binom{a+1}{2}+112\binom{a+0}{2}+21\binom{a+1}{1} \\
  +24\binom{a+0}{1}+1\binom{a+0}{0} = \tfrac{1}{4}(15a + 13)(5a + 4)(5a + 3)(3a + 2).
\end{multline*}
With $m' = 1$, we obtain $m = 2a+1$, which, by \Cref{eq:E8.2}, leads to
\begin{align*}
  \tfrac{1}{64}(15m+11)(5m+3)(5m+1)(3m+1) &= \tfrac{1}{64}(30a+26)(10a+8)(10a+6)(6a+4) \\
                                          &= \tfrac{1}{4} (15a+13)(5a+4)(5a+3)(3a+2),
\end{align*}
as desired.

\medskip

The case $\rrr = 6$ reduce to \Cref{lem:numbercrunching}(1).

\medskip

The case $\rrr = 8$ is given by the parameters
\begin{align*}
  y = 2,\quad m' = 3,\quad \ifix =  89.
\end{align*}
There are $181$ factorisations of the bipartite Coxeter element that satisfy the properties in \Cref{prop:invariantsittingdetails}.
Their ascent-descent sequences imply that the number of $(\Krewplustilde)^k$-invariant subwords as
\begin{multline*}
  33\binom{a+2}{2}+109\binom{a+1}{2}+8\binom{a+0}{2}+22\binom{a+1}{1}\\
  +8\binom{a+0}{1}+1\binom{a+0}{0} = (15a + 14)(5a + 4).
\end{multline*}
With $m' = 3$, we obtain $m = 4a+3$, which, by \Cref{eq:E8.3}, leads to
\[
  \tfrac{1}{16}(15m+11)(5m+1) = \tfrac{1}{16}(60a+56)(20a+16) = (15a+14)(5a+4),
\]
as desired.

\medskip

The cases $\rrr \in \{10,12\}$ reduce to \Cref{lem:numbercrunching}(1).

\medskip

The case $\rrr = 14$ is given by the parameters
\begin{align*}
  y = 2,\quad m' = 1,\quad \ifix =  17.
\end{align*}
There are $16$ factorisations of the bipartite Coxeter element that satisfy the properties in \Cref{prop:invariantsittingdetails}.
As positions of letters in $\invc$ (which lists the~$120$ reflections in the reflection ordering), these are given by the empty factorisation and
\[
  \left\{\begin{array}{c@{\hskip 3pt}c@{\hskip 3pt}c}
 (4, 20, 36, 52, 68, 84, 100), &
 (12, 28, 44, 60, 76, 92, 108), &
 (20, 36, 52, 68, 84, 100, 116), \\
 (28, 44, 60, 76, 92, 108, 4), &
 (36, 52, 68, 84, 100, 116, 12), &
 (44, 60, 76, 92, 108, 4, 20), \\
 (52, 68, 84, 100, 116, 12, 28), &
 (60, 76, 92, 108, 4, 20, 36), &
 (68, 84, 100, 116, 12, 28, 44), \\
 (76, 92, 108, 4, 20, 36, 52), &
 (84, 100, 116, 12, 28, 44, 60), &
 (92, 108, 4, 20, 36, 52, 68), \\
 (100, 116, 12, 28, 44, 60, 76), &
 (108, 4, 20, 36, 52, 68, 84), &
 (116, 12, 28, 44, 60, 76, 92)
 \end{array}
  \right\}
\]
Their ascent-descent sequences imply that the number of $(\Krewplustilde)^k$-invariant subwords as
\[
  2\binom{a+1}{1}+13\binom{a}{1}+\binom{a}{0} = 15a + 3.
\]
With $m' = 1$, we obtain $m = 7a+1$, which, by \Cref{eq:E8.4}, leads to
\[
  \tfrac{1}{7}(15m+6) = \tfrac{1}{7}(105a+21) = 15a+3,
\]
as desired.

\medskip

The case $\rrr = 16$ is given by the parameters
\[
  y = 2,\quad m' = 7,\quad \ifix =  105.
\]
In this case, there is only the trivial factorisation that satisfies the properties in \Cref{prop:invariantsittingdetails}, in agreement with \Cref{eq:E8.5}.

\end{document}